\documentclass[11pt]{amsart}

\usepackage{float}
\usepackage{placeins}
\usepackage{caption}

\usepackage{amsmath,amssymb,amscd,amsthm,graphicx,enumerate, mathrsfs, amsfonts}
\usepackage{graphicx}  
\usepackage{float}
\usepackage{hyperref}
\usepackage{xypic}
\usepackage{xcolor}
\usepackage{comment}
\hypersetup{breaklinks=true}
\hypersetup{
		colorlinks   = true, 
		urlcolor     = blue, 
		linkcolor    = blue,
		citecolor    = blue 
}

\usepackage[left=2.5cm, right=2.5cm, bottom=2.7cm]{geometry}

\numberwithin{equation}{section}
\usepackage{mathtools}
\mathtoolsset{showonlyrefs}

\theoremstyle{plain}

\newtheorem{theorem}{Theorem}[section]
\newtheorem{proposition}[theorem]{Proposition}
\newtheorem{lemma}[theorem]{Lemma}
\newtheorem{corollary}[theorem]{Corollary}

\newtheorem{claim}[theorem]{Claim}

\theoremstyle{definition}
\newtheorem{definition}[theorem]{Definition}
\newtheorem{remark}[theorem]{Remark}

\newcommand{\fn}{function}
\newcommand{\Rm}{Riemannian}
\newcommand{\cn}{constant}

\newcommand{\mt}{metric}
\newcommand{\seq}{sequence}
\newcommand{\w}{with}
\newcommand{\cts}{continuous}
 
\newcommand{\Thm}{Theorem}

\newcommand{\st}{such that}

\newcommand{\te}{there exist}
\newcommand{\tes}{there exists}
\newcommand{\Te}{There exist}
\newcommand{\Tes}{There exists}
\newcommand{\fa}{for all}
\newcommand{\tf}{Therefore}
\newcommand{\sps}{Suppose} 
\newcommand{\hn}{Hence}

\newcommand{\wrt}{with respect to}

\newcommand{\bbr}{\mathbb{R}}

\newcommand{\bbb}{\mathbb{B}}

\newcommand{\ra}{\rightarrow}

\newcommand{\del}{\partial}

\newcommand{\cA}{\mathcal{A}}
\newcommand{\rg}{\mathrm{g}}
\newcommand{\bn}{\mathbf{n}}

\newcommand{\A}{\mathcal{A}}
\newcommand{\Ric}{\operatorname{Ric}}

\newcommand{\al}{\alpha}
\newcommand{\be}{\beta}
\newcommand{\ga}{\gamma}
\newcommand{\de}{\delta}
\newcommand{\ve}{\varepsilon}
\newcommand{\et}{\eta}
\newcommand{\ph}{\phi}
\newcommand{\vp}{\varphi}
\newcommand{\ps}{\psi}

\newcommand{\la}{\lambda}
\newcommand{\om}{\omega}
\newcommand{\rh}{\rho}
\newcommand{\si}{\sigma}
\newcommand{\tht}{\theta}
\newcommand{\ta}{\tau}
\newcommand{\ch}{\chi}

\newcommand{\Ga}{\Gamma}
\newcommand{\De}{\Delta}
\newcommand{\Ph}{\Phi}

\newcommand{\La}{\Lambda}
\newcommand{\Om}{\Omega}
\newcommand{\Si}{\Sigma}
\newcommand{\Tht}{\Theta}
\newcommand{\na}{\nabla}
\newcommand{\eps}{\varepsilon}
\newcommand{\nm}[1]{\left\|#1\right\|}
\newcommand{\md}[1]{\left|#1\right|}

\newcommand{\mD}[1]{|#1|}

\newcommand{\cx}{\mathcal{X}}
\newcommand{\cy}{\mathcal{Y}}
\newcommand{\vth}{\vartheta}
\newcommand{\va}{v_{\alpha}}
\newcommand{\ffa}{\mathfrak{f}_{\al}}
\newcommand{\fwa}{\mathfrak{w}_{\alpha}}
\newcommand{\faa}{\mathfrak{a}_{\alpha}}

\newcommand{\sfu}{\mathsf{u}}
\newcommand{\snf}{\mathsf{f}}
\newcommand{\snG}{\mathsf{G}}

\newcommand{\tui}{\tilde{u}_i} 
\newcommand{\tuin}{\tilde{u}_{\infty}}

\newcommand{\sff}{\mathrm{I\!I}}
\newcommand{\Sia}{\Sigma_{\alpha}}
\newcommand{\Sib}{\Sigma_{\beta}}
\newcommand{\bha}{\mathbb{H}_{\alpha}}
\newcommand{\bhb}{\mathbb{H}_{\beta}}
\newcommand{\bbh}{\mathbb{H}}
\newcommand{\Hb}{\mathbb{H}}
\newcommand{\bH}{\overline{\mathbb{H}}}

\newcommand{\Gb}{\mathbb{G}}
\newcommand{\bg}{\mathbb{G}}
\newcommand{\bga}{\mathbb{G}_{\alpha}}
\newcommand{\bgah}{\hat{\bg}_{\al}}
\newcommand{\bgab}{\overline{\bg}_{\al}}

\newcommand{\vthai}{\vartheta_{\alpha}^{-1}}
\newcommand{\teta}{\tilde{\eta}_{\alpha}}

\DeclareMathOperator{\spt}{spt}

\newcommand{\apeqref}[2]{%
 \textup{(\hyperlink{#2}{\ref*{#1}})}%
}

\newcommand{\restatetag}[2]{%
  \tag{\hypertarget{#2}{}\hyperref[#1]{\ref*{#1}}}%
}

\begin{document}

\title[Second order estimates on transition layers: Part I]{Second order estimates on transition layers for the Allen--Cahn equation with homogeneous Neumann boundary conditions: Part I -- Boundary-orthogonal transition layers}

\author[Akashdeep Dey]{Akashdeep Dey}
\address{Chennai Mathematical Institute, H1, SIPCOT IT Park, Chennai, Tamil Nadu 603103, India}
\email{akashdeep@cmi.ac.in}

\author[Wenkui Du]{Wenkui Du}
\address{School of Mathematics, Hunan University, Changsha, Hunan Province,  410082, China}
\email{duwenkui@hnu.edu.cn}

\author[Davide Parise]{Davide Parise}
\address{Department of Mathematics, Zeeman Building, University of Warwick, Gibbet Hill Road, Coventry CV4 7AL, UK}
\email{Davide.Parise@warwick.ac.uk}

\author[Lorenzo Sarnataro]{Lorenzo Sarnataro}
\address{Department of Mathematics, University of Toronto, 40 St. George Street, Toronto, ON M5S 2E4 Canada}
\email{lorenzo.sarnataro@utoronto.ca}
\begin{abstract}
We study stable solutions of the Allen--Cahn equation with homogeneous Neumann boundary condition on Riemannian manifolds with boundary, in the regime where the transition layers meet the boundary orthogonally. Under an a priori bound for the enhanced second fundamental form in the transition region, we establish uniform second-order H\"older estimates and quantitative mean curvature decay for the nodal hypersurfaces up to the boundary in ambient dimensions at most 10. The proof adapts the technique pioneered by Wang--Wei \cite{wang-wei,wang-weiadv}, adding a further correction determined by the geometry of the ambient boundary. Its leading contribution to mean curvature cancels on the nodal set but survives on non-zero level sets in the transition region, whose mean curvature need not decay uniformly. This phenomenon has no interior analogue and is exhibited by stable solutions constructed by Kowalczyk \cite{kowalczyk05}.
\end{abstract}

\maketitle
\vspace{-5mm}
\tableofcontents
 
\section{Introduction}
In this paper we establish second-order estimates for stable solutions of the Allen--Cahn equation with homogeneous Neumann boundary conditions on compact Riemannian manifolds with boundary, in the situation where the transition layers of the solution meet the boundary orthogonally. 
Together with the estimates in the forthcoming paper \cite{FBWW2}, which deal with the situation where the transition layers of the solution approach the boundary parallelly from one side, they fully extend the regularity theory developed by Wang--Wei \cite{wang-wei,wang-weiadv} to the case of Neumann solutions on manifolds with boundary. In \cite{FBWW3}, we shall use the estimates in Theorem \ref{t.main.mfd} and those in \cite{FBWW2} to obtain a min-max construction of free boundary geodesics and geodesic billiards in compact Riemannian surfaces with boundary, and free boundary minimal hypersurfaces in compact Riemannian manifolds with boundary of dimensions 3 and 4, entirely based on solutions of the Allen--Cahn equation. Moreover, we shall obtain a characterisation of curves realising the (phase-transition) $p$--widths of the length functional on a Riemannian surface with boundary similar to the one obtained by Chodosh--Mantoulidis in the closed case \cite{chodosh-mantoulidis-2d}.

The estimates in this paper differ from the interior ones of \cite{wang-weiadv} (and the corresponding interior estimates in Riemannian 3-manifolds due to Chodosh--Mantoulidis \cite{chodosh-mantoulidis}) in two key aspects caused by the geometry of the boundary: a logarithmic loss in the  mean curvature decay and the quantitative separation estimates, and a distinction between the zero level set and the other level sets in the transition layer.
While it is unclear whether the first difference is a feature of our proof and whether it could potentially be overcome, the second one reflects a genuine boundary phenomenon, as the stable solutions constructed by Kowalczyk in \cite{kowalczyk05} show (see Section \ref{sec:comparison-estimates} for a careful discussion of these differences). This shows the subtleness of the boundary problem, and highlights a phenomenon that has no interior counterpart.

\subsection{Setup and main result} Let $(M^{n+1},\partial M,\rg)$ be a smooth compact Riemannian manifold with boundary. We consider solutions $u:M\to(-1,1)$ of Allen--Cahn equation with vanishing Neumann boundary condition
\begin{equation}\label{e.AC}
\begin{cases}
\varepsilon^2\Delta_\rg u=W'(u)&\text{in }M,\\
\partial_\nu u=0&\text{on }\partial M,
\end{cases}
\end{equation}
where $\nu$ is the inward-pointing unit normal to $\partial M$. Throughout the paper, $W$ is a smooth even double-well potential, positive in $(-1,1)$, with
\[
W(\pm1)=W'(\pm1)=0,\qquad W''(\pm1)=1,
\]
and with $0$ as its unique critical point in $(-1,1)$, satisfying $W''(0)<0$. Model examples include $W(t)=\frac{(1-t^2)^2}{8}$ and $W(t)=\frac{1+\cos (\pi t)}{\pi^2}$.

Equation \eqref{e.AC} is the Euler--Lagrange equation for the Allen--Cahn energy
\[
E_\varepsilon(u)=\int_M\left(\frac{\varepsilon}{2}|\nabla u|^2
+\frac{W(u)}{\varepsilon}\right)\,d\mathrm{vol}_\rg.
\]
If $S\subset\partial M$ is relatively open, stability in $M\cup S$ means that
\[
\int_M\left(\varepsilon^2|\nabla\psi|^2+W''(u)\psi^2\right)
\,d\mathrm{vol}_\rg\geq0
\quad\text{for every }\psi\in C_c^1(M\cup S).
\]
In particular, these test functions need not vanish on $S$.

At a point where $\nabla u\neq0$, the enhanced second fundamental form is defined to be
\[
\mathcal A(u)=\nabla\!\left(\frac{\nabla u}{|\nabla u|}\right).
\]
We use the conventions and notation of Section \ref{s.basic.definitions}.

\begin{theorem}[Boundary second-order estimates for stable transition layers]\label{t.main.mfd}
Let $2\leq n+1\leq10$ and let $(M^{n+1},\partial M,\rg)$ be a smooth Riemannian manifold with boundary. Let $S\subset\partial M$ be relatively open, and let $M'\Subset M\cup S$. Fix $\theta\in(0,1)$ and $0<b<1$. For every $\Lambda>0$, there are constants $\varepsilon_0>0$ and $C<\infty$ with the following property.

Suppose that $0<\varepsilon\leq\varepsilon_0$ and that $u_\varepsilon\in C^\infty(M\cup S,(-1,1))$ is stable in $M\cup S$ and satisfies
\[
\varepsilon^2\Delta_\rg u_\varepsilon=W'(u_\varepsilon)\quad\text{in }M,
\qquad \partial_\nu u_\varepsilon=0\quad\text{on }S,
\]
together with
\begin{equation}\label{eq:C11-estimate-main}
|\mathcal A(u_\varepsilon)|\leq\Lambda
\quad\text{on }\{|u_\varepsilon|\leq1-b\}\cap\{|\nabla u_\varepsilon|\neq0\}.
\end{equation}
Then every component $\Gamma$ of $\{u_\varepsilon=0\}$ meeting $S$ satisfies 
\begin{equation}\label{e.mc.rates}
\|H_\Gamma\|_{L^\infty(\Gamma\cap M')}
\leq C\varepsilon|\log\varepsilon|^2,
\qquad
[H_\Gamma]_{C^\theta(\Gamma\cap M')}
\leq C\varepsilon^{1-\theta}|\log\varepsilon|^2,
\end{equation}
and
\begin{equation}\label{eq:C2,theta-main}
\|\mathrm{I\!I}_\Gamma\|_{C^\theta(\Gamma\cap M')}\leq C,
\end{equation}
where $\mathrm{I\!I}_\Gamma$ denotes the second fundamental form of the hypersurface $\Gamma$, and $H_\Gamma$ its mean curvature.
The constants depend only on $M'$, the fixed geometry near $M'$, $W$, $b$, $\theta$, and $\Lambda$, and are independent of the number of transition layers.
\end{theorem}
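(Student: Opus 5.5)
We follow the Wang--Wei scheme \cite{wang-wei,wang-weiadv}, adapted to the Neumann boundary, with all estimates made after blowing up at scale $\varepsilon$. Rescale $M$ by $\varepsilon^{-1}$ near a point of $\Gamma\cap S$. Then $u$ solves $\Delta u=W'(u)$ with $\partial_\nu u=0$ on a boundary that is almost flat at scale $\varepsilon^{-1}$. Hypothesis \eqref{eq:C11-estimate-main} becomes $|\mathcal A|\le\Lambda\varepsilon$ on the transition region. So every level set $\{u=t\}$ with $|t|\le 1-b$ is, in a ball of radius $R\sim\varepsilon^{-1}$, a union of graphs $\Gamma_\alpha=\{x_{n+1}=f_\alpha(x')\}$ over a half-hyperplane. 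These graphs have small Lipschitz norm, and the Neumann condition forces them to meet $\partial M$ orthogonally up to $O(\varepsilon)$ errors. Curvature bounds, stability and dimension $\le 10$ (via the stable-cone classification underlying the curvature estimates, reflected across the boundary) give that the sheets are ordered. They also give a local mass bound independent of the number of layers.

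Next we build the approximate solution. In Fermi coordinates $(y,z)$ around each $\Gamma_\alpha$, set $g_\alpha(y,z)=\bar g(z-h_\alpha(y))$, where $\bar g$ is the one-dimensional heteroclinic cut off at distance $\sim|\log\varepsilon|$, and $h_\alpha$ is chosen so that the error $\phi=u-\sum_\alpha g_\alpha$ (with alternating signs) is orthogonal to $\bar g'$ on each fibre. The new ingredient is an additional boundary correction $\varepsilon\,\eta_\alpha$, determined by the second fundamental form of $\partial M$ and the angle. It is needed because the Fermi coordinates of $\Gamma_\alpha$ are not compatible with $\partial_\nu=0$ at order $\varepsilon$, so we add a term making $\partial_\nu(\sum g_\alpha+\varepsilon\eta)=O(\varepsilon^2)$. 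One must check that $\eta_\alpha$ is odd-orthogonal to $\bar g'$. This gives the cancellation on the nodal set, while $\eta_\alpha$ contributes an $O(\varepsilon)$ term to the mean curvature of $\{u=t\}$, $t\ne0$; this is why only $\Gamma$ is claimed to decay.

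Projecting the equation onto $\bar g'$ yields the Toda-type system
\[
H_\alpha = \tfrac{2A^2}{\sigma_0}\big(e^{-\sqrt2 d_{\alpha}^+}-e^{-\sqrt2 d_\alpha^-}\big)+O(\varepsilon^2)+O(\|\phi\|^2)+\dots
\]
on the half-space, with a Neumann (orthogonality) boundary condition for $f_\alpha$ up to $O(\varepsilon)$ errors. Here $d_\alpha^\pm$ are the distances to the neighbouring sheets. In parallel, $\phi$ is controlled by a linear estimate for $-\Delta+W''(\sum g)$ with Neumann data, which is coercive on the orthogonal complement: $\|\phi\|_{C^{2,\theta}}\lesssim \varepsilon^2+\max_\alpha e^{-\sqrt2 d_\alpha}$ plus the boundary defect.

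The main obstacle is the separation estimate, i.e.\ showing that $e^{-\sqrt2 D}\lesssim \varepsilon^2|\log\varepsilon|^{2}$, where $D$ is the minimal distance between sheets. Following Wang--Wei, we would first try to use stability to get that the sheets cannot be too close: the test functions are $\sum$ of cutoffs times $g_\alpha'$, which are allowed to be nonzero on $S$. This turns stability into stability of the Toda system. A comparison/sup-argument on the quantity $e^{-\sqrt2 d_\alpha}$ at its maximum point then handles the boundary by reflecting via the Neumann condition. At boundary maximum points, the Hopf lemma and the $O(\varepsilon)$ boundary defect cost the logarithmic factor $|\log\varepsilon|^2$, coming from the cutoff at distance $|\log\varepsilon|$. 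The Toda system then gives $|H_\Gamma|\lesssim\varepsilon|\log\varepsilon|^2$ after rescaling back. The Hölder seminorm in \eqref{e.mc.rates} follows by interpolating with the $C^{2,\theta}$ bound on $\phi$. Finally, \eqref{eq:C2,theta-main} follows from Schauder estimates for the mean curvature equation of the graphs $f_\alpha$ with the oblique (Neumann-type) boundary condition: $H\in C^\theta$ with a small norm, together with the orthogonality boundary condition, gives $f\in C^{2,\theta}$ uniformly.
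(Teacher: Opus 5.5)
Your approximate solution with an odd boundary correction is essentially the paper's $\mathbb G_\alpha$. However, there are genuine gaps at the two steps that actually produce the estimate.

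\textbf{The projected equation.} Projecting onto $\overline{\mathbb H}'_\alpha$ gives $H_\alpha+\Delta_{\alpha,0}h_\alpha$ on the left, not $H_\alpha$ alone (Proposition~\ref{toda system rough}). Since $h_\alpha$ is only controlled through $\phi(\cdot,0)$, the coercive estimate for $\phi$ gives merely $\|h_\alpha\|_{C^{2,\theta}}\lesssim\varepsilon^2+A$. So $\Delta_{\alpha,0}h_\alpha$ is of the same size as the interaction terms. Your relation for $H_\alpha$ therefore hides an $O(A)$ error, which makes it useless for separation.

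The paper closes this with a second inner--outer argument for the horizontal derivatives $\partial_{y_i}\phi$ (Section~\ref{improved estimates section}). At the boundary this needs Dirichlet data for $\partial_{y_1}\phi$, Neumann data for the tangential $\partial_{y_i}\phi$ with $i\ge2$, and divergence-form sources. It yields $\|\nabla_{\alpha,0}h_\alpha\|_{C^{1,\theta}}\lesssim\varepsilon^2+A^{3/2}+\varepsilon^{1/6}A$, i.e.\ a Toda system with error $O(\varepsilon^2)+o(A)$.

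\textbf{The separation step.} As you describe it, this step would fail. For the gap $v=f_{\alpha+1}-f_\alpha$ one only has a one-sided inequality $\operatorname{div}(B\nabla v)\le Ce^{-v}+\dots$. This is automatically satisfied at a minimum of $v$, whether interior or at a boundary point with zero conormal derivative. So a maximum-point or Hopf argument gives no bound on $e^{-v}$, and stability must enter in integrated form.

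The paper's mechanism (Section~\ref{s.separation.emden}) runs as follows:
\begin{enumerate}
\item Transfer the Toda stability inequality (Proposition~\ref{p.stability.toda}) to $\frac{4A_1^2}{\sigma_0}\int V\eta^2\le\tau\int|\nabla\eta|^2+\dots$, where $V\approx\kappa^{-1}e^{-v}$ at scale $\kappa^{-1/2}$.
\item Test the subsolution with $V^{2q-1}\psi^2$ and the stability inequality with $\eta=V^q\psi$ (Farina's method). This gives $L^p$ bounds for $p=2q+1<5$.
\item Obtain local boundedness by Moser iteration plus a blow-up, which requires $p>n/2$.
\item Deduce decay $A(r-KR_*)\le A(r)/2$ above a threshold, and iterate over roughly $|\log\varepsilon|$ scales.
\end{enumerate}
Step 3 is where $n+1\le10$ enters. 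It does not come from a stable-cone classification: the curvature bound is a hypothesis, and no mass bound is used or available.

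The boundary is harmless in these integrations by parts only because the height difference in boundary Fermi coordinates has \emph{exactly} zero conormal derivative. A Neumann defect of order $\varepsilon\rho$, as for the intrinsic distance $\rho$, is not small after rescaling by $\kappa^{-1/2}$ (Remark~\ref{r.s11.intrinsic.boundary}). Your ``up to $O(\varepsilon)$ errors'' boundary condition is therefore exactly the problematic point.

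The $|\log\varepsilon|^2$ loss does not come from a Hopf lemma or from the cutoff. It comes from comparing signed distance with height difference under first-order metric errors on base balls of radius $K\kappa^{-1/2}$ (Claim~\ref{c.s10.distance.comparison}). That comparison forces the threshold $\kappa\ge M\varepsilon^2|\log\varepsilon|^2$.

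\textbf{Two smaller points.} Ordinary Fermi coordinates of $\Gamma_\alpha$ do not send boundary fibres into $\partial M$; this is why the paper uses twisted Fermi coordinates generated by a transverse field tangent to $\partial M$. Also, with $W''(\pm1)=1$ the interactions decay like $e^{-d}$, not $e^{-\sqrt2 d}$.
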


\begin{remark}\label{e.dim.2}
Notice that, if $n+1=2$, then the level set $\{u_\varepsilon=0\}$ is a union of embedded curves, and the norm of its second fundamental form is simply the absolute value of its geodesic curvature $k_{\{u_\varepsilon=0\}}$, which is clearly the same as the mean curvature of $\{u_\varepsilon=0\}$. Therefore, in ambient dimension 2, \eqref{e.mc.rates} gives
\begin{equation}
    \|k_\Gamma\|_{L^\infty(\Gamma\cap M')}
\leq C\varepsilon|\log\varepsilon|^2,
\qquad
[k_\Gamma]_{C^\theta(\Gamma\cap M')}
\leq C\varepsilon^{1-\theta}|\log\varepsilon|^2,
\end{equation}
for every component $\Gamma$ of $\{u_\varepsilon=0\}$ meeting $S$. On the other hand, unlike in the interior case of \cite[Theorem 3.6]{wang-wei} and \cite[Proposition C.4]{chodosh-mantoulidis-2d}, it is not possible to obtain decay estimates (in $\varepsilon$) for the enhanced second fundamental form $\mathcal{A}(u_\eps)$. Indeed, at a point $p\in \{u_\eps=0\}\cap S$, by \eqref{|Anormsquare|2},
\[
|\mathcal A(u_\eps)|^2=|k_{\{u_\eps = 0\}}|^2+|\partial_\nu \log|\nabla u_\eps||^2,
\]
and an easy computation (see Remark \ref{r.s11.enhanced.boundary}) gives $|\partial_\nu\log|\nabla u_\eps||^2
 =|k_S|^2$, where $k_S$ denotes the geodesic curvature of the boundary at $p\in \{u_\eps=0\}\cap S$. In particular, $k_S$ is independent of $\varepsilon$. See also Remark \ref{r.s11.boundary.dependence} for a more detailed discussion of this issue.
\end{remark}

\begin{remark}\label{r.intro.totally.geodesic}
If $S$ is totally geodesic on a neighbourhood of $M'\cap S$, the estimates improve and extend to all transition levels. More precisely, if $0<b<b'<1$, then uniformly for $t\in[-1+b',1-b']$, every component $\Gamma_t$ of $\{u_\varepsilon=t\}$ meeting $S$ satisfies, on $M'$,
\[
\|\mathrm{I\!I}_{\Gamma_t}\|_{C^\theta}\leq C,
\qquad
\|H_{\Gamma_t}\|_{L^\infty}\leq C\varepsilon|\log\varepsilon|,
\qquad
[H_{\Gamma_t}]_{C^\theta}\leq C\varepsilon^{1-\theta}|\log\varepsilon|,
\]
where the constant $C$ (in addition to the geometry of $M'$, $W$, $b$ , $\theta$, $\Lambda$) also depends on $b'$.
If, in addition, $n+1=2$, then  
\[
|\mathcal A(u_\varepsilon)|\leq C\varepsilon|\log\varepsilon|
\quad\text{on }M'\cap\{|u_\varepsilon|\leq1-b'\}.
\]
\end{remark}

At regular boundary points, the Neumann condition implies that the nodal set meet $S$ orthogonally. The nodal estimates therefore give local $C^{2,\theta}$ control of the second fundamental form of the nodal set up to the boundary. In ambient dimension two, the norm of the second fundamental form of a curve equals the absolute value of its geodesic curvature, so \eqref{e.mc.rates} gives decay (and not simply boundedness) of curvature.

Like the main result in \cite{wang-weiadv}, Theorem \ref{t.main.mfd} is a conditional result, in the sense that it assumes the initial $C^{2}$ bound \eqref{eq:C11-estimate-main} and improves it to the $C^{2,\theta}$ estimates \eqref{e.mc.rates} and \eqref{eq:C2,theta-main}. In order to upgrade this conditional result to a full unconditional curvature estimate on a compact Riemannian manifold with boundary as in \cite[Theorem 3.4]{chodosh-mantoulidis}, two further ingredients are needed: the boundary-parallel estimates of \cite{FBWW2}, and the classification result for entire stable solutions in $\mathbb{R}^{n+1}$ with uniform Euclidean energy growth, which is now available in dimensions $n+1=2,3,4$ by \cite  {GhoussoubGui1998, AmbrosioCabre2000, florit2025stable}. Notice that in dimensions $n+1=2,3$, by \cite{GhoussoubGui1998,
Liu2026StableDeGiorgi, ChanFernandezRealFigalliFloritSimonSerra2026}, the same classification result holds without the additional assumption of uniform Euclidean energy growth.

On the other hand, similarly to \cite{wang-weiadv}, no bound on the number of layers is a priori imposed and the dimensional restriction comes from the stability estimates for the scalar equation (i.e. the two component Toda system) to which stability is reduced in Section \ref{s.separation.emden}. Nonetheless, as far as geometric applications relying on an  unconditional curvature estimate are concerned, one cannot hope to go beyond ambient dimension 7, since the rigidity result for entire stable solutions in $\mathbb{R}^{n+1}$ is bound to fail there, as shown in \cite{PW13}. Notice here the analogy with rigidity results for stable minimal surfaces in $\mathbb{R}^{n+1}$ with Euclidean area growth \cite{SSY,Bellettini}.

\subsection{Comparison with the interior estimates.}\label{sec:comparison-estimates}
Our analysis builds on the reduction of stable Allen--Cahn transition layers to an approximately stable Toda system developed by Wang and Wei in \cite{wang-weiadv}. However the estimates proven in Theorem \ref{t.main.mfd} differ from those of \cite{wang-weiadv} in two important respects:
\begin{enumerate}[(i)]
\item our estimates yield an $O(\varepsilon|\log\varepsilon|^2)$ bound for the zero level mean curvature, instead of the Euclidean $O(\varepsilon)$ rate, or the interior Riemannian $o(\varepsilon|\log\varepsilon|)$ rate (see \cite[Remark 10.9]{wang-weiadv});
\item our estimates only show decay of the mean curvature for the zero level set, and not for all level sets $\{u_\varepsilon=t\}$ for $t\in[-1+b, 1-b]$.
\end{enumerate}
Let us briefly discuss where these differences come from, and where the theorem could conceivably be strengthened and where it is essentially sharp.

Let us begin with (i). As noticed in \cite[Remark 10.9]{wang-weiadv} (see also \cite[Corollary 3.3]{chodosh-mantoulidis}), it is not surprising that introducing metric errors into the Wang--Wei reduction machinery would lead to a loss in the estimates, and, as the mentioned references show, this is already present in the Riemannian interior theory. It is not known at the moment if the logarithmic loss is a necessary feature of the general Riemannian (interior or boundary) theory, or simply an artefact of the proof strategy. On the other hand, it is worth noticing that the decay rates in \eqref{e.mc.rates} are strictly weaker than those in Riemannian interior theory of \cite{wang-weiadv} or \cite{chodosh-mantoulidis}. Even under the (significantly stronger) assumption that the boundary is totally geodesic, we are not able to recapture the same rates of decay.

This discrepancy comes from the estimates on quantitative separation of neighbouring sheets, rather than from the final elliptic estimates. The loss first occurs in the distance-gap comparison of Claim \ref{c.s10.distance.comparison}, which is used to prove Proposition \ref{p.s10.decay}. Curvature of the boundary introduces first-order metric terms when the signed Riemannian distance between sheets is compared with the difference of their coordinate heights. 
The decay estimate of Proposition \ref{p.s10.decay} can therefore only be established above the threshold $\varepsilon^2|\log\varepsilon|^2$. Its iteration in Corollary \ref{c.s10.global.decay}, followed by the improved Toda equation in Proposition \ref{p.s11.zero.level}, gives an $O(\varepsilon|\log\varepsilon|^2)$ bound, rather than the Euclidean interior rate $O(\varepsilon)$ of \cite[Theorem 1.1]{wang-weiadv}. Nonnegative ambient Ricci curvature does not remove the boundary condition for the intrinsic separation function in Remark \ref{r.s11.intrinsic.boundary}. 
On the other hand, if the boundary is totally geodesic, the first-order metric contribution in the distance comparison disappears, as explained in Remark \ref{r.s10.threshold} and the separation estimate then improves by one logarithm. Neither this bound nor the general-boundary squared-logarithm bound is claimed to be optimal.

Let us now discuss (ii). The failure to extend the zero-level decay estimate to all transition levels has a different origin. Even though the zero sheets meet the boundary orthogonally, their nearby parallel leaves need not do so. Consequently, a heteroclinic solution depending on signed distance does not by itself satisfy the Neumann condition, and the approximation constructed in Section \ref{s.approx.toda} includes the correction $\Gb_\alpha$. The evenness of the potential makes this correction $\Gb_\alpha$ odd in its normal variable, so its leading contribution vanishes at the zero level but need not vanish at $s(t)=\Hb^{-1}(t)$ when $t\neq0$. Proposition \ref{p.s11.level.H} shows that the horizontal Laplacian of the correction $\Gb_\alpha$ enters the mean curvature of the actual level graph. This contribution is only controlled at order $\varepsilon$ in the rescaled metric, corresponding to order one in the original metric. When the boundary is totally geodesic, $\Gb_\alpha$ vanishes identically, and uniform estimates hold for all levels in a fixed compact subinterval of $(-1,1)$.

Unlike (i), the failure of mean-curvature decay on nonzero levels is demonstrably a feature of the problem in question, as the examples in \cite{kowalczyk05} show. Indeed, Kowalczyk \cite{kowalczyk05} constructs solutions of \eqref{e.AC} in bounded domains of $\mathbb R^2$ with smooth boundary concentrating near nondegenerate straight line segments meeting the  boundary orthogonally and computes their Morse indices explicitly. In the regime where Kowalczyk's solutions are stable, they illustrate why our estimates must distinguish the nodal set from the other transition levels. The level-set geometry obtained from his boundary-layer expansion \cite[Section 2.4]{kowalczyk05} shows that, at an endpoint with nonzero
boundary curvature, a fixed nonzero level develops an order-one curvature contribution over distances of order $\varepsilon$.  Thus its mean curvature need not tend to zero, and its $C^\theta$ curvature seminorm can grow like $\varepsilon^{-\theta}$, despite a bounded $L^\infty$ curvature norm.  On the nodal set, on the other hand, the leading boundary correction
vanishes by oddness, and the expansion instead gives an $O(\varepsilon)$ bound for the mean curvature. This contrasts with the interior Euclidean estimates of
Wang--Wei \cite[Theorem 1.1]{wang-weiadv} and the interior Riemannian estimates of Chodosh--Mantouldis \cite[Corollary 3.3]{chodosh-mantoulidis}, which give uniform bounds for $\|H\|_{L^\infty}$ and $\|\mathrm{I\!I}\|_{C^\theta}$ on all transition levels.

\subsection{Outline of the proof.}
We follow the overall strategy of Wang--Wei \cite{wang-weiadv} with a few important differences.

By using the assumptions in Theorem \ref{t.main.mfd}, in Section \ref{s.preliminary.setup} we reduce the theorem to the local graphical statement in Theorem~\ref{t.main.ball}.
After dilation by $\varepsilon^{-1}$, the curvature of each sheet of the nodal set becomes $O(\varepsilon)$, and neighbouring sheets become uniformly separated as $\varepsilon\to0$. 
Notice that ordinary Fermi coordinates with respect to a sheet of the nodal set do not in general parametrise a collar neighbourhood of the ambient boundary. To obtain coordinates adapted to \emph{both} the sheets and the boundary, we use a \emph{twisted Fermi coordinates} construction inspired by Pacard--Ritor\'e \cite[Section~10]{Pacard-Ritore}; a closely related construction appears in the work of Kapouleas--Li on free boundary minimal surfaces \cite[Definition~2.10 and Lemma~2.12]{KapouleasLi21}. The key ingredient in this step is the construction of a vector field $Z$ which is transverse to the sheets (but not necessarily normal to them), tangential to the ambient boundary and satisfies the estimates in Lemma \ref{l.Z.Z-tld}, which are crucial to control the mixed derivative and commutator terms introduced without destroying the Toda structure of the leading interaction. 

In Section \ref{s.approx.toda}, we approximate $u$ by alternating translated heteroclinics together with a boundary correction, which is necessary because the one-dimensional heteroclinic solution has a leading order Neumann derivative of order $\varepsilon$. For each sheet $\Sigma_\alpha$ of the nodal set, we cancel this $O(\varepsilon)$ error by solving the linear problem \eqref{G-equationwithout cutoff} on $\Sigma_\alpha\times\mathbb R$ and then using a cutoff to localise around the sheet. Evenness of the potential $W$ guarantees that this correction $\mathbb{G}$ is odd in the twisted Fermi signed distance variable. Since the translation mode $\mathbb H'$ is even, the correction $\mathbb{G}$ is orthogonal to it, and vanishes on the central slice $\{z=0\}$. The construction of our correction $\mathbb{G}$ is analogous to the improvement of the approximate solution in del Pino--Kowalczyk--Wei \cite[Section~5]{del2008toda}: in both cases, the  $O(\varepsilon)$ leading order in the Neumann derivative is cancelled by adding an odd correction solving a linearized boundary value problem.

For a family of shifts $h=(h_\alpha)$, let $\mathbb H(\cdot;h)$ denote the alternating superposition of shifted, cut-off heteroclinic profiles, and let $\mathbb G(\cdot;h)$ denote the corresponding sum of shifted boundary corrections. Write
\[
\mathbb H_*=\mathbb H(\cdot;h),\quad
\mathbb G_*=\mathbb G(\cdot;h),\quad
g_*=\mathbb H_*+\mathbb G_*,
\qquad
\phi=u-g_*,
\]
so the stars denote the full alternating superposition, evaluated at the selected shifts. 
As in \cite{wang-weiadv}, we choose optimal shifts $h_\alpha$ so that, in twisted Fermi coordinates with respect to each sheet,
\[
\int_{\mathbb R}\phi(y,z)\mathbb H'_\alpha(y,z)\,dz=0,
\quad
\mathbb H'_\alpha(y,z)
=\overline{\mathbb H}'\!\left((-1)^\alpha(z-h_\alpha(y))\right).
\]
This orthogonality condition separates the equations for the layer positions from the coercive equation for the remainder. 

Similarly to \cite{wang-weiadv}, in Section~\ref{Equation of error and mean curvature approximation by Toda system of layers}, projection of the Allen--Cahn equation onto the translational mode $\mathbb H'_\alpha$ yields a Toda-type relation between the mean curvature, the Laplacian of $h_\alpha$, and the exponential interactions with the two adjacent sheets. 
\[
H_\alpha(y,0)+\Delta_{\alpha,0}h_\alpha(y) = \frac{2A_1^2}{\sigma_0}\left(e^{-|t_{\alpha-1}(y,0)|}-e^{-|t_{\alpha+1}(y,0)|}\right)+E_\alpha^0(y),
\]
where, at this stage, the error $E_\alpha^0$ has not yet been shown to be . 

Section~\ref{Inner-outer gluing regularity estimates of error} closes these estimates by an inner--outer gluing estimates argument, as in \cite[Section~6]{wang-weiadv}. Near a layer, orthogonality removes the translation kernel and gives coercive estimates for the model linearised operator. Away from adjacent layers, the solution is close to $\pm 1$ and $W''(u)$ has a uniform positive lower bound. Combining the two estimates, together with the Neumann derivative of $\phi$, gives
\[
\|\phi\|_{C^{2,\theta}}
+\max_\alpha\|h_\alpha\|_{C^{2,\theta}}
+\max_\alpha\|H_\alpha+\Delta_{\alpha,0}h_\alpha\|_{C^\theta}
\lesssim\varepsilon^2+A,
\]
where $A$ measures the largest exponential interaction between neighbouring sheets of the nodal set.

This estimate is still insufficient for the final separation argument of Section~\ref{s.separation.emden}, as $\Delta_{\alpha,0}h_\alpha$ could still be of the same order as the leading interaction. Section~\ref{improved estimates section} therefore repeats the inner--outer analysis for the horizontal derivatives of $\phi$, following \cite[Section~7]{wang-weiadv}. Unlike \cite{wang-weiadv}, we need to deal with the boundary conditions, which must be treated separately. Derivatives tangent to the boundary have controlled Neumann data, while the inward horizontal derivative has controlled Dirichlet data. As a result the mean curvature is shown to be equal to the difference of the two adjacent interactions, up to an error $O(\varepsilon^2)+o(A)$, which is small relative to the interaction scale used in the decay argument of Proposition~\ref{p.s10.decay}.

The Toda equations alone do not force the sheets to separate. In Section~\ref{s.stability.toda}, we use Allen--Cahn stability with test functions
\[
\varphi=\sum_\alpha\eta_\alpha
\bigl(\mathbb H'_\alpha+\mathbb G'_\alpha\bigr),
\]
where $\eta_\alpha$ is a function on the sheet $\Sigma_\alpha$. Unlike \cite[Section 8]{wang-weiadv}, the boundary correction also enters the linearised equation and we must combine the horizontal and transverse contributions to the diagonal energy by integration by parts before estimating them, and exploit the cancellations supplied by the equation defining the correction $\mathbb{G}$.  A Robin-type boundary term of order $\varepsilon$ still remains, which is controlled by trace estimates with a small loss in the constant in front of the Dirichlet energy and controlled $L^2$ errors.

Section~\ref{s.separation.emden} combines the Toda equation with the stability inequality to obtain a quantitative lower bound for the distances between neighbouring sheets. The height difference (with respect to the base $\{z=0\}$) satisfies a homogeneous Neumann condition, which allows to perform integration by parts without producing additional boundary terms.
On the other hand, using height difference rather than sheet separation causes an additional logarithmic loss compared to Wang--Wei, as the first-order metric error in boundary Fermi coordinates limits the iteration to $A\lesssim\varepsilon^2|\log\varepsilon|^2$. Wang and Wei manage to improve this bound to $A\lesssim\varepsilon^2$ in Euclidean space by deriving an equation directly for the separation function on a sheet; however, extending this improvement to our setting would require further work, as the intrinsic separation function does not generally satisfy a homogeneous Neumann boundary condition. Indeed, the normal derivative of this intrinsic separation function contains a term involving the second fundamental form of the ambient boundary whose contribution need not be small after rescaling. Therefore, implementing an intrinsic improvement argument in our setting would require further estimates to control this boundary contribution; these are not established in the present paper.

The proof of the pointwise estimates also differs from \cite[Section 9]{wang-weiadv}.  Following Farina \cite{farina2007stable}, we first combine the scalar inequality with stability to obtain $L^p$ estimates for the normalized exponential interaction, with $p>n/2$ and then obtain local boundedness by a blowup argument.  This argument replaces the Green-function and Morrey estimates used in \cite[Lemma 9.7-Lemma 9.8]{wang-weiadv} and applies both to interior balls and to boundary half-balls.
As stability gives $L^p$ control of the interaction for exponents $p<5$ and local boundedness requires $p>n/2$, this forces $n\leq9$, explaining the ambient dimension bound $n+1\leq10$.

Finally, in Section \ref{s.final.estimates}, we insert this bound into the improved Toda equation and use the oddness of the boundary correction to obtain the nodal mean-curvature estimates. 
On the other hand, as explained in Section \ref{sec:comparison-estimates}, for a nonzero transition level, we represent the portion of $\{u=t\}$ near $\Sigma_\alpha$ as a graph $z=\ell_\alpha(y,t)$ by solving $u(y,\ell_\alpha(y,t))=t$.
The boundary correction changes the height of this graph by a quantity proportional to $\mathbb G_\alpha(y,\mathbb H^{-1}(t))$. Differentiating twice along the sheet therefore introduces second horizontal derivatives of this correction. These derivatives vanish on the central profile slice because $\mathbb G_\alpha(\cdot,0)\equiv0$, but generally contribute to the mean curvature of nonzero levels, see Lemma~\ref{l.s11.level.graph} and Proposition~\ref{p.s11.level.H}, giving rise to the difference in behaviour between the nodal set and the other level sets in the transition region, which is the most relevant difference between the boundary problem and the interior one.
When the boundary is totally geodesic, the correction vanishes altogether, and the improved separation estimate gives the expected bounds on every transition level. 

\subsection*{Acknowledgements} The first-named author was partially supported by the NSERC Discovery Grants of Robert Haslhofer and Yevgeny Liokumovich during his postdoctoral fellowship at the University of Toronto, and is currently partially supported by an Infosys Foundation grant. The second-named author was partially supported by the NSERC Discovery Grant of Yevgeny Liokumovich during his postdoctoral fellowship at the University of Toronto, an AMS-Simons Travel Grant while at the Massachusetts Institute of Technology, and is currently supported by Hunan University. The third-named author was partially supported by the Cecilia Tanner Research Impulse Fund of Imperial College London. The fourth-named author is partially supported by the NSERC Discovery Grants of Robert Haslhofer and Yevgeny Liokumovich.  

\noindent We also thank Costante Bellettini, Otis Chodosh, Robert Haslhofer, Yevgeny Liokumovich, Christos Mantoulidis, Luca Spolaor, Kelei Wang, and Juncheng Wei for helpful discussions. 

\subsection*{AI disclosure} 
The writing and mathematical content of this paper is due to the authors. AI tools were used only to assist with literature searches and language editing at the very end of this project.

\section{Basic definitions and notation}\label{s.basic.definitions}
In this section, we present some basic definitions and introduce the notation which we shall use throughout the paper.

Let $W\in C^\infty(\mathbb R)$ is an even double-well potential, namely $W:\mathbb{R}\to [0, +\infty)$ has the following properties
\begin{enumerate}
	\item $W$ vanishes precisely at $\pm 1$, and $W'(\pm 1)=0$, $W''(\pm 1) = 1$;
	\item $0$ is the unique critical point of $W$ in $(-1,1)$, $W'(0) = 0$, and $W''(0) < 0$;
	\item $W(t) = W(-t)$.
\end{enumerate}
The normalisation $W''(\pm1)=1$ is responsible for the decay rate $e^{-|t|}$ in
Appendix \ref{s.1d.sol}.  Evenness is used in Section \ref{s.approx.toda} to make the boundary
correction odd in the normal variable.

Let $(M^{n+1},\partial M,\rg)$ be a smooth Riemannian manifold with boundary. We shall often use $\langle \cdot, \cdot, \rangle$ to denote the inner product with respect to the Riemannian metric $\mathrm{g}$.
Throughout this paper we shall consider stable critical points $u:M\to(-1, 1)$ of the Allen--Cahn energy
\[
E_\varepsilon(u)=\int_M\left(\frac{\varepsilon}{2}|\nabla u|^2
+\frac{W(u)}{\varepsilon}\right)\,d\mathrm{vol}_\rg.
\]
Any such critical point is a solution of \eqref{e.AC}. If $S\subset \partial M$, then  a solution $u$ of \eqref{e.AC} is \emph{stable in $M\cup S$} if
\begin{equation}\label{e.stability.definition}
\frac{d^2}{dt^2}\Bigl|_{t=0}E_\varepsilon(u+t\psi)=\int_M\left(\varepsilon^2|\nabla\psi|^2+W''(u)\psi^2\right)
 \,d\mathrm{vol}_\rg\geq0
\end{equation}
for every $\psi\in C_c^\infty(M\cup S)$.  Notice that test functions $\psi$ need not vanish on $S$.

For a solution $u_\varepsilon$ of \eqref{e.AC} and $x\in M$ such that $|\nabla u(x)|\neq 0$, we define the \emph{enhanced second fundamental form} $\mathcal A(u)$ of the level set $\{u= u(x)\}$ at $x$ by
\[
 \mathcal A(u)=\nabla\!\left(\frac{\nabla u}{|\nabla u|}\right).
\]
Notice that $\nabla u(x)/|\nabla u(x)|$ is the unit normal of the hypersurface  $\{u= u(x)\}$ at $x$.
We extend $\mathcal{A}(u)$ to the whole of $M$ by setting $|\mathcal A(u)|=0$ on the critical set.  
It is easy to check that if $|\nabla u(x)|\neq 0$, then at $x$, we have
\begin{equation}\label{|Anormsquare|2}
 |\mathcal A(u)|^2
 =\frac{|\nabla^2u|^2-|\nabla|\nabla u||^2}{|\nabla u|^2}
 =|\mathrm{I\!I}(u)|^2+|\nabla^T\log|\nabla u||^2,
\end{equation}
where $\sff(u)$ denotes the second fundamental form of the regular hypersurface $\{u=u(x)\}$ at $x$ and $\nabla^T$ denotes tangential differentiation along $\{u=u(x)\}$.
Thus $|\mathcal{A}(u)|$ dominates the second fundamental form of the level sets.

Our sign convention for the second fundamental form $\sff$ and the mean curvature $H$ of a hypersurface is
\[
 \mathrm{I\!I}(X,Y)=-\langle\nabla_X\bn,Y\rangle,
 \qquad H=\operatorname{tr}\mathrm{I\!I},
\]
where $\bn$ is the chosen unit normal.  With this convention, the linearization of mean
curvature in the normal direction $\psi\bn$ is
$\Delta\psi+(|\mathrm{I\!I}|^2+\operatorname{Ric}(\bn,\bn))\psi$.

\subsection{Balls in Euclidean space}
Consider Euclidean coordinates $(x_1, \dots, x_n, x_{n+1})$ on $\mathbb{R}^{n+1}$. We shall adopt the following notation for various types of balls in $\mathbb{R}^{n+1}$.
\begin{itemize}
	\item $\bbb_r(p)$ (resp. $\bar{\bbb}_r(p)$) : the open (resp. closed) ball in $\bbr^{n}$ centered at $p$ with radius $r$.
	\item $\bbb^+_r(p)$ (resp. $\bar{\bbb}^+_r(p)$) : the open (resp. closed) half-ball $\bbb_r(p)\cap \{x_1\geq 0\}$ (resp. $\bar{\bbb}_r(p)\cap \{x_1\geq 0\}$).
	\item $\del_0\bbb^+_r(p)$= $\del \bbb^+_r(p)\cap \{x_1= 0\}$, $\del_+\bbb^+_r(p)$= $\del \bbb^+_r(p)\cap \{x_1> 0\}$.
	\item $B_r(p)$ (resp. $\bar{B}_r(p)$) : the open (resp. closed) ball in $\bbr^{n+1}$ centered at $p$ with radius $r$. 
	\item $B^+_r(p)$ (resp. $\bar{B}^+_r(p)$) : the open (resp. closed) half-ball $B_r(p)\cap \{x_1\geq 0\}$ (resp. $\bar{B}_r(p)\cap \{x_1\geq 0\}$)
	\item $\del_0B^+_r(p)$ = $\del B^+_r(p)\cap \{x_1= 0\}$, $\del_+B^+_r(p)$= $\del B^+_r(p)\cap \{x_1= 0\}$.
\end{itemize}

\subsection{Constants dependence} 
If $a \leq C b$, for some universal constant $C$, we will frequently use the notation $a \lesssim b$, or $a = O(b)$. If we wish to explicit the dependence of the constant $C$ on certain parameters $\ell_1, \ell_2, \ldots, \ell_j$, we will use the symbol $\lesssim_{\ell_1, \ell_2, \ldots, \ell_j}$, or simply $O_{\ell_1, \ell_2, \ldots, \ell_j}$. When we have the two-sided bound $C^{-1} b \leq a \leq C b$, we will write $a \sim b$, or $a \sim_{\ell_1, \ell_2, \ldots, \ell_j}$, if the dependence depends on the parameters ${\ell_1, \ell_2, \ldots, \ell_j}$. 

Unless explicitly indicated otherwise, all constants are independent of $\varepsilon$, of the number of layers, and of the layer index.
An estimate containing an unspecified derivative order is understood for each fixed order, with
the constant allowed to depend on that order and on the corresponding smooth bounds for the
fixed geometric data.  Finally, every little-$o$ estimate in the paper is uniform in all layer and
level parameters occurring in its statement.

\part{Derivation of the Toda system}

\section{Preliminary setup}\label{s.preliminary.setup}
In this section, we shall prove some preliminary results about stable solutions of \eqref{e.AC} satisfying the vanishing Neumann boundary condition. This preliminary analysis will allow us to reduce Theorem \ref{t.main.mfd} to the local statement of Theorem \ref{t.main.ball}.

Throughout this section, let $(M^{n+1}, \partial M, \rg)$ be a smooth compact $(n+1)$-dimensional Riemannian manifold with non-empty smooth boundary and let $\ve>0$.
Let $S\subset\partial M$ be a relatively open domain in $\partial M=\overline{M}\setminus M$, and let $u_{\varepsilon}\in C^{\infty}(M\cup S, (-1,1))$ be a stable solution of \eqref{e.AC} satisfying the vanishing Neumann boundary condition along $S$. Suppose $u_{\ve}$ satisfies \eqref{e.esff.bd} and $M'\Subset M \cup S$. Fix $b'\in(b,1)$, $\Lambda>0$ and $M'\Subset M\cup S$, and assume
\begin{equation}\label{e.esff.bd}
	|\mathcal{A}(u_\varepsilon)|\leq \Lambda_0 \quad \text{on }\{|u_\varepsilon|\leq 1-b\}\cap \{|\nabla u_\varepsilon|\neq 0\}.
\end{equation}

\begin{lemma}\label{l.nab.u}
Under the assumptions of Theorem \ref{t.main.mfd}, there are constants
$\varepsilon_\bullet>0$ and $c>0$, depending only on $b$, $\Lambda$, $W$, and the fixed
geometry near $M'$, such that, whenever $0<\varepsilon\leq\varepsilon_\bullet$,
\begin{equation}\label{e.nab.u}
 \varepsilon|\nabla u_\varepsilon|\geq c
 \qquad\text{in }\{|u_\varepsilon|\leq1-b\}\cap M'.
\end{equation}\end{lemma}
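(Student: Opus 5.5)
We argue by contradiction and blow-up at scale $\varepsilon$, in the spirit of the compactness arguments of \cite{wang-weiadv, chodosh-mantoulidis}. Suppose there are $\varepsilon_i\to0$, stable solutions $u_i=u_{\varepsilon_i}$ as in Theorem \ref{t.main.mfd}, and points $x_i\in M'\cap\{|u_i|\leq1-b\}$ with $\varepsilon_i|\nabla u_i(x_i)|\to0$. Rescale by $\tilde u_i(y)=u_i(\exp_{x_i}(\varepsilon_i y))$. If $d(x_i,\partial M)/\varepsilon_i\to\infty$ we use normal coordinates. Otherwise we use Fermi coordinates for $\partial M$, and after translating we work on a half-space $\{y_1\geq -d_i\}$ with $d_i$ bounded; passing to a subsequence, we normalise so that the limit domain is $\{y_1\geq -d\}$. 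Since $M'\Subset M\cup S$, the boundary portion that appears near $x_i$ lies in $S$ for small $\varepsilon_i$, so the Neumann condition holds there.

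\textbf{Compactness.} The rescaled metrics converge in $C^\infty_{loc}$ to the Euclidean metric. Standard elliptic estimates for the Neumann problem (reflection across the flat limit boundary, or Schauder up to the boundary) give uniform $C^{2,\alpha}_{loc}$ bounds, because $|\tilde u_i|<1$ and the equation is $\Delta\tilde u_i=W'(\tilde u_i)$ up to metric errors. Hence $\tilde u_i\to u_\infty$ in $C^2_{loc}$. The limit $u_\infty$ solves $\Delta u_\infty=W'(u_\infty)$ on $\mathbb R^{n+1}$ or on a half-space with homogeneous Neumann condition. It is stable, since the stability inequality passes to the limit with compactly supported test functions not vanishing on the boundary. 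Moreover $|u_\infty(0)|\leq1-b$ and $\nabla u_\infty(0)=0$.

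\textbf{Rigidity from the curvature bound.} The assumption \eqref{e.esff.bd} is scale invariant, so it becomes $|\mathcal A(\tilde u_i)|\leq\Lambda\varepsilon_i\to0$ on $\{|\tilde u_i|\leq1-b\}\cap\{\nabla\tilde u_i\neq0\}$. First we show $u_\infty$ is nonconstant near $0$: a constant value $c$ with $|c|\leq1-b$ would have to satisfy $W'(c)=0$, forcing $c=0$, but the constant $0$ is unstable since $W''(0)<0$ (test with a large cutoff, including in the half-space case). So on the open set $U=\{|u_\infty|<1-b\}\cap\{\nabla u_\infty\neq0\}$ we get $\mathcal A(u_\infty)=0$ by $C^2$ convergence. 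Consequently $\nabla u_\infty/|\nabla u_\infty|$ is locally constant and $u_\infty$ depends on one variable on each component of $U$. By \eqref{|Anormsquare|2}, $|\nabla|\nabla u_\infty||$ also vanishes on $U$, so $|\nabla u_\infty|$ is constant along level sets. On such a component $u_\infty=g(e\cdot y)$ with $g''=W'(g)$. In the half-space case the Neumann condition forces $e\perp e_1$ whenever the layer meets the boundary.

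\textbf{Main obstacle.} The hardest step is excluding critical points of $u_\infty$ inside $\{|u_\infty|\leq1-b\}$. The idea is an ODE argument along the one-dimensional profile. Since $|u_\infty(0)|\leq1-b$ and $u_\infty$ is not locally constant, there are points $p_k\to0$ in $U$. Near each $p_k$, $u_\infty$ is one-dimensional with profile $g$, which has conserved energy $\tfrac12 g'^2-W(g)=E$. Continuity and the unit-normal constancy extend the one-dimensional structure up to the first critical point. Approaching $0$ along the direction $e$ gives $E=-W(u_\infty(0))<0$, since $|u_\infty(0)|<1$. But then $g$ is a bounded periodic oscillation between the two roots of $W=-E$ around $0$. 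A periodic one-dimensional solution is unstable: test with $g'$ times a large cutoff in the transverse variables, since $g'$ changes sign and its positive part gives a negative direction. This contradicts stability. Alternatively, $u_\infty$ is stable and one-dimensional, hence monotone, hence a heteroclinic $\bar{\mathbb H}(e\cdot y-a)$, whose gradient never vanishes where $|u|\leq1-b$. In both cases this contradicts $\nabla u_\infty(0)=0$. The delicate points are justifying the extension of the one-dimensional structure across the closure of $U$, and the boundary case, where the test functions must respect the Neumann condition. Both work because $e\perp e_1$ and the cutoffs need not vanish on $\{y_1=-d\}$.
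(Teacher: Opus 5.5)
Your blow-up and compactness steps are the same as the paper's. The gap is the step you yourself call the main obstacle: it is not proved, and the mechanism you offer for it would not work.

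On $U=\{|u_\infty|<1-b\}\cap\{\nabla u_\infty\neq0\}$ you only get that $u_\infty$ is \emph{locally} of the form $g(e\cdot y)$. ``Continuity and unit-normal constancy'' cannot carry this across the critical set. There $\nabla u_\infty/|\nabla u_\infty|$ is undefined, and different components of $U$ could a priori carry different directions and profiles. By continuity, the local energies $E_k=\tfrac12|\nabla u_\infty(p_k)|^2-W(u_\infty(p_k))$ do tend to $-W(u_\infty(0))<0$. (What makes this true is not ``approaching $0$ along $e$''; $0$ need not lie on the line through $p_k$.) But a piece of periodic profile up to its first turning point says nothing about the stability quadratic form, which is a large-scale object. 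Testing with a large transverse cutoff requires $u_\infty=g(e\cdot y)$ on all of $\mathbb{R}^{n+1}$ (or on the whole half-space), and your sketch never establishes this.

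The missing idea is unique continuation, which is what the paper uses. Suppose $u_\infty=g(e\cdot y)$ on some open set. For every $v\perp e$, the function $\partial_v u_\infty$ solves the linear equation $\Delta w=W''(u_\infty)w$ and vanishes on that open set, hence vanishes everywhere. So $u_\infty$ is globally one-dimensional. Either of two arguments then finishes:
\begin{itemize}
\item your periodic-instability argument;
\item the paper's route: a stable nonconstant one-dimensional solution is $\mathbb{H}(x\cdot e)$, whose gradient never vanishes where $|u|\leq1-b$.
\end{itemize}
The paper phrases this as: the critical set is null by unique continuation for $\partial_i u_\infty$, so almost every level set in $[-1+b,1-b]$ is a flat hyperplane.

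The same tool is needed silently in two other places of your sketch:
\begin{itemize}
\item to upgrade ``$u_\infty$ is constant $0$ near the origin'' to ``$u_\infty\equiv0$'' before testing with a large cutoff;
\item to guarantee regular points of $u_\infty$ accumulating at $0$.
\end{itemize}
In addition, if $|u_\infty(0)|=1-b$ you must exclude $0$ being a local minimum at level $1-b$; otherwise $U$ need not accumulate at $0$. This uses the strong maximum principle, since $\Delta u_\infty=W'(u_\infty)<0$ where $u_\infty\in(0,1)$, together with the Hopf lemma at a Neumann boundary point.

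Two smaller points. First, on a half-period the positive part of $g'$ has zero energy, not negative energy; you need a slightly longer interval, using strict monotonicity of the Dirichlet eigenvalue. Second, in the boundary case the paper avoids any analysis of $e$ versus $e_1$. It evenly reflects the half-space limit; this preserves stability (split a test function into its even and odd parts) and reduces everything to the entire case.
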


\begin{proof}
The argument is similar to \cite[Proof of Lemma 2.1]{wang-weiadv}. Let us assume by contradiction that \te\ sequences $(u_{\ve_i},\ve_i)$ \w\ $\ve_i\ra 0$, $p_i\in \{\mD{u_{\ve_i}}\leq 1-b\}\cap M'$ so that $u_{\ve_i}\in C^{\infty}(M\cup S, (-1,1))$ is a stable solution of \eqref{e.AC} \w\ $\ve=\ve_i$ satisfying the vanishing Neumann boundary condition along $S$ and $\ve_i\mD{\na u_{\ve_i}(p_i)}\ra 0$. Let $d_i=d_\rg(p_i,S)=d_\rg(p_i, q_i)$, where $q_i\in S$ is the nearest point projection of $p_i$ to $S$; we need to consider two cases:
\begin{enumerate}[(i)]
\item either $\liminf_{i\ra \infty}\ve_i^{-1}d_i=\infty$, 
\item or $\liminf_{i\ra \infty}\ve_i^{-1}d_i<\infty$.
\end{enumerate}

Assume first that $d_i/\varepsilon_i\to\infty$. In geodesic normal coordinates centreed at
$p_i$, define
\[
 E_{p_i,\varepsilon_i}(v)=\exp_{p_i}(\varepsilon_i v),
 \qquad
 \rg_{p_i,\varepsilon_i}=\varepsilon_i^{-2}E_{p_i,\varepsilon_i}^*\rg,
 \qquad
 \widetilde u_i=u_{\varepsilon_i}\circ E_{p_i,\varepsilon_i}.
\]
After identifying $T_{p_i}M'$ with $\bbr^{n+1}$  by choosing an orthonormal basis, the domain of $E_{p,\ve}$ can be identified with $B_{\ve^{-1}r}(0)$, where $r=\operatorname{inj}_p$. Then $\tilde{u}_i$ is a stable solution of \eqref{e.AC} \w\ $\ve=1$ on $(B_{r_i}(0),\mathrm{g}_{p_i,\ve_i})$, where $r_i=(2\ve_i)^{-1}\min\{d_i, \operatorname{inj}_{p_i}\}$, $\mD{\tui(0)}\leq 1-b$, $\mD{\na \tui(0)}\ra 0$ and $\mathrm{g}_{p_i,\ve_i}$ converges to the Euclidean \mt\ in $C^{\infty}_{\text{loc}}(\bbr^{n+1})$. \hn, by elliptic estimates, $\tui$ is uniformly bounded in $C^{3}_{\text{loc}}(\bbr^{n+1})$. \tf, \tes\ $\tuin$, a stable solution of \eqref{e.AC} \w\ $\ve=1$ on $\bbr^{n+1}$, \st\ up to a subsequence, $\tui\ra\tuin$ in $C^{2}_{\text{loc}}(\bbr^{n+1})$. Since $\mD{\tuin(0)}\leq 1-b$, $\mD{\na \tuin(0)}= 0$, $\tuin$ is non-constant. As in \cite[Proof of Lemma 2.1]{wang-weiadv}, by \eqref{e.esff.bd}, we have $\mD{\cA(\tuin)}=0$ on $\{\mD{\tuin}<1-b\}\cap \{\mD{\na \tuin}\neq 0\}$. By the unique continuation principle (for example see \cite{jerison1985unique}) for the linear equation $\Delta \partial_{i}\tilde{u}_{\infty}=W'(\tilde{u}_{\infty})\partial_{i}\tilde{u}_{\infty}$ ($i=1, \dots, n+1$) and Lebesgue's density theorem, the critical set $\{\nabla \tilde{u}_{\infty}=0\}$ has zero Lebesgue measure. By \eqref{|Anormsquare|2} and $|\mathcal{A}(\tilde{u}_{\varepsilon})|\leq \Lambda \varepsilon$ (which follows from \eqref{e.esff.bd}), we conclude that $\{\tilde{u}_{\infty}=t\}$ is a flat hyperplane for almost every $t\in[-1+b, 1-b]$.  This implies that $\tuin=U(x\cdot e)$, where $U: \mathbb{R}\to [-1, 1]$ is a nonconstant stable entire solution of the 1-dimensional Allen--Cahn equation and $e$ is a  unit vector. In particular, $U$ must be the 1-dimensional heteroclinic solution $\Hb$, which implies that $\mD{\na \tuin(0)}\neq 0$. Thus we arrive at a contradiction.

Assume now $\liminf_{i\ra \infty}\ve_i^{-1}d_i<\infty$, \tes\ $d\geq 0$ \st,\ possibly after passing to a subsequence, $\lim_{i\ra \infty}\ve_i^{-1}d_i=d$. In boundary Fermi coordinates centreed at $q_i$, define
\[
 E_{q_i,\varepsilon_i}(x_1,x')
 =\exp^M_{\exp^{\partial M}_{q_i}(\varepsilon_i x')}
 \!\left(\varepsilon_i x_1\nu_{\partial M}
 \bigl(\exp^{\partial M}_{q_i}(\varepsilon_i x')\bigr)\right),
 \qquad
 \rg_{q_i,\varepsilon_i}=\varepsilon_i^{-2}E_{q_i,\varepsilon_i}^*\rg.
\]
These coordinates map $\{x_1=0\}$ exactly to $\partial M$.  Thus
$\widetilde u_i=u_{\varepsilon_i}\circ E_{q_i,\varepsilon_i}$ satisfies the homogeneous
Neumann condition on the flat boundary, the metrics converge smoothly to the Euclidean metric
on $\mathbb R^{n+1}_+$, and the preimages of $p_i$ remain in a fixed compact set. Then there is a fixed $r>0$ such that for all sufficiently large $i$ the normal exponential map of $\partial M$ is a diffeomorphism of $B^{\partial M}_r(q_i)\times[0,r)$.
In particular, $\tilde{u}_i$ is a stable solution of \eqref{e.AC} \w\ $\ve=1$ on $(B^+_{r_i}(0),\mathrm{g}_{q_i,\ve_i})$, where $r_i=(2\ve_i)^{-1}\min\{d(q_i,\del M\setminus S), r\}$, and it satisfies the vanishing Neumann boundary condition along $\del_0B^+_{r_i}(0)$. Moreover, if $\tilde{p}_i=E_{q_i,\ve_i}^{-1}(p_i)$, then $\mD{\tui(\tilde{p}_i)}\leq 1-b$ and $\mD{\na \tui(\tilde{p}_i)}\ra 0$. Since $\mathrm{g}_{p_i,\ve_i}$ converges to the Euclidean \mt\ in $C^{\infty}_{\text{loc}}(\bbr^{n+1}_+)$, $\tui$ is uniformly bounded in $C^{3}_{\text{loc}}(\bbr^{n+1}_+)$. \tf,\ \tes\ $\tuin^*$, a stable solution of \eqref{e.AC} \w\ $\ve=1$ on $\bbr^{n+1}_+$, \st\ up to a subsequence, $\tui\ra\tuin^*$ in $C^{2}_{\text{loc}}(\bbr^{n+1}_+)$. Moreover, \tes\ $\tilde{p}_{\infty}$ \st\ up to a subsequence, $\tilde{p}_i \ra \tilde{p}_{\infty}$; so $\mD{\tilde{p}_{\infty}}=d$, $\mD{\tuin^*(\tilde{p}_{\infty})}\leq 1-b$, $\mD{\na \tuin^*(\tilde{p}_{\infty})}= 0$. Hence $\tuin^*$ is non-constant. We define  $\tuin\in C^2(\bbr^{n+1})$ by
\begin{equation}\label{e.even.refl}
\tuin(x_1,x_2,\dots,x_{n+1})=
\begin{cases}	
\tuin^*(x_1,x_2,\dots,x_{n+1}) &\text{ when } x_1\geq 0;\\
\tuin^*(-x_1,x_2,\dots,x_{n+1}) &\text{ when } x_1\leq 0.
\end{cases}
\end{equation}
Then $\tuin$ is a stable solution of \eqref{e.AC} \w\ $\ve=1$ on $\bbr^{n+1}$. Since $u_{\ve_i}$ satisfies \eqref{e.esff.bd}, arguing as in \cite[Proof of Lemma 2.1]{wang-weiadv}, we conclude that $\mD{\cA(\tuin)}=0$ on $\{\mD{\tuin}<1-b\}\cap \{\mD{\na \tuin}\neq 0\}$.  Similarly to the previous case, by unique continuation and the Lebesgue differentiation theorem, this implies $\tuin$ is again of the form $\tuin(x)=\Hb(x\cdot e)$ for some unit vector $e$. Hence $\mD{\na \tuin(\tilde{p}_{\infty})}\neq 0$. Thus we arrive again at a contradiction.
\end{proof}

\subsection{Fermi coordinates with respect to the boundary and graphical representation of layers}\label{s.graph}
We shall now discuss the definition and the properties of Fermi coordinates with respect to $S$ following \cite[Appendix A]{LZ}. Let $S' \Subset S$ and \(p\in S'\). Let \(x_1=\text{dist}_\rg(-,\del M)\), which is smooth in a neighbourhood of \(p\) contained in $M'$. Suppose  $(x_2,\dots, x_{n+1})$ is a normal coordinate system on $S$ centred at $p$, which is obtained by fixing an orthonormal basis of $T_pS$. Then $(x_1,x_2,\cdots,x_{n+1})$ is a Fermi coordinate system for $(M,\del M)$ near $p$. The Fermi distance function from \(p\) is defined by $\bar{r}_p(x)= \sqrt{x_1^2+x_2^2+\dots+ x_{n+1}^2}$. The Fermi coordinate ball $\mathcal{B}_r(p)$ defined by $\mathcal{B}_r(p)\coloneqq \{\bar{r}_p<r\}$ can be identified with $B^+_r(0)\subset\mathbb{R}^{n+1}$. 

\begin{lemma}\label{l.Fermi.coord}\cite[Lemma A.2]{LZ}
\Te\ constants $r_1$ and $\{\mathsf{C}_m\}_{m=0}^\infty$ depending only on the \mt\ \(\rg\) and $S'$ \st\ the following holds. Let $\rg_{ij}$, $\bar{\Ga}_{ij}^k$ denote the components of $\rg$ and the Christoffel symbols in a Fermi coordinate system centreed at $p$. Then
\begin{equation}\label{e.g.Ga.0}
\rg_{11}=1,\quad \rg_{1i}=0,\quad\bar{\Ga}_{i1}^1=\bar{\Ga}_{11}^i=\bar{\Ga}_{11}^1=0\quad \forall i\geq 2
\end{equation}
and for all $1 \le i, j \le n+1$,
\begin{equation}\label{e.g.Ga.est}
\md{\rg_{ij}(x)-\de_{ij}}\leq C_0\bar{r}_p(x)\text{ on }\mathcal{B}_{r_1}(p),\quad \|\rg_{ij}-\de_{ij}\|_{C^m(\mathcal{B}_{r_1}(p))}\leq \mathsf{C}_m\; \forall\; m \ge 0.
\end{equation}
\end{lemma}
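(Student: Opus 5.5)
The statement to prove is Lemma \ref{l.Fermi.coord}, the standard structure of Fermi coordinates adapted to $(M,\partial M)$. The plan is to first derive the exact identities \eqref{e.g.Ga.0} from the geometry of the normal exponential map. Then I would obtain the quantitative bounds \eqref{e.g.Ga.est} from smooth dependence on $p$ and compactness of $\overline{S'}$.

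\emph{Step 1: the identities.} The chart is $F_p(x_1,x')=\exp_{q}(x_1\nu(q))$ with $q=\exp^{S}_p(x')$. The curves $x_1\mapsto F_p(x_1,x')$ are unit speed geodesics, so $\rg_{11}=1$. Gauss's lemma for the normal exponential map of a hypersurface gives $\partial_{x_1}\perp\partial_{x_i}$ for $i\geq2$: the identity $\rg_{1i}=0$ holds at $x_1=0$ because $\nu\perp T\partial M$. Moreover
\[
\partial_1\rg_{1i}=\langle\nabla_{\partial_1}\partial_1,\partial_i\rangle+\langle\partial_1,\nabla_{\partial_i}\partial_1\rangle=0+\tfrac12\partial_i\rg_{11}=0,
\]
so $\rg_{1i}$ vanishes identically. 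Equivalently, $x_1=\operatorname{dist}_\rg(\cdot,\partial M)$ has $|\nabla x_1|=1$ and $\nabla x_1=\partial_1$. The geodesic equation $\nabla_{\partial_1}\partial_1=0$ gives $\bar\Ga_{11}^k=0$ for every $k$, in particular $\bar\Ga^1_{11}=\bar\Ga^i_{11}=0$. Finally,
\[
\bar\Ga_{i1}^1=\tfrac12\rg^{1k}(\partial_i\rg_{1k}+\partial_1\rg_{ik}-\partial_k\rg_{i1}) =\tfrac12(\partial_i\rg_{11}+\partial_1\rg_{i1}-\partial_1\rg_{i1})=0,
\]
using that $\rg^{1k}=\delta_{1k}$ by block-diagonality.

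\emph{Step 2: uniform size of the chart.} By compactness of $\overline{S'}\Subset S$ there is a uniform $r_1>0$ with three properties: the injectivity radius of $S$ at points of $S'$ exceeds $2r_1$; the normal exponential map is a diffeomorphism from $\{q\in S:d(q,S')<2r_1\}\times[0,2r_1)$ onto a collar inside $M'$; and $x_1$ is smooth there. This uses $S'\Subset S$ and the smoothness of $\partial M$. In particular $F_p$ is defined and smooth on $B^+_{r_1}(0)$ for each $p\in S'$, and $\mathcal B_{r_1}(p)$ is identified with this half-ball.

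\emph{Step 3: the estimates.} The map $(p,x)\mapsto F_p(x)$ depends smoothly on $p$, also through the orthonormal frame of $T_pS$. Any choice of frame changes $F_p$ only by a rotation in $x'$, so it is harmless. Hence all $x$-derivatives of $\rg_{ij}=\langle\partial_iF_p,\partial_jF_p\rangle$ are bounded uniformly on $\overline{S'}\times\overline{B^+_{r_1}}$ (after locally trivialising the frame bundle), which gives the constants $\mathsf C_m$. At the origin $\rg_{ij}(0)=\delta_{ij}$, since $\partial_1=\nu(p)$ and the $\partial_i$ form an orthonormal basis of $T_pS$. The mean value theorem with the $C^1$ bound then gives $|\rg_{ij}(x)-\delta_{ij}|\leq \mathsf C_1|x|=\mathsf C_1\bar r_p(x)$, so one may take $C_0=\mathsf C_1$.

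The only mildly delicate point is Step 2: making $r_1$ uniform near the edge of $S'$, where the half-balls must stay inside $M\cup S$ and away from $\partial M\setminus S$. This is handled by working on the slightly larger compact set $\{d(\cdot,S')\leq r_1\}\subset S$ and shrinking $r_1$.
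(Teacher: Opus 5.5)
Your proof is correct and follows the standard argument behind \cite[Lemma A.2]{LZ}, which the paper cites without proof. The identities \eqref{e.g.Ga.0} come from the unit-speed normal geodesics and the Gauss-lemma computation $\partial_1\rg_{1i}=\tfrac12\partial_i\rg_{11}=0$. The bounds \eqref{e.g.Ga.est} come from $\rg_{ij}(0)=\delta_{ij}$, smooth dependence of the chart on $p$ and on the frame, and compactness of $\overline{S'}\Subset S$.
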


For the current purpose, let $S'=\{|u_{\ve}|<1-b\}\cap S\cap M'$, and let $p\in S'$. By Lemma \ref{l.nab.u}, $S'\Subset \{|\nabla u_\ve|\neq 0\}$. Since $u_{\ve}$ satisfies the vanishing Neumann boundary condition on $S$, $\na u_{\ve}/\md{\na u_{\ve}}\in T_pS$. Choose an orthonormal basis $\{e_2,\dots,e_{n+1}\}$ of $T_pS$ such that $e_{n+1}=\na u_{\ve}/\md{\na u_{\ve}}$. Let $(x_1,x_2,\cdots,x_{n+1})$ be the Fermi coordinate system for $(M,\del M)$ centred at $p$ so that for $i\geq 2$, $\del_{x_i}|_p=e_i$.  

Then, for the function 
$F(x)=u_{\varepsilon}(x_1,\dots, x_n, x_{n+1})-t$, $\partial_{x_{n+1}}F(x)=|\nabla u_{\varepsilon}|G(x)$, where the function $G$ is defined by $G(x)=\rg\left(\frac{\na u_{\ve}}{\md{\na u_{\ve}}},\del_{x_{n+1}}\right)$ on $\mathcal{B}_{r_1}(p)$ (where $r_1$ is as in Lemma \ref{l.Fermi.coord}). We note that $G(p)=1$ and its derivative on $\{\md{u_{\ve}}<1-b'\}\cap  \mathcal{B}_{r_1}(p)$ is bounded by a constant which depends only on $\La$ and $\rg$. 

By the mean value theorem and the implicit function theorem, there exists a constant $r_2=r_2(\La, \rg)$ \st\ in $\mathcal{B}_{r_2}(p)$, for all $t\in (-1+b',1-b')$, $\{u_{\ve}=t\}$ is a union of graphs $\cup_{\al}\{x_{n+1}=\bar{f}^t_{\al,\ve}(x_1,\dots,x_n)\}$. Moreover, $\del_1\bar{f}^t_{\al,\ve}=0$ along $S$ (since $\del_1u_{\ve}=0$ along $S$) and $|\na^2\bar{f}^t_{\al,\ve}|\leq |\mathcal{A}|$ is bounded by a \cn\ which depends only on $\La$ and $\rg$ by \eqref{e.esff.bd}. All of these estimates are
uniform in $t$ and in the graphical component.

\subsection{A reduction of curvature estimates}\label{s.reduction}
In view of the above discussion in Section \ref{s.graph}, to prove Theorem \ref{t.main.mfd}, it is enough to prove the following Theorem \ref{t.main.ball}. Indeed, \Thm\ \ref{t.main.ball} implies that \Thm\ \ref{t.main.mfd} holds on the collar neighbourhood $M'\cap\mathcal{N}_{r_2/3}(\del M)$ (where $r_2$ is as in Section \ref{s.graph}). On the other hand, on $M'\setminus \mathcal{N}_{r_2/4}(\del M)$, the proof of \Thm\ \ref{t.main.mfd} follows from \cite{wang-wei,wang-weiadv,mantoulidis,chodosh-mantoulidis}.

\begin{theorem}\label{t.main.ball}
Assume $2\leq n+1\leq10$. Let $B_3^+(0)$ be equipped with a smooth Riemannian metric $\hat{g}$ so that the following conditions are satisfied. 
\begin{itemize}
	\item $\del_{x_1}$ is the inward pointing unit normal along $\del_0B_3^+(0)$.
	\item If $\hat{g}_{ij}$, $\hat{\Ga}_{ij}^k$ denote the components of $\hat{g}$ and its Christoffel symbols, then, for every fixed $m$,
	\begin{align}
	&\hat{g}_{11}=1,\quad \hat{g}_{1i}=0,\quad\hat{\Ga}_{i1}^1=\hat{\Ga}_{11}^i=\hat{\Ga}_{11}^1=0\quad \forall i\geq 2,\label{e.g.Ga.0.ball}\\
	&\hat{g}_{ij}(0)=\de_{ij}\quad \nm{\hat{g}_{ij}-\de_{ij}}_{C^m(B_3^+(0))}\leq \et_0\label{e.g.Ga.est.ball}.
	\end{align}
\end{itemize}
Fix $\theta\in (0, 1)$ and $0<b<1$. There exist constants $\varepsilon_0>0$ and $C_0<\infty$, depending only on $n,\theta,b,W,c_0,\eta_0$, and $\Lambda_0$, such that the following holds. Suppose $0<\varepsilon\leq \varepsilon_0$, $u_{\varepsilon}:(B_3^+(0),\hat{g})\rightarrow (-1,1)$ is a stable solution of \eqref{e.AC} satisfying the following conditions.

\noindent(H1) $u_{\varepsilon}$ satisfies the vanishing Neumann boundary condition $\partial_{x_1}u_\varepsilon=0$ along $\del_0B_3^+(0)$ and
\begin{equation}\label{e.nab.u.ball}
\varepsilon|\nabla u_{\varepsilon}|\geq c_0 \text{ on }\{|u_{\varepsilon}|\leq 1-b\}.
\end{equation}

\noindent(H2) Let $\hat{\Xi}$ denote the union of all the connected components of $\{u_{\ve}=0\}\cap \left(\bbb^+_{2}(0)\times (-2,2)\right)$ which have non-empty intersections with $\bbb^+_{2}(0)\times (-1.5,1.5)$. Then $\hat{\Xi}$ can be written as a union of graphs
\[\hat{\Xi}=\bigcup_{\al}\{x_{n+1}=\hat{f}_{\al,\ve}(x'):x'=(x_1,\dots,x_n)\in \bbb^+_{2}(0)\}\]
where $\hat{f}_{\al,\ve}\in C^{\infty}(\bbb^+_{2}(0),(-1.6,1.6))$ \w\ $\del_{x_1} \hat{f}_{\al,\ve} = 0 \text{ along } \del_0\bbb^+_{2}(0)$,
\begin{equation}\label{e.na.ftl}
|\na\hat{f}_{\al,\ve}|\leq \et_0,\; |\na^2\hat{f}_{\al,\ve}|\leq \La_0 \text{ on }\bbb^+_{2}(0).
\end{equation}

\noindent(H3) We have \begin{equation}\label{e.esff.bd.ball}
|\mathcal{A}(u_{\varepsilon})|\leq \Lambda_0 \quad \text{in}\,\,\{|u_{\varepsilon}|\leq 1- b\}\cap \{|\nabla u_{\varepsilon}|\not=0\}.
\end{equation}

Then every component of $\hat\Xi$ satisfies, on $B_{1/2}^+(0)$,
\begin{equation}\label{e.main.ball.zero.II}
 \|\sff_{\{u_\varepsilon=0\}}\|_{C^\theta}\leq C_0,
\end{equation}
and
\begin{equation}\label{e.mn.curv.ball}
 \|H_{\{u_\varepsilon=0\}}\|_{L^\infty}
 \leq C_0\varepsilon|\log\varepsilon|^2,
 \qquad
 [H_{\{u_\varepsilon=0\}}]_{C^\theta}
 \leq C_0\varepsilon^{1-\theta}|\log\varepsilon|^2.
\end{equation}
\end{theorem}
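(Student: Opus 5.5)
We follow the Wang--Wei scheme outlined in the introduction. The plan is to rescale, reduce the sheet positions to an approximate Toda system, close the remainder estimates, and then use stability to separate the sheets.

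\textbf{Rescaling and coordinates.} First rescale by $\varepsilon^{-1}$, so that $u$ solves \eqref{e.AC} with $\varepsilon=1$ on a half-ball of radius $\sim\varepsilon^{-1}$ with metric $\varepsilon^{-2}\hat g$. By (H2)--(H3), the sheets $\Sigma_\alpha$ of $\hat\Xi$ become graphs with curvature $O(\varepsilon)$ that meet the flat boundary orthogonally. By \eqref{e.nab.u.ball} they are uniformly separated. Along each sheet we build twisted Fermi coordinates $(y,z)$ using a vector field $Z$ that is transverse to $\Sigma_\alpha$ and tangent to $\partial_0B^+$, so that $\{y_1=0\}$ is the boundary.

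\textbf{Ansatz and Toda system.} Write $u=\mathbb H_*+\mathbb G_*+\phi$. Here $\mathbb H_*$ is the alternating sum of cut-off heteroclinics shifted by $h_\alpha$, and $\mathbb G_\alpha$ solves the linearised problem on $\Sigma_\alpha\times\mathbb R$ that cancels the $O(\varepsilon)$ Neumann defect of $\mathbb H_\alpha$. Since $W$ is even, $\mathbb G_\alpha$ is odd in $z$. The shifts are fixed by the orthogonality condition $\int\phi\,\mathbb H'_\alpha\,dz=0$. Projecting the equation onto $\mathbb H'_\alpha$ gives
\[
H_\alpha+\Delta_{\alpha,0}h_\alpha=\tfrac{2A_1^2}{\sigma_0}\bigl(e^{-|t_{\alpha-1}|}-e^{-|t_{\alpha+1}|}\bigr)+E_\alpha .
\]
The contribution of $\mathbb G$ drops out at leading order by oddness.

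\textbf{Closing the estimates.} Inner--outer gluing, coercive near the layers by orthogonality and away from them because $W''\ge c$, gives
\[
\|\phi\|_{C^{2,\theta}}+\|h\|_{C^{2,\theta}}\lesssim\varepsilon^2+A .
\]
We then differentiate in the horizontal directions: tangential derivatives carry Neumann data and the inward derivative carries Dirichlet data. This improves the error to $E_\alpha=O(\varepsilon^2)+o(A)$.

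\textbf{Separation via stability.} Test stability with $\sum_\alpha\eta_\alpha(\mathbb H'_\alpha+\mathbb G'_\alpha)$. This yields an approximately stable Toda inequality; the Robin-type $O(\varepsilon)$ boundary term is absorbed by a trace inequality. Combining it with the Toda equation for the height differences, which satisfy a homogeneous Neumann condition, we argue as Farina does:
\begin{itemize}
\item the stability inequality gives $L^p$ bounds on $e^{-|t|}/A$ for $p<5$;
\item a blow-up argument on interior balls and boundary half-balls gives $L^\infty$ bounds, which needs $p>n/2$, hence $n\le 9$.
\end{itemize}
Iterating a decay proposition then gives $A\lesssim\varepsilon^2|\log\varepsilon|^2$.

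\textbf{Main obstacle.} We expect the hardest step to be the distance-versus-height comparison: boundary curvature introduces first-order metric errors, and these limit the iteration at the $\varepsilon^2|\log\varepsilon|^2$ threshold. Controlling this, together with the commutator terms coming from $Z$, is where the real work lies.

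\textbf{Conclusion.} Plugging $A\lesssim\varepsilon^2|\log\varepsilon|^2$ into the improved Toda equation gives $|H_\alpha|\lesssim\varepsilon^2|\log\varepsilon|^2$ at the rescaled scale, in $C^\theta$ norm. Oddness of $\mathbb G$ ensures that $\mathbb G$ adds nothing on the zero level. Scaling back gives \eqref{e.mn.curv.ball}. Finally, Schauder estimates for the mean curvature equation of the graphs $\hat f_\alpha$ with the Neumann condition $\partial_{x_1}\hat f_\alpha=0$ give \eqref{e.main.ball.zero.II}.
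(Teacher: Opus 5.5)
Your proposal is correct and follows essentially the same route as the paper. The matching steps are: twisted Fermi coordinates; the odd boundary correction $\mathbb G_\alpha$ with optimal shifts fixed by orthogonality; inner--outer gluing followed by the horizontal-derivative improvement; the stability inequality for the Toda system fed into a Farina-type $L^p$ plus blow-up argument (forcing $n\le 9$); iteration of the decay proposition down to $A\lesssim\varepsilon^2|\log\varepsilon|^2$; and a final Neumann Schauder estimate for the graphs.

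Two points you gloss over should be made precise. First, the qualitative input needed to run the machinery is $D_\alpha\to\infty$ (i.e.\ $A=o(1)$), not mere uniform separation. The paper derives this from (H3) by blowing up to the heteroclinic (Lemmas~\ref{l.1D.soln} and~\ref{l.D_al.infty}), not from the gradient bound (H1). Second, closing the first gluing estimate requires the quadratic bound on $\partial_{y_1}h_\alpha$ along $\partial_0\Sigma_\alpha$ that is built into Proposition~\ref{p.optimal_h_shift}, because the term $\partial_{y_1}h_\alpha\,\mathbb H'_\alpha$ appears in the Neumann data of $\phi$.
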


\begin{remark}
The constants $\et_0$ and $\La_0$ in \eqref{e.g.Ga.est.ball}, \eqref{e.nab.u.ball}, \eqref{e.na.ftl} and \eqref{e.esff.bd.ball} are fixed before $\ve_0$ is chosen and will be assumed to be sufficiently small. 
\end{remark}

\begin{remark}
If, in addition, $\partial_0B_3^+(0)$ is totally geodesic and $0<b<b'<1$, then, uniformly for
$t\in[-1+b',1-b']$, every corresponding graphical component of
$\{u_\varepsilon=t\}$ satisfies
\begin{equation}\label{e.main.ball.all.II}
 \|\sff_{\{u_\varepsilon=t\}}\|_{C^\theta(B_{1/2}^+(0))}\leq C_0,
\end{equation}
and
\begin{equation}\label{e.main.ball.all.H}
 \|H_{\{u_\varepsilon=t\}}\|_{L^\infty(B_{1/2}^+(0))}
 \leq C_0\varepsilon|\log\varepsilon|,
 \qquad
 [H_{\{u_\varepsilon=t\}}]_{C^\theta(B_{1/2}^+(0))}
 \leq C_0\varepsilon^{1-\theta}|\log\varepsilon|.
\end{equation}
where $C_0$ also depends on $b'$ (in addition to $n,\theta,b,W,c_0,\eta_0$, and $\Lambda_0$).
\end{remark}

In the following sections, we shall first prove the mean curvature decay estimates for the zero level set $\{u_\ve=0\}$.

\section{Geometric setup and twisted Fermi coordinates} \label{section twisted Fermi}
In this section we define suitable coordinates (which we refer to as \textit{twisted Fermi coordinates})  near the boundary which are adapted to the geometry of the components of the zero level set of the solution $u_\ve$.

Before introducing this coordinate system, following \cite{wang-weiadv}, we rescale distances by $\varepsilon^{-1}$, so that we work with a stable solution of the Allen--Cahn equation with parameter 1 in ball of radius of order $\varepsilon^{-1}$.

\subsection{Basic geometric setup}\label{ss.twisted.Fermi.setup}
\subsubsection{Properties of $\varepsilon^{-1}$-rescaled geometry}
Let $B_3^+(0)$ be equipped with a \Rm\ \mt\ $\hat{g}$ satisfying the conditions of Theorem \ref{t.main.ball}.
From now on, as in \cite{wang-wei,wang-weiadv}, we work in the rescaled domain $(B^+_{3R}(0),g)$ where $R=\ve^{-1}$ and $g$ is the rescaled metric given by
\begin{equation}\label{e.mt.scale}
g_{ij}(x) = \hat{g}_{ij}(\ve x).
\end{equation}
Notice that, by \eqref{e.g.Ga.est.ball}, the metric $g$ satisfies 
\begin{equation}\label{e.rescaled.bound.gEucl}
    \sup_{x\in B^+_{3R}(0)}\sum_{\ell=0}^4\varepsilon^{-\ell}|\partial^{(\ell)}(g_{ij}-\delta_{ij})(x)|\leq C(n)\eta_0
\end{equation}
for all indices $i,j$.
Therefore, the Christoffel symbols $\Gamma_{ij}^k$ of the metric $g$ satisfy  
\begin{equation}\label{e.rescaled.bound.Christoffel}
    \sup_{x\in B^+_{3R}(0)}\sum_{\ell=0}^3\varepsilon^{-(\ell+1)}|\partial^{(\ell)}\Gamma_{ij}^k(x)|\leq C(n)\eta_0,
\end{equation}
and the components $R_{ijkl}$ of the Riemann tensor of $g$ are of order $\varepsilon^2$:
\begin{equation}\label{e.rescaled.bound.Riemann}
    \sup_{x\in B^+_{3R}(0)}\varepsilon^{-2}|R_{ijkl}(x)|\leq C(n)\eta_0.
\end{equation}
Finally, the second fundamental form $\sff_{\partial_0B^+_{3R}(0)}$ of the boundary and its derivatives also satisfy similar estimates:
\begin{equation}\label{e.rescaled.bound.sff}
    \varepsilon^{-(\ell+1)}|\nabla^\ell\sff_{\partial_0B^+_{3R}(0)}|\leq C(n)\eta_0
\end{equation}
for $\ell=0,1,2,3$.

If we define $u(x)=u_{\ve}(\ve x)$, then $u:(B^+_{3R}(0),g)\ra (-1,1)$ satisfies the Allen--Cahn equation with parameter $1$, i.e.
\begin{equation}\label{e.ac.1}
-\De u + W'(u)=0, 
\end{equation}
with Neumann boundary condition $\partial_{x_1} u = 0$ on $\partial_0 B_{3R}^+(0)$. In the rescaled coordinates, by \eqref{e.esff.bd.ball} we have
\begin{equation}\label{0A=O(e)}
    |\mathcal{A}|=O(\ve)
\end{equation}
on $B^+_{3R-1}(0)\cap \{|u|\leq 1-b\}$. In particular, the estimate holds in a fixed neighbourhood of every component of the zero level set.

Note that in above setup, the following derivative estimates holds (see also \cite[Lemma 8.1]{wang-wei}). 

\begin{proposition}[Derivative estimates for the enhanced second fundamental form]\label{l.grad.A}
For every fixed integer $k\geq0$,
\begin{equation}\label{A=O(e)}
    |\na^k \A|=O(\ve)
\end{equation}
on $B^+_{3R-1}(0)\cap \{|u|\leq 1-b\}$ \fa\ $k\geq 0$. As a consequence, if $x\in \{|u|\leq 1-b'\}$ and $\Si = \{u=u(x)\}$, then
\begin{equation}\label{e.grad.sff}
|\na_{\Si}^k\sff_{\Si}|=O(\ve) \qquad \text{on } \Si\cap B^+_{3R-1}(0) \ \  \forall  k\geq 0.
\end{equation}
\end{proposition}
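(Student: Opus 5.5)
We prove \eqref{A=O(e)} by a compactness and contradiction argument in the spirit of \cite[Lemma 8.1]{wang-wei}, combined with elliptic regularity up to the flat boundary piece $\partial_0B^+_{3R}(0)$. We first record the quantitative input. By (H1), $|\nabla u|\geq c_0$ on $\{|u|\leq1-b\}$. Since the rescaled metric satisfies \eqref{e.rescaled.bound.gEucl} with all derivatives of order $O(\varepsilon)$, interior and boundary Schauder estimates give $\|u\|_{C^{k}(B_2(x)\cap B^+_{3R}(0))}\leq C_k$ uniformly for all $k$; for boundary estimates we use the Neumann condition $\partial_{x_1}u=0$. Thus $\nu:=\nabla u/|\nabla u|$ is smooth with uniformly bounded derivatives on a fixed neighbourhood of $\{|u|\leq1-b'\}$. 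Hence $\nabla^k\mathcal A$ is uniformly bounded there, and the whole task is to gain the factor $\varepsilon$.

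We argue by contradiction. Suppose that for some $k\geq1$ there are $\varepsilon_i\to0$, solutions $u_i$, and points $x_i$ with $|u_i(x_i)|\leq 1-b'$ such that $|\nabla^k\mathcal A(u_i)(x_i)|\geq i\,\varepsilon_i$. Since $|\mathcal A|=O(\varepsilon)$ only on $\{|u|\leq1-b\}$, we work on a ball of fixed radius around $x_i$ that stays inside $\{|u|\leq1-b\}$; this is possible because $|\nabla u|$ is bounded. We use the interpolation route, which avoids blow-up subtleties.

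\textbf{Step 1: a linear equation for $\mathcal A$.} Differentiating the Allen--Cahn equation shows that $\mathcal A$, or more conveniently $\nabla^2u$ minus its normal part, satisfies a linear elliptic system with bounded smooth coefficients:
\[
\Delta \nabla^2 u - W''(u)\nabla^2u = W'''(u)\nabla u\otimes\nabla u + \mathrm{Rm}*\nabla u+\nabla\mathrm{Rm}*\nabla u,
\]
and the curvature terms are $O(\varepsilon^2)$ by \eqref{e.rescaled.bound.Riemann}. Writing $\mathcal A=|\nabla u|^{-1}(\nabla^2u-\nu\otimes\nabla|\nabla u|)$, one obtains the system $L\mathcal A=Q(\mathcal A,\nabla\mathcal A)+O(\varepsilon^2)$. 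Here $L$ has coefficients bounded in every $C^m$, and $Q$ is at least linear in $\mathcal A$. This matches the interior computation in \cite{wang-wei}.

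\textbf{Step 2: boundary data.} On $\partial_0B^+$ we need boundary conditions for $\mathcal A$. From $\partial_{x_1}u=0$ and \eqref{e.g.Ga.0.ball}, the normal derivatives of the tangential components of $\nabla^2 u$ are expressed through $\mathrm{I\!I}_{\partial_0}$ times $\nabla^2u$ and $\nabla u$, while the mixed components $\nabla^2u(\partial_1,\cdot)$ are given by $-\mathrm{I\!I}_{\partial_0}(\nabla u,\cdot)$. By \eqref{e.rescaled.bound.sff} all these data are $O(\varepsilon)$ in every $C^m$ norm. Alternatively, one could reflect evenly across $x_1=0$, as in \eqref{e.even.refl}. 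This produces a solution of an equation whose coefficients are only Lipschitz across the interface, with a jump in the first derivatives of size $O(\varepsilon)$, and it loses higher regularity. We therefore prefer to use boundary estimates for an elliptic system with oblique/Neumann plus Dirichlet data.

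\textbf{Step 3: Schauder gives $O(\varepsilon)$.} Applying boundary $C^{k,\alpha}$ estimates on unit half-balls, with $\|\mathcal A\|_{L^\infty}=O(\varepsilon)$ as the $C^0$ input, we get
\[
\|\mathcal A\|_{C^{k,\alpha}(B_1)}\lesssim \|\mathcal A\|_{L^\infty(B_2)}+\|\text{rhs}\|_{C^{k-2,\alpha}}+\|\text{bdry data}\|_{C^{k-1,\alpha}}=O(\varepsilon).
\]
The quadratic term $Q$ is handled by bootstrapping in $k$, since it is linear in $\mathcal A$ with bounded coefficients. This contradicts the assumption in the contradiction argument; equivalently, it proves \eqref{A=O(e)} directly. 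Finally, \eqref{e.grad.sff} follows because $\mathrm{I\!I}_\Sigma$ is the tangential restriction of $\mathcal A$ and $\nabla_\Sigma$ differs from $\nabla$ by terms involving $\mathcal A$ and $\nu$. Hence $\nabla^k_\Sigma\mathrm{I\!I}_\Sigma$ is a polynomial in $\nabla^j\mathcal A$ ($j\leq k$) with bounded coefficients, and each monomial contains at least one factor of $\mathcal A$.

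The main obstacle is Step 2. The full tensor $\mathcal A$ does not satisfy a clean Neumann condition: Remark \ref{e.dim.2} shows that $\partial_\nu\log|\nabla u|$ equals the boundary curvature at boundary points. One therefore has to split $\mathcal A$ carefully into its components tangential and normal to $\partial_0$, so that the resulting mixed boundary value problem is Lopatinskii-compatible and every boundary datum is visibly $O(\varepsilon)$ via \eqref{e.rescaled.bound.sff}.
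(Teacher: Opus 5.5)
Your interior argument is essentially sound: it is the Wang--Wei argument run one derivative higher, on $\mathcal A$ rather than on $N=\nabla u/|\nabla u|$. At boundary points, however, there is a genuine gap, because Step~2 is never carried out. You correctly observe that $\mathcal A$ has no clean Neumann condition. But you never specify which components receive which boundary condition, nor check that the data are $O(\varepsilon)$ in the norms Schauder requires. Your description is also wrong in the normal--normal slot: $\nabla^2u(\partial_1,\partial_1)=u_{11}$ is not determined by $\partial_1u=0$ at all. Without this, the boundary Schauder estimate in Step~3 has nothing to apply to, and the boundary is exactly what is new here relative to \cite[Lemma 8.1]{wang-wei}. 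Two smaller points: the contradiction set-up is never used and can be dropped. And the $O(1)$ source $W'''(u)\nabla u\otimes\nabla u$ in your $\nabla^2u$-equation only disappears after projecting; you assert this rather than derive it.

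The missing idea is to work one derivative lower, with $w=|\nabla u|^2$ and the coordinate components $N_q=w^{-1/2}u_q$ in boundary Fermi coordinates. Bochner's formula $\operatorname{div}(w\nabla N)=-w|\mathcal A|^2N-w\operatorname{Ric}(N,N)N+w\operatorname{Ric}(N,\cdot)^\sharp$ gives scalar equations $\Delta N_q+\langle\nabla\log w,\nabla N_q\rangle=F_q$. The Christoffel couplings between components go into $F_q$, and $\|F_q\|_{C^\theta}\lesssim\varepsilon$. The boundary conditions then come directly from $\partial_1u=0$ and $g^{1j}=\delta_{1j}$. On $\{x_1=0\}$ one has $N_1=0$. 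Since $u_{1q}=0$ for $q\geq2$ and $\partial_1w=(\partial_1g^{ij})u_iu_j$, one also has $\partial_1N_q=-\frac{(\partial_1g^{ij})u_iu_j}{2w}N_q$, which is $O(\varepsilon)$ in every $C^m$ norm. The smallness input is $\operatorname{osc}N=O(\varepsilon)$ on unit half-balls, which follows from $|\mathcal A|=O(\varepsilon)$. Dirichlet Schauder for $N_1$ and Neumann Schauder for $N_q-N_q(x_0)$, $q\geq2$, then give $\|N-N(x_0)\|_{C^{2,\theta}}\lesssim\varepsilon$, i.e.\ $\|\mathcal A\|_{C^{1,\theta}}\lesssim\varepsilon$, and one iterates; this is the paper's proof.

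Your own route can be closed with the same identities. For $j\geq2$, $\mathcal A_{j1}=-\Gamma^k_{j1}N_k$ and $\mathcal A_{1j}=\partial_1N_j-\Gamma^k_{1j}N_k$ have $O(\varepsilon)$ Dirichlet data. The tangential components $\mathcal A_{ij}$, $i,j\geq2$, get $O(\varepsilon)$ Neumann data essentially by differentiating the Robin identity tangentially. Finally, $\mathcal A_{11}=\partial_1N_1$ gets $O(\varepsilon)$ Neumann data from the $N_1$-equation restricted to $\{x_1=0\}$, where $|\mathcal A|^2N_1$ vanishes. Since the principal part of your system acts diagonally on components, componentwise scalar Schauder estimates suffice and no Lopatinskii analysis is needed. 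This is, however, strictly more work than running the argument on $N$.
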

\begin{proof}
    See Appendix \ref{Agradient} for the proof.
\end{proof}

Let $\Xi$ denote the union of all the connected components of $\{u=0\}\cap (\bbb^+_{2R}(0)\times (-2R,2R))$ which have non-empty intersections with $\bbb^+_{2R}(0)\times (-1.5R,1.5R)$. Then, using the notations of Section \ref{s.reduction}, $\Xi$ can be written as a union of graphs
\begin{equation}\label{e.Si.al}
\Xi=\bigcup_{\al}\Si_{\al},\quad \Si_{\al}=\{x_{n+1}=f_{\al}(x'):x'=(x_1,\dots,x_n)\in \bbb^+_{2R}(0)\},
\end{equation}
where $f_{\al}\in C^{\infty}(\bbb^+_{2R}(0),(-1.6R,1.6R))$, $f_{\al}(x')=\ve^{-1}\hat{f}_{\al,\ve}(\ve x')$, $\del_1 f_{\al} = 0 \text{ along } \del_0\bbb^+_{2R}(0)$,
\begin{equation}\label{e.na.f}
\md{\na f_{\al}}\leq \et_0,\quad |\na^2 f_{\al}|\leq \La_0\ve, \quad |\na^k f_{\al}|=O(\ve)\ \ \forall  k\geq 2 \text{ on }\bbb^+_{2R}(0).
\end{equation}

We fix an index $\al$ and set $f=f_{\al}$, $\Si = \{x_{n+1}=f(x'):x'=(x_1,\dots,x_n)\in \bbb^+_{2R}(0)\}$. Following \cite[Page 28]{Pacard-Ritore}, we discuss the definition and the properties of the \textit{twisted Fermi coordinates} \wrt\ $\Si$. 

Applying the standard extension operator in $C^m$ for every fixed $m$ (see \cite[Lemma 6.37]{gilbarg1977elliptic}), choose smooth extensions
$f^*$ and $g^*$ to the full balls, equal to $\hat f_{\alpha,\varepsilon}$ and $\hat g$ on the
corresponding half-balls, such that, for every fixed $m$ used below,
\begin{align}
 \|f^*\|_{C^m(\mathbb B_2(0))}
 &\leq C(n,m)\|\hat f_{\alpha,\varepsilon}\|_{C^m(\mathbb B_2^+(0))},
 \label{e.f*.C^4}\\
 \|\delta_{ij}-g^*_{ij}\|_{C^m(B_3(0))}
 &\leq C(n,m)\|\delta_{ij}-\hat g_{ij}\|_{C^m(B_3^+(0))}.
 \label{e.g*.C^3}
\end{align}
Define $\widetilde f(x')=\varepsilon^{-1}f^*(\varepsilon x')$ and
$\widetilde g_{ij}(x)=g^*_{ij}(\varepsilon x)$.  These extensions are smooth, agree with the
original data on the half-balls, and satisfy the rescaled estimates at every derivative order used
below. Then $\tilde{f}=f$ on $\bbb^+_{2R}(0)$ and $\tilde{g}=g$ on $B^+_{3R}(0)$. 
Notice that $\tilde{g}$ satisfies similar estimates to \eqref{e.rescaled.bound.gEucl}, \eqref{e.rescaled.bound.Christoffel}, and \eqref{e.rescaled.bound.Riemann} (possibly with  different dimensional constants) by \eqref{e.g*.C^3}.

Subsequently, we will assume that all the \mt\ dependent objects on $B_{3R}(0)$ (e.g. $\langle \cdot ,\cdot \rangle$, $\na$ etc.) are \wrt\ \(\tilde{g}\). Notice that $\tilde g=g$ on $B^+_{3R}(0)$. We define $\tilde{\Si} = \{x_{n+1}=\tilde{f}(x'):x'=(x_1,\dots,x_n)\in \bbb^+_{2R}(0)\}$.

\subsubsection{Properties of the normal exponential map and Fermi coordinates} Let $\Om= \bbb_{2R}(0)\times (-2R,2R)$ and $T\Om$ denote the tangent bundle of $\Om$ which can be canonically identified with $\Om\times \bbr^{n+1}$. Let $(x_1,\dots,x_{n+1},v_1,\dots,v_{n+1})$ be the standard coordinates on $T\Om$, $\pi_x:T\Om \ra \Om$ and $\pi_v:T\Om \ra \bbr^{n+1}$ the projection maps. Let $\Ph(s,-)$ be the flow of the vector field 
\[X(x,v)=\sum_k v_k\del_{x_k}\big\vert_{(x,v)}-\sum_{i,j,k}\Ga_{ij}^k(x)v_iv_j\del_{v_k}\big\vert_{(x,v)},\]
$\Ph_1=\pi_x\circ\Ph$, $\Ph_2=\pi_v\circ \Ph$. Then $\Ph_1(s,x,v)=\exp_x(sv)$ and $\Ph_2(s,x,v)=\frac{d}{ds}\exp_x(sv)$,
\begin{align}\label{e.Ph}
\del_s\Ph_1^k=\Ph_2^k, \quad \text{and} \quad \del_s \Ph_2^k=-\sum_{ij}\Ga^k_{ij}(\Ph_1)\Ph_2^i\Ph_2^j,  
\end{align}
for all $k$. 
Let $\mathcal S$ be the set of $(x,v)$ for which $|v|\leq\delta R$ and the geodesic
$s\mapsto\exp_x(sv)$ remains in $\Omega$ for $0\leq s\leq1$, where $\delta>0$ is fixed and
small, and define $\mathscr{E}:\mathcal{S}\to \Om$ by $\mathscr{E}(x,v)=\exp_x(v)$. Note that the coefficients $\Gamma_{ij}^k$ are computed in the $x_i$ coordinates. \newcommand{\sE}{\mathscr{E}}

\begin{lemma}\label{l.exp.map}
	For all $(x,v)\in \mathcal{S}$ and indices $i,j$, there holds
	\begin{equation}
		|\del_{x_i}\sE^j(x,v) - \de_{ij}|+ |\del_{v_i}\sE^j(x,v) - \de_{ij}|\lesssim \ve|v|,\quad |D^m \sE^i(x,v)|\lesssim \ve(1+|v|)\; \forall  m\geq 2.
	\end{equation}
\end{lemma}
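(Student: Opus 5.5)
We prove Lemma \ref{l.exp.map} by estimating the geodesic flow \eqref{e.Ph} directly through its variational equations, using only the bound \eqref{e.rescaled.bound.Christoffel} (valid for $\tilde g$ on $\Omega$ with a dimensional constant): $|\partial^{(\ell)}\Gamma^k_{ij}|\lesssim \varepsilon^{\ell+1}$ for $\ell\le 3$. Since $\mathscr E(x,v)=\Phi_1(1,x,v)$, it suffices to control $\Phi_1,\Phi_2$ and their derivatives in $(x,v)$ for $s\in[0,1]$.

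\emph{Step 1 (zeroth order).} The speed along a geodesic is constant in $\tilde g$, and $\tilde g$ is uniformly equivalent to the Euclidean metric, so $|\Phi_2(s)|\lesssim |v|$. Integrating $\partial_s\Phi_2=-\Gamma(\Phi_1)\Phi_2\Phi_2$ gives $|\Phi_2(s)-v|\lesssim \varepsilon |v|^2$. Integrating once more gives $|\Phi_1(s)-x-sv|\lesssim \varepsilon|v|^2$. Because $|v|\le\delta R=\delta\varepsilon^{-1}$, we also have $\varepsilon|v|\le\delta$, and this smallness is what keeps the Gronwall constants below uniform.

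\emph{Step 2 (first derivatives).} Write $J=\partial_{x_i}\Phi$ or $J=\partial_{v_i}\Phi$, and split $J=(J_1,J_2)$ into its position and velocity components. Differentiating \eqref{e.Ph} yields the linear system
\[
\partial_s J_1=J_2,\qquad \partial_s J_2=-(\partial\Gamma)(\Phi_1)J_1\,\Phi_2\Phi_2-2\Gamma(\Phi_1)\Phi_2 J_2 .
\]
By Step 1, the coefficients are bounded by $\varepsilon^2|v|^2\le\delta\varepsilon|v|$ and by $\varepsilon|v|$ respectively. The initial data are $(e_i,0)$ for $\partial_{x_i}$ and $(0,e_i)$ for $\partial_{v_i}$. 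Gronwall on $[0,1]$ then gives $|J|\lesssim 1$. Feeding this back into the velocity equation gives $|J_2-J_2(0)|\lesssim \varepsilon|v|$, and integrating again gives $|J_1(1)-J_1(0)-J_2(0)|\lesssim\varepsilon|v|$. This is exactly the first claimed estimate: $|\partial_{x_i}\mathscr E^j-\delta_{ij}|\lesssim\varepsilon|v|$ and $|\partial_{v_i}\mathscr E^j-\delta_{ij}|\lesssim\varepsilon|v|$. (The $x$-derivative of $\mathscr E$ is the position component at $s=1$ with data $(e_i,0)$, so the main term is $\delta_{ij}$; the same holds for the $v$-derivative with data $(0,e_i)$.)

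\emph{Step 3 (higher derivatives, the main bookkeeping obstacle).} For $m\ge2$ we induct on $m$. Differentiating the variational system $m-1$ more times produces the same linear operator acting on $D^m\Phi$, plus a forcing term. The forcing term is a sum of products of $(\partial^{(\ell)}\Gamma)(\Phi_1)$ with lower-order derivatives $D^{k}\Phi$ and factors of $\Phi_2$. The initial data now vanish, since $(x,v)\mapsto(x,v)$ is linear.

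Every forcing term contains at least one Christoffel factor, worth $\varepsilon^{\ell+1}$. The point to check is that the powers of $|v|$ attached to it are compensated. The leading term $\Gamma\,\Phi_2\,D^m\Phi_2$-type contributions are absorbed into the linear operator. In the remaining terms, each $\partial^{(\ell)}\Gamma$ carries at most $\ell+1$ extra factors of $\Phi_2$ or $D\Phi_2$. The factors of $|v|$ beyond the first can be traded for $\varepsilon^{-1}\delta$, using $\varepsilon|v|\le\delta$, so each term is $\lesssim \varepsilon(1+|v|)$. Gronwall then yields $|D^m\Phi|\lesssim\varepsilon(1+|v|)$, hence $|D^m\mathscr E|\lesssim\varepsilon(1+|v|)$.

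The derivative count is limited by the order up to which \eqref{e.rescaled.bound.Christoffel} is available. This is enough for the orders used later, and it can be extended at will using the higher-order estimates \eqref{e.g*.C^3}.
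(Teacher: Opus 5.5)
Your proposal is correct and follows essentially the paper's argument, which adapts Kapouleas--Li, Prop.~A.1: differentiate the geodesic-flow system \eqref{e.Ph} and apply Gronwall on $[0,1]$, using $\varepsilon|v|=O(1)$, to bound the variations; then feed the bound back into the velocity equation and integrate to extract the $\varepsilon|v|$ error, and for $m\geq2$ the $\varepsilon(1+|v|)$ error, the latter starting from vanishing initial data. One small wording fix in Step 3: what must be counted is the undifferentiated $\Phi_2$ factors, since factors $D^k\Phi_2$ are $O(1)$ and carry no power of $|v|$; each $\partial^{(\ell)}\Gamma$ term has at most two undifferentiated $\Phi_2$ factors, and the forcing terms with $\ell=0$ have none, so your bound $\varepsilon^{\ell+1}|v|^{p}\lesssim\varepsilon(1+|v|)$ does hold.
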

 \begin{proof}
 The proof is similar to \cite[Proposition A.1]{KapouleasLi21}. Since $\sE(x,v)= \Ph_1(1,x,v)$, we are going to restrict $\Ph$ on $[0,1]\times \mathcal{S}$ in this proof. Differentiating \eqref{e.Ph} we obtain
 	\begin{align}\label{e.Ph1}
 	\del_s\del_{*}\Ph_1^k=\del_{*}\Ph_2^k, \qquad \del_s \del_{*}\Ph_2^k=-\sum_{ijl}\del_{x_l}\Ga^k_{ij}(\Ph_1)\del_{*}\Ph_1^l\Ph_2^i\Ph_2^j-2\sum_{ij}\Ga^k_{ij}(\Ph_1)\del_{*}\Ph_2^i\Ph_2^j,
 	\end{align}
where $\del_{*}$ is any element in $\{\del_{x_i},\del_{v_i}:i=1,\dots,n+1\}$. We note that $\Ph(0,x,v)=(x,v)$, $|\Ph_2(s,x,v)|=|v|$ and (since the \mt\ $\tilde{g}$ is uniformly close to the Euclidean \mt) $\ve|v|=O(1)$ \fa\ $(x,v)\in \mathcal{S}$. \tf, using Gronwall's inequality for vector valued functions \cite[Theorem D.2]{Lee}, we conclude that $|\del_{*}\Ph(s,x,v)|$ is uniformly bounded for $(s,x,v)\in [0,1]\times \mathcal{S}$. Using this, one can deduce from \eqref{e.Ph1} that $|\del_s\del_{*}\Ph_2^k|\lesssim \ve |v|$ and
$|\del_{*}\Ph_1^k(1,x,v)-\del_{*}\Ph_1^k(0,x,v)-\del_{*}\Ph_2^k(0,x,v)|\lesssim \ve|v|.$
This implies the $C^1$ estimate for $\sE$.

Differentiating \eqref{e.Ph1} we obtain
\begin{align}\label{e.Ph2}
\begin{split}
    &\del_s\del_{\circ,*}\Ph_1^k=\del_{\circ,*}\Ph_2^k,\\
&\del_s \del_{\circ,*}\Ph_2^k=-\sum_{ijl}\del_{x_l}\Ga^k_{ij}(\Ph_1)\del_{\circ,*}\Ph_1^l\Ph_2^i\Ph_2^j-2\sum_{ij}\Ga^k_{ij}(\Ph_1)\del_{\circ,*}\Ph_2^i\Ph_2^j+\mathcal{E}
\end{split}
\end{align}
where $\del_{\circ}$ is any element in $\{\del_{x_i},\del_{v_i}:i=1,\dots,n+1\}$ and $|\mathcal{E}|\lesssim \ve(1+|v|).$ Note that $\mathcal{E}$ does not contain any second order derivatives of $\Ph$. \tf, using Gronwall's inequality for vector valued functions \cite[Theorem D.2]{Lee}, we obtain that $|\del_{\circ,*}\Ph(s,x,v)|$ is uniformly bounded for $(s,x,v)\in [0,1]\times \mathcal{S}$. Using this, one can deduce from \eqref{e.Ph2} that $|\del_s\del_{\circ,*}\Ph_2^k|\lesssim \ve (1+|v|)$, which implies $|\del_{\circ,*}\Ph_1^k|\lesssim \ve (1+|v|)$. Thus we obtain the $C^2$ estimate for $\sE$ and the higher order estimates can be proved in a similar way. 
\end{proof}


Let $\tilde{t}: \bbb_{1.8R}(0)\times (-2R,2R) \to \bbr$ be the signed distance \fn\ from $\tilde{\Si}$ \st\ $\partial_{x_{n+1}}\tilde{t}>0$ along \(\tilde{\Si}\). By \eqref{e.na.f}, if $\et_0, \La_0$ are sufficiently small
\begin{enumerate}[(i)]
	\item \(\tilde{t}\) is smooth on $\bbb_{1.8R}(0)\times (-2R,2R)$;
	\item the nearest point projection map \(\tilde{\Pi}:\bbb_{1.8R}(0)\times (-2R,2R)\ra \tilde{\Si}\) is a well-defined smooth map, whose image is contained in \(\tilde{\Si} \cap (\bbb_{1.9R}(0)\times (-2R,2R))\).
\end{enumerate}

\begin{remark}
    In what follows, we will denote by $\nabla$ the gradient of a scalar map, while $D$ will be used for the gradient of vector valued maps, i.e. the gradient of each component. Furthermore, everything will be computed in the ambient coordinates $(x_1, x_2, \ldots, x_{n + 1})$.
\end{remark}

\begin{lemma}\label{l.tilde.t.Pi}
	There exists $\tilde\de>0$, independent of $\ve$, \st\ the following holds.
	\begin{equation}\label{e.tilde.t.Pi}
	|D\tilde{\Pi}|=O(1),\quad |\na^m\tilde{t}|+|D^m\tilde{\Pi}|\lesssim \ve(1+|\tilde{t}|) \quad \text{ on }\{|\tilde{t}|\leq \tilde\de R\}\;\forall m\geq 2.
	\end{equation}
\end{lemma}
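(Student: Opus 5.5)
The plan is to realise $\tilde t$ and $\tilde\Pi$ through the normal exponential map of $\tilde\Si$ and then apply the inverse function theorem with the estimates of Lemma \ref{l.exp.map}. Let $\bn(y)$ be the unit normal of $\tilde\Si$ at $y=(x',\tilde f(x'))$, with $\partial_{x_{n+1}}$-component positive. Define
\[
F(x',s)=\sE\bigl((x',\tilde f(x')),\, s\,\bn(x',\tilde f(x'))\bigr).
\]
For $|s|\leq\tilde\de R$ with $\tilde\de$ small, the map $(x',s)\mapsto F(x',s)$ inverts to $x\mapsto (\pi'(\tilde\Pi(x)),\tilde t(x))$, where $\pi'$ is the projection onto the $x'$ coordinates. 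Consequently $\tilde t$ and $\tilde\Pi$ are smooth functions of $F^{-1}$, composed with the graph map $x'\mapsto(x',\tilde f(x'))$.

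\textbf{Step 1: derivatives of $F$.} The rescaled estimates \eqref{e.na.f}, transferred to $\tilde f$ via \eqref{e.f*.C^4}, give $|\nabla\tilde f|\leq C\et_0$ and $|\nabla^k\tilde f|\lesssim\ve$ for $k\geq2$. The normal $\bn$ is an algebraic expression in $\nabla\tilde f$ and $\tilde g$. Using \eqref{e.rescaled.bound.gEucl} for $\tilde g$, we get $|D^k\bn|\lesssim\ve$ for $k\geq1$. Combining this with Lemma \ref{l.exp.map} through the chain rule:
\begin{itemize}
\item $DF$ is within $O(\et_0+\ve|s|)$ of the matrix whose columns are $(e_i+\partial_i\tilde f\,e_{n+1})_{i\le n}$ and $\bn$. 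This matrix is uniformly invertible once $\et_0$ and $\tilde\de$ are small.
\item For $m\geq2$, $|D^mF|\lesssim\ve(1+|s|)$.
\end{itemize}
For the higher-order bound, every term in the chain-rule expansion contains either a derivative of order $\geq2$ of $\sE$, which is $O(\ve(1+|v|))$ with $|v|=|s|$, or a derivative of order $\geq1$ of $\bn$ or of order $\geq2$ of $\tilde f$, multiplied by $\partial_v\sE$ and by $s$. The latter terms are also $O(\ve(1+|s|))$.

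\textbf{Step 2: inverse function.} Injectivity of $F$ on $\{|s|\leq\tilde\de R\}$ follows from $|DF-L|$ being small for a fixed invertible linear $L$ (up to the $O(\et_0)$ variation), together with a standard argument on the convex domain $\bbb_{2R}\times(-\tilde\de R,\tilde\de R)$. Then $F^{-1}$ is well defined and $|DF^{-1}|=O(1)$. The derivatives of the inverse are given by the usual formulas, e.g. $D^2F^{-1}=-DF^{-1}\,D^2F(DF^{-1},DF^{-1})$, and in general each higher derivative of $F^{-1}$ is a polynomial in $DF^{-1}$ and the $D^jF$ with $j\geq2$, each monomial containing at least one $D^jF$ with $j\ge2$. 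Hence $|D^mF^{-1}|\lesssim\ve(1+|\tilde t|)$ for $m\geq2$. Here $(1+|s|)^k$ with $k>1$ is harmless, since $\ve|s|\leq\tilde\de$ allows absorbing the extra factors. Since $\tilde t$ is the last component of $F^{-1}$, and $\tilde\Pi(x)=(\pi'F^{-1}(x),\tilde f(\pi'F^{-1}(x)))$, composing with $\tilde f$ (whose second and higher derivatives are $O(\ve)$) gives the claimed bounds for $\nabla^m\tilde t$ and $D^m\tilde\Pi$, as well as $|D\tilde\Pi|=O(1)$.

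\textbf{Main obstacle.} The delicate point is identifying $F^{-1}$ with (projection, signed distance) globally on $\{|\tilde t|\leq\tilde\de R\}$, i.e. that the minimising geodesic to $\tilde\Si$ is the normal geodesic given by $F$. The curvature of $\tilde\Si$ is $O(\ve)$ and the ambient curvature is $O(\ve^2)$, so focal distance and injectivity scales are $\gtrsim R$. Choosing $\tilde\de$ small relative to these scales, and using properties (i)–(ii) stated before the lemma, gives the identification. I would handle this by a first-variation argument: a nearest point is a foot of a normal geodesic, and injectivity of $F$ then forces it to be unique.
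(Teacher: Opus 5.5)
Your proposal is correct and follows the paper's proof. The paper also parametrises by the normal exponential map $\tilde{\mathscr E}(y,\tilde z)=\mathscr E(y,\tilde z\,\mathbf n(y))$ in graph coordinates. It uses Lemma \ref{l.exp.map} and \eqref{e.na.f} in the chain rule to obtain $|\partial_{y_j}\tilde{\mathscr E}^i-\delta_{ij}|\lesssim\varepsilon|\tilde z|+\eta_0$ and $|D^m\tilde{\mathscr E}|\lesssim\varepsilon(1+|\tilde z|)$. It then inverts via $\tilde t(\tilde{\mathscr E}(y,\tilde z))=\tilde z$ and $\tilde\Pi(\tilde{\mathscr E}(y,\tilde z))=y$. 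The only difference is that you justify injectivity and the identification of the inverse with $(\tilde\Pi,\tilde t)$ explicitly, whereas the paper takes this directly from properties (i)--(ii) stated before the lemma.
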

\begin{proof}
Let $\tilde{\sE}:\tilde{\Si}\times (-\tilde\de R, \tilde\de R)\to \bbb_{1.8R}(0)\times (-2R,2R)$ be defined by $\tilde{\sE}(y,\tilde{z})=\exp_y(\tilde{z}\bn(y))=\sE(y,\tilde{z}\bn(y))$, where $\bn$ is the unit normal along $\tilde{\Si}$ satisfying $\langle  \bn , \del_{x_{n+1}}\rangle>0$, and $\tilde\delta>0$ will be determined later in the proof. We note that if $y=(y_1,\dots,y_n)\in \tilde{\Si}$, then $x_i(y)=y_i$ if $1\leq i\leq n$ and $x_{n+1}(y)=\tilde{f}(y_1,\dots, y_n)$. Hence
\begin{align}\label{e.tilde.E}
\begin{split}
\del_{y_j}\tilde{\sE}^i(y,\tilde{z}) &=\del_{x_j}\sE^i(y,\tilde{z}\bn(y))+\del_{x_{n+1}}\sE^i(y,\tilde{z}\bn(y))\del_{x_j}f(y_1,\dots,y_n)\\
& +\sum_k\tilde{z}\del_{v_k}\sE^i(y,\tilde{z}\bn(y))\del_{y_j}\bn^k(y),\\
\del_{\tilde{z}}\tilde{\sE}^i(y,\tilde{z}) &= \sum_k\del_{v_k}\sE^i(y,\tilde{z}\bn(y))\bn^k(y).
\end{split} 
\end{align}
\tf, by Lemma \ref{l.exp.map} and \eqref{e.na.f}, we have 
\begin{equation}\label{e.del.tilde.E}
|\del_{y_j}\tilde{\sE}^i(y,\tilde{z})-\de_{ij}|\lesssim \ve|\tilde{z}|+\et_0,\;1\leq i,j\leq n+1,
\end{equation}
where $\del_{y_{n+1}}$ stands for $\del_{\tilde{z}}$. This implies that if $\de$ and $\et_0$ are sufficiently small, the Jacobian of  $\tilde{\sE}$ is greater or equal than $1/2$. Differentiating \eqref{e.tilde.E} and using Lemma \ref{l.exp.map}, one can show that
\begin{equation}\label{e.D^k.tilde.E}
|D^m\tilde{\sE}(y,\tilde{z})|\lesssim \ve(1+|\tilde{z}|)\;\forall m\geq 2.
\end{equation}
Since $\tilde{t}(\tilde{\sE}(y,\tilde{z}))=\tilde{z}$ and $\tilde{\Pi}(\tilde{\sE}(y,\tilde{z}))=y$, using inverse function theorem, one can deduce from \eqref{e.del.tilde.E} and \eqref{e.D^k.tilde.E} that if $\tilde\de$ and $\et_0$ are sufficiently small,
\begin{equation}\label{e.tilde.t.Pi'}
|D\tilde{\Pi}|=O(1),\quad |\na^m\tilde{t}|+|D^m\tilde{\Pi}|\lesssim \ve(1+|\tilde{t}|) \text{ on }\{|\tilde{t}|\leq \tilde\de R\}\;\forall m\geq 2.
\end{equation}
\end{proof}

\subsection{Definition and properties of twisted Fermi coordinates}\label{s.tw.Fer.def} 
\subsubsection{The twisted Fermi vector field  and twisted Fermi coordinates}\label{sectwisted fermi def}

Let $\tilde{F}: \bbb_{2R}(0)\times (-2R,2R) \ra \bbr$ be defined by $\tilde{F}(x)=x_{n+1}-\tilde{f}(x')$. We define the following vector fields on $\bbb_{1.8R}(0)\times (-2R,2R)$:
\begin{align}
&\tilde{Z}=\na \tilde{t};\quad Z_1=\frac{\na \tilde{F}}{|\na \tilde{F}|};\quad Z_2'=\del_{x_1} - \left\langle \del_{x_1}, Z_1\right\rangle Z_1;\quad Z_2 = \frac{Z_2'}{\md{Z_2'}}; \quad Z=\frac{\tilde{Z}-\langle\tilde{Z},Z_2\rangle Z_2}{1-\langle\tilde{Z},Z_2\rangle^2}.
\end{align}
Then, $|Z_2'|\to 1$ and $|\langle\tilde{Z},Z_2\rangle|\to 0$ uniformly as $(\et_0+\La_0)\ra 0$. Along $\Si$, $\tilde{Z}=Z_1$ and is orthogonal to $\Si$. Furthermore, $Z_2=\del_{x_1}$ along $\del_0\bbb^+_{1.8R}(0)\times (-2R,2R)$ and is tangential to $\Si$. 

These facts imply the following lemma.

\begin{lemma}\label{l.vect.Z}
The vector field $Z$ has the following properties.
\begin{itemize}
	\item[(i)] $Z$ is tangential to $\del_0\bbb^+_{1.8R}(0)\times (-2R,2R)$.
	\item[(ii)] $Z=\tilde{Z}$ along $\Si$ and is perpendicular to $\Si$.
	\item[(iii)] $\langle Z, \tilde{Z}\rangle=1$.
	\item[(iv)] $(Z-\tilde{Z})$ is perpendicular to $\tilde{Z}$.
	\item[(v)] If $Z=\sum_iZ^i\del_{x_i}$ then $|Z^i-\de_{i,n+1}|\lesssim \et_0+\La_0$.
\end{itemize}
\end{lemma}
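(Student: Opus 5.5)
The proof is a direct verification from the definitions
\[
Z=\frac{\tilde Z-\langle\tilde Z,Z_2\rangle Z_2}{1-\langle\tilde Z,Z_2\rangle^2},
\]
using three facts: $|\tilde Z|=|\nabla\tilde t|=1$ since $\tilde t$ is a signed distance function; $|Z_2|=1$; and $a:=\langle\tilde Z,Z_2\rangle$ satisfies $|a|\to0$ as $\eta_0+\Lambda_0\to0$, so the denominator $1-a^2$ is bounded away from zero. I will verify the five items in the order (iii), (iv), (ii), (i), (v), since the later ones reuse the earlier computations.

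For (iii), compute
\[
\langle Z,\tilde Z\rangle=\frac{|\tilde Z|^2-a^2}{1-a^2}=\frac{1-a^2}{1-a^2}=1.
\]
For (iv), note that
\[
\langle Z-\tilde Z,\tilde Z\rangle=\langle Z,\tilde Z\rangle-|\tilde Z|^2=1-1=0,
\]
so (iv) follows immediately from (iii).

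For (ii), on $\Sigma$ the gradient of the signed distance is the unit normal, so $\tilde Z=\bn=Z_1$; this uses that $\tilde F=0$ on $\Sigma$ with $\partial_{x_{n+1}}\tilde F>0$, so $\nabla\tilde F/|\nabla\tilde F|$ is the same unit normal. The vector $Z_2'$ is $\partial_{x_1}$ minus its component along $Z_1$, hence $Z_2'\perp Z_1$, and therefore $Z_2$ is tangent to $\Sigma$. Consequently $a=0$ on $\Sigma$, which gives $Z=\tilde Z$ there, and this vector is perpendicular to $\Sigma$.

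For (i), I need $Z$ to be tangent to the flat boundary portion $\{x_1=0\}$, whose $g$-normal is $\partial_{x_1}$ by \eqref{e.g.Ga.0.ball}. Since $\tilde g=g$ on the half ball, $g_{11}=1$ and $g_{1i}=0$ hold there. The key step is to show that $Z_2=\partial_{x_1}$ along $\{x_1=0\}$. Because $\partial_1 f=0$ there and $g_{1i}=0$, we get
\[
\langle\partial_{x_1},\nabla\tilde F\rangle=\partial_1\tilde F=-\partial_1 f=0,
\]
so $Z_2'=\partial_{x_1}$ and $|Z_2'|=1$. It then remains to check that $\langle Z,\partial_{x_1}\rangle=0$:
\[
\langle Z,\partial_{x_1}\rangle=\frac{\langle\tilde Z,\partial_{x_1}\rangle-a\cdot 1}{1-a^2},
\]
and $a=\langle\tilde Z,Z_2\rangle=\langle\tilde Z,\partial_{x_1}\rangle$ on $\{x_1=0\}$, so the numerator vanishes.

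This boundary identity is the only step requiring care. It relies on $\tilde g=g$ on the half ball and on the exact relations $g_{1i}=0$ and $\partial_1 f=0$; everything else is algebra.

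For (v), \eqref{e.rescaled.bound.gEucl} gives $|g_{ij}-\delta_{ij}|\lesssim\eta_0$, and \eqref{e.na.f} gives $|\nabla f|\le\eta_0$, so $Z_1^i=\delta_{i,n+1}+O(\eta_0)$. For $\tilde Z$, I use Lemma \ref{l.tilde.t.Pi}: integrating $|\nabla^2\tilde t|\lesssim\varepsilon(1+|\tilde t|)$ along normal geodesics from $\Sigma$ over a distance $\lesssim\tilde\delta R$ shows that $\tilde Z$ stays within $O(\eta_0+\Lambda_0)$ of $\bn$, provided $\tilde\delta$ is small relative to these constants. Alternatively, one can read this off from $D\tilde{\mathscr E}$ via \eqref{e.del.tilde.E}. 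In either case $\tilde Z^i=\delta_{i,n+1}+O(\eta_0+\Lambda_0)$. Since $Z_2^{n+1}$ is small, $a=O(\eta_0+\Lambda_0)$, and (v) follows from the formula for $Z$.
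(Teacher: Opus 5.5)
Your items (i)--(iv) are correct. They are exactly the direct verification the paper intends: it lists the facts $|\tilde Z|=1$, $\tilde Z=Z_1$ on $\Sigma$, $Z_2\perp Z_1$, and $Z_2=\partial_{x_1}$ on $\{x_1=0\}$, and says they imply the lemma.

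The gap is in (v). It sits in the step $\tilde Z^i=\delta_{i,n+1}+O(\eta_0+\Lambda_0)$, on which both (v) and your derivation of the uniform smallness of $a=\langle\tilde Z,Z_2\rangle$ rest.

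Integrating the bound $|\nabla^2\tilde t|\lesssim\varepsilon(1+|\tilde t|)$ of Lemma \ref{l.tilde.t.Pi} along a normal segment of length $\tilde\delta R$ gives
\[
\int_0^{\tilde\delta R}\varepsilon(1+s)\,ds=\tilde\delta+\frac{\tilde\delta^2}{2\varepsilon}.
\]
This blows up as $\varepsilon\to0$, however $\tilde\delta$ is chosen.

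Your alternative via \eqref{e.del.tilde.E} gives only $|\tilde Z^i-\delta_{i,n+1}|\lesssim\varepsilon|\tilde z|+\eta_0$, which is $O(\tilde\delta+\eta_0)$ on $\{|\tilde t|\le\tilde\delta R\}$. Making this $O(\eta_0+\Lambda_0)$ forces $\tilde\delta\lesssim\eta_0+\Lambda_0$, so you control $\tilde Z$ only on a strip of width about $(\eta_0+\Lambda_0)R$ around $\Sigma$.

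That is not enough. $Z$ is defined on all of $\mathbb{B}_{1.8R}(0)\times(-2R,2R)$, where $|\tilde t|$ is of order $R$, and (v) is used there. Right after the lemma, (v) is what guarantees that the $Z$-flow from every $p\in\mathbb{B}^+_{1.6R}(0)\times(-2R,2R)$ stays in the box and meets $\Sigma$ at a unique time, so that $t$ and $\Pi$ are defined.

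The missing observation is that you need no bound on $\nabla^2\tilde t$ in the normal direction at all. Differentiating $|\nabla\tilde t|^2=1$ gives $\nabla^2\tilde t(\tilde Z,\cdot)=0$, i.e. $\nabla_{\tilde Z}\tilde Z=0$, so the integral curves of $\tilde Z$ are unit-speed geodesics. Along the normal geodesic $\gamma$ from $\tilde\Pi(x)$ to $x$, the coordinate components therefore satisfy
\[
\frac{d}{ds}\tilde Z^k(\gamma(s))=-\Gamma^k_{ij}\tilde Z^i\tilde Z^j=O(\varepsilon\eta_0),
\]
by \eqref{e.rescaled.bound.Christoffel}, which also holds for $\tilde g$. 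Since the length $|\tilde t(x)|$ is $O(R)$ on the box, the accumulated change is $O(\eta_0)$.

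Combine this with $\tilde Z=Z_1$ on $\Sigma$ and $Z_1^k=\delta_{k,n+1}+O(\eta_0)$. You get $\tilde Z^k=\delta_{k,n+1}+O(\eta_0)$ on the whole box, wherever $\tilde t$ is smooth; that smoothness is where $\Lambda_0$ enters.

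Next, $\langle\partial_{x_1},Z_1\rangle=-\partial_1\tilde f/|\nabla\tilde F|=O(\eta_0)$ gives $Z_2=\partial_{x_1}+O(\eta_0)$. Hence $a=O(\eta_0)$, which also supplies the uniform smallness of $a$ you took as an input. Item (v) then follows from the formula for $Z$ exactly as you wrote.
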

Combining \eqref{e.grad.sff}, \eqref{e.tilde.t.Pi}, \eqref{e.na.f}, \eqref{e.f*.C^4}, and \eqref{e.g*.C^3} we infer 
\begin{equation}\label{e.na.Z}
|\na^m \tilde{Z}|+|\na^m Z_1|+|\na^m Z_2|+|\na^m Z|\lesssim \ve(1+|\tilde{t}|) \quad \text{on }\{|\tilde{t}|\leq \tilde\de R\}\;\forall m\geq 1.
\end{equation}

Let $\Tht(\ta,-)=\Tht_{\ta}$ denote the flow of $Z$ on $\bbb_{1.8R}(0)\times (-2R,2R)$, so $\Tht(0,x)=x$ and
\begin{equation}\label{e.Tht}
\del_{\ta}\Tht = Z(\Tht).
\end{equation}

\begin{definition}[Twisted Fermi coordinates]\label{twisted fermi def}
Given a coordinate system $y=(y_1, \dots, y_n)$ on $\Si$,  \textit{twisted Fermi coordinates} on $\bbb^+_{1.8R}(0)\times (-2R,2R)$ with respect to $\Si$ are defined by $Y_{\Si}(y,z)=\Tht_{z}(y)$, for $y\in \Si$.
\end{definition}
By Lemma \ref{l.vect.Z} items (i) and (v), and \eqref{e.na.Z}, if $p\in \bbb^+_{1.6R}(0)\times (-2R,2R)$, then
\begin{enumerate}[(i)]
	\item \(\Tht_{\ta}(p)\in \bbb^+_{1.7R}(0)\times (-2R,2R)\) for all $\ta$ belonging to the domain of $\ta \mapsto \Tht_{\ta}(p)$,
	\item \tes\ a unique time $\ta_p\in \bbr$ \st\ $\Tht_{-\ta_p}(p)\in \Si$,	
\end{enumerate}  
provided $\et_0$ and $\La_0$ are sufficiently small. 

\begin{definition}[Twisted Fermi coordinate time function and projection]\label{d.twisted.time.projection}
The twisted Fermi coordinate time function $t:\bbb^+_{1.6R}(0)\times (-2R,2R) \ra \bbr$ and projection map $\Pi:\bbb^+_{1.6R}(0)\times (-2R,2R) \ra \Si\cap (\bbb^+_{1.7R}(0)\times (-2R,2R))$ are defined by $t(p)=\ta_p$ and $\Pi(p)=\Tht(-t(p),p)$. 
\end{definition}

From now on, $(y_1,\dots,y_n,z)$ will denote twisted Fermi coordinates \wrt\ this coordinate system on $\Si$.

\begin{remark}
	Similar coordinate systems near a boundary point appear in \cite[Section 10]{Pacard-Ritore} and \cite[Definition 2.10]{KapouleasLi21}.
\end{remark}

\begin{lemma}\label{l.Fc.vs.tFc.1}
Let $p\in \bbb^+_{1.6R}(0)\times (-2R,2R)$. Then $\tilde{t}(p)=t(p)$.
\end{lemma}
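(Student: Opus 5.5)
The plan is to show that $\tilde t$ increases at unit rate along the integral curves of $Z$, and then integrate from the base point on $\Sigma$. By Lemma \ref{l.vect.Z}(iii), $\langle Z,\tilde Z\rangle=1$ with $\tilde Z=\nabla\tilde t$. Hence for every $x$ in the domain,
\[
\frac{d}{d\tau}\,\tilde t(\Theta(\tau,x))=\langle\nabla\tilde t,Z\rangle(\Theta(\tau,x))=1 .
\]
So $\tau\mapsto\tilde t(\Theta(\tau,x))-\tau$ is constant along each flow line, as long as the flow line stays in the region where $\tilde t$ is smooth and $Z$ is defined, namely $\mathbb B_{1.8R}(0)\times(-2R,2R)$.

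Now fix $p\in\mathbb B^+_{1.6R}(0)\times(-2R,2R)$ and let $\tau_p=t(p)$, so that $q:=\Theta(-\tau_p,p)=\Pi(p)\in\Sigma$. The flow line joining $q$ to $p$ is $\tau\mapsto\Theta(\tau,q)$ for $\tau$ between $0$ and $\tau_p$. By the flow-invariance property (i) listed after Definition \ref{twisted fermi def}, this segment lies in $\mathbb B^+_{1.7R}(0)\times(-2R,2R)$, where $\tilde t$ is smooth. On this half-ball $\tilde\Sigma=\Sigma$, and $\tilde t$ is the signed distance to $\tilde\Sigma$, so $\tilde t(q)=0$. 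Integrating the identity above from $\tau=0$ to $\tau=\tau_p$ gives
\[
\tilde t(p)=\tilde t(q)+\tau_p=t(p).
\]

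The only point that needs care is that the integration happens inside the region where both $\tilde t$ is smooth and $\langle Z,\nabla\tilde t\rangle=1$ holds. This is supplied by the invariance statement (i) and the existence and uniqueness of $\tau_p$ in (ii), both stated after Definition \ref{twisted fermi def}. Point (iii) of Lemma \ref{l.vect.Z} holds by construction: the vector $\tilde Z-\langle\tilde Z,Z_2\rangle Z_2$ has inner product $1-\langle\tilde Z,Z_2\rangle^2$ with $\tilde Z$, since $|\tilde Z|=1$. The denominator $1-\langle\tilde Z,Z_2\rangle^2$ is nonzero because $|\langle\tilde Z,Z_2\rangle|\to0$ for small $\eta_0,\Lambda_0$. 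The bound $|\tilde t|\le\tilde\delta R$ is not needed, since smoothness of $\tilde t$ holds on the whole box by the discussion after Lemma \ref{l.exp.map}. I expect no real obstacle beyond this domain bookkeeping.
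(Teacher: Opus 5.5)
Your proof is correct and is essentially the paper's argument: the paper also sets $\gamma(\tau)=\tilde t(\Theta(\tau,q))$ with $q=\Pi(p)$, notes $\gamma(0)=0$ and $\gamma'(\tau)=\langle\nabla\tilde t,Z\rangle=1$, and concludes $\tilde t(p)=\gamma(t(p))=t(p)$. The only differences are cosmetic: the paper cites item (iv) of Lemma \ref{l.vect.Z} (equivalent to (iii) since $|\tilde Z|=1$), and you spell out the domain bookkeeping for the flow segment, which the paper leaves implicit.
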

\begin{proof}
Let $q=\Pi(p)$. We consider the function $\ga(\ta)=\tilde{t}(\Tht(\ta,q))$. Then $\ga(0)=0$ and by Lemma \ref{l.vect.Z}(iv), $\ga'(\ta)=\left\langle \na \tilde{t}, Z\right\rangle(\Tht(\ta,q))=1$. Hence, $\tilde{t}(p)=\ga(\ta_p)=\ta_p=t(p)$.
\end{proof}

\begin{lemma}\label{l.Tht}
	Let us fix $K>0$. For all $(\ta,x)$ satisfying $|\ta|,|t(x)|\leq K\sqrt{R}$ and for all indices $i,j$, there holds
	\begin{align}
	&|\del_{x_i}\Tht^j(\ta,x) - \de_{ij}|\lesssim \ve(|\ta|+|\ta|^2+|t(x)|^2),\\ 
& |D^m_x \Tht^i(\ta,x)|\lesssim \ve(|\ta|+\ta^2+t(x)^2),\qquad |D^m \Tht^i(\ta,x)|\lesssim \ve(1+|\ta|^2+|t(x)|^2)\; \forall  m\geq 2.
	\end{align}
\end{lemma}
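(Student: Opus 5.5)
The plan is to differentiate the flow equation \eqref{e.Tht} in $x$ and apply Gronwall's inequality, in the spirit of the proof of Lemma \ref{l.exp.map}. The one new ingredient is the control of $\nabla^m Z$ along the flow line supplied by \eqref{e.na.Z}. Fix $x$ with $|t(x)|\leq K\sqrt R$. By Lemma \ref{l.Fc.vs.tFc.1} and the proof of Lemma \ref{l.Fc.vs.tFc.1}, $\tilde t(\Theta(\sigma,x))=t(x)+\sigma$, because $\langle\nabla\tilde t,Z\rangle=1$ by Lemma \ref{l.vect.Z}(iii). Along the orbit we therefore have $|\tilde t|\leq|t(x)|+|\sigma|\leq 2K\sqrt R\ll\tilde\delta R$, so \eqref{e.na.Z} applies at every point of the orbit and gives
\[
|D^mZ(\Theta(\sigma,x))|\lesssim\varepsilon(1+|t(x)|+|\sigma|)\qquad\text{for all }m\geq1.
\]
Since the coordinates are ambient and $\tilde g$ is $\eta_0$-close to Euclidean, coordinate derivatives of $Z^i$ agree with covariant ones up to Christoffel terms of size $O(\varepsilon)$, by \eqref{e.rescaled.bound.Christoffel}. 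Hence the same bound holds for $\partial^m Z^i$.

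\emph{First derivatives.} Write $J(\sigma)=D_x\Theta(\sigma,x)$. It satisfies $\partial_\sigma J=DZ(\Theta)\,J$ with $J(0)=I$. Gronwall gives
\[
|J(\sigma)|\leq\exp\Bigl(\int_0^{|\tau|}C\varepsilon(1+|t(x)|+s)\,ds\Bigr)=O(1),
\]
because $\varepsilon(|\tau|+|\tau|^2+|\tau||t(x)|)\lesssim\varepsilon K^2R=K^2$ is bounded. Integrating once more,
\[
|J(\tau)-I|\leq\int_0^{|\tau|}|DZ||J|\,ds\lesssim\varepsilon\bigl(|\tau|+|\tau|^2+|\tau||t(x)|\bigr).
\]
The product term is absorbed using $|\tau||t|\leq\tfrac12(\tau^2+t^2)$, which yields the first estimate.

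\emph{Higher $x$-derivatives.} Differentiate the ODE $m$ times. The result is $\partial_\sigma D^m_x\Theta=DZ(\Theta)\,D^m_x\Theta+\mathcal E_m$, where $\mathcal E_m$ is a sum of products of $D^kZ(\Theta)$, $2\leq k\leq m$, with lower-order derivatives $D^{j}_x\Theta$, $j<m$. By induction those lower-order derivatives are bounded, so $|\mathcal E_m|\lesssim\varepsilon(1+|t(x)|+|\sigma|)$. Since $D^m_x\Theta(0,x)=0$ for $m\geq2$, the linear Gronwall argument gives
\[
|D^m_x\Theta(\tau,x)|\lesssim\int_0^{|\tau|}\varepsilon(1+|t|+s)\,ds\lesssim\varepsilon(|\tau|+\tau^2+t(x)^2).
\]
This is the second estimate.

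\emph{Mixed derivatives.} For derivatives involving $\tau$, use $\partial_\tau\Theta=Z(\Theta)$ and the chain rule. One $\tau$-derivative of $D^{m-1}_x\Theta$ is a combination of $D^kZ(\Theta)$, $k\geq1$, times bounded derivatives of $\Theta$, together with the term $Z(\Theta)\,D^{m-1}_x\Theta$. Each such term is $\lesssim\varepsilon(1+|\tau|+|t(x)|)$. The only other contribution is the pure term $\partial_\tau^2\Theta=DZ(Z)$, which has the same bound. These bounds do not vanish at $\tau=0$, which accounts for the weaker right-hand side $\varepsilon(1+\tau^2+t^2)$.

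The main obstacle is bookkeeping rather than ideas. One must check that \eqref{e.na.Z} really holds at all orbit points, which requires that the orbit stays in the domain $\mathbb B_{1.8R}(0)\times(-2R,2R)$; this follows from Lemma \ref{l.vect.Z}(v) and $|\tau|\lesssim\sqrt R\ll R$. One must also confirm that the Gronwall exponent stays bounded, and that is exactly where the restriction $|\tau|,|t(x)|\leq K\sqrt R$ enters, with constants depending on $K$.
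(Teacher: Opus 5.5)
Your proposal is correct and follows essentially the paper's argument: you differentiate $\partial_\tau\Theta=Z(\Theta)$ in $x$, bound the derivatives of $Z$ along the orbit by $\varepsilon(1+|\tau|+|t(x)|)$ via \eqref{e.na.Z}, use Gronwall to get boundedness, and integrate in $\tau$ from $D_x\Theta(0,x)=I$, $D^m_x\Theta(0,x)=0$; your identity $\tilde t(\Theta(\sigma,x))=t(x)+\sigma$ makes explicit what the paper uses tacitly. One small slip: $\partial_\tau D^{m-1}_x\Theta=D^{m-1}_x(Z\circ\Theta)$ contains no undifferentiated term $Z(\Theta)\,D^{m-1}_x\Theta$, which would be $O(1)$ rather than $O(\varepsilon)$; the top-order term is $DZ(\Theta)\,D^{m-1}_x\Theta$, which does satisfy your bound, so the conclusion stands.
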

\begin{proof}
The proof is similar to the proof of Lemma \ref{l.exp.map}. Differentiating \eqref{e.Tht} we obtain
\begin{equation}\label{e.del.x.Tht}
\del_{\ta}\del_{x_i}\Tht^k=\sum_p\del_{x_p}Z^k(\Tht)\del_{x_i}\Tht^p.
\end{equation}
Furthermore, \eqref{e.na.Z} implies
$|\na^mZ(\Tht(\ta,x))|\lesssim \ve(1+|\ta|+|t(x)|)$ for all $m\geq 1$.
\tf, using Gronwall's inequality for vector valued functions \cite[Theorem D.2]{Lee}, we conclude that $|\del_{x_i}\Tht^k|=O(1)$. Using this estimate, we deduce from \eqref{e.del.x.Tht} that 
\begin{equation}\label{e.del2.ta.x.Tht}
|\del_{\ta}\del_{x_i}\Tht^k(\ta,x)|\lesssim \ve(1+|\ta|+|t(x)|),
\end{equation}
hence
\begin{equation}
|\del_{x_i}\Tht^k(\ta,x)-\del_{x_i}\Tht^k(0,x)|\lesssim \ve(|\ta|+|\ta|^2+|t(x)|^2).
\end{equation}
This implies the $C^1$ estimate for $\Tht$. 

Differentiating \eqref{e.del.x.Tht}, we obtain
\begin{equation}\label{e.del2.x.Tht}
\del_{\ta}\del_{x_ix_j}\Tht^k=\sum_{p,q}\del_{x_px_q}Z^k(\Tht)\del_{x_i}\Tht^p\del_{x_j}\Tht^q+\sum_p\del_{x_p}Z^k(\Tht)\del_{x_ix_j}\Tht^p.
\end{equation}
\tf, using Gronwall's inequality for vector valued functions \cite[Theorem D.2]{Lee}, we conclude that $|\del_{x_ix_j}\Tht^k|=O(1)$. Using this estimate, we deduce from \eqref{e.del2.x.Tht} that 
\begin{equation}
|\del_{\ta}\del_{x_ix_j}\Tht^k(\ta,x)|\lesssim \ve(1+|\ta|+|t(x)|);
\end{equation}
hence
\begin{equation}\label{e.est.del2.x.Tht}
|\del_{x_ix_j}\Tht^k(\ta,x)|\lesssim \ve(|\ta|+|\ta|^2+|t(x)|^2).
\end{equation}
Combining \eqref{e.del2.ta.x.Tht} and \eqref{e.est.del2.x.Tht}, we obtain the $C^2$ estimates for $\Tht$. Higher order estimates can be proved in a similar way.
\end{proof}

\begin{lemma}\label{l.t.Pi}
There exists $0<\de<\tilde{\delta}$, independent of $\ve$, \st\ the following holds.
	\begin{equation}\label{e.t.Pi}
	|D\Pi|=O(1),\quad |\na^m t|\lesssim \ve(1+|z|),\quad |D^m \Pi|\lesssim \ve(1+|z|^2) \text{ on }\{|z|\leq \de \sqrt{R}\}\;\forall m\geq 2.
	\end{equation}
\end{lemma}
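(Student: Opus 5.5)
The plan is to mimic the proof of Lemma \ref{l.tilde.t.Pi}, with the twisted flow map $Y(y,z)=\Tht(z,y)$ playing the role of $\tilde{\sE}$, and then invert. By Lemma \ref{l.Fc.vs.tFc.1} we have $t=\tilde t$. Hence the bound $|\na^m t|\lesssim\ve(1+|z|)$ follows at once from Lemma \ref{l.tilde.t.Pi}, since on the image of $Y$ we have $t(Y(y,z))=z$ and $\{|z|\leq\de\sqrt R\}\subset\{|\tilde t|\leq\tilde\de R\}$ once $\ve$ is small. So the real content is the estimate for $\Pi$.

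For $\Pi$, parametrise $\Si$ by $y=(y_1,\dots,y_n)\mapsto (y,f(y))$ and set $Y(y,z)=\Tht(z,(y,f(y)))$. Differentiating gives
\[
\del_{y_j}Y^i=\del_{x_j}\Tht^i+\del_{x_{n+1}}\Tht^i\,\del_jf,\qquad \del_zY^i=Z^i(Y).
\]
Lemma \ref{l.Tht} applies with $x\in\Si$ (so $t(x)=0$) and $|\ta|=|z|\leq\de\sqrt R$. Combined with \eqref{e.na.f} and Lemma \ref{l.vect.Z}(v), it yields
\[
|DY-\mathrm{Id}|\lesssim \et_0+\La_0+\ve(|z|+z^2)\lesssim \et_0+\La_0+\de^2 .
\]
Here $\ve z^2\leq\de^2$; this is exactly why the region is restricted to $|z|\leq\de\sqrt R$ rather than $\de R$. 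Choosing $\de,\et_0,\La_0$ small makes $DY$ uniformly invertible, so $DY^{-1}=O(1)$.

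For the higher derivatives, differentiate the formula for $DY$ again. The terms involving $D^m_x\Tht$ are bounded by Lemma \ref{l.Tht} by $\ve(|z|+z^2)$. The mixed terms $\del_z\del_y Y=DZ(Y)\,\del_yY$ and $\del_z^2Y=DZ(Y)Z(Y)$ are bounded via \eqref{e.na.Z} by $\ve(1+|z|)$. The terms with derivatives of $f$ are $O(\ve)$ by \eqref{e.na.f}. Altogether
\[
|D^mY|\lesssim\ve(1+z^2)\qquad\text{for all } m\geq2 .
\]

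Next write $Y^{-1}=(\Pi,t)$ in these coordinates, identifying $\Pi$ with its $y$-coordinates, or with the point $(y,f(y))$; the latter only costs $O(\ve)$ derivatives of $f$. The inverse function theorem gives $D\Pi=O(1)$. For $m\geq2$, use the standard formula for derivatives of an inverse: $D^m(Y^{-1})$ is a polynomial in $(DY)^{-1}$ and $D^kY$ for $2\leq k\leq m$, with each monomial containing at least one $D^kY$ with $k\geq 2$. Since $\ve(1+z^2)\lesssim 1$ on the region, products of several such factors are still $\lesssim\ve(1+z^2)$. This gives $|D^m\Pi|\lesssim\ve(1+|z|^2)$.

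The main obstacle I expect is the careful bookkeeping of the $\tau$-dependent growth in Lemma \ref{l.Tht}. One must check that the quadratic factor $z^2$ does not accumulate when these terms are composed in the inversion. This is controlled precisely by $\ve z^2\leq\de^2\ll1$, which is also what fixes the choice of $\de<\tilde\de$.
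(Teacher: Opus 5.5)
Your proposal is correct and follows essentially the same route as the paper. The paper likewise gets $\nabla^m t$ from Lemma \ref{l.tilde.t.Pi} via $t=\tilde t$, shows that the restricted flow map $(z,y)\mapsto\Theta(z,y)$ on $[-\delta\sqrt R,\delta\sqrt R]\times\Sigma$ has differential $o(1)$-close to the identity (as $\delta+\eta_0+\Lambda_0\to0$) and higher derivatives $\lesssim\varepsilon(1+|z|^2)$ by Lemma \ref{l.Tht}, and then inverts; your additional bookkeeping (mixed derivatives via \eqref{e.na.Z}, and the polynomial structure of $D^m(Y^{-1})$ together with $\varepsilon(1+z^2)\lesssim1$) only makes explicit what the paper leaves implicit.
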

\begin{proof}
The estimates for $|\na^m t|$ follow from Lemma \ref{l.tilde.t.Pi} and Lemma \ref{l.Fc.vs.tFc.1}. Let $\hat{\Tht}$ denote the restriction of $\Tht$ on $[-\de \sqrt{R}, \de \sqrt{R}]\times \Si$, where $\de\in(0,\tilde\delta)$ is to be determined. We note that if $y=(y_1,\dots,y_n)\in \Si$, then $x_i(y)=y_i$ if $1\leq i\leq n$ and $x_{n+1}(y)=f(y_1,\dots, y_n)$. Hence
\begin{align}\label{e.hat.Tht}
& \del_{y_i} \hat{\Tht}^j(z,y)=\del_{x_i}\Tht^j(z,y)+\del_{x_{n+1}}\Tht^j(z,y)\del_{x_i}f(y_1,\dots,y_n),\\
& \del_z \hat{\Tht}^j(z,y)=Z^j(\Tht(z,y)).
\end{align}
\tf, by Lemma \ref{l.Tht}, \eqref{e.na.f} and Lemma \ref{l.vect.Z}(v), as $(\de+\et_0+\La_0)\to 0$ we infer 
\begin{equation}\label{e.del.hat.Tht}
|\del_{y_i}\hat{\Tht}^j(z,y)-\de_{ij}|=o(1),\;1\leq i,j\leq n+1,
\end{equation}
where $\del_{y_{n+1}}$ stands for $\del_{z}$. This implies if $\de,\;\et_0,\;\La_0$ are sufficiently small, the Jacobian of  $\hat{\Tht}$ is greater or equal than $1/2$. Differentiating \eqref{e.hat.Tht} and using Lemma \ref{l.Tht}, one can show that
\begin{equation}\label{e.D^k.hat.Tht}
|D^m\hat{\Tht}(z,y)|\lesssim \ve(1+|z|^2)\;\forall m\geq 2.
\end{equation}
Since $t(\hat{\Tht}(z,y))=z$ and $\Pi(\hat{\Tht}(z,y))=y$, using inverse function theorem, one can deduce from \eqref{e.del.hat.Tht} and \eqref{e.D^k.hat.Tht} that if $\de,\;\et_0,\;\La_0$ are sufficiently small,
\begin{equation}\label{e.D.t.Pi}
|D\Pi|=O(1),\quad |D^m \Pi|\lesssim \ve(1+|z|^2) \text{ on }\{|z|\leq \de \sqrt{R}\}\;\forall m\geq 2.
\end{equation}
\end{proof}

Notice that by definition of twisted Fermi coordinates, if $x\in \bbb^+_{1.6R}(0)\times (-2R,2R)$, then
\begin{equation}\label{e.partial_yi.differential.flow}
    \del_{y_i}|_x=D\Tht_{t(x)}(\Pi(x))\big(\del_{y_i}|_{\Pi(x)}\big).
\end{equation}
Moreover, for $y=(y_1,\dots,y_n)\in \Si$, clearly
$\del_{y_i}|_y=\del_{x_i}|_y+\del_{x_i}f(y_1,\dots, y_n)\del_{x_{n+1}}|_y$, since $\Sigma$ is the graph of $f$ over $\mathbb{B}^{+}_{2R}(0)$.

Hence, we obtain
\begin{equation}\label{e.del_y_i}
    \del_{y_i}|_{(y,z)}=\sum_j\del_{x_i}\Tht^j(z,y)\del_{x_j}|_{(y,z)}+\sum_j\del_{x_i}f(y_1,\dots, y_n)\del_{x_{n+1}}\Tht^j(z,y)\del_{x_j}|_{(y,z)}.
\end{equation}
Equation \eqref{e.del_y_i} has several useful consequences, which we shall record below.

\begin{lemma}[Covariant derivative estimates]\label{l.na.y_i}
The following estimates hold  for $\partial_{y_i}$ and its covariant derivatives
\begin{align}
    \label{e.del.y_i.O(1)}
	|\partial_{y_i}| = O(1)  &\text{ on }\{|z|\leq \delta\sqrt{R}\},\\
 \label{e.na.y_i}
	|\na^m\del_{y_i}|\lesssim \ve(1+|z|^2) &\text{ on }\{|z|\leq \delta\sqrt{R}\}\;\forall m\geq 1. 
\end{align} 
\end{lemma}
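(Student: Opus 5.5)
The plan is to read everything off the coordinate expression \eqref{e.del_y_i}. That formula writes $\partial_{y_i}|_{(y,z)}=\sum_j a_i^j(y,z)\,\partial_{x_j}$, where
\[
a_i^j(y,z)=\partial_{x_i}\Theta^j(z,y)+\partial_{x_i}f(y)\,\partial_{x_{n+1}}\Theta^j(z,y)
\]
is evaluated with $x=y\in\Sigma$. Every needed bound is then a bound on the flow $\Theta$ (Lemma \ref{l.Tht}), on $f$ (estimate \eqref{e.na.f}), and on the metric (estimates \eqref{e.rescaled.bound.gEucl}--\eqref{e.rescaled.bound.Christoffel}).

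For \eqref{e.del.y_i.O(1)}, I will apply Lemma \ref{l.Tht} with $\tau=z$ and base point $y\in\Sigma$, where $t(y)=0$. It gives
\[
|\partial_{x_i}\Theta^j(z,y)-\delta_{ij}|\lesssim \varepsilon(|z|+|z|^2)\lesssim \delta^2+\varepsilon\sqrt R
\]
on $\{|z|\le\delta\sqrt R\}$, which is $O(1)$. Together with $|\nabla f|\le\eta_0$ this gives $|a_i^j|=O(1)$. Since $\tilde g$ is uniformly equivalent to the Euclidean metric, it follows that $|\partial_{y_i}|_{\tilde g}=O(1)$.

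For \eqref{e.na.y_i}, I will first treat $m=1$. I need to bound $\nabla_{\partial_{x_k}}\partial_{y_i}=\sum_j(\partial_{x_k}a_i^j)\partial_{x_j}+\sum_{j,l}a_i^j\Gamma_{kj}^l\partial_{x_l}$, regarded as a function of $x$. The Christoffel term is $O(\varepsilon)$ by \eqref{e.rescaled.bound.Christoffel} and the bound on $a_i^j$ just proved. The remaining term requires $D_x a_i^j$, where $a_i^j$ is viewed as a function of $x$ through $(y,z)=(\Pi(x),t(x))$. By the chain rule this involves:
\begin{itemize}
\item second derivatives of $\Theta$ in $(\tau,x)$, bounded by Lemma \ref{l.Tht} by $\varepsilon(1+z^2)$;
\item the quantities $\nabla^2 f=O(\varepsilon)$, multiplied by bounded first derivatives of $\Theta$;
\item the first derivatives $D\Pi=O(1)$ and $\nabla t=O(1)$ from Lemma \ref{l.t.Pi}.
\end{itemize}
Each of these contributes $\lesssim\varepsilon(1+|z|^2)$. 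Higher $m$ proceed by induction using the same chain rule. Each extra derivative falls on one of the following:
\begin{itemize}
\item $\Theta$, bounded by $\varepsilon(1+z^2)$ by Lemma \ref{l.Tht};
\item $f$, bounded by $O(\varepsilon)$;
\item $\Pi$ or $t$, where higher derivatives are bounded by $\varepsilon(1+|z|^2)$ by Lemma \ref{l.t.Pi};
\item $\Gamma$, bounded by $O(\varepsilon)$.
\end{itemize}
In every product at most one factor is not $O(1)$, and all others are bounded.

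The main obstacle is the bookkeeping in the products. For $m\ge2$, a product of two "small" factors could a priori produce $\varepsilon^2(1+|z|^2)^2$. I will absorb this using $\varepsilon(1+|z|^2)\lesssim 1$ on $\{|z|\le\delta\sqrt R\}$. This leaves $\lesssim\varepsilon(1+|z|^2)$, as claimed.
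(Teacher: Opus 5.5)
Your proposal is correct and follows the same route as the paper, whose proof simply reads the estimates off the coordinate expression \eqref{e.del_y_i} together with the flow bounds of Lemma \ref{l.Tht}. Your version makes explicit the bookkeeping the paper leaves implicit: the use of Lemma \ref{l.t.Pi} to pass from $(y,z)$-derivatives to ambient ones, the Christoffel terms, and the absorption of products of two small factors via $\varepsilon(1+|z|^2)\lesssim 1$ on $\{|z|\le\delta\sqrt R\}$.
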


\begin{proof}
The lemma follows from \eqref{e.del_y_i} and Lemma \ref{l.Tht}.
\end{proof}

\begin{lemma}[Neumann derivative estimate]\label{l.del.y_1.t_be}
The following estimate holds
\begin{equation}\label{e.del.x_1-del.y_1}
	|\del_{x_1}\varphi(y, z)-\del_{y_1}\varphi(y, z)|\lesssim \ve|z|(1+|z|)|\nabla \varphi(y, z)|\text{ on }\partial_0\bbb^+_{1.6R}(0)\times [-\delta\sqrt R,\delta\sqrt R]
\end{equation}
for any smooth function $\varphi$ on $\bbb^+_{1.6R}(0)\times (-2R,2R)$.
\end{lemma}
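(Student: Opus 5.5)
The estimate reduces to comparing the two vectors $\partial_{y_1}|_{(y,z)}$ and $\partial_{x_1}|_{(y,z)}$ at boundary points. Since $\partial_{x_1}\varphi-\partial_{y_1}\varphi=\langle\nabla\varphi,\partial_{x_1}-\partial_{y_1}\rangle$, it suffices to show
\[
\bigl|\partial_{y_1}|_{(y,z)}-\partial_{x_1}|_{(y,z)}\bigr|\lesssim\varepsilon|z|(1+|z|)
\]
for $y\in\Sigma\cap\partial_0$ and $|z|\leq\delta\sqrt R$. The base point is a point of $\partial_0\mathbb B^+_{1.6R}(0)\times(-2R,2R)$. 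By Lemma \ref{l.vect.Z}(i), $Z$ is tangent to $\{x_1=0\}$, so the flow $\Theta_z$ preserves $\{x_1=0\}$. Hence $\Pi(x)$ lies on $\Sigma\cap\{x_1=0\}$.

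\textbf{Step 1: the slice $z=0$.} I would start with formula \eqref{e.del_y_i} at $z=0$. There $\Theta_0=\mathrm{id}$, so $\partial_{y_1}|_y=\partial_{x_1}|_y+\partial_{x_1}f(y)\,\partial_{x_{n+1}}|_y$. Since $\partial_{x_1}f=0$ along $\partial_0\mathbb B^+_{2R}(0)$, the two vectors coincide exactly at $z=0$.

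\textbf{Step 2: the difference along the flow.} By \eqref{e.partial_yi.differential.flow}, $\partial_{y_1}|_{(y,z)}=D\Theta_z(y)\,\partial_{x_1}|_y$. So I need to compare $D\Theta_z(y)\partial_{x_1}$ with the coordinate field $\partial_{x_1}$ at $\Theta_z(y)$. Set $V(\tau)=D\Theta_\tau(y)\partial_{x_1}-\partial_{x_1}|_{\Theta_\tau(y)}$, with components $V^k(\tau)=\partial_{x_1}\Theta^k(\tau,y)-\delta_{1k}$. From \eqref{e.del.x.Tht},
\[
\partial_\tau V^k=\sum_p \partial_{x_p}Z^k(\Theta)\,\partial_{x_1}\Theta^p=\partial_{x_1}Z^k(\Theta)+\sum_p\partial_{x_p}Z^k(\Theta)V^p .
\]
The term $\partial_{x_1}Z^k(\Theta)$ is the Lie-derivative-type forcing, and the same computation gives the bound $O(\varepsilon(1+|\tau|))$ from \eqref{e.na.Z}. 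Gronwall, using $|\nabla Z|\lesssim\varepsilon(1+|\tau|)$ and $\varepsilon|\tau|^2\lesssim\delta^2$, then gives $|V(z)|\lesssim\varepsilon|z|(1+|z|)$. This is essentially the first estimate of Lemma \ref{l.Tht} with $t(x)=0$ on $\Sigma$, which I would simply quote. Along $\Sigma$ we have $t(y)=0$, so Lemma \ref{l.Tht} gives $|\partial_{x_i}\Theta^j(z,y)-\delta_{ij}|\lesssim\varepsilon(|z|+|z|^2)$.

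\textbf{Step 3: conclusion.} Inserting this into \eqref{e.del_y_i} with $i=1$, the second sum vanishes because $\partial_{x_1}f(y)=0$ at boundary points. This gives
\[
\partial_{y_1}-\partial_{x_1}=\sum_j\bigl(\partial_{x_1}\Theta^j-\delta_{1j}\bigr)\partial_{x_j},
\]
with coefficients $O(\varepsilon|z|(1+|z|))$. Since $|\partial_{x_j}|=O(1)$ for the metric $\tilde g$, pairing with $\nabla\varphi$ via Cauchy--Schwarz yields \eqref{e.del.x_1-del.y_1}.

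The only delicate point is ensuring that the bound in Lemma \ref{l.Tht} really vanishes at $\tau=0$, i.e. that it carries no additive $O(\varepsilon)$ term. This holds because $\partial_{x_i}\Theta^j(0,x)=\delta_{ij}$ exactly and the $\tau$-derivative is integrated from $0$. Equally, we need that there is no leftover term from $\partial_{x_1}f$, which is exactly the Neumann property of the graph in (H2).
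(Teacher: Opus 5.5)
Your proposal is correct and follows the paper's proof: you use \eqref{e.del_y_i} with $\partial_{x_1}f=0$ on $\partial_0\Sigma$ to write $\partial_{y_1}|_{(y,z)}=\sum_j\partial_{x_1}\Theta^j(z,y)\,\partial_{x_j}|_{(y,z)}$, then apply the first estimate of Lemma \ref{l.Tht} with $t(y)=0$. Your extra remarks (that the flow of $Z$ preserves $\{x_1=0\}$, so the base point lies on $\partial_0\Sigma$, and the Gronwall rederivation) only make explicit what the paper leaves implicit.
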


\begin{proof}
Notice that along $\del_0\Si$, $\partial_{x_1}f=0$ by the setup in \ref{ss.twisted.Fermi.setup}, and $\del_{x_1}=\del_{y_1}$ by the definition of twisted Fermi coordinates. Hence, by \eqref{e.del_y_i}, on $\del_0\{|z|\leq K\sqrt R |\}$
\begin{equation}
   \del_{y_1}|_{(y,z)}=\sum_j\del_{x_1}\Tht^j(z,y)\del_{x_j}|_{(y,z)}.            
\end{equation}
Then \eqref{e.del.x_1-del.y_1} follows directly from Lemma \ref{l.Tht}.
\end{proof}

\begin{lemma}\label{l.Fc.vs.tFc.2}
	\Tes\ $\de >0$ \st\ the following holds: 
	$$d_{\Si}\left(\tilde{\Pi}(p),\Pi(p)\right)\lesssim \ve (|t(p)|^2+|t(p)|^3) \text{ on } \{|t|\leq \de R\}.$$
\end{lemma}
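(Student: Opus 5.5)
We compare the two ways of reaching $p$ from $\Si$: the normal geodesic $s\mapsto \tilde{\sE}(\tilde\Pi(p),s)$ and the integral curve $\ta\mapsto\Tht_\ta(\Pi(p))$ of $Z$. The plan is to flow backwards from $p$ along $Z$ and track how far we drift from the normal geodesic through $\tilde\Pi(p)$.

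\textbf{Setup.} Set $\tau_0=t(p)=\tilde t(p)$, using Lemma \ref{l.Fc.vs.tFc.1}. Consider the curve $\ga(\ta)=\Tht(\ta-\tau_0,p)$ for $\ta\in[0,\tau_0]$. Then $\ga(\tau_0)=p$, $\ga(0)=\Pi(p)$, and again by Lemma \ref{l.Fc.vs.tFc.1} we have $\tilde t(\ga(\ta))=\ta$. Define
\[
\be(\ta)=\tilde\Pi(\ga(\ta))\in\Si .
\]
Then $\be(\tau_0)=\tilde\Pi(p)$ and $\be(0)=\Pi(p)$. So it suffices to bound the length of $\be$, i.e. to bound $|\be'(\ta)|=|D\tilde\Pi(\ga(\ta))\,Z(\ga(\ta))|$ and integrate over $[0,\tau_0]$.

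\textbf{Key identity.} Since $\tilde{\sE}(y,s)$ has $\partial_s\tilde{\sE}=\tilde Z$ and $\tilde\Pi(\tilde{\sE}(y,s))=y$, we get $D\tilde\Pi\,\tilde Z=0$ everywhere on $\{|\tilde t|\le\tilde\de R\}$. Hence
\[
\be'(\ta)=D\tilde\Pi\,(Z-\tilde Z)(\ga(\ta)),
\]
and by Lemma \ref{l.tilde.t.Pi}, $|\be'|\lesssim|Z-\tilde Z|(\ga(\ta))$.

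\textbf{Bounding $|Z-\tilde Z|$.} By Lemma \ref{l.vect.Z}(ii), $Z-\tilde Z$ vanishes on $\Si$. To bound it at $\ga(\ta)$, integrate along the normal geodesic $s\mapsto\tilde{\sE}(\be(\ta),s)$, $s\in[0,\ta]$. This geodesic has unit speed and ends at $\ga(\ta)$, because $\tilde t(\ga(\ta))=\ta$ and $\tilde\Pi(\ga(\ta))=\be(\ta)$. By \eqref{e.na.Z}, the covariant derivative of $Z-\tilde Z$ is $\lesssim\ve(1+|s|)$ along it, and we are in the range $|s|\le\de R\le\tilde\de R$. Parallel transport is an isometry and the metric is close to Euclidean, so
\[
|Z-\tilde Z|(\ga(\ta))\lesssim\ve(|\ta|+\ta^2).
\]

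\textbf{Conclusion.} Integrating, $d_\Si(\tilde\Pi(p),\Pi(p))\le\operatorname{length}(\be)\lesssim\ve(\tau_0^2+|\tau_0|^3)$, as claimed. Two routine points need checking. First, $\be$ measured in $\Si$-distance: $\be$ lies in $\Si$ and the Euclidean-type length of $\be'$ is comparable to its intrinsic length. Second, $\ga$ stays in the region where the previous lemmas apply: along $\ga$, $|\tilde t|\le|\tau_0|\le\de R$, and the flow stays in $\bbb^+_{1.7R}\times(-2R,2R)$ as noted after Definition \ref{twisted fermi def}, while $\tilde\Pi$ and the estimate \eqref{e.na.Z} hold on $\{|\tilde t|\le\tilde\de R\}$. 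Choosing $\de\le\tilde\de$ small enough makes all of this consistent.

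\textbf{Main obstacle.} The step I expect to be delicate is the identity $D\tilde\Pi\,\tilde Z=0$ together with the normal-geodesic integration of $|Z-\tilde Z|$. This step must be done in the extended metric $\tilde g$ on the full ball, because the normal geodesics from $\Si$ may leave the half-ball. It is also the step where the constants must not depend on $\ve$.
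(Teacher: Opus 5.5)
Your proposal is correct and is essentially the paper's proof. The paper studies the same curve $G(\tau)=\tilde\Pi(\Theta(\tau,\Pi(p)))$, observes $G'(0)=0$ because $D\tilde\Pi(\tilde Z)=0$ and $Z=\tilde Z$ on the sheet, and bounds $|G''|\lesssim\varepsilon(1+|\tau|)$ via Lemma \ref{l.tilde.t.Pi} and \eqref{e.na.Z} before applying Taylor's theorem; you instead bound $G'=D\tilde\Pi\,(Z-\tilde Z)$ directly by integrating \eqref{e.na.Z} along normal geodesics (which re-derives Lemma \ref{l.Z.Z-tld}), the same double integration organized differently. One small correction is that $\beta$ lies in the extended sheet $\tilde\Sigma$ and not necessarily in $\Sigma$; this is harmless because $\langle\tilde Z,Z_2\rangle=0$, and hence $Z=\tilde Z$, on all of $\tilde\Sigma$.
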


\begin{proof}
	Let $q=\Pi(p)$. We consider the function $G(\ta)=(\tilde{\Pi}_1(\Tht(\ta,q)),\dots, \tilde{\Pi}_n(\Tht(\ta,q)))$, where $\tilde{\Pi}_i=y_i\circ \tilde{\Pi}$. Then $G(0)=(y_1(q),\dots,y_n(q))$,
	\[\frac{d}{d\ta}\tilde{\Pi}(\Tht(\ta,q))=D\tilde{\Pi}(\Tht(\ta,q))(Z(\Tht(\ta,q))).\]
	Note that $\tilde{\Pi}$ is constant along the integral curves of $\tilde{Z}$. \tf, by Lemma \ref{l.vect.Z}(ii), $G'(0)=(0,\dots,0)$. By Lemma \ref{l.tilde.t.Pi} and \eqref{e.na.Z}, $\md{G''(\ta)}\lesssim \ve(1+|\ta|)$. \tf,\ by Taylor's theorem,  $$d_{\Si}\left(\tilde{\Pi}(p),\Pi(p)\right)\lesssim |G(t(p))-G(0)|\lesssim\ve(|t(p)|^{2}+|t(p)|^3).$$
\end{proof}

Since $Z=\tilde{Z}$ along $\Si$, the following lemma follows from \eqref{e.na.Z}.
\begin{lemma}\label{l.Z.Z-tld}
	There holds
	$$|Z-\tilde{Z}|\lesssim \ve(|z|+z^2),\quad  |\na^m (Z-\tilde{Z})|\lesssim \ve(1+|z|) \text{ on }\{|z|\leq \de R\}\;\forall m\geq 1.$$
\end{lemma}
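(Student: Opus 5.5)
The plan is to reduce both bounds to \eqref{e.na.Z}, integrating along the integral curves of $Z$ emanating from $\Si$. We work in twisted Fermi coordinates. By Lemma \ref{l.Fc.vs.tFc.1}, $z=t=\tilde t$, so the region $\{|z|\leq\de R\}$ is contained in $\{|\tilde t|\leq\tilde\de R\}$, where \eqref{e.na.Z} holds. The higher-derivative bound is then immediate: for $m\geq1$, the triangle inequality gives
\[
|\na^m(Z-\tilde Z)|\leq|\na^mZ|+|\na^m\tilde Z|\lesssim\ve(1+|\tilde t|)=\ve(1+|z|).
\]

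For the zeroth-order bound, fix a point $p=\Tht(z,y)$ with $y\in\Si$, and let $V=Z-\tilde Z$. By Lemma \ref{l.vect.Z}(ii), $V$ vanishes along $\Si$. Consider the curve $\ga(\ta)=\Tht(\ta,y)$ for $\ta$ between $0$ and $z$; it is an integral curve of $Z$, so $\ga'=Z$. Along $\ga$ we have $\tilde t(\ga(\ta))=\ta$, and $|Z|=O(1)$ by Lemma \ref{l.vect.Z}(v). The coordinate components of $V$ therefore satisfy
\[
\Bigl|\tfrac{d}{d\ta}V^k(\ga(\ta))\Bigr|\lesssim|\na V|+|\Ga||V|\lesssim\ve(1+|\ta|),
\]
where we use \eqref{e.na.Z} for the first term and $|\Ga|\lesssim\ve$ from \eqref{e.rescaled.bound.Christoffel} (for the extended metric $\tilde g$), together with the crude bound $|V|=O(1)$, for the second. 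Integrating from $0$, where $V=0$, up to $z$ gives
\[
|V(p)|\lesssim\ve\int_0^{|z|}(1+s)\,ds\lesssim\ve(|z|+z^2).
\]

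The main point of care is that $\ga$ must stay inside the region where \eqref{e.na.Z} holds and that the coordinate-versus-covariant comparison costs only $O(\ve)$. The first is guaranteed by Lemma \ref{l.Fc.vs.tFc.1}, since $\tilde t(\ga(\ta))=\ta$ is monotone in $\ta$ and hence $|\tilde t|\leq|z|$ along the whole curve. The second is exactly the Christoffel-symbol bound used above.
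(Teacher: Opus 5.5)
Your proof is correct and follows the paper's argument: the paper's proof just notes that $Z=\tilde Z$ along $\Sigma$ and invokes \eqref{e.na.Z}. You have written out the two steps this leaves implicit. For $m\geq1$ you apply the triangle inequality to \eqref{e.na.Z}. For $m=0$ you integrate the derivative bound along the $Z$-flow lines from $\Sigma$, where $\tilde t(\gamma(\tau))=\tau$ keeps each curve inside the region where \eqref{e.na.Z} holds.
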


\begin{remark}\label{r.tZ-Z}
    By Lemma \ref{l.vect.Z} (iv),
\begin{equation}\label{e.decomp.tZ}
	\del_{\tilde{z}}=\del_z-T=\partial_z-\sum_{i=1}^{n}T^i\del_{y_i}.
\end{equation}
By Lemma \ref{l.Z.Z-tld}, for each index $i$, we have estimates
\begin{equation}\label{e.difference.z.tz}
	|T^i|\lesssim \ve(|z|+z^2),\quad |\na^m T^i|\lesssim \ve(1+|z|) \text{ on }\{|z|\leq \de R\}\;\forall m\geq 1.
\end{equation}
Notice that $T^i(y,z)=-g^{iz}(y,z)$.
\end{remark}

\subsubsection{Boundary behaviour of twisted Fermi coordinates} Finally, we shall carefully look at how these twisted Fermi coordinates behave at the boundary. The decomposition in the following lemma will be crucial in the Inverse Function Theorem argument of Section \ref{s.approx.toda}.

\begin{lemma}\label{l.dz(nu)}
	Let $J:\del_0\bbb^+_{1.6R}(0)\times (-2R,2R)\ra \bbr$ be defined by $J=\del_{x_1}t=dz(\nu)$, $w(y)=\del_z J(y,0)$ and $\tilde{J}(y,z)=J(y,z)-w(y)z$. Then
	\begin{equation}\label{e.dz(nu).1}
		|J|=|\partial_{\nu} z|\lesssim \ve|z|(1+|z|+|z|^2) \text{ on }\{|z|\leq \de \sqrt R\}.
	\end{equation} 
	Moreover,
	\begin{equation}\label{e.w.1}
		|\na^k w|=O(\ve)\text{ on } \del_0\bbb^+_{1.6R}(0) \;\forall k\geq 0,
	\end{equation}
    and
    \begin{equation}\label{e.w.2}
		|\na^k_y \tilde{J}|\lesssim \ve^2(1+|z|^3)z^2 \text{ on }\{|z|\leq \delta\sqrt R\}\;\forall k\geq 0.
	\end{equation}
\end{lemma}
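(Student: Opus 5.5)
The plan is to compute $J$ along the boundary by differentiating $t$ along the flow of $Z$. Fix a boundary point $y\in\partial_0\Sigma$ and consider $J(y,z)=\langle\nabla t,\partial_{x_1}\rangle$ at $\Theta_z(y)$. By Lemma \ref{l.Fc.vs.tFc.1}, $t=\tilde t$, so $\nabla t=\tilde Z$. Hence $J=\langle \tilde Z,\partial_{x_1}\rangle$.

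At $z=0$ we have $J=0$: on $\Sigma$, $\tilde Z=Z_1$ is normal to $\Sigma$, and $\partial_{x_1}=\partial_{y_1}$ is tangent to $\Sigma$ along $\partial_0\Sigma$ because $\partial_1 f=0$. By Lemma \ref{l.vect.Z}(i), the flow line $z\mapsto\Theta_z(y)$ stays in $\partial_0$. So we may differentiate $J$ in $z$ along it:
\[
\partial_z J=Z\langle\tilde Z,\partial_{x_1}\rangle=\langle\nabla_Z\tilde Z,\partial_{x_1}\rangle+\langle\tilde Z,\nabla_Z\partial_{x_1}\rangle .
\]
By \eqref{e.na.Z}, the first term is $\lesssim\varepsilon(1+|z|)$. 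The second term involves Christoffel symbols, hence is $O(\varepsilon)$ by \eqref{e.rescaled.bound.Christoffel}; note also that $\nabla_{\partial_{x_1}}\partial_{x_1}=0$ by \eqref{e.g.Ga.0.ball}. Integrating from $0$ gives $|J|\lesssim\varepsilon|z|(1+|z|)$, which implies \eqref{e.dz(nu).1}. The extra power $|z|^2$ is harmless slack coming from the estimate $|Z-\tilde Z|\lesssim\varepsilon(|z|+z^2)$ of Lemma \ref{l.Z.Z-tld} if one tracks things differently.

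For $w(y)=\partial_zJ(y,0)$, evaluate the formula above at $z=0$, where $Z=\tilde Z$:
\[
w=\langle\nabla_{\tilde Z}\tilde Z,\partial_{x_1}\rangle+\langle\tilde Z,\nabla_{\tilde Z}\partial_{x_1}\rangle .
\]
Since $\tilde Z=\nabla\tilde t$ is a unit geodesic field, the first term vanishes. The second is $-\sff_{\partial}(\tilde Z,\tilde Z)$ up to sign, evaluated along $\partial_0\Sigma$. Hence $w$ is expressed through the boundary second fundamental form and the unit normal of $\Sigma$. By \eqref{e.rescaled.bound.sff}, each tangential derivative of $\sff_\partial$ gains an extra $\varepsilon$. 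By \eqref{e.grad.sff} and \eqref{e.na.f}, derivatives of the normal along $\partial_0\Sigma$ are $O(\varepsilon)$. Thus $|\nabla^k w|=O(\varepsilon)$, which is \eqref{e.w.1}.

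For \eqref{e.w.2}, write $\tilde J(y,z)=\int_0^z(\partial_zJ(y,s)-\partial_zJ(y,0))\,ds=\int_0^z\int_0^s\partial_z^2J$. Differentiating the formula for $\partial_zJ$ once more along $Z$ produces three kinds of terms:
\begin{itemize}
\item terms with $\nabla^2\tilde Z$ and $\nabla Z$;
\item the term $\langle\nabla_Z\tilde Z,\nabla_Z\partial_{x_1}\rangle$;
\item $\nabla\Gamma$ terms.
\end{itemize}
The key point is that $\partial_z^2J$ is itself $O(\varepsilon^2)$ rather than $O(\varepsilon)$, because every term carries two small factors. One factor comes from the $O(\varepsilon)$ bounds on $\nabla^m Z$, $\nabla^m\tilde Z$, $\Gamma$ ($\varepsilon^2$ for $\partial\Gamma$). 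The other comes from $\nabla_Z\tilde Z=\nabla_{Z-\tilde Z}\tilde Z$, which is $O(\varepsilon\cdot\varepsilon(|z|+z^2))$ by Lemma \ref{l.Z.Z-tld}. In addition, $\nabla_{\tilde Z}\tilde Z=0$ gives $\nabla^2\tilde Z(\tilde Z,\cdot)=-\nabla\tilde Z\cdot\nabla\tilde Z=O(\varepsilon^2)$. Tracking the polynomial weights in $|z|$ gives $|\partial_z^2J|\lesssim\varepsilon^2(1+|z|^3)$, and integrating twice yields the $z^2$ factor.

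The $y$-derivatives are handled the same way. They commute with $\partial_z$ up to $D\Theta$ factors, which are controlled by Lemma \ref{l.Tht} and Lemma \ref{l.na.y_i}, so each additional derivative lands on an $\varepsilon$-small quantity and costs at most further powers of $(1+|z|)$ bounded on $\{|z|\le\delta\sqrt R\}$. This step is the main obstacle. One must verify that no term in $\partial_z^2J$ (and its $y$-derivatives) is merely $O(\varepsilon)$, i.e.\ that the $O(\varepsilon)$ part is exactly linear in $z$ and captured by $w$. I would check this term by term using $\nabla_{\tilde Z}\tilde Z=0$ and $\nabla_{\partial_{x_1}}\partial_{x_1}=0$.
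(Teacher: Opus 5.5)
Your proposal is correct and follows the paper's own proof. In both, $J=\langle\tilde Z,\nu\rangle$ vanishes at $z=0$. The first derivative $\partial_zJ=\langle\nabla_{Z-\tilde Z}\tilde Z,\nu\rangle+\langle\tilde Z,\nabla_Z\nu\rangle$ gives $w=-\sff_{\partial_0}(\tilde Z,\tilde Z)$. One more $Z$-derivative yields only doubly small contractions, so $|\nabla_y^k\partial_{zz}J|\lesssim\varepsilon^2(1+|z|^3)$, and Taylor's theorem concludes.

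The differences are cosmetic. You work with $\partial_{x_1}$ and Christoffel symbols where the paper uses $\nu$ and $\sff_{\partial_0}$. Your list of terms in $\partial_z^2J$ should also include $\langle\tilde Z,\nabla_Z\nabla_Z\partial_{x_1}\rangle$; this term is $O(\varepsilon^2)$ through $\Gamma\Gamma$, $\partial\Gamma$ and $\nabla Z\cdot\Gamma$. Finally, $\partial_{y_i}$ commutes exactly with $\partial_z$, so no $D\Theta$ factors arise when you take $y$-derivatives.
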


\begin{proof}
By Lemma \ref{l.Fc.vs.tFc.1}, $\nabla t=\widetilde Z$, and hence
$J=\langle\widetilde Z,\nu\rangle$.  The sheet meets the boundary orthogonally, so
$J(y,0)=0$.  Since $\partial_z=Z$ and the integral curves of $\widetilde Z$ are unit-speed geodesics,
\begin{equation}\label{e.partial.z.J}
 \partial_zJ
 =\langle\nabla_{Z-\widetilde Z}\widetilde Z,\nu\rangle
  +\langle\widetilde Z,\nabla_Z\nu\rangle.
\end{equation}
With the sign convention of Section \ref{s.basic.definitions},
$\langle X,\nabla_Y\nu\rangle=-\mathrm{I\!I}_{\partial_0}(Y,X)$ for vectors tangent to the boundary.
At $z=0$, where $Z=\widetilde Z$, this gives
\begin{equation}\label{e.w.boundary.geometry}
 w(y)=\partial_zJ(y,0)=-\mathrm{I\!I}_{\partial_0}(\widetilde Z,\widetilde Z)(y,0).
\end{equation}
In particular, \eqref{e.rescaled.bound.sff} and the tangential derivative bounds for $\widetilde Z$
imply \eqref{e.w.1}.

Differentiating \eqref{e.partial.z.J} once more in the $Z$-direction, and using
$\nabla_{\widetilde Z}\widetilde Z=0$, expresses $\partial_{zz}J$ as a sum of contractions of
$\nabla^3t$, $\nabla^2t$, $Z-\widetilde Z$, $\nabla(Z-\widetilde Z)$,
$\mathrm{I\!I}_{\partial_0}$, and $\nabla\mathrm{I\!I}_{\partial_0}$.  Lemmas \ref{l.t.Pi} and
\ref{l.Z.Z-tld}, together with \eqref{e.rescaled.bound.sff}, therefore give
\begin{equation}\label{e.est.partial.zz.J}
 |\nabla_y^k\partial_{zz}J(y,z)|
 \lesssim_k \varepsilon^2(1+|z|^3)
 \qquad (|z|\leq\delta\sqrt R)
\end{equation}
for every fixed $k$.  The same differentiated identity, with one fewer $z$-derivative, gives
$|\nabla_y^k\partial_zJ|\lesssim_k\varepsilon(1+|z|+|z|^2)$.
Taylor's theorem at $z=0$, using $J(y,0)=0$ and the definition of $w$, now yields
\eqref{e.dz(nu).1} and
\[
 |\nabla_y^k(J(y,z)-w(y)z)|
 \leq |z|^2\sup_{|s|\leq|z|}|\nabla_y^k\partial_{zz}J(y,s)|,
\]
which is exactly \eqref{e.w.2}.
\end{proof}

\subsection{Geometric approximation}
Let $\Si_{\al}$ be as in \eqref{e.Si.al}. $\Si_{\al}$ is parametrized by $y\mapsto (y,f_{\al}(y))$, and twisted Fermi coordinates $(y,z)$ with respect to $\Si_{\al}$ are defined as in Section \ref{s.tw.Fer.def}. 
As in Section \ref{s.tw.Fer.def}, let $t_{\al}$ and $\Pi_{\al}$ denote the twisted Fermi coordinate time function and projection map, and let $\Si_{\al, z}=\{t_{\al}=z\}$ be parametrized by $y\mapsto Y_{\Sia}(y,z)$. 

We shall identify a point $p=Y_{\Sia}(y,z)$ with its twisted Fermi coordinates $(y,z)$, and represent the metric $g$ at $p$ as a matrix
\begin{equation}\label{e.metric.matrix}
\renewcommand\arraystretch{1.8}
g(y,z)=\left(\begin{array}{@{}c|c@{}}
\big(g_{ij}(y,z)\big)_{i,j=1}^n & \big(g_{iz}(y,z)\big)_{i=1}^n \\
\hline
\big(g_{zj}(y,z)\big)_{j=1}^n & g_{zz}(y,z)
\end{array}\right)
\end{equation}
where $g_{ij}(y,z)\coloneqq \langle\partial_{y_i}|_{(y,z)}, \partial_{y_j}|_{(y,z)}\rangle$, $g_{iz}(y,z)\coloneqq \langle\partial_{y_i}|_{(y,z)}, \partial_{z}|_{(y,z)}\rangle$ etc.

Let $g_{\al}(y,z),\;\Gamma_{\alpha}(y,z),\;\sff_{\al}(y,z),\; H_{\al}(y,z)$ respectively denote the induced \mt\ on $\Si_{\al, z}$, its Christoffel symbols, the second fundamental form of $\Si_{\al, z}$ and the mean curvature of $\Si_{\al, z}$ at $(y,z)$.
Since, by definition $\{\partial_{y_1}|_{(y,z)}, \dots, \partial_{y_n}|_{(y,z)}\}$ are tangent to $\Sigma_{\alpha, z}$ at $(y,z)$, we have $g_{\alpha,ij}(y,z)=g_{ij}(y,z)$. Similarly, by inverting \eqref{e.metric.matrix}, we see that
\begin{equation}\label{e.g-1.alpha}
    g_\alpha^{ij}(y,z)=g^{ij}(y,z)-g^{zz}(y,z)^{-1}g^{iz}(y,z)g^{zj}(y,z)
\end{equation}
for all $1\leq i,j\leq n$.

Sometimes we shall use the abbreviated notation $g_{\al,z},\;\Gamma_{\al,z},\;\sff_{\al,z},\; H_{\al,z}$ to denote the same quantities, or even $g_{\al},\;\Gamma_\alpha,\;\sff_{\al},\; H_{\al}$, when there is no danger of ambiguity. The covariant derivative and the Laplacian on $\Si_{\al, z}$ \wrt\ the induced \mt\ are denoted by $\na_{\al,z}$ and $\De_{\al,z}$. Finally, we denote the volume form induced on $\Sigma_{\alpha, z}$ by $dA_{\alpha, z} = \lambda_{\alpha}(y, z)dy$, where $\lambda_{\alpha} = \sqrt{\det g_{\alpha}(y, z)}$. 

\begin{lemma}\label{l.g^zz.twisted}
	In twisted Fermi coordinates $(y,z)$ with respect to $\Sigma_\alpha$, $g^{zz}(y,z)=1$
\end{lemma}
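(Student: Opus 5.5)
The plan is to identify $g^{zz}$ with the squared norm of the gradient of the coordinate function $z$, and then use that $z$ coincides with the signed distance function $\tilde t$. By definition of the inverse metric in any coordinate system $(y_1,\dots,y_n,z)$, we have
\[
g^{zz}=\langle dz,dz\rangle_{g^{-1}}=|\nabla z|^2 .
\]
So it suffices to show that $|\nabla z|\equiv 1$ on the domain of the twisted Fermi coordinates with respect to $\Sigma_\alpha$.

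By Definition \ref{d.twisted.time.projection}, the coordinate function $z$ is exactly the twisted time function $t=t_\alpha$, since $Y_{\Sigma}(y,z)=\Theta_z(y)$ gives $t(\Theta_z(y))=z$. Lemma \ref{l.Fc.vs.tFc.1} gives $t=\tilde t$ on $\mathbb B^+_{1.6R}(0)\times(-2R,2R)$, where $\tilde t$ is the signed distance function to $\tilde\Sigma$. On the region where $\tilde t$ is smooth, item (i) preceding Lemma \ref{l.tilde.t.Pi} applies, and the signed distance function satisfies the eikonal equation $|\nabla\tilde t|=1$. Hence $|\nabla z|^2=|\nabla \tilde t|^2=1$, which yields $g^{zz}=1$.

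As a consistency check, one can also argue directly from Lemma \ref{l.vect.Z}. Since $\nabla z=\nabla\tilde t=\tilde Z$ is a unit vector and $\partial_z=Z$ with $\langle Z,\tilde Z\rangle=1$, we get $dz(\partial_z)=1$ and $dz(\partial_{y_i})=0$, as required of coordinate functions. The row $g^{z\cdot}$ is then given by the components of $\tilde Z$ in the basis $\{\partial_{y_i},\partial_z\}$, namely $\tilde Z=\partial_z-T$ by Remark \ref{r.tZ-Z}. Therefore $g^{zz}=dz(\tilde Z)=1$ and $g^{iz}=-T^i$, which agrees with the statement in Remark \ref{r.tZ-Z}.

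There is no real obstacle in this argument. The only care needed is to ensure that we work where $\tilde t$ is smooth and equals $t$, namely $\{|z|\le\delta R\}$ inside $\mathbb B^+_{1.6R}(0)\times(-2R,2R)$, so that the eikonal identity applies.
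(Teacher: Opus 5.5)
Your proposal is correct and follows essentially the same route as the paper, which writes $1=|\nabla\tilde t|^2=|\nabla t|^2=g^{zz}$. That is, the paper also uses the eikonal identity for the signed distance together with Lemma \ref{l.Fc.vs.tFc.1}. Your additional consistency check ($g^{zz}=dz(\tilde Z)=1$ and $g^{iz}=-T^i$ via $\tilde Z=\partial_z-T$) is also correct and matches Remark \ref{r.tZ-Z}.
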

\begin{proof}
	This follows from Lemma \ref{l.Fc.vs.tFc.1}. Indeed, $1=|\nabla\tilde{t}|^2=|\nabla t|^2=g^{zz}$. 
\end{proof}

\begin{proposition}[Geometric Expansion in Twisted Fermi Coordinates]\label{geometric approximation}
	Let $g_{\al}(y,z),\;\Gamma_{\alpha}(y,z),$ $\sff_{\al}(y,z),\; H_{\al}(y,z)$ respectively denote the induced \mt\ on $\Si_{\al, z}$, the second fundamental form of $\Si_{\al, z}$ and the mean curvature of $\Si_{\al, z}$ at $(y,z)$. For each  fixed $K>0$, if $z\in(-K |\log \varepsilon|, K|\log \varepsilon|)$, we have for all $1\leq i,j,k,l\leq n$
	\begin{gather}\label{e.g_ij.approx}
		|g_{\al, ij}(y, z)-g_{\al,ij}(y, 0)|+|g^{ij}_{\al}(y, z)-g^{ij}_{\al}(y, 0)|\lesssim \varepsilon |z|(1+|z|),\\
    \label{e.na.g.approx}
		|\partial_{y_l} g_{\al,ij}(y, z)|+ |\partial_{y_k}\partial_{y_l} g_{\al,ij}(y, z)|+ |\partial_{y_l} g^{ij}_{\al}(y, z)|+ |\partial_{y_k}\partial_{y_l} g^{ij}_{\al}(y,z)| \lesssim\varepsilon(1+|z|^2),\\
    \label{e.gamma.alpha.approx}
        |\Gamma_{\al, ij}^k(y,z)-\Gamma_{\al, ij}^k(y,0)|\lesssim \varepsilon |z|(1+|z|),\\
    \label{e.partial.gamma.alpha.approx}
        |\partial_{y_l}\Gamma_{\al, ij}^k(y,z)|\lesssim \varepsilon(1+|z|^2),\\
    \label{e.sff.approx}
		|\mathrm{I\!I}_{\al}(y, z)-\mathrm{I\!I}_{\al}(y, 0)|\lesssim\varepsilon^2|z|(1+|z|^3),\\
    \label{e.mn.curv.approx}
 		|H_{\al}(y, z)-H_{\al}(y, 0)|\lesssim\varepsilon^2|z|(1+|z|^3),\\
    \label{e.nablasff}
        \vert\nabla_{\al,z}\sff_\alpha(y, z)-\nabla_{\al, 0}\sff_\alpha(y, 0)\vert\lesssim \varepsilon^2|z|(1+|z|^3)\\
    \label{e.vol.form}
		\vert \lambda_\alpha(y, z) - \lambda_\alpha(y, 0) \vert \lesssim \varepsilon |z|(1 + |z|). 
    \end{gather}    
	with constants depending on $n, K, \delta, \eta_0$ and $\Lambda_0$.
\end{proposition}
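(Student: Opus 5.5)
Every estimate will come from Taylor expansion in $z$ at $z=0$, once we control $\partial_z$ and $\partial_z^2$ of the relevant quantities. These controls come from the flow estimates of Lemma \ref{l.Tht}, the coordinate-vector bounds of Lemma \ref{l.na.y_i}, and the Fermi-type comparisons of Lemmas \ref{l.Z.Z-tld} and \ref{l.t.Pi}. Throughout, $|z|\le K|\log\varepsilon|\ll\delta\sqrt R$, so all earlier lemmas apply.

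\emph{Metric, inverse metric and volume.} Since $\partial_z=Z$, we have $[\partial_z,\partial_{y_i}]=0$, so $\nabla_Z\partial_{y_i}=\nabla_{\partial_{y_i}}Z$. This gives
\[
\partial_z g_{ij}=\langle\nabla_{\partial_{y_i}}Z,\partial_{y_j}\rangle+\langle\partial_{y_i},\nabla_{\partial_{y_j}}Z\rangle .
\]
By \eqref{e.na.Z} and \eqref{e.del.y_i.O(1)} this is $O(\varepsilon(1+|z|))$. Integrating from $0$ to $z$ gives \eqref{e.g_ij.approx} for $g_{\alpha,ij}$. The bound for $g^{ij}_\alpha$ follows by matrix inversion, since $g_\alpha$ is uniformly close to $\delta_{ij}$. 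The volume bound \eqref{e.vol.form} follows from $\lambda_\alpha=\sqrt{\det g_\alpha}$. For \eqref{e.na.g.approx}, write $\partial_{y_l}g_{ij}=\langle\nabla_{\partial_{y_l}}\partial_{y_i},\partial_{y_j}\rangle+\langle\partial_{y_i},\nabla_{\partial_{y_l}}\partial_{y_j}\rangle$ and apply \eqref{e.na.y_i}; second derivatives are handled the same way. The Christoffel bounds \eqref{e.gamma.alpha.approx} and \eqref{e.partial.gamma.alpha.approx} then follow from the coordinate formula for $\Gamma$ as a product of $g_\alpha^{-1}$ and $\partial_y g_\alpha$. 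For the difference in \eqref{e.gamma.alpha.approx}, I will differentiate $\partial_y g_\alpha$ in $z$ once more, using $\nabla^2 Z=O(\varepsilon(1+|z|))$, and obtain a bound $\varepsilon(1+|z|)$ on $\partial_z\partial_y g_\alpha$.

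\emph{Second fundamental form and mean curvature.} This is the main obstacle, because the claimed rate is $\varepsilon^2$, not $\varepsilon$. The unit normal to $\Sigma_{\alpha,z}$ is $\nabla t=\tilde Z$, and $\tilde Z$ is a unit geodesic field with $\nabla_{\tilde Z}\tilde Z=0$. Hence $\mathrm{I\!I}_\alpha(y,z)(\partial_{y_i},\partial_{y_j})=-\nabla^2 t(\partial_{y_i},\partial_{y_j})$. Differentiate this in $z$ along $Z=\tilde Z+(Z-\tilde Z)$.

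Along $\tilde Z$ the usual Riccati equation applies: $\nabla_{\tilde Z}\nabla^2 t=-(\nabla^2t)^2-R(\cdot,\tilde Z,\tilde Z,\cdot)$. Here $(\nabla^2 t)^2=O(\varepsilon^2(1+|z|)^2)$ by Lemma \ref{l.t.Pi}, and the curvature term is $O(\varepsilon^2)$ by \eqref{e.rescaled.bound.Riemann}.

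The remaining pieces all carry two factors of $\varepsilon$:
\begin{itemize}
\item $\nabla_{Z-\tilde Z}\nabla^2t$ is bounded by $|Z-\tilde Z|\,|\nabla^3 t|\lesssim\varepsilon(|z|+z^2)\cdot\varepsilon(1+|z|)$, using Lemma \ref{l.Z.Z-tld}.
\item The terms from $\nabla_Z\partial_{y_i}=\nabla_{\partial_{y_i}}Z$ contribute $\nabla^2t\cdot\nabla Z=O(\varepsilon^2(1+|z|)^2)$.
\end{itemize}
Altogether $|\partial_z\mathrm{I\!I}_\alpha|\lesssim\varepsilon^2(1+|z|^3)$, and integrating in $z$ gives \eqref{e.sff.approx}.

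Taking the trace, $H_\alpha=g_\alpha^{ij}\mathrm{I\!I}_{\alpha,ij}$. The change of $g_\alpha^{ij}$ is $O(\varepsilon|z|(1+|z|))$ by \eqref{e.g_ij.approx}, and it multiplies $\mathrm{I\!I}=O(\varepsilon(1+|z|))$, so its contribution is again of order $\varepsilon^2$. Together with \eqref{e.sff.approx} this gives \eqref{e.mn.curv.approx}.

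For \eqref{e.nablasff}, I will run the same argument one derivative higher. One differentiates the Riccati identity tangentially and uses the $\nabla^4 t$ and $\nabla^2 Z$ bounds. The Christoffel corrections from \eqref{e.gamma.alpha.approx} multiply $\mathrm{I\!I}$, so they also come with an extra factor of $\varepsilon$. The delicate point throughout is to check that every term in $\partial_z$ of a curvature quantity contains two factors that are each $O(\varepsilon)$. Any term with only one such factor would spoil the $\varepsilon^2$ rate, so each must be verified using the geodesic property of $\tilde Z$.
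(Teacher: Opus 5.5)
Your proposal is correct and follows essentially the paper's argument in Appendix~\ref{ap:geometric.approx}. The metric, inverse-metric and Christoffel bounds come from $\partial_z g_{\alpha,ij}=\langle\nabla_{\partial_{y_i}}Z,\partial_{y_j}\rangle+\langle\partial_{y_i},\nabla_{\partial_{y_j}}Z\rangle$ together with \eqref{e.na.Z}. The $\varepsilon^2$ rates come from writing $\sff_{\alpha,ij}=-\nabla^2t_\alpha(\partial_{y_i},\partial_{y_j})$, splitting $\partial_z=\widetilde Z+(Z-\widetilde Z)$, applying the Riccati equation to the $\widetilde Z$-part, and differentiating that equation tangentially to control $\nabla^4t_\alpha(\widetilde Z,\cdot,\cdot,\cdot)$ for \eqref{e.nablasff}.

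The differences are cosmetic. The paper derives \eqref{e.vol.form} from $\partial_z\lambda_\alpha=\lambda_\alpha(-H_\alpha+\operatorname{div}_{\alpha,z}T)$ rather than from $\lambda_\alpha=\sqrt{\det g_\alpha}$. It also first sharpens $|\nabla_{\alpha,z}\sff_{\alpha,z}|\le C\varepsilon$ by a Gronwall argument, whereas your cruder bound $|\nabla^3t_\alpha|\lesssim\varepsilon(1+|z|)$ from Lemma~\ref{l.t.Pi} already suffices for the stated polynomial factors.
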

\noeqref{e.gamma.alpha.approx}\noeqref{e.nablasff}

\begin{proof}
    See Appendix \ref{ap:geometric.approx} for the proof.
\end{proof}

Recall that in Fermi coordinates $(\tilde{y}, \tilde{z})$ with respect to $\Sigma_\alpha$, we have the following well-known formula which relates the ambient Laplacian $\Delta$ to the induced Laplacian $\Delta_{\alpha, \tilde z}$ on $\Sigma_{\alpha, \tilde z}=\{\tilde{t}_\alpha =\tilde z\}$ and the mean curvature $H_{\alpha, \tilde z}$ of $\Sigma_{\alpha, \tilde z}$:
\[\De\vp (\tilde{y},\tilde{z}) = \De_{\alpha,\tilde{z}}\vp(\tilde{y},\tilde{z}) + \del_{\tilde{z}\tilde{z}}\vp(\tilde{y},\tilde{z}) - H_{\alpha, \tilde z}(\tilde{y},\tilde{z})\del_{\tilde{z}}\vp(\tilde{y},\tilde{z}). \]
Since $\tilde{z}=z$, we have the following lemma, which establishes an analogous formula for twisted Fermi coordinates

\begin{lemma}\label{l.Lapl.tw.Fer}
Let $T=(T^i)$ be as in Remark \ref{r.tZ-Z}. Then,  if $(y,z)$ denote twisted Fermi coordinates with respect to $\Sigma_\alpha$, we have 
\begin{align}\label{e.Lapl.tw.Fer}
	\De \vp = & \De_{\alpha, z}\vp + \del_{zz}\vp -H_\alpha(y,z)\del_z\vp +\mathscr{E}_{\alpha}(y, z)\varphi
	\end{align}
where 
\begin{align}\label{diff operator E=E1+E2}
\mathscr{E}_{\alpha}(y, z)\varphi
= H_\alpha(y,z) T \ast \del_y \vp + T\ast T \ast \del_{yy}^2\vp + T\ast \na T \ast \del_y \vp + \na T \ast \del_y \vp + T \ast \del^2_{y,z} \vp.
\end{align}
In particular, every
coefficient in $\mathscr E_\alpha$ is controlled by \eqref{e.difference.z.tz}.
\end{lemma}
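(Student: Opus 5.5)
We compare the twisted Fermi chart $(y,z)$ with the ordinary Fermi chart $(\tilde y,\tilde z)$ of $\Sigma_\alpha$, in which the classical formula $\Delta\varphi=\Delta_{\alpha,\tilde z}\varphi+\partial_{\tilde z\tilde z}\varphi-H_{\alpha,\tilde z}\partial_{\tilde z}\varphi$ holds. By Lemma \ref{l.Fc.vs.tFc.1}, $\tilde z=z$, so both charts have the same level hypersurfaces $\Sigma_{\alpha,z}$. Consequently $\Delta_{\alpha,\tilde z}=\Delta_{\alpha,z}$ and $H_{\alpha,\tilde z}=H_\alpha(y,z)$, since these are intrinsic or extrinsic invariants of the leaf and do not depend on how the leaf is parametrised. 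The leaf Laplacian applied to $\varphi$ only involves the restriction $\varphi|_{\Sigma_{\alpha,z}}$, so it is the same in both charts. Hence the whole discrepancy comes from the transverse terms $\partial_{\tilde z\tilde z}\varphi$ and $\partial_{\tilde z}\varphi$.

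To handle these we use Remark \ref{r.tZ-Z}: the coordinate vector field $\partial_{\tilde z}=\tilde Z$, written in twisted coordinates, is $\partial_z-T$ with $T=\sum_i T^i\partial_{y_i}$. This holds because $\tilde Z-Z$ is tangent to the leaves by Lemma \ref{l.vect.Z}(iv), and because $\partial_{\tilde z}$ at fixed $\tilde y$ is the unit-speed normal geodesic field $\tilde Z$. For the first derivative, $-H_\alpha\partial_{\tilde z}\varphi=-H_\alpha\partial_z\varphi+H_\alpha T^i\partial_{y_i}\varphi$, which produces the term $H_\alpha T\ast\partial_y\varphi$. For the second derivative, we apply the vector field $\partial_z-T$ twice:
\[
(\partial_z-T)(\partial_z-T)\varphi=\partial_{zz}\varphi-2T^i\partial_{y_iz}\varphi-(\partial_zT^i)\partial_{y_i}\varphi+T^iT^j\partial_{y_iy_j}\varphi+T^i(\partial_{y_i}T^j)\partial_{y_j}\varphi.
\]
The derivatives $\partial_zT^i$ and $\partial_{y_i}T^j$ are coordinate derivatives. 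We convert them to covariant ones, $\nabla T$, using that $\partial_z$ and $\partial_{y_i}$ are bounded vectors (Lemma \ref{l.na.y_i} and Lemma \ref{l.vect.Z}(v)). Christoffel contributions get absorbed the same way, since by Lemma \ref{l.na.y_i} the $\partial_{y_i}$ have covariant derivatives of size $O(\varepsilon(1+z^2))$.

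This yields exactly the five terms listed in the statement: $T\ast\partial^2_{y,z}\varphi$, $\nabla T\ast\partial_y\varphi$, $T\ast T\ast\partial^2_{yy}\varphi$, $T\ast\nabla T\ast\partial_y\varphi$, and the $H_\alpha T\ast\partial_y\varphi$ term from before. The main point to check is that $\partial_{\tilde z}$ is genuinely the field $\tilde Z$ expressed as $\partial_z-T$ \emph{as a differential operator acting on functions}, and not merely as vectors at $z=0$. This holds pointwise everywhere because both charts share the coordinate $z$ and $\tilde Z(z)=|\nabla t|^2=1$, which is Lemma \ref{l.g^zz.twisted}. The coefficient bounds then follow immediately from \eqref{e.difference.z.tz}.
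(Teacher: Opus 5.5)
Your proposal is correct and follows the paper's route. The paper also uses $\tilde z=z$ to identify the leaves, so that $\Delta_{\alpha,\tilde z}=\Delta_{\alpha,z}$ and $H_{\alpha,\tilde z}=H_\alpha(y,z)$. It then substitutes $\partial_{\tilde z}=\widetilde Z=\partial_z-\sum_iT^i\partial_{y_i}$ into the classical Fermi formula, obtaining exactly your expansion, which is recorded in \eqref{e.s6.operator}.

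The one superfluous step is converting $\partial_zT^j$ and $\partial_{y_i}T^j$ into covariant derivatives. The $T^j$ are scalar functions, so these coordinate derivatives are already bounded by $|\nabla T^j|\,|\partial_z|$ and $|\nabla T^j|\,|\partial_{y_i}|$. Since $|\partial_z|,|\partial_{y_i}|=O(1)$, this is directly what \eqref{e.difference.z.tz} controls, and no Christoffel terms need to be absorbed.
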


Let $\varphi_0$ be a $C^2$ function on $\Sigma_\alpha$. we can extend $\vp_0$ to a $C^2$ function on $\bbb^+_{1.6R}(0)\times (-2R,2R)$ (which we will denote by $\vp$) by setting it to be constant along the flowlines of the vector field $Z$. Namely, we let
\begin{equation}\label{e.const.extension}
    \vp(y,z) \coloneqq \vp_0(y)
\end{equation}
in twisted Fermi coordinates with respect to $\Sigma_{\alpha}$.
Notice that by definition $\partial_z\vp=0$, and, since $[\partial_z, \partial_{y_i}]=0$, we have $\partial_z\partial_{y_i}\vp=\partial_z\partial_{y_i}\partial_{y_j}\vp=0$. Hence, 
\begin{equation}\label{e.partial.constant.extension}
    \partial_{y_i}\vp(y,z)=\partial_{y_i}\vp_0(y) \;\text{ and }\; \partial_{y_i}\partial_{y_j}\vp(y,z)=\partial_{y_i}\partial_{y_j}\vp_0(y).
\end{equation}

\begin{proposition}\label{prope.comparing laplacian}
	Let $K>0$ be fixed. For $\vp_0\in C^2(\Sia)$, let $\vp\in C^2(\bbb^+_{1.6R}(0)\times (-2R,2R))$ be defined as in \eqref{e.const.extension}. Then, there holds
	\begin{equation} \label{e.comparing laplacian}
		|\Delta_{\al, z}\varphi-\Delta_{\al, 0} \varphi|\lesssim\varepsilon|z|(1+|z|)(|\nabla^2_{\al, 0}\varphi_0|+|\nabla_{\al, 0}\varphi_0|)
	\end{equation}
	for $z\in (-K|\log \varepsilon|, K|\log \varepsilon|)$, where the constant depends on $n, K, \delta, \eta_0$ and $\Lambda_0$.
\end{proposition}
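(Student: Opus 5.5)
We write both Laplacians in the twisted Fermi coordinate chart $y=(y_1,\dots,y_n)$ on $\Sigma_{\alpha,z}$ and $\Sigma_{\alpha,0}$ and compare them coefficient by coefficient. For a function on the leaf $\Sigma_{\alpha,z}$ parametrised by $y\mapsto Y_{\Sia}(y,z)$,
\[
\Delta_{\alpha,z}\varphi(y,z)=g_\alpha^{ij}(y,z)\bigl(\partial_{y_i}\partial_{y_j}\varphi-\Gamma_{\alpha,ij}^k(y,z)\partial_{y_k}\varphi\bigr)(y,z).
\]
By \eqref{e.partial.constant.extension}, the derivatives $\partial_{y_i}\varphi(y,z)$ and $\partial_{y_i}\partial_{y_j}\varphi(y,z)$ coincide with $\partial_{y_i}\varphi_0(y)$ and $\partial_{y_i}\partial_{y_j}\varphi_0(y)$. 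Hence
\[
\Delta_{\alpha,z}\varphi-\Delta_{\alpha,0}\varphi
=\bigl(g_\alpha^{ij}(y,z)-g_\alpha^{ij}(y,0)\bigr)\partial_{ij}\varphi_0
-\bigl(g_\alpha^{ij}\Gamma^k_{\alpha,ij}(y,z)-g_\alpha^{ij}\Gamma^k_{\alpha,ij}(y,0)\bigr)\partial_k\varphi_0 .
\]

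We bound the two groups separately using Proposition \ref{geometric approximation}. By \eqref{e.g_ij.approx}, the first coefficient difference is $O(\varepsilon|z|(1+|z|))$. For the second, we write it as $(g^{ij}_\alpha(z)-g^{ij}_\alpha(0))\Gamma(z)+g^{ij}_\alpha(0)(\Gamma(z)-\Gamma(0))$. We then use \eqref{e.g_ij.approx}, \eqref{e.gamma.alpha.approx}, the bound $|\Gamma_{\alpha}|\lesssim\varepsilon(1+|z|^2)$ from \eqref{e.na.g.approx}, and the uniform boundedness of $g_\alpha^{ij}$. On $|z|\le K|\log\varepsilon|$ the product term is $O(\varepsilon^2|z|(1+|z|)^3)$, which is dominated by $\varepsilon|z|(1+|z|)$ because $\varepsilon|\log\varepsilon|^2\ll1$. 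This yields
\[
|\Delta_{\alpha,z}\varphi-\Delta_{\alpha,0}\varphi|\lesssim\varepsilon|z|(1+|z|)\bigl(|\partial^2_y\varphi_0|+|\partial_y\varphi_0|\bigr).
\]

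The remaining step, which is the only real point to check, is converting coordinate derivatives of $\varphi_0$ into covariant ones on $\Sigma_{\alpha,0}$. We have $\partial_{ij}\varphi_0=\nabla^2_{\alpha,0}\varphi_0(\partial_{y_i},\partial_{y_j})+\Gamma^k_{\alpha,ij}(y,0)\partial_k\varphi_0$. On $\Sigma_\alpha$ the coordinate frame $\partial_{y_i}$ is uniformly bounded with $g_{\alpha,ij}(y,0)$ uniformly equivalent to $\delta_{ij}$; this follows from \eqref{e.na.f} and \eqref{e.del.y_i.O(1)}, since the sheet is a small-gradient graph. Together with $|\Gamma_\alpha(y,0)|\lesssim\varepsilon$, this gives $|\partial^2_y\varphi_0|+|\partial_y\varphi_0|\lesssim|\nabla^2_{\alpha,0}\varphi_0|+|\nabla_{\alpha,0}\varphi_0|$, which proves \eqref{e.comparing laplacian}.

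All constants come from Proposition \ref{geometric approximation}, so they depend only on $n,K,\delta,\eta_0,\Lambda_0$.
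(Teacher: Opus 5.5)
Your proof is correct and follows the paper's argument. You write $\Delta_{\alpha,z}$ in the $y$-chart, use \eqref{e.partial.constant.extension} to eliminate the differences of derivatives of $\varphi$, and bound the coefficient differences $g^{ij}_\alpha(y,z)-g^{ij}_\alpha(y,0)$ and $g^{ij}_\alpha\Gamma^k_{\alpha,ij}(y,z)-g^{ij}_\alpha\Gamma^k_{\alpha,ij}(y,0)$ by \eqref{e.g_ij.approx} and \eqref{e.gamma.alpha.approx}. You also spell out two steps the paper leaves implicit, both handled correctly: splitting the product term using $|\Gamma_\alpha|\lesssim\varepsilon(1+|z|^2)$, and converting coordinate derivatives of $\varphi_0$ into $|\nabla_{\alpha,0}\varphi_0|$ and $|\nabla^2_{\alpha,0}\varphi_0|$ via $g_{\alpha,ij}(y,0)\sim\delta_{ij}$.
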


\begin{proof}
For a general function $\vp\in C^2(\bbb^+_{1.6R}(0)\times (-2R,2R))$,
\begin{equation}\label{e.lapl.gamma}
\Delta_{\alpha,z}\varphi(y,z)
=\sum_{i,j=1}^ng_\alpha^{ij}(y,z)\partial_{y_iy_j}\varphi(y,z)
-\sum_{i,j,k=1}^ng_\alpha^{ij}(y,z)\Gamma_{\alpha,ij}^k(y,z)\partial_{y_k}\varphi(y,z).
\end{equation}
Thus,
\begin{equation}\label{e.lapl.difference}
\begin{split}
	\Delta_{\alpha, z}\varphi (y,z)-\Delta_{\alpha, 0}\varphi (y,0) &= \sum_{i,j=1}^n(g_{\alpha}^{ij}(y,z)-g_{\alpha}^{ij}(y,0))\partial_{y_i}\partial_{y_j}\varphi(y,z)\\
	&\quad-\sum_{i,j,k=1}^n(g_{\alpha}^{ij}(y,z)\Gamma_{\alpha,ij}^k(y,z)-g_{\alpha}^{ij}(y,0)\Gamma_{\alpha,ij}^k(y,0))\partial_{y_k}\varphi(y,z)\\
	&\quad +\sum_{i,j=1}^ng_{\alpha}^{ij}(y,0)(\partial_{y_i}\partial_{y_j}\varphi(y,z)-\partial_{y_i}\partial_{y_j}\varphi(y,0))\\
	&\quad -\sum_{i,j,k=1}^n g_{\alpha}^{ij}(y,0)\Gamma_{\alpha,ij}^k(y,0)(\partial_{y_k}\varphi(y,z)-\partial_{y_k}\varphi(y,0)).
\end{split}
\end{equation}
In the case where $\varphi$ is the $z$-constant extension of $\vp_0\in C^2(\Sia)$ as in \eqref{e.const.extension}, then \eqref{e.partial.constant.extension} holds, and \eqref{e.partial.constant.extension} follows directly from \eqref{e.g_ij.approx} and \eqref{e.gamma.alpha.approx}.
\end{proof}

Finally, we shall record for later use the following expansion for the mean curvature of the graph of a function $\vp\in C^2(\Sigma_\alpha)$. This is analogous to \cite[Lemma 2.9]{chodosh-mantoulidis}.

\begin{lemma}\label{mean curvature expansion tf}
Fix $K>0$.  Let $\varphi\in C^2(\Sigma_\alpha)$ satisfy
\begin{equation*}
 \|\varphi\|_{L^\infty(\Sigma_\alpha)}\leq K|\log\varepsilon|,
 \qquad |\nabla_{\alpha,0}\varphi|\leq\eta_0,
 \qquad |\nabla^2_{\alpha,0}\varphi|\leq\Lambda_0\varepsilon.
\end{equation*}
Let $\Sigma_\alpha[\vp]$ denote the graph of $\vp$ in twisted Fermi coordinates, parametrized by
$y\mapsto Y_{\Sigma_\alpha}(y,\varphi(y))$, and let $H_{\Sigma_\alpha[\vp]}$ denote its mean curvature. 
For each fixed $\varphi$, define an operator on $\psi\in C^2(\Sigma_\alpha)$ by
\[
 \mathcal L_{\Sigma_\alpha[\varphi]}\psi(y)
 :=\left[
 \frac{\Delta_{\Sigma_\alpha[\varphi]}(\psi\circ\Pi_\alpha)}
 {\langle\bn_{\Sigma_\alpha[\varphi]},\nabla t_\alpha\rangle}
 \right]\!\left(Y_{\Sigma_\alpha}(y,\varphi(y))\right),
\]
where $\psi\circ\Pi_\alpha$ is restricted to the graph before applying its
intrinsic Laplacian.  For all sufficiently small $\varepsilon$, this is a
linear uniformly elliptic operator in weighted divergence form, and
\[
 \mathcal L_{\Sigma_\alpha[\varphi]}1=0,
 \qquad \mathcal L_{\Sigma_\alpha[0]}=\Delta_{\alpha,0}.
\]
Moreover,
\begin{equation*}
\begin{split}
 H_{\Sigma_\alpha[\varphi]}-H_{\Sigma_\alpha}
 ={}&\mathcal L_{\Sigma_\alpha[\varphi]}\varphi
 +\bigl(|\sff_{\Sigma_\alpha}|^2
       +\Ric_g(\bn_{\Sigma_\alpha},\bn_{\Sigma_\alpha})\bigr)\varphi
 +\mathcal Q(y,\varphi,d\varphi),
\end{split}
\end{equation*}
where $\mathcal Q(y,z,p)$ is a smooth function of the base point, height,
and covector $p\in T_y^*\Sigma_\alpha$, and satisfies
\begin{equation*}
\begin{split}
 |\mathcal Q(y,\varphi,d\varphi)|
 \leq{}& C\varepsilon|d\varphi|^2
 +C\varepsilon|\varphi|^2
          |\nabla_{\alpha,0}H_\alpha(y,0)|+C\varepsilon^3|\varphi|^2(1+|\varphi|^2).
\end{split}
\end{equation*}
The norms on the right are taken with respect to $g_{\alpha,0}$.
The ellipticity constants and $C$ are independent of $\varepsilon$ and
$\alpha$.
\end{lemma}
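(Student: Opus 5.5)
The graph $\Sigma_\alpha[\varphi]$ is a level set of the function $F(x)=t_\alpha(x)-\varphi(\Pi_\alpha(x))$, so I would compute its mean curvature as the divergence of a unit normal. With our sign convention, $H_{\Sigma_\alpha[\varphi]}=-\operatorname{div}\bigl(\nabla F/|\nabla F|\bigr)$, evaluated on the graph. Write $\tilde\varphi=\varphi\circ\Pi_\alpha$, the $z$-constant extension \eqref{e.const.extension}. Lemma \ref{l.Fc.vs.tFc.1} gives $\nabla t_\alpha=\widetilde Z$, which is a unit geodesic field, so
\[
\nabla F=\widetilde Z-\nabla\tilde\varphi,\qquad |\nabla F|^2=1-2\langle\widetilde Z,\nabla\tilde\varphi\rangle+|\nabla\tilde\varphi|^2 .
\]
By Remark \ref{r.tZ-Z}, $\langle\widetilde Z,\nabla\tilde\varphi\rangle=d\tilde\varphi(\partial_{\tilde z})=-T^i\partial_{y_i}\varphi$, which is $O(\varepsilon|z|(1+|z|)|d\varphi|)$. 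Expanding $-\operatorname{div}(\nabla F/|\nabla F|)$ gives three pieces. The first is $-\operatorname{div}\widetilde Z=H_\alpha(y,z)$, the mean curvature of the leaf $\Sigma_{\alpha,z}$. The second is the linear term $\operatorname{div}\nabla\tilde\varphi=\Delta\tilde\varphi$. The third collects terms quadratic in $d\varphi$ with coefficients of the form $\nabla^2 t_\alpha=O(\varepsilon(1+|z|))$ (Lemma \ref{l.t.Pi}), together with $\varepsilon$-small multiples of $\nabla^2\tilde\varphi$ times $|d\varphi|^2$; all of these go into $\mathcal Q$ with bound $C\varepsilon|d\varphi|^2$.

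Rather than working with $\Delta\tilde\varphi$ directly, I would write the linear part in the intrinsic form on the graph. The identity $\operatorname{div}_M(\nabla F/|\nabla F|)=\operatorname{div}_{\Sigma[\varphi]}\bn$, combined with the decomposition of $\nabla\tilde\varphi$ into its tangential part along $\Sigma_\alpha[\varphi]$ and its $\bn$-component, gives
\[
H_{\Sigma_\alpha[\varphi]}=\frac{H_\alpha(y,\varphi)+\Delta_{\Sigma_\alpha[\varphi]}\tilde\varphi}{\langle\bn,\nabla t_\alpha\rangle}+(\text{quadratic terms}).
\]
The factor $\langle\bn,\nabla t_\alpha\rangle=(1-\langle\widetilde Z,\nabla\tilde\varphi\rangle)/|\nabla F|=1+O(|d\varphi|^2+\varepsilon|\varphi|^2|d\varphi|)$ appears because the normal speed of the graph relative to the foliation is $\langle\bn,\nabla t_\alpha\rangle^{-1}$. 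This is the reason for the normalisation built into $\mathcal L_{\Sigma_\alpha[\varphi]}$.

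The structural claims on $\mathcal L_{\Sigma_\alpha[\varphi]}$ then follow directly. The relation $\mathcal L1=0$ holds because the Laplacian of a constant vanishes. For $\varphi=0$ the graph is $\Sigma_\alpha$, and there $\bn=\widetilde Z$, so $\mathcal L_{\Sigma_\alpha[0]}=\Delta_{\alpha,0}$. Uniform ellipticity in weighted divergence form comes from pulling the Laplace--Beltrami operator of the graph metric back to $y$-coordinates: $\lambda^{-1}\partial_i(\lambda\, g^{ij}\partial_j)$ divided by the positive weight $\langle\bn,\nabla t_\alpha\rangle$. The coefficients are close to those of $g_{\alpha,0}$ by \eqref{e.g_ij.approx}, since $|\varphi|\le K|\log\varepsilon|$ gives $\varepsilon|\varphi|^2\ll1$, and the Jacobian $D Y_{\Sigma_\alpha}$ restricted to the graph is close to the identity by Lemma \ref{l.Tht}.

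The remaining task is to expand the zeroth-order term $H_\alpha(y,\varphi(y))$ about $z=0$. Along a flowline of $Z$,
\[
\partial_z H_\alpha=|\sff_\alpha|^2+\Ric(\widetilde Z,\widetilde Z)+(\text{terms with }Z-\widetilde Z).
\]
This is the Riccati equation for the $\widetilde Z$-geodesic foliation, corrected by the twist; the twist correction is $\langle Z-\widetilde Z,\nabla H\rangle$-type and is $O(\varepsilon|z|(1+|z|))\,|\nabla_\alpha H_\alpha|$ by Lemma \ref{l.Z.Z-tld}. A second-order Taylor expansion in $z$ then gives
\[
H_\alpha(y,\varphi)=H_\alpha(y,0)+\bigl(|\sff|^2+\Ric\bigr)\varphi+O\bigl(\varepsilon|\varphi|^2|\nabla_{\alpha,0}H_\alpha(y,0)|+\varepsilon^3|\varphi|^2(1+|\varphi|^2)\bigr).
\]
Here $\partial_z(|\sff|^2+\Ric)=O(\varepsilon^3)$ by Proposition \ref{geometric approximation}, since $\sff=O(\varepsilon)$, $\partial_z\sff=O(\varepsilon^2)$ and $\nabla\Ric=O(\varepsilon^3)$. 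The division by $\langle\bn,\nabla t_\alpha\rangle$ produces an additional error $H_\alpha\cdot O(|d\varphi|^2)$, which is $O(\varepsilon|d\varphi|^2)$ and also goes into $\mathcal Q$.

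I expect the main obstacle to be the bookkeeping of the twist terms involving $T$. These are not obviously quadratic, and the first-order term $\langle\widetilde Z,\nabla\tilde\varphi\rangle$ is only $O(\varepsilon|\varphi||d\varphi|)$. I would use the bounds $|\nabla\varphi|\le\eta_0$ and $|T|\lesssim\varepsilon|z|(1+|z|)$ evaluated at $z=\varphi$, and absorb such terms either into $\mathcal L_{\Sigma_\alpha[\varphi]}$ (they are built into the intrinsic Laplacian of the graph via $\Pi_\alpha$) or, when they appear as products, into $C\varepsilon|d\varphi|^2+C\varepsilon^3|\varphi|^2(1+|\varphi|^2)$ by Young's inequality. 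The key step is to check that no term linear in $d\varphi$ with an $O(\varepsilon)$ coefficient survives outside $\mathcal L$.
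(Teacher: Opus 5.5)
Your treatment of the graph is essentially the paper's, and it is better than you fear: the identity you reach is \emph{exact}. The ``quadratic terms'' are the single term $-\sff_{\alpha,\varphi}(\nabla_{\alpha,\varphi}\varphi,\nabla_{\alpha,\varphi}\varphi)/\bigl((1+T^i\varphi_i)\varpi\bigr)$. Here $\varpi^2=(1+T^i\varphi_i)^2+|d\varphi|^2_{g_{\alpha,\varphi}}$ and $\langle\bn_{\Sigma_\alpha[\varphi]},\nabla t_\alpha\rangle=(1+T^i\varphi_i)/\varpi$.

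The paper gets this in one line from the restriction formula $\Delta f=\Delta_{\Sigma_\alpha[\varphi]}f+\nabla^2f(\bn,\bn)-H_{\Sigma_\alpha[\varphi]}\langle\nabla f,\bn\rangle$ applied to $f=t_\alpha$. It uses $t_\alpha=\varphi\circ\Pi_\alpha$ on the graph, $\Delta t_\alpha=-H_\alpha$ and $\nabla^2t_\alpha(\widetilde Z,\cdot)=0$. Your divergence computation with the tangential/normal splitting of $\nabla(\varphi\circ\Pi_\alpha)$ yields the same identity. Moreover, $\varpi/(1+T^i\varphi_i)-1=|d\varphi|^2/\bigl((1+T^i\varphi_i)(\varpi+1+T^i\varphi_i)\bigr)$ has no linear part. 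So every term linear in $d\varphi$ already lies in $\mathcal L_{\Sigma_\alpha[\varphi]}\varphi$. The check you single out as the key step is therefore automatic, and no Young-type absorption is needed; a bare $\varepsilon|\varphi||d\varphi|$ term could not have been absorbed into the allowed bound for $\mathcal Q$ anyway.

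Where you genuinely differ is in expanding $H_\alpha(y,\varphi(y))$ in the height. You Taylor-expand along the twisted flow line at fixed $y$, using $\partial_zH_\alpha=\widetilde ZH_\alpha+T^i\partial_{y_i}H_\alpha$. The paper instead expands along the normal geodesic through $Y_{\Sigma_\alpha}(y,z)$, starting from its Fermi foot point $\widetilde y=\widetilde\Pi_\alpha(Y_{\Sigma_\alpha}(y,z))$. There \eqref{e.Riccati} gives $|\widetilde Z^2H_\alpha|\le C\varepsilon^3$ with no polynomial loss in $z$. The paper then compares $\widetilde y$ with $y$ on $\Sigma_\alpha$ itself, via $|\widetilde y-y|\le C\varepsilon z^2$, $|\nabla^2_{\alpha,0}H_\alpha(\cdot,0)|\le C\varepsilon$ and $|\nabla_{\alpha,0}(\widetilde ZH_\alpha)(\cdot,0)|\le C\varepsilon^2$.

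The paper's route only needs horizontal derivatives of $H_\alpha$ on the zero sheet, which is a level set of $u$ controlled by Proposition \ref{l.grad.A}. Your route produces $\nabla_{\alpha,s}H_\alpha(y,s)$ at intermediate heights. You must still transport this back to $|\nabla_{\alpha,0}H_\alpha(y,0)|$, for instance by Gronwall applied to $\partial_s\partial_{y_i}H_\alpha$, and you do not address this.

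More concretely, the bounds you cite do not give the stated weights. The estimate $|T|\lesssim\varepsilon|z|(1+|z|)$ from Lemma \ref{l.Z.Z-tld} integrates to $C\varepsilon|\varphi|^2(1+|\varphi|)|\nabla H_\alpha|$, not $C\varepsilon|\varphi|^2|\nabla_{\alpha,0}H_\alpha(y,0)|$. Likewise, Proposition \ref{geometric approximation} only gives $|\partial_z\sff_\alpha|\lesssim\varepsilon^2(1+|z|^3)$, not $O(\varepsilon^2)$.

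The missing ingredient, which the paper proves first, is the sharper twist bound $|T(y,z)|\le C\varepsilon|z|$. It follows from $\langle\widetilde Z,Z_2\rangle(y,0)=0$ and $|\partial_z\langle\widetilde Z,Z_2\rangle|\le C\varepsilon$, and the paper also needs it for $|\widetilde y-y|\le C\varepsilon z^2$. With this bound, Riccati for the $\widetilde Z$-part of $\partial_z\bigl(|\sff_\alpha|^2+\Ric_g(\widetilde Z,\widetilde Z)\bigr)$, and the Gronwall transport, your integrated expansion $H_\alpha(y,\varphi)-H_\alpha(y,0)=\int_0^\varphi\partial_zH_\alpha(y,s)\,ds$ also closes. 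Without it you lose only powers of $|\log\varepsilon|$. That is harmless in the later applications, but it is not the estimate as stated.
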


\begin{proof}
    See Appendix \ref{ap:geometric.approx}.
\end{proof}

\subsection{Commutator estimates}
We shall now record some easy commutator estimates, which will be used throughout the rest of the paper.

\begin{proposition}\label{p.commutator.laplacian}
Let $K>0$ be fixed. For all $\vp\in C^3(\bbb^+_{1.6R}(0)\times (-2R,2R))$, we have the following commutator estimate 
\begin{equation}\label{e.exchange.derivative}
	|\Delta_{\al, z} \partial_{y_i}\varphi-\partial_{y_i}\Delta_{\al, z}\varphi|\lesssim \varepsilon(1+|z|^2)(|\nabla^2_{\al, z}\varphi|+|\nabla_{\al, z}\varphi|)
\end{equation}
for $z\in (-K|\log \varepsilon|, K|\log \varepsilon|)$.
The constant depends on $n, K, \delta, \eta_0$ and $\Lambda_0$.

\end{proposition}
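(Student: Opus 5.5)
The plan is to prove the estimate by writing both $\Delta_{\alpha,z}\partial_{y_i}\varphi$ and $\partial_{y_i}\Delta_{\alpha,z}\varphi$ in the twisted Fermi coordinates via the coordinate formula \eqref{e.lapl.gamma}, and then comparing them term by term. Throughout, $z$ is held fixed while $y$ varies. The commutator is then a purely algebraic expression in the $y$-derivatives of the metric coefficients $g_\alpha^{ij}(y,z)$ and of the Christoffel symbols $\Gamma^k_{\alpha,ij}(y,z)$ of $\Sigma_{\alpha,z}$.

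Concretely, I would first apply \eqref{e.lapl.gamma} to the function $\partial_{y_i}\varphi$, which gives
$$\Delta_{\alpha,z}\partial_{y_i}\varphi=g_\alpha^{jk}\partial_{y_jy_ky_i}\varphi-g_\alpha^{jk}\Gamma^l_{\alpha,jk}\partial_{y_ly_i}\varphi.$$
Next I would differentiate \eqref{e.lapl.gamma} in $y_i$. The coordinate vector fields commute, so $\partial_{y_i}\partial_{y_j}\partial_{y_k}\varphi$ is symmetric, and the third-order terms cancel exactly. This leaves
$$\partial_{y_i}\Delta_{\alpha,z}\varphi-\Delta_{\alpha,z}\partial_{y_i}\varphi=(\partial_{y_i}g_\alpha^{jk})\partial_{y_jy_k}\varphi-\partial_{y_i}(g_\alpha^{jk}\Gamma^l_{\alpha,jk})\partial_{y_l}\varphi.$$
By \eqref{e.na.g.approx} and \eqref{e.partial.gamma.alpha.approx}, valid for $|z|\le K|\log\varepsilon|$, every coefficient on the right is $O(\varepsilon(1+|z|^2))$. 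For the product in the last term I also need $g_\alpha^{jk}=O(1)$, which follows from \eqref{e.g_ij.approx} and the closeness of $g$ to the Euclidean metric, and $\Gamma^l_{\alpha,jk}=O(\varepsilon(1+|z|^2))$, which follows from \eqref{e.gamma.alpha.approx} combined with the smallness of the Christoffel symbols at $z=0$ coming from \eqref{e.na.g.approx}.

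The last step converts coordinate derivatives back into covariant ones. I would use $\partial_{y_jy_k}\varphi=\nabla^2_{\alpha,z}\varphi(\partial_{y_j},\partial_{y_k})+\Gamma^l_{\alpha,jk}\partial_{y_l}\varphi$ together with $|\partial_{y_j}|=O(1)$ from \eqref{e.del.y_i.O(1)}. This bounds $|\partial_{y_jy_k}\varphi|\lesssim|\nabla^2_{\alpha,z}\varphi|+|\nabla_{\alpha,z}\varphi|$ and $|\partial_{y_l}\varphi|\lesssim|\nabla_{\alpha,z}\varphi|$, which gives \eqref{e.exchange.derivative}.

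There is no real obstacle here. The only point needing care is keeping the polynomial weight in $z$ uniform, namely $(1+|z|^2)$ with no worse power. This works because the derivative bounds in Proposition \ref{geometric approximation} already carry exactly this weight on the whole range $|z|\le K|\log\varepsilon|$, and the $O(1)$ bounds on $g_\alpha^{ij}$ hold there as well, since $\varepsilon|z|(1+|z|)\to0$ on that range.
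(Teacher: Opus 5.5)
Your proof is correct and follows the paper's argument: expand both sides with \eqref{e.lapl.gamma}, observe that the third-order terms cancel, and bound the remaining coefficients $\partial_{y_i}g_\alpha^{jk}$, $\Gamma^l_{\alpha,jk}$ and $\partial_{y_i}\Gamma^l_{\alpha,jk}$ via Proposition~\ref{geometric approximation}. Your last step, converting coordinate derivatives into $\nabla_{\alpha,z}\varphi$ and $\nabla^2_{\alpha,z}\varphi$ using \eqref{e.del.y_i.O(1)}, is left implicit in the paper but is exactly what is needed.
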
 

\begin{proof}
Using the formula in \eqref{e.lapl.gamma}, we see that
\begin{equation}
\begin{split}
    \Delta_{\al, z} \partial_{y_i}\varphi-\partial_{y_i}\Delta_{\al, z}\varphi &= -\sum_{j,k=1}^n(\partial_{y_i}g_{\alpha}^{jk})\partial_{y_j}\partial_{y_k}\varphi+\sum_{j,k,l=1}^n\left((\partial_{y_i}g_{\alpha}^{jk})\Gamma_{\alpha,jk}^l+g_{\alpha}^{jk}(\partial_{y_i}\Gamma_{\alpha,jk}^l)\right)\partial_{y_l}\varphi
\end{split}
\end{equation}
where all terms are evaluated at $(y,z)$. Then \eqref{e.exchange.derivative} follows from \eqref{e.g_ij.approx},
\eqref{e.na.g.approx},
\eqref{e.gamma.alpha.approx}, and \eqref{e.partial.gamma.alpha.approx}.
\end{proof}

We shall also need some commutator estimates for derivatives along the boundary of the domain.

\begin{proposition}
Let $K>0$ be fixed. For all $\vp\in C^3(\bbb^+_{1.6R}(0)\times (-2R,2R))$, we have the following commutator estimates
\begin{align}
    \label{e.exchange.zneumannderivative0}
        |\partial_{z}\partial_{\nu}\varphi-\partial_{\nu}\partial_{z}\varphi|&\lesssim \varepsilon{(1+|z|)}|\nabla \varphi|,\\
    \label{e.exchange.1neumannderivative0}
        |\partial_{\nu}\partial_{y_1}\varphi-\partial_{y_1}\partial_{\nu}\varphi|&\lesssim \varepsilon(1+|z|^2)|\nabla \varphi|,\\
    \label{e.exchange.neumannderivative0}
        |\partial_{\nu}\partial_{y_i}\varphi-\partial_{y_i}\partial_{\nu}\varphi|&\lesssim \varepsilon |z|  (1+|z|)|\nabla \varphi|\;\text{ for all $2\leq i\leq n$}
\end{align}
on $\partial_0\{|z|\leq K|\log\varepsilon|\}$.
The constants depend on $n, K, \delta, \eta_0$ and $\Lambda_0$.
\end{proposition}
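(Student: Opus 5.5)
We will reduce each commutator to a first-order operator built from the coordinate vector fields and then bound its coefficients with the estimates already established. For vector fields $X,Y$ and a scalar function $\varphi$ we have $XY\varphi-YX\varphi=[X,Y]\varphi$. The vector field $\nu$ is extended off $\partial_0$ as $\partial_{x_1}$, the unit normal in boundary Fermi coordinates. The twisted Fermi frame $\{\partial_{y_i},\partial_z\}$ consists of coordinate fields and hence commutes. Therefore each left-hand side equals $|[\partial_z,\partial_{x_1}]\varphi|$ or $|[\partial_{y_i},\partial_{x_1}]\varphi|$, which is at most $|[\,\cdot\,,\partial_{x_1}]|\,|\nabla\varphi|$. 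It remains to estimate the Lie brackets on $\partial_0\{|z|\le K|\log\varepsilon|\}$.

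For the bracket with $\partial_z=Z$, write $[Z,\partial_{x_1}]=\nabla_Z\partial_{x_1}-\nabla_{\partial_{x_1}}Z$. By \eqref{e.rescaled.bound.Christoffel}, $|\nabla\partial_{x_1}|\lesssim\varepsilon$, since the Christoffel symbols are $O(\varepsilon)$ and $|Z|=O(1)$ by Lemma \ref{l.vect.Z}(v). By \eqref{e.na.Z}, $|\nabla Z|\lesssim\varepsilon(1+|z|)$, using $\tilde t=z$ from Lemma \ref{l.Fc.vs.tFc.1}. Together these give \eqref{e.exchange.zneumannderivative0}.

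For $\partial_{y_i}$ we similarly write $[\partial_{y_i},\partial_{x_1}]=\nabla_{\partial_{y_i}}\partial_{x_1}-\nabla_{\partial_{x_1}}\partial_{y_i}$. The first term is $O(\varepsilon)$ by \eqref{e.del.y_i.O(1)} and the Christoffel bound. The second is $\lesssim\varepsilon(1+|z|^2)$ by \eqref{e.na.y_i}. This gives \eqref{e.exchange.1neumannderivative0}; the case $i=1$ cannot be improved, since $\partial_{y_1}=\partial_{x_1}$ at $z=0$ while $\nabla_{\partial_{x_1}}\partial_{x_1}$ need not vanish off $z=0$ in the relevant sense.

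The main point is the improved bound \eqref{e.exchange.neumannderivative0} for $2\le i\le n$, which requires the bracket to vanish at $z=0$. We will argue along $\Sigma_\alpha\cap\partial_0$ using the coordinate expression \eqref{e.del_y_i}. There $\partial_{y_i}=\partial_{x_i}+\partial_{x_i}f\,\partial_{x_{n+1}}$. Since $\partial_{x_1}$ is a Fermi coordinate field, $[\partial_{x_1},\partial_{x_i}]=0$, and the remaining contribution of this expression is $\partial_{x_1}\partial_{x_i}f\,\partial_{x_{n+1}}$. On the boundary, $\partial_{x_1}f=0$ holds identically along $\partial_0$, so its tangential derivative $\partial_{x_i}\partial_{x_1}f$ vanishes for $i\ge2$. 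We also need the $x_1$-derivative of the flow Jacobian $\partial_{x_i}\Theta^j(z,y)$ at $z=0$: it vanishes because $\Theta_0=\mathrm{id}$. Hence the bracket, viewed as a smooth function of $z$ along $\partial_0$, vanishes at $z=0$.

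We then bound its $z$-derivative by differentiating \eqref{e.del_y_i} in $x_1$ and using the mixed bounds of Lemma \ref{l.Tht}. Specifically, $|D_x^2\Theta|\lesssim\varepsilon(|\tau|+\tau^2+t^2)$, and on $\partial_0$ we have $t(x)=\tau=z$. This yields $\lesssim\varepsilon|z|(1+|z|)$ times $|\nabla\varphi|$, where $|\nabla f|\le\eta_0$ and $|\nabla^2 f|\lesssim\varepsilon$ by \eqref{e.na.f} control the remaining factors. The delicate point, which we expect to be the main obstacle, is checking that no $O(\varepsilon)$ term independent of $z$ survives from the Christoffel contribution. Working with coordinate partial derivatives rather than covariant ones avoids this, because the bracket of two coordinate-expressed fields involves only ordinary derivatives of their components.
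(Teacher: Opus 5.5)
Your proofs of \eqref{e.exchange.zneumannderivative0} and \eqref{e.exchange.1neumannderivative0}, via $[X,Y]=\nabla_XY-\nabla_YX$ together with $|\nabla\partial_{x_1}|\lesssim\varepsilon$, \eqref{e.na.Z} and \eqref{e.na.y_i}, are correct. For \eqref{e.exchange.1neumannderivative0} your route is shorter than the paper's, which computes $L_1=[\partial_{x_1},\partial_{y_1}]$ at $z=0$ and then propagates it with Gronwall. (Your reason why $i=1$ cannot be improved is off. By \eqref{e.g.Ga.0.ball}, $\nabla_{\partial_{x_1}}\partial_{x_1}=0$ in boundary Fermi coordinates. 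The actual obstruction is that $L_1(y,0)=\partial_{x_1}^2f\,\partial_{x_{n+1}}$ need not vanish.)

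The gap is in the key step of \eqref{e.exchange.neumannderivative0}: carrying the vanishing at $z=0$ up to height $z$. You announce a bound on the $z$-derivative, but what you describe is a direct $x_1$-differentiation of \eqref{e.del_y_i} at height $z$. As written, that does not produce the factor $|z|$.

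Write $L_i=[\partial_{x_1},\partial_{y_i}]$. Then $L_i^j=\partial_{x_1}b_i^j$ with $b_i^j=\partial_{x_i}\Theta^j(z,\Pi)+(\partial_{x_i}f)(\Pi)\,\partial_{x_{n+1}}\Theta^j(z,\Pi)$, and the chain rule produces three kinds of terms.

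1. The terms with $D_x^2\Theta(z,\Pi)\,\partial_{x_1}\Pi$ are $O(\varepsilon|z|(1+|z|))$ by Lemma \ref{l.Tht}. Here $t=0$, not $t=z$, since the base point lies on $\Sigma_\alpha$.
2. The terms with $\partial_\tau D_x\Theta(z,\Pi)\,\partial_{x_1}z$ are $O(\varepsilon(1+|z|))\cdot|J|$, so they are small only by Lemma \ref{l.dz(nu)}. This is also what makes the flow-Jacobian contribution vanish at $z=0$: it needs $J(y,0)=0$ (orthogonal intersection), not merely $\Theta_0=\mathrm{id}$.
3. The remaining term is $\sum_k(\partial_{x_k}\partial_{x_i}f)(\Pi(x))\,\partial_{x_1}\Pi^k(x)$. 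Using only $|\nabla^2f|\lesssim\varepsilon$, as you propose, it is $O(\varepsilon)$ with no factor $|z|$. That is exactly the $z$-independent error you wanted to rule out. It has nothing to do with Christoffel symbols, so switching to coordinate derivatives does not remove it.

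What makes the third term small is the following. For $x\in\partial_0$ one has $\Pi(x)\in\partial_0\Sigma_\alpha$, so the $k=1$ summand vanishes, because $\partial_{x_i}\partial_{x_1}f=0$ there for $i\ge2$. The other summands are small because $\partial_{x_1}\Pi^k-\delta_{1k}=D\Pi^k(\partial_{x_1}-\partial_{y_1})=O(\varepsilon|z|(1+|z|))$ by Lemma \ref{l.del.y_1.t_be}. With these observations added, your direct route closes.

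The paper avoids this bookkeeping. Since $[Z,\partial_{y_i}]=0$, the Jacobi identity gives $\partial_zL_i^k=[[Z,\partial_{x_1}],\partial_{y_i}]^k+\sum_j(\partial_{x_j}Z^k)L_i^j$. Every coefficient there involves a derivative of $Z$, hence is $O(\varepsilon(1+|z|))$ by \eqref{e.na.Z}. The flow lines of $Z$ stay in $\partial_0$ by Lemma \ref{l.vect.Z}(i). Integrating along them from $L_i(y,0)=0$ gives $|L_i|\lesssim\varepsilon|z|(1+|z|)$. That ODE is the genuine $z$-derivative bound your outline refers to but does not supply.
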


\begin{proof}
Recall that $\nu=\partial_{x_1}$ on $\partial_0\{|z|\leq K|\log\varepsilon|\}$. Thus, $\partial_{z}\partial_{\nu}\varphi-\partial_{\nu}\partial_{z}\varphi=[Z, \partial_{x_1}]\varphi$. If we write $Z=\sum_{k=1}^{n+1}Z^k\partial_{x_k}$, then $[Z, \partial_{x_1}]=-\sum_{k=1}^{n+1}(\partial_{x_1}Z^k)\partial_{x_k}$, and \eqref{e.exchange.zneumannderivative0} follows directly from \eqref{e.na.Z}.

Now, let $L_i\coloneqq [\partial_{x_1}, \partial_{y_i}]$, so that \eqref{e.exchange.1neumannderivative0} and \eqref{e.exchange.neumannderivative0} amount to estimating $L_i$ for $i=1, 2, \dots, n$.\\ Let $L_i|_{(y,z)}=\sum_{k=1}^{n+1}L_i^k(y,z)\partial_{x_k}|_{(y,z)}$.
On $\partial_0\Sigma_\alpha$, $\partial_{y_i}|_{(y,0)}=\partial_{x_i}|_{(y,0)}+\partial_{x_i}f(y)\partial_{x_{n+1}}|_{(y,0)}$. Therefore,
\begin{equation}
    [\partial_{x_1}, \partial_{y_i}]|_{(y,0)}=\partial_{x_1}\partial_{x_i}f(y)\partial_{x_{n+1}}|_{(y,0)} = \partial_{x_i}\partial_{x_1}f(y)\partial_{x_{n+1}}|_{(y,0)}.
\end{equation}
If $i=1$, for $(y,0)\in \partial_0\Sigma_\alpha$ we have
\begin{equation}\label{e.Y_1(0)}
|L^k_1(y,0)|\lesssim\Lambda_0\varepsilon
\end{equation}
 for all $k=1, \dots, n+1$ by \eqref{e.na.f}.
On the other hand, if $2\leq i\leq n$, then by the Neumann condition for $f$ on $\del_0\bbb^+_{2R}(0)$, if $(y,0)\in \partial_0\Sigma_\alpha$. 
\begin{equation}\label{e.Y_i(0)}
    L^k_i(y,0)=0
\end{equation}
 for all $k=1, \dots, n+1$.

We now want to estimate $L_i^k(y,z)$ by deriving the ODE it satisfies.

Since $\partial_{y_i}, \partial_z$ are coordinate vectors $[Z, \partial_{y_i}]=0$ everywhere. Moreover, by direct computation $[Z,L_i]=[[Z,\partial_{x_1}], \partial_{y_i}]$, i.e.
\begin{equation}
\begin{split}
    \partial_zL_i^k = [Z, L_i]^k+\sum_{j=1}^{n+1}(\partial_{x_j}Z^k)L_i^j
    =[[Z,\partial_{x_1}], \partial_{y_i}]^k+\sum_{j=1}^{n+1}(\partial_{x_j}Z^k)L_i^j.
\end{split}
\end{equation}
Let us write $\partial_{y_i}|_{(y,z)}=\sum_{m=1}^{n+1}b_i^m(y,z)\partial_{x_m}|_{(y,z)}$, where $b_i^m(y,z)=\partial_{x_i}\Theta_m(z,y)+\partial_{x_i}f(y)\partial_{x_{n+1}}\Theta_m(z,y)$ by \eqref{e.del_y_i}.
We have
\begin{equation}
\begin{split}
    [[Z,\partial_{x_1}], \partial_{y_i}]^k=\sum_{j=1}^{n+1}b_i^j(\partial_{x_j}\partial_{x_1}Z^k)-(\partial_{x_j}b_i^k)(\partial_{x_1}Z^j).
\end{split}
\end{equation}
Hence, the functions $L_i^k$ satisfy an ODE system of the form 
\[\partial_zL_i^k(y,z) = \sum_{j=1}^{n+1}c_j(y,z)L_i^j(y,z)+d_i^j(y,z),\]
where the coefficients satisfy $|c_j(y,z)|\lesssim\varepsilon(1+|z|)$ and $|d_i^j(y,z)|\lesssim \varepsilon(1+|z|)$ by \eqref{e.na.Z}.
By Gronwall's inequality for vector valued functions \cite[Theorem D.2]{Lee}
\begin{equation}
\begin{split}
    |L_i(y,z)|\lesssim \exp(\varepsilon|z|(1+|z|))(|L_i(y,0)|+\varepsilon|z|(1+|z|)).
\end{split}
\end{equation}
Therefore, for $|z|<K|\log\varepsilon|$, $|L_i(y,z)|\lesssim |L_i(y,0)|+\varepsilon|z|(1+|z|)$,
which implies \eqref{e.exchange.1neumannderivative0} and \eqref{e.exchange.neumannderivative0}, by \eqref{e.Y_1(0)} and \eqref{e.Y_i(0)} respectively.
\end{proof}

\subsection{Comparison of twisted Fermi distance functions}
\label{ss.comparison}

As in \cite{wang-wei, wang-weiadv}, we shall need to compare distances with respect to different sheets. The following proposition in the analogue of \cite[Lemma 3.4]{wang-weiadv}, which is itself \cite[Lemma 8.3]{wang-wei}. The only difference in our situation is that we shall use twisted Fermi coordinates as defined in Section \ref{s.tw.Fer.def}. In particular, given distinct connected components $\Sigma_\alpha$, $\Sigma_\beta$ of $\Xi$, $t_{\al}$ and $\Pi_{\al}$ denote the twisted Fermi coordinate time function and projection map with respect to $\Sigma_\alpha$, and let $t_{\beta}$ and $\Pi_{\beta}$ denote the twisted Fermi coordinate time function and projection map with respect to $\Sigma_\beta$. Let $d_{\Sia}$, $d_{\Sigma_\beta}$ denote the intrinsic distances on the hypersurfaces $\Sia$ and $\Sigma_\beta$ respectively, and let $d$ denote the ambient distance, so that $d|_{\Sia}, d|_{\Sigma_\beta}$ are the restrictions of the ambient distance to $\Sia$ and $\Sigma_\beta$ respectively.

\begin{proposition}\label{l.projection.error}
	For any $K>0$, there is a constant $C(K)$ so that for $\alpha\not= \beta$, $x\in \bbb_{2R}^+(0)\times (-2R,2R)$ with $|t_{\alpha}(x)|, |t_{\beta}(x)|\leq K|\log \varepsilon|$, 
	\begin{align}
		\label{e.proj.1}
		d_{\Sigma_ {\beta}}(\Pi_{\beta}\circ\Pi_{\alpha}(x), \Pi_{\beta}(x)) &\leq C(K)\varepsilon^{1/2}|\log \varepsilon|^{3/2}\\
		\label{e.proj.2}
		|t_{\beta}(\Pi_{\alpha}(x))+t_{\alpha}(\Pi_{\beta}(x))|&\leq C(K)\varepsilon^{1/2}|\log \varepsilon|^{3/2}\\
		\label{e.proj.3}
		|t_{\alpha}(x)-t_{\beta}(x)+t_{\beta}(\Pi_{\alpha}(x))|&\leq C(K)\varepsilon^{1/2}|\log \varepsilon|^{3/2}\\
		\label{e.proj.4}
		|t_{\alpha}(x)-t_{\beta}(x)-t_{\alpha}(\Pi_{\beta}(x))|&\leq C(K)\varepsilon^{1/2}|\log \varepsilon|^{3/2}\\
		\label{e.proj.5}
		\left|1-\langle\nabla t_{\alpha}(x),\nabla  t_{\beta}(x)\rangle\right|&\leq C(K)\varepsilon^{1/2}|\log \varepsilon|^{3/2}
	\end{align}
\end{proposition}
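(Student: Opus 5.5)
The plan is to reduce everything to the corresponding statements for ordinary Fermi coordinates, which are \cite[Lemma 3.4]{wang-weiadv} (that is, \cite[Lemma 8.3]{wang-wei}), and then pay for passing from twisted to ordinary coordinates. By Lemma \ref{l.Fc.vs.tFc.1} we have $t_\alpha=\tilde t_\alpha$ and $t_\beta=\tilde t_\beta$, and $\nabla t_\alpha=\widetilde Z_\alpha$. So \eqref{e.proj.5} is a statement about the ordinary signed distance functions alone. The only genuinely new quantities are the projections $\Pi_\alpha,\Pi_\beta$, which differ from the nearest-point projections $\tilde\Pi_\alpha,\tilde\Pi_\beta$. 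Lemma \ref{l.Fc.vs.tFc.2} controls this difference by $d_{\Sigma}(\tilde\Pi(p),\Pi(p))\lesssim\varepsilon(|t|^2+|t|^3)\lesssim\varepsilon|\log\varepsilon|^3$ on $\{|t|\le K|\log\varepsilon|\}$.

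First I would prove the ordinary-Fermi versions of \eqref{e.proj.1}--\eqref{e.proj.5}, with $\tilde\Pi$ in place of $\Pi$, on the extended metric $\tilde g$. Here I follow Wang--Wei.
\begin{itemize}
\item For \eqref{e.proj.5}: take the minimizing geodesic from $x$ to $\tilde\Pi_\alpha(x)$. By \eqref{e.grad.sff} and Lemma \ref{l.tilde.t.Pi}, $|\nabla^2\tilde t_\beta|\lesssim\varepsilon(1+|\tilde t_\beta|)$ along it. Both sheets are $O(\varepsilon)$-curved graphs with $|\nabla f|\le\eta_0$, and they are disjoint with separation $\lesssim K|\log\varepsilon|$ near $x$.
\item Write $\nu_\alpha,\nu_\beta$ for the unit normals. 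A sheet-comparison argument shows that the function $|\nu_\alpha-\nu_\beta|$ on nearby points is small. Two disjoint graphs with second fundamental form $O(\varepsilon)$, at distance $D\lesssim|\log\varepsilon|$, have normals differing by at most $C\sqrt{\varepsilon D}$; otherwise they would cross within horizontal distance $O(\sqrt{D/\varepsilon})$, which is $\ll R$. This is where the $\varepsilon^{1/2}|\log\varepsilon|^{1/2}$ comes from.
\item Transporting along the geodesic of length $\le K|\log\varepsilon|$ adds $O(\varepsilon|\log\varepsilon|^2)$. This gives $|1-\langle\nabla t_\alpha,\nabla t_\beta\rangle|\lesssim\varepsilon^{1/2}|\log\varepsilon|^{3/2}$ after accounting for the extra $|\log\varepsilon|$ factors from $|t|\le K|\log\varepsilon|$.
\end{itemize}

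Then \eqref{e.proj.3}--\eqref{e.proj.4} follow by integrating $\nabla\tilde t_\beta$ along the normal geodesic from $\tilde\Pi_\alpha(x)$ to $x$. That geodesic has tangent $\nabla\tilde t_\alpha$, so the derivative of $\tilde t_\beta$ along it is $1+O(\varepsilon^{1/2}|\log\varepsilon|^{3/2})$. Over length $|t_\alpha(x)|\le K|\log\varepsilon|$ this gives $t_\beta(x)-t_\beta(\tilde\Pi_\alpha x)=t_\alpha(x)+O(\cdot)$. Because of the extra log factor, I would apply \eqref{e.proj.5} in a sharper form, $|1-\langle\cdot,\cdot\rangle|\lesssim\varepsilon|\log\varepsilon|$, which is what the quadratic normal-deviation estimate actually gives; the square root loss appears only at first order.

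Summing the two versions yields \eqref{e.proj.2}. For \eqref{e.proj.1}, note that $\tilde\Pi_\beta$ is constant along $\nabla\tilde t_\beta$. Moving from $x$ to $\tilde\Pi_\alpha(x)$ along $\nabla\tilde t_\alpha$, the tangential component of this direction relative to $\nabla\tilde t_\beta$ has size $|\nabla t_\alpha-\nabla t_\beta|\lesssim\varepsilon^{1/2}|\log\varepsilon|^{1/2}$. Using $|D\tilde\Pi_\beta|=O(1)$ and integrating over length $K|\log\varepsilon|$ gives the stated bound.

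Second, I would replace $\tilde\Pi$ by $\Pi$ using Lemma \ref{l.Fc.vs.tFc.2} together with $|D\Pi|=O(1)$ and $|\nabla t|=1$ from Lemma \ref{l.t.Pi}. Each replacement costs $O(\varepsilon|\log\varepsilon|^3)$, which is $\ll\varepsilon^{1/2}|\log\varepsilon|^{3/2}$. Evaluating $t_\beta$ at $\Pi_\alpha(x)$ rather than $\tilde\Pi_\alpha(x)$ changes it by at most the distance between these two points.

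The main obstacle is the normal-closeness estimate behind \eqref{e.proj.5}, in the presence of the boundary. The crossing argument must be run on the half-ball, and a boundary point might in principle let the sheets tilt relative to each other without crossing. I would handle this with the extensions $\tilde f,\tilde g$, and also by even reflection across $\partial_0$, using $\partial_{x_1}f_\alpha=0$ and \eqref{e.na.f}. The reflected graphs remain disjoint up to $O(\varepsilon)$ metric errors, so the interior crossing argument of \cite[Lemma 8.3]{wang-wei} applies at horizontal scale $\sqrt{|\log\varepsilon|/\varepsilon}$.
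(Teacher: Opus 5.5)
Your proposal is correct and follows the paper's route. The paper likewise reduces to the ordinary-Fermi estimates of Wang--Wei (in the Riemannian form of Chodosh--Mantoulidis, Lemma B.1) via Lemma \ref{l.Fc.vs.tFc.1} ($t=\tilde t$) and Lemma \ref{l.Fc.vs.tFc.2} ($\Pi$ versus $\tilde\Pi$ up to $O(\varepsilon|\log\varepsilon|^3)$), but it simply cites those estimates rather than re-deriving them. Your additional details are sound and fill in what the paper leaves implicit: the quadratic bound $1-\langle\nabla t_\alpha,\nabla t_\beta\rangle\lesssim\varepsilon|\log\varepsilon|$, which you correctly note is needed so that integrating over length $K|\log\varepsilon|$ in \eqref{e.proj.3}--\eqref{e.proj.4} does not lose a logarithm, and the even reflection of the gap function across $\partial_0$.
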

\noeqref{e.proj.5}\noeqref{e.proj.4}\noeqref{e.proj.3}\noeqref{e.proj.2}\noeqref{e.proj.1}

\begin{proof}
	The proof is a simple modification of \cite[Lemma 8.3]{wang-wei} (or, more precisely, its Riemannian version in \cite[Lemma B.1]{chodosh-mantoulidis}), thanks to Lemma \ref{l.Fc.vs.tFc.1} and Lemma \ref{l.Fc.vs.tFc.2}, which show that, under the current assumptions, the twisted Fermi coordinate time functions $t_\alpha, t_\beta$ coincide with the corresponding Fermi distance functions $\tilde{t}_\alpha, \tilde{t}_\beta$, and that the twisted Fermi projection maps $\Pi_\alpha, \Pi_\beta$ are comparable to the corresponding Fermi projection maps $\tilde{\Pi}_\alpha, \tilde{\Pi}_\beta$, up to a small error of order $\varepsilon|\log\varepsilon|^3$. 
\end{proof}
Let us now state a few simple but rather useful consequences of Proposition \ref{l.projection.error}.\\
Let $\alpha, \beta, K$ be as in Proposition \ref{l.projection.error}, and let $(y,z)$ denote twisted Fermi coordinates with respect to $\Sigma_\alpha$. Assume $x\in \bbb_{1.5R}^+(0)\times (-2R,2R)$ is such that $t_\alpha, \Pi_\alpha$ and $t_\alpha, \Pi_\alpha$ are defined at $x$, and that $|t_\alpha(x)|, |t_\beta(x)|\leq K|\log\varepsilon|$, so that the conclusions of Proposition \ref{l.projection.error} apply.

\begin{proposition}\label{p.conseq.almost.parallel}
Let $x$ be as above and assume it has twisted Fermi coordinates $(y,z)$ with respect to $\Sigma_\alpha$.
Then the following estimates hold:
\begin{align}\label{eq:tb-tb0}
	&t_\beta(y,z) = t_\beta(y,0)+z+o(\varepsilon^{1/3})\\
\label{e.partial_zt_beta}
    &|\partial_z t_\beta-1|\lesssim\varepsilon^{1/3}\\
\label{0Dalphaztbeta}
	&|\nabla_{\alpha, z} t_\beta|\lesssim \varepsilon^{\frac{1}{5}}\\
\label{nabla2alphaztbeta}
    &|\nabla^2_{\alpha,z}t_\beta|\lesssim \varepsilon
\end{align}
\end{proposition}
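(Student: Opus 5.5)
The four bounds are derived from Proposition \ref{l.projection.error}, the identification $t_\beta=\tilde t_\beta$ from Lemma \ref{l.Fc.vs.tFc.1}, and the derivative bounds of Lemma \ref{l.t.Pi} and Lemma \ref{l.Z.Z-tld}. The order is: first the Hessian bound \eqref{nabla2alphaztbeta}, then the $z$-derivative \eqref{e.partial_zt_beta}, then \eqref{eq:tb-tb0} by integration, and finally the tangential gradient \eqref{0Dalphaztbeta} by interpolation.

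\emph{Hessian bound.} Since $|t_\beta|\le K|\log\varepsilon|$, Lemma \ref{l.t.Pi} gives the ambient bound $|\nabla^2 t_\beta|\lesssim\varepsilon(1+|t_\beta|)$. The tangential Hessian on $\Sigma_{\alpha,z}$ is this ambient Hessian restricted to $T\Sigma_{\alpha,z}$, plus $\sff_{\alpha,z}$ times the normal derivative of $t_\beta$. By Proposition \ref{l.grad.A} and \eqref{e.sff.approx}, $|\sff_{\alpha,z}|=O(\varepsilon)$, and $|\nabla t_\beta|=1$. Hence $|\nabla^2_{\alpha,z}t_\beta|\lesssim\varepsilon|\log\varepsilon|$. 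To remove the logarithm, I would note that only $\nabla^2 t_\beta$ evaluated near $\Sigma_\beta$-parallel leaves matters. More precisely, Lemma \ref{l.tilde.t.Pi} gives $|\nabla^2\tilde t_\beta|\lesssim\varepsilon(1+|\tilde t_\beta|)$, but the Hessian of a distance function equals the second fundamental form of its level set. That form is $O(\varepsilon)$ uniformly for $|\tilde t_\beta|\le K|\log\varepsilon|$ by a Riccati comparison, because the curvature is $O(\varepsilon^2)$ by \eqref{e.rescaled.bound.Riemann}. This gives \eqref{nabla2alphaztbeta}. Coordinate Hessians in $y$ differ from the covariant one by Christoffel terms of size $\varepsilon(1+z^2)$, which is harmless.

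\emph{$z$-derivative and \eqref{eq:tb-tb0}.} We have $\partial_z t_\beta=\langle\nabla t_\beta,Z\rangle$. Write $Z=\nabla t_\alpha+(Z-\tilde Z_\alpha)$. The estimate \eqref{e.proj.5} gives $|\langle\nabla t_\beta,\nabla t_\alpha\rangle-1|\lesssim\varepsilon^{1/2}|\log\varepsilon|^{3/2}$, and Lemma \ref{l.Z.Z-tld} gives $|Z-\tilde Z_\alpha|\lesssim\varepsilon|z|(1+|z|)\lesssim\varepsilon|\log\varepsilon|^2$. Together these give \eqref{e.partial_zt_beta}, since $\varepsilon^{1/2}|\log\varepsilon|^{3/2}\ll\varepsilon^{1/3}$. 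For \eqref{eq:tb-tb0} I would not integrate this crude bound, which only yields an error of $\varepsilon^{1/3}|\log\varepsilon|$. Instead I would use \eqref{e.proj.3}: it gives $t_\beta(x)=t_\alpha(x)+t_\beta(\Pi_\alpha(x))+O(\varepsilon^{1/2}|\log\varepsilon|^{3/2})$, with $t_\alpha(x)=z$ and $\Pi_\alpha(x)=(y,0)$. The error here is $o(\varepsilon^{1/3})$, as required.

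\emph{Tangential gradient.} This is the main obstacle, because no pointwise orthogonality of $\nabla t_\beta$ to $T\Sigma_{\alpha,z}$ is given directly. By \eqref{e.proj.5}, $|\nabla t_\beta-\nabla t_\alpha|^2=2(1-\langle\nabla t_\alpha,\nabla t_\beta\rangle)\lesssim\varepsilon^{1/2}|\log\varepsilon|^{3/2}$. Since $\nabla t_\alpha$ is normal to $\Sigma_{\alpha,z}$, this gives $|\nabla_{\alpha,z}t_\beta|\lesssim\varepsilon^{1/4}|\log\varepsilon|^{3/4}\lesssim\varepsilon^{1/5}$. This proves \eqref{0Dalphaztbeta}.

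If \eqref{e.proj.5} turned out insufficient at some point near the boundary, my fallback is interpolation. Apply it to the function $y\mapsto t_\beta(y,z)-t_\beta(y,0)-z$, which is $o(\varepsilon^{1/3})$ by \eqref{eq:tb-tb0}, together with the Hessian bound \eqref{nabla2alphaztbeta}. This gives a gradient bound of order $(\varepsilon^{1/3}\varepsilon)^{1/2}$ for that difference. On the slice $z=0$, one would additionally apply the same interpolation to $t_\beta(\cdot,0)$, which is bounded by $K|\log\varepsilon|$. On half-balls this uses one-sided difference quotients along directions pointing into $\{y_1\ge0\}$; these are available because the domain has size $R\gg1$.
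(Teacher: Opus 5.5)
Your proposal is correct and follows essentially the paper's argument. Your \eqref{e.proj.3} is literally \eqref{e.proj.4} with $\alpha,\beta$ swapped, which is what the paper uses for \eqref{eq:tb-tb0}. For \eqref{e.partial_zt_beta} and \eqref{0Dalphaztbeta} the paper uses the same splitting $\partial_z=\nabla t_\alpha+(\partial_z-\nabla t_\alpha)$, together with \eqref{e.proj.5} and Lemma \ref{l.Z.Z-tld}. For \eqref{nabla2alphaztbeta} it uses $\nabla^2_{\alpha,z}t_\beta=\nabla^2t_\beta+\langle\nabla t_\beta,\nabla t_\alpha\rangle\sff_{\alpha,z}$ with $\nabla^2t_\beta=\sff_{\beta,t_\beta}=O(\varepsilon)$, obtained from \eqref{e.sff.approx}, whose proof is the same Riccati argument you invoke. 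Your interpolation fallback is therefore never needed.
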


Note that the point of this proposition is that in \eqref{eq:tb-tb0}, \eqref{e.partial_zt_beta}, and  \eqref{0Dalphaztbeta}, $z$ and $\partial_z$ represent the twisted Fermi coordinate function and the twisted Fermi coordinate vector field \emph{with respect to} $\Sigma_\alpha$. Note that these are different from the twisted Fermi coordinate function and the twisted Fermi coordinate vector field \emph{with respect to} $\Sigma_\beta$. 

\begin{remark}
The exponents appearing in \eqref{eq:tb-tb0}, \eqref{e.partial_zt_beta}, and \eqref{0Dalphaztbeta} are not sharp. Indeed, as it is apparent from the proof, from Proposition \ref{l.projection.error}, we can obtain
\begin{align*}
    &t_\beta(y,z) = t_\beta(y,0)+z+O(\varepsilon^{1/2}|\log \varepsilon|^{3/2})\\
    &|\partial_z t_\beta-1| \lesssim \varepsilon^{1/2}|\log \varepsilon|^{3/2}\\
    &|\nabla_{\alpha, z} t_\beta| \lesssim \varepsilon^{1/4}|\log \varepsilon|^{3/4}\\
\end{align*}
In particular, some of the exponents appearing e.g. in \eqref{horizontalphicontrolh} and in some other estimates of the later section could be slightly improved. This however does not seem to significantly affect the final results.
\end{remark}

\begin{proof}
First, notice that by \eqref{e.proj.4}, after swapping $\alpha$ and $\beta$, if $x$ has twisted Fermi coordinates $(y,z)$ with respect to $\Sigma_\alpha$, then by a slight abuse of notation, we can write $t_\beta(y,z)=t_\beta(y,0)+z+O(\varepsilon^{1/2}|\log \varepsilon|^{3/2})$.
In particular, for all $\theta\in(0,1/2)$ and all sufficiently small $\varepsilon$ (depending on $K$ and $\theta$)
\begin{equation}
	t_\beta(y,z)=t_\beta(y,0)+z+o(\varepsilon^{1/2-\theta})
\end{equation}
provided $|t_\beta(y,z)|, |z|\leq K |\log \varepsilon|$. This proves \eqref{eq:tb-tb0}.

Next, notice that at $x$, since $|\nabla t_\beta|=1$,
	\begin{equation}
		\begin{split}
			|\partial_z t_\beta-1| &= |\langle\nabla t_\alpha+(\partial_z-\nabla t_\alpha), \nabla t_\beta\rangle-1|=|\langle \nabla t_\alpha, \nabla t_\beta\rangle-1+\langle \partial_z-\nabla t_\alpha, \nabla t_\beta\rangle|\\
			&\leq |\langle \nabla t_\alpha, \nabla t_\beta\rangle-1|+|\langle \partial_z-\nabla t_\alpha, \nabla t_\beta\rangle|\\
			&\lesssim \varepsilon^{1/2}|\log \varepsilon|^{3/2} + \varepsilon(|t_\alpha(x)|+|t_\alpha(x)|^2)\\
			&\lesssim \varepsilon^{1/2}|\log \varepsilon|^{3/2} + \varepsilon (|\log \varepsilon|+|\log \varepsilon|^2)\\
			&\lesssim \varepsilon^{1/2-\theta}
		\end{split}
	\end{equation}
	for any $\theta\in(0,1/2)$, by Lemma \ref{l.Z.Z-tld} and Proposition \ref{l.projection.error}, which implies \eqref{e.partial_zt_beta}.

    Similarly,
	\begin{equation}
		|\nabla_{\alpha, z}t_\beta|\lesssim|\nabla t_\beta-\langle\nabla t_\beta, \nabla t_\alpha\rangle \nabla t_\alpha| \!=\!\sqrt{1-\langle \nabla t_\alpha, \nabla t_\beta \rangle^2}\leq \sqrt{2(1-\langle \nabla t_\beta, \nabla t_\alpha\rangle)}\lesssim \varepsilon^{1/4}|\log \varepsilon|^{3/4}
	\end{equation}
	by \eqref{e.proj.5}. In particular, $|\nabla_{\alpha, z} t_\beta|\lesssim \varepsilon^{1/4-\theta}$ for any $\theta\in(0,1/4)$, thus proving \eqref{0Dalphaztbeta}.

    Next, notice that $\nabla_{\alpha, z}^2t_\beta=\nabla^2 t_\beta+\langle \nabla t_\beta, \nabla t_\alpha\rangle \sff_{\al, z}$. Since $t_\beta=\tilde t_\beta$, then $\nabla^2 t_\beta$ is precisely the second fundamental form of $\Sigma_{\beta, t_\beta}$. Recall that by \eqref{e.na.f}, $|\sff_{\alpha, 0}|=O(\varepsilon)$ and $|\sff_{\beta, 0}|=O(\varepsilon)|$. Since we are assuming $|t_\alpha(x)|, |t_\beta(x)|\leq K|\log \varepsilon|$, by \eqref{e.sff.approx}, 
	\[|\sff_{\alpha,z}-\sff_{\alpha, 0}| = |\sff_{\alpha, t_{\alpha}(x)}-\sff_{\alpha,0}|\lesssim \varepsilon^2|\log\varepsilon|^4,\]
	so that $|\sff_{\alpha, z}|=O(\varepsilon)$, and, similarly, $|\nabla^2 t_\beta|=|\sff_{\beta,t_\beta(x)}|=O(\varepsilon)$. Hence, by \eqref{e.proj.5}, \eqref{nabla2alphaztbeta} holds.
\end{proof}

As mentioned in the proof of Proposition \ref{l.projection.error}, by \cite[Lemma 2.2]{wang-weiadv}, $d_{\Sigma_\alpha}, d_{\Sigma_\beta}$ are comparable to $d|_{\Sigma_\alpha}, d|_{\Sigma_\beta}$ respectively, up to a universal constant depending on $\eta_0$ and $\Lambda_0$. By Lemma \ref{l.t.Pi}, $\Pi_\alpha|_{\Sigma_\beta}$ and $\Pi_\beta|_{\Sigma_\alpha}$ are uniformly bi-Lipschitz in the regions where $|t_\alpha|, |t_\beta|\leq \delta\sqrt{R}$.
    
The following lemma is an easy consequence of the results of this section.
\begin{lemma}\label{l.DPi.beta}
Let $K>0$ be fixed. Let $(y, z)$ be twisted Fermi coordinates with respect to $\Sia$, and let $\Pi_\beta$ denote the twisted Fermi coordinate projection map with respect to $\Sigma_\beta$. 
Then,
\begin{equation}\label{e.Dpi.bound}
    |D\Pi_\beta(\partial_{y_i})|=O(1)
\end{equation}
for $i=1, \dots, n$, and
\begin{equation}\label{e.Dzpi.bound}
    |D\Pi_\beta(\partial_{z})|\lesssim \varepsilon^{\frac{1}{5}}
\end{equation}
uniformly in the region where $|t_\alpha|, |t_\beta|\leq K|\log \varepsilon|$.
\end{lemma}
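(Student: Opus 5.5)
The plan is to decompose $D\Pi_\beta$ at the point $x$ with respect to the splitting $T_xM=T_x\Sigma_{\beta,t_\beta(x)}\oplus\mathbb R\,\nabla t_\beta(x)$. By construction $\Pi_\beta$ is constant along the flow lines of the twisted Fermi vector field $Z_\beta$ of $\Sigma_\beta$, so $D\Pi_\beta(Z_\beta)=0$. Lemma \ref{l.vect.Z}(iii)--(iv) gives $Z_\beta=\nabla t_\beta+(Z_\beta-\widetilde Z_\beta)$ with $Z_\beta-\widetilde Z_\beta$ tangent to the level set $\{t_\beta=t_\beta(x)\}$. Hence any vector $V$ can be written as
\[
V=\langle V,\nabla t_\beta\rangle Z_\beta+\bigl(V-\langle V,\nabla t_\beta\rangle Z_\beta\bigr),
\]
where the second summand is tangent to $\Sigma_{\beta,t_\beta(x)}$. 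This yields
\[
D\Pi_\beta(V)=D\Pi_\beta\bigl(V-\langle V,\nabla t_\beta\rangle Z_\beta\bigr),
\]
and the estimate reduces to bounding $|D\Pi_\beta|$ on all vectors, together with the size of the vector on the right.

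For \eqref{e.Dpi.bound} we use Lemma \ref{l.t.Pi}, applied to $\Sigma_\beta$ in place of $\Sigma_\alpha$. It gives $|D\Pi_\beta|=O(1)$ on $\{|t_\beta|\leq\delta\sqrt R\}$, which contains the region $|t_\beta|\leq K|\log\varepsilon|$ for $\varepsilon$ small. By Lemma \ref{l.na.y_i} (for the coordinates of $\Sigma_\alpha$, valid since $|z|=|t_\alpha|\leq K|\log\varepsilon|$), $|\partial_{y_i}|=O(1)$, and the bound follows immediately.

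For \eqref{e.Dzpi.bound} we take $V=\partial_z=Z_\alpha$. By the decomposition above,
\[
|D\Pi_\beta(\partial_z)|\lesssim\bigl|\partial_z-(\partial_z t_\beta)Z_\beta\bigr|.
\]
We then write
\[
\partial_z-(\partial_zt_\beta)Z_\beta=(Z_\alpha-\widetilde Z_\alpha)+(\nabla t_\alpha-\nabla t_\beta)+(1-\partial_zt_\beta)\nabla t_\beta-(\partial_zt_\beta)(Z_\beta-\widetilde Z_\beta).
\]
Each term is estimated separately:
\begin{itemize}
\item By Lemma \ref{l.Z.Z-tld}, the first and last terms are $O(\varepsilon|\log\varepsilon|^2)$.
\item By \eqref{e.partial_zt_beta}, the third term is $O(\varepsilon^{1/3})$.
\item Since both gradients have unit length, $|\nabla t_\alpha-\nabla t_\beta|^2=2(1-\langle\nabla t_\alpha,\nabla t_\beta\rangle)$. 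By \eqref{e.proj.5} this is $O(\varepsilon^{1/2}|\log\varepsilon|^{3/2})$, so the second term is $O(\varepsilon^{1/4}|\log\varepsilon|^{3/4})\lesssim\varepsilon^{1/5}$.
\end{itemize}
Combining these gives \eqref{e.Dzpi.bound}.

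The only point needing care is the identity $D\Pi_\beta(Z_\beta)=0$ together with the tangency of the residual vector. The first is immediate from Definition \ref{d.twisted.time.projection}, since $\Pi_\beta(\Theta^\beta_\tau(p))=\Pi_\beta(p)$. The second follows from $\langle Z_\beta,\nabla t_\beta\rangle=1$. The remaining bookkeeping is to check that $x$ lies in the domain of both coordinate systems, which the standing assumptions before Proposition \ref{p.conseq.almost.parallel} guarantee. The limiting exponent comes from \eqref{e.proj.5}, which is why $\varepsilon^{1/5}$ rather than a better power appears.
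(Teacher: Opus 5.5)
Your proof is correct and follows essentially the paper's route: $D\Pi_\beta(Z_\beta)=0$, the uniform bound on $D\Pi_\beta$ from Lemma \ref{l.t.Pi}, and then \eqref{e.proj.5} together with Lemma \ref{l.Z.Z-tld}. The only difference is that you subtract $(\partial_z t_\beta)Z_\beta$, whereas the paper simply writes $D\Pi_\beta(\partial_z)=D\Pi_\beta(Z_\alpha-Z_\beta)$. Your choice introduces an extra but harmless term controlled by \eqref{e.partial_zt_beta}, and it makes the tangency observation unnecessary for the bound.
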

\begin{proof}
Equation \eqref{e.Dpi.bound} follows from Lemma \ref{l.t.Pi}.  For
\eqref{e.Dzpi.bound}, write $Z_\gamma$ and $\widetilde Z_\gamma=\nabla t_\gamma$ for the twisted
and ordinary Fermi fields associated with $\Sigma_\gamma$.  The projection $\Pi_\beta$ is constant
along the flow of $Z_\beta$, and hence $D\Pi_\beta(Z_\beta)=0$.  Since
$\partial_z=Z_\alpha$ in the $\alpha$-coordinates,
\[
 D\Pi_\beta(\partial_z)=D\Pi_\beta(Z_\alpha-Z_\beta).
\]
Moreover,
\[
 Z_\alpha-Z_\beta
 =(\widetilde Z_\alpha-\widetilde Z_\beta)
 +(Z_\alpha-\widetilde Z_\alpha)-(Z_\beta-\widetilde Z_\beta).
\]
The first term is $O(\varepsilon^{1/4}|\log\varepsilon|^{3/4})$ by
\eqref{e.proj.5}, while the last two are $O(\varepsilon(|\log\varepsilon|+|\log\varepsilon|^2))$
by Lemma \ref{l.Z.Z-tld}.  Combining these estimates with the uniform bound for $D\Pi_\beta$
gives \eqref{e.Dzpi.bound}.
\end{proof}

\subsection{Comparison of boundary derivatives}\label{ss.comparison.boundary.derivatives}
Finally, let us consider how twisted Fermi coordinates with respect to different sheets $\Sigma_{\alpha}$, $\Sigma_\beta$ behave along the boundary.

Let $K>0$ be fixed, and let $x\in \del_0\bbb^+_{1.6R}(0)\times (-2R,2R)$ be such that
$|t_{\alpha}(x)|, |t_{\beta}(x)|\leq K|\log \varepsilon|$. Let $(y,z)$ denote twisted Fermi coordinates with respect to $\Sigma_\alpha$.

Recall from Lemma \ref{l.dz(nu)}, $J_\alpha:\partial_0\bbb^+_{1.6R}\times(-2R, 2R)\to \mathbb{R}$ is defined by $J_\alpha=\partial_{x_1}t_\alpha$. Similarly, one can define $J_\beta$ by $J_\beta=\partial_{x_1}t_\beta$. Now, for $\be\neq \al$, let \(\mathsf{J}_{\al \be}\) denote the restriction of \(\del_{y_1}t_{\be}\) to \(\del_0\bbb^+_{1.6R}(0)\times (-2R,2R)\). We have the following estimate.

\begin{lemma}\label{l.del.y_1.t_be 2} There holds
\begin{equation}
	|\na^m \mathsf{J}_{\al \be}|\lesssim \ve(1+|z|^3+ |t_{\be}| + t_{\be}^2) \; \text{ on } \del_0\{x:|t_{\al}(x)|, |t_{\be}(x)| \leq K |\log \ve|\}\quad \forall  m \ge 0.
\end{equation}	
\end{lemma}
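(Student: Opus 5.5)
We split $\mathsf J_{\alpha\beta}=\partial_{y_1}t_\beta$ on the boundary as
\[
\partial_{y_1}t_\beta=\partial_{x_1}t_\beta+(\partial_{y_1}-\partial_{x_1})t_\beta=J_\beta+(\partial_{y_1}-\partial_{x_1})t_\beta .
\]
We estimate the two pieces separately. The first is handled by Lemma~\ref{l.dz(nu)} applied to the sheet $\Sigma_\beta$. The second is handled by Lemma~\ref{l.del.y_1.t_be}, which is stated for the coordinates of $\Sigma_\alpha$.

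For the first piece, Lemma~\ref{l.dz(nu)} with $\alpha$ replaced by $\beta$ gives $|J_\beta|\lesssim\varepsilon|t_\beta|(1+|t_\beta|+t_\beta^2)$. Its $y$-derivatives satisfy
\[
|\nabla_y^k J_\beta|\lesssim \varepsilon|t_\beta|+\varepsilon^2 t_\beta^2(1+|t_\beta|^3),
\]
using the decomposition $J_\beta=w_\beta t_\beta+\tilde J_\beta$ together with \eqref{e.w.1}, \eqref{e.w.2} and the bounds $|\nabla^m t_\beta|\lesssim\varepsilon(1+|t_\beta|)$ from Lemma~\ref{l.t.Pi}. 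In the region $|t_\beta|\le K|\log\varepsilon|$ we have $\varepsilon t_\beta^2(1+|t_\beta|^3)\lesssim 1$. This absorbs the higher powers, leaving $\lesssim\varepsilon(|t_\beta|+t_\beta^2)$. Derivatives taken in the $\alpha$-coordinates are converted to $\beta$-coordinates with the chain rule. This is harmless because $D\Pi_\beta(\partial_{y_i})=O(1)$ by Lemma~\ref{l.DPi.beta} and the coordinate vector fields have bounded covariant derivatives by Lemma~\ref{l.na.y_i}.

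For the second piece, Lemma~\ref{l.del.y_1.t_be} (more precisely its proof, via \eqref{e.del_y_i} and Lemma~\ref{l.Tht}) shows that on the boundary
\[
\partial_{y_1}-\partial_{x_1}=\sum_j(\partial_{x_1}\Theta^j(z,y)-\delta_{1j})\,\partial_{x_j},
\]
with coefficients $O(\varepsilon(|z|+z^2))$. Every $y$-derivative of these coefficients is $O(\varepsilon(1+|z|^2))$, again by Lemma~\ref{l.Tht}. Applying this vector field to $t_\beta$ with $|\nabla t_\beta|=1$ gives a zeroth-order bound of $\varepsilon|z|(1+|z|)$. The only derivatives that fall on $t_\beta$ are $\nabla^m t_\beta=O(\varepsilon(1+|t_\beta|))$, so differentiating $m$ times gives at most $\varepsilon(1+|z|^2)+\varepsilon^2(|z|+z^2)(1+|t_\beta|)$. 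Both terms are dominated by $\varepsilon(1+|z|^3+|t_\beta|)$. Summing the two pieces yields the claimed bound.

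The main obstacle is keeping the derivatives along the boundary honest. The natural tangential derivatives on $\partial_0$ are the $y_i$ with $i\ge2$ together with the restriction of $z$. Mixing $\alpha$- and $\beta$-coordinates could introduce factors of $\nabla^2\Theta$ that are only $O(\varepsilon(1+z^2+t^2))$ rather than small, which is why the statement allows the loose right-hand side $1+|z|^3$. A cruder fallback avoids the splitting for higher $m$ altogether. It differentiates $\partial_{y_1}t_\beta=\langle\partial_{y_1},\nabla t_\beta\rangle$ directly using Lemma~\ref{l.na.y_i} and Lemma~\ref{l.t.Pi}, giving $\varepsilon(1+z^2)+\varepsilon(1+|t_\beta|)$ for $m\ge1$. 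The splitting is then needed only for $m=0$, where the point is that $J_\beta$ vanishes to first order at $t_\beta=0$.
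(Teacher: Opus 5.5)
Your proof is correct and follows the paper's argument. You use the same splitting $\mathsf J_{\alpha\beta}=J_\beta+\langle\nabla t_\beta,\partial_{y_1}-\partial_{x_1}\rangle$, the same decomposition $J_\beta=w_\beta t_\beta+\widetilde J_\beta$ from Lemma~\ref{l.dz(nu)} with the higher powers absorbed in the logarithmic strip, and Lemma~\ref{l.t.Pi} for the derivatives of $t_\beta$. The deviations are minor. You bound the coefficients of $\partial_{y_1}-\partial_{x_1}$ directly by Lemma~\ref{l.Tht}, getting $\varepsilon(|z|+z^2)$, whereas the paper integrates Lemma~\ref{l.na.y_i} from $z=0$, which is where the $|z|^3$ in the statement comes from. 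Also, for $m\ge1$ your derivative bound on $J_\beta$ should read $\varepsilon(1+|t_\beta|)$ because of the term $w_\beta\nabla t_\beta$; this is harmless, since the right-hand side contains the constant $1$.
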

\begin{proof}
	We note that
	\begin{equation}
		\mathsf{J}_{\al \be} = \langle \na t_{\be} , \del_{y_1} - \del_{x_1} \rangle + J_{\be}
	\end{equation}
	Since $\del_{x_1}=\del_{y_1}$ along $\del_0\Sia$, it follows from Lemma \ref{l.na.y_i} that 
	\begin{equation}
		|\na^m (\del_{x_1}-\del_{y_1})|\lesssim \ve(1 +|z|^3)\;\text{ on }\del_0\{|z|\leq K|\log \ve |\}\quad \forall m\ge 0.
	\end{equation}
Using Lemma \ref{l.dz(nu)} in the form $J_\beta=w_\beta t_\beta+\widetilde J_\beta$, the apparently cubic part of the crude bound is absorbed by
$\varepsilon^2|t_\beta|^2(1+|t_\beta|^3)\leq C_K\varepsilon$ in the logarithmic strip.  Together with the preceding estimate and Lemma \ref{l.t.Pi}, this gives the stated bound for $\mathsf J_{\alpha\beta}$.
\end{proof}

\subsection{Separation of layers}
Analogously to \cite[Section 3.2]{wang-weiadv}, given a point $y\in \Sigma_\alpha\cap(\bbb^+_{1.6R}(0)\times (-2R,2R))$, we let 
\begin{equation} \label{definition D alpha}
	D_\alpha(y)\coloneqq \min_{\beta \neq \alpha}|t_\beta(y)|.
\end{equation}
Then, for $\lambda\in \mathbb{R}$, define the set
\begin{equation}\label{Mlambdaalpha}
	\mathcal{M}^\lambda_\alpha\coloneqq \left\{x\in \bbb^+_{1.6R}(0)\times (-2R,2R): |t_\alpha(x)|<\min\{|t_{\alpha-1}(x)|+\lambda, |t_{\alpha+1}(x)|+\lambda\}\right\}.
\end{equation}
The following simple lemma will be used throughout the rest of the paper.

\begin{lemma}[c.f. {\cite[Lem 3.6]{wang-weiadv}}] \label{l.sum.of.exp}
	For any $y\in \Sigma_{\alpha}$ and $\sigma > 0$, we have 
	\begin{equation}\label{exponential sum1}	
		\sum_{\beta\not=\alpha}e^{- \sigma|t_{\beta}(y)|}\lesssim_{K, \sigma} e^{- \sigma D_{\alpha}(y)},  
	\end{equation}
	uniformly with respect to $y$ and $\alpha$. For  $x\in \mathcal{M}^{4}_{\alpha}$ with $|t_\alpha(x)|\leq K|\log\varepsilon|$, we have 
	\begin{equation}\label{exponential sum2}
		\sum_{\beta\not=\alpha}e^{-\sigma|t_{\beta}(x)|}\lesssim_{K, \sigma} e^{-\sigma|t_{\alpha+1}(x)|}+e^{-\sigma|t_{\alpha-1}(x)|}
	\end{equation}
\end{lemma}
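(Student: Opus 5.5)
The plan follows \cite[Lemma 3.6]{wang-weiadv}, with twisted Fermi coordinates in place of ordinary ones. The key input is a \emph{uniform separation} of the sheets: distinct components $\Sigma_\beta$ of $\Xi$ are disjoint nodal components, and along them $u$ alternates sign. By \eqref{e.nab.u.ball} and $|\mathcal A|=O(\varepsilon)$, each sheet has a neighbourhood of fixed rescaled width on which $u$ is close to a one-dimensional heteroclinic profile. Concretely, I would first show that there is $c_1>0$, independent of $\varepsilon$ and of the number of layers, with $|t_\beta(x)-t_\gamma(x)|\geq c_1$ for $\beta\neq\gamma$ whenever both are at most $K|\log\varepsilon|$ in absolute value. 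This follows from the lower gradient bound \eqref{e.nab.u.ball}, which forces $|u|$ to move away from $0$ linearly in the normal direction, together with \eqref{eq:tb-tb0} and \eqref{e.partial_zt_beta}, which say that $t_\beta$ restricted to the $t_\alpha$-flowline through $y$ is a function of $z$ with derivative $1+O(\varepsilon^{1/3})$.

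For \eqref{exponential sum1}, fix $y\in\Sigma_\alpha$ and move along the flowline of $Z_\alpha$ through $y$. The ordering of sheets in the graph direction $x_{n+1}$ is preserved, and by \eqref{eq:tb-tb0} the values $t_\beta(y)$ for $\beta>\alpha$ are positive and increasing in $\beta$, while those for $\beta<\alpha$ are negative and decreasing. By the separation step, consecutive values differ by at least $c_1/2$, so $|t_{\alpha\pm k}(y)|\geq D_\alpha(y)+(k-1)c_1/2$. Summing the two geometric series gives
\[
\sum_{\beta\neq\alpha}e^{-\sigma|t_\beta(y)|}\leq \frac{2}{1-e^{-\sigma c_1/2}}\,e^{-\sigma D_\alpha(y)}.
\]
Sheets with $|t_\beta(y)|>K|\log\varepsilon|$ must be handled separately, since the comparison estimates of Proposition \ref{l.projection.error} are not available there. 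For these, I would use the crude count: the number of sheets in $\Omega$ is $O(R)=O(\varepsilon^{-1})$ by the uniform separation in $x_{n+1}$. Choosing $K$ large relative to $\sigma$ then makes their total contribution $O(\varepsilon^{\sigma K-1})$. This is dominated by $e^{-\sigma D_\alpha}$ as long as $D_\alpha\leq K|\log\varepsilon|$; otherwise all terms are tiny and the same bound holds after enlarging $K$. Alternatively, I would simply adopt the convention, as in \cite{wang-weiadv}, that the sum runs over sheets within the logarithmic strip.

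For \eqref{exponential sum2}, take $x\in\mathcal M^4_\alpha$ with twisted coordinates $(y,z)$ relative to $\Sigma_\alpha$, so $|z|\leq K|\log\varepsilon|$. Applying \eqref{eq:tb-tb0} to every relevant $\beta$, the values $t_\beta(x)=t_\beta(y)+z+o(1)$ are again ordered monotonically in $\beta$ with gaps at least $c_1/2$. Hence, for $k\geq1$, $|t_{\alpha+k}(x)|\geq|t_{\alpha+1}(x)|+(k-1)c_1/2$ whenever $t_{\alpha+1}(x)\geq0$, and symmetrically on the other side. If instead $t_{\alpha+1}(x)<0$, then $x$ lies beyond $\Sigma_{\alpha+1}$ and $|t_\alpha(x)|>|t_{\alpha+1}(x)|+c_1$ roughly. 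This is compatible with $\mathcal M^4_\alpha$ only when $|t_{\alpha+1}(x)|$ is bounded by a constant, in which case the sheets $\beta>\alpha+1$ are controlled by shifting the geometric series by a bounded amount, costing a constant $e^{O(\sigma)}$. The same argument applies below. Summing the two series yields \eqref{exponential sum2} with a constant depending only on $K$, $\sigma$ and $c_1$.

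I expect the main obstacle to be the uniform separation constant $c_1$ near the boundary. This is where one must check that the twisted coordinates still order the sheets and that the $o(1)$ errors in \eqref{eq:tb-tb0} do not accumulate across many layers. The errors do not accumulate because each comparison is made directly between $\Sigma_\alpha$ and a single $\Sigma_\beta$, not by chaining consecutive sheets.
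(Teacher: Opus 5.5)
Your argument for \eqref{exponential sum1} on the sheets inside the logarithmic strip is the Wang--Wei geometric-series argument that the paper invokes. There, replacing the diverging gaps of Lemma \ref{l.D_al.infty} by a fixed separation $c_1>0$ is harmless. For \eqref{exponential sum2} it is not. The step that fails is your claim that a point $x\in\mathcal M^4_\alpha$ lying beyond $\Sigma_{\alpha+1}$ must have $|t_{\alpha+1}(x)|$ bounded. Membership in $\mathcal M^4_\alpha$ only gives $|t_\alpha(x)|-|t_{\alpha+1}(x)|<4$, and by \eqref{eq:tb-tb0} the left side equals $|t_{\alpha+1}(y,0)|+o(1)$ for such $x$. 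So what is bounded is the local gap between $\Sigma_\alpha$ and $\Sigma_{\alpha+1}$, not $|t_{\alpha+1}(x)|$, and $x$ may sit right next to $\Sigma_{\alpha+2}$. For instance, take flat sheets $\Sigma_\alpha=\{z=0\}$, $\Sigma_{\alpha+1}=\{z=g\}$, $\Sigma_{\alpha+2}=\{z=g+G\}$ with $c_1\leq g<4$, $G\gg1$, and $\Sigma_{\alpha-1}$ far below. The point at height $g+G-1$ lies in $\mathcal M^4_\alpha$ because $g+G-1<(G-1)+4$. Yet the left side of \eqref{exponential sum2} is at least $e^{-\sigma}$, while the right side is roughly $e^{-\sigma(G-1)}$. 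Hence \eqref{exponential sum2} does not follow from a uniform separation alone. Shifting the geometric series "by a bounded amount" cannot help, because the required shift $|t_{\alpha+1}(x)|-|t_{\alpha+2}(x)|$ is unbounded.

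The missing input is precisely Lemma \ref{l.D_al.infty}, i.e. $\inf_\alpha\inf D_\alpha\to\infty$. This comes from the blow-up to the heteroclinic in Lemma \ref{l.1D.soln}, not from \eqref{e.nab.u.ball}. Once every local gap exceeds, say, $5$, the computation above shows that no $x\in\mathcal M^4_\alpha$ in the strip lies beyond $\Sigma_{\alpha\pm1}$ on its $Z_\alpha$-flowline. For $x$ between $\Sigma_{\alpha-1}$ and $\Sigma_{\alpha+1}$, \eqref{eq:tb-tb0} gives $|t_{\alpha\pm k}(x)|-|t_{\alpha\pm1}(x)|=|t_{\alpha\pm k}(y,0)|-|t_{\alpha\pm1}(y,0)|+o(1)$. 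The geometric series of the first part then yields \eqref{exponential sum2}, which is why the paper can say the second estimate follows directly from the first.

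Separately, your treatment of sheets outside the strip does not work as written. The bound $O(\varepsilon^{-1})e^{-\sigma K|\log\varepsilon|}=O(\varepsilon^{\sigma K-1})$ is not dominated by $e^{-\sigma D_\alpha}\geq\varepsilon^{\sigma K}$; it is off by a factor $\varepsilon^{-1}$. If $D_\alpha>K|\log\varepsilon|$, counting gives only $O(\varepsilon^{-1})e^{-\sigma D_\alpha}$, which no enlargement of $K$ makes uniform in $\varepsilon$. Either restrict to the strip, as the $K$-dependence of the statement and the use of Proposition \ref{l.projection.error} implicitly do, or control the far sheets by an additive distance argument rather than by counting.
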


In order to prove Lemma \ref{l.sum.of.exp}, we need the following two preliminary results.

\begin{lemma}\label{l.1D.soln}
	Let $\ve_i>0$ be a \seq\ which converges to $0$, $R_i=\ve_i^{-1}$, $u_i(x):=u_{\ve_i}(\ve_ix)$, $x_i\in \{u_i=0\}\cap (\bbb^+_{R_i}(0)\times (-R_i, R_i))$, $\Si_{x_i}$ be the connected component of $\{u_i=0\}$ in $\bbb^+_{R_i}(0)\times (-R_i, R_i)$, which contains $x_i$. \sps\ in  twisted Fermi coordinates \wrt\ $\Si_{x_i}$, $x_i=(y_i,0)$, $\tilde{u}_i(y,z):= u_i(y_i+y,z)$ and $u_i(y,z)$ has the same sign as $z$. Then, up to a subsequence, $\tilde{u}_i(y,z)$ converges to $\bbh(z)$ in $C^3_{\text{loc}}$.
\end{lemma}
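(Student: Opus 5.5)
We argue by compactness and then classify the limit, as in the proof of Lemma \ref{l.nab.u}. The first task is to obtain uniform $C^{3,\gamma}_{\mathrm{loc}}$ bounds for $\tilde u_i$ in the coordinates $(y,z)$. Since $|u_i|<1$ and $u_i$ solves \eqref{e.ac.1} with Neumann data on $\partial_0$, Schauder estimates (interior, or boundary Neumann estimates near $\partial_0$) give uniform $C^{3,\gamma}$ bounds in the ambient coordinates. The twisted Fermi chart $Y_{\Sigma_{x_i}}$ converges to an isometry in $C^\infty_{\mathrm{loc}}$, by Lemma \ref{l.Tht}, Lemma \ref{l.t.Pi}, Lemma \ref{l.na.y_i} and \eqref{e.na.f}. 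In particular $g_{\alpha,ij}(y,z)\to\delta_{ij}$, $g^{zz}=1$ (Lemma \ref{l.g^zz.twisted}), $T^i\to0$ locally, and the Laplacian in \eqref{e.Lapl.tw.Fer} converges to the flat one. So after passing to a subsequence, $\tilde u_i\to u_\infty$ in $C^3_{\mathrm{loc}}$. The limit solves $\Delta u_\infty=W'(u_\infty)$ either on $\mathbb R^{n+1}$ or on a half-space, and in the latter case it satisfies the Neumann condition. Which case occurs is decided by whether the distance from $y_i$ to $\partial_0$ stays bounded.

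In the half-space case we reflect evenly as in \eqref{e.even.refl}. Here we use Lemma \ref{l.del.y_1.t_be} to see that $\partial_{y_1}$ agrees with $\partial_{x_1}$ to order $\varepsilon_i|z|(1+|z|)$. The limiting Neumann condition is therefore $\partial_{y_1}u_\infty=0$ on $\{y_1=\text{const}\}$, and the reflected function is again an entire solution. Stability passes to the limit: test functions with compact support in the limit domain pull back to admissible test functions for $u_i$, and the quadratic forms converge. In the boundary case we use that even reflection preserves stability, exactly as in Lemma \ref{l.nab.u}.

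To identify the limit, note that the rescaled bound \eqref{0A=O(e)} gives $|\mathcal A(u_\infty)|=0$ on $\{|u_\infty|<1-b\}\cap\{\nabla u_\infty\neq0\}$. Lemma \ref{l.nab.u}, rescaled to give \eqref{e.nab.u.ball}, gives $|\nabla u_\infty|\geq c_0$ on $\{|u_\infty|\leq1-b\}$, so in this region the level sets of $u_\infty$ are parallel hyperplanes. By \eqref{|Anormsquare|2}, $|\nabla u_\infty|$ is constant along each of these level sets. Unique continuation, as in Lemma \ref{l.nab.u}, then shows $u_\infty(x)=U(x\cdot e)$ for a bounded monotone one-dimensional solution $U$, and hence $U=\overline{\mathbb H}$ up to translation and sign.

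It remains to fix $e$, the translation and the sign. The zero level set of $\tilde u_i$ contains $\{z=0\}$, since the chart sends $\{z=0\}$ to $\Sigma_{x_i}$. So $\{u_\infty=0\}\supseteq\{z=0\}$, which forces $e=\pm\partial_z$ and zero translation. The hypothesis that $u_i$ has the sign of $z$ selects $u_\infty(y,z)=\mathbb H(z)$. Because the limit is unique, the whole subsequence converges.

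The main obstacle is the uniform $C^3$ control up to $\partial_0$ in the twisted coordinates, together with checking that the Neumann condition persists in the limit. The mixed terms $\mathscr E_\alpha$ and the vector field $T$ must be shown to vanish locally, and Lemma \ref{l.Z.Z-tld} together with \eqref{e.difference.z.tz} should give this directly.
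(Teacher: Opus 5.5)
Your proposal is correct and follows essentially the same route as the paper: compactness, with the interior/boundary dichotomy decided by whether $y_{i,1}$ stays bounded, and even reflection (which preserves stability) in the boundary case. As in the paper, you then use the vanishing of $\mathcal A$ in the limit together with unique continuation to reduce to a stable one-dimensional solution, and fix it through $\{z=0\}\subset\{u_\infty=0\}$ and the sign condition.

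Two small points. First, the monotonicity of $U$ does not follow from your gradient lower bound, since bounded periodic solutions of $U''=W'(U)$ with amplitude close to $1$ satisfy that bound; it must come from the stability of the limit you established, exactly as in Lemma \ref{l.nab.u}. Second, the pulled-back metric converges to a constant-coefficient flat metric $g^\infty_{ij}dy^idy^j+dz^2$, with $g^\infty_{1j}=0$ at the boundary, rather than to $\delta_{ij}$; this is harmless after a linear change of variables.
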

\begin{proof}	
The proof is similar to \cite[Proof of Lemma 2.1]{wang-weiadv}. \sps\ in  twisted Fermi coordinates \wrt\ $\Si_{x_i}$, $x_i=(y_{i,1},\dots, y_{i,n},0)$. When $\liminf_{i\ra \infty}y_{i,1}=\infty$, $\tilde{u}_i$ is a stable solution of \eqref{e.ac.1} on $B_{y_{i,1}}(0)$, $\tui(y_1,\dots,y_n,0)=0$ and $\del_z\tui(y_1,\dots,y_n,0)\geq 0$. By the elliptic estimates, $\tui$ is uniformly bounded in $C^{4}_{\text{loc}}(\bbr^{n+1})$. \tf, \tes\ $\tuin$, a stable solution of \eqref{e.ac.1} on $\bbr^{n+1}$, \st\ up to a subsequence, $\tui\ra\tuin$ in $C^{3}_{\text{loc}}(\bbr^{n+1})$, $\tuin(y_1,\dots,y_n,0)=0$ and $\del_z\tuin(y_1,\dots,y_n,0)\geq 0$. As explained in \cite[Proof of Lemma 2.1]{wang-weiadv}, since $u_{\ve}$ satisfies \eqref{e.esff.bd}, there holds $\mD{\cA(\tuin)}=0$ on $\{\mD{\tuin}<1-b'\}\cap \{\mD{\na \tuin}\neq 0\}$. By the unique continuation principle, this implies $\tuin$ is a lift of $\Hb$; hence $\tuin(y_1,\dots,y_n,z)=\bbh(z)$.
	
	If $\liminf_{i\ra \infty}y_{i,1}<\infty$, then, possibly after passing to a subsequence, we can assume that $y_{i,1}$ is uniformly bounded. Let $x'_i=(y'_i,0)=(0,y_{i,2},\dots, y_{i,n},0)$ (which is a point in $\del_0\bbb^+_{R_i}(0)\times (-R_i, R_i)$). We define $\hat{u}_i(y,z)=u_i(y'_i+y,z)$. Then $\hat{u}_i$ is a stable solution of \eqref{e.ac.1} on $B^+_{R_i}(0)$ satisfying the vanishing Neumann boundary condition along $\del_0B^+_{R_i}(0)$, $\hat{u}_i(y_1,\dots,y_n,0)=0$ and $\del_z\hat{u}_i(y_1,\dots,y_n,0)\geq 0$. $\hat{u}_i$ is uniformly bounded in $C^{4}_{\text{loc}}(\bbr^{n+1}_+)$. Hence \tes\ $\tuin$, a stable solution of \eqref{e.ac.1} on $\bbr^{n+1}_+$ satisfying the vanishing Neumann boundary condition along $\del \bbr^{n+1}_+$, \st\ up to a subsequence, $\hat{u}_i\ra\tuin$ in $C^{3}_{\text{loc}}(\bbr^{n+1}_+)$, $\tuin(y_1,\dots,y_n,0)=0$ and $\del_z\tuin(y_1,\dots,y_n,0)\geq 0$. We extend $\tuin$ on $\bbr^{n+1}$ by even reflection (as in \eqref{e.even.refl}). Then $\tuin$ is a stable solution of \eqref{e.ac.1}. Since $u_{\ve}$ satisfies \eqref{e.esff.bd}, arguing as in \cite[Proof of Lemma 2.1]{wang-weiadv}, we conclude that $\mD{\cA(\tuin)}=0$ on $\{\mD{\tuin}<1-b'\}\cap \{\mD{\na \tuin}\neq 0\}$. By the unique continuation principle, this implies $\tuin$ is a lift of $\Hb$; hence $\tuin(y_1,\dots,y_n,z)=\bbh(z)$. Thus $\hat{u}_i(y,z)\ra\Hb(z)$ in $C^{3}_{\text{loc}}(\bbr^{n+1}_+)$. Since $y_{i,1}$ is uniformly bounded, this implies $\tui(y,z)\ra\Hb(z)$ in $C^{3}_{\text{loc}}$.	
\end{proof}
The next lemma then follows directly. The proof is the same as \cite[proof of eq. (2.1)]{wang-weiadv}.
\begin{lemma}\label{l.D_al.infty}
	As $\ve\ra 0$, $\inf\limits_{\al}\inf\{D_{\al}(y):y\in\Sia\cap (\bbb^+_R(0)\times (-R,R)) \} \ra +\infty$.
\end{lemma}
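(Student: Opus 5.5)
The plan is a contradiction argument built on the blow-up classification in Lemma \ref{l.1D.soln}, as in \cite[proof of (2.1)]{wang-weiadv}. Suppose the statement fails. Then there are $\varepsilon_i\to0$, a constant $L<\infty$, sheets $\Sigma_{\alpha_i}$ and points $x_i=(y_i,0)\in\Sigma_{\alpha_i}\cap(\mathbb B^+_{R_i}(0)\times(-R_i,R_i))$ with $D_{\alpha_i}(y_i)\le L$. So there is $\beta_i\ne\alpha_i$ with $|t_{\beta_i}(x_i)|\le L$.

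First I make the second sheet visible in the coordinates of the first. Let $q_i$ be the point reached by flowing from $x_i$ along the twisted Fermi field of $\Sigma_{\beta_i}$ for time $-t_{\beta_i}(x_i)$. Then $q_i\in\Sigma_{\beta_i}$ and, by Lemma \ref{l.vect.Z}(v), $q_i$ lies within distance $\lesssim L$ of $x_i$. Since $t_{\alpha_i}$ is $1$-Lipschitz ($|\nabla t|=1$), we get $|t_{\alpha_i}(q_i)|\le CL$. Because $q_i$ stays a bounded distance from $x_i$, the twisted Fermi coordinates of $\Sigma_{\alpha_i}$ are defined there (Lemma \ref{l.t.Pi}). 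Write $q_i=(y_i+a_i,s_i)$ in these coordinates, with $|a_i|,|s_i|\le C(L)$. Also $s_i\ne0$, since distinct components of $\{u=0\}$ are disjoint.

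Next I apply Lemma \ref{l.1D.soln} to the recentred functions $\tilde u_i(y,z)=u_i(y_i+y,z)$. After possibly flipping the sign so that $u_i$ has the sign of $z$ near $\Sigma_{\alpha_i}$, a subsequence satisfies $\tilde u_i\to\mathbb H(z)$ in $C^3_{\mathrm{loc}}$. In the boundary case, where $y_{i,1}$ stays bounded, that lemma already covers the even-reflection argument. Pass to a further subsequence with $(a_i,s_i)\to(a,s)$. Since $\tilde u_i(a_i,s_i)=0$, the limit gives $\mathbb H(s)=0$, hence $s=0$, so $s_i\to0$.

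The contradiction then comes from nondegeneracy. On the segment $\{(a_i,z):|z|\le1\}$ we have $\partial_z\tilde u_i\ge c>0$ for large $i$, because $\mathbb H'(0)>0$ and the convergence is $C^1$. So $z\mapsto\tilde u_i(a_i,z)$ has a unique zero on $[-1,1]$, and that zero lies on $\Sigma_{\alpha_i}$, namely at $z=0$. But $q_i=(a_i,s_i)$ is also a zero, with $s_i\ne0$ and $s_i\to0$. Hence $q_i\in\Sigma_{\alpha_i}$, contradicting $q_i\in\Sigma_{\beta_i}$ with $\beta_i\ne\alpha_i$.

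Uniformity in $\alpha$ is automatic, because the sheet index was allowed to vary along the sequence.

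The main obstacle I expect is justifying that the coordinates of $\Sigma_{\alpha_i}$ really cover a fixed neighbourhood of $x_i$ up to the boundary, so that $q_i$ has coordinates $(y_i+a_i,s_i)$ with bounded $a_i$ and the local convergence applies there. If $x_i$ is near $\partial_0$, the recentring is done at the boundary point, as in Lemma \ref{l.1D.soln}, and the bounded offset $y_{i,1}$ is carried along. This should follow from Lemma \ref{l.t.Pi} together with the estimate $|Z^i-\delta_{i,n+1}|\lesssim\eta_0+\Lambda_0$.
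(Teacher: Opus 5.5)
Your argument is correct and is essentially the paper's proof. The paper simply defers to Wang--Wei's proof of (2.1) in \cite{wang-weiadv}: argue by contradiction, blow up at the offending point via Lemma \ref{l.1D.soln} to obtain $\mathbb H(z)$, and use $\mathbb H'>0$ to see that near a zero the nodal set is a single graph, so a second sheet at bounded distance is impossible. Choosing $q_i=\Pi_{\beta_i}(x_i)$ through the twisted flow is the right call, since that flow stays in the half-space and so $q_i$ is a genuine zero of $u_i$.
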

We can finally give the proof of Lemma \ref{l.sum.of.exp}.

\begin{proof}[Proof of Lemma \ref{l.sum.of.exp}]
	The proof of \eqref{exponential sum1} is identical to the proof of \cite[Lemma 3.6]{wang-weiadv}, with Lemma \ref{l.D_al.infty} replacing \cite[Lemma 2.2]{wang-weiadv}, and Proposition \ref{l.projection.error} replacing \cite[Lemma 3.4]{wang-weiadv}. Then, \eqref{exponential sum2} follows directly from \eqref{exponential sum1}.
\end{proof}

\section{An optimal approximate solution}\label{s.approx.toda}
In this section, we aim to find the optimal approximate solution of $u$ by modifying the sign alternating heteroclinic approximation $\mathbb{H}(y, z; h)$ in \eqref{Hb(y, z; h)} to vanishing Neumann boundary adapted sign alternating approximation  $g(y, z; h)$ defined in \eqref{def_g=H+G},  and proving the existence of optimal layer shift $h$ in Proposition \ref{p.optimal_h_shift}.

\subsection{Optimal approximate solution ansatz}
We fix a smooth even cut-off function $\zeta\in C^{\infty}_{c}(\mathbb{R})$ with $\zeta=1$ on $(-1, 1)$, $0\leq \zeta\leq 1$, $\spt \zeta\subset(-2,2)$, and $|\zeta'|+|\zeta''|\leq 16$. Define 
\begin{equation}\label{e.zeta_ep}
    \zeta_\varepsilon(t)\coloneqq\zeta\left(\frac{t}{4|\log\varepsilon|}\right)
\end{equation}
and, similarly, $\zeta'_\varepsilon(t)\coloneqq\zeta'((4|\log\varepsilon|)^{-1}t)$, $\zeta''_\varepsilon(t)\coloneqq\zeta''((4|\log\varepsilon|)^{-1}t)$.
Then, let 
\begin{equation}\label{cut-H-def}
\overline{\Hb}(t)=\zeta_\varepsilon(t)\Hb(t)+[1-\zeta_\varepsilon(t)]\textrm{sgn}(t),\quad t\in\bbr.
\end{equation}
Then $\overline{\Hb}=1$ on $(8|\log \varepsilon|, \infty)$ and $\overline{\Hb}=-1$ on $(-\infty, -8|\log \varepsilon|)$. Notice that 
\begin{equation}\label{barHb' equation}
\overline{\Hb}''=W'(\overline{\Hb})+\bar{\xi}_1,\quad
\overline{\Hb}'''=W''(\overline{\Hb})\overline{\Hb}'+\bar{\xi}'_1,
\end{equation}
where $\textrm{spt}(\bar{\xi}_1)\subset (4|\log\varepsilon|, 8|\log\varepsilon|) \cup (-8|\log\varepsilon|, -4|\log\varepsilon|) $ and 
\begin{equation}\label{xiestimates}
    \sum_{k=0}^m|\bar{\xi}^{(k)}_1(t)|\lesssim_m e^{-\frac{1}{4}|t|}\varepsilon^3
\end{equation}
Recall the definition of $\sigma_0$ in Appendix \ref{s.1d.sol}, and notice that by \eqref{xiestimates} we have
\begin{equation}\label{H'2integral}
\int_{-\infty}^{\infty} \overline{\Hb}'(t)^2dt=\si_0+O(\ve^3).
\end{equation}
We define the approximate solution $\Hb(y, z; h)\in C^{3}(M)$ in the following way. 
First, recall from Section \ref{ss.twisted.Fermi.setup} the union $\Xi$ of all the connected components of $\{u=0\}\cap (\bbb^+_{2R}(0)\times (-2R,2R))$ which have non-empty intersections with $\bbb^+_{2R}(0)\times (-1.5R,1.5R)$
can be written as a union graphical sheets
\begin{equation}
\Xi=\bigcup_{\al}\Si_{\al},
\end{equation} 
as in \eqref{e.Si.al}.
Let $t_\al$ denote the twisted Fermi coordinate time function with respect to $\Sia$ as in Definition \ref{d.twisted.time.projection}, and consider the sets $\mathcal{M}_{\alpha}^{\lambda}$ as defined in \eqref{Mlambdaalpha}. 
Without loss of generality, we can assume that near $\Sia$, $u$ has the same sign as $(-1)^{\alpha}t_{\alpha}$. Given a vector of functions $h=(h_{\al})$, where $h_{\al}: \Si_{\al} \to \bbr$, in each $\mathcal{M}^{0}_{\alpha}$ (which is a neighbourhood of $\Sigma_{\alpha}$), let $\Hb(y, z; h)$ be defined by
\begin{equation}\label{Hb(y, z; h)}
\Hb(y, z; h)=\sum_{\beta}\left(\Hb_{\beta}(y, z; h_{\be})+\textrm{sgn}(\beta - \alpha)(-1)^{\beta}\right),
\end{equation}
where $(y, z)\in \mathcal{M}^{0}_{\alpha}$ denotes the twisted Fermi coordinates with respect to $\Sia$ and 
\begin{equation}\label{def_Hbeta}
\Hb_{\beta}(y, z; h_{\be})=\overline{\Hb}\left((-1)^{\beta}(t_{\beta}(y, z)-h_{\beta}(\Pi_{\beta}(y, z)))\right).
\end{equation}
In particular $\Hb_{\alpha}(y, z; h_{\al})=\overline{\Hb}((-1)^{\alpha}(z-h_{\alpha}(y)))$. 

We shall use the following notation about for derivatives of $\Hb_{\beta}$:
\begin{equation}\label{def_Hkbeta}
\Hb_{\beta}^{(k)}(y, z; h_\beta)=\overline{\Hb}^{(k)}\left((-1)^{\beta}(t_{\beta}(y, z)-h_{\beta}(\Pi_{\beta}(y, z)))\right),\ \;  k\geq 1.
\end{equation}

To derive the desired curvature estimates, we need to modify the approximate solution $\mathbb{H}(y, z; h)$ so that the corrected approximate solution satisfies good estimates on its Neumann derivative along the boundary of the domain.

By Proposition \ref{p.lin.ac}, we can define a function $\bgah \in C^{\infty}(\Si_{\al}\times \bbr)$ by solving the following mixed boundary value problem (as the boundary datum is odd in $\tau$):
\begin{align}\label{G-equationwithout cutoff}
\begin{cases}\Delta_{\al, 0} \bgah(y,\ta)+\del_{\ta \ta}\bgah(y,\ta)-W''(\Hb(\ta))\bgah(y,\ta)=0 & \text{ on }\Sigma_{\alpha}\times \mathbb{R};\\
\del_{y_1}\bgah(y,\ta)+w_{\al}(y)\bH'(\ta)\ta=0 & \text{ along }\del_0\Sigma_{\alpha}\times \mathbb{R};\\
\bgah=0 & \text{ along }\del_{+}\Sigma_{\alpha}\times \mathbb{R};\\
\int_{-\infty}^{\infty} \bgah(y,\ta)\Hb'(\ta)d\ta=0, 
\end{cases}
\end{align}
where $w_{\al}$ is as in Lemma \ref{l.dz(nu)}. 

Since, by Lemma \ref{p.lin.ac}, the solution to this problem is unique, it must satisfy the following condition
\begin{equation}\label{e.G_alpha.odd}
-\bgah(y,-z)=\bgah(y,z),
\end{equation}
i.e. $\bgah$ is odd in the $z$ variable.
Also, by \eqref{e.lin.ac.est} in Lemma \ref{p.lin.ac}, and, for every $\sigma\in(0,1)$ and every fixed non-negative
integer $m$,
\begin{equation}\label{e.G.est}
 \sum_{\ell=0}^{m}|\nabla^\ell\hat{\mathbb G}_\alpha(y,\tau)|
 \leq C_{m,\sigma}\varepsilon e^{-\sigma|\tau|}.
\end{equation}
The constants are uniform in $\alpha$ and $\varepsilon$.

As we did in the case of the one-dimensional heteroclinic solution, we need to evenly cut off the correction function $\bgah$ in order for it to have compact support. Therefore, let
\begin{equation}\label{eq:cutoffG}
	\bgab(y,z)=\zeta_\varepsilon(z)\bgah(y,z)
\end{equation}
for $z\in \bbr$.
Since the cut-off function $\zeta$ was assumed to be even, it is easy to see that $\overline{\Gb}_{\alpha}$ is an odd function in the $z$-variable. 

Notice that $\bgah$ satisfies 
\begin{align}\label{G-equation}
\begin{cases}\Delta_{\al, 0} \bgab(y,z)+\del_{z z}\bgab(y,z)-W''(\bH(z))\bgab(y,z) = \bar{\xi}_{2, \alpha}(y,z), & \text{ on }\Sigma_{\alpha}\times \mathbb{R};\\
\del_{y_1}\overline{\Gb}_{\alpha}(y, z)+w_{\al}(y)\bH'(z)z=\delta_\alpha(y,z) & \text{ along }\del_0\Sigma_{\alpha}\times \mathbb{R};\\
\overline{\Gb}_{\alpha}=0 & \text{ along }\del_{+}\Sigma_{\alpha}\times \mathbb{R};\\
\int_{-\infty}^{\infty} \overline{\Gb}_{\alpha}(y,z)\overline{\Hb}'(z)dz=0, 
\end{cases}
\end{align}
where
\begin{align}
\bar\xi_{2,\alpha}(y,z)={}&
 \frac{1}{2|\log\varepsilon|}\zeta_\varepsilon'(z)
   \partial_z\bgah(y,z)
 +\frac{1}{16|\log\varepsilon|^2}\zeta_\varepsilon''(z)\bgah(y,z)\notag\\
&+\bigl(W''(\mathbb H(z))-W''(\bH(z))\bigr)\bgab(y,z),\label{e.cutoff.residual.correct}
\end{align}
and
\begin{equation}\label{e.delta.error}
 \delta_\alpha(y,z)=(1-\zeta_\varepsilon(z))w_\alpha(y)\bH'(z)z.
\end{equation}
Notice that the last property in \eqref{G-equation} is a consequence of oddness in the $z$-variable.
By definition of the cutoff function, $\bar{\xi}_2$ is supported in $\Sigma_\alpha\times\left((4|\log\varepsilon|, 8|\log\varepsilon|) \cup (-8|\log\varepsilon|, -4|\log\varepsilon|)\right)$. Similarly, $\delta_\alpha$
is supported in $\partial_0\Sigma_\alpha\times\left((4|\log\varepsilon|, 8|\log\varepsilon|) \cup (-8|\log\varepsilon|, -4|\log\varepsilon|)\right)$. Moreover, by \eqref{e.G.est}

\begin{equation}\label{e.xi.delta.est}
    \sum_{k=0}^m|\na^k \bar{\xi}_{2, \alpha}(y,z)|+|\na^k \delta_{\alpha}(y,z)| \lesssim_m e^{-\frac{1}{4}|z|}\varepsilon^{3}.
\end{equation}

Given $h_{\al} : \Sia \to \bbr$, we define 
\begin{equation}
    \bga(y,z; h_{\al})\coloneqq\bar{\bg}_{\al}(y, (-1)^{\al}(z-h_{\al}(y))),
\end{equation}
and, given a vector of functions $h=(h_{\al})$, where $h_{\al}: \Sia \to \bbr$, we define
\begin{equation}
\bg(-;h):=\sum\limits_{\beta}\bg_{\beta}(-;h_{\beta}),    
\end{equation}
where $\bg_\beta(-;h_\beta)$ is defined analogously to \eqref{def_Hbeta}.
For simplicity, we will use the following notation
\begin{align}\label{sign}
    \bg_{\al}^{(k)}(y,z; h_{\al}) & \coloneqq(\del^{(k)}_{z}\bar{\bg}_{\al})(y, (-1)^{\al}(z-h_{\al}(y))), \\  
    \bg_{\al,i}(y,z; h_{\al}) & \coloneqq(\del_{y_i}\bar{\bg}_{\al})(y, (-1)^{\al}(z-h_{\al}(y))), \\ 
    \bg_{\al,ij}(y,z; h_{\al}) & \coloneqq (\del_{y_iy_j}\bar{\bg}_{\al})(y, (-1)^{\al}(z-h_{\al}(y))),
\end{align}
for $1\leq i,j\leq n$. When $k=1$, we shall write $\bga'$ instead of $\bga^{(1)}$.

These can also be combined, as in $\bg_{\al,ij}'(y,z; h_{\al}) = (\del_{y_iy_jz}\bar{\bg}_{\al})(y,(-1)^{\al}(z-h_{\al}(y)))$ etc.
Similarly, let $\xi_{2,\alpha}(y,z;h_\alpha)\coloneqq
\bar\xi_{2,\alpha}(y, (-1)^\alpha(z-h_\al(y)))$, $\xi'_{2,\alpha}(y,z;h_\al)\coloneqq(\partial_z\bar\xi_{2, \alpha})(y, (-1)^\al(z-h_\al(y)))$ etc.
Finally, let $\delta'_\alpha(y,z)\coloneqq (\partial_z\delta_\al)(y,z)$, $\delta_{\alpha,i}(y,z)\coloneqq (\partial_{y_i}\delta_\al)(y,z)$, etc.

For $k\geq 0$, we define
\begin{equation}\label{def_gk=H+G}
g_{\be}^{(k)}(y, z;h_{\be}) =\Hb_{\be}^{(k)}(y, z;h_{\be})+ \bg_{\be}^{(k)}(y, z;h_{\be})    
\end{equation}
and
\begin{equation}\label{def_g=H+G}
    g(y, z; h)=\Hb(y, z; h)+\bg(y, z; h).
\end{equation}

Clearly, an easy computation gives the following.
\begin{lemma}\label{lem:maing}We have
\begin{align}\label{eq:maing}
(\Delta_{\alpha,0}+\partial_z^2-W''(\Hb_\alpha))\Gb_\alpha
 ={}&|\nabla_{\alpha,0}h_\alpha|^2\Gb_\alpha^{(2)}
 -(-1)^\alpha(\Delta_{\alpha,0}h_\alpha)\Gb_\alpha^{(1)}\notag\\
 &-2(-1)^\alpha\sum_{i,j}g_\alpha^{ij}(y,0)
       \Gb'_{\alpha,i}\,\partial_{y_j}h_\alpha+\xi_{2,\alpha},
\end{align}
\begin{align}\label{eq:main}
 (\Delta_{\alpha,0}+\partial_z^2-W''(\Hb_\alpha))\Gb_\alpha^{(1)}
 ={}&|\nabla_{\alpha,0}h_\alpha|^2\Gb_\alpha^{(3)}
 -(-1)^\alpha(\Delta_{\alpha,0}h_\alpha)\Gb_\alpha^{(2)}\notag\\
 &-2(-1)^\alpha\sum_{i,j}g_\alpha^{ij}(y,0)
       \Gb''_{\alpha,i}\,\partial_{y_j}h_\alpha+W'''(\Hb_\alpha)\Hb_\alpha^{(1)}\Gb_\alpha+\xi'_{2,\alpha}.
\end{align}
\begin{align}
    \partial_{y_1}\Gb_{\alpha}(y, z; 0)+(-1)^{\alpha}w_{\alpha}(y)\Hb'_{\alpha}(y, z; 0)z &= \delta_{\alpha}\label{Neumanny1 G0}\\
\partial_{y_1 y_i}\Gb_{\alpha}(y, z; 0)+(-1)^{\alpha} \partial_{y_i}w_{\alpha}(y)\Hb'_{\alpha}(y, z; 0)z &= \delta_{\alpha, i} \quad (2\leq i\leq n). \label{Neumanny1yi G0}
\end{align}
\end{lemma}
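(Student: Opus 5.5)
The plan is a direct chain-rule computation in the twisted Fermi coordinates $(y,z)$ relative to $\Sigma_\alpha$. Write $s=(-1)^\alpha(z-h_\alpha(y))$, so that $\Gb_\alpha(y,z;h_\alpha)=\bgab(y,s)$. Here $h_\alpha$ is regarded as a function of $y$ only, i.e.\ its $z$-constant extension \eqref{e.const.extension}, which gives $\partial_z h_\alpha=0$ and $\partial_z s=(-1)^\alpha$.

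\textbf{Step 1: derivatives of the composition.} Compute
\begin{align*}
\partial_z\Gb_\alpha&=(-1)^\alpha\Gb_\alpha^{(1)},\\
\partial_{zz}\Gb_\alpha&=\Gb_\alpha^{(2)},\\
\partial_{y_i}\Gb_\alpha&=\Gb_{\alpha,i}-(-1)^\alpha\partial_{y_i}h_\alpha\,\Gb_\alpha^{(1)},\\
\partial_{y_iy_j}\Gb_\alpha&=\Gb_{\alpha,ij}-(-1)^\alpha\partial_{y_j}h_\alpha\Gb'_{\alpha,i}-(-1)^\alpha\partial_{y_i}h_\alpha\Gb'_{\alpha,j}\\
&\quad+\partial_{y_i}h_\alpha\partial_{y_j}h_\alpha\Gb_\alpha^{(2)}-(-1)^\alpha\partial_{y_iy_j}h_\alpha\Gb_\alpha^{(1)}.
\end{align*}

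\textbf{Step 2: assemble $\Delta_{\alpha,0}$.} Contract with $g_\alpha^{ij}(y,0)$ and subtract the Christoffel terms using \eqref{e.lapl.gamma} at $z=0$. This gives
\begin{align*}
\Delta_{\alpha,0}\Gb_\alpha={}&(\Delta_{\alpha,0}\bgab)(y,s)+|\nabla_{\alpha,0}h_\alpha|^2\Gb_\alpha^{(2)}-(-1)^\alpha(\Delta_{\alpha,0}h_\alpha)\Gb_\alpha^{(1)}\\
&-2(-1)^\alpha g_\alpha^{ij}(y,0)\Gb'_{\alpha,i}\partial_{y_j}h_\alpha.
\end{align*}
Adding $\partial_{zz}\Gb_\alpha=(\partial_{ss}\bgab)(y,s)$ and $-W''(\Hb_\alpha)\Gb_\alpha$, note that $W''$ is even and $\Hb_\alpha=\bH(s)$. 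Hence the first equation of \eqref{G-equation}, evaluated at $(y,s)$, turns the $\bgab$ terms into $\xi_{2,\alpha}$, which yields \eqref{eq:maing}.

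\textbf{Step 3: the equation for $\Gb_\alpha^{(1)}$.} Apply the same computation to $\Gb_\alpha^{(1)}=(\partial_s\bgab)(y,s)$. Differentiating \eqref{G-equation} in $s$ gives
\[
(\Delta_{\alpha,0}+\partial_{ss}-W''(\bH))\partial_s\bgab=W'''(\bH)\bH'\bgab+\partial_s\bar\xi_{2,\alpha}.
\]
One must then check sign and parity conventions: $W'''$ is odd while $\bH'$ is even, and the notation $\Hb_\alpha^{(1)}$ in \eqref{def_Hkbeta} is the derivative evaluated at $s$. With these conventions the right-hand side becomes exactly $W'''(\Hb_\alpha)\Hb_\alpha^{(1)}\Gb_\alpha+\xi'_{2,\alpha}$, and \eqref{eq:main} follows.

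\textbf{Step 4: the boundary identities.} At $h=0$ we have $\Gb_\alpha(y,z;0)=\bgab(y,(-1)^\alpha z)$. The Neumann condition in \eqref{G-equation}, evaluated at $\tau=(-1)^\alpha z$, gives
\[
\partial_{y_1}\Gb_\alpha+w_\alpha\bH'((-1)^\alpha z)(-1)^\alpha z=\delta_\alpha .
\]
Since $\bH'$ is even, $\bH'((-1)^\alpha z)=\Hb'_\alpha(y,z;0)$, and this is \eqref{Neumanny1 G0}. Here $\delta_\alpha$ is understood at $(y,(-1)^\alpha z)$, and it is even in its second argument because of the factor $\bH'(z)z$ and the even cutoff.

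For \eqref{Neumanny1yi G0}, differentiate the boundary identity tangentially in $y_i$ with $2\le i\le n$. These directions are tangent to $\partial_0\Sigma_\alpha\times\mathbb R$, so $\partial_{y_i}$ commutes with restriction to the boundary. Moreover $\partial_{y_i}\partial_{y_1}=\partial_{y_1}\partial_{y_i}$ as coordinate derivatives, so the order of differentiation causes no trouble.

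\textbf{Expected obstacle.} No step is deep; this is bookkeeping. The likeliest pitfall is keeping track of the parity and sign factors $(-1)^\alpha$, especially in the $W'''$ term and in the meaning of $\delta_\alpha$, $\delta_{\alpha,i}$ at the reflected argument.
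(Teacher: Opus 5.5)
Your computation is correct and is the same chain-rule calculation the paper has in mind; the paper writes it out in \eqref{e.s6.shift.derivatives} and in the proof of Lemma \ref{l.s9.gprime}. Steps 1--3 and the tangential differentiation in Step 4 are fine. For \eqref{Neumanny1yi G0} you should say explicitly that $\Hb'_\alpha(y,z;0)=\bH'((-1)^\alpha z)$ does not depend on $y$, so no extra term appears.

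One remark in Step 4 is wrong: $\delta_\alpha$ is not even in its second argument. Since $\bH$ is odd, $\bH'$ is even and $\tau\bH'(\tau)$ is odd; this is why the paper calls the boundary datum of \eqref{G-equationwithout cutoff} odd. Hence $\delta_\alpha(y,\tau)=(1-\zeta_\varepsilon(\tau))w_\alpha(y)\tau\bH'(\tau)$ and $\delta_{\alpha,i}$ are odd in $\tau$, consistent with $\partial_{y_1}\bgab$ being odd. It follows that $\delta_\alpha(y,(-1)^\alpha z)=(-1)^\alpha\delta_\alpha(y,z)$. The right-hand sides of \eqref{Neumanny1 G0}--\eqref{Neumanny1yi G0} must therefore be read at the reflected argument $(y,(-1)^\alpha z)$, exactly as the paper does in \eqref{e.del_nu.ffa.1}. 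Replacing them by $\delta_\alpha(y,z)$, $\delta_{\alpha,i}(y,z)$ would be off by the sign $(-1)^\alpha$. Your first reading is the right one, so simply delete the parity claim.

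Likewise, the other parity appeals ($W''$ even, $W'''$ odd, $\bH'$ even) are unnecessary. The identities $W''(\Hb_\alpha)=W''(\bH(s))$, $\Hb_\alpha^{(1)}=\bH'(s)$ and $\Hb'_\alpha(y,z;0)=\bH'((-1)^\alpha z)$ hold directly by definition.
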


Finally, we shall need some elementary estimates, which we record below in Lemma \ref{l.interaction.int} and Lemma \ref{l.interaction.int.2}. The proofs, which follow more or less directly from the results of Section \ref{ss.comparison} and the properties of the heteroclinic solution $\mathbb{H}$, are in Appendix \ref{s.proofs.integrals}.

\begin{lemma}\label{l.interaction.int}
Let $K>0$ be fixed, let $\beta\neq\alpha$ satisfy
$|t_\beta(y,0)|\leq K|\log\varepsilon|$, and let
$\sigma,\varsigma\geq0$ with $\sigma+\varsigma>0$.  For every $k\in\mathbb N$,
\begin{align}\label{equation 3 in l.interaction.int}
 \int_{-K|\log\varepsilon|}^{K|\log\varepsilon|}
 |z|^ke^{-\sigma|t_\beta(y,z)|-\varsigma|z|}\,dz
 &\lesssim (1+|t_\beta(y,0)|)^{k+1}
 e^{-\min\{\sigma,\varsigma\}|t_\beta(y,0)|},\\
\label{equation 1 in l.interaction.int}
 \int_{-K|\log\varepsilon|}^{K|\log\varepsilon|}
 |t_\beta(y,z)|^ke^{-\sigma|t_\beta(y,z)|-\varsigma|z|}\,dz
 &\lesssim (1+|t_\beta(y,0)|)^{k+1}
 e^{-\min\{\sigma,\varsigma\}|t_\beta(y,0)|}.
\end{align}
\end{lemma}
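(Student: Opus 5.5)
The plan is to reduce both estimates to one-dimensional integrals in which $t_\beta(y,z)$ is replaced by the affine function $t_\beta(y,0)+z$, up to a bounded error. By \eqref{eq:tb-tb0} in Proposition \ref{p.conseq.almost.parallel}, for $|z|\leq K|\log\varepsilon|$ (with $|t_\beta|\leq K'|\log\varepsilon|$ along the relevant segment) we have $t_\beta(y,z)=t_\beta(y,0)+z+o(1)$. Hence $e^{-\sigma|t_\beta(y,z)|}\lesssim e^{-\sigma|a+z|}$ with $a:=t_\beta(y,0)$, and $|t_\beta(y,z)|\leq |a+z|+1$, with constants depending only on $K,\sigma$.

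One point needs care: the strip hypothesis of Proposition \ref{p.conseq.almost.parallel} requires $|t_\beta(y,z)|\lesssim|\log\varepsilon|$. I would get this by integrating \eqref{e.partial_zt_beta}, $|\partial_zt_\beta-1|\lesssim\varepsilon^{1/3}$, along the $z$-segment through $(y,0)$, starting from $|a|\leq K|\log\varepsilon|$. A continuity argument shows that $|t_\beta(y,z)|\leq 3K|\log\varepsilon|$ persists on the whole segment, so the proposition applies with $3K$ in place of $K$.

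It then suffices to bound
\[
I_k(a)=\int_{\mathbb R}(1+|z|+|a+z|)^k e^{-\sigma|a+z|-\varsigma|z|}\,dz .
\]
Set $m=\min\{\sigma,\varsigma\}>0$. I would use the triangle-type inequality $\sigma|a+z|+\varsigma|z|\geq m(|a+z|+|z|)\geq m|a|$, together with the fact that the integrand also decays in $z$ away from the interval between $0$ and $-a$.

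To do this, split the real line into the segment between $0$ and $-a$, of length $|a|$, and its complement. On the segment, $|a+z|+|z|=|a|$, so the exponential is at most $e^{-m|a|}$. The polynomial weight there is at most $(1+2|a|)^k$, which gives a contribution of $|a|(1+|a|)^k e^{-m|a|}$. On the complement, the distance $s$ to the nearer endpoint satisfies $|a+z|+|z|=|a|+2s$, so the integrand is at most $(1+|a|+2s)^k e^{-m|a|-2ms}$. Integrating in $s$ gives $\lesssim(1+|a|)^k e^{-m|a|}$.

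Summing the two pieces yields $(1+|a|)^{k+1}e^{-m|a|}$, which is both \eqref{equation 3 in l.interaction.int} and \eqref{equation 1 in l.interaction.int}. The degenerate case where one of $\sigma,\varsigma$ vanishes does not occur, since then $m=0$ and the claim reads $\lesssim(1+|a|)^{k+1}$. In that case the integral is still finite because the other exponent is positive: bound the exponential by $e^{-(\sigma+\varsigma)\cdot\mathrm{dist}}$ outside the segment, and the same split works.

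The main obstacle is the case $m=0$ when the decaying exponent is $\sigma$ and $|z|$ is unbounded within $K|\log\varepsilon|$. There the weight $|z|^k$ integrated against $e^{-\sigma|a+z|}$ still gives $(1+|a|)^{k}\cdot O(1)$ after centering at $z=-a$, so it is fine. The step that genuinely needs care is justifying the uniform $o(1)$ replacement over the whole logarithmic range, which is handled by the continuity argument above.
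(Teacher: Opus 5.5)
Your proof is correct and follows the paper's route. You replace $t_\beta(y,z)$ by the affine function $t_\beta(y,0)+z$, up to a small error, using Proposition \ref{p.conseq.almost.parallel}, and then apply an elementary one-dimensional exponential estimate. Two points go beyond the paper's one-line argument: your continuity argument securing the strip hypothesis, and your explicit treatment of $\min\{\sigma,\varsigma\}=0$, which the paper states only for $\sigma,\varsigma>0$. Do reword the sentence saying the degenerate case ``does not occur'', since you go on to show, correctly, that it occurs but is harmless.
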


\begin{lemma}\label{l.interaction.int.2}
Let $K>0$ be fixed.  If $\beta\neq\alpha$ and
$|t_\beta(y,0)|\leq K|\log\varepsilon|$, then
\begin{equation}\label{equation 2 in l.interaction.int}
 \int_{\mathbb R}
 \left|\bH(t_\beta(y,z))+\operatorname{sgn}(\beta-\alpha)\right|
 |\bH'(z)|\,dz
 \lesssim (1+|t_\beta(y,0)|)e^{-|t_\beta(y,0)|}.
\end{equation}
Consequently, for every $\sigma\in(0,1)$,
\begin{equation}\label{e.interaction.sigma.correct}
 \int_{\mathbb R}
 \left|\bH(t_\beta(y,z))+\operatorname{sgn}(\beta-\alpha)\right|
 |\bH'(z)|\,dz
 \lesssim_\sigma e^{-\sigma|t_\beta(y,0)|}.
\end{equation}
\end{lemma}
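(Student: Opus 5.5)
The bound \eqref{equation 2 in l.interaction.int} reduces to a one-dimensional integral estimate. We use the exponential convergence of the heteroclinic to $\pm1$, the decay $|\bH'(z)|\lesssim e^{-|z|}$, and the near-translation relation $t_\beta(y,z)=t_\beta(y,0)+z+o(\varepsilon^{1/3})$ from \eqref{eq:tb-tb0}. The inequality \eqref{e.interaction.sigma.correct} then follows at once, since $(1+s)e^{-s}\lesssim_\sigma e^{-\sigma s}$ for $s\geq0$ and $\sigma\in(0,1)$.

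\emph{Reduction of the integrand.} Set $d:=t_\beta(y,0)$. By the ordering of the sheets and the sign convention for $u$ near each sheet, the sign of $\bH(t_\beta(y,z))+\operatorname{sgn}(\beta-\alpha)$ is arranged so that, in the region relevant to $\Sigma_\alpha$, this quantity is close to zero: $\bH(t_\beta)\approx -\operatorname{sgn}(\beta-\alpha)$ on the side of $\Sigma_\beta$ facing $\Sigma_\alpha$. If the convention instead places the relevant $\bH$ evaluated at $(-1)^\beta t_\beta$, I would track that sign as well. In either case I expect the pointwise bound
\[
\left|\bH(t_\beta(y,z))+\operatorname{sgn}(\beta-\alpha)\right|\lesssim \min\{1,e^{-|t_\beta(y,z)|}\}
\]
on the half-line where $t_\beta$ has the sign of $-\operatorname{sgn}(\beta-\alpha)\cdot(\text{sign of } d)$. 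On the complementary half-line the integrand is only bounded by $2$. This bound comes from $|\mathbb H(t)\mp1|\lesssim e^{-|t|}$ (Appendix \ref{s.1d.sol}), together with the fact that the cutoff $\bH$ equals $\pm1$ exactly for $|t|\geq 8|\log\varepsilon|$ and agrees with $\mathbb H$ otherwise up to an error of order $\varepsilon^3$.

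\emph{Splitting the integral.} For $|z|\leq K'|\log\varepsilon|$, use \eqref{eq:tb-tb0} to replace $t_\beta(y,z)$ by $d+z$ up to an $o(1)$ error; this changes exponentials only by a bounded factor. The integral over $|z|>K'|\log\varepsilon|$ is bounded by $\int_{|z|>K'|\log\varepsilon|}|\bH'|\lesssim\varepsilon^{K'}$, which is $\lesssim e^{-|d|}$ once $K'>K$, because $|d|\leq K|\log\varepsilon|$. On the remaining range the integral is dominated by
\[
\int_{\mathbb R}\min\{1,e^{-|d+z|}\}\,\mathbf 1_{\{\text{good side}\}}\,e^{-|z|}\,dz+\int_{\{\text{bad side}\}}e^{-|z|}\,dz .
\]
The bad side is the half-line $\{z: d+z \text{ has the wrong sign}\}$, so it lies at distance about $|d|$ from the origin, and its contribution is $\lesssim e^{-|d|}$. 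The first integral is the standard convolution of two exponentials, $\int e^{-|d+z|-|z|}dz=(1+|d|)e^{-|d|}$, which produces exactly the factor $(1+|d|)$.

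\emph{Main obstacle.} The delicate point is the sign bookkeeping: checking that the good side is indeed the one containing $z=0$, i.e.\ that $\Sigma_\alpha$ lies on the side of $\Sigma_\beta$ where $\bH(t_\beta)\to-\operatorname{sgn}(\beta-\alpha)$. This uses the monotone ordering of the graphs $f_\beta$ and the orientation of $t_\beta$ (so that $\partial_{x_{n+1}}t_\beta>0$): if $\beta>\alpha$ then $d<0$, and points near $\Sigma_\alpha$ have $t_\beta\approx d+z<0$ unless $z>|d|$. A second, smaller point is that \eqref{eq:tb-tb0} is only valid when $|t_\beta|,|z|\leq K|\log\varepsilon|$. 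I would handle this either by the monotonicity of $z\mapsto t_\beta(y,z)$, which holds since $\partial_z t_\beta\geq 1/2$ by \eqref{e.partial_zt_beta}, to extend the comparison, or simply by absorbing the tail region into the $\varepsilon^{K'}$ bound.
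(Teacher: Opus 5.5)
Your proposal is correct and follows the paper's argument. The paper assumes $\beta>\alpha$ without loss of generality, so that $t_\beta(y,0)<0$, and uses $t_\beta(y,z)\approx t_\beta(y,0)+z$ to dominate the integrand by $\min\{1,e^{-|t_\beta(y,0)|+z}\}e^{-|z|}$; it then splits at $z=0$ and $z=|t_\beta(y,0)|$ and concludes with $(1+s)e^{-s}\lesssim_\sigma e^{-\sigma s}$, exactly as you do (your separate tail estimate is not even needed, since $\bH'$ vanishes for $|z|\geq 8|\log\varepsilon|$).
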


\newcommand{\tty}{\mathtt{y}}
\subsection{Notation for Proposition \ref{p.optimal_h_shift}}
We are now going to introduce some notation which will be used in the statement and the proof of Proposition \ref{p.optimal_h_shift}. 

\subsubsection{Coordinate balls} Let us fix an index $\alpha$. Recall that $\Sia$ is the graph of a smooth function $f_\al:\bbb^+_{2R}(0)\to(-1.6R, 1.6R)$ which satisfies \eqref{e.na.f}. Therefore,
\begin{equation}\label{e.Sia.param}
    \Sia=\{(y, f_\al(y)): y\in\bbb^+_{2R}(0)\}\ \text{ and }\ \partial_0\Sia=\{(y, f_\al(y)): y\in\partial_0\bbb^+_{2R}(0)\}.
\end{equation}
Let us now define the following \textit{coordinate balls}.
\begin{definition}\label{def.notation.balls}
For $\bar x=(\bar y,f_\alpha(\bar y))\in\Sigma_\alpha$ set
\[
 B_r^\alpha(\bar x)=\{(y,f_\alpha(y)):y\in\mathbb B_{2R}^+(0),\ |y-\bar y|<r\}.
\]
For $\bar x\in\partial_0\Sigma_\alpha$ set
\[
 B_r^{0,\alpha}(\bar x)=\{(y,f_\alpha(y)):y\in\partial_0\mathbb B_{2R}^+(0),\ |y-\bar y|<r\}.
\]
Their relative closures are $\bar B_r^\alpha(\bar x)$ and $\bar B_r^{0,\alpha}(\bar x)$.
The notation with centre $\bar y$ has the same meaning under the graph parametrization.\end{definition}

\begin{remark}\label{r.ball.comparison}
    Notice that, by \eqref{e.rescaled.bound.gEucl} and \eqref{e.na.f}, it is easy to see that for all small enough $\eta_0$ (independent of $\varepsilon$), the coordinate balls $B^\alpha_r(\cdot)$,  $B^{0,\alpha}_r(\cdot)$ are uniformly comparable to the (intrinsic) geodesic balls in $\Sia$ and $\partial_0\Sia$ respectively. In particular, it is easy to check that there are constants $c_0(n), c_1(n)$ such that for all sufficiently small $\eta_0$, 
    \begin{equation}
        B^\alpha_{(1-c_1(n)\eta_0)r}(x)\subset B^{\Sia}_r(x)\subset B^\alpha_{(1+c_1(n)\eta_0)r}(x) \ \text{ for all } x\in \Sia,
    \end{equation}
    and
    \begin{equation}
        B^{0,\alpha}_{(1-c_0(n)\eta_0)r}(x)\subset B^{\partial_0\Sia}_r(x)\subset B^{0,\alpha}_{(1+c_0(n)\eta_0)r}(x) \ \text{ for all } x\in \partial_0\Sia,
    \end{equation}
    where $B^{\Sia}_r(x)$ (resp. $B^{\partial_0\Sia}_r(x)$) denotes the open geodesic ball of radius $r$ centred at $x$ in $\Sia$ (resp. $\partial_0\Sia$).
\end{remark}

\subsubsection{H{\"o}lder seminorms on coordinate balls}
Let $k\geq 0$, $\theta\in(0,1]$, and let $\vp\in C^{k, \theta}(\Sia)$ (resp. $C^{k, \theta}(\partial_0\Sia)$). Let $\bar x=(\bar y, f(\bar y))\in \Sia$ (resp. $\partial_0\Sia$), and let $x=(y, f(y))$ denote another point in $\Sia$ (resp. $\partial_0\Sia$). 

Let $j\leq k$, and a multi-index \(I=(i_1,\dots,i_j)\) (where each \(i_l\in \{1,\dots,n\}\), resp. \(i_l\in \{2,\dots,n\}\)) with  \(|I|=j\). Define \(\vp_I = \del_{y_{i_1} \dots y_{i_j}} \vp\), and the following quantities:
\begin{align}\label{j-theta norm}
\begin{split}
 &|\vp|_0(\bar x)=|\vp(\bar x)|,\\
 &|\vp|_{0,\theta}(\bar x)=
 \sup\left\{\frac{|\vp(x)-\vp(\bar x)|}{|y-\bar y|^\theta}:
 x\in\bar B_1^\alpha(\bar x),\ x\neq\bar x\right\}
 \quad\text{(respectively, $\bar B_1^{0,\alpha}(\bar x)$)},\\
 &|\vp|_j(\bar x)=\max_{|I|=j}|\vp_I(\bar x)|,
 \qquad |\vp|_{j,\theta}(\bar x)=
 \max_{|I|=j}|\vp_I|_{0,\theta}(\bar x),\\
 &|\vp|'_j(\bar x)=\sup\left\{|\vp|_j(x):x\in\bar B_1^\alpha(\bar x)\right\}
 \quad\text{(respectively, $\bar B_1^{0,\alpha}(\bar x)$)}.
\end{split}
\end{align}

We note the following elementary inequalities 
\begin{equation}\label{e.|phi.psi|_tht}
	|\ps^{-1}|_{0,\tht} \leq |\ps|_{0,\tht}|\ps^{-2}|'_0, \text{ if } \ps >0;\quad |\ps|_{0,\tht} \leq |\ps|'_1;\quad	|\vp \ps|_{0,\tht}\leq |\vp|_0|\ps|_{0,\tht}+|\vp|_{0,\tht}|\ps|'_0.
\end{equation}
and analogous higher order versions.

\begin{remark}
    Much like in Remark \ref{r.ball.comparison}, notice that, again by by \eqref{e.rescaled.bound.gEucl} and \eqref{e.na.f}, up to a harmless dimensional constant, for all small enough $\eta_0$ (independent of $\varepsilon$) and all $S\subset \Sia$ (resp. $S\subset \partial_0\Sia$), $\sup_{x\in S}|\vp|_{0,\tht}(x)$ is uniformly comparable to the H{\"o}lder seminorm $[\vp]_{\theta, S}$ defined using the intrinsic distance on $\Sia$. Similarly for $\sup_{x\in S}|\vp|_{j,\tht}(x)$ and $[\nabla^j\vp]_{\theta, S}$. 
    
    Therefore, after taking $\eta_0$ sufficiently small (independent of $\varepsilon$), we can use appropriate suprema of the quantities in \eqref{j-theta norm} to define the H{\"o}lder seminorms.
\end{remark}

\subsubsection{The restriction map} For $R/2\leq r\leq R$, $C^{+}_r(0):=\bar{\bbb}^{+}_r(0)\times [-r,r]$. Let 
\begin{equation}\label{restriction map}
    \mathscr{R}:C^{k,\tht}(\Sia\cap C^+_{R}(0))\to C^{k,\tht}(\Sia\cap C^+_{9R/10}(0))
\end{equation}
denote the restriction map. By \cite[Lemma 6.37]{gilbarg1977elliptic}, \tes\ $\mathscr{R}^{-1}$, which is a \cts\ right inverse of $\mathscr{R}$. Moreover if $\mathscr{R}^{-1}(\vp)=\tilde{\vp}$, then $\tilde{\vp}$ can be described more concretely as follows. Since $\Sia$ is the graph of $f_{\al}$ over $\bbb_{2R}^+(0)$, $\Sia\cap C^+_{r}(0)$ (with $r\leq R$) can be identified with $\bbb_{r}^+(0)$ using the map $(y_1,\dots,y_n)\mapsto (y_1,\dots,y_n,f_{\al}(y_1,\dots,y_n))$. If we make this identification and use the polar coordinate $(\mathfrak{r},\upsilon)$ on $\bar{\bbb}_{R}^+(0)$ (where $\mathfrak{r}$ denotes the radial coordinate), then $\tilde{\vp}(\mathfrak{r},\upsilon)=\vp(\mathfrak{r},\upsilon)$, if $\mathfrak{r}\leq r_0=9R/10$ and
\begin{equation}\label{e.R^-1.ph}
	\tilde{\vp}(\mathfrak{r},\upsilon)=\sum_{i=1}^{k+1}c_i\vp\left(r_0-\frac{\mathfrak{r}-r_0}{i+1},\upsilon\right), \text{ if }\mathfrak{r}\geq r_0,
\end{equation}
where the constants $c_i$ are chosen in such a way that $\tilde{\vp}$ is a $C^{k,\tht}$ function. Further, \(\bar{\mathscr{R}} : C^0(\Sia\cap C^+_{R}(0)) \to C^0(\Sia\cap C^+_{R}(0))\) is defined by $\bar{\mathscr{R}}(\vp) = \bar{\vp}$, where $\bar{\vp}(\mathfrak{r},\upsilon)=\vp(\mathfrak{r},\upsilon)$, if $\mathfrak{r}\leq r_0=9R/10$ and
\begin{equation}\label{e.R.bar.ph}
	\bar{\vp}(\mathfrak{r},\upsilon)=\frac{1}{2}\vp(\mathfrak{r},\upsilon) + \frac{1}{2(k+1)}\sum_{i=1}^{k+1}\vp\left(r_0-\frac{\mathfrak{r}-r_0}{i+1},\upsilon\right), \text{ if }\mathfrak{r}\geq r_0.
\end{equation} 

\subsection{Existence and estimates of optimal layer shift with orthogonality condition}
In order to state  Proposition \ref{p.optimal_h_shift}, let us define
\begin{align}\label{notation wwaa}
\begin{split} 
&\mathtt{I}_{\al,y} =\{\be: \be \neq \al,\; |t_{\be}(y,0)|< 17 |\log \ve|\},\\
     &v_\alpha(y,z)=u(y,z)-\Hb_\alpha(y,z;0),\\   
	 & \om_{\al}^{(j)}(y)=\int_{-\infty}^{\infty}\Big(\sum_{i=0}^j|\va|'_i + |\va|_{i,\tht} \Big)(y,z)(1+|z|^3)e^{-|z|} dz, \\
 &\fwa^{(j)}(y)=\om^{(j)}_{\al}(y)+\max_{\be\in \mathtt{I}_{\al,y}}\sup\{\om^{(j)}_{\be}(\Pi_{\be}(y',z)):  y' \in \bar{B}^{\al}_1(y), |z|< 9 |\log \varepsilon|\}, \\
 & \mathrm{a}_{\al}(y) = |e^{-D_{\al}}|'_0(y),\\
	 & \faa(y)=\mathrm{a}_{\al}(y)+\max_{\be\in \mathtt{I}_{\al,y}}\sup\{\mathrm{a}_{\be}(\Pi_{\be}(y',z)):  y' \in \bar{B}^{\al}_1(y), |z|< 9 |\log \varepsilon|\}.
\end{split}
\end{align}

\begin{proposition}[Optimal layer shift]\label{p.optimal_h_shift}
	There exists $h_{\alpha}\in C^{k,\tht}(\Sigma_{\alpha}\cap C^{+}_{R}(0))$ such that for each index $\alpha$ and $y\in \Sigma_{\alpha}\cap C^{+}_{9R/10}(0)$, for $\phi(y, z; h)=u(y, z)-\Hb(y, z; h)-\bg(y,z;h)$ there holds
	\begin{equation}\label{e.ortho.H'_al}
	\int^{\infty}_{-\infty}\phi(y, z; h)\Hb_{\alpha}'(y, z; h_\alpha)dz=\int^{\infty}_{-\infty}[u(y, z)-\Hb(y, z; h)-\bg(y,z;h)]\Hb_{\alpha}'(y, z; h_\alpha)dz=0,
	\end{equation}
where $(y,z)$ are twisted Fermi coordinates with respect to $\Sia$. Moreover, the following estimates hold uniformly \wrt\ the index $\al$. As $\varepsilon\to 0$,
\begin{equation}\label{e.h=o1 stronger} 
\begin{aligned}
		& |h_{\al}|_{j} + |h_{\al}|_{j, \tht} \lesssim \ve + \om^{(j)}_{\al}+\mathrm{a}_{\al}^{0.9},\; 0\leq j \leq k, \text{ on } \Sigma_{\alpha}\cap C^{+}_{9R/10}(0),\\ 
	&|\partial_{y_1}h_{\al}|_{(j-1)} + |\partial_{y_1}h_{\al}|_{(j-1),\tht} \lesssim (\ve+\om^{(j)}_{\al} + \mathrm{a}_{\al}^{0.9})(\ve + \fwa^{(j)} + \faa^{0.9}),\; 1\leq j \leq k,  \text{ on }\del_0\Sigma_{\alpha}\cap C^{+}_{9R/10}(0),
\end{aligned}
\end{equation}
\end{proposition}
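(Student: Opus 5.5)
We will prove this with an implicit function argument on weighted Hölder spaces, following \cite[Proposition 4.1]{wang-weiadv}, with the twisted Fermi coordinates and the correction $\bg$ built into the map. For a vector of shifts $h=(h_\beta)$ with small $C^{k,\theta}$ norm we define, for each $\alpha$,
\[
\mathcal F_\alpha(h)(y)=\int_{-\infty}^{\infty}\bigl[u-\Hb(\cdot;h)-\bg(\cdot;h)\bigr](y,z)\,\Hb'_\alpha(y,z;h_\alpha)\,dz,
\]
for $y\in\Sigma_\alpha\cap C^+_R(0)$. We compose with the extension $\mathscr R^{-1}\circ\mathscr R$ so that the problem is posed on a fixed domain. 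We then look for a zero of $\mathcal F=(\mathcal F_\alpha)$.

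The key steps are as follows.

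\textbf{Step 1: the approximate zero and the linearisation.} The natural starting point is $h=0$. In twisted Fermi coordinates with respect to $\Sia$, write $u-\Hb(\cdot;0)-\bg(\cdot;0)=v_\alpha-\bg_\alpha-\sum_{\beta\neq\alpha}(\dots)$. Then:
\begin{itemize}
\item The $v_\alpha$ term is bounded by $\om^{(j)}_\alpha$ after pairing with $\Hb'$.
\item The $\bg_\alpha$ term vanishes by oddness.
\item The contributions of the other sheets are bounded by $\mathrm a_\alpha^{0.9}$, using Lemma \ref{l.interaction.int.2} with $\sigma=0.9$ and Lemma \ref{l.sum.of.exp}.
\item The $\bg_\beta$ terms are $O(\varepsilon)$ by \eqref{e.G.est}.
\end{itemize}
The derivative in $h_\alpha$ is $(-1)^\alpha\int\bigl(\bar\Hb'^2+\bar\Gb'\bar\Hb'\bigr)\,dz+\dots=(-1)^\alpha\sigma_0+O(\varepsilon)$. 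Here the cross term integrates to zero because $\bar\Gb'_\alpha$ is even and $\bar\Hb''$ is odd. The off-diagonal derivatives in $h_\beta$ are bounded by $e^{-0.9 D_\alpha}$ and are therefore small by Lemma \ref{l.D_al.infty}.

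\textbf{Step 2: contraction.} With these bounds, the linearisation is an isomorphism on $C^{k,\theta}$ (diagonally dominant). A contraction mapping in the norm $|\cdot|_j+|\cdot|_{j,\theta}$ produces $h$ with $\mathcal F(h)=0$ on $C^+_{9R/10}$, which is \eqref{e.ortho.H'_al}. Differentiating the identity $\mathcal F_\alpha(h)=0$ in $y$ up to order $k$ gives the first line of \eqref{e.h=o1 stronger}. For this we use:
\begin{itemize}
\item the commutator estimates and Lemma \ref{l.na.y_i} for the $y$-derivatives of the metric;
\item Lemma \ref{l.DPi.beta} for the dependence of $\Pi_\beta$;
\item the definitions of $\fwa$ and $\faa$, to capture the neighbouring sheets' quantities evaluated at the projected points.
\end{itemize}

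\textbf{Step 3: the boundary estimate (main obstacle).} The second line of \eqref{e.h=o1 stronger} is where we expect the real difficulty. It claims that $\partial_{y_1}h_\alpha$ is quadratically small on $\partial_0\Sigma_\alpha$. We would apply $\partial_{y_1}$ to $\mathcal F_\alpha(h)=0$ on $\partial_0\Sigma_\alpha$ and solve for $\partial_{y_1}h_\alpha$ using the invertible coefficient $(-1)^\alpha\sigma_0$. The remaining terms each need to be shown to be products of two small quantities.
\begin{itemize}
\item \emph{The $u$ term.} $\int\partial_{y_1}u\,\Hb'\,dz$ is handled by the Neumann condition together with Lemma \ref{l.del.y_1.t_be}. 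This gives $\partial_{x_1}u=0$, so $\partial_{y_1}u=O(\varepsilon|z|(1+|z|)|\nabla u|)$, and the factor $|\nabla u|$ is bounded by the $v_\alpha$, $\varepsilon$ and $\mathrm a$ quantities.
\item \emph{The $\bg_\alpha$ term.} $\partial_{y_1}\bg_\alpha$ equals $-w_\alpha\bar\Hb' z$ up to $\delta_\alpha$, by \eqref{Neumanny1 G0}. This is precisely the term designed to cancel the $O(\varepsilon)$ Neumann defect coming from $\partial_{y_1}\Hb_\alpha=(-1)^\alpha\bar\Hb'\,\partial_{y_1}t$, through the decomposition $J=wz+\tilde J$ of Lemma \ref{l.dz(nu)}. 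The leftover is $\tilde J=O(\varepsilon^2)$.
\item \emph{The neighbouring sheets.} These are controlled by Lemma \ref{l.del.y_1.t_be 2}, which gives $\varepsilon\cdot\faa^{0.9}$.
\end{itemize}
Every surviving term then carries a factor $(\varepsilon+\fwa+\faa^{0.9})$ multiplying $(\varepsilon+\om_\alpha+\mathrm a_\alpha^{0.9})$, the latter coming from $h_\alpha$ itself via Step 2. Higher tangential derivatives $\partial_{y_i}$, $i\geq2$, follow by the same computation, using \eqref{Neumanny1yi G0} and the commutator bound \eqref{e.exchange.neumannderivative0}.
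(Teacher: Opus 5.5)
There is a genuine gap in Step 3, in the $u$-term. This is exactly where the $O(\ve)$ Neumann defect lives.

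With $\partial_{x_1}u=0$, Lemma \ref{l.del.y_1.t_be} gives $|\partial_{y_1}u|\lesssim\ve|z|(1+|z|)|\nabla u|$. But $|\nabla u|$ is of order one on the layer, since it is close to $|\Hb_\alpha'|$. It is not controlled by $\ve$, $\om_\al$, $\mathrm a_\al$, so this bound only gives $O(\ve)$.

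Conversely, in the twisted coordinates of $\Sia$ one has $t_\alpha\equiv z$. Hence $\partial_{y_1}t_\alpha\equiv0$, and $\partial_{y_1}[\Hb_\alpha(\cdot;h_\alpha)]=-(-1)^\alpha\Hb_\alpha'\,\partial_{y_1}h_\alpha$ carries no defect at all. Nor does the $\Gb_\alpha$-term cancel anything after projection: differentiating $\int\Gb_\alpha(\cdot;h_\alpha)\Hb_\alpha'(\cdot;h_\alpha)\,dz\equiv0$ shows that its contribution is only $O(\ve|\partial_{y_1}h_\alpha|)$. So you have placed the defect in the wrong term and declared the term that actually contains it to be small.

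The correct bookkeeping is as follows. Since $\Hb_\alpha(y,z;0)=\bH((-1)^\alpha z)$ is independent of $y$, we have $\partial_{y_1}u=\partial_{y_1}v_\alpha$. The Neumann condition gives $\partial_{x_1}v_\alpha=-(-1)^\alpha\Hb_\alpha'(\cdot;0)J_\alpha$, so on $\del_0\Sia$
\[
\partial_{y_1}v_\alpha=\langle\nabla v_\alpha,\partial_{y_1}-\partial_{x_1}\rangle-(-1)^\alpha\Hb_\alpha'(\cdot;0)\bigl(w_\alpha z+\widetilde J_\alpha\bigr).
\]
Lemma \ref{l.del.y_1.t_be} has to be applied to $v_\alpha$, whose gradient is small. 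The $w_\alpha z$-term must then be removed separately. The paper does this pointwise by subtracting $\Gb_{\alpha,1}(\cdot;0)$, which is \eqref{e.del_nu.ffa.1}. After projection you can equivalently use parity, $\int s\,\bH'(s)\bH'(s-h)\,ds=O(|h|)$, which leaves $O(\ve^2+\ve\om_\al+\ve|h_\al|)$ and hence a product bound via the first line of \eqref{e.h=o1 stronger}.

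With this repair your overall route is viable, but it differs from the paper's. The paper never differentiates $F_\al(h)=0$ a posteriori. Instead it builds the weights $\ve+\om^{(j)}_\al+\mathrm a_\al^{0.9}$ into the norms of $\cx$ and $\cy$, together with the product weight $(\ve+\om^{(j)}_\al+\mathrm a_\al^{0.9})(\ve+\fwa^{(j)}+\faa^{0.9})$ for $\partial_{y_1}\mathfrak h$ on $\del_0\Sia$. It then proves, in those norms, that $DF(0)$ has a bounded right inverse, that $DF$ is Lipschitz at $0$ with a small constant, and that $\|F(0)\|_{\cy}\lesssim\La^{-1}$. The estimates \eqref{e.h=o1 stronger} are then just the statement $\|h\|_{\cx}\leq1$.

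Your unweighted contraction followed by pointwise bootstrapping is simpler. However, Step 3 then needs an ingredient you do not state. The neighbour terms contain $\partial_{y_1}(h_\beta\circ\Pi_\beta)$, not only $\partial_{y_1}t_\beta$, and Lemma \ref{l.del.y_1.t_be 2} controls only the latter. The a priori $o(1)$ bound on $h_\beta$ yields only $o(1)\,\mathrm a_\al^{0.9}$, which is not of product form. You must insert the first line of \eqref{e.h=o1 stronger} for $h_\beta$ at $\Pi_\beta(y,z)$; this is precisely where $\fwa$ and $\faa$ enter.

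A minor point in Step 1: $\int\overline{\Gb}_\alpha'\bH'\,dz$ has an even integrand and does not vanish by parity. It is $O(\ve)$, and in $\vartheta_\alpha$ it cancels against $\int\overline{\Gb}_\alpha\bH''\,dz$ by differentiating the orthogonality condition.
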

\begin{remark}\label{r.optimal_h_shift}
    In later application, we will fix $k=3$ and $j=2$ in Proposition \ref{p.optimal_h_shift}, so that $h_{\alpha}=o(1)\in C^{3,\tht}(\Sigma_{\alpha}\cap C^{+}_{R}(0))$. In this case, for ease of notation, we write (compare with \eqref{notation wwaa})
    \begin{equation}\label{m=m2, w=w2}
        \fwa=\fwa^{(2)}\qquad \text{and} \qquad \om_{\al}=\om_{\al}^{(2)}.
    \end{equation}
    In particular, we can simplify the estimate \eqref{e.h=o1 stronger} satisfied by $h_{\alpha}$ in Proposition \ref{p.optimal_h_shift} to  
 \begin{equation}\label{e.h=o1}
\begin{aligned}
	& \sum_{i=0}^2 |h_{\al}|_{i} + |h_{\al}|_{2,\tht}\lesssim \ve + \fwa+\faa^{0.9}\text{ on } \Sigma_{\alpha}\cap C^{+}_{9R/10}(0),\\ 
	&\sum_{i=0}^1|\partial_{y_1}h_{\al}|_{i}+|\partial_{y_1}h_{\al}|_{1,\tht}\lesssim \ve^2+\fwa^2+\faa^{1.8}    \text{ on }\del_0\Sigma_{\alpha}\cap C^{+}_{9R/10}(0).
\end{aligned}
\end{equation}
\end{remark}

\begin{proof}[Proof of Proposition \ref{p.optimal_h_shift}]
To this end, we shall modify the proof in \cite[Proof of Proposition 4.1]{wang-weiadv} as follows. For each $k$ and $\theta\in (0, 1)$ and $\alpha$, we define new norms $\|\cdot\|_{*}$ and $\|\cdot\|_{\star}$ on $C^{k, \tht}(\Sia\cap C^+_{9R/10}(0))$ and $C^{k, \tht}(\Sia\cap C^+_{R}(0))$. We introduce $\|\mathfrak{h}\|_{*}:=\sum_{j=0}^k [\mathfrak{h}]_{*;j} + \sum_{j=1}^k [\mathfrak{h}]'_{*;j}$, where
\begin{align}
	& [\mathfrak{h}]_{*;j}=\La^{-1} \sup_{\Sia\cap C^{+}_{9R/10}(0)}  \frac{|\mathfrak{h}|_{j} + |\mathfrak{h}|_{j,\tht} }{\ve+\om^{(j)}_{\al}+\mathrm{a}_{\al}^{0.9}},\\
	& [\mathfrak{h}]'_{*;j}=\La^{-2}\sup_{\del_0\Sia\cap C^{+}_{9R/10}(0)} \frac{|\mathfrak{h}_{\nu}|_{(j-1)}+|\mathfrak{h}_{\nu}|_{(j-1),\tht}}{(\ve +\om^{(j)}_{\al}+\mathrm{a}_{\al}^{0.9}) (\ve+\fwa^{(j)}+\faa^{0.9})},
\end{align}
$\mathfrak{h}_{\nu}$ denotes the restriction of $\del_{y_1}\mathfrak{h}$ on $\del_0\Sia$, $\La > 1$ is a constant, which will be specified later. Let us denote \( \bar{\mathscr{R}}(\om^{(j)}_{\al})\), \(\bar{\mathscr{R}}(\mathrm{a}_{\al})\), \(\bar{\mathscr{R}}(\mathfrak{w}_{\al})\), and \(\bar{\mathscr{R}}(\mathfrak{a}_{\al})\) by \(\bar{\om}^{(j)}_{\al} \), \(\bar{\mathrm{a}}_{\al} \), \(\bar{\mathfrak{w}}_{\al} \), and \(\bar{\mathfrak{a}}_{\al} \), where restriction map $\bar{\mathscr{R}}$ is defined in \eqref{restriction map}.
$\|\mathfrak{h}\|_{\star}:=\sum_{j=0}^k[\mathfrak{h}]_{\star;j} + \sum_{j=1}^k [\mathfrak{h}]'_{\star; j}$, where
\begin{align}
	& [\mathfrak{h}]_{\star; j}=\La^{-1} \sup_{\Sia\cap C^{+}_{R}(0)}  \frac{|\mathfrak{h}|_{j} + |\mathfrak{h}|_{j,\tht} }{\ve+\bar{\om}^{(j)}_{\al}+\bar{\mathrm{a}}_{\al}^{0.9}}, \\
	& [\mathfrak{h}]'_{\star; j}=\La^{-2} \sup_{\del_0\Sia\cap C^{+}_{R}(0)} \frac{|\mathfrak{h}_{\nu}|_{(j-1)}+|\mathfrak{h}_{\nu}|_{(j-1),\tht}}{(\ve +\bar{\om}^{(j)}_{\al}+\bar{\mathrm{a}}_{\al}^{0.9}) (\ve+\bar{\mathfrak{w}}_{\al}^{(j)} + \bar{\mathfrak{a}}_{\al}^{0.9})}.
\end{align}

Following \cite[Proof of Proposition 4.1]{wang-weiadv}, we define 
$$\cx_{\al}=(C^{k,\tht}(\Sia\cap C^+_{R}(0)),\|\cdot\|_{\star}),\quad \cy_{\al}=(C^{k,\tht}(\Sia\cap C^+_{9R/10}(0)), \|\cdot\|_{*}),$$
$\cx=\bigoplus_{\al}\cx_{\al}$, $\cy=\bigoplus_{\al}\cy_{\al}$. The norm on $\cx$ is defined by $\|\mathbf{x}\|=\sup_{\al}\|\mathbf{x_{\al}}\|_{\star},$ where $\mathbf{x_{\al}}$ is the $\cx_{\al}$ component of $\mathbf{x}$. The norm on $\cy$ is defined in a similar way.

Recall the definition of $g_{\beta}(y, z;h_{\beta})$ and $g(y, z; h)$  from \eqref{def_gk=H+G} and \eqref{def_g=H+G}. Then we define $F:\cx\to\cy$ by
\begin{align}
    F_{\al}(h)(y)&=\int^{\infty}_{-\infty}[u(y, z)-g(y,z;h)]\Hb_{\alpha}'(y, z; h_{\al})dz\\
    &=\int^{\infty}_{-\infty}[u(y, z)-\Hb(y, z; h)-\bg(y,z;h)]\Hb_{\alpha}'(y, z; h_{\al})dz,
\end{align}
where $F_{\al}(h)$ denotes the $\cy_{\al}$ component of $F(h)$ and $(y,z)$ denotes the twisted Fermi coordinates with respect to $\Sia$. We restrict $F$ on the closed unit ball in $\cx$. We note that, for $y\in \Sia$ and $\mathsf{y}\in \bar{B}_1^{\al}(y)$,
\((g_{\be}(\mathsf{y},z;h_{\be})+\operatorname{sgn}(\be - \al)(-1)^{\be}) \Hb_{\alpha}'(\mathsf{y}, z; h_{\al}) \neq 0\), $\be \neq \al$, implies \(|z|< 9 |\log \varepsilon|\) and \(\be \in \mathtt{I}_{\al,y}\);  hence

\begin{equation}
	F_{\al}(h)(\mathtt{y})=\int^{\infty}_{-\infty}\big[u(\mathtt{y}, z)-g_{\al}(\mathtt{y}, z; h_{\al})-\sum_{\be\in \mathtt{I}_{\al,y}} (g_{\be}(\mathtt{y},z;h_{\be})+\operatorname{sgn}(\be - \al)(-1)^{\be})\big] \Hb_{\alpha}'(\mathtt{y}, z; h_{\al})dz.
\end{equation}

Notice that $F$ is a $C^1$ map, and its derivative $(DF_{\al}(h)\xi)(y)$ is given by
		\begin{align}\label{e.DF_alpha}
		 & (-1)^{\al}\xi_{\al}(y)\int_{-\infty}^{\infty}\left[\Hb'_{\alpha}(y, z; h_{\al})^2+\bga'(y, z; h_{\al})\Hb'_{\alpha}(y, z; h_{\al})\right]dz\\
		 & - (-1)^{\al}\xi_{\al}(y)\int_{-\infty}^{\infty}\big[u(y, z)-\Hb(y, z; h)-\bg(y,z;h)\big]\Hb''_{\alpha}(y, z; h_{\al})dz\\
		& +\sum_{\be\in \mathtt{I}_{\al,y}}(-1)^{\beta}\int_{-\infty}^{\infty}\xi_{\beta}(\Pi_{\beta}(y,z))\bhb'(y, z; h_{\beta})\bha'(y, z; h_{\al})dz\\
		& +\sum_{\be\in \mathtt{I}_{\al,y}}(-1)^{\beta} \int_{-\infty}^{\infty}\xi_{\beta}(\Pi_{\beta}(y,z))\bg'_{\beta}(y, z; h_{\beta})\bha'(y, z; h_{\al})dz.
	\end{align}
	Thus, there exist \cts\ linear maps $D_{\be}F_{\al}(h):\cx_{\be}\to \cy_{\al}$ \st
	\[DF_{\al}(h)\xi=D_{\alpha}F_{\alpha}(h)\xi_{\alpha}+\sum_{\be\in \mathtt{I}_{\al,y}}D_{\be}F_{\al}(h)\xi_{\be}.\]
	Let $\mathsf{T} = DF(0)$, $\mathsf{T}_{\al\al}=D_{\al}F_{\al}(0)$, $\mathsf{T}_{\be\al}=D_{\be}F_{\al}(0)$ for $\beta\not=\alpha$. Then, $(\mathsf{T}_{\al\al}\xi_{\al})(y)=\vth_{\al}(y)\xi_{\al}(y)$, where 
	\begin{align}\label{e.expr.vth}
		\vth_{\al}(y) = & (-1)^{\al}\int_{-\infty}^{\infty} \left[\Hb'_{\alpha}(y, z; 0)^2+\bga'(y, z;0)\Hb'_{\alpha}(y, z; 0)\right]dz\\
		& -(-1)^{\al}\int_{-\infty}^{\infty}\big[u(y, z)-\Hb_{\al}(y, z; 0)-\bg_{\al}(y,z;0)\big]\Hb''_{\alpha}(y, z; 0)dz\\
		&+(-1)^{\al}\sum_{\be\in \mathtt{I}_{\al,y}}\int_{-\infty}^{\infty}\big[\Hb_{\be}(y, z; 0)+\bg_{\be}(y,z;0)\big]\Hb''_{\alpha}(y, z; 0)dz.
	\end{align}

\begin{claim}\label{c.DF.F}	
We have the following properties.
\begin{enumerate}[(i)]
	\item \Tes\ $DF(0)^{-1}$, which is a right inverse of $DF(0)$, \st\  the operator norm of $DF(0)^{-1}$ satisfies  $\|DF(0)^{-1}\|\leq C(\si_0)$, where $\sigma_0$ is the energy of the one-dimensional solution, as in \eqref{e.sigma0}.
	\item $h \mapsto DF(h)$ is Lipschitz \cts\ at $h=0$ and its Lipschitz constant at $0$ is $ \La c_{\varepsilon}$, where $c_{\varepsilon}=o(1)$ as $\ve\to 0$.
	\item $\|F(0)\|_{\cy}\leq C \La^{-1}$ as $\ve \to 0$.	 
\end{enumerate}
\end{claim}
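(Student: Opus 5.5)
We prove the three items in turn, following \cite[Proof of Proposition 4.1]{wang-weiadv}. The new feature is the boundary part $[\cdot]'_{*;j}$ of the norms, which controls $\partial_{y_1}\mathfrak h$ on $\partial_0\Sigma_\alpha$.

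\textbf{Item (i).} We split $\mathsf T=\mathsf T_{\mathrm{diag}}+\mathsf T_{\mathrm{off}}$, where $\mathsf T_{\mathrm{diag}}\xi=(\vartheta_\alpha\xi_\alpha)_\alpha$ and $\mathsf T_{\mathrm{off}}$ collects the $\mathsf T_{\beta\alpha}$ with $\beta\in\mathtt I_{\alpha,y}$.

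First we expand $\vartheta_\alpha$ using \eqref{e.expr.vth}.
\begin{itemize}
\item The leading term is $(-1)^\alpha\sigma_0+O(\varepsilon^3)$ by \eqref{H'2integral}.
\item The $\mathbb G'_\alpha\mathbb H'_\alpha$ term is $O(\varepsilon)$ by \eqref{e.G.est}. In fact it is zero to leading order, because $\overline{\mathbb G}_\alpha$ is odd and $\overline{\mathbb H}'$ is even.
\item The $v_\alpha$ term is bounded by $\omega_\alpha$.
\item The interaction terms are bounded by $\mathrm a_\alpha^{0.9}$, using Lemma \ref{l.interaction.int.2} and \eqref{e.interaction.sigma.correct} with $\sigma=0.9$.
\end{itemize}
This gives $\vartheta_\alpha=(-1)^\alpha\sigma_0+O(\varepsilon+\omega_\alpha+\mathrm a_\alpha^{0.9})$, with the same bound for its $C^{j,\theta}$ norms.

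We define $\mathsf T_{\mathrm{diag}}^{-1}\eta=(\mathscr R^{-1}(\vartheta_\alpha^{-1}\eta_\alpha))_\alpha$. To bound it from $\mathcal Y$ to $\mathcal X$, we use the product and quotient rules \eqref{e.|phi.psi|_tht} in the interior. On $\partial_0\Sigma_\alpha$ we write
\[
\partial_{y_1}(\vartheta_\alpha^{-1}\eta_\alpha)=\vartheta_\alpha^{-1}\partial_{y_1}\eta_\alpha-\vartheta_\alpha^{-2}(\partial_{y_1}\vartheta_\alpha)\eta_\alpha .
\]
The first term is controlled by $[\eta]'_{*;j}$. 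For the second term we need a boundary estimate
\[
|\partial_{y_1}\vartheta_\alpha|\lesssim \varepsilon+\mathfrak w_\alpha+\mathfrak a_\alpha^{0.9}\quad\text{on }\partial_0\Sigma_\alpha .
\]
We obtain it by differentiating under the integral. We use the Neumann conditions $\partial_{x_1}u=0$, \eqref{Neumanny1 G0}, Lemma \ref{l.del.y_1.t_be}, and Lemma \ref{l.del.y_1.t_be 2} for $\partial_{y_1}t_\beta$. The leading part of $\partial_{y_1}\mathbb G_\alpha$ cancels by oddness after pairing with $\mathbb H''_\alpha$: the product $w_\alpha\bar{\mathbb H}'(z)z\,\bar{\mathbb H}''(z)$ is even. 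So only a term of size $O(\varepsilon)$ survives. The product $(\varepsilon+\omega+\mathrm a^{0.9})(\varepsilon+\mathfrak w+\mathfrak a^{0.9})$ is exactly the weight in $[\cdot]'$. The factor $\Lambda^{-2}$ against $\Lambda^{-1}$ absorbs the constant once $\Lambda$ is large, so $\|\mathsf T_{\mathrm{diag}}^{-1}\|\le C(\sigma_0)$.

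For the off-diagonal part, each $\mathsf T_{\beta\alpha}$ has a kernel $\overline{\mathbb H}'_\beta\overline{\mathbb H}'_\alpha$. By Lemma \ref{l.interaction.int} its integral is $\lesssim(1+|t_\beta|)e^{-|t_\beta(y,0)|}$, and by Lemma \ref{l.D_al.infty} this is $o(1)$. Lemma \ref{l.DPi.beta} handles the composition with $\Pi_\beta$, and Lemma \ref{l.sum.of.exp} makes the sum over $\beta$ finite. On the boundary the $y_1$-derivative of $\xi_\beta\circ\Pi_\beta$ is controlled by the boundary norm of $\xi_\beta$ together with Lemma \ref{l.del.y_1.t_be 2}. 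The remaining ratio of weights $\mathfrak w_\alpha,\mathfrak a_\alpha$ against $\omega_\beta,\mathrm a_\beta$ is exactly why the $\sup$ over $\beta\in\mathtt I_{\alpha,y}$ appears in \eqref{notation wwaa}. Hence $\|\mathsf T_{\mathrm{off}}\mathsf T_{\mathrm{diag}}^{-1}\|=o(1)$, and a Neumann series gives the right inverse $\mathsf T_{\mathrm{diag}}^{-1}(I+\mathsf T_{\mathrm{off}}\mathsf T_{\mathrm{diag}}^{-1})^{-1}$.

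\textbf{Item (ii).} We compare \eqref{e.DF_alpha} at $h$ and at $0$. Every coefficient depends smoothly on $h$ through $\overline{\mathbb H}^{(k)}$, $\overline{\mathbb G}^{(k)}$ and $\Pi_\beta$. For $\|h\|\le1$, its size in $C^{k,\theta}$ is at most $\Lambda(\varepsilon+\omega+\mathrm a^{0.9})=o(1)$, because $\omega_\alpha\to0$ by Lemma \ref{l.1D.soln} and $\mathrm a_\alpha\to0$ by Lemma \ref{l.D_al.infty}. A mean value argument then gives a Lipschitz constant of the form $\Lambda c_\varepsilon$. For the boundary norm we again use the Neumann identities to differentiate in $y_1$.

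\textbf{Item (iii).} We have $F_\alpha(0)=\int (v_\alpha-\mathbb G_\alpha-\sum_\beta\cdots)\mathbb H'_\alpha\,dz$. The three contributions are bounded as follows:
\begin{itemize}
\item the $\mathbb G_\alpha$ term integrates to zero by the orthogonality in \eqref{G-equation};
\item the $v_\alpha$ term is bounded by $\omega^{(j)}_\alpha$;
\item the interaction terms are bounded by $\mathrm a_\alpha^{0.9}$.
\end{itemize}
Together these give $[F(0)]_{*;j}\lesssim\Lambda^{-1}$. For the boundary seminorm, we show that $\partial_{y_1}F_\alpha(0)$ is quadratically small. Differentiating, $\partial_{y_1}v_\alpha$ is of size $\varepsilon\,\omega_\alpha$ by Lemma \ref{l.del.y_1.t_be}. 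The term $\partial_{y_1}\mathbb G_\alpha\cdot\mathbb H'_\alpha$ integrates to zero by oddness. The interaction terms contribute $\mathrm a_\alpha^{0.9}(\varepsilon+\mathfrak a_\alpha^{0.9})$ via Lemma \ref{l.del.y_1.t_be 2}.

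\textbf{Main obstacle.} The main obstacle is this quadratic boundary bound. The cross term $\partial_{y_1}v_\alpha$ must produce the factor $(\varepsilon+\mathfrak w_\alpha)$ and not merely $\omega_\alpha$. Our first attempt is to write $\partial_{y_1}v_\alpha=(\partial_{y_1}-\partial_{x_1})u-\partial_{y_1}\mathbb H_\alpha$ and use $J_\alpha=w_\alpha z+\tilde J_\alpha$ from Lemma \ref{l.dz(nu)}. The $w_\alpha z$ piece then cancels against the $\mathbb G$ Neumann datum in \eqref{Neumanny1 G0}. The remainder is $\varepsilon^2$ times polynomial weights, which is integrable against $e^{-|z|}$.
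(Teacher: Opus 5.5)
Your proposal follows the paper's proof in all three items. The diagonal right inverse is multiplication by $\vartheta_\alpha^{-1}$ combined with the extension operator. The off-diagonal blocks are small and are absorbed by a contraction; your Neumann series is the same argument. Item (ii) is a Taylor comparison of \eqref{e.DF_alpha} at $h$ and at $0$. The boundary part of (iii) is the identity \eqref{e.del_nu.ffa.1}: there the $w_\alpha z$ part of $J_\alpha$ cancels the Neumann datum of $\overline{\mathbb G}_\alpha$, and only $\widetilde J_\alpha$, $\langle\nabla v_\alpha,\partial_{y_1}-\partial_{x_1}\rangle$ and $\delta_\alpha$ remain.

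A few side remarks need correcting, though none of them breaks the argument.
\begin{enumerate}
\item The integrands $\partial_z\overline{\mathbb G}_\alpha\,\overline{\mathbb H}'$ and $z\,\overline{\mathbb H}'\,\overline{\mathbb H}''$ are even, so nothing vanishes by parity there. What you actually use is only their $O(\varepsilon)$ size.
\item $\partial_{y_1}v_\alpha$ is not pointwise $O(\varepsilon\,\omega_\alpha)$. It contains the term $-(-1)^\alpha\mathbb H'_\alpha J_\alpha=O(\varepsilon)$, and this is exactly why the decomposition in your last paragraph is needed.
\item Your separate boundary bound on $\partial_{y_1}\vartheta_\alpha$ is superfluous. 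The interior bound $|\vartheta_\alpha|_1\lesssim\varepsilon+\omega^{(1)}_\alpha+\mathrm a_\alpha^{0.9}$ already fits the boundary weight, since $\omega_\alpha\le\mathfrak w_\alpha$ and $\mathrm a_\alpha\le\mathfrak a_\alpha$.
\item For the off-diagonal blocks, the kernel must be bounded by $\mathrm a_\alpha^{0.95}$, not merely by $o(1)$. The reason is that $\xi_\beta\circ\Pi_\beta$ is only controlled by the larger weight $\varepsilon+\mathfrak w_\alpha+\mathfrak a_\alpha^{0.9}$, so the gain has to come from the factor $\mathrm a_\alpha^{0.9}$ in the target weight. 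Your bound $(1+|t_\beta(y,0)|)e^{-|t_\beta(y,0)|}$ already gives this once it is compared with $e^{-D_\alpha}$.
\end{enumerate}
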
	

First, let us assume Claim \ref{c.DF.F} and let us finish the proof of Proposition \ref{p.optimal_h_shift}. We will show  the existence of $h\in \cx$ satisfying \eqref{e.ortho.H'_al} and estimates \eqref{e.h=o1 stronger} by the contraction mapping principle, as in \cite[Theorem 1.2.4]{chang2005methodsnonlinear}.

Note that it is enough to show that \tes\ $h\in \cx$ \st\ $\|h\|_{\cx}\leq 1$ and $F(h)=0$.  Let $\mathsf{R}:\overline{B^{\cx}_1(0)}\to \cy$ be defined by $\mathsf{R}(h)=F(h)-DF(0)h$. $F(h)=0$ if and only if $DF(0)h=-\mathsf{R}(h)$. Let $\bar{\mathsf{R}}(h):=-DF(0)^{-1}\mathsf{R}(h)$. If $h$ is a fixed point of $\bar{\mathsf{R}}$, then $F(h)=0$. 

We note that
\begin{equation}
	\mathsf{R}(h)-\mathsf{R}(\tilde{h})=\int_{0}^{1} \left(DF(\ga(t))-DF(0)\right)(h-\tilde{h})dt,\;\text{ where }\ga(t) = th+(1-t)\tilde{h}.
\end{equation}
\hn, denoting the Lipschitz constant of $DF$ at $0$ by $\La c_{\varepsilon}$ ($c_{\varepsilon}=o(1)$ as \(\ve\to 0\)) and using (i) and (ii) of Claim \ref{c.DF.F}, 
\begin{equation}\label{e.R.bar.1}
	\|\bar{\mathsf{R}}(h)-\bar{\mathsf{R}}(\tilde{h})\| \leq C(\si_0)\La c_\varepsilon \|h-\tilde{h}\|,
\end{equation}
which implies
\begin{equation}\label{e.R.bar.2}
	\|\bar{\mathsf{R}}(h)\| \leq C(\si_0) \La c_\varepsilon \|h\|+\|\bar{\mathsf{R}}(0)\|.
\end{equation}
By  (i) and (iii) of Claim \ref{c.DF.F}, we  can fix $\La$ sufficiently large so that 
\begin{equation}
    \|\bar{\mathsf{R}}(0)\|_{\cx}\leq  1/2.
\end{equation}
Then, \eqref{e.R.bar.1} and \eqref{e.R.bar.2} imply that if $\ve$ is sufficiently small, $\bar{\mathsf{R}}:\overline{B^{\cx}_1(0)}\to\overline{B^{\cx}_1(0)}$ is a contraction. Hence, $\bar{\mathsf{R}}$ has a fixed point $h$ in $\overline{B^{\cx}_1(0)}$ satisfying estimates \eqref{e.h=o1 stronger} and $F(h)=0$. This finishes the proof of Proposition \ref{p.optimal_h_shift}.

Let us now prove Claim \ref{c.DF.F}.


\noindent\textit{\textbf{Proof of (i).}} 
As a preliminary to the construction of a right inverse of $DF(0)$ satisfying the required operator norm estimate, we shall establish the following two facts:
\begin{enumerate}[(a):]
    \item There exists $\mathsf{T}_{\al\al}^{-1}:\cy_{\al}\to\cx_{\al}$, which is a right inverse of $\mathsf{T}_{\al\al}$, \st\ the operator norm of $\mathsf{T}_{\al\al}^{-1}$ satisfies $\|\mathsf{T}_{\al\al}^{-1}\|\leq C(\si_0)$.
    \item The operators $\mathsf{T}_{\be\al}:\cx_{\beta}\to\cy_{\al}$ for $\beta\neq \alpha$ satisfy $\|\sum_{\be \in \mathtt{I}_{\al,y} }\mathsf{T}_{\be\al}\xi_{\be}\|_{*; 3, \tht}=o(1)\|\xi\|_{\cx}$ as $\varepsilon \to 0$.
\end{enumerate}

\noindent\textit{\textbf{Proof of (a).}}  Let $\mathscr{R}:C^{3, \theta}(\Sia\cap C^+_{9R/10}(0))\to C^{3, \theta}(\Sia\cap C^+_{7R/8}(0))$ denote the restriction map. By \cite[Lemma 6.37]{gilbarg1977elliptic},  \tes\ an extension operator $\mathscr{R}^{-1}$, which is a continuous right inverse of $\mathscr{R}$. 

By Lemmas \ref{l.1D.soln}, \ref{l.interaction.int}, \ref{l.D_al.infty}, \ref{l.sum.of.exp} and \eqref{e.G.est}, 
\begin{equation}\label{e.vth.lower.bd}
|\vth_{\al}(y)|\geq \frac{\si_0}{2}.
\end{equation}
Then we can define the right inverse of $\mathsf{T}_{\al\al}$ by $(\mathsf{T}_{\al\al}^{-1}\et_{\al})(y):=\vth_{\al}(y)^{-1}\tilde{\et}_{\al}(y)$ where $\tilde{\et}_{\al}=\mathscr{R}^{-1}{\et_{\al}}$.  We need to show that $\|\vth_{\al}^{-1}\tilde{\et}_{\al}\|_{\star}\leq C(\si_0) \|\et_{\al}\|_{*}$. Again by Lemmas \ref{l.1D.soln}, \ref{l.interaction.int}, \ref{l.D_al.infty}, \ref{l.sum.of.exp} and \eqref{e.G.est}, we also have
\begin{equation}\label{e.vth.bd}
 |\vth_{\al} - (-1)^{\alpha}\sigma_0|'_0 + |\vth_{\al}|_{0, \tht} \lesssim \ve + \om_{\al}^{(0)} + \mathrm{a}_{\al}^{0.9}.
\end{equation}
\hn, by \eqref{e.|phi.psi|_tht},
\begin{equation}\label{e.vth^-1.tet.0.tht}
	|\vthai \teta|_0 + |\vthai \teta|_{0, \tht} \leq C(\si_0) (|\teta|_0 + |\teta|_{0, \tht}).
\end{equation}
Differentiating \eqref{e.expr.vth} \wrt\ $y_i$, we obtain
\begin{align}
	\del_{y_i}\vth_{\al}(y)= 
	& (-1)^{\al}\int_{-\infty}^{\infty} \bg_{\al,i}'(y, z;0)\Hb'_{\alpha}(y, z; 0)dz \\
	& -(-1)^{\al}\int_{-\infty}^{\infty}\big[\del_{y_i}v_{\al}(y,z)-\bg_{\al,i}(y,z;0)\big]\Hb''_{\alpha}(y, z; 0)dz\\ \label{e.del_y_i.vth}
	& +(-1)^{\al}\sum_{\be\in \mathtt{I}_{\al,y}}(-1)^{\be}\int_{-\infty}^{\infty}\big[\Hb'_{\be}(y, z;0) +\bg'_{\be}(y,z;0)\big] \del_{y_i}t_{\be}(y,z)\Hb''_{\alpha}(y, z; 0)dz \\ 
	& +(-1)^{\al}\sum_{\be\in \mathtt{I}_{\al,y}}\sum_{m=1}^{n} \int_{-\infty}^{\infty} \bg_{\be,m}(y,z;0) \del_{y_i}\Pi^m_{\be}(y,z) \Hb''_{\alpha}(y, z; 0)dz,
\end{align}
which implies
\begin{equation}\label{e.del.vth.est}
	|\vth_{\al}|'_1 + |\vth_{\al}|_{1, \tht}\lesssim \ve + \om_{\al}^{(1)}+\mathrm{a}_{\al}^{0.9}.
\end{equation} 
Since
\begin{equation}\label{e.del_y_i.vthai.teta}
	\del_{y_i} (\vthai \teta) = \vthai \del_{y_i}\teta - \vth_{\al}^{-2}\del_{y_i} \vth_{\al} \teta, 
\end{equation}
by \eqref{e.vth.lower.bd}, \eqref{e.vth.bd}, \eqref{e.del.vth.est}, and \eqref{e.|phi.psi|_tht},
\begin{equation}\label{e.vth^-1.tet.1.tht}
	|\vthai \teta|_1 + |\vthai \teta|_{1, \tht} \leq C(\si_0) (|\teta|_0 + |\teta|_{0, \tht} + |\teta|_1 + |\teta|_{1, \tht}).
\end{equation}
Using \eqref{e.del_y_i.vthai.teta} and \eqref{e.|phi.psi|_tht} we obtain
\begin{multline}\label{e.vth^-1.tet.nu.0.tht}
	 |(\vthai \teta)_{\nu}|_0 + |(\vthai \teta)_{\nu}|_{0, \tht} \leq C(\si_0)\big(|(\teta)_{\nu}|_{0, \tht} + |(\teta)_{\nu}|_{0} \\
	  + |\teta|_{0, \tht} |\vth_{\al}|'_1 + |\teta|_{0} |\vth_{\al}|_{1, \tht} +  |\teta|_{0} |\vth_{\al}|_{1} \big).
\end{multline}
\eqref{e.vth^-1.tet.0.tht}, \eqref{e.vth^-1.tet.1.tht}, \eqref{e.vth^-1.tet.nu.0.tht}, \eqref{e.R^-1.ph}, \eqref{e.R.bar.ph} imply
\begin{equation}
	[\vthai \teta]_{\star; 0} + [\vthai \teta]_{\star; 1} + [\vthai \teta]'_{\star; 1} \le C(\si_0) ([\et_{\al}]_{*; 0} + [\et_{\al}]_{*; 1} + [\et_{\al}]'_{*; 1} )
\end{equation}
Further differentiating \eqref{e.del_y_i.vth}, one can show that 
\begin{equation}\label{e.del_y_i.vth.est}
	|\vth_{\al}|'_{j} + |\vth_{\al}|_{j, \tht}\lesssim \ve + \om_{\al}^{(j)}+\mathrm{a}_{\al}^{0.9},\quad 1 \le j \le k.
\end{equation}
\eqref{e.|phi.psi|_tht}, \eqref{e.R^-1.ph}, \eqref{e.R.bar.ph}, \eqref{e.vth.lower.bd}, \eqref{e.vth.bd} and \eqref{e.del_y_i.vth.est} imply 
\begin{equation}
\|\vth_{\al}^{-1}\tilde{\et}_{\al}\|_{\star}\leq C(\si_0) \|\et_{\al}\|_{*},
\end{equation}
which is the desired estimate for $\|\mathsf{T}^{-1}_{\al\al}\|$.

\noindent\textit{\textbf{Proof of (b).}} We recall that
\begin{equation}\label{e.T_{be.al}.1}
	(\mathsf{T}_{\be\al}\xi_{\be})(y)= (-1)^{\beta}\int_{-\infty}^{\infty}\xi_{\beta}(\Pi_{\beta}(y,z))(\bhb'(y, z; 0)+\bg'_{\beta}(y, z; 0))\bha'(y, z; 0)dz.
\end{equation}
If \(y' \in \bar{B}^{\al}_1(y)\), then 
\begin{align}\label{e.holder.chain.rule}
	& \frac{|\hat{\xi}_{\be}(\Pi_{\be}(y,z)) - \hat{\xi}_{\be}(\Pi_{\be}(y',z))|}{|y - y'|^{\tht}} \\
	& \le
		\left[|\hat{\xi}_{\be}|_{0, \tht}(\Pi_{\be}(y,z)) + |\hat{\xi}_{\be}(\Pi_{\be}(y,z))| + |\hat{\xi}_{\be}(\Pi_{\be}(y',z))| \right]\|D \Pi_{\be}\|^{\tht},
\end{align}
for any \(\hat{\xi}_{\be} \in C^{0, \tht}(\Sib)\). \hn\ \eqref{e.G.est}, \eqref{e.|phi.psi|_tht}, and Lemmas \ref{l.interaction.int}, \ref{l.sum.of.exp} imply 
\begin{equation}
	|\la|_0 + |\la|_{0, \tht} \lesssim (\ve + \mathfrak{w}_{\al}^{(0)} + \mathfrak{a}_{\al}) \mathrm{a}_{\al}^{0.95} \max_{\be \neq \al} [\xi_{\be}]_{\star;0}, \text{ where }\la = \sum_{\be\neq \al }\mathsf{T}_{\be\al}\xi_{\be}.
\end{equation}
Differentiating \eqref{e.T_{be.al}.1} with respect to \(y_i\), we obtain
\begin{align}\label{e.T_{be.al}.2}
	\del_{y_i}(\mathsf{T}_{\be\al}\xi_{\be}) = & (-1)^{\beta}\sum_{m=1}^n\int_{-\infty}^{\infty} \xi_{\beta,m}(\Pi_{\beta}) \del_{y_i}\Pi_{\beta}^m(\bhb'(-; 0) +\bg'_{\beta}(-; 0))\bha'(-; 0)dz \\
	& + \int_{-\infty}^{\infty}\xi_{\beta}(\Pi_{\beta})(\bhb''(-; 0)+\bg''_{\beta}(-; 0)) \del_{y_i}t_{\be}   \bha'(-; 0)dz\\
	& + (-1)^{\beta}\sum_{m=1}^n\int_{-\infty}^{\infty} \xi_{\beta}(\Pi_{\beta})\bg'_{\beta,m}(-; 0) \del_{y_i}\Pi_{\beta}^m \bha'(-; 0)dz.
\end{align}
\hn\ \eqref{e.holder.chain.rule}, \eqref{e.G.est}, \eqref{e.|phi.psi|_tht}, and Lemmas \ref{l.interaction.int}, \ref{l.sum.of.exp} imply 
\begin{equation}
	|\la|_{1} + |\la|_{1, \tht} + |\la_{\nu}|_{0} + |\la_{\nu}|_{0, \tht} \lesssim (\ve + \mathfrak{w}_{\al}^{(1)} + \mathfrak{a}_{\al}) \mathrm{a}_{\al}^{0.95} \max_{\be \neq \al} \left([\xi_{\be}]_{\star;0} + [\xi_{\be}]_{\star;1}\right).
\end{equation}
Differentiating \eqref{e.T_{be.al}.2}, one can show that
\begin{equation}
	|\la|_{j} + |\la|_{j, \tht} + |\la_{\nu}|_{j-1} + |\la_{\nu}|_{(j-1), \tht} \lesssim (\ve + \mathfrak{w}_{\al}^{(j)} + \mathfrak{a}_{\al}) \mathrm{a}_{\al}^{0.95} \max_{\be \neq \al} \left(\sum_{i=0}^j [\xi_{\be}]_{\star;i}\right)\quad \forall 1 \le j \le k.
\end{equation}

Let us now use (a) and (b) to construct the desired right inverse of $DF(0)$.
Given $\et\in \cy$, as in \cite[page 17] {wang-weiadv} we define $S_{\et}:\cx\to \cx$ by
\[S_{\et}(\xi)_{\al}=\mathsf{T}_{\al\al}^{-1}\et_{\al}-\sum_{\be\in \mathtt{I}_{\al,y} }\mathsf{T}_{\al\al}^{-1}\mathsf{T}_{\be\al}\xi_{\be}. \]
As a consequence of (a) and (b), the map $S_{\et}$ is a contraction for $\varepsilon>0$ small enough. By the contraction mapping principle, $S_{\et}$ has a unique fixed point, which we denote by $\tilde{S}(\et)$ and 
$$\big\|\tilde{S}(\et)\big\|_{\cx}=\lim_{k\ra \infty}\big\|S^k_{\et}(0)\big\|_{\cx}\leq C(\si_0)\|\et\|_{\cy}.$$ 
By the definition of $S_{\et}$, it follows that $\tilde{S}=(DF(0))^{-1}$ is a right inverse of $DF(0)$. This concludes the proof of (i) in Claim \ref{c.DF.F}.

\noindent\textit{\textbf{Proof of (ii).}}  We fix $\xi\in \cx$ \w\ $\|\xi\|_{\cx}\leq 1$. Since $F$ was restricted to the closed the unit ball in $\cx$, we need to show that 
\begin{equation}\label{e.DF.lip}
	\|DF_{\al}(h)\xi-DF_{\al}(0)\xi\|_{*}\leq c_{\varepsilon}\La \max_{\be}\|h_{\be}\|_{\star},\quad \forall  h\in \overline{B^{\cx}_1(0)},
\end{equation}
where $c_{\varepsilon}>0$ is independent of $\La$ and \(c_{\varepsilon}=o(1)\) as $\ve \to 0$. Let $\et_{\al}=DF_{\al}(h)\xi-DF_{\al}(0)\xi$. By the $L^2$-orthogonality condition in the definition of $\bar{\bg}_{\al}$, 
\[\int_{-\infty}^{\infty}\bg_{\al}(y,z;h_{\al})\Hb_{\al}'(y,z;h_{\al})dz=0\quad \forall  h_{\al}\in \cx_{\al},\]
which implies
\[\int_{-\infty}^{\infty}\bg_{\al}'(y,z;h_{\al})\Hb_{\al}'(y,z;h_{\al})dz+\int_{-\infty}^{\infty}\bg_{\al}(y,z;h_{\al})\Hb_{\al}''(y,z;h_{\al})dz=0.\]

\hn\ (using \eqref{e.DF_alpha}) $\et_{\al}=\et_{\al\al}+\sum\limits_{\be\neq \al}\et_{\al\be}$, where 
\begin{align}
	 \et_{\al\al} = & (-1)^{\al}\xi_{\al}\int_{-\infty}^{\infty}\left[\Hb'_{\alpha}(-; h_{\al})^2-(u-\Hb_{\al}(-; h))\Hb''_{\alpha}(-; h_{\al}) \right] dz \\
	 & - (-1)^{\al}\xi_{\al}\int_{-\infty}^{\infty}\left[\Hb'_{\alpha}(-; 0)^2-(u-\Hb_{\al}(-; 0))\Hb''_{\alpha}(-; 0) \right] dz,\\
	\et_{\al\be}= & (-1)^{\al} \xi_{\al} \int_{-\infty}^{\infty}(g_{\be}(-; h_{\beta}) +\operatorname{sgn}(\be-\al) (-1)^{\be})\bha''(-; h_{\al})dz\\
	& - (-1)^{\al} \xi_{\al} \int_{-\infty}^{\infty} (g_{\be}(-; 0) +\operatorname{sgn}(\be-\al) (-1)^{\be})\bha''(-; 0) dz \\	
	& +(-1)^{\beta}\int_{-\infty}^{\infty}\xi_{\beta}(\Pi_{\beta})\left[g_{\be}'(-; h_{\beta})\bha'(-; h_{\al}) - g_{\be}'(-; 0) \bha'(-; 0)\right]dz.		
\end{align}
We note that
\begin{align}
	\et_{\al\al} = & (-1)^{\al}\xi_{\al}\int_{-\infty}^{\infty} \left[\bH'(z-h_{\al})^2-\bH'(z)^2 + \left(\bH(z-h_{\al})-\bH(z)\right) \bH''(z-h_{\al})\right]dz \\
	& -\xi_{\al}\int_{-\infty}^{\infty} (u-\Hb_{\al}(-;0))(\bH''(z-h_{\al})-\bH''(z))dz,	
\end{align}
which implies
\begin{align}
	\et_{\al\al} = & (-1)^{\al+1}\xi_{\al}h_{\al}\int_{0}^{1}\int_{-\infty}^{\infty}\left[2\bH''(z-sh_{\al})\bH'(z-sh_{\al})+ \bH'(z-sh_{\al})\bH''(z-h_{\al})\right]dzds\\
	&+ \xi_{\al}h_{\al}\int_{0}^{1}\int_{-\infty}^{\infty} v_{\al}\bH'''(z-sh_{\al})dzds.
\end{align}
\hn, \fa\ \(0\leq j \leq k\),
\begin{align}\label{e.et_al,al}
& |\et_{\al\al}|_j + |\et_{\al\al}|_{j, \tht} \lesssim \La^2(\ve + \om_{\al}^{(j)} + \mathrm{a}_{\al}^{0.9}) (\ve + |\om_{\al}^{(j)}|'_0 + |\mathrm{a}_{\al}^{0.9}|'_0) \|\xi_{\al}\|_{\star} \|h_{\al}\|_{\star},\\
& |(\et_{\al\al})_{\nu}|_{j-1} + |(\et_{\al\al})_{\nu}|_{(j-1), \tht} \lesssim \La^3(\ve+\om^{(j)}_{\al}+\mathrm{a}_{\al}^{0.9})(\ve+\mathfrak{w}^{(j)}_{\al} +\mathfrak{a}_{\al}^{0.9}) (\ve + |\om^{(j)}_{\al}|'_0 + |\mathrm{a}_{\al}^{0.9}|'_0) \|\xi_{\al}\|_{\star} \|h_{\al}\|_{\star}.
\end{align}
Similarly,
\begin{align*}
 \eta_{\alpha\beta}={}&(-1)^\alpha\xi_\alpha\int_{\mathbb R}
 (g_\beta(-;h_\beta)-g_\beta(-;0))\Hb_\alpha''(-;h_\alpha)\,dz\\
 &+(-1)^\alpha\xi_\alpha\int_{\mathbb R}
 (g_\beta(-;0)+\operatorname{sgn}(\be-\al))
 (\Hb_\alpha''(-;h_\alpha)-\Hb_\alpha''(-;0))\,dz\\
 &+(-1)^\beta\int_{\mathbb R}\xi_\beta(\Pi_\beta)
 (g_\beta'(-;h_\beta)-g_\beta'(-;0))\Hb_\alpha'(-;h_\alpha)\,dz\\
 &+(-1)^\beta\int_{\mathbb R}\xi_\beta(\Pi_\beta)g_\beta'(-;0)
 (\Hb_\alpha'(-;h_\alpha)-\Hb_\alpha'(-;0))\,dz.
\end{align*}
which implies, by Taylor's theorem, that
\begin{align*}
 \eta_{\alpha\beta}={}&(-1)^{\alpha+\beta+1}\xi_\alpha
 \int_0^1\!\int_{\mathbb R}h_\beta(\Pi_\beta)
 g_\beta'(-;th_\beta)\Hb_\alpha''(-;h_\alpha)\,dz\,dt\\
 &-\xi_\alpha h_\alpha\int_0^1\!\int_{\mathbb R}
 (g_\beta(-;0)+\operatorname{sgn}(\be-\al))\Hb_\alpha^{(3)}(-;th_\alpha)\,dz\,dt\\
 &-\int_0^1\!\int_{\mathbb R}\xi_\beta(\Pi_\beta)h_\beta(\Pi_\beta)
 g_\beta''(-;th_\beta)\Hb_\alpha'(-;h_\alpha)\,dz\,dt\\
 &+(-1)^{\alpha+\beta+1}h_\alpha\int_0^1\!\int_{\mathbb R}
 \xi_\beta(\Pi_\beta)g_\beta'(-;0)\Hb_\alpha''(-;th_\alpha)\,dz\,dt.
\end{align*}
\hn, (similar to Step 2) if $\bar{\et}_{\al}=\sum_{\be\neq \al}\et_{\al\be}$, \(0 \le j \le k\),
\begin{align}\label{e.et'_al}
	& |\bar{\et}_{\al}|_j + |\bar{\et}_{\al}|_{j, \tht} + \md{(\bar{\et}_{\al})_{\nu}}_{j-1} + \md{(\bar{\et}_{\al})_{\nu}}_{(j-1), \tht} \\
	& \lesssim \La^2  (\ve+|\mathfrak{w}_{\al}^{(j)}|'_0 + |\mathfrak{a}_{\al}^{0.9}|'_0 ) (\ve + \mathfrak{w}_{\al}^{(j)} + \mathfrak{a}_{\al}^{0.9})\mathrm{a}_{\al}^{0.9}\|\xi\|\|h\|.
\end{align}
Hence, \eqref{e.et_al,al} and \eqref{e.et'_al} imply that the constant $c_{\varepsilon}=o(1)$ in \eqref{e.DF.lip}.
This concludes the proof of (ii) in  Claim \ref{c.DF.F}.

\noindent\textit{\textbf{Proof of (iii).}} Let $\ffa=F_{\al}(0)$. Then
\begin{align}\label{e.ffa.expr}
	\ffa(y)= & \int_{-\infty}^{\infty}\big[u(y, z)-\Hb_{\al}(y, z; 0)-\bg_{\al}(y,z;0)\big]\Hb'_{\alpha}(y, z; 0)dz\\
	&-\sum_{\be\in \mathtt{I}_{\al,y}}\int_{-\infty}^{\infty}\big[\Hb_{\be}(y, z; 0) + \bg_{\be}(y,z;0) +\operatorname{sgn}(\be - \al)(-1)^{\be}\big]\Hb'_{\alpha}(y, z; 0)dz.
\end{align}
Hence, using Lemmas \ref{l.interaction.int}, \ref{l.D_al.infty}, \ref{l.sum.of.exp} and \eqref{e.G.est}, we obtain
\begin{equation}\label{e.ffa.est}
	|\ffa|_0 + |\ffa|_{0, \tht} \lesssim \ve + \om^{(0)}_{\al} + \mathrm{a}_{\al}^{0.9}.
\end{equation}
Differentiating \eqref{e.ffa.expr} \wrt\ $y_i$ we obtain
\begin{align}\label{e.del_y_i.ffa}
	\del_{y_i}\ffa(y)= 
	& \int_{-\infty}^{\infty}\big[\del_{y_i}v_{\al}(y,z)-\bg_{\al,i}(y,z;0)\big]\Hb'_{\alpha}(y, z; 0)dz\\
	& -\sum_{\be\in \mathtt{I}_{\al,y}}(-1)^{\be}\int_{-\infty}^{\infty}\big[\Hb'_{\be}(y, z;0) +\bg'_{\be}(y,z;0)\big] \del_{y_i}t_{\be}(y,z)\Hb'_{\alpha}(y, z; 0)dz \\ 
	& -\sum_{\be\in \mathtt{I}_{\al,y}}\sum_{m=1}^{n} \int_{-\infty}^{\infty} \bg_{\be,m}(y,z;0) \del_{y_i}\Pi^m_{\be}(y,z) \Hb'_{\alpha}(y, z; 0)dz.
\end{align}
Hence (using \eqref{e.|phi.psi|_tht})
\begin{equation}\label{e.del_y_i.ffa.est}
	|\ffa|_1 + |\ffa|_{1, \tht} \lesssim \ve + \om^{(1)}_{\al}+\mathrm{a}_{\al}^{0.9}.
\end{equation} 
By further differentiating \eqref{e.del_y_i.ffa}, one can show that
\begin{equation}\label{e.del_y_i.ffa.est.j}
	|\ffa|_j + |\ffa|_{j, \tht} \lesssim \ve + \om^{(j)}_{\al}+\mathrm{a}_{\al}^{0.9},\quad 0\le j \le k.
\end{equation}
On $\partial_0C_R^+(0)$, the Neumann condition for $u$, the definition of $v_\alpha$, and
\eqref{G-equation} give
\begin{align}\label{e.del_nu.ffa.1}
 \partial_{y_1}v_\alpha-\Gb_{\alpha,1}(-;0)
 & =\langle\nabla v_\alpha,\partial_{y_1}-\partial_{x_1}\rangle
 -(-1)^\alpha\Hb_\alpha^{(1)}(y,z;0)\widetilde J_\alpha(y,z)
 -\delta_\alpha(y,(-1)^\alpha z).
\end{align}
Indeed,
$\partial_{x_1}v_\alpha=-(-1)^\alpha\Hb_\alpha^{(1)}J_\alpha$, whereas the boundary
condition for $\bgab$ gives
$-\Gb_{\alpha,1}=(-1)^\alpha w_\alpha\Hb_\alpha^{(1)}z
-\delta_\alpha(y,(-1)^\alpha z)$; now use
$\widetilde J_\alpha=J_\alpha-w_\alpha z$.
We also recall that 
\begin{equation}
    |\delta_\alpha|_{3}\lesssim \varepsilon^3
\end{equation}
by \eqref{e.xi.delta.est}.

Let \(\varrho : \del_0 \Sia \cap C_{9R/10} \to \bbr \) be defined by 
\begin{equation}
	\varrho(y) = \int_{-\infty}^{\infty}\big[\del_{y_1}v_{\al}(y,z)-\bg_{\al,1}(y,z;0)\big]\Hb'_{\alpha}(y, z; 0)dz
\end{equation}
\eqref{e.del_nu.ffa.1}, \eqref{e.|phi.psi|_tht}, \eqref{e.del.x_1-del.y_1} and Lemma \ref{l.dz(nu)} imply 
\begin{equation}\label{e.del_nu.ffa.2}
	|\varrho|_{(j-1)} + |\varrho|_{(j-1), \tht} \lesssim \ve \om_{\al}^{(j)} + \ve^2\quad \forall  1\leq j \leq k.
\end{equation}
Therefore, using \eqref{e.del_y_i.ffa}, \eqref{e.del_nu.ffa.2}, \eqref{e.G.est}, \eqref{e.|phi.psi|_tht} and Lemmas \ref{l.interaction.int}, \ref{l.del.y_1.t_be}, \ref{l.del.y_1.t_be 2} we obtain
\begin{equation}\label{e.ffa.nu}
	|(\ffa)_{\nu}|_{j-1} + |(\ffa)_{\nu}|_{j-1, \tht} \lesssim \ve^2+ \ve\om_{\al}^{(j)} + \ve\mathrm{a}_{\al}^{0.9}\quad \forall  1\leq j \leq k.
\end{equation}
\eqref{e.del_y_i.ffa.est.j} and \eqref{e.ffa.nu} imply $\|\ffa\|_{*}=O(\La^{-1})$.

This finishes the proof (iii), and therefore the proof of Claim \ref{c.DF.F}.	Hence we  finish the proof of Proposition \ref{p.optimal_h_shift}.
\end{proof}

From now on, let $h=(h_{\al})$ be the optimal layer shift function given by Proposition \ref{p.optimal_h_shift}. We shall use the following notation:
\begin{align*}
\Hb_{\al}^{(k)}(y, z) & =\Hb_{\al}^{(k)}(y, z; h_{\al}), \qquad & \bga(y,z) =\bga(y,z;h_{\al}), \\
\bg_{\al}^{(k)}(y,z) &= \bg_{\al}^{(k)}(y,z;h_{\al}) \qquad & \bg_{\al,i}(y,z)= \bg_{\al,i}(y,z;h_{\al}) \\  \bg_{\al,ij}^{(k)}(y,z) & =\bg_{\al,ij}^{(k)}(y,z;h_{\al}) \qquad & \del \bga=(\bg_{\al,1},\dots, \bg_{\al,n}, \bg_{\al}') \\ 
\Hb_{*}(y, z)& =\Hb(y, z; h) \qquad & \Gb_{*}(y, z)=\Gb(y, z; h) \\
g_*(y, z)&=\Hb_*(y, z)+\bg_*(y, z) &  
\end{align*}
Finally, we denote by
\begin{equation}\label{phi-def} 
    \phi=u-g_{*}=u-(\Hb_{*}+\Gb_{*})
\end{equation}
the difference between the real solution and the approximate one. 

\begin{corollary}\label{improved G estimates}
Let $K>0$. For any polynomial $p(z)$ in $\vert z \vert$, $k\geq 0$ and $\sigma\in (0, 1)$, we have  
\begin{equation}\label{Galpha epsilon} 
\|p(z)e^{\sigma |z|}{\bg}_{\alpha}(y, z)\|_{C^{2, \theta}_{\mathrm{loc}}(\Sia \times (-2R, 2R))}\lesssim_{k,p,\sigma} \varepsilon 
\end{equation}
and
\begin{equation} \label{Galpha epsilon 2}
\|\Gb_*(y, z)\|_{C^{2, \theta}_{\mathrm{loc}}(\Sia \times (-2R, 2R))}\lesssim \varepsilon
\end{equation}
as well as 
\begin{align}\label{G* epsilon} 
\begin{split}
    & \|{\bg}(y, z; h) - {\bg}(y, z; 0)\|_{C^{2, \theta}_{\mathrm{loc}}(\Sia \times(-2R, 2R) )} \\
    & \qquad \lesssim \varepsilon \|e^{-\sigma|t_{\alpha+1}(y, z)|}+e^{-\sigma|t_{\alpha-1}(y, z)|}\|_{C^0_{\mathrm{loc}}(\Sia \times (-2R, 2R))}+\varepsilon \|h_{\alpha}\|_{C^{2, \theta}_{\mathrm{loc}}(\Sia)}.
    \end{split} 
\end{align}
\end{corollary}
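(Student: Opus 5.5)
The three estimates are consequences of the bound \eqref{e.G.est} for $\bgah$, the properties of the cutoff $\zeta_\varepsilon$, and the smallness of the optimal shift in Remark \ref{r.optimal_h_shift}. No new analysis is needed; the work is bookkeeping of how $\bgah$ is composed with $(-1)^\alpha(z-h_\alpha(y))$ and localised.

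\textbf{Estimate \eqref{Galpha epsilon}.} Write $\bga(y,z)=\zeta_\varepsilon(s)\bgah(y,s)$ with $s=(-1)^\alpha(z-h_\alpha(y))$.

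1. Apply \eqref{e.G.est} with an exponent $\sigma'\in(\sigma,1)$. This gives $|\nabla^\ell\bgah(y,s)|\lesssim\varepsilon e^{-\sigma'|s|}$ for $\ell\le 3$.
2. The derivatives of $\zeta_\varepsilon$ are $O(|\log\varepsilon|^{-1})$, so the cutoff does not spoil this bound.
3. By \eqref{e.h=o1}, $h_\alpha=o(1)$ in $C^{2,\theta}$. Hence $|s-(-1)^\alpha z|=o(1)$ and $e^{-\sigma'|s|}\lesssim e^{-\sigma'|z|}$.
4. The chain rule brings in only $\nabla h_\alpha$ and $\nabla^2h_\alpha$, which are bounded, and a H\"older quotient of $\nabla^2 h_\alpha$, bounded by the same remark.
5. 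The factor $p(z)e^{\sigma|z|}e^{-\sigma'|z|}$ is bounded. Its derivatives are controlled similarly, since $e^{\sigma|z|}$ is smooth away from $z=0$; where the kink at $z=0$ matters, replace $|z|$ by $\sqrt{1+z^2}$ beforehand, at no cost.

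This yields \eqref{Galpha epsilon}.

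\textbf{Estimate \eqref{Galpha epsilon 2}.} Near $\Sia$, $\Gb_*=\sum_\beta\bg_\beta$, and each summand is supported in $\{|t_\beta-h_\beta|\le 8|\log\varepsilon|\}$.

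1. Transfer the estimate for $\bg_\beta$ to $\alpha$-coordinates using Lemma \ref{l.t.Pi}, Lemma \ref{l.DPi.beta} and Proposition \ref{p.conseq.almost.parallel}. These show that $t_\beta$ and $\Pi_\beta$ have bounded $C^{3}$ norms in the $\alpha$-coordinates on the logarithmic strip.
2. This gives $|\bg_\beta|_{C^{2,\theta}}\lesssim\varepsilon e^{-\sigma|t_\beta|}$.
3. Summing over $\beta$ with Lemma \ref{l.sum.of.exp} gives $\lesssim\varepsilon$, uniformly in the number of layers.

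\textbf{Estimate \eqref{G* epsilon}.} Split $\bg(\cdot;h)-\bg(\cdot;0)$ into the $\beta=\alpha$ term and the $\beta\ne\alpha$ terms.

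1. For $\beta=\alpha$, apply the mean value theorem in the shift:
\[\bga(y,z;h_\alpha)-\bga(y,z;0)=-(-1)^\alpha h_\alpha(y)\int_0^1\bga'(y,z;\tau h_\alpha)\,d\tau.\]
2. Estimate the $C^{2,\theta}$ norm of the product by the product rule \eqref{e.|phi.psi|_tht}, using the $\varepsilon e^{-\sigma|z|}$ bounds from the first part on the derivatives of $\bgab$. This gives a bound $\lesssim\varepsilon\|h_\alpha\|_{C^{2,\theta}}$.
3. For $\beta\ne\alpha$, use the same Taylor expansion in $h_\beta\circ\Pi_\beta$. Each term carries a factor $\varepsilon e^{-\sigma|t_\beta|}$.
4. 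Sum over $\beta$ with \eqref{exponential sum2} from Lemma \ref{l.sum.of.exp}, reducing to the two neighbours $\alpha\pm1$. This gives the $\varepsilon\|e^{-\sigma|t_{\alpha\pm1}|}\|_{C^0}$ term.

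\textbf{Main obstacle.} The hardest step is the $\beta\ne\alpha$ contribution, specifically the H\"older seminorms of compositions with $\Pi_\beta$. I would handle these with \eqref{e.holder.chain.rule} and the bound $|D\Pi_\beta|=O(1)$ from Lemma \ref{l.DPi.beta}, absorbing the $h_\beta$ factors into $\varepsilon$ via $h_\beta=o(1)$. If a term $\varepsilon\|h_\beta\|$ appears that cannot be absorbed this way, I would bound it through \eqref{e.h=o1}, since $\fwa$ and $\faa$ dominate the neighbouring sheets' shifts.
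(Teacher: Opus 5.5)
Your proposal is correct and follows the paper's argument. You take $\sigma'\in(\sigma,1)$ in \eqref{e.G.est} to absorb $p(z)e^{\sigma|z|}$, note that the derivatives of $\zeta_\varepsilon$ are harmless, and for \eqref{Galpha epsilon 2} and \eqref{G* epsilon} work in the $\alpha$-coordinates and sum the foreign-sheet contributions via Lemma \ref{l.sum.of.exp}; you also spell out what the paper leaves implicit, namely the shift expansion of the own-sheet term producing $\varepsilon\|h_\alpha\|_{C^{2,\theta}}$ and a smooth replacement for the kinked weight $e^{\sigma|z|}$.

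One caveat applies to you and the paper alike: since both invoke \eqref{exponential sum2}, estimate \eqref{G* epsilon} is really proved only on $\mathcal M^4_\alpha\cap\{|z|\leq K|\log\varepsilon|\}$, which is the region where it is later used.
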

\begin{proof}
Note first that since the pointwise estimates in \eqref{e.G.est} for $\hat{\mathbb{G}}_\alpha(y, z)$ are valid for every $\sigma \in (0, 1)$, then the same holds for $\bar{\mathbb{G}}_\alpha(y, z)$ (it is useful to note here that derivatives of the cut-off $\zeta_\varepsilon$ are supported in the annulus $(-8 \vert \log \varepsilon \vert, - 4 \vert \log \varepsilon \vert) \cup (4 \vert \log \varepsilon \vert, 8 \vert \log \varepsilon\vert)$). In particular, the size of the interval in the $z$-direction does not affect the estimates. 

To prove \eqref{Galpha epsilon}, fix $\sigma > 0$, and choose $\sigma^\prime \in (0, 1)$ in \eqref{e.G.est} satisfying $\sigma^\prime > \sigma$, so that the term $p(z) e^{\sigma \vert z \vert}$ can be absorbed by $e^{-\sigma^\prime \vert z \vert}$, leaving only the bound $\varepsilon.$ The bound in \eqref{Galpha epsilon 2} follows from \eqref{e.G.est}. Lastly, \eqref{G* epsilon} follows by expressing everything in twisted Fermi coordinates $(y, z)$ with respect to $\Sigma_\alpha$ and combining \eqref{exponential sum2} with the higher order version of the estimate \eqref{e.G.est}. 
\end{proof}
To better estimate the terms $\fwa$ and $\faa$ from Proposition \ref{p.optimal_h_shift} we need to introduce the following quantity (cf. \cite[Subsection 3.2]{wang-weiadv})
\begin{equation} \label{definition of A(r; x)}
    A(r; x) := \max_\alpha \max_{y \in \overline{\Sigma_\alpha \cap B_r(x)}} e^{-D_\alpha(y)}, 
\end{equation}
where the setup is an in Section \ref{section twisted Fermi}, while $D_\alpha(\cdot)$ has been defined in \eqref{definition D alpha}. 

\begin{lemma}\label{omegay estimate}
For $\fwa$ and $\faa$ defined in \eqref{m=m2, w=w2} and \eqref{notation wwaa}, 
we have the following estimate 
\begin{equation}\label{fwa estimate}
\begin{aligned}
       \fwa(y) & \lesssim \varepsilon+\|\phi\|_{C^{2,\theta}(B_2^\alpha (y)\times [-25|\log\varepsilon|, 25|\log\varepsilon|])}+A(40|\log\varepsilon|; (y, 0))^{3/4}\\
    & \qquad +\sup_{\be: |t_{\be}(y,0)|< 17 |\log \ve|}\|h_{\beta}\|_{C^{2,\theta}(B_3^\beta(\Pi_{\beta}(y, 0)))}\\
     \faa(y)&\lesssim A(40|\log\varepsilon|; (y, 0))
\end{aligned}
\end{equation}
\end{lemma}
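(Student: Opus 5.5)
We argue directly from the definitions in \eqref{notation wwaa}, treating $\faa$ first since it is elementary. By definition $\mathrm a_\alpha(y)=|e^{-D_\alpha}|'_0(y)=\sup_{y'\in\bar B^\alpha_1(y)}e^{-D_\alpha(y')}$. By Remark \ref{r.ball.comparison}, $\bar B^\alpha_1(y)$ lies in the ambient ball of radius $2$ about $(y,0)$, so $\mathrm a_\alpha(y)\leq A(40|\log\varepsilon|;(y,0))$. For the second part of $\faa$ we need, for $\beta\in\mathtt I_{\alpha,y}$, $y'\in\bar B_1^\alpha(y)$ and $|z|<9|\log\varepsilon|$, that the points $\Pi_\beta(y',z)$ and their unit coordinate balls lie within ambient distance $40|\log\varepsilon|$ of $(y,0)$. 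This follows from $|t_\beta(y,0)|<17|\log\varepsilon|$, \eqref{eq:tb-tb0} (which gives $|t_\beta(y',z)|\leq 27|\log\varepsilon|+O(1)$), and the fact that $\Pi_\beta$ moves a point along a $Z_\beta$-flowline of length $|t_\beta|$ at essentially unit speed (Lemma \ref{l.vect.Z}(v)). The margin $27+1+2<40$ absorbs all constants.

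For $\fwa$ we start from $v_\alpha=u-\Hb_\alpha(\cdot;0)$ and decompose, in $\mathcal M^0_\alpha$-type regions,
\[
v_\alpha=\phi+\bigl(\Hb_\alpha(\cdot;h_\alpha)-\Hb_\alpha(\cdot;0)\bigr)+\Gb_*+\sum_{\beta\neq\alpha}\bigl(\Hb_\beta(\cdot;h_\beta)+\operatorname{sgn}(\beta-\alpha)(-1)^\beta\bigr).
\]
Since the weight $(1+|z|^3)e^{-|z|}$ in $\om_\alpha$ makes the region $|z|\geq25|\log\varepsilon|$ contribute $O(\varepsilon^{24})$ (using $|v_\alpha|\leq2$ and interior $C^{2,\theta}$ bounds for $u$), it suffices to bound each term in $C^{2,\theta}$ on $B_2^\alpha(y)\times[-25|\log\varepsilon|,25|\log\varepsilon|]$, after weighting. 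The terms are handled as follows.
\begin{itemize}
\item The $\phi$-term gives the $\|\phi\|_{C^{2,\theta}}$ term.
\item The $\Gb_*$-term is $O(\varepsilon)$ by Corollary \ref{improved G estimates}.
\item The $h_\alpha$-term is bounded by $\|h_\alpha\|_{C^{2,\theta}}$ through a Taylor expansion in $h_\alpha$, with bounded derivatives of $\bH$.
\item For each $\beta\neq\alpha$, we split $\Hb_\beta(\cdot;h_\beta)-\Hb_\beta(\cdot;0)$, which is bounded by $\|h_\beta\|_{C^{2,\theta}(B^\beta_3(\Pi_\beta(y,0)))}$ via Lemma \ref{l.DPi.beta} and the chain rule, from $\Hb_\beta(\cdot;0)+\operatorname{sgn}(\cdot)$. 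Only indices with $|t_\beta(y,0)|<17|\log\varepsilon|$ matter up to $O(\varepsilon)$, because otherwise the product with $e^{-|z|}$ is tiny.
\end{itemize}

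The pure interaction term $\int|\bH(t_\beta)+\operatorname{sgn}|(1+|z|^3)e^{-|z|}dz$ and its $C^{2,\theta}$ analogues are bounded, via Lemma \ref{l.interaction.int}, \eqref{nabla2alphaztbeta}, \eqref{0Dalphaztbeta} and Lemma \ref{l.sum.of.exp}, by $(1+D_\alpha)^4e^{-D_\alpha}\lesssim e^{-\frac34D_\alpha}$ evaluated near $y$, hence by $A^{3/4}$. The same bounds then apply to the $\beta$-contributions in $\fwa$, namely $\om_\beta(\Pi_\beta(y',z))$. Re-expanding $v_\beta$ around $\Sigma_\beta$ puts the relevant norms of $\phi$ in the region $\{|t_\beta|\leq 25|\log\varepsilon|\}$ near $\Pi_\beta(y',z)$. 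We plan to show this region is contained in $B_2^\alpha(y)\times[-25|\log\varepsilon|,25|\log\varepsilon|]$ up to enlarging constants, which is also consistent with the $40|\log\varepsilon|$ radius of $A$.

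We expect the main obstacle to be exactly this domain bookkeeping for the $\beta$-terms. The $C^{2,\theta}$ norm of $\phi$ is measured in $\alpha$-coordinates over a window of fixed width $2$ in $y$, while $\om_\beta$ is evaluated at points $\Pi_\beta(y',z)$ that are displaced horizontally by $D\Pi_\beta(\partial_z)$ times $|z|$, which is $O(\varepsilon^{1/5}|\log\varepsilon|)=o(1)$ by \eqref{e.Dzpi.bound}. So the displacement stays inside the window, but the change of coordinates between $\alpha$ and $\beta$ twisted Fermi charts must be controlled in $C^{2,\theta}$ using Lemma \ref{l.t.Pi} and Lemma \ref{l.DPi.beta}. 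We would handle this by composing with the chart transition and accepting a constant loss, if necessary shrinking the radius slightly and enlarging the $z$-window (our stated $25$ versus the effective $17+9$).
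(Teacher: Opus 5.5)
Your route is the paper's: the same decomposition of $v_\alpha$, the same interaction and exponential-sum lemmas giving $e^{-\frac34 D}$, and then the passage to $\fwa$ and $\faa$ through $\mathtt I_{\alpha,y}$ and the definition of $A$. Your argument for $\faa$ is fine. The gap is in the $\beta$-bookkeeping you yourself flag: as you set it up, it does not give the window in the statement.

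\textbf{The vertical window.} You truncate the re-expanded $\omega_\beta$-integrals at $|t_\beta|\le 25|\log\varepsilon|$. But $\Pi_\beta(y',z)\in\Sigma_\beta$ sits at $\alpha$-height $t_\alpha(\Pi_\beta(y',z))=-t_\beta(y',0)+o(1)$ by \eqref{e.proj.2}, and this can be close to $17|\log\varepsilon|$ in absolute value. So by \eqref{eq:tb-tb0} the region where you need $\phi$ reaches $|t_\alpha|$ of order $42|\log\varepsilon|$. It is therefore not contained in $[-25|\log\varepsilon|,25|\log\varepsilon|]$, and enlarging the window only proves a weaker statement.

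Your count ``$17+9$'' is also off. The variable $z$ enters only through $\Pi_\beta(y',z)$, which by \eqref{e.Dzpi.bound} drifts by $O(\varepsilon^{1/5}|\log\varepsilon|)$ as $z$ ranges over $|z|<9|\log\varepsilon|$. So it adds nothing vertically, and horizontally the drift is likewise $o(1)$.

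The fix is that you only need $O(\varepsilon)$ accuracy. Cut every $\omega$-integral at depth $T=8|\log\varepsilon|-1$. Since $v_\beta$ and its $C^{2,\theta}$ quantities are bounded, this costs $\lesssim T^3e^{-T}\lesssim\varepsilon^{8}|\log\varepsilon|^3$. Then $17|\log\varepsilon|+T+o(1)<25|\log\varepsilon|$, which is where the $25$ comes from.

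\textbf{The $h$-norms.} Splitting the foreign tails as $\Hb_\beta(\cdot;h_\beta)=[\Hb_\beta(\cdot;h_\beta)-\Hb_\beta(\cdot;0)]+\Hb_\beta(\cdot;0)$ is harmless inside $\omega_\alpha(y)$. But you then say the same bounds apply to $\omega_\beta(\Pi_\beta(y',z))$. Re-expanding $v_\beta$ around $\Sigma_\beta$ this way produces norms $\|h_\gamma\|_{C^{2,\theta}}$ for sheets $\gamma$ within $17|\log\varepsilon|$ of $\Sigma_\beta$, so possibly with $|t_\gamma(y,0)|$ near $34|\log\varepsilon|$. Those sheets are not in the supremum of the statement.

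The paper does not split. The whole foreign tail $\Hb_\gamma(\cdot;h_\gamma)+\operatorname{sgn}(\gamma-\beta)(-1)^\gamma$ is treated as an interaction term. Using $\|h_\gamma\|_{C^{2,\theta}}=o(1)$ from Remark \ref{r.optimal_h_shift}, it is bounded by $e^{-\frac34 D_\beta}$ near $\Pi_\beta(y,0)$, and hence by $A(40|\log\varepsilon|;(y,0))^{3/4}$.

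With that choice, the only $h$-norm in the bound for $\omega_\beta(\Pi_\beta(y',z))$ is the own-sheet norm of $h_\beta$ on a unit neighbourhood of $\Pi_\beta(y',z)$, with $\beta\in\mathtt I_{\alpha,y}\cup\{\alpha\}$. That neighbourhood lies inside $B_3^\beta(\Pi_\beta(y,0))$, which is exactly the supremum appearing in the statement.
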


\begin{proof}
In order to prove the lemma, we start by establishing the following pointwise estimate 
    \begin{align}\label{omegay bound}
\vert \omega_{\alpha}(y) \vert \lesssim \|\phi\|_{C^{2,\theta}(B^\alpha_{2}(y)\times [-10|\log\varepsilon|, 10|\log\varepsilon|])}+\|h_{\alpha}\|_{C^{2,\theta}(B^\alpha_2(y))}+\max_{B^\alpha_2(y)}e^{-\frac{3}{4}D_{\alpha}(y)}+\varepsilon,  
\end{align}
where we recall from \eqref{notation wwaa} the definition of $\omega_\alpha(y)=\omega_\alpha^{(2)}(y)$. To do so, we expand $v_\alpha$ as follows
\begin{align}\label{computation proof omegay bound}
\begin{split}
 v_\alpha(y,z)
 &=\phi(y,z)+\bigl(\Hb_\alpha(y,z;h_\alpha)-\Hb_\alpha(y,z;0)\bigr)+\Gb_*(y,z)\\
 &\quad+\sum_{\beta\neq\alpha}
 \left(\Hb_\beta(y,z;h_\beta)
 +\operatorname{sgn}(\beta-\alpha)(-1)^\beta\right).
\end{split}
\end{align}
where the second equality follows from the definition of $\phi$ in \eqref{phi-def}. To infer the desired bound, we need to estimate the terms $\vert v_\alpha\vert_0$, $\vert v_\alpha\vert_1$, $\vert v_\alpha\vert_2$, and $\vert v_\alpha\vert_{2, \theta}$ multiplied by $(1 + \vert z \vert^3)e^{-\vert z \vert}$. The first order term $\vert v_\alpha\vert_0$ can be dealt with by appealing to Lemma \ref{l.interaction.int} and Lemma \ref{l.interaction.int.2} directly. In particular, the first term on the right-hand side of \eqref{omegay bound} arises from estimating the first term on the right-hand side of \eqref{computation proof omegay bound}. Similarly, by Lemma \ref{l.sum.of.exp}, Remark \ref{r.optimal_h_shift}, the estimates of $\Gb_*$ in Corollary \ref{improved G estimates}   the second, third, and fourth terms in \eqref{omegay bound} come from (in order of appearance) from estimating the second, third, and fourth terms of \eqref{computation proof omegay bound}. Note that we used the estimate $h_\alpha = o(1)$ from Proposition \ref{p.optimal_h_shift}, see Remark \ref{r.optimal_h_shift}. 

The higher order estimates on $v_{\alpha}$ can be estimated in a similar fashion, after differentiating the above expression for $v_\alpha$. Hence we obtain \eqref{omegay bound}.

To conclude, recall the definition of $\mathfrak{w}_{\alpha}(y)$ and $\mathtt{I}_{\al,y}$ in \eqref{notation wwaa}. Combining them with \eqref{omegay bound}, we obtain 
\begin{align}
    \fwa(y)=\fwa^{(2)}(y)\lesssim & \; \varepsilon+\|\phi\|_{C^{2,\theta}(B^\alpha_{2}(y)\times [-25|\log\varepsilon|, 25|\log\varepsilon|])}\\
    &+\sup_{\be: |t_{\be}(y,0)|< 17 |\log \ve|}\left(\|h_{\beta}\|_{C^{2,\theta}(B^{\beta}_2(\Pi_{\beta}(y, 0)))}+\max_{B^{\beta}_2(\Pi_{\beta}(y, 0))}e^{-\frac{3}{4}D_{\beta}}\right). 
\end{align}
Using this, the definition of $\faa(y)$ and the definition of $A(r; x)$ in \eqref{definition of A(r; x)}, we obtain the two estimates in \eqref{fwa estimate}.
\end{proof}

\subsection{Controlling $h$ by $\phi$}
In order to derive the Toda system, we prove the following estimates on $h_{\alpha}$. Compare them with the ones in Remark \ref{r.optimal_h_shift}. In later sections, we will further improve on these. 
\begin{lemma}[First improved estimates]\label{phicontrolhprop}
    For each $\alpha$ and for $y\in \Sigma_{\alpha}\cap B^{+}_{\frac{7}{8}R-2}(0)$, we have
\begin{equation}\label{phicontrolh}
\|h_{\alpha}\|_{C^{2,\theta}(B^\alpha_1(y))}\leq C\|\phi(\cdot, 0)\|_{C^{2,\theta}(B^{\alpha}_1(y, 0))}+C\max_{B^\alpha_1(y)}e^{-D_{\alpha}}+C\varepsilon^2 
    \end{equation}
and 
\begin{equation}\label{horizontalphicontrolh}
\begin{split}
    \|\nabla_{\alpha, 0} h_{\alpha}\|_{C^{1,\theta}(B^\alpha_1(y))}\leq & \;   C\varepsilon^2+C\|\nabla_{\alpha, 0} \phi(\cdot, 0)\|_{C^{1,\theta}(B^{\alpha}_1(y, 0))}+C\varepsilon^{1/6}\max_{B^{\alpha}_1(y)}e^{-D_{\alpha}}\\
    &+C\left(\max_{\beta: |t_{\beta}(y, 0)|\leq 9|\log\varepsilon|}  \|\nabla_{\beta, 0} h_{\beta}\|_{C^{1,\theta}(B^{\beta}_2(\Pi^{\beta}(y,0)))}\right)\max_{B^{\alpha}_1(y)}e^{-D_{\alpha}}.
\end{split}
\end{equation}
If, in addition, $y\in\partial_0\Sigma_\alpha$, then
\begin{equation}\label{neumann h improved relate to phi}
\|\partial_{y_1}h_{\alpha}\|_{C^{1,\theta}(B^{0,\alpha}_1(y))}\leq C\|\phi\|^2_{C^{2,\theta}(B^{\alpha}_{4}(y, 0)\times [-50|\log\varepsilon|, 50|\log\varepsilon|])}+CA(50|\log\varepsilon|;(y, 0))^{3/2}+C\varepsilon^2
\end{equation}
where the constants $C$ in the above estimates are uniform in $\alpha$ and $y$.
\end{lemma}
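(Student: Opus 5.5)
The plan is to differentiate the orthogonality identity \eqref{e.ortho.H'_al} along $\Sigma_\alpha$ and solve for $h_\alpha$ and its derivatives, following \cite[Lemma 4.2]{wang-weiadv}.

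\textbf{Step 1: an identity for $h_\alpha$.} Write $u-g_*=\phi$ and expand $u=\Hb_\alpha(\cdot;h_\alpha)+\Gb_\alpha(\cdot;h_\alpha)+\sum_{\beta\neq\alpha}(g_\beta+\operatorname{sgn}(\beta-\alpha)(-1)^\beta)+\phi$. The identity
\[
\int\phi\,\Hb'_\alpha(y,z;h_\alpha)\,dz=0
\]
does not involve $h_\alpha$ directly. We therefore use instead the integral $\int u\,\Hb'_\alpha(y,z;0)\,dz$ and the vanishing of $u$ on $\Sigma_\alpha$. Concretely, we evaluate $u(y,0)=0$, which gives
\[
0=\Hb_\alpha(y,0;h_\alpha)+\Gb_\alpha(y,0;h_\alpha)+\sum_{\beta\neq\alpha}\bigl(g_\beta(y,0)+\operatorname{sgn}(\beta-\alpha)(-1)^\beta\bigr)+\phi(y,0).
\]
Since $\Hb_\alpha(y,0;h_\alpha)=\bH(-(-1)^\alpha h_\alpha)$ has derivative bounded below near $0$, and $\Gb_\alpha(y,0;h_\alpha)=\bar\Gb_\alpha(y,-(-1)^\alpha h_\alpha)$ is odd with derivative $O(\varepsilon)$, the map $h\mapsto \bH(-(-1)^\alpha h)+\bar\Gb_\alpha(y,\mp h)$ is invertible near $0$ with $C^{3}$-bounded inverse. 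This expresses
\[
h_\alpha=\Psi\Bigl(y,\ \phi(y,0)+\sum_{\beta\neq\alpha}\bigl(\Hb_\beta+\Gb_\beta+\operatorname{sgn}(\beta-\alpha)(-1)^\beta\bigr)(y,0)\Bigr),
\]
with $\Psi$ smooth, $\Psi(y,0)=0$, and $|\partial_y\Psi|\lesssim\varepsilon|h|$ (the $y$-dependence enters only through $\bar\Gb_\alpha$, and $\bar\Gb_\alpha(y,0)=0$).

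\textbf{Step 2: \eqref{phicontrolh} and \eqref{horizontalphicontrolh}.} Taking $C^{2,\theta}$ norms gives \eqref{phicontrolh}. The interaction terms are $\lesssim e^{-D_\alpha}$ by Lemma~\ref{l.sum.of.exp}, with $\Gb_\beta$ even smaller by \eqref{e.G.est}; the $y$-dependence of $\Psi$ contributes at most $\varepsilon\|h_\alpha\|$, which is absorbed. For \eqref{horizontalphicontrolh} we differentiate in $y$. The derivatives of the interaction terms produce $\nabla_{\alpha,0}t_\beta$ and $\nabla h_\beta\circ\Pi_\beta$ times $\Hb'_\beta$. By Proposition~\ref{p.conseq.almost.parallel} and Lemma~\ref{l.DPi.beta}, the factor $\nabla_{\alpha,0}t_\beta$ is $\lesssim\varepsilon^{1/5}$, which yields the term $\varepsilon^{1/6}e^{-D_\alpha}$. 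The terms containing $\nabla h_\beta$ yield the max-term in \eqref{horizontalphicontrolh}. The second derivatives of $t_\beta$ are $O(\varepsilon)$ by \eqref{nabla2alphaztbeta}, and the remaining $y$-derivatives of $\Psi$ contribute $\varepsilon\|h_\alpha\|\lesssim\varepsilon^2+\varepsilon\|\phi\|+\varepsilon e^{-D_\alpha}$, which is fine after relabeling.

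\textbf{Step 3: the Neumann estimate \eqref{neumann h improved relate to phi}, the main obstacle.} Here we must obtain quadratic smallness. We apply $\partial_{y_1}$ to the identity of Step 1 on $\partial_0\Sigma_\alpha$. The term $\partial_{y_1}\phi(y,0)$ is compared with $\partial_{x_1}\phi$ via Lemma~\ref{l.del.y_1.t_be}, which gives an error of order zero at $z=0$. Using the Neumann condition for $u$, we have $\partial_{x_1}\phi=-\partial_{x_1}g_*$.

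The leading part of $\partial_{x_1}(\Hb_\alpha+\Gb_\alpha)$ cancels by \eqref{Neumanny1 G0} together with $J_\alpha=w_\alpha z+\tilde J_\alpha$ (Lemma~\ref{l.dz(nu)}). What is left is $\varepsilon^2$ from $\tilde J_\alpha$ and $\delta_\alpha$, plus cross terms of the form $\partial_{y_1}\bar\Gb\cdot h_\alpha$. These cross terms are $O(\varepsilon|h_\alpha|)$ and are bounded using \eqref{phicontrolh} by $\varepsilon\|\phi\|+\varepsilon A+\varepsilon^3$, which is in turn $\lesssim\|\phi\|^2+A^{3/2}+\varepsilon^2$ by Young's inequality.

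For the $\beta$-terms we use $\partial_{x_1}t_\beta=J_\beta=w_\beta t_\beta+\tilde J_\beta$ and Lemma~\ref{l.del.y_1.t_be 2}. These give contributions $\varepsilon(1+|t_\beta|^2)e^{-|t_\beta|}\lesssim\varepsilon A^{0.9}\lesssim\varepsilon^2+A^{3/2}$. The terms $\partial_{y_1}h_\beta$ appear multiplied by $e^{-D_\alpha}$, so they are handled by a bootstrap over all sheets: take the supremum over $\beta$ and absorb, using that $A$ is small. The Hölder norms follow by the same computation after one more tangential differentiation, with the commutators controlled by \eqref{e.exchange.neumannderivative0}.

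The delicate point is verifying that all $O(\varepsilon)$ linear terms cancel exactly through \eqref{Neumanny1 G0} and \eqref{Neumanny1yi G0}. If the pointwise cancellation fails, the fallback is to run the argument through the integral identity $\partial_{y_1}F_\alpha(h)=0$, as in the estimate \eqref{e.ffa.nu}, which already exhibits the needed $\varepsilon^2+\varepsilon\omega+\varepsilon A^{0.9}$ structure.
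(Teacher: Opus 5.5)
Your Steps 1--2 follow the paper's argument for \eqref{phicontrolh} and \eqref{horizontalphicontrolh}. The paper also starts from $u(y,0)=0$, i.e.\ $\phi(y,0)=-\Hb_*(y,0)-\Gb_*(y,0)$, and uses the oddness of $\overline{\Gb}_\alpha$ together with \eqref{G* epsilon} to absorb the correction.

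Step~3, however, has a genuine gap: the pointwise argument is tautological. At $(y,0)$ with $y\in\partial_0\Sigma_\alpha$ one has $\partial_{y_1}=\partial_{x_1}=\nu$ exactly; this is why Lemma~\ref{l.del.y_1.t_be} gives no error there. So the $y_1$-derivative of $y\mapsto u(y,0)\equiv0$ \emph{is} the Neumann condition $\partial_\nu u=0$, and substituting one into the other can only return $0=0$.

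Concretely, it is not true that after \eqref{Neumanny1 G0} and $J_\alpha=w_\alpha z+\widetilde J_\alpha$ only $O(\varepsilon^2)$ and $O(\varepsilon|h_\alpha|)$ terms remain in $\partial_{x_1}(\Hb_\alpha+\Gb_\alpha)(y,0)$. Since $J_\alpha(y,0)=0$, one has $\partial_{x_1}\Hb_\alpha(y,0)=-(-1)^\alpha\Hb'_\alpha\,\partial_{y_1}h_\alpha$, which is the unknown itself (compare the term $-\partial_{y_1}h_\beta$ in \eqref{parnuG+H'(+0)}). It cancels exactly against the term coming from $\partial_{y_1}[\Hb_\alpha(y,0;h_\alpha)]$ in your differentiated identity. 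You also cannot close by inserting a bound on $\partial_\nu\phi$, because Proposition~\ref{improved neumann phi} is itself proved using \eqref{neumann h improved relate to phi}. The structural reason is that at $z=0$ the $O(\varepsilon)$ Neumann error $w_\alpha z\,\Hb'_\alpha$, which $\overline{\Gb}_\alpha$ is designed to cancel, vanishes identically. The zero level therefore carries no boundary information on $h_\alpha$.

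That information lives at heights $z\neq0$, i.e.\ in the orthogonality identity \eqref{e.ortho.H'_al} that you set aside in Step~1. Differentiate $F_\alpha(h)=0$ in $y_1$ on $\partial_0\Sigma_\alpha$ and write $\partial_{y_1}u=-J_\alpha\partial_zu-\sum_j(\partial_\nu y_j-\delta_{1j})\partial_{y_j}u$. The $O(\varepsilon)$ term $-(-1)^\alpha w_\alpha z\,\Hb'_\alpha$ is then cancelled by $-\Gb_{\alpha,1}$, the remaining terms are products of two small factors, and $\partial_{y_1}h_\alpha$ appears with coefficient $(-1)^\alpha\sigma_0+o(1)$. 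This is what \eqref{e.del_nu.ffa.1}--\eqref{e.ffa.nu} and the boundary-weighted norm $[\cdot]'_{*}$ in Proposition~\ref{p.optimal_h_shift} encode.

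The paper's proof of \eqref{neumann h improved relate to phi} proceeds from there as follows:
\begin{itemize}
\item take the second line of \eqref{e.h=o1}, $|\partial_{y_1}h_\alpha|\lesssim\varepsilon^2+\fwa^2+\faa^{1.8}$;
\item insert Lemma~\ref{omegay estimate}, namely $\fwa\lesssim\varepsilon+\|\phi\|_{C^{2,\theta}}+A^{3/4}+\sup_\beta\|h_\beta\|_{C^{2,\theta}}$ and $\faa\lesssim A$;
\item apply \eqref{phicontrolh} to each $h_\beta$, so that after squaring everything is bounded by $\varepsilon^2+\|\phi\|_{C^{2,\theta}}^2+A^{3/2}$.
\end{itemize}
Your closing fallback points in this direction, but as written it is not a proof. \eqref{e.ffa.nu} bounds $\partial_\nu F_\alpha(0)$, not $\partial_{y_1}h_\alpha$. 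You still need either the inversion of $DF(0)$ in the weighted boundary norm (Claim~\ref{c.DF.F}) or the direct differentiation just described, followed by the squaring step.
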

\begin{proof}
In twisted Fermi coordinates with respect to $\Sigma_{\alpha}$ we have $     \phi(y, 0)=-\Hb_*(y, 0)-\Gb_*(y, 0)$. Since $\|h_{\beta}\|_{C^{2, \theta}}=o(1)$, arguing as in \cite[Lemma 4.6]{wang-weiadv}, we obtain
\begin{equation}
   \|h_{\alpha}\|_{C^{2,\theta}(B^\alpha_1(y))}\lesssim\|\phi\|_{C^{2,\theta}(B^\alpha_1(y, 0))}+\max_{B^\alpha_1(y)}e^{-D_{\alpha}(\cdot)} + \|\mathbb{G}_{*}\|_{C^{2,\theta}(B^\alpha_1(y))}.
\end{equation}
Combining now \eqref{G* epsilon} (from Corollary \ref{improved G estimates}) with $\Gb_{\alpha}(y, 0; 0)=0$, which in turn follows from $\overline{\Gb}_{\alpha}$ being odd, we have
\begin{equation}
    \|\mathbb{G}_{*}\|_{C^{2,\theta}(B^\alpha_1(y))}\lesssim \varepsilon\|h_{\alpha}\|_{C^{2,\theta}(B^\alpha_1(y))}+\varepsilon \max_{B^\alpha_1(y)} e^{-\sigma D_{\alpha}} \qquad \text{where $\sigma\in (1/2, 1)$}.
\end{equation}
The two inequalities above, together with Cauchy's inequality, imply the desired bound \eqref{phicontrolh}. Differentiating the expression $\phi(y, 0)=-\Hb_*(y, 0)-\Gb_*(y, 0)$, appealing to Proposition \ref{p.conseq.almost.parallel} to handle the terms $\nabla_{\alpha, 0} t_\beta$ and to Lemma \ref{l.t.Pi} to have uniform boundedness of $\vert \nabla_{\alpha, 0} \Pi_\beta \vert$, and arguing in a similar fashion, we obtain \eqref{horizontalphicontrolh} as well. Second derivatives $\nabla_{\alpha, 0}^2 h_{\alpha}$ are handled in the exact same way, while for the H\"older terms we need to use the bound $\vert \nabla_{\alpha, 0}^2 t_\beta \vert \lesssim \varepsilon$. 

To obtain \eqref{neumann h improved relate to phi}, recall the estimate on the Neumann derivative of $h_{\alpha}$ from Remark \ref{r.optimal_h_shift}. This, together with the estimates for $\fwa, \faa$ in \eqref{fwa estimate} of Lemma \ref{omegay estimate}, and the improved estimates on $h_{\al}$ just shown in \eqref{phicontrolh}, give \eqref{neumann h improved relate to phi}.
\end{proof}
\begin{remark}
Compared with \cite[(4.13) in Lemma 4.6]{wang-weiadv}, the influence of the error term of order $C\varepsilon^2$ in \eqref{phicontrolh} on our final curvature estimates is negligible. Furthermore, the $\varepsilon^{1/6}$ appearing in estimate \eqref{horizontalphicontrolh} and later sections can be improved to $\varepsilon^{1/4}|\log\varepsilon|^{3/4}$ by appealing to Proposition \ref{p.conseq.almost.parallel}.
\end{remark}

\section{Equation for the error and mean curvature approximation by Toda system}\label{Equation of error and mean curvature approximation by Toda system of layers}
We derive the equation satisfied by the error function $\phi$ defined in \eqref{phi-def}, and then project this equation onto $\Hb'_{\alpha}$ to obtain the Toda system.  We use the geometric estimates of Section \ref{section twisted Fermi}, the heteroclinic estimates in Section \ref{s.approx.toda}, and the optimal shifts of Proposition \ref{p.optimal_h_shift}.  In particular, the shifts are $o(1)$ in $C^{3,\theta}$ as in Remark \ref{r.optimal_h_shift}.  

$B_s^+(y,z)$ denotes the ambient coordinate ball $B_s^+(Y_{\Sigma_\alpha}(y,z))$, intersected with the half-domain. A sheet ball $B_s^\alpha(y)$ is the coordinate ball on $\Sigma_\alpha$ defined in Section \ref{s.approx.toda}. We identify a base point $y\in\Sigma_\alpha$ with the ambient point $Y_{\Sigma_\alpha}(y,0)$.

\subsection{Equation for the error $\phi$}
The following computation is the counterpart of \cite[(4.11)]{wang-weiadv}, with the boundary correction and the twisted-coordinate terms retained.

\begin{proposition}\label{p.s6.error.equation}
	In the $\alpha$-coordinates, the error satisfies
	\begin{align}\label{phieq}
		\begin{split}
			\Delta_{\alpha,z}\phi-&H_\alpha(y,z)\partial_z\phi+\partial_{zz}\phi
			+\mathscr E_\alpha\phi\\
			&={}W'(\Hb_*+\Gb_*+\phi)-\sum_\beta W'(\Hb_\beta)
			-\sum_\beta W''(\Hb_\beta)\Gb_\beta\\
			&\quad+(-1)^\alpha\Hb'_\alpha
			\bigl(H_\alpha(y,z)+\Delta_{\alpha,z}h_\alpha\bigr)
			-\Hb''_\alpha|\nabla_{\alpha,z}h_\alpha|^2\\
			&\quad+\sum_{\beta\ne\alpha}
			\bigl[(-1)^\beta\Hb'_\beta\mathcal R_{\beta,1}
			-\Hb''_\beta\mathcal R_{\beta,2}\bigr]\\
			&\quad-\sum_\beta\xi_\beta-\sum_\beta\mathscr E_\beta\Hb_\beta
			+\sum_\beta\mathcal G_\beta,
		\end{split}
	\end{align}
	where  $\mathscr{E}_{\alpha}(y, z)$ is the second order linear differential operator defined in \eqref{diff operator E=E1+E2}, $H_\alpha(y, z)$ is the mean curvature of $\Sigma_{\alpha, z}$ (cf. Proposition \ref{geometric approximation}), and 
	\begin{align}\label{xiRi}
		\begin{split}
			\xi_\beta(y,z)
			={}&\bar\xi_1\bigl((-1)^\beta(z-h_\beta(y))\bigr)
			+\bar\xi_{2,\beta}\bigl(y,(-1)^\beta(z-h_\beta(y))\bigr),\\
			\mathcal R_{\beta,1}(y,z)
			={}&H_\beta(y,z)+\Delta_{\beta,z}h_\beta(y),\\
			\mathcal R_{\beta,2}(y,z)
			={}&|\nabla_{\beta,z}h_\beta(y)|^2,\\
			\mathcal R_{\beta,3}(y,z)
			={}&-2(-1)^\beta\sum_{i,j=1}^n
			g_\beta^{ij}(y,z)\Gb'_{\beta,i}(y,z)\partial_{y_j}h_\beta(y),\\
			\mathcal R_{\beta,4}(y,z)
			={}&\sum_{i,j=1}^n
			\bigl(g_\beta^{ij}(y,z)-g_\beta^{ij}(y,0)\bigr)\Gb_{\beta,ij}(y,z)\\
			&-\sum_{i,j,k=1}^n
			\bigl(g_\beta^{ij}(y,z)\Gamma^k_{\beta,ij}(y,z)
			-g_\beta^{ij}(y,0)\Gamma^k_{\beta,ij}(y,0)\bigr)
			\Gb_{\beta,k}(y,z).
		\end{split}
	\end{align}
	The correction terms are
	\begin{align}\label{bold Gbeta}
		\begin{split}
			\mathcal G_\beta \ &= \mathcal G_{\beta,1}+\mathcal G_{\beta,2}
			+\mathcal G_{\beta,3}+\mathcal G_{\beta,4}
			+\mathcal G_{\beta,5},\\
			\mathcal G_{\beta,1} &= (-1)^\beta\Gb'_\beta\mathcal R_{\beta,1},\\
			\mathcal G_{\beta,2} &= -\Gb''_\beta\mathcal R_{\beta,2},\\
			\mathcal G_{\beta,3} &= -\mathcal R_{\beta,3},\\
			\mathcal G_{\beta,4} &= -\mathcal R_{\beta,4},\\
			\mathcal G_{\beta,5} &= -\mathscr E_\beta\Gb_\beta.
		\end{split}
	\end{align}
	At every base point to which Proposition \ref{p.optimal_h_shift} applies, one also has
	\begin{equation}\label{phi perp}
		\int_{\mathbb R}\phi(y,z)\Hb'_\alpha(y,z)\,dz=0.
	\end{equation}
\end{proposition}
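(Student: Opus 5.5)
The plan is a direct computation in the $\alpha$-twisted Fermi coordinates, arranged so that each piece of $g_*=\Hb_*+\Gb_*$ is computed in the coordinates adapted to its own sheet.

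\textbf{Splitting the equation.} Since $u$ solves \eqref{e.ac.1}, we have $\Delta\phi = W'(u)-\Delta\Hb_*-\Delta\Gb_*$. By Lemma \ref{l.Lapl.tw.Fer},
\[
\Delta\phi=\Delta_{\alpha,z}\phi+\partial_{zz}\phi-H_\alpha\partial_z\phi+\mathscr E_\alpha\phi .
\]
This gives the left-hand side of \eqref{phieq}, and the first right-hand term $W'(\Hb_*+\Gb_*+\phi)=W'(u)$. It then remains to show
\[
-\Delta\Hb_\beta-\Delta\Gb_\beta = -W'(\Hb_\beta)-W''(\Hb_\beta)\Gb_\beta+(\text{the remaining terms})
\]
for each $\beta$. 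Because $\Delta$ is coordinate invariant, I would compute each $\beta$-term in the $\beta$-twisted Fermi coordinates and apply Lemma \ref{l.Lapl.tw.Fer} again with $\beta$ in place of $\alpha$. The constants $\operatorname{sgn}(\beta-\alpha)(-1)^\beta$ contribute nothing.

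\textbf{The heteroclinic part.} Write $\Hb_\beta=\overline{\Hb}((-1)^\beta(z-h_\beta(y)))$ in the $\beta$-coordinates. Direct differentiation gives
\[
\Delta_{\beta,z}\Hb_\beta=-(-1)^\beta\Hb'_\beta\Delta_{\beta,z}h_\beta+\Hb''_\beta|\nabla_{\beta,z}h_\beta|^2,\qquad \partial_{zz}\Hb_\beta=\Hb''_\beta=W'(\Hb_\beta)+\xi\text{-term},
\]
using \eqref{barHb' equation}. The mean-curvature term is $-H_\beta\partial_z\Hb_\beta=-(-1)^\beta H_\beta\Hb'_\beta$. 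Combining these yields $-\Delta\Hb_\beta=-W'(\Hb_\beta)+(-1)^\beta\Hb'_\beta\mathcal R_{\beta,1}-\Hb''_\beta\mathcal R_{\beta,2}-\bar\xi_1-\mathscr E_\beta\Hb_\beta$. For $\beta=\alpha$ this is exactly the explicit $\alpha$-line of \eqref{phieq}.

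\textbf{The correction part.} The same procedure applied to $\Gb_\beta$ is the main bookkeeping obstacle. The difficulty is that $\bar\Gb_\beta$ depends on $y$ as well as on the shifted variable. I would write
\[
\Delta_{\beta,z}=\Delta_{\beta,0}+\bigl(\Delta_{\beta,z}-\Delta_{\beta,0}\bigr),
\]
where the difference acting on $y$-derivatives of $\bar\Gb_\beta$ produces exactly $\mathcal R_{\beta,4}$ via \eqref{e.lapl.gamma}. Applying the chain rule to $\Delta_{\beta,0}$ produces the pure-$y$ Laplacian, the mixed term $\mathcal R_{\beta,3}$, and the analogues of $\mathcal R_{\beta,1},\mathcal R_{\beta,2}$ multiplying $\Gb'_\beta,\Gb''_\beta$. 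The leftover operator $\Delta_{\beta,0}+\partial_{zz}-W''(\bH)$ applied to $\bar\Gb_\beta$ is replaced by $\bar\xi_{2,\beta}$ using \eqref{G-equation}. Finally, the metric $g^{ij}_\beta(y,z)$ must be kept in $\mathcal R_{\beta,3}$, and one should check that $\mathcal R_{\beta,2}$ is evaluated with the same metric in both places. The result is
\[
-\Delta\Gb_\beta=-W''(\Hb_\beta)\Gb_\beta+\mathcal G_\beta-\bar\xi_{2,\beta},
\]
with the $\mathscr E_\beta\Gb_\beta$ term absorbed into $\mathcal G_{\beta,5}$.

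\textbf{Summing up.} Summing over $\beta$ and separating $\beta=\alpha$ from $\beta\neq\alpha$ gives \eqref{phieq}. Identity \eqref{phi perp} is exactly \eqref{e.ortho.H'_al} from Proposition \ref{p.optimal_h_shift}.
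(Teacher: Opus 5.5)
Your proposal is correct and is essentially the paper's own proof: you compute $\Delta\Hb_\beta$ and $\Delta\Gb_\beta$ in the $\beta$-twisted Fermi coordinates using Lemma~\ref{l.Lapl.tw.Fer} and the chain rule, invoke \eqref{barHb' equation} and \eqref{G-equation} to obtain \eqref{delta-H} and \eqref{delta-G}, and subtract these from $\Delta u=W'(\Hb_*+\Gb_*+\phi)$. The one thing to state cleanly is the order of the split you already flag. First apply the chain rule with $\Delta_{\beta,z}$, so that $\mathcal R_{\beta,1}$, $\mathcal R_{\beta,2}$ and $\mathcal R_{\beta,3}$ all carry $g_\beta^{ij}(y,z)$; only then replace the first-argument Laplacian of $\bar\Gb_\beta$ at height $z$ by the one at height $0$, the difference being exactly $\mathcal R_{\beta,4}$.
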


\begin{proof}
In twisted Fermi coordinates with respect to $\Sigma_\beta$, one has $\widetilde Z=\partial_z-\sum_iT^i\partial_{y_i}$.  Expanding $\Delta=\Delta_{\beta,z}+\widetilde Z^2-H_\beta\widetilde Z$ gives
	\begin{align}\label{e.s6.operator}
		\begin{split}
			\mathscr E_\beta\phi
			={}&-2\sum_iT^i\partial_{y_i z}\phi
			+\sum_{i,j}T^iT^j\partial_{y_i y_j}\phi+\sum_j\Bigl(\sum_iT^i\partial_{y_i}T^j
			-\partial_zT^j+H_\beta T^j\Bigr)\partial_{y_j}\phi.
		\end{split}
	\end{align}
and the coefficients can be estimated by \eqref{e.difference.z.tz}.
The chain rule gives
	\begin{align}\label{e.s6.shift.derivatives}
		\begin{split}
			\partial_{y_i}\Gb_\beta
			& =\Gb_{\beta,i}-(-1)^\beta\Gb'_\beta\partial_{y_i}h_\beta,\\
			\partial_{y_i y_j}\Gb_\beta
			& =\Gb_{\beta,ij}
			-(-1)^\beta\bigl(\Gb'_{\beta,i}\partial_{y_j}h_\beta
			+\Gb'_{\beta,j}\partial_{y_i}h_\beta
			+\Gb'_\beta\partial_{y_i y_j}h_\beta\bigr)\\
			&\qquad+\Gb''_\beta\partial_{y_i}h_\beta\partial_{y_j}h_\beta,\\
			\partial_z\Gb_\beta&=(-1)^\beta\Gb'_\beta,
			\qquad \partial_{zz}\Gb_\beta=\Gb''_\beta.
		\end{split}
	\end{align}
After contraction with $g_\beta^{ij}(y,z)$, the two terms containing
$\Gb'_{\beta,i}\partial_{y_j}h_\beta$ give $\mathcal R_{\beta,3}$, while the difference between the first-argument Laplacians at heights $z$ and $0$ is $\mathcal R_{\beta,4}$.  Hence the equation for $\bar\Gb_\beta$ in \eqref{G-equation} becomes
\begin{align}\label{delta-G}
\begin{split}
\Delta\Gb_\beta
={}&W''(\Hb_\beta)\Gb_\beta+\xi_{2,\beta}
-(-1)^\beta\Gb'_\beta\mathcal R_{\beta,1}
+\Gb''_\beta\mathcal R_{\beta,2}\\
&+\mathcal R_{\beta,3}+\mathcal R_{\beta,4}
+\mathscr E_\beta\Gb_\beta.
\end{split}
\end{align}

For the heteroclinic, we have
	\[
	\partial_{y_i}\Hb_\beta=-(-1)^\beta\Hb'_\beta\partial_{y_i}h_\beta,
	\qquad
	\partial_{y_i y_j}\Hb_\beta
	=\Hb''_\beta\partial_{y_i}h_\beta\partial_{y_j}h_\beta
	-(-1)^\beta\Hb'_\beta\partial_{y_i y_j}h_\beta.
	\]
	Since $\partial_z\Hb_\beta=(-1)^\beta\Hb'_\beta$ and
	$\partial_{zz}\Hb_\beta=\Hb''_\beta$, we obtain
	\begin{align}\label{delta-H}
		\begin{split}
			\Delta\Hb_\beta
			={}&W'(\Hb_\beta)
			+\bar\xi_1\bigl((-1)^\beta(z-h_\beta(y))\bigr)\\
			&-(-1)^\beta\Hb'_\beta\mathcal R_{\beta,1}
			+\Hb''_\beta\mathcal R_{\beta,2}
			+\mathscr E_\beta\Hb_\beta.
		\end{split}
	\end{align}
Finally, by the Allen--Cahn equation
	\begin{equation}\label{unewpde form}
		\Delta u=W'(\Hb_*+\Gb_*+\phi).
	\end{equation}
 Subtracting the sum of \eqref{delta-H} and \eqref{delta-G} from \eqref{unewpde form} gives \eqref{phieq}.  Finally, \eqref{phi perp} is \eqref{e.ortho.H'_al}. 
\end{proof}

The nonlinear terms admit the following exact decomposition
\begin{align}\label{interaction two forms=}
	\begin{split}
		W'(u)-\sum_\beta W'(\Hb_\beta)-\sum_\beta W''(\Hb_\beta)\Gb_\beta
		=W''(\Hb_*)\phi+\mathcal R(\Gb_*+\phi)+\mathcal I_1+\mathcal I_2,
	\end{split}
\end{align}
where
\begin{equation}\label{Rgphidef}
	\mathcal R(\Gb_*+\phi)
	=W'(\Hb_*+\Gb_*+\phi)-W'(\Hb_*)-W''(\Hb_*)(\Gb_*+\phi),
\end{equation}
\begin{equation}\label{defI1-}
	\mathcal I_1=W'(\Hb_*)-\sum_\beta W'(\Hb_\beta),
\end{equation}
and
\begin{equation}\label{defI2-}
	\mathcal I_2=\sum_\beta\mathcal I_{2,\beta}
	=\sum_\beta\bigl(W''(\Hb_*)-W''(\Hb_\beta)\bigr)\Gb_\beta.
\end{equation}
Moreover, by Taylor's formula
\[
\mathcal R(\Gb_*+\phi)
=(\Gb_*+\phi)^2\int_0^1(1-t)
W'''\bigl(\Hb_*+t(\Gb_*+\phi)\bigr)\,dt.
\]
The sums $\Hb_*$ and $\Gb_*$ have uniform $C^{2,\theta}(B_1^+(p))$ bounds for $p\in B^+_{7R/8-2}(0)$, by exponential decay and separation of the sheets. Interior and Neumann estimates for $u$ on $B_2^+(p)$ give the same bound for $u$, and hence for $\phi$, on $B_1^+(p)$.
Hence,
\begin{equation}\label{RGphi}
	\|\mathcal R(\Gb_*+\phi)\|_{C^{k,\theta}(\Omega)}
	\leq C\bigl(\|\Gb_*\|_{C^{k,\theta}(\Omega)}^2
	+\|\phi\|_{C^{k,\theta}(\Omega)}^2\bigr),
	\quad k=0,1,2.
\end{equation}
Constants in these estimates do not depend on the number of sheets.

\begin{lemma}[Interaction terms]\label{interaction_terms}
	For $(y,z)\in\mathcal M_\alpha^4\cap B^+_{7R/8-2}(0)$,
	\begin{equation}\label{I1estimates}
		|\mathcal I_1(y,z)|\leq C\bigl(e^{-D_\alpha(y)}+\varepsilon^2\bigr),
	\end{equation}
	\begin{equation}\label{I2estimates}
		|\mathcal I_2(y,z)|\leq C\bigl(e^{-3D_\alpha(y)/2}+\varepsilon^2\bigr).
	\end{equation}
	For $(y,z)\in\mathcal M_\alpha^3\cap B^+_{7R/8-2}(0)$, the corresponding norm estimates are
	\begin{align}\label{I1estimateslip}
		\|\mathcal I_1\|_{C^\theta(B_1^+(y,z))}
		&\leq C\|\mathcal I_1\|_{\mathrm{Lip}(B_1^+(y,z))}
		\lesssim \sup_{B_2^\alpha(y)}e^{-D_\alpha}+\varepsilon^2,\\
		\|\mathcal I_2\|_{C^\theta(B_1^+(y,z))}
		&\leq C\|\mathcal I_2\|_{\mathrm{Lip}(B_1^+(y,z))}
		\lesssim \sup_{B_2^\alpha(y)}e^{-3D_\alpha/2}+\varepsilon^2.
		\label{I2estimateslip}
	\end{align}
\end{lemma}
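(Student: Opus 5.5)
The plan is to follow the pointwise argument of Wang--Wei for \cite[Lemma 4.3]{wang-weiadv}. The changes are that the twisted-coordinate comparison of Proposition \ref{p.conseq.almost.parallel} replaces the Fermi one, and that $\mathcal I_2$ carries the extra factor $\varepsilon$ coming from $\Gb_\beta$.

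\emph{Setup.} Fix $(y,z)\in\mathcal M^4_\alpha$ in $\alpha$-coordinates and write $\Hb_*=\Hb_\alpha+\psi$, where
\[
\psi=\sum_{\beta\neq\alpha}\bigl(\Hb_\beta+\operatorname{sgn}(\beta-\alpha)(-1)^\beta\bigr).
\]
Each summand is bounded by $Ce^{-|t_\beta(y,z)|}$ plus cutoff tails of order $\varepsilon^3$, by \eqref{cut-H-def} and \eqref{xiestimates}. Lemma \ref{l.sum.of.exp} (estimate \eqref{exponential sum2}) then gives
\[
|\psi|\lesssim e^{-|t_{\alpha+1}|}+e^{-|t_{\alpha-1}|}+\varepsilon^3 .
\]

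\emph{Geometric inequality.} The key input is that, for $\beta=\alpha\pm1$,
\[
|z|+|t_\beta(y,z)|\ge |t_\beta(y,0)|-o(1)\ge D_\alpha(y)-o(1).
\]
This follows from \eqref{eq:tb-tb0}, $t_\beta(y,z)=t_\beta(y,0)+z+o(\varepsilon^{1/3})$, together with the triangle inequality. We may restrict to $|z|\le K|\log\varepsilon|$; outside this range all terms are $O(\varepsilon^2)$ because of the cutoffs in $\overline{\Hb}$ and $\bgab$. In addition, membership in $\mathcal M^4_\alpha$ gives $|t_\beta(y,z)|\ge|z|-4$.

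\emph{Estimate for $\mathcal I_1$.} Since $W'$ is odd and $W'(\pm1)=0$, for $\beta\neq\alpha$ we have $W'(\Hb_\beta)=W''(\pm1)\bigl(\Hb_\beta\mp1\bigr)+O\bigl(|\Hb_\beta\mp1|^2\bigr)$, with $W''(\pm1)=1$. Taylor expansion in $\psi$ then yields
\[
\mathcal I_1=\bigl(W''(\Hb_\alpha)-1\bigr)\psi+O\Bigl(\psi^2+\sum_{\beta\neq\alpha}|\Hb_\beta\mp1|^2\Bigr).
\]
The factor satisfies $|W''(\Hb_\alpha)-1|\lesssim e^{-|z|}$, so the first term is at most $e^{-|z|-|t_\beta|}$. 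The quadratic terms are at most $e^{-2|t_\beta|}\le e^{4}\,e^{-|t_\beta|-|z|}$, using $|t_\beta|\ge|z|-4$. Both are therefore $\lesssim e^{-D_\alpha(y)}$ by the geometric inequality. The metric and cutoff errors contribute $O(\varepsilon^2)$, which proves \eqref{I1estimates}.

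\emph{Estimate for $\mathcal I_2$.} Split into the terms $\beta=\alpha$ and $\beta\neq\alpha$.
\begin{itemize}
\item For $\beta=\alpha$: $|W''(\Hb_*)-W''(\Hb_\alpha)|\lesssim|\psi|$, and \eqref{Galpha epsilon} gives $|\Gb_\alpha|\lesssim\varepsilon e^{-\sigma|z|}$. The product is $\lesssim\varepsilon e^{-\sigma(|z|+|t_\beta|)}\lesssim\varepsilon e^{-\sigma D_\alpha}$.
\item For $\beta\neq\alpha$: $|W''(\Hb_*)-W''(\Hb_\beta)|\lesssim e^{-|z|}+|\psi|$, while $|\Gb_\beta|\lesssim\varepsilon e^{-\sigma|t_\beta|}$. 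This gives the same bound.
\end{itemize}
Choose $\sigma=3/4$ and apply Young's inequality, $\varepsilon e^{-\frac34D_\alpha}\le\varepsilon^2+e^{-\frac32D_\alpha}$, to obtain \eqref{I2estimates}.

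\emph{Lipschitz bounds.} For \eqref{I1estimateslip} and \eqref{I2estimateslip}, differentiate the expressions above once. The derivatives of $t_\beta$ are controlled by \eqref{e.partial_zt_beta}, \eqref{0Dalphaztbeta} and \eqref{nabla2alphaztbeta}, those of $\Pi_\beta$ by Lemma \ref{l.t.Pi}, and the $C^{1}$ norm of the optimal shift by Remark \ref{r.optimal_h_shift}. Each derivative reproduces the same exponential product structure. Passing from $\mathcal M^4_\alpha$ to $\mathcal M^3_\alpha$ guarantees that the ball $B^+_1(y,z)$ lies in $\mathcal M^4_\alpha$, with base points in $B^\alpha_2(y)$; this produces the supremum over $B_2^\alpha(y)$. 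The constant depends only on the two neighbours $\alpha\pm1$ via Lemma \ref{l.sum.of.exp}, hence not on the number of sheets.

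\emph{Main obstacle.} The delicate step is the geometric inequality $|z|+|t_\beta(y,z)|\ge D_\alpha(y)-o(1)$ uniformly in the logarithmic strip, together with the tail bookkeeping that turns the $\varepsilon^3$-type cutoff errors into the stated $\varepsilon^2$. If \eqref{eq:tb-tb0} is too weak near $|z|\sim|\log\varepsilon|$, I would instead integrate \eqref{e.partial_zt_beta} along the $Z_\alpha$ flow line from $(y,0)$ to $(y,z)$.
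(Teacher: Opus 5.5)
Your proposal is correct and follows essentially the same route as the paper. You write $\mathcal I_1=(W''(\Hb_\alpha)-1)\psi$ plus quadratic tails, and bound every piece by products $e^{-|z|-|t_{\alpha\pm1}|}$. For $\mathcal I_2$ you split into own-sheet and foreign-sheet parts and use the $\varepsilon e^{-\sigma|\cdot|}$ decay of $\Gb$, obtaining products $\varepsilon e^{-\sigma(|z|+|t_{\alpha\pm1}|)}$. You then apply $|z|+|t_\beta(y,z)|\ge|t_\beta(y,0)|-o(1)$, use Young's inequality with $\sigma=3/4$, and differentiate once for the Lipschitz bounds.

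One small slip: the $t_\gamma$ are $1$-Lipschitz, so a unit ball around a point of $\mathcal M^3_\alpha$ lies only in $\mathcal M^5_\alpha$, not in $\mathcal M^4_\alpha$. The paper uses $\mathcal M^6_\alpha$ to also absorb the coordinate and metric distortion. You should therefore note that your pointwise argument runs verbatim on any fixed $\mathcal M^\lambda_\alpha$, using $|t_{\alpha\pm1}|\ge|z|-\lambda$, at the cost of $\lambda$-dependent constants.
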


\begin{proof}
The proof follows the interaction estimates in \cite[Section 4.2]{wang-weiadv}.
By Proposition \ref{l.projection.error}, separation, and Lemma \ref{l.sum.of.exp}, we have
\begin{equation}\label{e.s6.sum.at.point}
 \sum_{\beta:\,|t_\beta(x)|\leq10|\log\varepsilon|}
 e^{-\sigma|t_\beta(x)|}\leq C_\sigma
 \qquad(\sigma>0).
\end{equation}
The same geometric-series argument bounds the sum over $|t_\beta(x)|>s$ by $C_\sigma e^{-\sigma s}$.
On $\mathcal M_\alpha^\lambda$, the sum over all the sheets $\beta \neq \alpha$ is bounded by
$C_\sigma e^{-\sigma|z|}$.

Let us begin with the term $\mathcal I_1=W'(\Hb_*)-\sum_\beta W'(\Hb_\beta)$. For $\beta\ne\alpha$, exponential decay gives
\begin{align}\label{e.s6.phase.tails}
 |\Hb_\beta-\operatorname{sgn}(\alpha-\beta)(-1)^\beta|
       +|\nabla\Hb_\beta|&\leq Ce^{-|t_\beta|},\notag\\
 |W''(\Hb_\alpha)-1|+|\nabla W''(\Hb_\alpha)|&\leq Ce^{-|z|}.
\end{align}
Hence, Taylor expansion using $W'(\pm1)=0$ and $W''(\pm1)=1$ yields
\begin{equation}\label{e.s6.I1.expansion}
 \begin{split}
 \mathcal I_1={}&[W''(\Hb_\alpha)-1]
 \sum_{\beta\ne\alpha}
 [\Hb_\beta-\operatorname{sgn}(\alpha-\beta)(-1)^\beta]+O\Bigl(\bigl(\sum_{\beta\ne\alpha}e^{-|t_\beta|}\bigr)^2\Bigr).
 \end{split}
\end{equation}
Differentiating the integral Taylor remainder and using \eqref{e.s6.phase.tails} gives the same quadratic bound for the first derivatives. Hence
\begin{equation}\label{e.s6.I1.product}
 |\mathcal I_1|+|\nabla\mathcal I_1|
 \leq Ce^{-|z|}\sum_{\beta\ne\alpha}e^{-|t_\beta|}.
\end{equation}
The terms with $|t_\beta|>2|\log\varepsilon|$ sum to $O(\varepsilon^2)$.  For the other indices, Proposition \ref{l.projection.error} gives $|z|+|t_\beta(y,z)|\geq|t_\beta(y,0)|-o(1)$.
Summing these contributions proves \eqref{I1estimates}.

For $\mathcal I_2=\sum_\beta[W''(\Hb_*)-W''(\Hb_\beta)]\Gb_\beta$, let us first separate the $\alpha$-sheet contribution from the contributions of the other sheets. 

For the $\alpha$ sheet, by Taylor expansion, 
\begin{equation}\label{W''H-W''Halpha}
|W''(\Hb_*)-W''(\Hb_\alpha)|
+|\nabla(W''(\Hb_*)-W''(\Hb_\alpha))|
\leq C\sum_{\beta\ne\alpha}e^{-|t_\beta|},
\end{equation}
while for $\beta\ne\alpha$,
\begin{equation}\label{W''Hbeta-W''pm1}
|W''(\Hb_\beta)-1|+|\nabla W''(\Hb_\beta)|
\leq Ce^{-|t_\beta|}.
\end{equation}
Combining these inequalities with Corollary \ref{improved G estimates}, for any fixed
$\sigma\in[3/4,1)$,
\begin{align}\label{I2 formula}
\begin{split}
|\mathcal I_2|+|\nabla\mathcal I_2|
&\leq C_\sigma\varepsilon\sum_{\beta\ne\alpha}
\bigl(e^{-\sigma|z|-|t_\beta|}
+e^{-|z|-\sigma|t_\beta|}\bigr)\leq C_\sigma\varepsilon\sum_{\beta\ne\alpha}
e^{-\sigma(|z|+|t_\beta|)}.
\end{split}
\end{align}
The second inequality uses the preceding distance splitting; the last is Young's inequality.
This proves \eqref{I2estimates}.

If either distance is larger than $2|\log\varepsilon|$, then
\begin{equation}\label{I2 formula1}
C_\sigma\varepsilon e^{-2\sigma|\log\varepsilon|}
=C_\sigma\varepsilon^{1+2\sigma}
\leq C_\sigma\varepsilon^2.
\end{equation}
Otherwise the distance comparison and Lemma \ref{l.sum.of.exp} give
\begin{equation}\label{I2 formula2}
C_\sigma\varepsilon\sum_{\beta\ne\alpha}
e^{-\sigma|t_\beta(y,0)|}
\leq C_\sigma\varepsilon e^{-\sigma D_\alpha(y)}
\leq C_\sigma\bigl(\varepsilon^2+e^{-2\sigma D_\alpha(y)}\bigr).
\end{equation}
Taking $\sigma=3/4$ proves \eqref{I2estimates} and its first-derivative analogue. 

Fix $p=Y_{\Sigma_\alpha}(y,z)\in\mathcal M_\alpha^3\cap B^+_{7R/8-2}(0)$. Then $B_1^+(p)\subset\mathcal M_\alpha^6$ and $\Pi_\alpha(B_1^+(p))\subset B_2^\alpha(y)$. Apply the preceding pointwise and first-derivative estimates at the points of $B_1^+(p)$, taking the supremum of $e^{-D_\alpha}$ and $e^{-3D_\alpha/2}$ on $B_2^\alpha(y)$. The Lipschitz bound on $B_1^+(p)$ controls its $C^\theta$ norm and proves \eqref{I1estimateslip} and \eqref{I2estimateslip}.
\end{proof}

\begin{lemma}\label{error phi equation}
	Let $(y,z)\in B^+_{7R/8-2}(0)$.  Then
	\begin{equation}\label{error phi equation1}
		\Bigl\|\sum_\beta\xi_\beta\Bigr\|_{C^\theta(B_1^+(y,z))}
		\leq C\varepsilon^3.
	\end{equation}
	For $|z|\leq9|\log\varepsilon|$ one has
	\begin{align}\label{MEphi}
		\|\mathscr E_\alpha\phi\|_{C^\theta(B_1^+(y,z))}
		&\leq C\varepsilon(1+|z|^2)
		\|\phi\|_{C^{2,\theta}(B_1^+(y,z))}\notag\\
		&\leq C\varepsilon^2(1+|z|^4)
		+C\|\phi\|_{C^{2,\theta}(B_1^+(y,z))}^2.
	\end{align}
	The other geometric and correction terms satisfy
	\begin{align}\label{error phi equation2}
		\Bigl\|\sum_\beta\mathscr E_\beta\Hb_\beta\Bigr\|_{C^\theta(B_1^+(y,z))}
		&\leq C\varepsilon
		\max_{\beta:\,|t_\beta(y,z)|\leq10|\log\varepsilon|}
		\|h_\beta\|_{C^{2,\theta}(B_2^\beta(\Pi_\beta(y,z)))}
		+C\varepsilon^2,\\
		\Bigl\|\sum_\beta\mathcal G_\beta\Bigr\|_{C^\theta(B_1^+(y,z))}
		&\leq C\varepsilon
		\max_{\beta:\,|t_\beta(y,z)|\leq10|\log\varepsilon|}
		\|h_\beta\|_{C^{2,\theta}(B_2^\beta(\Pi_\beta(y,z)))}
		+C\varepsilon^2.
		\label{error phi equation3}
	\end{align}
\end{lemma}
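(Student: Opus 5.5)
The proof treats the four estimates term by term. In each case the tool is a pointwise bound plus a first-derivative bound on $B_1^+(y,z)$; the Lipschitz norm then controls the $C^\theta$ norm, exactly as in the proof of Lemma \ref{interaction_terms}.

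\emph{The cut-off residuals $\xi_\beta$.} By \eqref{xiestimates} and \eqref{e.xi.delta.est}, each $\xi_\beta$ is supported where $4|\log\varepsilon|\le|t_\beta-h_\beta|\le 8|\log\varepsilon|$. In this region, every derivative up to order one in $(y,z)$ is bounded by $C\varepsilon^3e^{-|t_\beta|/4}$. The chain-rule factors $\nabla t_\beta$, $\nabla(h_\beta\circ\Pi_\beta)$ and $D\Pi_\beta$ are $O(1)$ by Lemma \ref{l.t.Pi} and Remark \ref{r.optimal_h_shift}. Summing over $\beta$ uses the geometric-series bound \eqref{e.s6.sum.at.point} (Lemma \ref{l.sum.of.exp}), which is uniform in the number of sheets. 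This gives \eqref{error phi equation1}.

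\emph{The operator $\mathscr E_\alpha\phi$.} By \eqref{e.s6.operator}, the coefficients of $\mathscr E_\alpha$ are products of $T$, $\nabla T$, $T\ast T$ and $H_\alpha T$. By \eqref{e.difference.z.tz}, these and their first derivatives are bounded by $C\varepsilon(1+|z|^2)$ for $|z|\le 9|\log\varepsilon|+1$. Here $H_\alpha T$ is even $O(\varepsilon^2)$ by Proposition \ref{geometric approximation} and \eqref{e.na.f}, and the $T\ast T$ terms are $O(\varepsilon^2|z|^4)\lesssim\varepsilon(1+|z|^2)$ in the logarithmic strip. The coefficients multiply derivatives of $\phi$ of order at most two, so the product rule for $C^\theta$ norms gives the first line of \eqref{MEphi}. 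The second line follows from Cauchy's inequality $ab\le a^2+b^2$.

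\emph{The terms $\mathscr E_\beta\Hb_\beta$.} Write $\mathscr E_\beta\Hb_\beta$ in the $\beta$-coordinates using the chain-rule formulas in the proof of Proposition \ref{p.s6.error.equation}. Every term contains at least one $y$-derivative of $\Hb_\beta$, hence a factor $\partial_y h_\beta$ or $\partial^2_y h_\beta$, multiplied by $\Hb'_\beta$ or $\Hb''_\beta$. The mixed term $T\ast\partial^2_{yz}\Hb_\beta$ is also of this form. The coefficients are $O(\varepsilon(1+|t_\beta|^2))$, and $|\Hb_\beta^{(k)}|\lesssim e^{-|t_\beta|}$, so each summand is bounded in $C^\theta(B_1^+(y,z))$ by
\[
C\varepsilon(1+|t_\beta|^2)e^{-|t_\beta|}\,\|h_\beta\|_{C^{2,\theta}(B_2^\beta(\Pi_\beta(y,z)))}.
\]
Sheets with $|t_\beta(y,z)|>10|\log\varepsilon|$ contribute $O(\varepsilon^{2})$ in total, using the tail sum together with $\|h_\beta\|_{C^{2,\theta}}=o(1)$. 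The remaining sheets are summed using \eqref{e.s6.sum.at.point}, which gives \eqref{error phi equation2}. Two technical points arise when the $C^\theta$ norm is taken in the $\alpha$-frame:
\begin{itemize}
\item the Hölder seminorms are transported by Lemma \ref{l.DPi.beta} and \eqref{e.holder.chain.rule};
\item the image of $B_1^+(y,z)$ under $\Pi_\beta$ lies inside $B_2^\beta$ by Lemma \ref{l.t.Pi}.
\end{itemize}

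\emph{The correction terms $\mathcal G_\beta$.} This is the step I expect to be the main obstacle. The terms $\mathcal G_{\beta,2},\mathcal G_{\beta,3},\mathcal G_{\beta,5}$ carry a factor of $\Gb_\beta$, which is $O(\varepsilon)$ with exponential decay by Corollary \ref{improved G estimates}, together with a factor of $\nabla h_\beta$. They are therefore bounded by $\varepsilon\|h_\beta\|_{C^{2,\theta}}$, and in fact better. For $\mathcal G_{\beta,4}$, use the metric and Christoffel-symbol expansions \eqref{e.g_ij.approx}, \eqref{e.gamma.alpha.approx}, \eqref{e.na.g.approx}, \eqref{e.partial.gamma.alpha.approx} together with $|\nabla_y\Gb_\beta|\lesssim\varepsilon e^{-\sigma|t_\beta|}$. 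This gives $\varepsilon^2|t_\beta|(1+|t_\beta|)e^{-\sigma|t_\beta|}$, which is $O(\varepsilon^2)$ after summation.

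The delicate term is $\mathcal G_{\beta,1}=(-1)^\beta\Gb'_\beta(H_\beta+\Delta_{\beta,z}h_\beta)$. The mean curvature contributes $|H_\beta(y,z)|\le |H_\beta(y,0)|+C\varepsilon^2|t_\beta|(1+|t_\beta|^3)$ by \eqref{e.mn.curv.approx}. Together with $|H_\beta(y,0)|=O(\varepsilon)$ from \eqref{e.grad.sff} and $|\Gb'_\beta|\lesssim\varepsilon e^{-\sigma|t_\beta|}$, this part is $O(\varepsilon^2)$. Its Hölder norm is handled the same way, using $|\nabla H_\beta|=O(\varepsilon)$ from \eqref{e.grad.sff}. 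The $\Delta_{\beta,z}h_\beta$ part is bounded by $\varepsilon\|h_\beta\|_{C^{2,\theta}}$ via Proposition \ref{prope.comparing laplacian}.

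Summing all five pieces over the sheets, with the same tail splitting as before, yields \eqref{error phi equation3}. The constants are independent of the number of sheets because every sum is controlled by Lemma \ref{l.sum.of.exp}.
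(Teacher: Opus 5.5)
Your term-by-term argument is essentially the paper's proof: pointwise and first-derivative bounds, the exponential decay of $\Hb_\beta^{(k)}$ and of $\Gb_\beta$ from Corollary \ref{improved G estimates}, the coefficient bounds \eqref{e.difference.z.tz}, and summation over sheets via \eqref{e.s6.sum.at.point}. There is one slip, which does not affect the conclusion: $\mathcal G_{\beta,5}=-\mathscr E_\beta\Gb_\beta$ does not always carry a factor of $\nabla h_\beta$, because $\bar\Gb_\beta$ also depends on its first argument, so terms such as $T^i\Gb'_{\beta,i}$ and $T^iT^j\Gb_{\beta,ij}$ appear with no $h$-factor and are not bounded by $\varepsilon\|h_\beta\|_{C^{2,\theta}}$; they are instead $O(\varepsilon^2e^{-\sigma|z|})$ by the same coefficient-times-$\bar\Gb_\beta$-derivative argument you use for $\mathcal G_{\beta,4}$ (the paper bounds $\mathcal G_{\beta,4}$ and $\mathcal G_{\beta,5}$ together this way), and the $C\varepsilon^2$ term in \eqref{error phi equation3} absorbs them.
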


\begin{proof}
We estimate the four expressions in the order in which they occur in the statement.

First, by \eqref{xiestimates} and \eqref{e.xi.delta.est}, we have
\[
\|\xi_\beta\|_{C^\theta(B_1^+(y,z))}
\leq C\varepsilon^3e^{-|t_\beta(y,z)|/4}, 
\]
and summing with \eqref{e.s6.sum.at.point} proves \eqref{error phi equation1}.

Next, on $|z|\leq9|\log\varepsilon|$, the coefficients in \eqref{e.s6.operator}, together with their local $C^\theta$ norms, are bounded by $C\varepsilon(1+|z|^2)$, so the product estimate gives the first line of \eqref{MEphi}; Young's inequality gives the second line.
	
For \eqref{error phi equation2}, the proof proceeds as in the previous paragraph. We compute
  \begin{align}
\Bigl|\sum_{\beta}\mathscr{E}_{\alpha }(y, z)(\Hb_{\beta})\Bigr|&\lesssim \sum_{\beta}e^{-\vert t_{\beta}(y, z)\vert} [\varepsilon |z|(|\nabla^2_{\beta}h_{\beta}|+ |\nabla_{\beta}h_{\beta}|)]\\
&\lesssim \varepsilon  \max_{\beta: |t_{\be}(y,0)|< 8 |\log \ve|}(|\nabla^2_{\beta}h_{\beta}|+ |\nabla_{\beta}h_{\beta}|)+\varepsilon^2, 
    \end{align}
where we appealed to \eqref{e.Dpi.bound} and \eqref{exponential sum2} to estimate $D\Pi_\beta$ and to control the exponential sum. The $C^{\theta}$-H\"older norm estimate follows in a similar way. 
	
	Finally, to prove \eqref{error phi equation3}, we estimate the five terms in \eqref{bold Gbeta} before summing them.  Notice that their $C^\theta$ norms are bounded respectively by
	\begin{align}\label{e.s6.G.five.bounds}
		\begin{split}
			\|\mathcal G_{\beta,1}\|_{C^\theta}
			&\leq C_\sigma\varepsilon e^{-\sigma|z|}
			\bigl(\varepsilon+\|h_\beta\|_{C^{2,\theta}}\bigr),\\
			\|\mathcal G_{\beta,2}\|_{C^\theta}
			&\leq C_\sigma\varepsilon e^{-\sigma|z|}
			\|h_\beta\|_{C^{2,\theta}}^2,\\
			\|\mathcal G_{\beta,3}\|_{C^\theta}
			&\leq C_\sigma\varepsilon e^{-\sigma|z|}
			\|h_\beta\|_{C^{1,\theta}},\\
			\|\mathcal G_{\beta,4}\|_{C^\theta}
			+\|\mathcal G_{\beta,5}\|_{C^\theta}
			&\leq C_\sigma\varepsilon^2 e^{-\sigma|z|},
		\end{split}
	\end{align}
where the first line uses $H_\beta=O(\varepsilon)$; the second uses
$\mathcal R_{\beta,2}=|\nabla_{\beta,z}h_\beta|^2$; and the third is the explicit term $\mathcal R_{\beta,3}$ in \eqref{xiRi}. For the fourth line, the coefficients in $\mathcal R_{\beta,4}$ are $O(\varepsilon(1+|z|^2))$ in $C_\mathrm{loc}^\theta$, while the  derivatives of $\bar\Gb_\beta$ are $O(\varepsilon e^{-\sigma|z|})$.  The same reasoning applies to $\mathscr E_\beta\Gb_\beta$, using \eqref{e.s6.shift.derivatives}.  Summing \eqref{e.s6.G.five.bounds} by \eqref{e.s6.sum.at.point} proves \eqref{error phi equation3}.
\end{proof}

\subsection{Mean curvature and the Toda system with rough error}
Substituting \eqref{interaction two forms=} into \eqref{phieq} gives
\begin{align}\label{phieq-interaction form}
	\begin{split}
		\Delta_{\alpha,z}\phi-H_\alpha\partial_z\phi+\partial_{zz}\phi
		={}&W''(\Hb_*)\phi+\mathcal I_1+\mathcal I_2
		+\mathcal R(\Gb_*+\phi)\\
		&+(-1)^\alpha\Hb'_\alpha(H_\alpha+\Delta_{\alpha,z}h_\alpha)
		-\Hb''_\alpha|\nabla_{\alpha,z}h_\alpha|^2\\
		&+\sum_{\beta\ne\alpha}
		\bigl[(-1)^\beta\Hb'_\beta\mathcal R_{\beta,1}
		-\Hb''_\beta\mathcal R_{\beta,2}\bigr]
		-\sum_\beta\xi_\beta\\
		&-\mathscr E_\alpha\phi-\sum_\beta\mathscr E_\beta\Hb_\beta
		+\sum_\beta\mathcal G_\beta,
	\end{split}
\end{align}
where the estimates for $\mathcal{R}(\Gb_*)$, $\mathcal{I}_1$, $\mathcal{I}_2$ and the last four terms are given in \eqref{RGphi}, Lemma \ref{interaction_terms} and Lemma \ref{error phi equation}. 

We can now derive the Toda system satisfied by the layer distances. 

\begin{proposition}[Toda system]\label{toda system rough}
	For sufficiently small $\varepsilon$, on $\Sigma_\alpha\cap B^+_{7R/8-2}(0)$,
	\begin{equation}\label{H+lh}
		H_\alpha(y,0)+\Delta_{\alpha,0}h_\alpha(y)
		=\frac{2}{\sigma_0}
		\bigl(A_{(-1)^{\alpha-1}}^2e^{-|t_{\alpha-1}(y,0)|}
		-A_{(-1)^\alpha}^2e^{-|t_{\alpha+1}(y,0)|}\bigr)
		+E_\alpha^0(y).
	\end{equation}
	Here $A_{\pm1}$ are defined in \eqref{Hsolution asymptotics}.  For every
	$x\in B^+_{6R/7}(0)$ and $0<r<R/60$, the remainder satisfies
	\begin{align}\label{E^0_alphaestimates}
		\begin{split}
			\max_\alpha\|E_\alpha^0\|_{C^\theta(\Sigma_\alpha\cap B_r^+(x))}
			\lesssim{}&\varepsilon^2+\varepsilon^{1/3}A(r+25|\log\varepsilon|;x)
			+A(r+25|\log\varepsilon|;x)^{3/2}\\
			&+\max_\alpha
			\|H_\alpha+\Delta_{\alpha,0}h_\alpha\|_
			{C^\theta(\Sigma_\alpha\cap B^+_{r+25|\log\varepsilon|}(x))}^2\\
			&+\|\phi\|_{C^{2,\theta}(B^+_{r+25|\log\varepsilon|}(x))}^2.
		\end{split}
	\end{align}
\end{proposition}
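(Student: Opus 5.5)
The plan is to follow \cite[Section 4.3]{wang-weiadv}: multiply \eqref{phieq-interaction form} by $\Hb'_\alpha(y,z)$ and integrate in $z$ over $\mathbb R$, at a fixed base point $y\in\Sigma_\alpha$. The quantity $H_\alpha+\Delta_{\alpha,0}h_\alpha$ will come from the term $(-1)^\alpha\Hb'_\alpha(H_\alpha+\Delta_{\alpha,z}h_\alpha)$. After we replace $H_\alpha(y,z)$ by $H_\alpha(y,0)$ using \eqref{e.mn.curv.approx}, and $\Delta_{\alpha,z}h_\alpha$ by $\Delta_{\alpha,0}h_\alpha$ using Proposition \ref{prope.comparing laplacian}, its coefficient is $\int\Hb_\alpha'^2=\sigma_0+O(\varepsilon^3)$ by \eqref{H'2integral}. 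The replacement errors are $O(\varepsilon^2|\log\varepsilon|^k\|h_\alpha\|_{C^2})$ and $O(\varepsilon\|h_\alpha\|_{C^2})$, both weighted against the exponentially decaying $\Hb'_\alpha$. We then bound $\|h_\alpha\|_{C^{2,\theta}}$ by \eqref{phicontrolh} and use Young's inequality to turn $\varepsilon\|\phi\|$ into $\varepsilon^2+\|\phi\|^2$, and $\varepsilon e^{-D_\alpha}$ into $\varepsilon^2+A^{3/2}$ (with an $\varepsilon^{1/3}A$ term left over).

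The left-hand side is handled by integration by parts in $z$ together with the orthogonality \eqref{phi perp}. Since $\partial_{zz}\Hb'_\alpha=W''(\Hb_\alpha)\Hb'_\alpha$, the combination $\int(\partial_{zz}\phi-W''(\Hb_*)\phi)\Hb'_\alpha$ reduces to $-\int(W''(\Hb_*)-W''(\Hb_\alpha))\phi\Hb'_\alpha$. This is $O(\|\phi\|\,e^{-D_\alpha})$, and Young's inequality absorbs it. For the horizontal term we write $\int\Delta_{\alpha,z}\phi\,\Hb'_\alpha=\Delta_{\alpha,0}\int\phi\Hb'_\alpha+\text{commutators}$. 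The first piece vanishes by orthogonality. The commutators involve derivatives of $\Hb'_\alpha$ in $y$ (through $h_\alpha$) and the metric variation in $z$, so they are of size $\|\phi\|_{C^2}(\|h_\alpha\|_{C^2}+\varepsilon|\log\varepsilon|^2)$; the $h_\alpha$ factor is controlled via \eqref{phicontrolh}, which generates the square of $H_\alpha+\Delta h_\alpha$ through Young's inequality. The terms $H_\alpha\partial_z\phi$ and $\mathscr E_\alpha\phi$ are $O(\varepsilon\|\phi\|)$ by \eqref{MEphi}.

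On the right-hand side, the main term is $\int\mathcal I_1\Hb'_\alpha$. Using the expansion \eqref{e.s6.I1.expansion} and the asymptotics \eqref{Hsolution asymptotics}, the neighbours $\alpha\pm1$ contribute $(-1)^{\alpha}\cdot 2A^2_{\pm}e^{-|t_{\alpha\mp1}(y,0)|}$, computed exactly as in \cite[Lemma 4.4]{wang-weiadv}. We use Proposition \ref{p.conseq.almost.parallel} to replace $t_\beta(y,z)$ by $t_\beta(y,0)+z$ up to $O(\varepsilon^{1/3})$, which produces the $\varepsilon^{1/3}A$ error. Farther sheets contribute $O(A^{3/2})$ by Lemma \ref{l.sum.of.exp}. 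Dividing by $(-1)^\alpha\sigma_0$ then gives the stated coefficients. The remaining right-hand terms are bounded as follows:
\begin{itemize}
\item $\mathcal I_2$ is $O(\varepsilon^2+A^{3/2})$ by \eqref{I2estimates};
\item $\mathcal R(\Gb_*+\phi)$ is $O(\varepsilon^2+\|\phi\|^2)$ by \eqref{RGphi};
\item $\Hb''_\alpha|\nabla h_\alpha|^2$ is quadratic in $h_\alpha$;
\item $\xi_\beta$ is $O(\varepsilon^3)$;
\item $\mathscr E_\beta\Hb_\beta$ and $\mathcal G_\beta$ are controlled by \eqref{error phi equation2}--\eqref{error phi equation3} and \eqref{phicontrolh};
\item the cross terms $\Hb'_\beta\mathcal R_{\beta,1}$, $\beta\neq\alpha$, are $\lesssim A\,\|H_\beta+\Delta h_\beta\|\lesssim A^2+\|H_\beta+\Delta h_\beta\|^2$.
\end{itemize}

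The $C^\theta$ estimate follows by applying the same decomposition to difference quotients, using \eqref{I1estimateslip} and the Lipschitz bounds on the integrands. The enlarged radius $r+25|\log\varepsilon|$ comes from the $z$-support $|z|\le 9|\log\varepsilon|$ together with the projections $\Pi_\beta$.

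The step I expect to be the main obstacle is the $\mathcal G_\beta$ and $\mathcal G_{\alpha,1}$ contributions. A priori $\int\Gb'_\alpha\mathcal R_{\alpha,1}\Hb'_\alpha$ is only $O(\varepsilon\,|H_\alpha+\Delta h_\alpha|)$, which is too large unless it cancels. I would try first to use oddness: $\Gb'_\alpha$ is even, so this does not vanish, but the $\Gb$-part of the coefficient can be folded into the left-hand side as $\sigma_0+O(\varepsilon)$. The remaining $\varepsilon$-weighted terms then close via Young's inequality as $\varepsilon^2+\|H+\Delta h\|^2$.
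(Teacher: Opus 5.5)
Your proposal is correct and follows essentially the same route as the paper: project \eqref{phieq-interaction form} onto $\Hb'_\alpha$, use the twice-differentiated orthogonality \eqref{phi perp} and the ODE for $\Hb'_\alpha$ on the left, read off the adjacent interactions from $\int\mathcal I_1\Hb'_\alpha\,dz$ as in \cite{wang-weiadv} (the $\varepsilon^{1/3}A$ loss comes from the distance comparison), and absorb everything else via \eqref{phicontrolh} and Young's inequality. The $\mathcal G_{\alpha,1}$ term you flag as the main obstacle is not one. The paper simply uses \eqref{error phi equation3} (that is, $H_\alpha=O(\varepsilon)$ together with \eqref{phicontrolh}), and in any case $\varepsilon\|H_\alpha+\Delta_{\alpha,0}h_\alpha\|_{C^\theta}\leq\varepsilon^2+\|H_\alpha+\Delta_{\alpha,0}h_\alpha\|_{C^\theta}^2$ is already admissible. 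You should, however, integrate polynomial factors in $z$ against the decay of $\Hb'_\alpha$ rather than bound them by powers of $|\log\varepsilon|$: a bound such as $\varepsilon|\log\varepsilon|^2\|\phi\|_{C^{2,\theta}}$ would only give $\varepsilon^2|\log\varepsilon|^4$ after Young's inequality.
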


\begin{proof}
Fix $x\in B^+_{6R/7}(0)$ and $0<r<R/60$. 
Differentiating \eqref{phi perp} twice and taking the horizontal trace gives
\begin{align}\label{0=ONintez}
\begin{split}
 \int_{\mathbb R}\Delta_{\alpha,0}\phi\,\Hb'_\alpha\,dz
 ={}&(-1)^\alpha\Delta_{\alpha,0}h_\alpha
                      \int_{\mathbb R}\phi\Hb''_\alpha\,dz
       -|\nabla_{\alpha,0}h_\alpha|^2
                      \int_{\mathbb R}\phi\Hb'''_\alpha\,dz\\
 &+2(-1)^\alpha\int_{\mathbb R}
       \langle\nabla_{\alpha,0}\phi,\nabla_{\alpha,0}h_\alpha\rangle
                                           \Hb''_\alpha\,dz.
\end{split}
\end{align}
Multiply \eqref{phieq-interaction form} by $\Hb'_\alpha$ and integrate in $z$ with respect to $dz$.  The identity
$\bar\Hb'''=W''(\bar\Hb)\bar\Hb'+\bar\xi'_1$
cancels the principal normal term, up to the cutoff error.

Now set 
\begin{equation}\label{bold{E}}
 \mathbf E_\alpha(y)=\int_{\mathbb R}
 \bigl[\mathcal I_2+\mathcal R(\Gb_*+\phi)-\mathscr E_\alpha\phi
       -\sum_\beta\mathscr E_\beta\Hb_\beta+\sum_\beta\mathcal G_\beta\bigr]
                                                     \Hb'_\alpha\,dz.
\end{equation}
Lemmas \ref{interaction_terms} and \ref{error phi equation}, \eqref{RGphi}, and the correction bounds in Corollary \ref{improved G estimates} give
\begin{equation}\label{bold{E}-estimate}
 \max_\alpha\|\mathbf E_\alpha\|_{C^\theta(\Sigma_\alpha\cap B_r^+(x))}
 \leq C\varepsilon^2+CA(r+25|\log\varepsilon|;x)^{3/2}
                                      +C\|\phi\|_{C^{2,\theta}(B^+_{r+25|\log\varepsilon|}(x))}^2.
\end{equation}
Then, we estimate the remaining terms as in \cite[proof of Lemma 5.1]{wang-weiadv}, using Proposition \ref{geometric approximation} and Lemma \ref{phicontrolhprop} . 
In particular, for the $\beta \neq \alpha$ curvature terms, retain
$H_\beta(\cdot,0)+\Delta_{\beta,0}h_\beta$
as a single $C^\theta$ factor and bound the overlap using
\[
 \int_{\mathbb R}e^{-|z|-|z+t|}\,dz
       =(1+|t|)e^{-|t|}\leq Ce^{-3|t|/4}.
\]
Together with the distance comparison and exponential summability, this bounds their contribution by a constant times $A(r+25|\log\varepsilon|;x)^{3/4}$ multiplied by $\max_\beta\|H_\beta+\Delta_{\beta,0}h_\beta\|_{C^\theta}$.
The potential-difference term is bounded in the same way, with the curvature norm replaced by $\|\phi\|_{C^{2,\theta}}$.  Young's inequality, the cutoff estimates, and \eqref{bold{E}-estimate} therefore give
\begin{align}\label{phi toda integral equation}
 &\max_\alpha\Bigl\|
    (-1)^\alpha(H_\alpha(\cdot,0)+\Delta_{\alpha,0}h_\alpha)
                       \int_{\mathbb R}(\Hb'_\alpha)^2\,dz
                  +\int_{\mathbb R}\mathcal I_1\Hb'_\alpha\,dz
                         \Bigr\|_{C^\theta}\notag\\
 &\qquad\leq C\varepsilon^2+CA(r+25|\log\varepsilon|;x)^{3/2}
       +C\max_\beta\|H_\beta+\Delta_{\beta,0}h_\beta\|_{C^\theta}^2
       +C\|\phi\|_{C^{2,\theta}}^2.
\end{align}
All these estimates hold in $C^\theta$.

In order to estimate the leading interaction, we use the decomposition in \cite[Lemma B.1]{wang-weiadv}, where only the two adjacent linear tails contribute at leading order.  The quadratic tails contribute $CA(r+25|\log\varepsilon|;x)^{3/2}$, while nonadjacent tails contribute $CA(r+25|\log\varepsilon|;x)^2$.  On the common logarithmic strips, the estimates of Section \ref{section twisted Fermi} give $t_\beta(\cdot,z)-t_\beta(\cdot,0)-z=O_{C^1}(\varepsilon^{1/3})$; the first-derivative bound follows by differentiating along the twisted flow.  We can then proceed as in \cite[Lemma 5.1]{wang-weiadv}.
Using \eqref{Hsolution asymptotics} gives
\begin{align}\label{e.s6.projected.interaction}
 &\max_\alpha\left\|
    \int_{\mathbb R}\mathcal I_1\Hb'_\alpha\,dz
     +2(-1)^\alpha\left(
       A_{(-1)^{\alpha-1}}^2e^{-|t_{\alpha-1}(\cdot,0)|}
       -A_{(-1)^\alpha}^2e^{-|t_{\alpha+1}(\cdot,0)|}
                       \right)\right\|_{C^\theta}\notag\\
 &\qquad\leq C\varepsilon^2
       +C\varepsilon^{1/3}A(r+25|\log\varepsilon|;x)
       +CA(r+25|\log\varepsilon|;x)^{3/2}
       +C\|\phi\|_{C^{2,\theta}}^2.
\end{align}

Finally,
$\int_{\mathbb R}(\Hb'_\alpha)^2\,dz
 =\int_{\mathbb R}(\bar\Hb')^2\,dz
 =\sigma_0+O(\varepsilon^3)$,
independently of $y$ and $\alpha$.  Combining \eqref{phi toda integral equation} and \eqref{e.s6.projected.interaction}, dividing by this positive mass and by $(-1)^\alpha$, and using $A^2\leq A^{3/2}$ for $0\leq A\leq1$, proves \eqref{H+lh} with the remainder estimate \eqref{E^0_alphaestimates}.
\end{proof}

\begin{corollary}\label{H+htheta}
	For sufficiently small $\varepsilon$, every $x\in B^+_{6R/7}(0)$ and $0<r<R/60$ satisfy
	\begin{align}\label{H+deltahestimates}
		\begin{split}
			&\max_\alpha
			\|H_\alpha+\Delta_{\alpha,0}h_\alpha\|_{C^\theta(\Sigma_\alpha\cap B_r^+(x))}\\
			&\quad\leq\frac14\bigl[
			\max_\alpha\|H_\alpha+\Delta_{\alpha,0}h_\alpha\|_
			{C^\theta(\Sigma_\alpha\cap B^+_{r+25|\log\varepsilon|}(x))}
			+\|\phi\|_{C^{2,\theta}(B^+_{r+25|\log\varepsilon|}(x))}\bigr]\\
			&\qquad\quad+C\varepsilon^2+CA(r+25|\log\varepsilon|;x).
		\end{split}
	\end{align}
\end{corollary}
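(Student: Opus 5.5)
Corollary \ref{H+htheta} should follow by taking the $C^\theta$ norm of both sides of the Toda identity \eqref{H+lh} in Proposition \ref{toda system rough} and absorbing the quadratic parts of \eqref{E^0_alphaestimates}. Fix $x\in B^+_{6R/7}(0)$ and $0<r<R/60$, and write $B_s=B^+_s(x)$. Set
\[
\mathcal X=\max_\alpha\|H_\alpha+\Delta_{\alpha,0}h_\alpha\|_{C^\theta(\Sigma_\alpha\cap B_{r+25|\log\varepsilon|})},\qquad
\mathcal P=\|\phi\|_{C^{2,\theta}(B_{r+25|\log\varepsilon|})},
\]
and $A=A(r+25|\log\varepsilon|;x)$.

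\textbf{Step 1 (leading interaction).} The interaction terms on the right of \eqref{H+lh} are
$\frac{2}{\sigma_0}(A_{\pm}^2e^{-|t_{\alpha\mp1}(\cdot,0)|})$.
\begin{itemize}
\item Their sup on $\Sigma_\alpha\cap B_r$ is at most $CA$ by the definitions of $D_\alpha$ and $A(\cdot;x)$, since $B_r\subset B_{r+25|\log\varepsilon|}$.
\item For the H\"older seminorm I will use the gradient. Proposition \ref{p.conseq.almost.parallel} gives $|\nabla_{\alpha,0}t_\beta|\lesssim\varepsilon^{1/5}$.
\item This yields $|\nabla_{\alpha,0}e^{-|t_\beta(\cdot,0)|}|\lesssim e^{-|t_\beta(\cdot,0)|}$ on unit sheet balls.
\item These unit balls stay inside the enlarged region, so the $C^\theta$ norm on unit scale is $\lesssim A$.
\end{itemize}

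\textbf{Step 2 (remainder).} By \eqref{E^0_alphaestimates}, $\|E^0_\alpha\|_{C^\theta}\lesssim \varepsilon^2+\varepsilon^{1/3}A+A^{3/2}+\mathcal X^2+\mathcal P^2$. Since $A\le1$, the terms $\varepsilon^{1/3}A+A^{3/2}\le 2A$.

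\textbf{Step 3 (absorbing the quadratic terms).} This is the main obstacle. The quadratic terms $C\mathcal X^2+C\mathcal P^2$ must become $\frac14(\mathcal X+\mathcal P)$, which requires $C\mathcal X\le\frac14$ and $C\mathcal P\le\frac14$ for small $\varepsilon$, i.e. $\mathcal X=o(1)$ and $\mathcal P=o(1)$ uniformly.
\begin{itemize}
\item For $\mathcal X$: $H_\alpha=O(\varepsilon)$ by Proposition \ref{l.grad.A}, and $\|h_\alpha\|_{C^{2,\theta}}=o(1)$ by Proposition \ref{p.optimal_h_shift} and Remark \ref{r.optimal_h_shift}. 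Hence $\mathcal X=o(1)$.
\item For $\mathcal P$: I need $\|\phi\|_{C^{2,\theta}}=o(1)$ uniformly. I will get this by contradiction and compactness, as in Lemma \ref{l.1D.soln}. After recentring at any point of $B^+_{7R/8-2}(0)$, $u$ converges in $C^3_{\mathrm{loc}}$ to $\mathbb H$ (after even reflection if near the boundary) or to $\pm1$. At the same time $\Hb_*\to$ the same limit, $\Gb_*=O(\varepsilon)$ by Corollary \ref{improved G estimates}, and $h=o(1)$. Hence $\phi\to0$ in $C^{2,\theta}_{\mathrm{loc}}$, uniformly in the centre.
\end{itemize}
The delicate point is that this convergence must hold uniformly without any bound on the number of sheets. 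This is handled by Lemma \ref{l.D_al.infty} (sheet separation goes to infinity) together with exponential summability, Lemma \ref{l.sum.of.exp}.

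\textbf{Step 4 (conclusion).} Once $\mathcal X,\mathcal P$ are $o(1)$, choose $\varepsilon_0$ so that $C\mathcal X^2+C\mathcal P^2\le\frac14(\mathcal X+\mathcal P)$. Combining Steps 1–3 then gives \eqref{H+deltahestimates}.
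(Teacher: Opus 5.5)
Your proposal is correct and follows essentially the same route as the paper: you take the $C^\theta$ norm of the Toda identity \eqref{H+lh}, bound the adjacent interaction terms by $CA(r+25|\log\varepsilon|;x)$ using the mean value theorem and the bounded tangential derivatives of $t_{\alpha\pm1}$, and absorb the quadratic terms of \eqref{E^0_alphaestimates} by qualitative smallness. Your compactness argument for $\|\phi\|_{C^{2,\theta}}=o(1)$ fills in a step the paper's proof leaves implicit here, since it only mentions $H_\alpha+\Delta_{\alpha,0}h_\alpha=o(1)$; the paper records this point later, with the same ingredients, as the qualitative smallness statement \eqref{e.s7.qualitative}.
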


\begin{proof}
	The mean value theorem and the uniform bound for the tangential derivatives of the distance functions give
	\[
	\|e^{-|t_{\alpha\pm1}(\cdot,0)|}\|_
	{C^\theta(\Sigma_\alpha\cap B_r^+(x))}
	\leq C A(r+25|\log\varepsilon|;x).
	\]
Combining this with \eqref{H+lh}-\eqref{E^0_alphaestimates} gives
	\begin{align}\label{H+deltaahactheta}
		\begin{split}
			&\max_\alpha\|H_\alpha+\Delta_{\alpha,0}h_\alpha\|_
			{C^\theta(\Sigma_\alpha\cap B_r^+(x))}\leq C\varepsilon^2+CA(r+25|\log\varepsilon|;x)\\
			&\qquad+C\max_\alpha\|H_\alpha+\Delta_{\alpha,0}h_\alpha\|_
			{C^\theta(\Sigma_\alpha\cap B^+_{r+25|\log\varepsilon|}(x))}^2
			+C\|\phi\|_{C^{2,\theta}(B^+_{r+25|\log\varepsilon|}(x))}^2.
		\end{split}
	\end{align}
Since $H_\alpha=O(\varepsilon)$ in $C^\theta$ and $h_\alpha=o(1)$ in $C^{3,\theta}$,	the $C^\theta$ norm of $H_\alpha+\Delta_{\alpha,0}h_\alpha$ on $B^+_{r+25|\log\varepsilon|}(x))$ is $o(1)$ uniformly, therefore we can absorb it and obtain \eqref{H+deltahestimates}.
\end{proof}

\section{Inner-outer gluing estimates for the error function $\phi$}\label{Inner-outer gluing regularity estimates of error}
We estimate the error $\phi=u-\Hb_*-\Gb_*$ by combining the equation and the orthogonality condition obtained in Section \ref{Equation of error and mean curvature approximation by Toda system of layers}.  The argument follows the inner--outer decomposition of \cite[Section 6]{wang-weiadv}.  At the   boundary, however, the approximate solution has nonzero Neumann derivative, which we shall estimate in Section \ref{sec:neumann.phi}.

\begin{proposition}[$C^{2,\theta}$ estimates]\label{aprior C2alpha Hhphi}
	For every fixed $\theta\in(0,1)$ there are $\varepsilon_0>0$ and $C<\infty$, depending only on the standing data and $\theta$, such that, whenever $0<\varepsilon<\varepsilon_0$, $x\in B^+_{6R/7}(0)$ and $0<r<R/60$,
	\begin{align}\label{aprior C2alpha Hhphieq}
		\begin{split}
			&\|\phi\|_{C^{2,\theta}(B_r^+(x))}
			+\max_\alpha\|h_\alpha\|_{C^{2,\theta}(\Sigma_\alpha\cap B_r^+(x))}\\
			&\qquad+\max_\alpha
			\|H_\alpha(\cdot,0)+\Delta_{\alpha,0}h_\alpha\|_{C^\theta(\Sigma_\alpha\cap B_r^+(x))}
			\leq C\varepsilon^2+CA(r+100|\log\varepsilon|^2;x).
		\end{split}
	\end{align}
	The constants are independent of the number of sheets.
\end{proposition}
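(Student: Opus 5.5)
I will follow the inner--outer scheme of \cite[Section 6]{wang-weiadv}, with the Neumann data of $\phi$ as an additional source term. Fix $x$ and $r$. The estimate for $h_\alpha$ follows from the one for $\phi$ via Lemma \ref{phicontrolhprop}, and the estimate for $H_\alpha+\Delta_{\alpha,0}h_\alpha$ then follows from Corollary \ref{H+htheta}. Iterating that corollary $O(|\log\varepsilon|)$ times on radii increasing by $25|\log\varepsilon|$ turns the contraction factor $1/4$ into $\varepsilon^2$. This iteration is the reason the radius in \eqref{aprior C2alpha Hhphieq} grows to $r+100|\log\varepsilon|^2$. So the core task is a bound
\[
\|\phi\|_{C^{2,\theta}(B_r^+(x))}\leq \tfrac14\bigl(\|\phi\|_{C^{2,\theta}(B^+_{r+L}(x))}+\max_\alpha\|H_\alpha+\Delta_{\alpha,0}h_\alpha\|_{C^\theta(B^+_{r+L}(x))}\bigr)+C\varepsilon^2+CA(r+L;x),
\]
with $L=C|\log\varepsilon|$, to be iterated in the same way.

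\emph{Step 1: the Neumann data of $\phi$.} On $\partial_0$, write $\partial_{x_1}\phi=-\partial_{x_1}\Hb_*-\partial_{x_1}\Gb_*$. Express $\partial_{x_1}$ through $\partial_{y_1}$ using Lemma \ref{l.del.y_1.t_be}, together with $J_\beta=w_\beta t_\beta+\widetilde J_\beta$ from Lemma \ref{l.dz(nu)}, Lemma \ref{l.del.y_1.t_be 2}, and the boundary condition \eqref{Neumanny1 G0} for $\Gb_\beta$. The $O(\varepsilon)$ term $w_\beta\Hb'_\beta t_\beta$ cancels exactly against $\partial_{y_1}\Gb_\beta$. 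What remains is bounded by $\widetilde J_\beta=O(\varepsilon^2)$, by $\partial_{y_1}h_\beta$ (controlled quadratically in \eqref{neumann h improved relate to phi}), by $\varepsilon\|h\|$-type products, and by cross terms between sheets, which are exponentially small. This should give
\[
\|\partial_\nu\phi\|_{C^{1,\theta}}\lesssim\varepsilon^2+A^{3/2}+\varepsilon\|h\|_{C^{2,\theta}}+\|\phi\|^2_{C^{2,\theta}}.
\]

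\emph{Step 2: inner estimate.} Near $\Sigma_\alpha$, on $\mathcal M^4_\alpha\cap\{|z|\le 9|\log\varepsilon|\}$, I rewrite \eqref{phieq-interaction form} as $\Delta_{\alpha,0}\phi+\partial_{zz}\phi-W''(\Hb_\alpha)\phi=f$. Here $f$ collects the interaction terms (Lemma \ref{interaction_terms}), the errors of Lemma \ref{error phi equation}, the metric deviations (Proposition \ref{geometric approximation}), and the Toda term $\Hb'_\alpha(H_\alpha+\Delta h_\alpha)$. I would argue by contradiction with blow-up: normalize, pass to a limit on $\mathbb R^n\times\mathbb R$, or on a half-space with Neumann condition after even reflection. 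The limit is a bounded solution of the linearized heteroclinic operator orthogonal to $\Hb'$, hence zero. This yields a coercive $C^{2,\theta}$ bound of the form ``$\|\phi\|\lesssim\|f\|+\|\partial_\nu\phi\|+$ small $\cdot$ (outer norm)''. Orthogonality \eqref{phi perp} kills the kernel, and the boundary data enter through Step 1.

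\emph{Step 3: outer estimate and combination.} Away from the sheets, $|u|\ge1-b$, so $W''(g_*)\ge c>0$. Maximum-principle barriers, with the Neumann condition handled by reflection or Hopf's lemma, give exponential decay of $\phi$ into the outer region, bounded by boundary values on the inner region plus source and Neumann terms; Schauder estimates then upgrade this to $C^{2,\theta}$. Combining inner and outer bounds and absorbing the quadratic terms (all $o(1)$ by Remark \ref{r.optimal_h_shift}) gives the displayed contraction. I expect the main obstacle to be Step 2 at the boundary: checking that after rescaling the mixed Neumann condition really does converge to a homogeneous reflected problem. In particular, the $\varepsilon|z|(1+|z|)$ discrepancy between $\partial_{x_1}$ and $\partial_{y_1}$ must stay small on the whole $|\log\varepsilon|$-strip, uniformly in the number of sheets.
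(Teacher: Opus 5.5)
Your plan is essentially the paper's proof. It has the same ingredients: the Neumann bound through the cancellation of $w_\beta z\Hb'_\beta$ against $\partial_{y_1}\Gb_\beta$ (Proposition \ref{improved neumann phi}), an inner--outer estimate with a small factor in front of the norm on the enlarged ball, combination with \eqref{H+deltaahactheta} and \eqref{phicontrolh}, and $O(|\log\varepsilon|)$ iterations of the resulting contraction on radii growing by $C|\log\varepsilon|$. The paper only packages the inner step differently: it first proves a $C^{1,\theta}$ bound for the cut-off, re-projected function $\phi_\alpha=\tilde\xi\phi-c_\alpha\Hb'_\alpha$, using Proposition \ref{p.lin.ac} and Lemma \ref{global gluing estimates 1}, each iterated over concentric balls, which is what produces the factor $e^{-cL}$, and then upgrades by Schauder; it also defines the outer region by $\min_\beta|t_\beta|>L/4$, where $W''(\Hb_*+\Gb_*)\geq1/2$ for $L$ large, rather than by $|u|\geq1-b$, on which $W''$ need not be positive.
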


We shall use the following \emph{qualitative smallness} statement
\begin{equation}\label{e.s7.qualitative}
\begin{split}
 &\|\varphi\|_{C^{2,\theta}(B^+_{7R/8-2}(0))}+\max_\alpha
 \|H_\alpha(\cdot,0)+\Delta_{\alpha,0}h_\alpha\|_
 {C^\theta(\Sigma_\alpha\cap B^+_{7R/8-2}(0))}=o(1)
 \quad\text{as }\varepsilon\to0,
\end{split}
\end{equation}
which follows from Lemma \ref{l.sum.of.exp}, Lemma \ref{l.1D.soln}, \eqref{Galpha epsilon 2}, and the fact that $H_\alpha=O(\varepsilon)$ in $C^\theta$ and $h_\alpha=o(1)$ in $C^{3,\theta}$ (see \cite[Remark 4.2]{wang-weiadv}).

\subsection{Neumann derivative estimates for $\phi$}\label{sec:neumann.phi}

\begin{proposition}[Neumann derivative estimate]\label{improved neumann phi}
For each ambient ball $B_r^+(x)$ used in this section, with $B^+_{r+80|\log\varepsilon|}(x)\subset B^+_{7R/8-2}(0)$,
	\begin{align}\label{neumann C1thetaphi}
		\|\partial_\nu\phi\|_{C^{1,\theta}(\partial_0B_r^+(x))}
		&\lesssim \varepsilon^2
		+\|\phi\|_{C^{2,\theta}(B^+_{r+80|\log\varepsilon|}(x))}^2
		+A(r+80|\log\varepsilon|;x)^{3/2}.
	\end{align}
\end{proposition}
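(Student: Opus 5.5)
Since $\partial_\nu u=\partial_{x_1}u=0$ on $\partial_0B^+_{3R}(0)$, we have $\partial_\nu\phi=-\partial_{x_1}\Hb_*-\partial_{x_1}\Gb_*$ on the flat boundary. The plan is to compute this sheet by sheet. At a boundary point $p\in\partial_0B_r^+(x)$ we work in the $\alpha$-coordinates of a nearest sheet, so that $p\in\mathcal M^4_\alpha$, and we split
\[
\partial_\nu\phi=-\bigl(\partial_{x_1}\Hb_\alpha+\partial_{x_1}\Gb_\alpha\bigr)-\sum_{\beta\ne\alpha}\bigl(\partial_{x_1}\Hb_\beta+\partial_{x_1}\Gb_\beta\bigr).
\]
This is exactly the computation that produced \eqref{e.del_nu.ffa.1}, now with the shifts $h_\beta$ inserted.

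\textbf{The $\alpha$-term.} By the chain rule,
\[
\partial_{x_1}\Hb_\alpha=(-1)^\alpha\Hb'_\alpha\bigl(J_\alpha-\partial_{x_1}(h_\alpha\circ\Pi_\alpha)\bigr),
\qquad
\partial_{x_1}\Gb_\alpha=(-1)^\alpha\Gb'_\alpha\bigl(J_\alpha-\partial_{x_1}(h_\alpha\circ\Pi_\alpha)\bigr)+\partial_{x_1}\bigl(\bar\Gb_\alpha(\Pi_\alpha,\cdot)\bigr).
\]
\begin{itemize}
\item Lemma~\ref{l.del.y_1.t_be} lets us replace $\partial_{x_1}$ by $\partial_{y_1}$ on horizontal quantities, at a cost of $\varepsilon|z|(1+|z|)$ times a gradient.
\item Lemma~\ref{l.dz(nu)} gives $J_\alpha=w_\alpha z+\widetilde J_\alpha$, with $\widetilde J_\alpha=O(\varepsilon^2z^2(1+|z|^3))$.
\item The boundary condition in \eqref{G-equation} makes $\partial_{y_1}\bar\Gb_\alpha$ cancel $w_\alpha\bar\Hb'(\cdot)\,\cdot$. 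Because the argument is $(-1)^\alpha(z-h_\alpha)$ rather than $z$, the cancellation leaves the mismatch $w_\alpha h_\alpha\Hb'_\alpha$.
\end{itemize}
What remains is a sum of: $\widetilde J_\alpha\Hb'_\alpha=O(\varepsilon^2)$ times exponential decay; $\delta_\alpha=O(\varepsilon^3)$; $\Hb'_\alpha\,\partial_{y_1}h_\alpha$, which by \eqref{neumann h improved relate to phi} is bounded by $\|\phi\|^2+A^{3/2}+\varepsilon^2$; $w_\alpha h_\alpha\Hb'_\alpha$ and $\Gb'_\alpha J_\alpha$, each $O(\varepsilon)\cdot(\varepsilon+|h_\alpha|)$ times decay; and the commutator errors $\varepsilon|z|(1+|z|)|\nabla h_\alpha|$.

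\textbf{Closing the linear terms.} The terms linear in $h_\alpha$ carry an extra factor $\varepsilon$. Using \eqref{phicontrolh}, $\varepsilon\|h_\alpha\|\lesssim\varepsilon\|\phi\|+\varepsilon A+\varepsilon^3$, and Young's inequality turns this into $\varepsilon^2+\|\phi\|^2+A^2$. Note that $\varepsilon A\le\varepsilon^2+A^2$ is fine, since $A^2\le A^{3/2}$.

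\textbf{The $\beta\ne\alpha$ terms.} The same computation applies in $\beta$-coordinates, with derivatives $\partial_{x_1}t_\beta=J_\beta$ and $\partial_{x_1}(h_\beta\circ\Pi_\beta)$. The decay factors are $e^{-\sigma|t_\beta|}$, and on $\mathcal M^4_\alpha$ we have $|t_\beta|\ge|t_\beta(y,0)|/2-o(1)$. Lemma~\ref{l.del.y_1.t_be 2} together with the boundary cancellation for $\Gb_\beta$ gives the same bounds multiplied by $e^{-\sigma D_\alpha}$. These are absorbed into $\varepsilon^2+A^{3/2}$ by summing via Lemma~\ref{l.sum.of.exp}.

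\textbf{Higher derivatives.} To get $C^{1,\theta}$ along $\partial_0$, we differentiate tangentially in $y_i$, $2\le i\le n$. The tools are \eqref{Neumanny1yi G0}, the commutator estimate \eqref{e.exchange.neumannderivative0}, and the derivative bounds in \eqref{e.w.1}, \eqref{e.w.2}, and Corollary~\ref{improved G estimates}. All quantities are evaluated on a $|\log\varepsilon|$-neighbourhood, which is why the radius is enlarged to $r+80|\log\varepsilon|$.

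\textbf{Main obstacle.} The delicate step is the $\alpha$-term cancellation. The polynomial weights in $z$, coming from $J_\alpha$ and the commutators, must always be paired with $\Hb'$ or $\Gb$ decay; in the region where $\Hb'$ is not small, $|z|$ is bounded up to $O(|\log\varepsilon|)$. The other requirement is that no linear-in-$A$ term survives without a factor $\varepsilon$. That factor is supplied by $w_\alpha=O(\varepsilon)$ and by the quadratic Neumann bound for $h_\alpha$.
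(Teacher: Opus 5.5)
Your proof follows the paper's own argument. On $\partial_0$ you write $-\partial_\nu\phi=\sum_\beta\partial_\nu(\Hb_\beta+\Gb_\beta)$ and compute each summand in the coordinates of its own sheet. You use $J_\beta=w_\beta z+\widetilde J_\beta$ together with the boundary condition in \eqref{G-equation} to cancel the $O(\varepsilon)$ term $w_\beta z\Hb_\beta'$, which leaves only the mismatch $w_\beta h_\beta\Hb_\beta'$. You then close with Young's inequality, \eqref{phicontrolh}, \eqref{neumann h improved relate to phi} and exponential summation. Singling out a nearest sheet is unnecessary but harmless, since each summand already carries its own factor $e^{-|t_\beta|/2}$.

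One bookkeeping point needs fixing. $\partial_0B_r^+(x)$ is parametrized by $(y_2,\dots,y_n,z)$, so the $C^{1,\theta}$ norm also requires $\partial_z$-derivatives; these are the only tangential derivatives when $n=1$, and your higher-derivative step, which only differentiates in $y_2,\dots,y_n$, does not cover them. They are handled in the same way using \eqref{e.exchange.zneumannderivative0}, or, as the paper does, by bounding every factor of the exact formula for $\partial_\nu(\Hb_\beta+\Gb_\beta)$ in $C^{1,\theta}$ via the product rule.
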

\begin{proof}
	We first compute the contribution of one sheet in its own coordinates.  Set
	\begin{equation}\label{parnuphi=N}
		N^\beta=\partial_\nu(\Hb_\beta+\Gb_\beta),
		\qquad -\partial_\nu\phi=\sum_\beta N^\beta.
	\end{equation}
By \eqref{sign}, we have the exact formula
	\begin{align}\label{Nalpha-G-H}
		\begin{split}
			N^\beta(y,z)
			={}&(-1)^\beta(\Hb_\beta'+\Gb_\beta')
			\bigl(J_\beta-\sum_{j=1}^n
			(\partial_{y_j}h_\beta)\,\partial_\nu y_j\bigr)
			+\sum_{j=1}^n\Gb_{\beta,j}\,\partial_\nu y_j .
		\end{split}
	\end{align}
By definition of $\Gb_{\beta,1}$ and \eqref{G-equation}, we have
	\[
	\Gb_{\beta,1}(y,z)
	=-(-1)^\beta w_\beta(y)(z-h_\beta(y))\Hb_\beta'(y,z)
	+\delta_\beta\bigl(y,(-1)^\beta(z-h_\beta(y))\bigr).
	\]
	Substituting this and $J_\beta=w_\beta z+\widetilde J_\beta$ into \eqref{Nalpha-G-H} yields
	\begin{align}\label{parnuG+H'(+0)}
		\begin{split}
			N^\beta
			={}&(-1)^\beta\Hb_\beta'
			\bigl(\widetilde J_\beta+w_\beta h_\beta-\partial_{y_1}h_\beta\bigr)
			+\delta_\beta\bigl(y,(-1)^\beta(z-h_\beta)\bigr)+(-1)^\beta\Gb_\beta'
			\bigl(J_\beta-\partial_{y_1}h_\beta\bigr)\\
			&+\sum_{j=1}^n(\partial_\nu y_j-\delta_{1j})
			\left[\Gb_{\beta,j}
			-(-1)^\beta(\Hb_\beta'+\Gb_\beta')\partial_{y_j}h_\beta\right].
		\end{split}
	\end{align}
This provides the cancellation of the order $\varepsilon$ term $z\bar\Hb'$. 	
We estimate the four displayed contributions separately. First, 
\[
\|\partial_\nu y_j-\delta_{1j}\|_{C^{1,\theta}}
\leq C\varepsilon(1+|z|^3),
\qquad
\|\widetilde J_\beta\|_{C^{1,\theta}}
\leq C\varepsilon^2(1+|z|^6)
\]
by the flow estimates and Lemma \ref{l.dz(nu)}.

Moreover,
\[
\delta_\beta(y,z)=(1-\zeta_\varepsilon(z))w_\beta(y)z\bar\Hb'(z)
\]
is supported where $|z|\geq4|\log\varepsilon|$ and has local
$C^{1,\theta}$ norm at most $C\varepsilon^2e^{-|z|/2}$.  Finally,
Corollary \ref{improved G estimates} bounds $\Gb_\beta'$ and
$\Gb_{\beta,j}$ by $C\varepsilon e^{-\sigma|z|}$ for any fixed
$\sigma\in(1/2,1)$.  Absorbing the polynomial factors in $z$ into this exponential decay and applying the product inequality to \eqref{parnuG+H'(+0)} gives
\begin{align}\label{parnuG+H'(+)}
\begin{split}
\|N^\beta\|_{C^{1,\theta}(B_1^{0,\beta}(y)\times(z-1,z+1))}
\leq Ce^{-|z|/2}\bigl(&\varepsilon^2
+\|h_\beta\|_{C^{2,\theta}(B_2^\beta(y))}^2\\
&+\|\partial_{y_1}h_\beta\|_{C^{1,\theta}(B_2^{0,\beta}(y))}\bigr).
\end{split}
\end{align}

Apply \eqref{phicontrolh} and \eqref{neumann h improved relate to phi} to the two norms of $h_\beta$ in \eqref{parnuG+H'(+)}.  Their right-hand sides are bounded by
$C\varepsilon^2$, the squared ambient $C^{2,\theta}$ norm of $\phi$, and the corresponding $A^{3/2}$ term, since the $A^2$ produced by squaring \eqref{phicontrolh} is smaller than $A^{3/2}$.  Lastly,
\begin{equation}\label{parnuG+H'(+)beta}
\sum_{\beta:\,|t_\beta|\leq9|\log\varepsilon|}
e^{-|t_\beta|/2}\leq C
\end{equation}
by \eqref{e.s6.sum.at.point}.  Summing \eqref{parnuG+H'(+)} in \eqref{parnuphi=N} proves \eqref{neumann C1thetaphi}. 
\end{proof}

Fix $L>1$, to be chosen large before $\varepsilon$ is chosen small, and retain the notation
\begin{equation}\label{e.s7.regions}
	\begin{split}
		\Omega_\alpha^1&=\{|t_\alpha|<L\}\cap\mathcal M_\alpha^0,\\
		\Omega_\alpha^2&=\{|t_\alpha|>L/2\}\cap\mathcal M_\alpha^0,\\
		\Omega_\alpha^3&=\{|t_\alpha|\leq L/2\}\cap\mathcal M_\alpha^0.
	\end{split}
\end{equation}
By separation, after $L$ is fixed we may assume that the strips $|t_\alpha|\leq2L$ are contained in $\mathcal M_\alpha^0$ on every neighbourhood used below.

\subsection{$C^{1,\theta}$-estimates for $\phi$}

\begin{proposition}[$C^{1,\theta}$ estimates]\label{C1,theta}
	There are constants $c,C>0$ independent of $L$ such that, for each sufficiently large fixed $L$ and all sufficiently small $\varepsilon$ depending on $L$,
	\begin{align}\label{e.s7.first.order}
		\begin{split}
			\|\phi\|_{C^{1,\theta}(B_r^+(x))}
			\leq{}&Ce^{-cL}\bigl(\|\phi\|_{C^{2,\theta}(B^+_{r+95|\log\varepsilon|}(x))}+\max_\alpha\|H_\alpha+\Delta_{\alpha,0}h_\alpha\|_
			{C^\theta(\Sigma_\alpha\cap B^+_{r+95|\log\varepsilon|}(x))}\bigr)\\
			&+C(L)\varepsilon^2+C(L)A(r+95|\log\varepsilon|;x).
		\end{split}
	\end{align}
\end{proposition}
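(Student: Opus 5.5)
The plan is to adapt the blow-up/contradiction argument of \cite[Lemma 6.2 and Section 6]{wang-weiadv} to the half-space setting. The Neumann data of $\phi$ are controlled by Proposition \ref{improved neumann phi}. Write the equation \eqref{phieq-interaction form} as
\[
\Delta_{\alpha,z}\phi+\partial_{zz}\phi-H_\alpha\partial_z\phi-W''(\Hb_*)\phi=(-1)^\alpha\Hb'_\alpha(H_\alpha+\Delta_{\alpha,z}h_\alpha)+\mathcal F,
\]
where $\mathcal F$ collects all remaining terms. By Lemmas \ref{interaction_terms}, \ref{error phi equation}, \eqref{RGphi} and Lemma \ref{phicontrolhprop}, we have $\|\mathcal F\|_{C^\theta}\lesssim \varepsilon^2+A+o(1)\|\phi\|_{C^{2,\theta}}$ on the enlarged balls. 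The term $\varepsilon(1+|z|^2)\|\phi\|_{C^{2,\theta}}$ coming from $\mathscr E_\alpha\phi$ is at most $C(L)\varepsilon\|\phi\|_{C^{2,\theta}}$ in the strip $|z|\le 2L$. Away from the strips it will be absorbed via Young's inequality, as in \eqref{MEphi}.

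\textbf{Outer region.} On $\Omega^2_\alpha$ we have $W''(\Hb_*)\ge 1/2$ once $L$ is large. I will localize with a cutoff and apply the maximum principle, and Schauder $C^{1,\theta}$ estimates for the Neumann problem on half-balls. The boundary term $\partial_\nu\phi$ is handled by Proposition \ref{improved neumann phi}, which is already quadratic. This gives, on $\Omega^2_\alpha\cap B_r^+(x)$, a bound for $\|\phi\|_{C^{1,\theta}}$ by the sup of $|\phi|$ on $\{|t_\alpha|=L/2\}$, plus $C(\varepsilon^2+A+\|\phi\|^2_{C^{2,\theta}})$. The term $\Hb'_\alpha(H_\alpha+\Delta h_\alpha)$ is only $e^{-L/2}$ there, which produces the $e^{-cL}$ factor in front of the $C^\theta$ norm of $H_\alpha+\Delta_{\alpha,0}h_\alpha$. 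The exponential decay of the barrier over distance $\sim|\log\varepsilon|$ explains the enlargement of radius to $95|\log\varepsilon|$.

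\textbf{Inner region.} On $\Omega^1_\alpha$ I will argue by contradiction. Suppose the $C^{1,\theta}$ (or first $C^0$) norm of $\phi$ there is not bounded by $e^{-cL}(\cdots)+C(L)(\varepsilon^2+A)$. Normalize $\phi$ by its sup and blow up at a near-maximal point, using twisted Fermi coordinates; both interior and boundary base points are allowed. By Proposition \ref{geometric approximation} and Lemma \ref{l.Lapl.tw.Fer} the operator converges to $\Delta_{\mathbb R^n}+\partial_z^2-W''(\Hb)$ on $\mathbb R^n\times(-L,L)$ or on $\mathbb R^n_+\times(-L,L)$. In the half-space case the Neumann condition survives in the limit by \eqref{e.del.x_1-del.y_1} and \eqref{neumann C1thetaphi}. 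The orthogonality \eqref{phi perp} also passes to the limit, and the right-hand side tends to zero. The normalized term $H_\alpha+\Delta h_\alpha$ is the only obstruction, but it is paired with $\Hb'$ and is removed by projecting onto $\Hb'$ using orthogonality.

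The limit is a bounded solution on the strip, even reflected in the boundary case. Its boundary values at $|z|=L$ are controlled by the outer estimate. The nondegeneracy of the one-dimensional linearized operator on the orthogonal complement of $\Hb'$ (Proposition \ref{p.lin.ac}) then forces decay of order $e^{-cL}$, which gives the contradiction. Patching the inner and outer estimates and absorbing the quadratic terms through the qualitative smallness \eqref{e.s7.qualitative} yields \eqref{e.s7.first.order}.

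The main obstacle is the inner blow-up at the boundary. There I must check that the mixed terms $T*\partial^2_{yz}\phi$ and the difference between $\partial_{x_1}$ and $\partial_{y_1}$ do not spoil the limiting Neumann condition. I must also check that the constants stay independent of $L$ except in the $C(L)$ terms. If necessary, I will first prove a $C^0$ version and then upgrade to $C^{1,\theta}$ by local Schauder estimates for the Neumann problem on unit half-balls.
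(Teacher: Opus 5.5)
Your route is genuinely different from the paper's, which proves \eqref{e.s7.first.order} constructively rather than by contradiction. In the outer region the paper writes $\Delta\phi=U_L\phi+E^2_\alpha$ with $U_L=W''(\Hb_*+\Gb_*)\geq 1/2$. It then iterates the massive estimate of Lemma \ref{global gluing estimates 1}(i) on concentric half-balls about $L/(64\rho)$ times, and the resulting factor $2^{-\lfloor L/(64\rho)\rfloor}$ is the source of $e^{-cL}$. In the inner region it cuts off and projects, $\phi_\alpha=\tilde\xi\phi-c_\alpha\Hb'_\alpha$, with $c_\alpha=O(e^{-L})$ in $C^{1,\theta}$ by Claim \ref{calphamanyestimates}. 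The projected linear estimate behind Proposition \ref{p.lin.ac} then applies to $(\Delta_{\alpha,0}+\partial_{zz}-W''(\Hb_\alpha))\phi_\alpha=p_\alpha\Hb'_\alpha+F_\alpha$ and removes $p_\alpha\Hb'_\alpha$. The cut-off terms $2\tilde\xi'\partial_z\phi+\tilde\xi''\phi$ in $F_\alpha$ are controlled by the outer lemma. The quantity $H_\alpha+\Delta_{\alpha,0}h_\alpha$ survives only through $(\tilde\xi-1)\Hb'_\alpha$, i.e.\ with a factor $e^{-L}$.

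The closing step of your inner argument is not justified as written. You confine the blow-up limit to $\mathbb R^n\times(-L,L)$, take its values at $|z|=L$ from your outer estimate, and invoke Proposition \ref{p.lin.ac} to obtain ``decay of order $e^{-cL}$''. But your outer bound controls $\phi$ by $\sup_{\{|t_\alpha|=L/2\}}|\phi|$, which is the very quantity you normalised to one, with no small factor. Even with the barrier decay from $|t_\alpha|=L/2$ to $|t_\alpha|=L$ made explicit, you would need a nondegeneracy estimate on a strip with data at $z=\pm L$, uniform in $L$. Proposition \ref{p.lin.ac} does not provide this: it is a statement on $\Sigma\times\mathbb R$ with full-fibre orthogonality. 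Likewise, \eqref{phi perp} is an integral over the whole fibre, so it does not survive in a limit that lives only on the strip.

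The fix is simpler than your plan. Let $N$ be the norm you normalise by; failure with $C(L)=k$ gives $(\varepsilon^2+A)/N<1/k$. The contradiction hypothesis also gives $\|\phi/N\|_{C^{2,\theta}}\leq e^{cL}/C$ and $\|H_\beta+\Delta_{\beta,0}h_\beta\|_{C^\theta}/N\leq e^{cL}/C$ on $B^+_{r+95|\log\varepsilon|}(x)$, a ball whose radius tends to infinity. Blow up at a point where the local $C^{1,\theta}$ norm of $\phi/N$ stays bounded below. By $C^{2,\theta}$ compactness you get $C^{1,\theta}_{\mathrm{loc}}$ convergence to a nonzero bounded entire solution $v$, either on $\mathbb R^{n+1}$ or on a half-space with zero Neumann data (by Proposition \ref{improved neumann phi}). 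All error terms vanish in the limit: $\|\phi\|_{C^{2,\theta}}^2/N\leq(e^{cL}/C)\|\phi\|_{C^{2,\theta}}\to0$ by \eqref{e.s7.qualitative}, and foreign-sheet terms carry $\Hb'_\beta\to0$ locally. If the point drifts away from all sheets, then $\Delta v=v$ and Lemma \ref{l.entire.lin.ac'} gives $v\equiv0$. Otherwise $\Delta v-W''(\Hb)v=\lambda(y)\Hb'$ with $\int_{\mathbb R}v\Hb'\,dz=0$. Projecting onto $\Hb'$ gives $\lambda\equiv0$, and Lemma \ref{l.entire.lin.ac} forces $v\equiv0$. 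This needs neither the inner/outer split nor the maximum principle. It proves the estimate with any $\delta>0$ in place of $Ce^{-cL}$; taking $\delta=e^{-L}$ recovers the statement.

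Once repaired, your compactness argument is shorter than the paper's and shows that $L$ plays no essential role in this proposition. It is, however, non-constructive: $C(L)$ and $\varepsilon_0(L)$ are not explicit, and it needs a Liouville theorem for every possible limit configuration. The paper's inner--outer scheme keeps the $L$-dependence explicit. It is also the template reused for $\phi_{y_i}$ in Section \ref{improved estimates section}, with divergence-form sources and Dirichlet (for $i=1$) or Neumann (for $i\geq2$) boundary data.
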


The proof will follow from combining Lemma \ref{outer} and Lemma \ref{C1thetainner -estimates-lemma0} below.

\subsubsection{$C^{1,\theta}$-outer estimates for $\phi$}\label{outer phi}

\begin{lemma}\label{outer}
	For each fixed sufficiently large $L$ and sufficiently small $\varepsilon$,
	\begin{align}\label{c1thetaouter}
		\begin{split}
			\|\phi\|_{C^{1,\theta}(B_r^+(x)\cap\Omega_\alpha^2)}
			\leq{}&Ce^{-cL}\bigl(\|\phi\|_{C^{2,\theta}(B^+_{r+85|\log\varepsilon|}(x))}+\max_\beta\|H_\beta+\Delta_{\beta,0}h_\beta\|_
			{C^\theta(\Sigma_\beta\cap B^+_{r+85|\log\varepsilon|}(x))}\bigr)\\
			&+C\varepsilon^2+CA(r+85|\log\varepsilon|;x)
			+C\|\phi\|_{C^{2,\theta}(B^+_{r+85|\log\varepsilon|}(x))}^2.
		\end{split}
	\end{align}
	The constants in this estimate are independent of $L$ once $L$ is sufficiently large.
\end{lemma}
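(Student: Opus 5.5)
In the outer region $\Omega_\alpha^2$ the solution is uniformly close to $\pm1$, and I will exploit that coercivity. There $|t_\alpha|>L/2$, and we stay inside $\mathcal M_\alpha^0$, so every sheet is at distance at least $L/2$ from the point. By \eqref{e.s6.phase.tails} and \eqref{e.s6.sum.at.point}, $\Hb_*=\pm1+O(e^{-L/2})$ on $\Omega^2_\alpha$. Since $\Gb_*=O(\varepsilon)$ (Corollary \ref{improved G estimates}), we get
\[
W''(\Hb_*)\geq 1-Ce^{-L/2}-C\varepsilon\geq \tfrac12
\]
once $L$ is large and $\varepsilon$ is small. I therefore rewrite \eqref{phieq-interaction form} as a linear equation $-\Delta\phi+W''(\Hb_*)\phi=-\mathcal F$, working in ambient coordinates and absorbing $\mathscr E_\alpha\phi$ back into the full Laplacian. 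The Neumann data is $\partial_\nu\phi$, which Proposition \ref{improved neumann phi} controls.

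\textbf{Step 1: bound the source $\mathcal F$ in $C^\theta$ on $\Omega^2_\alpha$.}
\begin{itemize}
\item $\mathcal I_1$, $\mathcal I_2$ and $\sum_\beta\xi_\beta$ contribute $C\varepsilon^2+CA$ by Lemmas \ref{interaction_terms} and \ref{error phi equation}.
\item $\mathcal R(\Gb_*+\phi)$ contributes $C\varepsilon^2+C\|\phi\|^2_{C^{2,\theta}}$ by \eqref{RGphi}.
\item The geometric terms $\sum_\beta\mathscr E_\beta\Hb_\beta$ and $\sum_\beta\mathcal G_\beta$ are bounded through \eqref{error phi equation2}, \eqref{error phi equation3}. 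There $\varepsilon\|h_\beta\|_{C^{2,\theta}}$ is reduced via Lemma \ref{phicontrolhprop} to $\varepsilon(\|\phi\|+A+\varepsilon^2)$, which is acceptable.
\item The key terms are $\Hb'_\beta\mathcal R_{\beta,1}$ and $\Hb''_\beta\mathcal R_{\beta,2}$, for $\beta=\alpha$ and for the neighbours. On $\Omega^2_\alpha$ the factor $\Hb'_\beta$ is at most $Ce^{-L/2}$ in $C^\theta$. So $\Hb'_\beta(H_\beta+\Delta_{\beta,z}h_\beta)$ is bounded by $Ce^{-L/2}\|H_\beta+\Delta_{\beta,0}h_\beta\|_{C^\theta}$, plus the difference between heights $z$ and $0$. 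That difference is controlled by \eqref{e.mn.curv.approx} and Proposition \ref{prope.comparing laplacian}, giving $O(\varepsilon|z|(1+|z|)\|h_\beta\|_{C^2})+O(\varepsilon^2|z|^4)$, and the weight $e^{-|z|}$ kills the polynomial. The term $\Hb''_\beta|\nabla h_\beta|^2$ is quadratic, hence bounded by $C\|\phi\|^2+CA^2+C\varepsilon^4$ via \eqref{phicontrolh}.
\end{itemize}
Summing over $\beta$ with \eqref{e.s6.sum.at.point} yields
\[
\|\mathcal F\|_{C^\theta}\leq Ce^{-cL}\Big(\|\phi\|_{C^{2,\theta}}+\max_\beta\|H_\beta+\Delta_{\beta,0}h_\beta\|_{C^\theta}\Big)+C\varepsilon^2+CA+C\|\phi\|^2_{C^{2,\theta}},
\]
with all norms taken on the enlarged ball.

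\textbf{Step 2: turn the source bound into a $C^{1,\theta}$ bound on $\phi$.} I apply a maximum principle or barrier to the coercive operator on $\Omega^2_\alpha\cap B^+_{r+\cdot}(x)$ with Neumann data on $\partial_0$. The inner boundary $\{|t_\alpha|=L/2\}$ is where the main difficulty lies. There $\phi$ is controlled only by $\|\phi\|_{C^0}$, with no small factor, so in Step 3 I will need $\|\phi\|_{C^0}$ to enter with a small coefficient or to be handled separately.

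\textbf{Step 3 (main obstacle): choose the comparison function and handle the boundary pieces.} Following \cite[Lemma 6.2]{wang-weiadv}, I will use the exponential barrier
\[
\bar\phi=\sup\{\text{data}\}\,e^{-(|t_\alpha|-L/2)/2}+2\sup|\mathcal F|+2\sup|\partial_\nu\phi|\cdot\psi,
\]
where $\psi$ is a bounded function with $\partial_\nu\psi\le -1$ near $\partial_0$. The function $\psi$ is built from the boundary distance $x_1$, in the form $\psi=-x_1$ cut off at distance one. This is legitimate because $\partial_\nu\phi$ is $O(\varepsilon^2+\|\phi\|^2+A^{3/2})$ by \eqref{neumann C1thetaphi}.

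The barrier decays into the outer region, so its value at distance $\gtrsim L$ carries the factor $e^{-cL}$. The lateral boundary of the enlarged ball at radius $r+85|\log\varepsilon|$ is handled the same way: the decay there is $e^{-c|\log\varepsilon|}$, which is $\lesssim\varepsilon^2$ once the ball is enlarged by a multiple of $|\log\varepsilon|$. The region $L/2<|t_\alpha|<L$ close to the inner boundary, where no gain occurs, is covered instead by the inner estimate, so Lemma \ref{outer} only needs the $C^0$ bound. Finally, Schauder estimates with Neumann data on unit half-balls upgrade the $C^0$ bound to $C^{1,\theta}$. These use $\|\mathcal F\|_{C^0}+\|\partial_\nu\phi\|_{C^{\theta}}+\|\phi\|_{C^0}$, and the Neumann contribution is again quadratic or $O(\varepsilon^2+A^{3/2})$.
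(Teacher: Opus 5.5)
There is a genuine gap in Step 3, located exactly where you flag the difficulty. You put the Dirichlet boundary of the comparison argument at $\{|t_\alpha|=L/2\}$, which is the inner edge of $\Omega^2_\alpha$. So at points of $\Omega^2_\alpha$ with $|t_\alpha|$ close to $L/2$, your barrier is comparable to $\sup|\phi|$ and gives no factor $e^{-cL}$. But \eqref{c1thetaouter} is asserted on all of $B_r^+(x)\cap\Omega^2_\alpha$, and handing the strip $L/2<|t_\alpha|<L$ to the inner estimate does not prove it there. What you actually obtain is the estimate on $\{|t_\alpha|\geq L\}$. That weaker statement would suffice downstream, because Lemma \ref{C1thetainner -estimates-lemma0} uses \eqref{c1thetaouter} only where $\tilde\xi'\neq0$, i.e.\ $L<|t_\alpha|<2L$; but it is not the lemma. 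The missing observation is that the coercivity $W''(\Hb_*)\geq\frac12$ and your Step 1 source bounds hold on the larger set $\{\min_\beta|t_\beta|>L/4\}$. Run the comparison there: every point of $\Omega^2_\alpha$ is then at distance at least $L/4$ from the Dirichlet boundary and gains a factor $e^{-L/8}$. This is exactly why the paper defines $U_L=W''(\Hb_*+\Gb_*)$ on $\{\min_\beta|t_\beta|>L/4\}$ and works on $B^+_{L/32}(p)$.

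A second unaddressed point is the remaining boundary of the domain. $\Omega^2_\alpha\cap B^+_{r+\cdot}(x)$ also has boundary on $\partial\mathcal M^0_\alpha=\{|t_\alpha|=|t_{\alpha\pm1}|\}$. There nothing makes $\phi$ small, and a barrier depending on $t_\alpha$ alone can be far smaller than $|\phi|$. You need the union region over all sheets and a barrier such as $\|\phi\|_{C^0}\sum_\beta e^{-(|t_\beta|-L/4)/2}$. It must be a sum rather than a maximum so that it remains a supersolution. Each term satisfies $(-\Delta+U)v\geq(\frac14-o(1))v$, since $|\nabla t_\beta|=1$ and $|\nabla^2t_\beta|\lesssim\varepsilon(1+|t_\beta|)$, and the sum is controlled by \eqref{e.s6.sum.at.point}.

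The Neumann part also needs care. The function $-x_1$ cut off at distance one is nonpositive, and its cutoff produces $O(1)$ Laplacian terms of either sign; use something like $2e^{-x_1/2}$ instead. In addition, $\partial_\nu t_\beta=J_\beta\neq0$, so the exponential barriers carry a Neumann defect of size $O_L(\varepsilon)\|\phi\|_{C^0}$. This must be compensated, but it is harmless once $\varepsilon$ is small depending on $L$.

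With these repairs your maximum-principle route is a legitimate and more elementary alternative, with an explicit rate. The paper instead avoids global supersolutions altogether. It iterates the compactness-based estimate of Lemma \ref{global gluing estimates 1}(i) on the half-balls $B^+_{1+j\rho}(p)\subset B^+_{L/32}(p)$, $\lfloor L/(64\rho)\rfloor$ times, and each step halves the coefficient of $\|\phi\|_{C^{1,\theta}}$. The Neumann case of that estimate rests on the half-space Liouville theorem, Lemma \ref{l.entire.lin.ac'}.
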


\begin{proof}
We follow \cite[Lemma 6.2 and (6.2)]{wang-weiadv}. On
$\{\min_\beta|t_\beta|>L/4\}$ set $U_L=W''(\Hb_*+\Gb_*)$.
The exponential estimates and \eqref{e.s6.sum.at.point} give
$U_L\geq1/2$ and $U_L+|\nabla U_L|\leq C$, uniformly for large $L$
and small $\varepsilon$. Equation \eqref{phieq} can therefore be written as
\begin{equation}\label{outerpde}
 \Delta\phi=U_L\phi+E_\alpha^2,
\end{equation}
where we claim
\begin{equation}\label{E2estimate}
\begin{aligned}
 &\|E_\alpha^2\|_{L^\infty(B_r^+(x)\cap\{\min_\beta|t_\beta|>L/4\})}\\
 &\quad\leq C\varepsilon^2+CA(r+80|\log\varepsilon|;x)
       +C\|\phi\|_{C^{2,\theta}(B^+_{r+80|\log\varepsilon|}(x))}^2\\
 &\qquad\quad+Ce^{-cL}\max_\beta
 \|H_\beta+\Delta_{\beta,0}h_\beta\|_
 {C^\theta(\Sigma_\beta\cap B^+_{r+80|\log\varepsilon|}(x))}.
\end{aligned}
\end{equation}
Let us verify this estimate term by term at points $p\in B_r^+(x)\cap\{\min_\beta|t_\beta|>L/4\}$. In the bounds below, the norms of $h_\beta$ are on $B_2^\beta(\Pi_\beta(p))$. The norms of $\phi$ and the quantity $A$ refer to $B^+_{r+80|\log\varepsilon|}(x)$ and $A(r+80|\log\varepsilon|;x)$, respectively; norms of $H_\beta+\Delta_{\beta,0}h_\beta$ are on $\Sigma_\beta\cap B^+_{r+80|\log\varepsilon|}(x)$.
\begin{itemize}
\item
The difference
\[
W'(\Hb_*+\Gb_*+\phi)-W'(\Hb_*+\Gb_*)-U_L\phi
\]
is $O(\phi^2)$.  The expression
\[
\mathcal I_1+\mathcal I_2
+W'(\Hb_*+\Gb_*)-W'(\Hb_*)-W''(\Hb_*)\Gb_*
\]
is bounded by Lemma \ref{interaction_terms}, \eqref{RGphi}, and Corollary \ref{improved G estimates}.  These estimates give
$C\varepsilon^2+CA+C\|\phi\|_{C^{2,\theta}}^2$.
\item
For every $\beta$, Propositions \ref{geometric approximation}
and \ref{prope.comparing laplacian} give
\[
\begin{aligned}
 \Hb'_\beta\mathcal R_{\beta,1}
 ={}&\Hb'_\beta(H_\beta(\cdot,0)+\Delta_{\beta,0}h_\beta)+O\bigl(e^{-|z|/2}
          (\varepsilon^2+\varepsilon\|h_\beta\|_{C^{2,\theta}})\bigr),\\
 |\Hb''_\beta\mathcal R_{\beta,2}|
 \leq{}&Ce^{-|z|/2}\|h_\beta\|_{C^{2,\theta}}^2.
\end{aligned}
\]
Polynomial factors are absorbed by exponential decay. Summation using
\eqref{e.s6.sum.at.point} gives $e^{-cL}$ in the outer region; \eqref{phicontrolh}, and Young's inequality give the last line of \eqref{E2estimate} and the stated $C\varepsilon^2+CA+C\|\phi\|^2$ error.
\item
Lemma \ref{error phi equation} gives
\begin{equation}\label{last four terms}
 \bigl\| -\sum_\beta\xi_\beta-\sum_\beta\mathscr E_\beta\Hb_\beta
                 +\sum_\beta\mathcal G_\beta\bigr\|_{C^\theta}
 \leq C\varepsilon^2+C\varepsilon\max\|h_\beta\|_{C^{2,\theta}}.
\end{equation}
After \eqref{phicontrolh} and Young's inequality, the right-hand side of
\eqref{last four terms} is bounded by the first three terms on the right of \eqref{E2estimate}.
\end{itemize}
This proves \eqref{E2estimate} uniformly in the number of sheets.

For $p\in B_r^+(x)\cap\Omega_\alpha^2$, every sheet distance exceeds
$L/4$ on $B^+_{L/32}(p)$, so we can apply Lemma \ref{global gluing estimates 1}(i) iteratively on the concentric balls $B^+_{1+j\rho}(p)$ with Neumann condition on $\partial_0B^+_{1+j\rho}(p)$. Choose $\rho>1$, independently of $L$, so that $C/\rho\leq1/2$ in \eqref{e.est1.De-1}, and iterate for $\lfloor L/(64\rho)\rfloor$ steps. The remaining norm is $\|\phi\|_{C^{1,\theta}(B^+_{1+\rho\lfloor L/(64\rho)\rfloor}(p))}$, multiplied by at most $Ce^{-cL}$. All these balls lie in $B^+_{L/32}(p)$ for sufficiently large $L$. The contributions of $E_\alpha^2$ and $\partial_\nu\phi$ form a bounded geometric sum. Use \eqref{E2estimate} and Proposition \ref{improved neumann phi} on these balls. The largest data ball is contained in $B^+_{r+1+L/64+80|\log\varepsilon|}(x)\subset B^+_{r+85|\log\varepsilon|}(x)$ for fixed $L$ and small $\varepsilon$. This estimates $\phi$ on every $B_1^+(p)$; restricting to $B_r^+(x)\cap\Omega_\alpha^2$ proves \eqref{c1thetaouter}.
\end{proof}

\subsubsection{$C^{1,\theta}$-inner estimates for $\phi$}\label{inner phi}

\begin{lemma}\label{C1thetainner -estimates-lemma0}
	For each sufficiently large fixed $L$ and all sufficiently small $\varepsilon$,
	\begin{align}\label{c1theta}
		\begin{split}
			\|\phi\|_{C^{1,\theta}(B_r^+(x)\cap\Omega_\alpha^1)}
			\leq{}&Ce^{-cL}\bigl(\|\phi\|_{C^{2,\theta}(B^+_{r+95|\log\varepsilon|}(x))}\\
			&\quad+\max_\beta\|H_\beta+\Delta_{\beta,0}h_\beta\|_
			{C^\theta(\Sigma_\beta\cap B^+_{r+95|\log\varepsilon|}(x))}\bigr)\\
			&\quad+C(L)\bigl(\varepsilon^2+A(r+95|\log\varepsilon|;x)
			+\|\phi\|_{C^{2,\theta}(B^+_{r+95|\log\varepsilon|}(x))}^2\bigr).
		\end{split}
	\end{align}
\end{lemma}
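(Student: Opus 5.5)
We follow the inner estimate of \cite[Lemma 6.3]{wang-weiadv}, adapted to the half-domain, and argue by contradiction and blow-up, with the orthogonality condition \eqref{phi perp} used to kill the translation kernel. Fix $p=Y_{\Sigma_\alpha}(y,z)\in B_r^+(x)\cap\Omega_\alpha^1$. In $\alpha$-coordinates rewrite \eqref{phieq-interaction form} as
\begin{equation*}
 \Delta_{\alpha,z}\phi+\partial_{zz}\phi-W''(\Hb_\alpha)\phi
 =(-1)^\alpha\Hb'_\alpha\bigl(H_\alpha(\cdot,0)+\Delta_{\alpha,0}h_\alpha\bigr)+E^1_\alpha ,
\end{equation*}
where $E^1_\alpha$ collects $\bigl(W''(\Hb_*)-W''(\Hb_\alpha)\bigr)\phi$, $\mathcal I_1,\mathcal I_2$, $\mathcal R(\Gb_*+\phi)$, $H_\alpha\partial_z\phi$, $\mathscr E_\alpha\phi$, the $\beta\neq\alpha$ curvature terms and the terms of Lemma \ref{error phi equation}. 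On the strip $\{|t_\alpha|<2L\}$ we bound $E^1_\alpha$ in $C^\theta$ term by term. Lemma \ref{interaction_terms}, \eqref{RGphi}, \eqref{MEphi}, \eqref{error phi equation2}--\eqref{error phi equation3} and \eqref{phicontrolh} give $C(L)(\varepsilon^2+A+\|\phi\|^2_{C^{2,\theta}})$. The coefficient difference $W''(\Hb_*)-W''(\Hb_\alpha)$ is $O(e^{-D_\alpha+2L})$, which becomes a $C(L)A$ contribution after Young's inequality. We replace $H_\alpha(y,z)+\Delta_{\alpha,z}h_\alpha$ by its value at $z=0$ using Propositions \ref{geometric approximation} and \ref{prope.comparing laplacian}, at cost $C(L)\varepsilon(\varepsilon+\|h_\alpha\|_{C^{2,\theta}})$. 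The Neumann data are controlled by Proposition \ref{improved neumann phi}.

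Next we project out the kernel. Multiply the equation by $\Hb'_\alpha$ and integrate in $z$, using \eqref{0=ONintez} exactly as in the proof of Proposition \ref{toda system rough}. This shows that $H_\alpha(\cdot,0)+\Delta_{\alpha,0}h_\alpha$ is bounded by $\|E^1_\alpha\|$, plus small multiples of $\|\phi\|_{C^{2,\theta}}$, plus an $e^{-cL}$ tail from the outer region. Substituting back, the right-hand side is then controlled by the outer bound (Lemma \ref{outer}) on $\{|t_\alpha|=L/2\}$, the data above, and $Ce^{-cL}$ times the large-ball norms.

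The core step is a coercive estimate for the model operator $\mathcal L=\Delta_{\alpha,0}+\partial_{zz}-W''(\Hb_\alpha)$ on $\Omega^1_\alpha$, with Neumann data on $\partial_0$, boundary values on $\{|z|=L\}$ supplied by Lemma \ref{outer}, and the constraint $\int\phi\,\Hb'_\alpha\,dz=0$:
\[
\|\phi\|_{C^{1,\theta}(\Omega^1_\alpha\cap B_r^+(x))}\leq C\bigl(\|\mathcal L\phi\|_{L^\infty}+\|\partial_\nu\phi\|_{C^{1,\theta}}+\|\phi\|_{C^{1,\theta}(\{L/2<|t_\alpha|<L\})}\bigr).
\]
We prove this by contradiction. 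Take a sequence violating it and normalize at the points where $\|\phi\|$ is nearly attained. After translating along $\Sigma_\alpha$ (and to $\partial_0$ if the base points stay at bounded distance from it), the limit is a bounded solution of $(\Delta_{\mathbb R^n}+\partial_{zz}-W''(\Hb))\psi=0$ on $\mathbb R^n\times(-L,L)$ or on $\mathbb R^n_+\times(-L,L)$ with $\partial_{y_1}\psi=0$. In the half-space case we reflect evenly, as in \eqref{e.even.refl}. The limit has small boundary values at $|z|=L$ and satisfies $\int\psi\Hb'\,dz=0$. The one-dimensional spectral gap of $-\partial_{zz}+W''(\Hb)$ on $(\Hb')^\perp$, combined with a Liouville argument in $y$, forces $\psi=0$, a contradiction.

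Combining the two regions yields \eqref{c1theta}, with the boundary values on $|z|\sim L$ contributing the $e^{-cL}$ factor.

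The main obstacle is making this blow-up uniform in $L$ while keeping the truncation error of order $e^{-cL}$ rather than $C(L)$. Without a bound on $\partial_\nu\phi$ the boundary limit would carry nonzero Neumann data; Proposition \ref{improved neumann phi} is what makes the data small enough to pass to the limit. If the blow-up fails to give uniform constants, the fallback is the energy approach: test the equation against $\phi$ on each slice, use the spectral gap, and accept the boundary integral $\int_{\partial_0}\phi\,\partial_\nu\phi$, which is quadratically small by Proposition \ref{improved neumann phi}.
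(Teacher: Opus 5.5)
There is a genuine gap, and it sits in your core coercive estimate. Uniformity in $L$ is not merely unproved: as you formulate it, the estimate cannot hold with a constant compatible with the $e^{-cL}$ gain. The whole content of the lemma is that $\|\phi\|_{C^{2,\theta}}$ and $\|H_\beta+\Delta_{\beta,0}h_\beta\|_{C^\theta}$ appear with a factor $Ce^{-cL}$, with $C,c$ independent of $L$; only $\varepsilon^2+A+\|\phi\|^2$ may carry $C(L)$.

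Already for a flat sheet ($h_\alpha=0$), take $\phi=a\Hb'(z)$ on $\{|t_\alpha|<L\}$. Then $(\Delta_{\alpha,0}+\partial_{zz}-W''(\Hb_\alpha))\phi=0$ there and $\partial_{y_1}\phi=0$. Moreover $\|\phi\|_{C^{1,\theta}(\{L/2<|t_\alpha|<L\})}\sim ae^{-L/2}$, whereas your left-hand side is $\sim a$. The constraint $\int_{\mathbb R}\phi\Hb'_\alpha\,dz=0$ does not exclude this example. It involves $\phi$ on $\{|t_\alpha|>L\}$, which your estimate does not see, so it can be restored by modifying $\phi$ there. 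Restricted to $(-L,L)$, it is simply not satisfied by the actual $\phi$.

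Hence your constant is at least of order $e^{L/2}$. This swamps the $e^{-cL}$ delivered by Lemma~\ref{outer}, where $c$ comes from a fixed-step iteration and is small. Correspondingly, a compactness argument with $L_k\to\infty$ produces a limit $c\,\Hb'$ with no way to show $c=0$. Your energy fallback fails for the same reason. The spectral gap of Theorem~\ref{t.coercive.a} needs orthogonality on all of $\mathbb R$, while the equation with small error is only available for $|t_\alpha|<2L$.

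The missing idea is the paper's cut-off-and-project step. Set $\phi_\alpha=\tilde\xi(z)\phi-c_\alpha\Hb'_\alpha$ as in \eqref{phi_alpha}, with $c_\alpha$ chosen to restore exact orthogonality on $\mathbb R$. The kernel direction is then paid for by $|c_\alpha|\leq Ce^{-L}\sup_{L<|z|<9|\log\varepsilon|}|\phi|$ (Claim~\ref{calphamanyestimates}), i.e.\ by $e^{-L}$ times the norm of $\phi$ on the large ball. Since $\phi_\alpha$ lives on $\Sigma_\alpha\times\mathbb R$ and solves the equation of Claim~\ref{innerpdephialpha}, Proposition~\ref{p.lin.ac} applies with constants independent of $L$.

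The remaining terms are then all small in $L$:
\begin{itemize}
\item The cutoff errors $2\tilde\xi'\partial_z\phi+\tilde\xi''\phi$ are supported in $L\leq|z|\leq2L$ and are controlled by Lemma~\ref{outer}.
\item $H_\alpha+\Delta_{\alpha,0}h_\alpha$ never needs a separate bound. The term $p_\alpha\Hb'_\alpha$ is projected away, and only $(\tilde\xi-1)\Hb'_\alpha(H_\alpha+\Delta_{\alpha,0}h_\alpha)$, of size $e^{-L}$, survives. This is why that norm enters the statement with $e^{-cL}$. Your Toda-type bound for it is harmless but unnecessary.
\end{itemize}

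You also omit localization in $y$. A blow-up argument needs either a global domain or a term $\rho^{-1}\|\phi\|_{C^{1,\theta}}$ on an enlarged ball. The paper iterates a fixed-$\rho$ step $\lfloor L/(64\rho)\rfloor$ times, and this is precisely what puts $e^{-cL}$ in front of $\|\phi\|$ on $B^+_{r+95|\log\varepsilon|}(x)$.
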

We follow the cutoff--projection argument of \cite[Section 6.2]{wang-weiadv}, and keep track the extra terms coming from the Neumann derivative at the boundary and the twisted coordinates. Fix $p\in B_r^+(x)\cap\Omega_\alpha^1$ and put $y=\Pi_\alpha(p)$. We estimate $\phi$ on $B_1^+(p)\cap\Omega_\alpha^1$ by estimating its pullback to the $\alpha$-coordinates. The base of $Y_{\Sigma_\alpha}^{-1}(B_1^+(p))$ is contained in $B_2^\alpha(y)$.
\begin{claim}[Equation before projection]\label{innerpdewithoutprojection}
In $\Omega^1_\alpha$
	\begin{equation}\label{e.s7.inner.raw}
		(\Delta_{\alpha,0}+\partial_{zz}-W''(\Hb_\alpha))\phi
		=(-1)^\alpha\Hb_\alpha'
		(H_\alpha(\cdot,0)+\Delta_{\alpha,0}h_\alpha)+E_\alpha^1.
	\end{equation}
For $B_r^+(x)$ with $B^+_{r+30|\log\varepsilon|}(x)\subset B^+_{7R/8-2}(0)$
	\begin{align}\label{E^1estimates}
		\begin{split}
			\|E_\alpha^1\|_{L^\infty(B_r^+(x)\cap\{|t_\alpha|<2L\})}
			\leq C(L)\Bigl(&\varepsilon^2+A(r+30|\log\varepsilon|;x)+\|\phi\|_{C^{2,\theta}(B^+_{r+30|\log\varepsilon|}(x))}^2\Bigr).
		\end{split}
	\end{align}
\end{claim}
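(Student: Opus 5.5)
To prove Claim \ref{innerpdewithoutprojection}, I would start from the interaction form \eqref{phieq-interaction form} of the error equation, restrict to $\Omega^1_\alpha$, where $|z|<L$ and the sheet $\alpha$ is the nearest one, and move every term except the principal linear operator and the $\alpha$-curvature term into $E^1_\alpha$. Concretely, I would define $E^1_\alpha$ as the sum of the following pieces:
\begin{itemize}
\item $(\Delta_{\alpha,0}-\Delta_{\alpha,z})\phi+H_\alpha\partial_z\phi$;
\item $(W''(\Hb_*)-W''(\Hb_\alpha))\phi$;
\item $\mathcal I_1+\mathcal I_2+\mathcal R(\Gb_*+\phi)$;
\item $(-1)^\alpha\Hb'_\alpha\bigl[(H_\alpha(\cdot,z)-H_\alpha(\cdot,0))+(\Delta_{\alpha,z}-\Delta_{\alpha,0})h_\alpha\bigr]-\Hb''_\alpha|\nabla_{\alpha,z}h_\alpha|^2$;
\item the $\beta\neq\alpha$ curvature terms;
\item $-\sum_\beta\xi_\beta-\mathscr E_\alpha\phi-\sum_\beta\mathscr E_\beta\Hb_\beta+\sum_\beta\mathcal G_\beta$.
\end{itemize}
The identity \eqref{e.s7.inner.raw} then holds by construction, and all the work goes into the $L^\infty$ bound \eqref{E^1estimates}.

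I would estimate the pieces one at a time on $|z|<2L$.
\begin{itemize}
\item For the Laplacian difference, the relation \eqref{e.lapl.difference} together with \eqref{e.g_ij.approx} and \eqref{e.gamma.alpha.approx} gives a bound $C\varepsilon L^2\|\phi\|_{C^2}$. Since $H_\alpha=O(\varepsilon)$ by \eqref{e.mn.curv.approx}, the mean curvature term is also $O(\varepsilon)\|\phi\|_{C^1}$. Young's inequality turns both into $C(L)(\varepsilon^2+\|\phi\|^2_{C^{2,\theta}})$.
\item The potential difference $W''(\Hb_*)-W''(\Hb_\alpha)$ is bounded by $C\sum_{\beta\ne\alpha}e^{-|t_\beta|}$, as in \eqref{W''H-W''Halpha}. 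On $|z|<2L$, Proposition \ref{l.projection.error} gives $|t_\beta(y,z)|\ge |t_\beta(y,0)|-2L-o(1)$, so this is at most $C(L)A$. Multiplying by $\phi$ and applying Young gives $C(L)(A^2+\|\phi\|^2)$.
\item $\mathcal I_1$ and $\mathcal I_2$ are controlled directly by Lemma \ref{interaction_terms}: pointwise they are $\le C(e^{-D_\alpha}+\varepsilon^2)\le C(A+\varepsilon^2)$. The remainder $\mathcal R(\Gb_*+\phi)$ is handled by \eqref{RGphi} with \eqref{Galpha epsilon 2}, giving $C(\varepsilon^2+\|\phi\|^2)$.
\item For the $\alpha$-curvature corrections I would use \eqref{e.mn.curv.approx}, which gives $|H_\alpha(y,z)-H_\alpha(y,0)|\lesssim \varepsilon^2 L^4$, and Proposition \ref{prope.comparing laplacian}, which gives $\varepsilon L^2\|h_\alpha\|_{C^2}$. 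The quadratic term contributes $\|h_\alpha\|^2_{C^1}$. I would then bound $\|h_\alpha\|_{C^{2,\theta}}$ by \eqref{phicontrolh}, and Young yields $C(L)(\varepsilon^2+A+\|\phi\|^2)$; here $A^2\le A$ because $A\le1$.
\item The last group of terms is covered by Lemma \ref{error phi equation}: \eqref{MEphi} together with \eqref{error phi equation1}--\eqref{error phi equation3}, followed by \eqref{phicontrolh} and Young.
\end{itemize}

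The step I expect to be the main obstacle is the $\beta\neq\alpha$ curvature terms
\[
(-1)^\beta\Hb'_\beta\mathcal R_{\beta,1}-\Hb''_\beta\mathcal R_{\beta,2}.
\]
The claimed bound \eqref{E^1estimates} contains no $\|H_\beta+\Delta_{\beta,0}h_\beta\|$ term, so these must be absorbed somehow. My plan is to bound them crudely. Since $H_\beta=O(\varepsilon)$ and, by \eqref{phicontrolh}, $\|h_\beta\|_{C^{2,\theta}}\lesssim \|\phi\|+A+\varepsilon^2$, we get $|\mathcal R_{\beta,1}|\lesssim \varepsilon+\|\phi\|+A$. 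This factor is multiplied by $\Hb'_\beta\lesssim e^{-|t_\beta|}\le C(L)A$ on $|z|<2L$. Young's inequality then gives
\[
\varepsilon A\le \varepsilon^2+A^2,\qquad \|\phi\|A\le \|\phi\|^2+A^2,
\]
so the whole contribution is $\le C(L)(\varepsilon^2+A+\|\phi\|^2)$. The sum over $\beta$ stays uniform by \eqref{e.s6.sum.at.point}.

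Collecting all the pieces, with the data balls enlarged by at most $30|\log\varepsilon|$ (enough for the projections $\Pi_\beta$ and the sheet indices $|t_\beta|\le 10|\log\varepsilon|$), gives \eqref{E^1estimates}.
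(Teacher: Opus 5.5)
Your proposal is correct and is essentially the paper's own proof. It uses the same decomposition of $E^1_\alpha$ read off from \eqref{phieq-interaction form}, with the same term-by-term bounds from Lemma \ref{interaction_terms}, \eqref{RGphi}, Propositions \ref{geometric approximation} and \ref{prope.comparing laplacian}, Lemma \ref{error phi equation}, \eqref{phicontrolh} and Young's inequality.

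Two small remarks. First, the paper handles the $\beta\neq\alpha$ curvature terms simply by noting that $\mathcal R_{\beta,1},\mathcal R_{\beta,2}$ are uniformly bounded, so your Young refinement there is not needed. Second, in that step and in the potential-difference step, the sum over $\beta$ must be controlled by the geometric-series bound $\sum_{\beta\neq\alpha}e^{-|t_\beta|}\le C(L)e^{-D_\alpha}$ (Lemma \ref{l.sum.of.exp}). The uniform bound \eqref{e.s6.sum.at.point} alone is not enough for this.
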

\begin{proof}
We follow \cite[Lemma 6.3]{wang-weiadv} and estimate the additional correction and boundary terms.
Using \eqref{phieq-interaction form}, the exact error in \eqref{e.s7.inner.raw} is
	\begin{align}\label{e.s7.E1.formula}
		\begin{split}
			E_\alpha^1={}&\mathcal I_1+\mathcal I_2+\mathcal R(\Gb_*+\phi)
			+(W''(\Hb_*)-W''(\Hb_\alpha))\phi\\
			&+(\Delta_{\alpha,0}-\Delta_{\alpha,z})\phi
			+H_\alpha(\cdot,z)\partial_z\phi-\mathscr E_\alpha\phi\\
			&+(-1)^\alpha\Hb_\alpha'
			\bigl(H_\alpha(\cdot,z)-H_\alpha(\cdot,0)
			+(\Delta_{\alpha,z}-\Delta_{\alpha,0})h_\alpha\bigr)
			-\Hb_\alpha''|\nabla_{\alpha,z}h_\alpha|^2\\
			&+\sum_{\beta\ne\alpha}
			\bigl((-1)^\beta\Hb_\beta'\mathcal R_{\beta,1}
			-\Hb_\beta''\mathcal R_{\beta,2}\bigr)
			-\sum_\beta\xi_\beta-\sum_\beta\mathscr E_\beta\Hb_\beta
			+\sum_\beta\mathcal G_\beta.
		\end{split}
	\end{align}
For fixed $L$, separation places $B_r^+(x)\cap\{|t_\alpha|<2L\}$ in $\mathcal M_\alpha^0$ once $\varepsilon$ is small. To obtain the pointwise estimates below, take $p\in B_r^+(x)\cap\{|t_\alpha|<2L\}$. Norms of $\phi$ are taken on $B_1^+(p)$, and norms of $h_\beta$ on $B_2^\beta(\Pi_\beta(p))$; their bounds are then replaced by those on $B^+_{r+30|\log\varepsilon|}(x)$ and its sheet intersections. In these bounds $A$ means $A(r+30|\log\varepsilon|;x)$.
\begin{itemize}
\item The sum
\[
 \mathcal I_1+\mathcal I_2+\mathcal R(\Gb_*+\phi)
 +(W''(\Hb_*)-W''(\Hb_\alpha))\phi
\]
is bounded by $C(L)(\varepsilon^2+A+\|\phi\|_{C^0}^2)$, by Lemma \ref{interaction_terms}, \eqref{RGphi}, and $|\Hb_*-\Hb_\alpha|\leq C(L)e^{-D_\alpha}$.
\item The terms
\[
 (\Delta_{\alpha,0}-\Delta_{\alpha,z})\phi
 +H_\alpha(\cdot,z)\partial_z\phi-\mathscr E_\alpha\phi
\]
and
\[
 (-1)^\alpha\Hb'_\alpha
 \bigl(H_\alpha(\cdot,z)-H_\alpha(\cdot,0)
 +(\Delta_{\alpha,z}-\Delta_{\alpha,0})h_\alpha\bigr)
 -\Hb''_\alpha|\nabla_{\alpha,z}h_\alpha|^2
\]
are bounded by
\[
 C(L)\bigl(\varepsilon\|\phi\|_{C^2}+\varepsilon^2
 +\varepsilon\|h_\alpha\|_{C^2}+\|h_\alpha\|_{C^1}^2\bigr).
\]
Use Propositions \ref{geometric approximation} and \ref{prope.comparing laplacian},
\eqref{e.s6.operator}, then \eqref{phicontrolh} and Young's inequality.
\item The sum
\[
 \sum_{\beta\ne\alpha}
 \bigl((-1)^\beta\Hb'_\beta\mathcal R_{\beta,1}
       -\Hb''_\beta\mathcal R_{\beta,2}\bigr)
\]
is bounded by $C(L)A$: its coefficients are uniformly bounded and the exponentially decaying factors sum to at most $C(L)e^{-D_\alpha}$.
Finally,
\[
 -\sum_\beta\xi_\beta-\sum_\beta\mathscr E_\beta\Hb_\beta
 +\sum_\beta\mathcal G_\beta
\]
is controlled by Lemma \ref{error phi equation} and \eqref{last four terms}, followed by \eqref{phicontrolh} and Young's inequality.
\end{itemize}
\end{proof}

\subsubsection{Cut-off and project}\label{s.cutoff.project} 
Choose $\tilde\xi\in C_c^\infty((-2L,2L))$ with $0\leq\tilde\xi\leq1$, $\tilde\xi=1$ on $[-L,L]$, and $|\tilde\xi'|\leq C_*/L$, $|\tilde\xi''|\leq C_*/L^2$.
Define
\begin{equation}\label{phi_alpha}
	\phi_\alpha(y,z)=\tilde\xi(z)\phi(y,z)-c_\alpha(y)\Hb_\alpha'(y,z),
\end{equation}
where
\begin{equation}\label{def calpha}
	c_\alpha(y)=
	\frac{\int_{\mathbb R}\phi(y,z)(\tilde\xi(z)-1)\Hb_\alpha'(y,z)\,dz}
	{\int_{\mathbb R}(\Hb_\alpha'(y,z))^2\,dz},
\end{equation}
so that
\begin{equation}\label{ON condition}
	\int_{\mathbb R}\phi_\alpha(y,z)\Hb_\alpha'(y,z)\,dz=0.
\end{equation}

\begin{claim}[Estimates for the projection coefficient]\label{calphamanyestimates}
For any $y\in\Sigma_\alpha$, we have
	\begin{align}\label{cphi1}
		|c_\alpha(y)|&\leq Ce^{-L}
		\sup_{L<|z|<9|\log\varepsilon|}|\phi(y,z)|,\\
		\label{cphi2}
		|\nabla_{\alpha,0}c_\alpha(y)|&\leq Ce^{-L}
		\sup_{L<|z|<9|\log\varepsilon|}
		\bigl(|\phi(y,z)|+|\nabla_{\alpha,0}\phi(y,z)|\bigr),\\
		\label{cphi3}
		\|c_\alpha\|_{C^{1,\theta}(B_1^\alpha(y))}
		&\leq Ce^{-L}\|\phi\|_
		{C^{1,\theta}(B_2^\alpha(y)\times[-9|\log\varepsilon|,9|\log\varepsilon|])}.
	\end{align}
For $y\in \partial_0\Sigma_\alpha$,
	\begin{align}\label{cphi4}
		\begin{split}
			(\partial_{y_1}c_\alpha)\int_{\mathbb R}(\Hb_\alpha')^2\,dz
			={}&\int_{\mathbb R}(\partial_{y_1}\phi)(\tilde\xi-1)\Hb_\alpha'\,dz\\
			&-(-1)^\alpha(\partial_{y_1}h_\alpha)
			\int_{\mathbb R}\phi( \tilde\xi-1)\Hb_\alpha''\,dz.
		\end{split}
	\end{align}
	In particular,
	\begin{align}\label{cphi5}
		\begin{split}
			\|\partial_{y_1}c_\alpha\|_{C^\theta(B_1^{0,\alpha}(y))}
			\leq Ce^{-L}\bigl(&\|\partial_{y_1}\phi\|_
			{C^\theta(B_2^{0,\alpha}(y)\times[-9|\log\varepsilon|,9|\log\varepsilon|])}\\
			&+\|\partial_{y_1}h_\alpha\|_{C^\theta(B_2^{0,\alpha}(y))}
			\|\phi\|_{C^\theta(B_2^{0,\alpha}(y)\times[-9|\log\varepsilon|,9|\log\varepsilon|])}\bigr).
		\end{split}
	\end{align}
\end{claim}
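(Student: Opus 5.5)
The estimates come from differentiating the definition \eqref{def calpha} and exploiting the support of $\tilde\xi-1$. The denominator $\int_{\mathbb R}(\Hb_\alpha')^2\,dz=\int_{\mathbb R}(\bH')^2\,dz=\sigma_0+O(\varepsilon^3)$ is independent of $y$. This holds because $\Hb_\alpha'(y,z)=\bH'((-1)^\alpha(z-h_\alpha(y)))$ and the integral is translation invariant in $z$. Hence only the numerator needs to be differentiated, and the quotient structure causes no loss.

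\textbf{Pointwise bound \eqref{cphi1}.} The factor $\tilde\xi-1$ vanishes on $[-L,L]$, and $\bH'$ vanishes outside $|z-h_\alpha|\le 8|\log\varepsilon|$. Since $h_\alpha=o(1)$, the integrand is supported in $L\le|z|\le 9|\log\varepsilon|$, where $|\Hb_\alpha'|\le Ce^{-|z|}$. Integrating gives $\int_{|z|>L}e^{-|z|}\,dz\le 2e^{-L}$, which yields \eqref{cphi1}.

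\textbf{Derivative bounds \eqref{cphi2}, \eqref{cphi3}.} Differentiate in $y_i$ under the integral. The $y$-derivative falls either on $\phi$ or on $\Hb_\alpha'$. In the second case,
\[
\partial_{y_i}\Hb_\alpha'=-(-1)^\alpha\,\Hb_\alpha''\,\partial_{y_i}h_\alpha ,
\]
and $|\nabla h_\alpha|=o(1)$ by Remark \ref{r.optimal_h_shift}. Both resulting terms are supported in the same range $L\le|z|\le 9|\log\varepsilon|$ with the same exponential weight, giving \eqref{cphi2}. For the Hölder seminorm of $\nabla c_\alpha$, take differences between two points $y,y'\in B_1^\alpha(y)$ inside the integral and use the product inequality \eqref{e.|phi.psi|_tht}. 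The Hölder norms of $\Hb_\alpha',\Hb_\alpha''$ in $y$ are bounded by $e^{-|z|}$ times $\|h_\alpha\|_{C^{1,\theta}}=o(1)$. Here $\phi$ is read in $\alpha$-coordinates, so its $C^{1,\theta}$ norm in $(y,z)$ is comparable to the ambient one by Lemma \ref{l.na.y_i}. This gives \eqref{cphi3}.

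\textbf{Boundary identity \eqref{cphi4} and bound \eqref{cphi5}.} Apply the same product rule with $i=1$ on $\partial_0\Sigma_\alpha$. The term in which $\partial_{y_1}$ hits $\Hb_\alpha'$ gives exactly $-(-1)^\alpha(\partial_{y_1}h_\alpha)\int_{\mathbb R}\phi(\tilde\xi-1)\Hb_\alpha''\,dz$. The denominator contributes nothing since it is $y$-independent. Taking $C^\theta$ norms along $\partial_0\Sigma_\alpha$ and using the same support and decay argument yields \eqref{cphi5}; the point is that the second term there carries the small factor $\partial_{y_1}h_\alpha$ from \eqref{neumann h improved relate to phi}.

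The main care point is the $O(\varepsilon^3)$ cutoff error in $\int_{\mathbb R}(\bH')^2\,dz$. Because this integral is exactly $y$-independent it causes no problem, but one must check that the support of $\bH'$ (up to $8|\log\varepsilon|$) stays within the stated window $9|\log\varepsilon|$, which holds since $h_\alpha=o(1)$.
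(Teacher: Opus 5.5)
Your proposal is correct and follows essentially the same route as the paper: it uses that $\int_{\mathbb R}(\Hb_\alpha')^2\,dz$ is independent of $y$, differentiates \eqref{def calpha} under the integral via $\partial_{y_i}\Hb_\alpha'=-(-1)^\alpha(\partial_{y_i}h_\alpha)\Hb_\alpha''$, and extracts the factor $e^{-L}$ from the vanishing of $\tilde\xi-1$ on $[-L,L]$ together with $|\Hb_\alpha'|+|\Hb_\alpha''|\le Ce^{-|z|}$. As in the paper, the H\"older bounds \eqref{cphi3} and \eqref{cphi5} then follow by differencing the derivative identity at two base points and applying the product inequality, using the bounded $C^{1,\theta}$ (indeed $C^{2,\theta}$) norm of $h_\alpha$.
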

\begin{proof}
	The factor $\tilde{\xi}-1$ vanishes for $|z|\leq L$, while
	$|\Hb_\alpha'|+|\Hb_\alpha''|+|\Hb_\alpha'''|\leq Ce^{-|z|}$.
	Integrating this bound gives \eqref{cphi1}.  For any $i=1,\ldots,n$, differentiation of \eqref{def calpha} under the integral gives
	\begin{align}\label{e.s7.c.derivative}
		\begin{split}
			(\partial_{y_i}c_\alpha)\int_{\mathbb R}(\Hb_\alpha')^2\,dz
			={}&\int_{\mathbb R}(\partial_{y_i}\phi)(\tilde\xi-1)\Hb_\alpha'\,dz\\
			&-(-1)^\alpha(\partial_{y_i}h_\alpha)
			\int_{\mathbb R}\phi(\tilde\xi-1)\Hb_\alpha''\,dz.
		\end{split}
	\end{align}
	This proves \eqref{cphi2} and \eqref{cphi4}. Now subtract \eqref{e.s7.c.derivative} at two points $y, \bar y$ and use the H\"older product
inequality. The differences of the correction factors are bounded by
	$Ce^{-|z|}|h_\alpha(y)-h_\alpha(\bar y)|$, and the differences of the other factors are estimated by their $C^\theta$ seminorms.  Integration over $|z|>L$, using the bounded $C^{2,\theta}$ norm of $h_\alpha$, proves \eqref{cphi3} and \eqref{cphi5}.  
\end{proof}

\begin{claim}[Equation after projection]\label{innerpdephialpha}
For base points $y\in\Sigma_\alpha$ on which $c_\alpha$ is defined as above, and for every $z\in\mathbb R$,
	\begin{equation}\label{eqphi_alpha-}
		(\Delta_{\alpha,0}+\partial_{zz}-W''(\Hb_\alpha))\phi_\alpha
		=p_\alpha\Hb_\alpha'+F_\alpha,
	\end{equation}
	where
	\begin{equation}\label{eqpalpha=}
		p_\alpha=(-1)^\alpha(H_\alpha(\cdot,0)+\Delta_{\alpha,0}h_\alpha)
		-\Delta_{\alpha,0}c_\alpha
	\end{equation}
	and
	\begin{align}\label{e.s7.F.exact}
		\begin{split}
			F_\alpha={}&2(-1)^\alpha\Hb_\alpha''
			\langle\nabla_{\alpha,0}c_\alpha,\nabla_{\alpha,0}h_\alpha\rangle
			+(-1)^\alpha c_\alpha(\Delta_{\alpha,0}h_\alpha)\Hb_\alpha''\\
			&-c_\alpha|\nabla_{\alpha,0}h_\alpha|^2\Hb_\alpha'''
			-c_\alpha\bar\xi_1'\bigl((-1)^\alpha(z-h_\alpha)\bigr)\\
			&+2\tilde\xi'\partial_z\phi+\tilde\xi''\phi
			+\tilde\xi E_\alpha^1\\
			&+(-1)^\alpha(H_\alpha(\cdot,0)+\Delta_{\alpha,0}h_\alpha)
			(\tilde\xi-1)\Hb_\alpha'.
		\end{split}
	\end{align}
Moreover,
	\begin{align}\label{eqpalpha=on}
		\begin{split}
			p_\alpha\int_{\mathbb R}(\Hb_\alpha')^2\,dz
			={}&-\int_{\mathbb R}F_\alpha\Hb_\alpha'\,dz
			+\int_{\mathbb R}\phi_\alpha
			\bar\xi_1'\bigl((-1)^\alpha(z-h_\alpha)\bigr)\,dz\\
			&+2(-1)^\alpha
			\left\langle\nabla_{\alpha,0}h_\alpha,
			\int_{\mathbb R}\nabla_{\alpha,0}\phi_\alpha\,
			\Hb_\alpha''\,dz\right\rangle\\
			&+(-1)^\alpha(\Delta_{\alpha,0}h_\alpha)
			\int_{\mathbb R}\phi_\alpha\Hb_\alpha''\,dz
			-|\nabla_{\alpha,0}h_\alpha|^2
			\int_{\mathbb R}\phi_\alpha\Hb_\alpha'''\,dz.
		\end{split}
	\end{align}
On the boundary we have
	\begin{equation}\label{neumannphialpha 0}
		\partial_{y_1}\phi_\alpha
		=\tilde\xi\partial_{y_1}\phi
		-(\partial_{y_1}c_\alpha)\Hb_\alpha'
		+(-1)^\alpha c_\alpha(\partial_{y_1}h_\alpha)\Hb_\alpha''.
	\end{equation}
\end{claim}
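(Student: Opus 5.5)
The plan is a direct computation: apply the operator $\mathcal L_\alpha:=\Delta_{\alpha,0}+\partial_{zz}-W''(\Hb_\alpha)$ separately to the two pieces $\tilde\xi\phi$ and $c_\alpha\Hb_\alpha'$ of \eqref{phi_alpha}, then project onto $\Hb_\alpha'$. All coordinates are twisted Fermi coordinates with respect to $\Sigma_\alpha$. Here $c_\alpha$ and $h_\alpha$ depend only on $y$, and $\tilde\xi$ depends only on $z$.

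\textbf{The cut-off piece.} Since $\tilde\xi=\tilde\xi(z)$, it commutes with $\Delta_{\alpha,0}$ and with $W''(\Hb_\alpha)$. Hence
\[
\mathcal L_\alpha(\tilde\xi\phi)=\tilde\xi\,\mathcal L_\alpha\phi+2\tilde\xi'\partial_z\phi+\tilde\xi''\phi .
\]
On $\operatorname{spt}\tilde\xi\subset\{|z|<2L\}\subset\Omega^1_\alpha$ we substitute Claim \ref{innerpdewithoutprojection}. This gives $\tilde\xi(-1)^\alpha\Hb_\alpha'(H_\alpha(\cdot,0)+\Delta_{\alpha,0}h_\alpha)+\tilde\xi E^1_\alpha$. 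We then write $\tilde\xi=1+(\tilde\xi-1)$ in the first term. The ``$1$'' part is the leading coefficient $(-1)^\alpha(H_\alpha+\Delta_{\alpha,0}h_\alpha)$ in $p_\alpha$. The $(\tilde\xi-1)$ part is the last line of \eqref{e.s7.F.exact}. For $|z|\ge 2L$, both sides vanish on this piece, so the identity holds for all $z\in\mathbb R$.

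\textbf{The projection piece.} For $c_\alpha\Hb_\alpha'$ we use $\Hb_\alpha'=\bar\Hb'((-1)^\alpha(z-h_\alpha(y)))$ and the chain-rule formulas already used in the proof of Proposition \ref{p.s6.error.equation}. These give
\[
\Delta_{\alpha,0}\Hb_\alpha'=-(-1)^\alpha\Hb_\alpha''\Delta_{\alpha,0}h_\alpha+\Hb_\alpha'''|\nabla_{\alpha,0}h_\alpha|^2 ,
\]
together with the cross term $-2(-1)^\alpha\Hb''_\alpha\langle\nabla c_\alpha,\nabla h_\alpha\rangle$. By \eqref{barHb' equation}, $\partial_{zz}\Hb_\alpha'-W''(\Hb_\alpha)\Hb_\alpha'=\bar\xi_1'(\cdot)$. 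Expanding $\mathcal L_\alpha(c_\alpha\Hb_\alpha')$ with the product rule and subtracting, the term $(\Delta_{\alpha,0}c_\alpha)\Hb_\alpha'$ goes into $p_\alpha$ with a minus sign, and the remaining terms are exactly the first two lines of \eqref{e.s7.F.exact}. This proves \eqref{eqphi_alpha-}. The only care needed is bookkeeping of the signs $(-1)^\alpha$.

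\textbf{Formula for $p_\alpha$.} We multiply \eqref{eqphi_alpha-} by $\Hb_\alpha'$ and integrate in $z$. On the left-hand side, integrating by parts twice in $z$ (everything decays) moves $\partial_{zz}-W''(\Hb_\alpha)$ onto $\Hb_\alpha'$, which produces the $\int\phi_\alpha\bar\xi_1'$ term. For the horizontal Laplacian, we differentiate the orthogonality condition \eqref{ON condition} twice in $y$ and take the trace, exactly as in \eqref{0=ONintez}. This expresses $\int\Delta_{\alpha,0}\phi_\alpha\,\Hb_\alpha'\,dz$ through $\int\phi_\alpha\Hb_\alpha''$, $\int\phi_\alpha\Hb_\alpha'''$ and $\int\nabla\phi_\alpha\,\Hb_\alpha''$, each weighted by derivatives of $h_\alpha$. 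Rearranging gives \eqref{eqpalpha=on}.

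\textbf{Boundary identity.} Formula \eqref{neumannphialpha 0} follows by differentiating \eqref{phi_alpha} in $y_1$, using $\partial_{y_1}\Hb_\alpha'=-(-1)^\alpha\Hb_\alpha''\partial_{y_1}h_\alpha$.

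The main obstacle is the justification of differentiating the orthogonality relation under the integral and discarding boundary terms in $z$. Both are harmless, since $\Hb_\alpha'$ is compactly supported in $|z|\le 8|\log\varepsilon|$ after the cut-off and all functions involved are $C^2$.
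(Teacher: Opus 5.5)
Your proposal is correct and follows the paper's proof: it uses the chain-rule identities for $\Hb_\alpha'$, the equation \eqref{e.s7.inner.raw} for $\phi$, the twice-differentiated orthogonality \eqref{ON condition}, integration by parts in $z$ with no endpoint terms, and differentiation of \eqref{phi_alpha} in $y_1$. One small correction is needed: $\{|z|<2L\}$ is not contained in $\Omega^1_\alpha=\{|t_\alpha|<L\}\cap\mathcal M^0_\alpha$, so what you actually use (as the paper does) is that \eqref{e.s7.inner.raw}, with $E^1_\alpha$ given by the exact formula \eqref{e.s7.E1.formula}, holds on the whole strip $\{|t_\alpha|<2L\}$, which lies in $\mathcal M^0_\alpha$ once $L$ is fixed and $\varepsilon$ is small.
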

\begin{proof}
Recall that
\begin{equation}\label{e.chain.rule}
\begin{gathered}
 \partial_{y_i}\Hb_\alpha'=-(-1)^\alpha h_{\alpha,i}\Hb_\alpha'',\quad
 \Delta_{\alpha,0}\Hb_\alpha'
 =-(-1)^\alpha(\Delta_{\alpha,0}h_\alpha)\Hb_\alpha''
       +|\nabla_{\alpha,0}h_\alpha|^2\Hb_\alpha''',\\
       (\partial_{zz}-W''(\Hb_\alpha))\Hb_\alpha'
=\bar\xi_1'((-1)^\alpha(z-h_\alpha)).
\end{gathered}
\end{equation}
Apply these identities to \eqref{phi_alpha}, grouping
$-(\Delta_{\alpha,0}c_\alpha)\Hb_\alpha'$ into $p_\alpha\Hb_\alpha'$ according to \eqref{eqpalpha=}.
This gives \eqref{eqphi_alpha-}-\eqref{e.s7.F.exact}.
	
	To compute its projection, differentiate \eqref{ON condition} twice and apply \eqref{e.chain.rule} to obtain
	\begin{align*}
		\int_{\mathbb R}(\Delta_{\alpha,0}\phi_\alpha)\Hb_\alpha'\,dz
		={}&2(-1)^\alpha\left\langle\nabla_{\alpha,0}h_\alpha,
		\int_{\mathbb R}\nabla_{\alpha,0}\phi_\alpha\Hb_\alpha''\,dz\right\rangle\\
		&+(-1)^\alpha(\Delta_{\alpha,0}h_\alpha)
		\int_{\mathbb R}\phi_\alpha\Hb_\alpha''\,dz
		-|\nabla_{\alpha,0}h_\alpha|^2
		\int_{\mathbb R}\phi_\alpha\Hb_\alpha'''\,dz.
	\end{align*}
Integration by parts in $z$ contributes $\int\phi_\alpha\bar\xi_1'((-1)^\alpha(z-h_\alpha))\,dz$, and there are no endpoint terms because of the cutoffs. Combining these computations proves \eqref{eqpalpha=on}.  In particular, the horizontal integral is not zero: the orthogonality vector depends on $y$.  Finally, differentiating \eqref{phi_alpha} in $y_1$ gives \eqref{neumannphialpha 0}.
\end{proof}

\begin{proof}[Proof of Lemma \ref{C1thetainner -estimates-lemma0}]
In the following estimates, the norms of $\phi$ on the right are on
$B^+_{r+95|\log\varepsilon|}(x)$, and sheet norms are on its
intersections with $\Sigma_\beta$.

First, we estimate \eqref{e.s7.F.exact}. Notice that 
\begin{itemize}
\item the first two lines of \eqref{e.s7.F.exact}, by \eqref{cphi1}-\eqref{cphi3} and $h_\alpha=o(1)$ in $C^2$, on $B_s^\alpha(y)\times\mathbb R$ by $Ce^{-L}\|\phi\|_{C^{1,\theta}(B^+_{r+95|\log\varepsilon|}(x))}$;
\item the last line of \eqref{e.s7.F.exact} is bounded by $Ce^{-L}\|H_\alpha(\cdot,0)+\Delta_{\alpha,0}h_\alpha\|_{C^0}$;
\item by \eqref{c1thetaouter}, we have 
\[
\|2\tilde\xi'\partial_z\phi+\tilde\xi''\phi\|_{L^\infty}
		\leq{}Ce^{-cL}\bigl(\|\phi\|_{C^{2,\theta}}
		+\max_\beta\|H_\beta+\Delta_{\beta,0}h_\beta\|_{C^\theta}\bigr)+C\bigl(\varepsilon^2+A+\|\phi\|_{C^{2,\theta}}^2\bigr);\]
\item  the remaining term $\tilde\xi E_\alpha^1$ is controlled by Claim \ref{innerpdewithoutprojection}.
\end{itemize}
Therefore, for every $s\geq2$ with $s\leq2+L/64$, take the norm of $F_\alpha$ below on $B_s^\alpha(y)\times\mathbb R$ and obtain 
\begin{align}\label{Festimates}
\begin{split}
 \|F_\alpha\|_{L^\infty}
 \leq{}&Ce^{-cL}\bigl(\|\phi\|_{C^{2,\theta}}
       +\max_\beta\|H_\beta+\Delta_{\beta,0}h_\beta\|_{C^\theta}\bigr)\\
 &+C(L)\bigl(\varepsilon^2+A(r+95|\log\varepsilon|;x)
                 +\|\phi\|_{C^{2,\theta}}^2\bigr).
\end{split}
\end{align}

We now estimate the Neumann derivative. First, let us rewrite \eqref{neumannphialpha 0} as
\begin{align*}
 \partial_{y_1}\phi_\alpha
 ={}&\tilde\xi\,\partial_\nu\phi
       +\tilde\xi(\partial_{y_1}-\partial_\nu)\phi-(\partial_{y_1}c_\alpha)\Hb'_\alpha
       +(-1)^\alpha c_\alpha(\partial_{y_1}h_\alpha)\Hb''_\alpha.
\end{align*}
The first term is controlled by Proposition \ref{improved neumann phi}. On the support of $\tilde\xi$, the second term on the right-hand side has $C^\theta$ norm at most $C(L)\varepsilon\|\phi\|_{C^{1,\theta}}$ by \eqref{e.del.x_1-del.y_1}.  Hence, Young's inequality bounds it by $C(L)(\varepsilon^2+\|\phi\|_{C^{1,\theta}}^2)$. Finally, \eqref{cphi3} bounds the last two terms in $C^\theta((B_s^\alpha(y)\cap\partial_0\Sigma_\alpha)\times\mathbb R)$ by $Ce^{-L}\|\phi\|_{C^{1,\theta}(B^+_{r+95|\log\varepsilon|}(x))}$.

Therefore,
\begin{align}\label{neumannphialpha 1}
\begin{split}
 \|\partial_{y_1}\phi_\alpha\|_{C^\theta}
 \leq{}&Ce^{-L}\|\phi\|_{C^{2,\theta}}\\
 &+C(L)\bigl(\varepsilon^2+A(r+95|\log\varepsilon|;x)^{3/2}
                  +\|\phi\|_{C^{2,\theta}}^2\bigr).
\end{split}
\end{align}

We also note that by the definition of $\phi_{\alpha}$ in \eqref{phi_alpha} $\phi_{\alpha}$ has uniform exponential decay in $z$ for all $y$. By choosing $L$ sufficiently large, we can conclude the proof by arguing as in \cite[page 26,  (6.6)]{wang-weiadv} using the inner gluing estimates for the linearized operator in Appendix \ref{s.gluing.a.apriori}.
Indeed, the equation in Claim \ref{innerpdephialpha} and the orthogonality \eqref{ON condition} allow us to use Proposition \ref{p.lin.ac}. 

Fix $\rho>1$, independently of $L$, so that $C/\rho\leq1/2$ and repeat the fixed-step iteration used in Lemma \ref{outer}, starting at radius 2.
After $\lfloor L/(64\rho)\rfloor$ steps, the norm $\|\phi\|_{C^{1,\theta}}$ has coefficient $2^{-\lfloor L/(64\rho)\rfloor}\leq Ce^{-cL}$, while the $F_\alpha$ and boundary terms on the right-hand side only have a bounded  factor coming from summing a geometric series. By \eqref{phi_alpha} and \eqref{cphi3},
\[
 \|\phi_\alpha\|_{C^{1,\theta}(B_{2+k\rho}^\alpha(y)\times\mathbb R)}
 \leq C\|\phi\|_{C^{1,\theta}(B^+_{r+95|\log\varepsilon|}(x))}.
\]
Substituting \eqref{Festimates}-\eqref{neumannphialpha 1} and using $A^{3/2}\leq A$ gives the required estimate for $\phi_\alpha$ on $B_2^\alpha(y)\times\mathbb R$. On $Y_{\Sigma_\alpha}^{-1}(B_1^+(p)\cap\Omega_\alpha^1)$ we have $|z|<L$, hence $\phi=\phi_\alpha+c_\alpha\Hb'_\alpha$. Equation \eqref{cphi3} bounds the added term there by $Ce^{-L}\|\phi\|_{C^{1,\theta}(B^+_{r+95|\log\varepsilon|}(x))}$. Taking the supremum over $p\in B_r^+(x)\cap\Omega_\alpha^1$ proves \eqref{c1theta}
\end{proof}

\begin{proof}[Proof of Proposition \ref{C1,theta}]
Notice that the inner and outer regions overlap and cover the domain.  Use \eqref{c1thetaouter} and \eqref{c1theta} after enlarging all domains in the right-hand side to $B^+_{r+95|\log\varepsilon|}(x)$.  Their sum gives \eqref{e.s7.first.order} with the additional term $C(L)\|\phi\|_{C^{2,\theta}(B^+_{r+95|\log\varepsilon|}(x))}^2$.
After fixing $L$, choose $\varepsilon$ so small that this term is at most $e^{-cL}\|\phi\|_{C^{2,\theta}(B^+_{r+95|\log\varepsilon|}(x))}$. This proves \eqref{e.s7.first.order}.
\end{proof}

\subsection{$C^{2,\theta}$-estimates for $\phi$}\label{C2thetaphiestimates}
Let us begin by recording the following estimate which will be needed later to apply Schauder theory.  
\begin{claim}\label{schauder ctheta error}
For each ball $B_r^+(x)$ with $B^+_{r+80|\log\varepsilon|}(x)\subset B^+_{7R/8-2}(0)$,
	\begin{align}\label{cthetaerror}
		\begin{split}
			\|\Delta\phi-W''(\Hb_*)\phi\|_{C^\theta(B_r^+(x))}
			\lesssim\; &\varepsilon^2+A(r+80|\log\varepsilon|;x)
			+\|\phi\|_{C^{2,\theta}(B^+_{r+80|\log\varepsilon|}(x))}^2\\
			&+\max_\alpha\|H_\alpha+\Delta_{\alpha,0}h_\alpha\|_
			{C^\theta(\Sigma_\alpha\cap B^+_{r+80|\log\varepsilon|}(x))}^2.
		\end{split}
	\end{align}
\end{claim}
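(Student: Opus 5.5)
We start from the exact error equation \eqref{phieq-interaction form}. The Laplacian splits in twisted Fermi coordinates as in Lemma \ref{l.Lapl.tw.Fer}, which gives
\[
\Delta\phi=\Delta_{\alpha,z}\phi+\partial_{zz}\phi-H_\alpha\partial_z\phi+\mathscr E_\alpha\phi .
\]
Substituting into \eqref{phieq-interaction form}, we can write $\Delta\phi-W''(\Hb_*)\phi$ as a sum of the following terms:
\begin{itemize}
\item the interaction terms $\mathcal I_1+\mathcal I_2$;
\item the nonlinear remainder $\mathcal R(\Gb_*+\phi)$;
\item the leading curvature terms $(-1)^\alpha\Hb'_\alpha(H_\alpha(\cdot,z)+\Delta_{\alpha,z}h_\alpha)-\Hb''_\alpha|\nabla_{\alpha,z}h_\alpha|^2$;
\item the analogous sums over $\beta\neq\alpha$;
\item the terms $-\sum_\beta\xi_\beta-\sum_\beta\mathscr E_\beta\Hb_\beta+\sum_\beta\mathcal G_\beta$.
\end{itemize}
Since the left side is coordinate-free, we work locally. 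Cover $B_r^+(x)$ by unit balls $B_1^+(p)$. On each ball we use the coordinates of the nearest sheet $\alpha$, so that $p\in\mathcal M^0_\alpha$. We then estimate each term in $C^\theta(B_1^+(p))$, and take the supremum over $p$; the $C^\theta$ norm on $B_r^+(x)$ is controlled by this local supremum.

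Most terms are handled by earlier estimates.
\begin{itemize}
\item $\mathcal I_1$ and $\mathcal I_2$: \eqref{I1estimateslip} and \eqref{I2estimateslip} bound these by $\varepsilon^2+A(r+80|\log\varepsilon|;x)$. If $p\notin\mathcal M^3_\alpha$, the point is far from every sheet. There the pointwise bound \eqref{e.s6.I1.product} together with exponential summability gives the same control.
\item $\mathcal R(\Gb_*+\phi)$: by \eqref{RGphi} and \eqref{Galpha epsilon 2} it contributes $\varepsilon^2+\|\phi\|^2_{C^{2,\theta}}$.
\item $\sum_\beta\xi_\beta$, $\sum_\beta\mathscr E_\beta\Hb_\beta$, $\sum_\beta\mathcal G_\beta$: Lemma \ref{error phi equation} bounds these by $\varepsilon^2+\varepsilon\max_\beta\|h_\beta\|_{C^{2,\theta}}$. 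Using \eqref{phicontrolh} and Young's inequality, $\varepsilon\|h_\beta\|_{C^{2,\theta}}\lesssim\varepsilon^2+\|\phi\|^2_{C^{2,\theta}}+A$.
\item $\mathscr E_\alpha\phi$: this term does not appear, because $\mathscr E_\alpha\phi$ is part of $\Delta\phi$ itself.
\end{itemize}

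The main obstacle is the leading curvature terms. The target contains only the \emph{square} of $\|H+\Delta h\|_{C^\theta}$, whereas $\Hb'_\alpha(H_\alpha+\Delta_{\alpha,0}h_\alpha)$ is linear in it. This linear term has to be bounded by something else. We use the Toda equation \eqref{H+lh} to rewrite
\[
H_\alpha(\cdot,0)+\Delta_{\alpha,0}h_\alpha
=\frac{2}{\sigma_0}\bigl(A^2_{\pm}e^{-|t_{\alpha-1}|}-A^2_{\pm}e^{-|t_{\alpha+1}|}\bigr)+E^0_\alpha .
\]
\begin{itemize}
\item The exponential terms have $C^\theta$ norm $\lesssim A(r+25|\log\varepsilon|;x)$, as in the proof of Corollary \ref{H+htheta}.
\item $E^0_\alpha$ is bounded by \eqref{E^0_alphaestimates}. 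This is the only source of the quadratic $\|H+\Delta h\|^2$ term; the rest is $\varepsilon^2+A+\|\phi\|^2$ (using $\varepsilon^{1/3}A\le A$ and $A^{3/2}\le A$).
\item The shift from height $z$ to height $0$ is controlled by \eqref{e.mn.curv.approx} and Proposition \ref{prope.comparing laplacian}. It gives $\varepsilon^2(1+|z|^4)+\varepsilon|z|(1+|z|)\|h_\alpha\|_{C^2}$ times $\Hb'_\alpha$; the exponential decay absorbs the polynomial factors, and Young's inequality handles the product.
\item The term $\Hb''_\alpha|\nabla h_\alpha|^2$ is $\lesssim\|h_\alpha\|^2_{C^{2,\theta}}$, which \eqref{phicontrolh} bounds by $\varepsilon^4+A^2+\|\phi\|^2$.
\end{itemize}

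The $\beta\neq\alpha$ curvature terms are treated the same way, applying the Toda equation on $\Sigma_\beta$ and using $|D\Pi_\beta|=O(1)$ from Lemma \ref{l.DPi.beta}. Their summation over $\beta$ is uniform thanks to \eqref{e.s6.sum.at.point}. Each application of Toda needs the larger ball of radius $r+25|\log\varepsilon|$, and the sheet-to-ambient comparisons need a few more $|\log\varepsilon|$ units. All enlarged balls therefore fit inside $B^+_{r+80|\log\varepsilon|}(x)$. Summing the contributions gives \eqref{cthetaerror}.
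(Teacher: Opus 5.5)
Your proposal is correct and follows essentially the same route as the paper. You decompose $\Delta\phi-W''(\Hb_*)\phi$ via \eqref{phieq-interaction form}, with $\mathscr E_\alpha\phi$ correctly cancelling, split $\mathcal R_{\beta,1}$ into its height-zero part plus height-shift errors, and control the linear term $\Hb'_\beta(H_\beta(\cdot,0)+\Delta_{\beta,0}h_\beta)$ through the rough Toda equation, which is exactly the paper's use of \eqref{H+deltaahactheta}; the remaining terms are handled, as in the paper, by Lemma \ref{interaction_terms}, \eqref{RGphi}, Lemma \ref{error phi equation}, \eqref{phicontrolh} and Young's inequality. The one point the paper spells out more carefully is that \eqref{e.mn.curv.approx} is only a sup bound, so the $C^\theta$ control of $H_\beta(\cdot,z)-H_\beta(\cdot,0)$ also needs the tangential estimates of Proposition \ref{geometric approximation} and the Riccati equation for the normal derivative.
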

\begin{proof}
We follow \cite[Appendix C, (2)--(6)]{wang-weiadv} for the terms already present in the ordinary Fermi-coordinate calculation, and estimate the boundary-correction and twisted-coordinate contributions separately.

By \eqref{phieq-interaction form}, we have
	\begin{align}\label{e.s7.ambient.residual}
		\begin{split}
			\Delta\phi-W''(\Hb_*)\phi
			={}&\mathcal I_1+\mathcal I_2+\mathcal R(\Gb_*+\phi)
			+\sum_\beta\bigl((-1)^\beta\Hb_\beta'\mathcal R_{\beta,1}
			-\Hb_\beta''\mathcal R_{\beta,2}\bigr)\\
			&-\sum_\beta\xi_\beta-\sum_\beta\mathscr E_\beta\Hb_\beta
			+\sum_\beta\mathcal G_\beta.
		\end{split}
	\end{align}
Split $\mathcal R_{\beta,1}$ into
	\begin{align}\label{e.s6.R1.split}
		\mathcal R_{\beta,1}(y,z)
		={}&H_\beta(y,0)+\Delta_{\beta,0}h_\beta(y)\notag\\
		&+[H_\beta(y,z)-H_\beta(y,0)]
		+(\Delta_{\beta,z}-\Delta_{\beta,0})h_\beta(y).
	\end{align}
 The estimates in
\cite[Appendix C, (2)--(6)]{wang-weiadv} apply to $\mathcal I_1$ and the sums
in $\beta$ of
\[
 \Hb'_\beta[H_\beta(y,z)-H_\beta(y,0)],\qquad
 \Hb'_\beta(\Delta_{\beta,z}-\Delta_{\beta,0})h_\beta,\qquad
 \Hb''_\beta|\nabla_{\beta,z}h_\beta|^2,
\]
using here Lemma \ref{interaction_terms},
Propositions \ref{geometric approximation} and \ref{prope.comparing laplacian},
and \eqref{phicontrolh}. Their $C^\theta(B_r^+(x))$ norms are bounded by
\[
 C\bigl(\varepsilon^2+A(r+80|\log\varepsilon|;x)
 +\|\phi\|_{C^{2,\theta}(B^+_{r+80|\log\varepsilon|}(x))}^2\bigr).
\]
For $H_\beta(y,z)-H_\beta(y,0)$, Proposition \ref{geometric approximation}
controls the tangential derivatives; the Riccati equation and
$\partial_z=\widetilde Z_\beta+T$ control its normal derivative.
Multiplication by $\Hb'_\beta$ absorbs the polynomial factors in $z$.
All sums are controlled uniformly by \eqref{e.s6.sum.at.point}.

For $p\in B_r^+(x)$ and each index with $|t_\beta(p)|<9|\log\varepsilon|$, put $(y,z)=(\Pi_\beta(p),t_\beta(p))$. For the first line of \eqref{e.s6.R1.split}, use
\[
 \|\Hb'_\beta(H_\beta(\cdot,0)+\Delta_{\beta,0}h_\beta)\|_
 {C^\theta(B_1^+(y,z))}
 \leq Ce^{-|z|/2}
 \|H_\beta(\cdot,0)+\Delta_{\beta,0}h_\beta\|_{C^\theta(B_2^\beta(y))}.
\]
Apply \eqref{H+deltaahactheta}, including both terms on its last line,
to the norm on the right, and sum using \eqref{e.s6.sum.at.point}. This bounds
\[
 \Bigl\|\sum_\beta(-1)^\beta\Hb'_\beta
 (H_\beta(\cdot,0)+\Delta_{\beta,0}h_\beta)\Bigr\|_{C^\theta(B_r^+(x))}
\]
by the right-hand side of \eqref{cthetaerror}. Here $\Pi_\beta(p)$ is joined to $p$ by a twisted flow segment with time length less than $9|\log\varepsilon|$. The sheet ball $B_2^\beta(\Pi_\beta(p))$ and the balls required when applying \eqref{H+deltaahactheta} on that set, with its additional $25|\log\varepsilon|$ radius, lie in $B^+_{r+80|\log\varepsilon|}(x)$.

Finally, \eqref{RGphi}, Lemma \ref{interaction_terms},
Corollary \ref{improved G estimates}, and \eqref{last four terms},
combined with \eqref{phicontrolh} and Young's inequality, give
\[
\begin{aligned}
 &\Bigl\|\mathcal R(\Gb_*+\phi)+\mathcal I_2
 -\sum_\beta\xi_\beta-\sum_\beta\mathscr E_\beta\Hb_\beta
 +\sum_\beta\mathcal G_\beta\Bigr\|_{C^\theta(B_r^+(x))}\\
 &\quad\leq C\varepsilon^2+CA(r+80|\log\varepsilon|;x)^{3/2}
        +C\|\phi\|_{C^{2,\theta}(B^+_{r+80|\log\varepsilon|}(x))}^2.
\end{aligned}
\]
Combining these bounds in \eqref{e.s7.ambient.residual}, using $A^{3/2}\leq A$, proves
\eqref{cthetaerror}.
\end{proof}

\begin{lemma}[$C^{2,\theta}$ estimates]\label{C2,theta}
	For sufficiently small $\varepsilon$, the estimate \eqref{aprior C2alpha Hhphieq} holds.  Equivalently, for $x\in B^+_{6R/7}(0)$ and $0<r<R/60$,
	\begin{align}\label{C2thetaphiestimateseq}
		\begin{split}
			&\|\phi\|_{C^{2,\theta}(B_r^+(x))}
			+\max_\alpha\|H_\alpha+\Delta_{\alpha,0}h_\alpha\|_
			{C^\theta(\Sigma_\alpha\cap B_r^+(x))}
			+\max_\alpha\|h_\alpha\|_{C^{2,\theta}(\Sigma_\alpha\cap B_r^+(x))}\\
			&\hspace{35mm}\leq C\varepsilon^2+CA(r+100|\log\varepsilon|^2;x).
		\end{split}
	\end{align}
\end{lemma}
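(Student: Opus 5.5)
We follow the scheme of \cite[Section 6.3 and Appendix C]{wang-weiadv}: we get a Schauder estimate for $\phi$ with the boundary term included, combine it with the $C^{1,\theta}$ bound of Proposition \ref{C1,theta} and the Toda bound of Corollary \ref{H+htheta}, and remove the $o(1)$-weighted higher norms by iterating over radii.

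\emph{Step 1 (Schauder estimate with Neumann data).} On each $B_1^+(p)$ with $p\in B_r^+(x)$, the function $\phi$ solves $\Delta\phi-W''(\Hb_*)\phi=f$. The coefficient $W''(\Hb_*)$ is uniformly bounded in $C^\theta$. The right-hand side $f$ is controlled by Claim \ref{schauder ctheta error}, and the Neumann datum $\partial_\nu\phi$ on $\partial_0B^+_2(p)$ by Proposition \ref{improved neumann phi}. Boundary Schauder estimates for the oblique (Neumann) problem, together with interior estimates, then give
\[
\|\phi\|_{C^{2,\theta}(B_r^+(x))}\lesssim \|\phi\|_{C^{1,\theta}(B^+_{r+2}(x))}+\varepsilon^2+A(r+80|\log\varepsilon|;x)+\|\phi\|^2_{C^{2,\theta}(B^+_{r+80|\log\varepsilon|}(x))}+\max_\alpha\|H_\alpha+\Delta_{\alpha,0}h_\alpha\|^2_{C^\theta(\cdots)}.
\]

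\emph{Step 2 (closing the estimate for a fixed enlarged radius).} Write
\[
\Phi(s):=\|\phi\|_{C^{2,\theta}(B^+_s(x))}+\max_\alpha\|H_\alpha+\Delta_{\alpha,0}h_\alpha\|_{C^\theta(\Sigma_\alpha\cap B^+_s(x))}.
\]
We substitute Proposition \ref{C1,theta} into Step 1 and add Corollary \ref{H+htheta}. Using the qualitative smallness \eqref{e.s7.qualitative}, the quadratic terms can be absorbed as $o(1)\Phi(r+100|\log\varepsilon|)$. Fixing $L$ so large that $Ce^{-cL}\le 1/8$ yields
\[
\Phi(r)\le \tfrac12\,\Phi(r+100|\log\varepsilon|)+C\varepsilon^2+C A(r+100|\log\varepsilon|;x).
\]

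\emph{Step 3 (iteration).} We iterate this inequality $k=\lceil |\log\varepsilon|\rceil$ times, with radii $r_j=r+100j|\log\varepsilon|$. Since $A(\cdot;x)$ is monotone in the radius, the source terms sum as a geometric series to at most $2(C\varepsilon^2+CA(r+100|\log\varepsilon|^2;x))$. The remaining term $2^{-k}\Phi(r_k)$ is bounded by $2^{-k}\cdot O(1)$, which is $O(\varepsilon^{\log 2})$. This is not yet $\le\varepsilon^2$, so we instead take $k=C_0|\log\varepsilon|$ steps with $C_0\log 2\ge 2$; this costs only a constant in the radius, which we absorb by adjusting constants in the $100|\log\varepsilon|^2$ term.

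The point needing care is that all balls stay inside $B^+_{7R/8-2}(0)$, where the earlier estimates hold. Since $r<R/60$ and $x\in B^+_{6R/7}(0)$, the total enlargement $O(|\log\varepsilon|^2)$ is much smaller than $R$, so this holds for $\varepsilon$ small. Finally, the bound on $\|h_\alpha\|_{C^{2,\theta}}$ follows from \eqref{phicontrolh}, using $e^{-D_\alpha}\le A$ together with the bound just obtained for $\phi$.

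The main obstacle is Step 1 at the boundary. The Neumann data from Proposition \ref{improved neumann phi} are only quadratic in the enlarged norm of $\phi$, and the constant in front of $\|\phi\|_{C^{1,\theta}}$ must not depend on $L$. We would first check that the standard $C^{2,\theta}$ oblique-derivative estimate holds on half-balls in the rescaled metric, whose coefficients are $\varepsilon$-close to Euclidean, with constants uniform in $\varepsilon$.
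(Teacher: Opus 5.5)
Your proposal follows the paper's proof essentially step for step. The shared steps are: a local interior/Neumann Schauder estimate on $B_2^+(p)$; control of the lower-order norm by Proposition~\ref{C1,theta}, of the source by Claim~\ref{schauder ctheta error}, and of the Neumann datum by Proposition~\ref{improved neumann phi}; absorption of the quadratic terms via \eqref{e.s7.qualitative}; a contraction inequality for $\Phi$ with step $100|\log\varepsilon|$, which is iterated; and finally \eqref{phicontrolh} for $h_\alpha$. You should also start the iteration at radius $r+2$, as the paper does, so that the $\phi$-bound covers the sheet balls $B_1^\alpha(y)$ on which \eqref{phicontrolh} is applied.

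The one step that fails as written is the end of Step 3.

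With contraction factor $\lambda$ and step $100|\log\varepsilon|$, the remainder is $\lambda^k\Phi(r_k)=\lambda^k\,o(1)$. Bringing it down to $\varepsilon^2$ needs $k\geq 2|\log\varepsilon|/\log(1/\lambda)$, so the total radius enlargement is at least $200|\log\varepsilon|^2/\log(1/\lambda)$. For $\lambda=1/2$ this is about $289|\log\varepsilon|^2$, so you only obtain the estimate with $A(r+289|\log\varepsilon|^2;x)$.

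This cannot be absorbed into constants. The $100$ sits inside the radius of $A(\cdot\,;x)$, which is only nondecreasing and not controlled under enlargement by a multiplicative constant. The specific radius is also used downstream, e.g.\ $r+25|\log\varepsilon|+100|\log\varepsilon|^2\leq r+120|\log\varepsilon|^2$ in Corollary~\ref{toda system rough error first improved}. Shortening the step does not help either, since Proposition~\ref{C1,theta} alone already consumes $95|\log\varepsilon|$.

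The fix is to make the contraction factor smaller than $e^{-2}$, which your framework permits:
\begin{itemize}
\item Do not use the fixed $1/4$ of Corollary~\ref{H+htheta}. Go back to \eqref{H+deltaahactheta} and absorb its squared terms with an arbitrarily small coefficient via \eqref{e.s7.qualitative}.
\item Choose $L$ so that $Ce^{-cL}$ times the Schauder constant is at most $1/128$.
\end{itemize}
This yields \eqref{phi+H+deltaaha} with factor $1/16$. Then $k=\lceil 3|\log\varepsilon|/4\rceil$ steps give $16^{-k}\leq\varepsilon^2$ with total enlargement about $75|\log\varepsilon|^2\leq 100|\log\varepsilon|^2$. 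This is exactly the paper's choice.
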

\begin{proof}
The argument follows the proof of \cite[Section 6.3, especially (6.8)]{wang-weiadv}. For each $p\in B_r^+(x)$, apply the interior or Neumann Schauder estimate in Proposition \ref{t.sch} on $B_2^+(p)$, with data on $\partial_0B_2^+(p)$ when that set is nonempty, to estimate $\phi$ on $B_1^+(p)$. Since $B_2^+(p)\subset B^+_{r+2}(x)$, taking the supremum over $p$ gives
	\begin{align}\label{e.s7.schauder.step}
		\begin{split}
			\|\phi\|_{C^{2,\theta}(B_r^+(x))}
			\leq C\bigl(&\|\phi\|_{C^0(B^+_{r+2}(x))}
			+\|\Delta\phi-W''(\Hb_*)\phi\|_{C^\theta(B^+_{r+2}(x))}\\
			&+\|\partial_\nu\phi\|_{C^{1,\theta}(\partial_0B^+_{r+2}(x))}\bigr).
		\end{split}
	\end{align}
The coefficient $W''(\Hb_*)$ has a uniform $C^\theta(B_2^+(p))$ bound for these centres $p$.

Fix $L$ so large that the coefficient $Ce^{-cL}$ in \eqref{e.s7.first.order}, after multiplication by the Schauder constant, is less than $1/128$.
The qualitative convergence supplied by Lemmas \ref{l.1D.soln}-\ref{l.D_al.infty}, Proposition \ref{p.optimal_h_shift}, and Corollary \ref{improved G estimates} implies that, after decreasing $\varepsilon$ with this $L$ fixed,
\[
 C\|\phi\|_{C^{2,\theta}(B^+_{r+100|\log\varepsilon|}(x))}
 +C\max_\alpha\|H_\alpha+\Delta_{\alpha,0}h_\alpha\|_{C^\theta(\Sigma_\alpha\cap B^+_{r+100|\log\varepsilon|}(x))}
 \leq \frac1{128}.
\]

Now apply \eqref{e.s7.first.order} to $\|\phi\|_{C^0}$ in \eqref{e.s7.schauder.step}, Claim \ref{schauder ctheta error} to $\Delta\phi-W''(\Hb_*)\phi$, and Proposition \ref{improved neumann phi} to $\partial_\nu\phi$.  Then enlarge the right-hand domains to $B^+_{r+100|\log\varepsilon|}(x)$.  We have
\begin{equation}\label{e.prelim.bound.7.10}
 C\|\phi\|_{C^{2,\theta}}^2
 \leq \frac1{128}\|\phi\|_{C^{2,\theta}},
 \quad
 C\max_\alpha\|H_\alpha+\Delta_{\alpha,0}h_\alpha\|_{C^\theta}^2
 \leq \frac1{128}\max_\alpha\|H_\alpha+\Delta_{\alpha,0}h_\alpha\|_{C^\theta},
\end{equation}
with the norms of $\phi$ on $B^+_{r+100|\log\varepsilon|}(x)$ and the norms of $H_\alpha+\Delta_{\alpha,0}h_\alpha$ on $\Sigma_\alpha\cap B^+_{r+100|\log\varepsilon|}(x)$.  Since $A^{3/2}\leq A$, we obtain
\begin{align}\label{phic2testima}
\begin{split}
 \|\phi\|_{C^{2,\theta}(B_r^+(x))}
 \leq{}&\frac1{32}\Bigl(
 \|\phi\|_{C^{2,\theta}(B^+_{r+100|\log\varepsilon|}(x))}\\
 &\qquad+\max_\alpha\|H_\alpha+\Delta_{\alpha,0}h_\alpha\|_{C^\theta(\Sigma_\alpha\cap B^+_{r+100|\log\varepsilon|}(x))}\Bigr)\\
 &+C\varepsilon^2+CA(r+100|\log\varepsilon|;x).
\end{split}
\end{align}

Next apply \eqref{H+deltaahactheta} and consider 
\[
 C\max_\alpha\|H_\alpha+\Delta_{\alpha,0}h_\alpha\|_{C^\theta}^2
 \quad\text{and}\quad
 C\|\phi\|_{C^{2,\theta}}^2
\]
where the norms are taken, respectively, on $\Sigma_\alpha\cap B^+_{r+100|\log\varepsilon|}(x)$ and $B^+_{r+100|\log\varepsilon|}(x)$. By \eqref{e.prelim.bound.7.10}, followed by adding the resulting estimate to \eqref{phic2testima}, we obtain
\begin{align}\label{phi+H+deltaaha}
\begin{split}
 &\|\phi\|_{C^{2,\theta}(B_r^+(x))}
 +\max_\alpha\|H_\alpha+\Delta_{\alpha,0}h_\alpha\|_{C^\theta(\Sigma_\alpha\cap B_r^+(x))}\\
 &\quad\leq\frac1{16}\Bigl(
 \|\phi\|_{C^{2,\theta}(B^+_{r+100|\log\varepsilon|}(x))}
 +\max_\alpha\|H_\alpha+\Delta_{\alpha,0}h_\alpha\|_{C^\theta(\Sigma_\alpha\cap B^+_{r+100|\log\varepsilon|}(x))}\Bigr)\\
 &\qquad+C\varepsilon^2+CA(r+100|\log\varepsilon|;x).
\end{split}
\end{align}
This is the analogue of \cite[(6.8)]{wang-weiadv}.

Iterate \eqref{phi+H+deltaaha} $k=\lceil3|\log\varepsilon|/4\rceil$ times, increasing the radius by $100|\log\varepsilon|$ at each step.  For sufficiently small $\varepsilon$,
\[
 16^{-k}\leq\varepsilon^2,
 \qquad
 2+100k|\log\varepsilon|\leq50|\log\varepsilon|^2.
\]
Start the iteration at radius $r+2$. After $k$ steps, the remaining norms are $\|\phi\|_{C^{2,\theta}(B^+_{r+2+100k|\log\varepsilon|}(x))}$ and $\max_\alpha\|H_\alpha+\Delta_{\alpha,0}h_\alpha\|_{C^\theta(\Sigma_\alpha\cap B^+_{r+2+100k|\log\varepsilon|}(x))}$, both uniformly bounded, while $A$ is nondecreasing in its radius and $\sum_{j\geq0}16^{-j}=16/15$.  Hence
\begin{align}\label{e.s7.iteration.result}
\begin{split}
 &\|\phi\|_{C^{2,\theta}(B_{r+2}^+(x))}
 +\max_\alpha\|H_\alpha+\Delta_{\alpha,0}h_\alpha\|_{C^\theta(\Sigma_\alpha\cap B_{r+2}^+(x))}\\
 &\hspace{30mm}\leq C\varepsilon^2+CA(r+100|\log\varepsilon|^2;x).
\end{split}
\end{align}
The intermediate balls $B^+_{r+2+100j|\log\varepsilon|}(x)$, $0\leq j\leq k$, remain in $B^+_{7R/8-2}(0)$ by the fixed outer margin. Finally, apply \eqref{phicontrolh} on $B_1^\alpha(y)$ for each $y\in\Sigma_\alpha\cap B_r^+(x)$.
Equation \eqref{e.s7.iteration.result}, followed by the supremum over $y$, gives the $C^{2,\theta}$ bound for $h_\alpha$ and proves \eqref{C2thetaphiestimateseq}.
\end{proof}

\begin{corollary}[Toda system with the first error improvement]\label{toda system rough error first improved}
	For $x\in B^+_{6R/7}(0)$ and $0<r<R/60$, the Toda system is
	\begin{align}\label{H+lhimproved}
		\begin{split}
			H_\alpha(y,0)+\Delta_{\alpha,0}h_\alpha(y)
			={}&\frac2{\sigma_0}\left(
			A_{(-1)^{\alpha-1}}^2e^{-|t_{\alpha-1}(y,0)|}
			-A_{(-1)^\alpha}^2e^{-|t_{\alpha+1}(y,0)|}\right)+E_\alpha^0(y),
		\end{split}
	\end{align}
	where
	\begin{align}\label{improvetoda1}
		\begin{split}
			\max_\alpha\|E_\alpha^0\|_{C^\theta(\Sigma_\alpha\cap B_r^+(x))}
			\lesssim\; &\varepsilon^2+A(r+120|\log\varepsilon|^2;x)^{3/2}+\varepsilon^{1/3}A(r+120|\log\varepsilon|^2;x).
		\end{split}
	\end{align}
\end{corollary}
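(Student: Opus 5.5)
The plan is to feed the a priori bound of Lemma \ref{C2,theta} into the rough remainder estimate \eqref{E^0_alphaestimates} of Proposition \ref{toda system rough}. The Toda identity \eqref{H+lhimproved} is the same as \eqref{H+lh}, so only the bound \eqref{improvetoda1} on $E_\alpha^0$ needs proof.

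Fix $x\in B^+_{6R/7}(0)$ and $0<r<R/60$, and set $r'=r+25|\log\varepsilon|$. Apply \eqref{E^0_alphaestimates} on $B_r^+(x)$. This bounds $\|E_\alpha^0\|_{C^\theta}$ by
\[
\varepsilon^2+\varepsilon^{1/3}A(r';x)+A(r';x)^{3/2}
+\max_\alpha\|H_\alpha+\Delta_{\alpha,0}h_\alpha\|^2_{C^\theta(\Sigma_\alpha\cap B^+_{r'}(x))}
+\|\phi\|^2_{C^{2,\theta}(B^+_{r'}(x))}.
\]
The two quadratic terms are the only ones to treat. Apply \eqref{C2thetaphiestimateseq} with radius $r'$ in place of $r$. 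This bounds both norms by $C\varepsilon^2+CA(r'+100|\log\varepsilon|^2;x)$. Squaring gives
\[
C\varepsilon^4+CA(r'+100|\log\varepsilon|^2;x)^2\leq C\varepsilon^2+CA(r+120|\log\varepsilon|^2;x)^{3/2}.
\]
Here I use $0\le A\le1$, so $A^2\le A^{3/2}$. I also use $25|\log\varepsilon|+100|\log\varepsilon|^2\le120|\log\varepsilon|^2$ for small $\varepsilon$, and that $A(\cdot\,;x)$ is nondecreasing in the radius. The other terms $\varepsilon^{1/3}A(r';x)$ and $A(r';x)^{3/2}$ are bounded by the same expressions at radius $r+120|\log\varepsilon|^2$, again by monotonicity.

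The main point to check is that Lemma \ref{C2,theta} may be applied with the enlarged radius $r'$. That lemma requires $r'<R/60$ and $x\in B^+_{6R/7}(0)$, and its iteration needs the balls of radius $r'+100|\log\varepsilon|^2$ to stay inside $B^+_{7R/8-2}(0)$. Since $r<R/60$ and $25|\log\varepsilon|\ll R=\varepsilon^{-1}$, these conditions hold up to a harmless shrinking of constants. If $r$ sits right at $R/60$, I would restrict the statement to $r<R/60-25|\log\varepsilon|$. Alternatively, I would cover $B_r^+(x)$ by smaller balls and use that the Hölder norm on the union is controlled locally. Combining the bounds above gives \eqref{improvetoda1}.
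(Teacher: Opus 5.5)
Your proposal is correct and matches the paper's proof essentially step for step. The paper also substitutes Lemma \ref{C2,theta} at radius $r+25|\log\varepsilon|$ into \eqref{E^0_alphaestimates}, squares the result, uses $A^2\leq A^{3/2}$ and $25|\log\varepsilon|+100|\log\varepsilon|^2\leq120|\log\varepsilon|^2$, and finishes by monotonicity of $A$ in the radius. Your concern about applying the lemma at the enlarged radius is legitimate bookkeeping that the paper passes over silently. It can be settled without shrinking the range of $r$: since $6R/7+R/60+120|\log\varepsilon|^2<7R/8-2$ for small $\varepsilon$, every ball used in the proof of Lemma \ref{C2,theta} still lies in $B^+_{7R/8-2}(0)$.
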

\begin{proof}
Equation \eqref{H+lhimproved} is the rough Toda equation \eqref{H+lh}.  It remains to improve the estimate for $E_\alpha^0$.
From \eqref{E^0_alphaestimates},
\begin{align*}
 \max_\alpha\|E_\alpha^0\|_{C^\theta(\Sigma_\alpha\cap B_r^+(x))}
 \leq C\bigl(&\varepsilon^2+\varepsilon^{1/3}A(r+25|\log\varepsilon|;x)
 +A(r+25|\log\varepsilon|;x)^{3/2}\\
 &+\max_\alpha\|H_\alpha+\Delta_{\alpha,0}h_\alpha\|_{C^\theta(\Sigma_\alpha\cap B^+_{r+25|\log\varepsilon|}(x))}^2\\
 &+\|\phi\|_{C^{2,\theta}(B^+_{r+25|\log\varepsilon|}(x))}^2\bigr).
\end{align*}
Apply Lemma \ref{C2,theta} with centre radius $r+25|\log\varepsilon|$.  The sum of the two displayed squared norms is estimated on a ball contained in the one below, since
\[
 r+25|\log\varepsilon|+100|\log\varepsilon|^2
 \leq r+120|\log\varepsilon|^2
\]
for small $\varepsilon$.  Lemma \ref{C2,theta} then gives
\begin{align*}
 &\max_\alpha\|H_\alpha+\Delta_{\alpha,0}h_\alpha\|_{C^\theta(\Sigma_\alpha\cap B^+_{r+25|\log\varepsilon|}(x))}^2
 +\|\phi\|_{C^{2,\theta}(B^+_{r+25|\log\varepsilon|}(x))}^2\\
 &\qquad\leq C\bigl(\varepsilon^2+A(r+120|\log\varepsilon|^2;x)\bigr)^2
 \leq C\varepsilon^2+CA(r+120|\log\varepsilon|^2;x)^{3/2}.
\end{align*}
Monotonicity of $A$ in the radius now gives \eqref{improvetoda1}.  
\end{proof}

For the differentiated form of the Toda system, restrict to the already defined logarithmic strip by setting
\[
\widetilde{\mathcal M}_\alpha^0
=\mathcal M_\alpha^0\cap\{|t_\alpha|<10|\log\varepsilon|\}.
\]
This does not change the definition of $\mathcal M_\alpha^0$.  As usual, write
$h_{\alpha,i}=\partial_{y_i}h_\alpha$ and
$H_{\alpha,i}(y,0)=\partial_{y_i}H_\alpha(y,0)$.

\begin{corollary}[Differentiated Toda system]\label{diff toda}
	For $i=1,\ldots,n$, in $\widetilde{\mathcal M}_\alpha^0$ one has
	\begin{equation}\label{e.s7.differentiated.toda}
		\Hb_\alpha'\bigl(H_{\alpha,i}(y,0)+\Delta_{\alpha,0}h_{\alpha,i}(y)\bigr)
		=\partial_{y_i}\mathcal E_{\alpha,1,i}+\mathcal E_{\alpha,2,i},
	\end{equation}
	where
	\begin{align}\label{Ea12i}
		\begin{split}
			&\|\mathcal E_{\alpha,1,i}\|_{C^\theta(\widetilde{\mathcal M}_\alpha^0\cap B_r^+(x))}
			+\|\mathcal E_{\alpha,2,i}\|_{C^0(\widetilde{\mathcal M}_\alpha^0\cap B_r^+(x))}\\
			&\qquad\leq C\bigl(\varepsilon^2+A(r+120|\log\varepsilon|^2;x)^{3/2}
			+\varepsilon^{1/6}A(r+120|\log\varepsilon|^2;x)\bigr).
		\end{split}
	\end{align}
\end{corollary}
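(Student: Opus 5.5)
The plan is to differentiate the error equation \eqref{phieq-interaction form} in $y_i$ before projecting, in the spirit of \cite[Section 7]{wang-weiadv}, instead of differentiating the projected identity \eqref{H+lhimproved}. The latter would cost a derivative on $E^0_\alpha$, which is only controlled in $C^\theta$. Concretely, I rewrite \eqref{phieq-interaction form} near $\Sigma_\alpha$ in the form of Claim \ref{innerpdewithoutprojection}:
\[
(-1)^\alpha\Hb_\alpha'\bigl(H_\alpha(\cdot,0)+\Delta_{\alpha,0}h_\alpha\bigr)=(\Delta_{\alpha,0}+\partial_{zz}-W''(\Hb_\alpha))\phi-E^1_\alpha .
\]
I then apply $\partial_{y_i}$ in $\widetilde{\mathcal M}^0_\alpha$. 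On the left, the chain rule gives $\Hb'_\alpha(H_{\alpha,i}+\partial_{y_i}\Delta_{\alpha,0}h_\alpha)$ plus the term $-(-1)^\alpha h_{\alpha,i}\Hb''_\alpha(H_\alpha+\Delta_{\alpha,0}h_\alpha)$. By Lemma \ref{C2,theta}, this extra term is quadratic and bounded by $C(\varepsilon^2+A)^2\le C\varepsilon^2+CA^{3/2}$, so I put it into $\mathcal E_{\alpha,2,i}$. Next, I replace $\partial_{y_i}\Delta_{\alpha,0}h_\alpha$ by $\Delta_{\alpha,0}h_{\alpha,i}$ using the commutator estimate of Proposition \ref{p.commutator.laplacian} at $z=0$. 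The resulting error is $O(\varepsilon\|h_\alpha\|_{C^2})$, which Young's inequality and Lemma \ref{C2,theta} bound by $C\varepsilon^2+CA^2$.

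On the right-hand side, the linear part $\partial_{y_i}[(\Delta_{\alpha,0}+\partial_{zz}-W''(\Hb_\alpha))\phi]$ is handled as an exact divergence in $y$. I simply put $\mathcal E_{\alpha,1,i}$ equal to the bracket. Its $C^\theta$ norm is not small in the required sense by itself, though. So I split further: I write $\Delta_{\alpha,0}\phi+\partial_{zz}\phi-W''(\Hb_*)\phi$ and use Claim \ref{schauder ctheta error}, after transferring $\Delta\phi$ to $\Delta_{\alpha,0}+\partial_{zz}$ via Lemma \ref{l.Lapl.tw.Fer} and Proposition \ref{prope.comparing laplacian}. This shows that the combination equals the $C^\theta$-small quantities listed in \eqref{e.s7.ambient.residual}, up to the leading $\Hb'_\alpha(H_\alpha+\Delta_{\alpha,0}h_\alpha)$ term. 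Equivalently, I write the whole identity as $\Hb'_\alpha(H_\alpha+\Delta h)=\mathcal F_\alpha$ with $\mathcal F_\alpha:=(-1)^\alpha[\text{linear operator on }\phi - E^1_\alpha]$. Each summand of $\mathcal F_\alpha$ is then of one of two types. The first type is a genuine $C^\theta$-small function $f$, whose $y_i$-derivative I keep in divergence form and assign to $\partial_{y_i}\mathcal E_{\alpha,1,i}$. The second type is a product whose $y_i$-derivative I compute explicitly and bound in $C^0$ through $\mathcal E_{\alpha,2,i}$. The first type consists of $\mathcal R(\Gb_*+\phi)$, $\mathcal I_2$, $\xi_\beta$, the geometric terms $\mathscr E_\beta\Hb_\beta$ and $\mathcal G_\beta$, and the interior sheet terms. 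Lemmas \ref{interaction_terms}, \ref{error phi equation}, \eqref{RGphi}, Lemma \ref{C2,theta} and Young's inequality bound all of them by $C(\varepsilon^2+A^{3/2})$. The radius enlargements $+25|\log\varepsilon|$ and $+100|\log\varepsilon|^2$ are absorbed into $120|\log\varepsilon|^2$ exactly as in Corollary \ref{toda system rough error first improved}.

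The main obstacle is the interaction term $\mathcal I_1$ together with the $\beta\ne\alpha$ terms $\Hb'_\beta\mathcal R_{\beta,1}$. These are only $O(A)$, and the target bound requires $A^{3/2}+\varepsilon^{1/6}A$. I would take $\mathcal I_1$ to the left and differentiate it explicitly, rather than placing it in $\mathcal E_{\alpha,1,i}$. By \eqref{e.s6.I1.expansion}, its leading part is $[W''(\Hb_\alpha)-1]\sum_{\beta=\alpha\pm1}[\Hb_\beta\mp\cdots]$. Its $y_i$-derivative involves $\Hb'_\beta\,\partial_{y_i}t_\beta$ and $\Hb'_\beta\,\partial_{y_i}h_\beta(\Pi_\beta)$. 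Here \eqref{0Dalphaztbeta} gives $|\nabla_{\alpha,z}t_\beta|\lesssim\varepsilon^{1/5}$, and \eqref{horizontalphicontrolh} controls $\nabla h_\beta$ by $\varepsilon^{1/6}A$ plus quadratic terms. Hence this part goes into $\mathcal E_{\alpha,2,i}$ with the size $\varepsilon^{1/6}A$. The derivative of $W''(\Hb_\alpha)$ brings in $h_{\alpha,i}$, which gives $A\cdot(\varepsilon^2+A)$ and therefore a contribution of order $A^{3/2}$. The terms $\Hb'_\beta\mathcal R_{\beta,1}$ for $\beta\ne\alpha$ are treated the same way. Their size $e^{-|t_\beta|}\|H_\beta+\Delta h_\beta\|$ is already quadratic, of order $A(\varepsilon^2+A)$, so they can go into $\mathcal E_{\alpha,1,i}$. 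Finally, I would check that the remaining non-differentiated pieces are in $C^0$, which is all that \eqref{Ea12i} requires for $\mathcal E_{\alpha,2,i}$. This check uses the $C^{1}$ version of Lemma \ref{interaction_terms}.
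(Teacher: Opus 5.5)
There is a genuine gap, and it sits at the term you flag yourself. Write $L_\alpha=\Delta_{\alpha,0}+\partial_{zz}-W''(\Hb_\alpha)$. After you differentiate the unprojected equation of Claim \ref{innerpdewithoutprojection}, the right-hand side contains $\partial_{y_i}(L_\alpha\phi)$. But $L_\alpha\phi$ is only $O(\varepsilon^2+A)$ in $C^\theta$ (Claim \ref{schauder ctheta error} gives nothing better), so it cannot be part of $\mathcal E_{\alpha,1,i}$.

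Your proposed further splitting does not fix this. Rewriting $L_\alpha\phi$ through Claim \ref{schauder ctheta error}, i.e.\ through \eqref{phieq-interaction form}, returns exactly $(-1)^\alpha\Hb'_\alpha(H_\alpha(\cdot,0)+\Delta_{\alpha,0}h_\alpha)+E^1_\alpha$. The identity $\Hb'_\alpha(H_\alpha+\Delta_{\alpha,0}h_\alpha)=\mathcal F_\alpha$ then becomes a tautology, and its $y_i$-derivative carries no information. Your list of ``first type'' summands of $\mathcal F_\alpha$ simply omits $L_\alpha\phi$, which is the largest one.

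Nor can you commute the derivative inside. Writing $\partial_{y_i}(L_\alpha\phi)=L_\alpha\phi_{y_i}+[\partial_{y_i},L_\alpha]\phi$ would need $C^{2}$-smallness of $\phi_{y_i}$ at the level $\varepsilon^2+A^{3/2}+\varepsilon^{1/6}A$. That is Proposition \ref{p.s8.horizontal.derivatives}, whose proof uses this very corollary (see \eqref{e.s8.Ecircle}--\eqref{e.s8.divergence.equation}). Pointwise in $z$ there is no cancellation to exploit: $L_\alpha\phi$ and $\mathcal I_1$ are each of order $A$. Only after projecting onto $\Hb'_\alpha$ does \eqref{phi perp} remove $L_\alpha\phi$ up to quadratic errors. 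Only then does $\mathcal I_1$ collapse to the explicit interaction $\frac{2}{\sigma_0}(A_{(-1)^{\alpha-1}}^2e^{-|t_{\alpha-1}(\cdot,0)|}-A_{(-1)^\alpha}^2e^{-|t_{\alpha+1}(\cdot,0)|})$.

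The missing idea is that your objection to differentiating \eqref{H+lhimproved} is not an obstacle, because the statement allows a divergence term with only $C^\theta$ control. Multiply the differentiated Toda identity by $\Hb'_\alpha$ and use $\partial_{y_i}\Hb'_\alpha=-(-1)^\alpha h_{\alpha,i}\Hb''_\alpha$. This gives $\Hb'_\alpha\partial_{y_i}E^0_\alpha=\partial_{y_i}(\Hb'_\alpha E^0_\alpha)+(-1)^\alpha h_{\alpha,i}\Hb''_\alpha E^0_\alpha$.

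Take $\mathcal E_{\alpha,1,i}=\Hb'_\alpha E^0_\alpha$, which \eqref{improvetoda1} controls. Put into $\mathcal E_{\alpha,2,i}$ the following three pieces:
\begin{enumerate}
\item the product-rule term $(-1)^\alpha\Hb''_\alpha h_{\alpha,i}E^0_\alpha$;
\item the commutator $\Hb'_\alpha(\Delta_{\alpha,0}h_{\alpha,i}-\partial_{y_i}\Delta_{\alpha,0}h_\alpha)$, bounded by \eqref{e.exchange.derivative} and Lemma \ref{C2,theta};
\item the derivative of the explicit interaction.
\end{enumerate}
The last piece is $O(\varepsilon^2+\varepsilon^{1/6}A)$. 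Indeed, $|\partial_{y_i}t_{\alpha\pm1}(y,0)|\lesssim\varepsilon^{1/6}$ when $|t_{\alpha\pm1}(y,0)|\leq10|\log\varepsilon|$ (Proposition \ref{p.conseq.almost.parallel}), and $e^{-|t_{\alpha\pm1}|}\leq\varepsilon^{10}$ otherwise. So the smallness comes from the fact that, after projection, the $O(A)$ part is an explicit function of the slowly varying sheet distances.

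Your $C^0$ treatment of $\partial_{y_i}\mathcal I_1$ and of the other pieces of $E^1_\alpha$ is sound; the paper proves the same bound in Lemma \ref{l.s8.error.bounds}. It does not repair the argument while $L_\alpha\phi$ stays unprojected.
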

\begin{proof}
	The graphical ordering gives $t_{\alpha-1}(y,0)>0$ and $t_{\alpha+1}(y,0)<0$.  Differentiating \eqref{H+lhimproved} in $y_i$  gives
\begin{align}\label{e.s7.toda.derivative.raw}
\begin{split}
 H_{\alpha,i}+\Delta_{\alpha,0}h_{\alpha,i}
 ={}&-\frac2{\sigma_0}\Bigl[
 A_{(-1)^{\alpha-1}}^2(\partial_{y_i}t_{\alpha-1})e^{-t_{\alpha-1}}
 +A_{(-1)^\alpha}^2(\partial_{y_i}t_{\alpha+1})e^{t_{\alpha+1}}\Bigr]\\
 &+\partial_{y_i}E_\alpha^0
 +\Delta_{\alpha,0}h_{\alpha,i}-\partial_{y_i}\Delta_{\alpha,0}h_\alpha.
\end{split}
\end{align}
Set
\begin{align}\label{e.s7.E12.exact}
 \mathcal E_{\alpha,1,i}={}&\Hb_\alpha' E_\alpha^0,\\
 \mathcal E_{\alpha,2,i}={}&-\frac2{\sigma_0}\Hb_\alpha'
 \Bigl[A_{(-1)^{\alpha-1}}^2(\partial_{y_i}t_{\alpha-1})e^{-t_{\alpha-1}}
 +A_{(-1)^\alpha}^2(\partial_{y_i}t_{\alpha+1})e^{t_{\alpha+1}}\Bigr]\notag\\
 &+\Hb_\alpha'\bigl(\Delta_{\alpha,0}h_{\alpha,i}
 -\partial_{y_i}\Delta_{\alpha,0}h_\alpha\bigr)
 +(-1)^\alpha\Hb_\alpha''h_{\alpha,i}E_\alpha^0.\notag
\end{align}
Since $\partial_{y_i}\Hb_\alpha'=-(-1)^\alpha h_{\alpha,i}\Hb_\alpha''$, the product rule in
$\partial_{y_i}(\Hb_\alpha'E_\alpha^0)$ and \eqref{e.s7.toda.derivative.raw} prove \eqref{e.s7.differentiated.toda}.

In the estimates that follow, ambient norms are on $\widetilde{\mathcal M}_\alpha^0\cap B_r^+(x)$ and sheet norms on $\Pi_\alpha(\widetilde{\mathcal M}_\alpha^0\cap B_r^+(x))$. 
We estimate separately the exponential-distance expression in the first line of $\mathcal E_{\alpha,2,i}$, the commutator $\Delta_{\alpha,0}h_{\alpha,i}-\partial_{y_i}\Delta_{\alpha,0}h_\alpha$, and the factors $\Hb_\alpha'E_\alpha^0$ and $\Hb_\alpha''h_{\alpha,i}E_\alpha^0$.
\begin{itemize}
 \item If $|t_{\alpha\pm1}(y,0)|\leq10|\log\varepsilon|$, Proposition \ref{p.conseq.almost.parallel} gives
 $|\partial_{y_i}t_{\alpha\pm1}(y,0)|\leq C\varepsilon^{1/6}$.
 If $|t_{\alpha\pm1}(y,0)|>10|\log\varepsilon|$, then
 $e^{-|t_{\alpha\pm1}(y,0)|}\leq\varepsilon^{10}$ and
 $|\partial_{y_i}t_{\alpha\pm1}|\leq C$.
 Hence
 \begin{equation}\label{dfftoda1}
 \sum_{\beta=\alpha-1,\alpha+1}
 |\partial_{y_i}t_\beta|e^{-|t_\beta|}
 \leq C\varepsilon^2+C\varepsilon^{1/6}A(r+120|\log\varepsilon|^2;x).
 \end{equation}

 \item By the commutator estimate \eqref{e.exchange.derivative},
 \[
 |\Delta_{\alpha,0}h_{\alpha,i}-\partial_{y_i}\Delta_{\alpha,0}h_\alpha|
 \leq C\varepsilon\bigl(|\nabla_{\alpha,0}h_\alpha|+|\nabla_{\alpha,0}^2h_\alpha|\bigr).
 \]
 Lemma \ref{C2,theta} therefore gives
 \begin{equation}\label{dfftoda2}
 |\Delta_{\alpha,0}h_{\alpha,i}-\partial_{y_i}\Delta_{\alpha,0}h_\alpha|
 \leq C\varepsilon^3+C\varepsilon A(r+120|\log\varepsilon|^2;x)
 \leq C\varepsilon^2+C\varepsilon^{1/6}A(r+120|\log\varepsilon|^2;x).
 \end{equation}

 \item Corollary \ref{toda system rough error first improved} and the bounded $C^\theta$ norms of
 $\Hb_\alpha'$ and $\Hb_\alpha''$ give
 \[
 \|\Hb_\alpha'E_\alpha^0\|_{C^\theta}
 \leq C\bigl(\varepsilon^2+A^{3/2}+\varepsilon^{1/3}A\bigr),
 \]
 where $A=A(r+120|\log\varepsilon|^2;x)$.
 
 Moreover, Proposition \ref{p.optimal_h_shift} gives $\|h_{\alpha,i}\|_{C^0}=o(1)$ uniformly, so
 \[
 \|(-1)^\alpha\Hb_\alpha''h_{\alpha,i}E_\alpha^0\|_{C^0}
 \leq C\bigl(\varepsilon^2+A^{3/2}+\varepsilon^{1/3}A\bigr).
 \]
\end{itemize}
Combining \eqref{dfftoda1}, \eqref{dfftoda2}, and the two displayed estimates for $\Hb_\alpha'E_\alpha^0$ and $\Hb_\alpha''h_{\alpha,i}E_\alpha^0$, and using $\varepsilon^{1/3}\leq\varepsilon^{1/6}$, proves \eqref{Ea12i}.
\end{proof}

\section{Improved horizontal estimates for \texorpdfstring{$\phi_{y_i}$}{phi_yi} and improved Toda system}\label{improved estimates section}
We improve the $C^{1,\theta}$ estimates for the horizontal derivatives of the error.  Throughout this section,
$\phi_{y_i}=\partial_{y_i}\phi$, $1\leq i\leq n$, in the twisted Fermi coordinates with respect to the sheet under consideration. The argument follows the inner--outer decomposition of \cite[Section 7]{wang-weiadv}, with the addition of the boundary terms and the twisted-coordinate errors.

Let $K\geq10$ be fixed. The constants may depend on $K$ and $\theta$, but not on the number of sheets.  By decreasing $\varepsilon_0$, the same estimates hold on every fixed larger logarithmic strip.

\begin{proposition}[Improved horizontal derivative estimates]\label{p.s8.horizontal.derivatives}
For $x\in B^+_{5R/6}(0)$, $0<r<R/70$, and all sufficiently small $\varepsilon$,
\begin{align}\label{e.s8.main}
\max_{\substack{\alpha\\1\leq i\leq n}}
\|\phi_{y_i}\|_{C^{1,\theta}(B_r^+(x)\cap\mathcal M_\alpha^0\cap\{|t_\alpha|\leq K|\log\varepsilon|\})}
\lesssim\; &\varepsilon^2+A(r+200|\log\varepsilon|^2;x)^{3/2}\notag\\
&+\varepsilon^{1/6}A(r+200|\log\varepsilon|^2;x).
\end{align}
In particular, \eqref{e.s8.main} controls every second derivative of $\phi$ containing at least one horizontal derivative.
\end{proposition}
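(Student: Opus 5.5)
We follow the inner--outer scheme of Section \ref{Inner-outer gluing regularity estimates of error}, applied now to $\phi_{y_i}$. The gain over Lemma \ref{C2,theta} comes from two facts. First, horizontal differentiation of the Toda-type source $(-1)^\alpha\Hb_\alpha'(H_\alpha+\Delta_{\alpha,0}h_\alpha)$ produces exactly the left-hand side of the differentiated Toda system, Corollary \ref{diff toda}, which is already bounded by $\varepsilon^2+A^{3/2}+\varepsilon^{1/6}A$. Second, horizontal derivatives of the interaction term $\mathcal I_1$ carry a factor $\partial_{y_i}t_\beta=O(\varepsilon^{1/6})$ by Proposition \ref{p.conseq.almost.parallel}.

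\textbf{Step 1: equation for $\phi_{y_i}$.} We differentiate \eqref{phieq-interaction form} in $y_i$ and commute $\partial_{y_i}$ past $\Delta_{\alpha,z}$ using Proposition \ref{p.commutator.laplacian}. We also commute it past $\mathscr E_\alpha$, using the coefficient bounds \eqref{e.difference.z.tz}. The result is
\[
(\Delta_{\alpha,0}+\partial_{zz}-W''(\Hb_*))\phi_{y_i}=(-1)^\alpha\Hb_\alpha'(H_{\alpha,i}+\Delta_{\alpha,0}h_{\alpha,i})+\partial_{y_i}\mathcal F_1+\mathcal F_2 .
\]
All remaining terms are placed in divergence form $\partial_{y_i}\mathcal F_1$ or in $\mathcal F_2$. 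Each piece is then bounded by $\varepsilon^2+A^{3/2}+\varepsilon^{1/6}A+\varepsilon\|\phi\|_{C^{2,\theta}}+\|\phi\|_{C^{2,\theta}}^2$:
\begin{itemize}
\item the quantity $W'''\,\partial_{y_i}\Hb_*\,\phi$ is quadratic after \eqref{horizontalphicontrolh};
\item $\partial_{y_i}\mathcal I_1$ gains $\varepsilon^{1/6}$ as in \eqref{dfftoda1};
\item $\partial_{y_i}\mathcal I_2$ is of order $A^{3/2}+\varepsilon^2$;
\item the $\mathcal G_\beta$, $\xi_\beta$ and $\mathscr E_\beta\Hb_\beta$ terms are $O(\varepsilon^2)$ after \eqref{phicontrolh} and Lemma \ref{C2,theta}.
\end{itemize}
The Toda term is handled by Corollary \ref{diff toda}. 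Its divergence part $\partial_{y_i}\mathcal E_{\alpha,1,i}$ is dealt with in the Schauder step only through the $C^\theta$ norm, by the standard divergence-form $C^{1,\theta}$ estimate. Inserting Lemma \ref{C2,theta} for $\|\phi\|_{C^{2,\theta}}$ then makes every term of order $\varepsilon^2+A^{3/2}+\varepsilon^{1/6}A$, up to enlargement of the radius by $O(|\log\varepsilon|^2)$.

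\textbf{Step 2: boundary data.} This is the main new point compared to \cite[Section 7]{wang-weiadv}. For $2\le i\le n$, $\partial_{y_i}$ is tangent to $\partial_0$, and \eqref{e.exchange.neumannderivative0} gives
\[
\partial_\nu\phi_{y_i}=\partial_{y_i}\partial_\nu\phi+O(\varepsilon|z|(1+|z|)|\nabla\phi|).
\]
This is controlled in $C^\theta$ by Proposition \ref{improved neumann phi} combined with Lemma \ref{C2,theta}. For $i=1$ we have no good Neumann data. Instead we use Dirichlet data: by Lemma \ref{l.del.y_1.t_be} and the Neumann estimate,
\[
\phi_{y_1}=\partial_\nu\phi+O(\varepsilon|z|(1+|z|)|\nabla\phi|)
\]
on $\partial_0$, which has $C^{1,\theta}$ norm bounded by $\varepsilon^2+A^{3/2}+\|\phi\|^2_{C^{2,\theta}}$. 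We then run the gluing estimates of Appendix \ref{s.gluing.a.apriori} and Proposition \ref{t.sch} with a Neumann condition for $i\ge2$ and a Dirichlet condition for $i=1$. We expect the Dirichlet case of the inner linear estimate (Proposition \ref{p.lin.ac}) to need a check. In particular, orthogonality must survive with Dirichlet data; we would first verify this by odd reflection across $\partial_0$.

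\textbf{Step 3: inner--outer closing.} In the outer region $\{\min_\beta|t_\beta|>L/2\}$, the potential $W''(\Hb_*)\ge1/2$ and the iteration of Lemma \ref{outer} applies verbatim. In the inner region we cut off and project as in \eqref{phi_alpha}, setting $\psi_\alpha=\tilde\xi\phi_{y_i}-c_{\alpha,i}\Hb_\alpha'$. By orthogonality, the differentiated condition \eqref{phi perp} shows that
\[
\int\phi_{y_i}\Hb_\alpha'\,dz=(-1)^\alpha h_{\alpha,i}\int\phi\,\Hb_\alpha''\,dz,
\]
which is quadratic. Hence $c_{\alpha,i}$ is small, up to an $e^{-L}$ contribution as in Claim \ref{calphamanyestimates}. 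The projected coefficient $p_\alpha$ is then expressed by the analogue of \eqref{eqpalpha=on} in terms of already-bounded quantities. Choosing $L$ large and iterating $\lceil|\log\varepsilon|\rceil$ times on balls growing by $O(|\log\varepsilon|)$, exactly as in Lemma \ref{C2,theta}, absorbs the $e^{-cL}$ terms. This yields \eqref{e.s8.main} with radius $r+200|\log\varepsilon|^2$.
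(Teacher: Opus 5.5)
Your architecture matches the paper's: Neumann data for $i\ge 2$, Dirichlet data $\phi_{y_1}=\partial_\nu\phi+(\partial_{y_1}-\partial_\nu)\phi$ for $i=1$, a remainder kept in divergence form and measured only in $C^\theta$, the differentiated Toda term replaced by Corollary \ref{diff toda}, and an inner--outer contraction. Step 3, however, is missing an ingredient. The function $\phi_{y_i}$ is not coordinate-free: it is the horizontal derivative in the twisted Fermi coordinates of one particular sheet, and \eqref{e.s8.main} only concerns it on $\mathcal M_\alpha^0\cap\{|t_\alpha|\le K|\log\varepsilon|\}$. The iteration of Lemma \ref{C2,theta} closes for $\phi$ because $\phi$ is a scalar. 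For $\phi_{y_i}$, each enlargement produces norms of the $\alpha$-horizontal derivative at points whose closest sheet is some $\Sigma_\beta\neq\Sigma_\alpha$. This happens for the remaining norm on $\operatorname{spt}\chi$ in the outer step, and for the tail $\int(\tilde\xi-1)\phi_{y_i}\Hb_\alpha'\,dz$ in the inner step, which sees all $z$. Your induction hypothesis says nothing at such points. You must iterate the quantity $\max_{\beta,j}\sup_{q\in\mathcal M^0_\beta}\|\phi_{y_j}\|_{C^{1,\theta}(B_1^+(q))}$, computed in the coordinates of the closest sheet, and pass between sheets by \eqref{e.s8.coordinate.change}:
\[
\partial^{(\alpha)}_{y_i}\phi=\sum_j(\partial_{y_i}\Pi_\beta^j)\,\partial^{(\beta)}_{y_j}\phi+(\partial_{y_i}t_\beta)\,\partial^{(\beta)}_z\phi .
\]
The normal derivative $\partial_z\phi$ has only the rough bound $\varepsilon^2+A$ from Lemma \ref{C2,theta}. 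This step is therefore admissible only because $\|\partial_{y_i}t_\beta\|_{C^{1,\theta}}\lesssim\varepsilon^{1/6}$ on the common logarithmic strip (Proposition \ref{p.conseq.almost.parallel} and Lemma \ref{l.t.Pi}). The resulting error \eqref{e.s8.coordinate.error} is a second source of the $\varepsilon^{1/6}A$ term, besides $\partial_{y_i}\mathcal I_1$. You invoke Proposition \ref{p.conseq.almost.parallel} only for $\mathcal I_1$, and a ``verbatim'' iteration does not supply this step.

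Two smaller points. First, if you project as in \eqref{phi_alpha}, the coefficient of $\Hb_\alpha'$ is read off by integrating the equation against $\Hb_\alpha'$. The source contains $\partial_{y_i}\mathcal F_1$ with $\mathcal F_1$ only in $C^\theta$, so that coefficient is not bounded in $C^0$ by ``already-bounded quantities''. It must itself be kept in divergence form, by commuting $\partial_{y_i}$ past $\int\cdot\,\Hb_\alpha'\,dz$ at the cost of a term with $h_{\alpha,i}\Hb_\alpha''$. The paper avoids this: after Corollary \ref{diff toda} no $\Hb_\alpha'$-multiple remains on the right-hand side. It then applies directly to $\chi\tilde\xi\phi_{y_i}$ the estimate \eqref{e.s8.inner.apriori.full}, which includes the orthogonality defect $\|\int\mathsf u\,\Hb_\alpha'\,dz\|_{C^0}$, and \eqref{e.s8.first.moment} makes that defect quadratic. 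Second, no new check is needed for Dirichlet data. The half-space Dirichlet Liouville theorem is already Lemma \ref{l.entire.lin.ac}, and a limit $c\Hb'(s)$ vanishing on $\{y_1=0\}$ forces $c=0$. Besides the missing Neumann data for $\phi_{y_1}$, the Neumann/Dirichlet split is also needed for compatibility with the divergence-form source. For $i\ge 2$ the flux direction $\partial_{y_i}$ is tangent to $\{y_1=0\}$ and creates no conormal boundary term. For $i=1$ it would contaminate a Neumann condition, but it is harmless for a Dirichlet one.
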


\begin{corollary}[Horizontal derivatives of the shifts]\label{c.s8.shift.derivatives}
Under the hypotheses of Proposition \ref{p.s8.horizontal.derivatives},
\begin{align}\label{e.s8.shift}
\max_\alpha\|\nabla_{\alpha,0}h_\alpha\|_{C^{1,\theta}(\Sigma_\alpha\cap B_r^+(x))}
\lesssim\; &\varepsilon^2+A(r+200|\log\varepsilon|^2;x)^{3/2}\notag\\
&+\varepsilon^{1/6}A(r+200|\log\varepsilon|^2;x).
\end{align}
\end{corollary}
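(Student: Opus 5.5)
The corollary should follow from Proposition \ref{p.s8.horizontal.derivatives} in the same way that \eqref{phicontrolh} and \eqref{horizontalphicontrolh} of Lemma \ref{phicontrolhprop} controlled $h_\alpha$ by $\phi$. The starting identity is the evaluation of $\phi$ on the central slice, which in the $\alpha$-coordinates reads
\[
\phi(y,0)=-\Hb_*(y,0)-\Gb_*(y,0).
\]
Since $\Hb_\alpha(y,0)=\bar\Hb((-1)^{\alpha+1}h_\alpha(y))$ and $\bar\Gb_\alpha(y,\cdot)$ is odd, we can write $\Hb_\alpha(y,0)+\Gb_\alpha(y,0)=\Psi(y,h_\alpha(y))$. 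Here $\partial_h\Psi$ is bounded away from zero, because $|\bar\Hb'(0)|>0$ and, by \eqref{Galpha epsilon}, $\partial_z\bar\Gb_\alpha=O(\varepsilon)$.

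Differentiating along $\Sigma_\alpha$ in $y_i$ gives
\[
\partial_h\Psi\,h_{\alpha,i}=-\phi_{y_i}(y,0)-(\partial_{y_i}\Psi)(y,h_\alpha)-\sum_{\beta\neq\alpha}\partial_{y_i}\bigl(\Hb_\beta+\Gb_\beta\bigr)(y,0),
\]
and we can solve this for $h_{\alpha,i}$. The three terms on the right are treated as follows.

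\emph{First term.} The $C^{1,\theta}$ norm of $\phi_{y_i}(\cdot,0)$ is bounded by \eqref{e.s8.main}.

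\emph{Second term.} The $y$-dependence of $\bar\Gb_\alpha$ at height $(-1)^\alpha(-h_\alpha)$ enters through $\bar\Gb_{\alpha,i}(y,\pm h_\alpha)$. Oddness gives $\bar\Gb_{\alpha,i}(y,0)=0$, so this term is bounded by $C\varepsilon|h_\alpha|$ in $C^{1,\theta}$. We then estimate it by $C\varepsilon\|h_\alpha\|_{C^{2,\theta}}$ and use Lemma \ref{C2,theta}, which gives $\varepsilon(\varepsilon^2+A)\lesssim\varepsilon^2+\varepsilon^{1/6}A$.

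\emph{Third term.} For $\beta\neq\alpha$, the chain rule produces factors $(\bar\Hb'+\bar\Gb')(\partial_{y_i}t_\beta-\partial_{y_i}(h_\beta\circ\Pi_\beta))$ together with $\Gb_{\beta,m}\partial_{y_i}\Pi^m_\beta$, each multiplied by $e^{-|t_\beta(y,0)|}$. We control these by Proposition \ref{p.conseq.almost.parallel}, using $|\nabla_{\alpha,0}t_\beta|\lesssim\varepsilon^{1/5}$ and $|\nabla^2_{\alpha,0}t_\beta|\lesssim\varepsilon$, and by Lemma \ref{l.sum.of.exp}. This yields $C\varepsilon^{1/6}A$, plus $A\cdot\max_\beta\|\nabla_{\beta,0}h_\beta\|_{C^{1,\theta}}$, plus $\varepsilon A$ coming from $\Gb_\beta$. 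Sheets with $|t_\beta|>10|\log\varepsilon|$ contribute $O(\varepsilon^2)$.

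Combining the three estimates gives
\[
\|\nabla_{\alpha,0}h_\alpha\|_{C^{1,\theta}(\Sigma_\alpha\cap B_r^+(x))}\lesssim \varepsilon^2+A^{3/2}+\varepsilon^{1/6}A+A\max_\beta\|\nabla_{\beta,0}h_\beta\|_{C^{1,\theta}}
\]
on slightly enlarged balls. The main obstacle is the last term, which couples the sheets and cannot be absorbed directly on the same ball. I would first try to absorb it directly by taking the maximum over $\alpha$ on a ball of radius $r+10|\log\varepsilon|$. If that fails, the fallback is to use Lemma \ref{C2,theta} to bound $\|\nabla h_\beta\|_{C^{1,\theta}}\lesssim\varepsilon^2+A$. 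The term is then at most $A(\varepsilon^2+A)\lesssim\varepsilon^2+A^{3/2}$, so no absorption is needed.

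Finally, the radii: enlarging $r+200|\log\varepsilon|^2$ by $O(|\log\varepsilon|)$ is harmless, since the enlargement can be built into the constant $200$ from the outset. Together with the monotonicity of $A$, this gives \eqref{e.s8.shift}.
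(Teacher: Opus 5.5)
Your argument is correct and is essentially the paper's proof. The paper quotes \eqref{horizontalphicontrolh} from Lemma \ref{phicontrolhprop}, which is exactly the differentiated identity $\phi(y,0)=-\Hb_*(y,0)-\Gb_*(y,0)$ that you re-derive; it then inserts Proposition \ref{p.s8.horizontal.derivatives} for $\nabla_{\alpha,0}\phi(\cdot,0)$ and bounds the coupling product $A\max_\beta\|\nabla_{\beta,0}h_\beta\|_{C^{1,\theta}}$ by the rough estimate of Proposition \ref{aprior C2alpha Hhphi}. That is precisely your fallback, so no absorption step is needed. One small correction: the foreign-sheet $\Gb_\beta$ contribution is $O(\varepsilon e^{-\sigma D_\alpha})$ with $\sigma<1$, not $O(\varepsilon A)$; taking $\sigma=3/4$ and using $\varepsilon A^{3/4}\leq\varepsilon^2+A^{3/2}$ handles it.
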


\begin{corollary}[Toda system with improved error]\label{c.s8.improved.toda}
Under the same hypotheses,
\begin{align}\label{e.s8.toda}
H_\alpha(y,0)
={}&\frac{2}{\sigma_0}\Bigl(
A_{(-1)^{\alpha-1}}^2e^{-|t_{\alpha-1}(y,0)|}
-A_{(-1)^\alpha}^2e^{-|t_{\alpha+1}(y,0)|}\Bigr)\notag\\
&+E_\alpha^0(y)-\Delta_{\alpha,0}h_\alpha(y),
\end{align}
where
\begin{align}\label{e.s8.toda.error}
\max_\alpha\|E_\alpha^0-\Delta_{\alpha,0}h_\alpha\|_{C^\theta(\Sigma_\alpha\cap B_r^+(x))}
\lesssim\; &\varepsilon^2+A(r+200|\log\varepsilon|^2;x)^{3/2}\notag\\
&+\varepsilon^{1/6}A(r+200|\log\varepsilon|^2;x).
\end{align}
\end{corollary}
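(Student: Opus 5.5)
The corollary follows directly from Corollary \ref{toda system rough error first improved} once $\Delta_{\alpha,0}h_\alpha$ is known to be small. The identity \eqref{e.s8.toda} is \eqref{H+lhimproved} rearranged: move $\Delta_{\alpha,0}h_\alpha(y)$ to the right-hand side. So only the error estimate \eqref{e.s8.toda.error} needs proof. By the triangle inequality,
\[
\|E_\alpha^0-\Delta_{\alpha,0}h_\alpha\|_{C^\theta(\Sigma_\alpha\cap B_r^+(x))}
\leq \|E_\alpha^0\|_{C^\theta(\Sigma_\alpha\cap B_r^+(x))}+\|\Delta_{\alpha,0}h_\alpha\|_{C^\theta(\Sigma_\alpha\cap B_r^+(x))},
\]
and I estimate the two terms separately.

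For the first term, apply \eqref{improvetoda1}. Its right-hand side involves $A(r+120|\log\varepsilon|^2;x)$. Since $A(\cdot\,;x)$ is nondecreasing in the radius, this is at most $A(r+200|\log\varepsilon|^2;x)$. Using also $\varepsilon^{1/3}\leq\varepsilon^{1/6}$, the first term is bounded by the right-hand side of \eqref{e.s8.toda.error}.

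For the second term, write in the coordinates $y$ on $\Sigma_\alpha$
\[
\Delta_{\alpha,0}h_\alpha=g_\alpha^{ij}(y,0)\,\partial_{y_i}(\partial_{y_j}h_\alpha)-g_\alpha^{ij}(y,0)\Gamma^k_{\alpha,ij}(y,0)\,\partial_{y_k}h_\alpha .
\]
The coefficients $g_\alpha^{ij}(\cdot,0)$ and $g_\alpha^{ij}\Gamma^k_{\alpha,ij}(\cdot,0)$ are bounded in $C^\theta$, uniformly in $\varepsilon$ and $\alpha$, by Proposition \ref{geometric approximation}. The Christoffel terms are in fact $O(\varepsilon)$, because the metric derivatives are $O(\varepsilon)$ by \eqref{e.na.g.approx}. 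Hence, by the product inequality \eqref{e.|phi.psi|_tht},
\[
\|\Delta_{\alpha,0}h_\alpha\|_{C^\theta}\lesssim\|\nabla_{\alpha,0}h_\alpha\|_{C^{1,\theta}} .
\]
The right-hand side is controlled by Corollary \ref{c.s8.shift.derivatives}. This is exactly where Section \ref{improved estimates section} gains over Lemma \ref{C2,theta}: the latter only gives $\|h_\alpha\|_{C^{2,\theta}}\lesssim\varepsilon^2+A$, which is too weak, whereas \eqref{e.s8.shift} has the improved form $\varepsilon^2+A^{3/2}+\varepsilon^{1/6}A$. Adding the two bounds and taking the maximum over $\alpha$ yields \eqref{e.s8.toda.error}.

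The only substantive input is Corollary \ref{c.s8.shift.derivatives}, which I take as given since it is stated earlier. It rests in turn on Proposition \ref{p.s8.horizontal.derivatives}: one differentiates $\phi(y,0)=-\Hb_*(y,0)-\Gb_*(y,0)$ horizontally, as in \eqref{horizontalphicontrolh}, and uses the oddness $\Gb_\alpha(y,0;0)=0$. Within the present deduction, the one point to check is domain bookkeeping. The sheet balls $B_1^\alpha(y)$ for $y\in\Sigma_\alpha\cap B_r^+(x)$, on which the $C^{1,\theta}$ norms are evaluated, must lie in regions covered by \eqref{e.s8.shift}. This costs at most a bounded enlargement of $r$, which is absorbed into the logarithmic radius $200|\log\varepsilon|^2$.
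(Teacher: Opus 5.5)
Your proposal is correct and follows the paper's proof: rearrange \eqref{H+lhimproved}, bound $E_\alpha^0$ by \eqref{improvetoda1} using monotonicity of $A$ in the radius and $\varepsilon^{1/3}\leq\varepsilon^{1/6}$, and bound $\|\Delta_{\alpha,0}h_\alpha\|_{C^\theta}\leq C\|\nabla_{\alpha,0}h_\alpha\|_{C^{1,\theta}}$ by \eqref{e.s8.shift}. The domain enlargement you raise is not actually needed, because \eqref{e.s8.shift} is already stated on the same set $\Sigma_\alpha\cap B_r^+(x)$ with the same radius $r+200|\log\varepsilon|^2$ in $A$.
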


The proofs are completed after we obtain the inner estimate Lemma \ref{l.s8.inner}. Until then, let $x\in B^+_{5R/6}(0)$ and $0<r<R/65$; the smaller range $r<R/70$ in the conclusions leaves room for iteration.

\subsection{Boundary estimates and the equation for $\phi_{y_i}$}

\begin{proposition}[Boundary data for horizontal derivatives]\label{p.s8.boundary.data}
For every $p\in\partial_0B_r^+(x)\cap\mathcal M_\alpha^0\cap\{|t_\alpha|\leq K|\log\varepsilon|\}$,
\begin{align}\label{e.s8.boundary.tangential}
\|\partial_\nu\phi_{y_i}\|_{C^\theta(\partial_0B_1^+(p))}
+\|\partial_{y_1}\phi_{y_i}\|_{C^\theta(\partial_0B_1^+(p))}
\lesssim\; &\varepsilon^2+A(r+120|\log\varepsilon|^2;x)^{3/2}\notag\\
&+\varepsilon^{1/6}A(r+120|\log\varepsilon|^2;x),
\qquad 2\leq i\leq n,
\end{align}
and
\begin{align}\label{e.s8.boundary.normal}
\|\phi_{y_1}\|_{C^{1,\theta}(\partial_0B_1^+(p))}
\lesssim\; &\varepsilon^2+A(r+120|\log\varepsilon|^2;x)^{3/2}\notag\\
&+\varepsilon^{1/6}A(r+120|\log\varepsilon|^2;x).
\end{align}
These are boundary norms: derivatives are taken only in the variables tangent to the boundary.  If $n=1$, only \eqref{e.s8.boundary.normal} is needed.
\end{proposition}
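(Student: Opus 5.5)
The plan is to differentiate the exact boundary identity for $\partial_\nu\phi$ obtained in the proof of Proposition \ref{improved neumann phi}, and to combine it with the already established $C^{2,\theta}$ bounds (Lemma \ref{C2,theta}) and the Neumann bounds on the shifts (Lemma \ref{phicontrolhprop}).

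\emph{Tangential derivatives, $2\leq i\leq n$.} On $\partial_0$ the fields $\partial_{y_i}$ are tangent to the boundary. We write
\[
\partial_\nu\phi_{y_i}=\partial_{y_i}\partial_\nu\phi+[\partial_\nu,\partial_{y_i}]\phi .
\]
The commutator is bounded by $C\varepsilon|z|(1+|z|)|\nabla\phi|$, using \eqref{e.exchange.neumannderivative0}. On the strip $|z|\leq K|\log\varepsilon|$, Young's inequality turns this into $\varepsilon^2|\log\varepsilon|^4+\|\phi\|_{C^1}^2$, which is controlled by $\varepsilon^2+A^{3/2}$ through Lemma \ref{C2,theta} (up to harmless log factors; we would absorb the $|\log\varepsilon|^4$ by being careful). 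The term $\partial_{y_i}\partial_\nu\phi$ is handled by Proposition \ref{improved neumann phi}, which already bounds $\partial_\nu\phi$ in $C^{1,\theta}$ of the boundary. The $\phi$ and $A$ terms there are once again estimated by Lemma \ref{C2,theta}, squared, and the radii are enlarged to $r+120|\log\varepsilon|^2$. Next, $\partial_{y_1}\phi_{y_i}$ differs from $\partial_{x_1}\phi_{y_i}=\partial_\nu\phi_{y_i}$ by \eqref{e.del.x_1-del.y_1}, with an error $\varepsilon|z|(1+|z|)|\nabla^2\phi|$. This error is estimated in the same way.

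\emph{Normal derivative $\phi_{y_1}$ (Dirichlet-type data).} On $\partial_0$ we have $\phi_{y_1}=\partial_\nu\phi+(\partial_{y_1}-\partial_{x_1})\phi$, and the second piece is again handled by \eqref{e.del.x_1-del.y_1}. We take its $C^{1,\theta}$ boundary norm, which involves only derivatives in $y_2,\dots,y_n$ and $z$. For the first piece, Proposition \ref{improved neumann phi} gives directly $\|\partial_\nu\phi\|_{C^{1,\theta}}\lesssim\varepsilon^2+\|\phi\|^2_{C^{2,\theta}}+A^{3/2}$. The key point is that the decomposition \eqref{parnuG+H'(+0)} of $N^\beta$ has no term linear in $h_\beta$ or $\phi$ except through $\partial_{y_1}h_\beta$. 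The latter is quadratic by \eqref{neumann h improved relate to phi}, while $w_\beta h_\beta$ is $\varepsilon\|h_\beta\|$, i.e. $\varepsilon^2+\|h\|^2$. After inserting Lemma \ref{C2,theta}, every term becomes $\varepsilon^2+A^{3/2}$. For $z$-derivatives we use the commutator \eqref{e.exchange.zneumannderivative0} similarly.

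\emph{Main obstacle.} The main difficulty is bookkeeping for the polynomial-in-$z$ commutator errors near $|z|\sim K|\log\varepsilon|$ without losing $\varepsilon^2$. I would first try to exploit the factor $|z|$ in \eqref{e.exchange.neumannderivative0} together with exponential decay of the $z$-dependent parts of $\phi$ near the sheet. Failing that, I would accept $\varepsilon\cdot\|\phi\|_{C^{2,\theta}}\lesssim\varepsilon^3|\log\varepsilon|^2+\varepsilon^{1+}A$, which is in any case bounded by $\varepsilon^2+\varepsilon^{1/6}A$. This explains the $\varepsilon^{1/6}A$ term in the statement.
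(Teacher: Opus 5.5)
Your plan is the paper's argument. It uses the same decompositions $\partial_\nu\phi_{y_i}=\partial_{y_i}\partial_\nu\phi+[\partial_\nu,\partial_{y_i}]\phi$, $\partial_{y_1}\phi_{y_i}=\partial_\nu\phi_{y_i}+(\partial_{y_1}-\partial_\nu)\phi_{y_i}$ and $\phi_{y_1}=\partial_\nu\phi+(\partial_{y_1}-\partial_\nu)\phi$. As in the paper, $\partial_\nu\phi$ is controlled by Proposition~\ref{improved neumann phi} combined with Proposition~\ref{aprior C2alpha Hhphi}, so that $(\varepsilon^2+A)^2\lesssim\varepsilon^2+A^{3/2}$. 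The commutator coefficients are bounded by $C\varepsilon(1+|z|^2)$; you need their $C^\theta$ and $C^{1,\theta}$ norms here, not just the pointwise bounds.

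The one step to drop is your first Young split: it leaves $\varepsilon^2|\log\varepsilon|^4$, which cannot be absorbed into $\varepsilon^2$. Your fallback, inserting the $C^{2,\theta}$ bound linearly to get $\varepsilon(1+|\log\varepsilon|^2)(\varepsilon^2+A)\lesssim\varepsilon^2+\varepsilon^{1/6}A$, is exactly what the paper does and is where the $\varepsilon^{1/6}A$ term comes from. Revisiting the $N^\beta$ decomposition is unnecessary, since Proposition~\ref{improved neumann phi} already supplies that bound.
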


\begin{proof}
Proposition \ref{improved neumann phi} and Proposition \ref{aprior C2alpha Hhphi}, applied on the enlarged balls occurring in \eqref{neumann C1thetaphi}, give
\begin{equation}\label{e.s8.neumann.residual}
\|\partial_\nu\phi\|_{C^{1,\theta}(\partial_0B_{r+2}^+(x))}
\leq C\bigl(\varepsilon^2+A(r+120|\log\varepsilon|^2;x)^{3/2}\bigr).
\end{equation}
Indeed, since $0<A\leq1$ by definition, the square in \eqref{neumann C1thetaphi} is bounded by
$C(\varepsilon^2+A)^2\leq C(\varepsilon^2+A^{3/2})$, with $A$ evaluated at the radius in \eqref{e.s8.neumann.residual}. 
On the boundary, the differentiated flow estimates and \eqref{e.exchange.zneumannderivative0}-\eqref{e.exchange.neumannderivative0} yield
\begin{equation}\label{e.s8.boundary.commutators}
\|\partial_{y_1}-\partial_\nu\|_{C^{1,\theta}(\partial_0B_1^+(p))}
+\sum_{j=2}^n\|[\partial_\nu,\partial_{y_j}]\|_{C^\theta(\partial_0B_1^+(p))}
+\|[\partial_\nu,\partial_z]\|_{C^\theta(\partial_0B_1^+(p))}
\leq C\varepsilon(1+|z|^2).
\end{equation}
Here a norm of a vector field means the corresponding norm of its coefficients.  At $z=0$, the coefficients of $\partial_{y_1}-\partial_\nu$ vanish.  Their $C^0$ bound follows by integrating in $z$; the first and second derivatives of all coefficients in \eqref{e.s8.boundary.commutators} follow from Lemmas \ref{l.Tht} and \ref{l.na.y_i}, together with the rescaled metric bounds.  The mean value theorem gives the stated H\"older seminorms.

For later use, on $|z|\leq K|\log\varepsilon|+2$,
\begin{equation}\label{e.s8.log.absorption}
\varepsilon^3(1+|\log\varepsilon|^6)\leq C\varepsilon^2,
\qquad
\varepsilon(1+|\log\varepsilon|^6)\leq C\varepsilon^{1/6}.
\end{equation}

If $i\geq2$, then $\partial_{y_i}$ is tangent to the boundary and
\[
\partial_\nu\phi_{y_i}
=\partial_{y_i}(\partial_\nu\phi)+[\partial_\nu,\partial_{y_i}]\phi.
\]
The first term is bounded by \eqref{e.s8.neumann.residual}; the second is bounded by \eqref{e.s8.boundary.commutators}, Proposition \ref{aprior C2alpha Hhphi}, and \eqref{e.s8.log.absorption}.  This proves the estimate for $\partial_\nu\phi_{y_i}$ in \eqref{e.s8.boundary.tangential}.  Since
\[
\partial_{y_1}\phi_{y_i}
=\partial_\nu\phi_{y_i}+(\partial_{y_1}-\partial_\nu)\phi_{y_i},
\]
the same estimates prove the second term in \eqref{e.s8.boundary.tangential}.

For $i=1$ we do not differentiate the Neumann condition in the transverse direction.  Instead, on the boundary,
\begin{equation}\label{e.s8.boundary.trace}
\phi_{y_1}=\partial_\nu\phi+(\partial_{y_1}-\partial_\nu)\phi.
\end{equation}
For $2\leq j\leq n$,
\[
\partial_{y_j}\bigl((\partial_{y_1}-\partial_\nu)\phi\bigr)
=(\partial_{y_1}-\partial_\nu)\phi_{y_j}+[\partial_\nu,\partial_{y_j}]\phi,
\]
and
\[
\partial_z\bigl((\partial_{y_1}-\partial_\nu)\phi\bigr)
=(\partial_{y_1}-\partial_\nu)\partial_z\phi+[\partial_\nu,\partial_z]\phi.
\]
Thus \eqref{e.s8.neumann.residual}-\eqref{e.s8.log.absorption} control the full boundary $C^{1,\theta}$ norm of the right-hand side of \eqref{e.s8.boundary.trace}, proving \eqref{e.s8.boundary.normal}.
\end{proof}

We next isolate the derivative of a $C^\theta$ remainder, as in the interior calculation of \cite[Lemma 7.2]{wang-weiadv}. 
\begin{lemma}[Equation for the horizontal derivatives]\label{l.s8.derivative.equation}
Write $H_{\alpha,i}(y,0)=\partial_{y_i}H_\alpha(y,0)$ and $h_{\alpha,i}=\partial_{y_i}h_\alpha$.  Set
\begin{align}\label{e.s8.E0}
\widetilde E_\alpha={}&\widetilde E_{0,\alpha}+\widetilde E_{e,\alpha},\notag\\
\widetilde E_{0,\alpha}={}&H_\alpha(y,z)\partial_z\phi
+(-1)^\alpha\Hb_\alpha'\bigl(H_\alpha(y,z)-H_\alpha(y,0)
 +(\Delta_{\alpha,z}-\Delta_{\alpha,0})h_\alpha\bigr)\notag\\
&-\Hb_\alpha''|\nabla_{\alpha,z}h_\alpha|^2
+\sum_{\beta\neq\alpha}\bigl((-1)^\beta\Hb_\beta'\mathcal R_{\beta,1}
 -\Hb_\beta''\mathcal R_{\beta,2}\bigr)-\sum_\beta\xi_\beta,
\end{align}
where, in the sum over $\beta\neq\alpha$, $\mathcal R_{\beta,1}$ and $\mathcal R_{\beta,2}$ are those in \eqref{xiRi}, and
\begin{align}\label{e.s8.Ee}
\widetilde E_{e,\alpha}={}&(\Delta_{\alpha,0}-\Delta_{\alpha,z})\phi
-\mathscr E_\alpha\phi+\mathcal R(\Gb_*+\phi)+\mathcal I_2
-\sum_\beta\mathscr E_\beta\Hb_\beta+\sum_\beta\mathcal G_\beta.
\end{align}
Then
\begin{equation}\label{e.s8.undifferentiated}
(\Delta_{\alpha,0}+\partial_{zz})\phi
=W''(\Hb_*)\phi+(-1)^\alpha\Hb_\alpha'
\bigl(H_\alpha(y,0)+\Delta_{\alpha,0}h_\alpha\bigr)
+\mathcal I_1+\widetilde E_\alpha,
\end{equation}
and, in the distributional sense,
\begin{equation}\label{e.s8.differentiated}
(\Delta_{\alpha,0}+\partial_{zz}-W''(\Hb_\alpha))\phi_{y_i}
=(-1)^\alpha\Hb_\alpha'
\bigl(H_{\alpha,i}(y,0)+\Delta_{\alpha,0}h_{\alpha,i}\bigr)
+\partial_{y_i}\widetilde E_\alpha+E^i+\partial_{y_i}\mathcal I_1,
\end{equation}
where
\begin{align}\label{e.s8.Ei}
E^i={}&[\Delta_{\alpha,0},\partial_{y_i}]\phi
+\bigl(W''(\Hb_*)-W''(\Hb_\alpha)\bigr)\phi_{y_i}\notag\\
&+W'''(\Hb_*)\phi\sum_\beta(-1)^\beta\Hb_\beta'
\Bigl(\partial_{y_i}t_\beta
-\sum_{j=1}^n(h_{\beta,j}\circ\Pi_\beta)\partial_{y_i}\Pi_\beta^j\Bigr)\notag\\
&+(-1)^\alpha\Hb_\alpha'
\bigl(\partial_{y_i}\Delta_{\alpha,0}h_\alpha-\Delta_{\alpha,0}h_{\alpha,i}\bigr)
-\Hb_\alpha''h_{\alpha,i}
\bigl(H_\alpha(y,0)+\Delta_{\alpha,0}h_\alpha\bigr).
\end{align}
The sum in the second line of \eqref{e.s8.Ei} includes $\beta=\alpha$.
\end{lemma}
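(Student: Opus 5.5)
The plan is to obtain both identities by direct algebraic manipulation of the error equation \eqref{phieq-interaction form}, in the same way as the interior computation of \cite[Lemma 7.2]{wang-weiadv}. No estimates are needed here: the lemma is exact, and all bounds on $\widetilde E_\alpha$ and $E^i$ are postponed.

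\textbf{Step 1: the undifferentiated identity \eqref{e.s8.undifferentiated}.} Start from \eqref{phieq-interaction form}, which is written with $\Delta_{\alpha,z}$.
\begin{itemize}
\item Add $(\Delta_{\alpha,0}-\Delta_{\alpha,z})\phi$ to both sides.
\item Move $-H_\alpha(y,z)\partial_z\phi$ to the right-hand side.
\item Split $(-1)^\alpha\Hb_\alpha'(H_\alpha(\cdot,z)+\Delta_{\alpha,z}h_\alpha)$ into its value at $z=0$, namely $(-1)^\alpha\Hb_\alpha'(H_\alpha(y,0)+\Delta_{\alpha,0}h_\alpha)$, plus the difference $H_\alpha(y,z)-H_\alpha(y,0)+(\Delta_{\alpha,z}-\Delta_{\alpha,0})h_\alpha$.
\end{itemize}
Each remaining term of \eqref{phieq-interaction form} is then assigned to $\widetilde E_{0,\alpha}$ or to $\widetilde E_{e,\alpha}$ exactly as listed in \eqref{e.s8.E0}–\eqref{e.s8.Ee}, leaving $W''(\Hb_*)\phi$ and $\mathcal I_1$ separate. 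This bookkeeping gives \eqref{e.s8.undifferentiated}. The only care required is with signs, for instance $-\mathscr E_\alpha\phi$ and $+\sum_\beta\mathcal G_\beta$.

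\textbf{Step 2: the differentiated identity \eqref{e.s8.differentiated}.} Apply $\partial_{y_i}$ to \eqref{e.s8.undifferentiated}, working in the $\alpha$-twisted Fermi coordinates.
\begin{itemize}
\item Since $[\partial_z,\partial_{y_i}]=0$, we have $\partial_{y_i}\partial_{zz}\phi=\partial_{zz}\phi_{y_i}$.
\item Also $\partial_{y_i}\Delta_{\alpha,0}\phi=\Delta_{\alpha,0}\phi_{y_i}-[\Delta_{\alpha,0},\partial_{y_i}]\phi$. The commutator moves to the right and becomes the first term of $E^i$.
\item Write $\partial_{y_i}(W''(\Hb_*)\phi)=W''(\Hb_\alpha)\phi_{y_i}+(W''(\Hb_*)-W''(\Hb_\alpha))\phi_{y_i}+W'''(\Hb_*)\phi\,\partial_{y_i}\Hb_*$.
\item Compute $\partial_{y_i}\Hb_*$ from \eqref{def_Hbeta} by the chain rule. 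Each $\Hb_\beta$ contributes $(-1)^\beta\Hb_\beta'\bigl(\partial_{y_i}t_\beta-\sum_j(h_{\beta,j}\circ\Pi_\beta)\partial_{y_i}\Pi_\beta^j\bigr)$, and this holds for $\beta=\alpha$ as well, since then $t_\alpha=z$ and $\Pi_\alpha$ is the identity in $y$.
\item For the principal term, use $\partial_{y_i}\Hb_\alpha'=-(-1)^\alpha h_{\alpha,i}\Hb_\alpha''$ from \eqref{e.chain.rule}. It gives $(-1)^\alpha\Hb_\alpha'(H_{\alpha,i}+\partial_{y_i}\Delta_{\alpha,0}h_\alpha)-\Hb_\alpha''h_{\alpha,i}(H_\alpha+\Delta_{\alpha,0}h_\alpha)$.
\item Replace $\partial_{y_i}\Delta_{\alpha,0}h_\alpha$ by $\Delta_{\alpha,0}h_{\alpha,i}$ plus a commutator. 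This produces the last line of \eqref{e.s8.Ei}.
\end{itemize}
The terms $\partial_{y_i}\mathcal I_1$ and $\partial_{y_i}\widetilde E_\alpha$ are kept underived.

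\textbf{Main obstacle: regularity.} The identity is claimed only in the distributional sense because $\widetilde E_\alpha$ contains terms that are merely $C^\theta$ in $y$, for example $\Delta_{\alpha,z}h_\alpha$ with $h_\alpha\in C^{2,\theta}$, and $\mathscr E_\alpha\phi$. To justify differentiation, I would note that $\phi$ and $u$ are smooth and that $h_\alpha\in C^{3,\theta}$ by Proposition \ref{p.optimal_h_shift} with $k=3$. Consequently every term of \eqref{e.s8.undifferentiated} is at least $C^{1}$ in $y$, and the chain-rule computation of Step 2 holds pointwise. Reading $\partial_{y_i}\widetilde E_\alpha$ distributionally is then only a statement about how the term will later be estimated, namely as the derivative of a $C^\theta$ quantity, not a loss of validity. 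Near the boundary nothing further is needed, since the computation is local and uses only horizontal derivatives in the coordinate chart.
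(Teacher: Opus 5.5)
Your proposal is correct and follows the paper's proof essentially line by line. You rearrange \eqref{phieq-interaction form} to isolate the height-zero coefficient of $(-1)^\alpha\Hb_\alpha'$, then differentiate in $y_i$ using the chain rule for $\Hb_*$ and $\partial_{y_i}\Hb_\alpha'=-(-1)^\alpha h_{\alpha,i}\Hb_\alpha''$, and commute $\partial_{y_i}$ past $\Delta_{\alpha,0}$. Your regularity remark is more careful than the paper, which is silent on it, with one correction: $\phi$ is not smooth but only as regular as the shifts $h_\beta\in C^{3,\theta}$, because $\Hb_*$ and $\Gb_*$ depend on them; this still suffices for the pointwise differentiation you describe.
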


\begin{proof}
Move the mean-curvature term and the difference $\Delta_{\alpha,z}-\Delta_{\alpha,0}$ in \eqref{phieq} to the right, and use \eqref{interaction two forms=}.  The coefficient of $(-1)^\alpha\Hb_\alpha'$ at height zero is
$H_\alpha(y,0)+\Delta_{\alpha,0}h_\alpha$; the remaining terms are exactly \eqref{e.s8.E0} and \eqref{e.s8.Ee}.  This proves \eqref{e.s8.undifferentiated}.

Differentiating \eqref{e.s8.undifferentiated} gives
\[
\partial_{y_i}\bigl(W''(\Hb_*)\phi\bigr)
=W''(\Hb_*)\phi_{y_i}+W'''(\Hb_*)\phi\,\partial_{y_i}\Hb_*,
\]
with
\[
\partial_{y_i}\Hb_*
=\sum_\beta(-1)^\beta\Hb_\beta'
\Bigl(\partial_{y_i}t_\beta
-\sum_{j=1}^n(h_{\beta,j}\circ\Pi_\beta)\partial_{y_i}\Pi_\beta^j\Bigr).
\]
Moreover,
\begin{align*}
\partial_{y_i}\Bigl[(-1)^\alpha\Hb_\alpha'
\bigl(H_\alpha(y,0)+\Delta_{\alpha,0}h_\alpha\bigr)\Bigr]
={}&(-1)^\alpha\Hb_\alpha'
\bigl(H_{\alpha,i}(y,0)+\Delta_{\alpha,0}h_{\alpha,i}\bigr)\\
&+(-1)^\alpha\Hb_\alpha'
\bigl(\partial_{y_i}\Delta_{\alpha,0}h_\alpha-\Delta_{\alpha,0}h_{\alpha,i}\bigr)\\
&-\Hb_\alpha''h_{\alpha,i}
\bigl(H_\alpha(y,0)+\Delta_{\alpha,0}h_\alpha\bigr).
\end{align*}
Here $\partial_{y_i}t_\alpha=0$, $\partial_{y_i}\Pi_\alpha^j=\delta_i^j$, and
$\partial_{y_i}\Hb_\alpha'=-(-1)^\alpha h_{\alpha,i}\Hb_\alpha''$.  Commuting $\partial_{y_i}$ past $\Delta_{\alpha,0}$ and subtracting $W''(\Hb_\alpha)\phi_{y_i}$ proves \eqref{e.s8.differentiated}-\eqref{e.s8.Ei}.  
\end{proof}

\begin{lemma}[Bounds for the differentiated equation]\label{l.s8.error.bounds}
On $B_r^+(x)\cap\mathcal M_\alpha^0\cap\{|t_\alpha|\leq K|\log\varepsilon|\}$,
\begin{equation}\label{e.s8.E.bound}
\|\widetilde E_\alpha\|_{C^\theta(\mathcal{M}^{0}_{\alpha}\cap B^+_r(x))}
\lesssim \varepsilon^2+A(r+120|\log\varepsilon|^2;x)^{3/2},
\end{equation}
\begin{align}\label{e.s8.Ei.bound}
\|E^i\|_{C^0(\mathcal{M}^{0}_{\alpha}\cap B^+_r(x))}
\lesssim\;&\varepsilon^2+A(r+120|\log\varepsilon|^2;x)^{3/2}+\varepsilon^{1/6}A(r+120|\log\varepsilon|^2;x),
\end{align}
and
\begin{align}\label{e.s8.I1.derivative.bound}
\|\partial_{y_i}\mathcal I_1\|_{C^0(\mathcal{M}^{0}_{\alpha}\cap B^+_r(x))}
\lesssim\; &\varepsilon^2+A(r+120|\log\varepsilon|^2;x)^2+\varepsilon^{1/6}A(r+120|\log\varepsilon|^2;x).
\end{align}
The same estimates hold on every fixed enlargement $\mathcal M_\alpha^\lambda$, with constants also depending on $\lambda$.
\end{lemma}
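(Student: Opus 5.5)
The plan is to estimate the three quantities term by term from the explicit formulas \eqref{e.s8.E0}, \eqref{e.s8.Ee}, \eqref{e.s8.Ei}, feeding in the first-round bounds of Proposition \ref{aprior C2alpha Hhphi} (equivalently Lemma \ref{C2,theta}) and Corollary \ref{toda system rough error first improved}, all evaluated at radius $r+120|\log\varepsilon|^2$. Schematically, each term is a product of two small factors: one geometric (of order $\varepsilon$ up to polynomial factors in $z$), one quantity already bounded by $\varepsilon^2+A$, or an exponential interaction tail. Squares are then absorbed by $(\varepsilon^2+A)^2\lesssim\varepsilon^2+A^{3/2}$, and polynomial factors in $|z|\leq K|\log\varepsilon|$ by \eqref{e.s8.log.absorption}.

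For \eqref{e.s8.E.bound}, begin with $\widetilde E_{e,\alpha}$.
\begin{itemize}
\item The term $(\Delta_{\alpha,0}-\Delta_{\alpha,z})\phi-\mathscr E_\alpha\phi$ is bounded by $\varepsilon(1+|z|^2)\|\phi\|_{C^{2,\theta}}$, using Proposition \ref{prope.comparing laplacian} and \eqref{MEphi}.
\item The term $\mathcal R(\Gb_*+\phi)$ is controlled by \eqref{RGphi} and Corollary \ref{improved G estimates}.
\item The term $\mathcal I_2$ is controlled by \eqref{I2estimateslip}.
\item The terms $\sum\mathscr E_\beta\Hb_\beta$ and $\sum\mathcal G_\beta$ are controlled by \eqref{error phi equation2}--\eqref{error phi equation3} with $\|h_\beta\|_{C^{2,\theta}}\lesssim\varepsilon^2+A$.
\end{itemize}
Each of these yields $\varepsilon\cdot\mathrm{polylog}\cdot(\varepsilon^2+A)+\varepsilon^2+A^{3/2}$.

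In $\widetilde E_{0,\alpha}$:
\begin{itemize}
\item $H_\alpha(y,z)\partial_z\phi$ is $O(\varepsilon)\|\phi\|_{C^{1,\theta}}$.
\item $H_\alpha(y,z)-H_\alpha(y,0)$ is $O(\varepsilon^2\,\mathrm{polylog})$ by \eqref{e.mn.curv.approx}.
\item $(\Delta_{\alpha,z}-\Delta_{\alpha,0})h_\alpha$ is $O(\varepsilon|z|^2\|h_\alpha\|_{C^2})$.
\item $|\nabla h_\alpha|^2$ is quadratic.
\item $\xi_\beta$ is $O(\varepsilon^3)$.
\end{itemize}

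The delicate piece is the cross-sheet sum $\sum_{\beta\ne\alpha}\Hb'_\beta\mathcal R_{\beta,1}$. It is \emph{not} a priori of size $\varepsilon\cdot A$: $\mathcal R_{\beta,1}$ is bounded by $H_\beta(\cdot,0)+\Delta_{\beta,0}h_\beta$ plus $O(\varepsilon^2+\varepsilon\|h_\beta\|)$ terms. I would split it as in \eqref{e.s6.R1.split}. Then $H_\beta(\cdot,0)+\Delta_{\beta,0}h_\beta=O(\varepsilon^2+A)$ in $C^\theta$ by Lemma \ref{C2,theta}, and on $\mathcal M^0_\alpha$ with $|z|\leq K|\log\varepsilon|$ the factor $\Hb'_\beta$ is $\lesssim e^{-|t_\beta|}\lesssim e^{-D_\alpha/2}$ via Lemma \ref{l.sum.of.exp}. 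So the product is $\lesssim(\varepsilon^2+A)A^{1/2}\lesssim\varepsilon^2+A^{3/2}$. H\"older norms follow from the product inequality \eqref{e.|phi.psi|_tht} and Lemma \ref{l.DPi.beta}.

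For \eqref{e.s8.Ei.bound}, take the terms of \eqref{e.s8.Ei} in turn.
\begin{itemize}
\item The commutator $[\Delta_{\alpha,0},\partial_{y_i}]\phi$ is $O(\varepsilon)\|\phi\|_{C^2}$ by \eqref{e.exchange.derivative}.
\item $(W''(\Hb_*)-W''(\Hb_\alpha))\phi_{y_i}\lesssim A\,\|\phi\|_{C^1}\lesssim A(\varepsilon^2+A)$.
\item In the $W'''$ sum, the $\beta=\alpha$ term reduces to $\Hb'_\alpha h_{\alpha,i}\phi$, which is quadratic. For $\beta\neq\alpha$, $\Hb'_\beta\,\partial_{y_i}t_\beta$ is bounded by $\varepsilon^{1/6}e^{-|t_\beta|}$ (Proposition \ref{p.conseq.almost.parallel}, as in \eqref{dfftoda1}), times $\|\phi\|\lesssim 1$. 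This gives the $\varepsilon^{1/6}A$ term.
\item The $h$-commutator term gives $\varepsilon(\varepsilon^2+A)$.
\item The last term $h_{\alpha,i}(H_\alpha+\Delta h_\alpha)$ is quadratic.
\end{itemize}

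For \eqref{e.s8.I1.derivative.bound}, I would differentiate the expansion \eqref{e.s6.I1.expansion}. The derivative falls either on $W''(\Hb_\alpha)-1$, which is $O(|h_{\alpha,i}|)$ times $e^{-|z|}e^{-|t_\beta|}$ and hence quadratic, or on $\Hb_\beta$, which produces $\partial_{y_i}t_\beta$ (again $\varepsilon^{1/6}$) or $h_{\beta,j}\partial_{y_i}\Pi_\beta^j$ (quadratic). The remainder, being quadratic in the tails, contributes $A^2$.

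I expect the main obstacle to be this step: ensuring the linear-in-$A$ parts of $\partial_{y_i}\mathcal I_1$ and of the $W'''$ term genuinely carry the small factor $\partial_{y_i}t_\beta=O(\varepsilon^{1/6})$ uniformly on the whole strip, not only at $z=0$. If the estimate of Proposition \ref{p.conseq.almost.parallel} is only usable near $z=0$, I would restrict to $|t_\beta|\leq10|\log\varepsilon|$, where it holds, and handle $|t_\beta|>10|\log\varepsilon|$ by $e^{-|t_\beta|}\leq\varepsilon^{10}$. The enlargement to $\mathcal M^\lambda_\alpha$ only changes the constants in Lemma \ref{l.sum.of.exp}.
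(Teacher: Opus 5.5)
Your route is the paper's: estimate \eqref{e.s8.E0}, \eqref{e.s8.Ee}, \eqref{e.s8.Ei} term by term from the rough bound $\varepsilon^2+A$ of Proposition \ref{aprior C2alpha Hhphi}, with Young's inequality, the splitting \eqref{e.s6.R1.split} (foreign tails summing to $O(A^{1/2}+\varepsilon^2)$ on $\mathcal M^0_\alpha$), and the factor $\varepsilon^{1/6}$ from Proposition \ref{p.conseq.almost.parallel}. But in the $E^i$ and $\partial_{y_i}\mathcal I_1$ parts you switch to treating a foreign tail $e^{-|t_\beta(y,z)|}$, $\beta\neq\alpha$, as $O(A)$ on the whole strip. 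On $\mathcal M^0_\alpha$ you only have $|t_{\alpha\pm1}(y,z)|\geq|z|$, hence $|t_{\alpha\pm1}(y,z)|\geq\frac12|t_{\alpha\pm1}(y,0)|-o(1)$. This is sharp near the midpoint between $\Sigma_\alpha$ and $\Sigma_{\alpha+1}$, which lies in $\{|t_\alpha|\leq K|\log\varepsilon|\}$. So the tail is only $O(A^{1/2})$ there, exactly as you correctly used for the $\Hb'_\beta\mathcal R_{\beta,1}$ term. As a result, your step for the $W'''$ term in \eqref{e.s8.Ei} fails. Bounding $\Hb'_\beta\partial_{y_i}t_\beta$ by $\varepsilon^{1/6}e^{-|t_\beta|}$ and multiplying by $\|\phi\|\lesssim1$ yields only $\varepsilon^{1/6}A^{1/2}$, not $\varepsilon^{1/6}A$. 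For $A=\varepsilon^2$ this is $\varepsilon^{7/6}$, far larger than $\varepsilon^2+A^{3/2}+\varepsilon^{1/6}A\approx\varepsilon^2$. Likewise, $W''(\Hb_*)-W''(\Hb_\alpha)$ is $O(A^{1/2}+\varepsilon^2)$, not $O(A)$.

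The repair uses only what you already have. Keep $\|\phi\|_{C^{2,\theta}}\lesssim\varepsilon^2+A$ from Proposition \ref{aprior C2alpha Hhphi} as the coefficient of the $W'''$ sum. Then the $\beta\neq\alpha$ part is $\lesssim(\varepsilon^2+A)(A^{1/2}+\varepsilon^2)\lesssim\varepsilon^2+A^{3/2}$, and similarly $A^{1/2}\|\phi_{y_i}\|\lesssim\varepsilon^2+A^{3/2}$. In $\partial_{y_i}\mathcal I_1$, the linear term is indeed $O(\varepsilon^{1/6}A)$. The reason is the extra own-sheet factor $e^{-|z|}$ from $W''(\Hb_\alpha)-1$, combined with $|z|+|t_\beta(y,z)|\geq|t_\beta(y,0)|-o(1)$. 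The Taylor remainder in \eqref{e.s6.I1.expansion} is $O(A)$, not $O(A^2)$, near the midpoint. What saves it is that every product of two foreign tails is $O(A)$ on $\mathcal M^0_\alpha$, and each differentiated tail carries $\partial_{y_i}t_\beta$ or $\nabla h_\beta$. Its derivative is therefore $O(\varepsilon^{1/6}A+A(\varepsilon^2+A))$, still within the target. Your worry about Proposition \ref{p.conseq.almost.parallel} is unfounded: its bounds hold wherever $|t_\alpha|,|t_\beta|\leq K|\log\varepsilon|$, not only at $z=0$. (A smaller point: $H_\alpha(y,z)-H_\alpha(y,0)=O(\varepsilon^2|\log\varepsilon|^4)$ is not absorbed by \eqref{e.s8.log.absorption}; the factor $\Hb'_\alpha$ in front of it absorbs the polynomial in $z$.)
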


\begin{proof}
Fix $p\in B_r^+(x)\cap\mathcal M_\alpha^0\cap\{|t_\alpha|\leq K|\log\varepsilon|\}$. We estimate the expressions below on $B_1^+(p)$. Notice that Proposition \ref{aprior C2alpha Hhphi} gives bounds for $\phi$, $h_\beta$, and
$H_\beta(\cdot,0)+\Delta_{\beta,0}h_\beta$, respectively in $C^{2,\theta}$, $C^{2,\theta}$, and $C^\theta$, by
\begin{equation}\label{e.s8.rough.local}
C\varepsilon^2+CA(r+120|\log\varepsilon|^2;x).
\end{equation}

For the first three linear expressions in \eqref{e.s8.E0}-\eqref{e.s8.Ee},
\[
H_\alpha(y,z)\partial_z\phi,
\qquad (\Delta_{\alpha,0}-\Delta_{\alpha,z})\phi,
\qquad -\mathscr E_\alpha\phi,
\]
Proposition \ref{geometric approximation} and \eqref{e.s6.operator} give the bound
$C\varepsilon(1+|z|^2)\|\phi\|_{C^{2,\theta}}$.  Substitution of \eqref{e.s8.rough.local} uses
\begin{equation}\label{e.s8.young}
\varepsilon(1+|\log\varepsilon|^2)A(r+120|\log\varepsilon|^2;x)
\leq \frac23A(r+120|\log\varepsilon|^2;x)^{3/2}
+\frac13\varepsilon^3(1+|\log\varepsilon|^2)^3,
\end{equation}
whose last term is $O(\varepsilon^2)$.

The two expressions
\[
(-1)^\alpha\Hb_\alpha'
\bigl(H_\alpha(y,z)-H_\alpha(y,0)+(\Delta_{\alpha,z}-\Delta_{\alpha,0})h_\alpha\bigr),
\qquad
-\Hb_\alpha''|\nabla_{\alpha,z}h_\alpha|^2,
\]
have local $C^\theta$ norm at most
$C\varepsilon^2+C\varepsilon\|h_\alpha\|_{C^{2,\theta}}
+C\|h_\alpha\|_{C^{2,\theta}}^2$; the polynomial factors in $z$ are absorbed by the exponential decay of $\Hb_\alpha'$ and $\Hb_\alpha''$.

For $\beta\neq\alpha$, write in the $\beta$-coordinates
\[
\mathcal R_{\beta,1}
=H_\beta(y,0)+\Delta_{\beta,0}h_\beta
+H_\beta(y,z)-H_\beta(y,0)
+(\Delta_{\beta,z}-\Delta_{\beta,0})h_\beta.
\]
On $\mathcal M_\alpha^0$, separation and the closest-sheet property give
\[
\sum_{\beta\neq\alpha}
\bigl(\|\Hb_\beta'\|_{C^\theta}+\|\Hb_\beta''\|_{C^\theta}\bigr)
\leq C A(r+120|\log\varepsilon|^2;x)^{1/2}+C\varepsilon^2.
\]
The contribution
\[
\sum_{\beta\neq\alpha}\Hb_\beta'
\bigl(H_\beta(\cdot,0)+\Delta_{\beta,0}h_\beta\bigr)
\]
is bounded by the product of \eqref{e.s8.rough.local} and the sum above.  The terms containing
$H_\beta(y,z)-H_\beta(y,0)$, $(\Delta_{\beta,z}-\Delta_{\beta,0})h_\beta$, and
$\Hb_\beta''\mathcal R_{\beta,2}$ are estimated as for the two $\alpha$-sheet expressions above.  Their sum is
$C(\varepsilon^2+A^{3/2})$ at the radius in \eqref{e.s8.E.bound}.

Finally,
\[
\|\mathcal R(\Gb_*+\phi)\|_{C^\theta}
\leq C(\varepsilon+\|\phi\|_{C^{2,\theta}})^2,
\quad
\|\mathcal I_2\|_{C^\theta}\leq C(\varepsilon^2+A^{3/2}),
\]
while Lemma \ref{error phi equation} gives
\[
\Bigl\|\sum_\beta\mathscr E_\beta\Hb_\beta\Bigr\|_{C^\theta}
+\Bigl\|\sum_\beta\mathcal G_\beta\Bigr\|_{C^\theta}
\leq C\varepsilon^2+C\varepsilon\max_\beta\|h_\beta\|_{C^{2,\theta}},
\qquad
\Bigl\|\sum_\beta\xi_\beta\Bigr\|_{C^\theta}\leq C\varepsilon^2.
\]
Together with \eqref{e.s8.rough.local}-\eqref{e.s8.young}, these estimates prove \eqref{e.s8.E.bound}.

We next estimate the five expressions in \eqref{e.s8.Ei}.  The commutator estimate \eqref{e.exchange.derivative} gives
\[
\|[\Delta_{\alpha,0},\partial_{y_i}]\phi\|_{C^0}
\leq C\varepsilon\|\phi\|_{C^{2,\theta}}.
\]
The closest-sheet estimate and Taylor expansion give
\[
\|W''(\Hb_*)-W''(\Hb_\alpha)\|_{C^0}
\leq C A(r+120|\log\varepsilon|^2;x)^{1/2}+C\varepsilon^2.
\]
In
\[
W'''(\Hb_*)\phi\sum_\beta(-1)^\beta\Hb_\beta'
\Bigl(\partial_{y_i}t_\beta
-\sum_j(h_{\beta,j}\circ\Pi_\beta)\partial_{y_i}\Pi_\beta^j\Bigr),
\]
the $\beta=\alpha$ summand in parentheses is $-h_{\alpha,i}$, so the corresponding heteroclinic  factor is
$-\Hb_\alpha'h_{\alpha,i}$.  The sum over $\beta\neq\alpha$ is controlled by separation and, when both sheets lie in the logarithmic strip, Proposition \ref{p.conseq.almost.parallel}.  Thus this expression is bounded by
$C(\varepsilon^2+A^{3/2}+\varepsilon^{1/6}A)$.
Moreover,
\[
|\partial_{y_i}\Delta_{\alpha,0}h_\alpha-\Delta_{\alpha,0}h_{\alpha,i}|
\leq C\varepsilon\bigl(|\nabla_{\alpha,0}h_\alpha|+|\nabla^2_{\alpha,0}h_\alpha|\bigr)
\]
by \eqref{e.exchange.derivative}, while
$\Hb_\alpha''h_{\alpha,i}(H_\alpha(y,0)+\Delta_{\alpha,0}h_\alpha)$ is bounded by the product of the two corresponding norms in \eqref{e.s8.rough.local}.  This proves \eqref{e.s8.Ei.bound}.

It remains to differentiate $\mathcal I_1$.  On $\mathcal M_\alpha^0$, the expansion used in the proof of Lemma \ref{interaction_terms} is
\[
\mathcal I_1
=\bigl(W''(\Hb_\alpha)-1\bigr)
\sum_{\beta\neq\alpha}
\bigl(\Hb_\beta-\operatorname{sgn}(\alpha-\beta)(-1)^\beta\bigr)
+O\Bigl(\bigl(\sum_{\beta\neq\alpha}e^{-|t_\beta|}\bigr)^2\Bigr).
\]
If $\partial_{y_i}$ hits $h_\alpha$, $h_\beta$, or the quadratic remainder, the result is bounded by
$C A(r+120|\log\varepsilon|^2;x)(\varepsilon^2+A(r+120|\log\varepsilon|^2;x))$.
If it hits $t_\beta$ or $\Pi_\beta$, Proposition \ref{p.conseq.almost.parallel} gives
$C\varepsilon^{1/6}A(r+120|\log\varepsilon|^2;x)$; the part outside the common logarithmic strip is $O(\varepsilon^2)$.  This proves \eqref{e.s8.I1.derivative.bound}.  This is the analogue of the interaction differentiation in \cite[Section 7]{wang-weiadv}.
\end{proof}

On $|t_\alpha|<10|\log\varepsilon|$, use Corollary \ref{diff toda} in \eqref{e.s8.differentiated}.  On its complement in the fixed $K|\log\varepsilon|$ strip, the factors $\Hb_\alpha'$ and $\Hb_\alpha''$, and so do both terms in \eqref{e.s7.differentiated.toda}.  Hence the same substitution is valid throughout the strip.  Define, for the fixed index $i$,
\begin{equation}\label{e.s8.Ecircle}
\widetilde E_\alpha^\circ
=\widetilde E_\alpha+(-1)^\alpha\mathcal E_{\alpha,1,i},
\end{equation}
\begin{equation}\label{e.s8.Eicircle}
E^{i,\circ}=E^i+\partial_{y_i}\mathcal I_1+(-1)^\alpha\mathcal E_{\alpha,2,i}.
\end{equation}
Then
\begin{equation}\label{e.s8.divergence.equation}
(\Delta_{\alpha,0}+\partial_{zz}-W''(\Hb_\alpha))\phi_{y_i}
=\partial_{y_i}\widetilde E_\alpha^\circ+E^{i,\circ},
\end{equation}
Since $0<A\leq1$, Lemma \ref{l.s8.error.bounds} and \eqref{Ea12i} imply
\begin{align}\label{e.s8.source.bound}
\|\widetilde E_\alpha^\circ\|_{C^\theta}
+\|E^{i,\circ}\|_{C^0}
\lesssim\; &\varepsilon^2+A(r+140|\log\varepsilon|^2;x)^{3/2}\notag\\
&+\varepsilon^{1/6}A(r+140|\log\varepsilon|^2;x).
\end{align}
The $20|\log\varepsilon|^2$ increase absorbs the displacement between the ambient point and the base points used in \eqref{Ea12i}.

We record two points needed in both the inner and outer estimates.  First, at a point written in the $\alpha$-coordinates and then in the $\beta$-coordinates,
\begin{equation}\label{e.s8.coordinate.change}
\phi_{y_i}(y,z)
=\sum_{j=1}^n(\partial_{y_i}\Pi_\beta^j)(y,z)
\phi_{y_j}(\Pi_\beta(y,z),t_\beta(y,z))
+(\partial_{y_i}t_\beta)(y,z)
\partial_z\phi(\Pi_\beta(y,z),t_\beta(y,z)).
\end{equation}
Fix a ball $B_1^+(p)$ on which $|t_\alpha|+|t_\beta|\leq C|\log\varepsilon|$. Then, on $B_1^+(p)$, Proposition \ref{p.conseq.almost.parallel} and the higher derivative bounds in Lemma \ref{l.t.Pi} give
$\|\partial_{y_i}t_\beta\|_{C^{1,\theta}}\leq C\varepsilon^{1/6}$, while
$\|D\Pi_\beta\|_{C^{1,\theta}}\leq C$.  Proposition \ref{aprior C2alpha Hhphi} therefore gives
\begin{equation}\label{e.s8.coordinate.error}
C\|\partial_{y_i}t_\beta\|_{C^{1,\theta}(B_1^+(p))}\|\phi\|_{C^{2,\theta}(B_2^+(p))}
\leq C\bigl(\varepsilon^2+\varepsilon^{1/6}A(r+120|\log\varepsilon|^2;x)\bigr).
\end{equation}
For each centre $p$, choose $\beta$ with $p\in\mathcal M_\beta^0$ and keep this $\beta$ in \eqref{e.s8.coordinate.change} on all of $B_1^+(p)$.

Second, with $\lambda_\alpha(y,0)$ denoting the volume density of $g_{\alpha,0}$,
\begin{equation}\label{e.s8.density.divergence}
\lambda_\alpha(y,0)\partial_{y_i}\widetilde E_\alpha^\circ
=\partial_{y_i}\bigl(\lambda_\alpha(y,0)\widetilde E_\alpha^\circ\bigr)
-(\partial_{y_i}\lambda_\alpha(y,0))\widetilde E_\alpha^\circ.
\end{equation}
For $i\geq2$, the flux in \eqref{e.s8.density.divergence} is tangent to $y_1=0$.  Apply Proposition \ref{t.sch}(a) in the interior, (b) for $i=1$ at the boundary, and (c) for $i\geq2$, with the traces in Proposition \ref{p.s8.boundary.data}.  These estimates use $\|E^{i,\circ}\|_{C^0}$ and $\|\widetilde E_\alpha^\circ\|_{C^\theta}$, not a norm of $\partial_{y_i}\widetilde E_\alpha^\circ$.

\subsection{Improved horizontal outer estimates}

Let $\Omega_\alpha^2$ be the outer region in \eqref{e.s7.regions}.

\begin{lemma}[Outer localization]\label{l.s8.outer}
Fix $\rho>1$, and then take $L>8\rho$.  For every $p\in B_r^+(x)\cap\Omega_\alpha^2\cap\{|t_\alpha|\leq K|\log\varepsilon|\}$, 
\begin{align}\label{e.s8.outer.contraction}
\|\phi_{y_i}\|_{C^{1,\theta}(B_1^+(p))}\leq C\bigl(\varepsilon^2 &+A(r+140|\log\varepsilon|^2;x)^{3/2}+\varepsilon^{1/6}A(r+140|\log\varepsilon|^2;x)\bigr)\\
&+C_*(\rho^{-1}+e^{-c_*L})\max_{\substack{\beta,\,1\leq j\leq n}}
\sup_{\substack{q\in B^+_{r+C_*\rho}(x)\cap\mathcal M_\beta^0\\
|t_\beta(q)|\leq K|\log\varepsilon|+C_*\rho}}
\|\phi_{y_j}\|_{C^{1,\theta}(B_1^+(q))}.
\end{align}
The constants $C_*,c_*$ in the coefficient $C_*(\rho^{-1}+e^{-c_*L})$ are independent of $L$ and $\rho$. The constant $C$ multiplying the right-hand side of \eqref{e.s8.source.bound} may depend on $L$ and $\rho$; once these are fixed, it is independent of $\varepsilon$, the layer indices, and the admissible centres and radii.

\end{lemma}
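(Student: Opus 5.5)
The plan is to mirror the outer argument of Lemma \ref{outer}, now applied to $\phi_{y_i}$ rather than to $\phi$. In $\Omega_\alpha^2$ every sheet distance exceeds $L/4$, so $W''(\Hb_\alpha)$ and $W''(\Hb_*)$ both lie in $[1/2,C]$. First, fix $p$ and choose $\beta$ with $p\in\mathcal M_\beta^0$. Work in the $\beta$-coordinates on a ball of radius comparable to $\rho$ around $p$; for $L>8\rho$ this ball stays in the region where all sheet distances exceed $L/8$.

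The coordinate change \eqref{e.s8.coordinate.change} together with \eqref{e.s8.coordinate.error} lets me replace $\phi_{y_i}$ in the $\alpha$-coordinates by a bounded combination of the $\phi_{y_j}$ in the $\beta$-coordinates. The error is $C(\varepsilon^2+\varepsilon^{1/6}A)$, so it suffices to estimate each $\phi_{y_j}$ on $B_1^+(p)$. In the $\beta$-coordinates, \eqref{e.s8.divergence.equation} gives
\[
(\Delta_{\beta,0}+\partial_{zz}-W''(\Hb_\beta))\phi_{y_j}=\partial_{y_j}\widetilde E^\circ_\beta+E^{j,\circ}.
\]
I then convert $\Delta_{\beta,0}+\partial_{zz}$ back to the ambient Laplacian. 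The error terms are $\varepsilon(1+|z|^2)\|\phi_{y_j}\|_{C^{1,\theta}}$ plus second derivatives of $\phi$ in which at least one derivative is horizontal. These are either absorbed using \eqref{e.s8.young} or placed into the supremum term. The divergence structure \eqref{e.s8.density.divergence} keeps the source in the form $\partial_{y_j}(\text{a }C^\theta\text{ flux})+C^0$.

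Next I apply the divergence-form Schauder/Caccioppoli estimate of Proposition \ref{t.sch} on the concentric balls $B^+_{1+k}(p)$, $k\le\rho$. Part (a) covers interior balls. Part (b) covers $j=1$ with the Dirichlet trace from \eqref{e.s8.boundary.normal}. Part (c) covers $j\ge2$ with the Neumann trace from \eqref{e.s8.boundary.tangential}, and its flux is tangent to the boundary.

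The key structural input is coercivity: $W''\ge1/2$ on these balls. I would use it through a comparison or iteration argument as in Lemma \ref{global gluing estimates 1}(i), in the form
\[
\|v\|_{C^{1,\theta}(B_s)}\le \frac{C}{\rho}\|v\|_{C^{1,\theta}(B_{s+\rho})}+C(\text{data}),
\]
or alternatively via an exponential barrier, which gives a factor $e^{-c\rho}\le C/\rho$. The data are bounded by \eqref{e.s8.source.bound} and Proposition \ref{p.s8.boundary.data}. Those bounds are stated at radius $120$–$140|\log\varepsilon|^2$, while the centres used here move by at most $C_*\rho$, so the estimates carry over after enlarging the radius by a fixed amount. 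The contribution $C_*e^{-c_*L}$ collects the terms in which the differentiated interaction or the $\Hb_\beta'$ factors appear: these are weighted by $e^{-|t_\beta|}\le e^{-L/8}$, multiplied by norms of $\phi_{y_j}$ at nearby centres.

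The remaining norms of $\phi_{y_j}$ on the enlarged ball $B^+_{C_*\rho}(p)$ are covered by unit balls centred at points $q$. Each such $q$ lies in $\mathcal M_{\beta'}^0$ for some $\beta'$ with $|t_{\beta'}(q)|\le K|\log\varepsilon|+C_*\rho$, so these norms are bounded by the supremum on the right of \eqref{e.s8.outer.contraction}.

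I expect the main obstacle to be the boundary case with $j=1$, where only Dirichlet data for $\phi_{y_1}$ are available. There one must check that the Dirichlet version of the coercive estimate still yields the factor $C/\rho$. One must also check that the coordinate switch from $\alpha$ to $\beta$ preserves this boundary structure: near $\partial_0$, $\partial_{y_1}^\beta-\partial_{y_1}^\alpha$ is $O(\varepsilon|\log\varepsilon|^2)$ by Lemma \ref{l.na.y_i}, so the Dirichlet and Neumann splitting between the two coordinate systems agrees up to errors that can be absorbed.
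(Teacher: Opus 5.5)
Your proposal is correct and is essentially the paper's argument. The paper localizes $\phi_{y_i}$ with a cutoff of scale $\rho$ in the $\alpha$-coordinates and applies the coercive outer estimate (the product-metric version of Lemma \ref{global gluing estimates 1}) with potential $1$, so that $\rho^{-1}$ comes from the cutoff and $e^{-c_*L}$ from $W''(\Hb_\alpha)-1$; your direct use of Lemma \ref{global gluing estimates 1} with the coercive potential $W''(\Hb_\alpha)\geq 1/2$ packages exactly this, with the same boundary data, the same source bounds, and the same unit-ball covering and coordinate-change step.

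Several of your detours are unnecessary:
\begin{itemize}
\item Since $p\in\Omega_\alpha^2\subset\mathcal M_\alpha^0$, you may simply take $\beta=\alpha$.
\item There is no need to convert to $\Delta_g$. If you do, note that $(\Delta_{\alpha,z}-\Delta_{\alpha,0})\phi_{y_i}$ and $\mathscr E_\alpha\phi_{y_i}$ contain third derivatives of $\phi$, so they must be kept in divergence form.
\item The Dirichlet case $i=1$ that you single out is precisely Lemma \ref{global gluing estimates 1}(ii).
\end{itemize}
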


\begin{proof}
The argument is the outer gluing argument used in \cite[Section 7]{wang-weiadv}. Fix $p\in B_r^+(x)\cap\Omega_\alpha^2\cap\{|t_\alpha|\leq K|\log\varepsilon|\}$ and write $p=Y_{\Sigma_\alpha}(y,z)$.  Fix $p$ as in the statement.  Choose a cutoff
$\chi$ in the $\alpha$-coordinates such that $\chi=1$ on
$Y_{\Sigma_\alpha}^{-1}(B_1^+(p))$, its support is contained in the
product-coordinate ball of radius $2\rho$ centred at
$(\Pi_\alpha(p),t_\alpha(p))$, and
\[
 |D\chi|\leq C_\ast\rho^{-1},\qquad
 |D^2\chi|\leq C_\ast\rho^{-2},\qquad
 \partial_{y_1}\chi=0\quad\hbox{on }\{y_1=0\}.
\]
Since $p\in\Omega_\alpha^2$ and $L>8\rho$, one has $|z|>L/4$ on
$\operatorname{spt}\chi$.  Hence
\[
 \|W''(\Hb_\alpha)-1\|_{C^\theta(\operatorname{spt}\chi)}
 \leq C_\ast e^{-c_\ast L}.
\]
We use the outer gluing estimate (see Appendix \ref{s.gluing.a.apriori})
\begin{equation}
 \|\sfu\|_{C^{1,\theta}}
 \leq C_\ast\bigl(
 \|\snf\|_{C^0}+\|\snG\|_{C^\theta}+\|\psi\|_{C^{1,\theta}}
 \bigr),
\label{e.s8.massive.estimate.full}
\end{equation}
for
$(\Delta_{\alpha,0}+\partial_{zz}-1)\sfu=\snf+\partial_{y_i}\snG^i$.
For $i\geq2$, the last norm in
\eqref{e.s8.massive.estimate.full} is replaced by
$\|\partial_{y_1}\sfu\|_{C^\theta}$.  The constant $C_\ast$ in
\eqref{e.s8.massive.estimate.full} is independent of $L$ and $\rho$.
This is the corresponding consequence of Lemma F.4 for the product metric
$g_{\alpha,0}+dz^2$.  Identity \eqref{e.s8.density.divergence} gives the required
divergence-form interpretation at $y_1=0$.

Apply \eqref{e.s8.massive.estimate.full} to $\sfu=\chi\phi_{y_i}$.  From
\eqref{e.s8.divergence.equation},
\begin{align*}
(\Delta_{\alpha,0}+\partial_{zz}-1)(\chi\phi_{y_i})
={}&\partial_{y_i}(\chi\widetilde E_\alpha^\circ)
   +\chi E^{i,\circ}-(\partial_{y_i}\chi)\widetilde E_\alpha^\circ\\
&+\chi\bigl(W''(\Hb_\alpha)-1\bigr)\phi_{y_i}
  +2\langle\nabla\chi,\nabla\phi_{y_i}\rangle
  +(\Delta_{\alpha,0}+\partial_{zz})\chi\,\phi_{y_i}.
\end{align*}
By \eqref{e.s8.source.bound}, the first line and the corresponding boundary
data contribute at most
\[
 C_{L,\rho}\Bigl(
 \varepsilon^2
 +A(r+140|\log\varepsilon|^2;x)^{3/2}
 +\varepsilon^{1/6}A(r+140|\log\varepsilon|^2;x)
 \Bigr).
\]
The second line is bounded by
\[
 C_\ast\bigl(\rho^{-1}+e^{-c_\ast L}\bigr)
 \|\phi_{y_i}\|_{C^{1,\theta}
 (Y_{\Sigma_\alpha}(\operatorname{spt}\chi))}.
\]
On $y_1=0$, Proposition \ref{p.s8.boundary.data} controls
$\chi\phi_{y_1}$ when $i=1$ and
\[
 \partial_{y_1}(\chi\phi_{y_i})
 =\chi\partial_{y_1}\phi_{y_i},\qquad i\geq2.
\]

Finally,
$Y_{\Sigma_\alpha}(\operatorname{spt}\chi)\subset B_{C_\ast\rho}^+(p)$.
For each $q$ in this set, choose a sheet $\Sigma_\beta$ closest to $q$ and
use its coordinates throughout $B_1^+(q)$.  Equations
\eqref{e.s8.coordinate.change}-\eqref{e.s8.coordinate.error} give
\begin{align*}
&\|\phi_{y_i}\|_{C^{1,\theta}
 (Y_{\Sigma_\alpha}(\operatorname{spt}\chi))}\\
&\quad\leq C_\ast
 \max_{\substack{\beta\\1\leq j\leq n}}
 \sup_{\substack{
 q\in B_{r+C_\ast\rho}^+(x)\cap\mathcal M_\beta^0\\
 |t_\beta(q)|\leq K|\log\varepsilon|+C_\ast\rho}}
 \|\phi_{y_j}\|_{C^{1,\theta}(B_1^+(q))}\\
&\qquad+C_{L,\rho}\Bigl(
 \varepsilon^2+A(r+140|\log\varepsilon|^2;x)^{3/2}
 +\varepsilon^{1/6}A(r+140|\log\varepsilon|^2;x)
 \Bigr).
\end{align*}
Substitution in \eqref{e.s8.massive.estimate.full} proves \eqref{e.s8.outer.contraction}.
\end{proof}

\subsection{Improved horizontal inner estimates}
By taking derivatives of the orthogonality condition \eqref{phi perp} is not zero we get \begin{equation}\label{e.s8.first.moment}
\int_{\mathbb R}\phi_{y_i}\Hb_\alpha'\,dz
=(-1)^\alpha h_{\alpha,i}\int_{\mathbb R}\phi\Hb_\alpha''\,dz,
\end{equation}
\begin{align}\label{e.s8.second.moment}
\int_{\mathbb R}\phi_{y_i y_j}\Hb_\alpha'\,dz
={}&(-1)^\alpha h_{\alpha,i}\int_{\mathbb R}\phi_{y_j}\Hb_\alpha''\,dz
+(-1)^\alpha h_{\alpha,j}\int_{\mathbb R}\phi_{y_i}\Hb_\alpha''\,dz\notag\\
&+(-1)^\alpha h_{\alpha,ij}\int_{\mathbb R}\phi\Hb_\alpha''\,dz
-h_{\alpha,i}h_{\alpha,j}\int_{\mathbb R}\phi\Hb_\alpha'''\,dz.
\end{align}
They follow by differentiating \eqref{phi perp} once and twice and using
$\partial_{y_i}\Hb_\alpha'=-(-1)^\alpha h_{\alpha,i}\Hb_\alpha''$ and
$\partial_{y_j}\Hb_\alpha''=-(-1)^\alpha h_{\alpha,j}\Hb_\alpha'''$.
The integrals are supported in $|z|\leq8|\log\varepsilon|+1$.  For each $y\in\Sigma_\alpha\cap B_r^+(x)$, the norms are taken on $B_1^\alpha(y)$.  Proposition \ref{aprior C2alpha Hhphi}, the product rule, and exponential decay give
\begin{align}\label{e.s8.moment.bound}
\Bigl\|\int_{\mathbb R}\phi_{y_i}\Hb_\alpha'\,dz\Bigr\|_{C^{1,\theta}}
+\Bigl\|\int_{\mathbb R}\phi_{y_i y_j}\Hb_\alpha'\,dz\Bigr\|_{C^\theta}
\leq C\bigl(\varepsilon^2+A(r+120|\log\varepsilon|^2;x)^2\bigr).
\end{align}
Indeed, every term in \eqref{e.s8.first.moment}-\eqref{e.s8.second.moment} is bounded by
$C\|h_\alpha\|_{C^{2,\theta}}\|\phi\|_{C^{2,\theta}}$, except the last term in \eqref{e.s8.second.moment}, which has an additional factor $\|\nabla h_\alpha\|_{C^0}$.

Let $\Omega_\alpha^3$ be the inner region in \eqref{e.s7.regions}.

\begin{lemma}[Inner localization]\label{l.s8.inner}
After $\rho$ and $L$ have been fixed and $\varepsilon$ is sufficiently small, for every $p\in B_r^+(x)\cap\Omega_\alpha^3$
\begin{align}\label{e.s8.inner.contraction}
\|\phi_{y_i}\|_{C^{1,\theta}(B_1^+(p))}\leq &C\bigl(
 \varepsilon^2+A(r+140|\log\varepsilon|^2;x)^{3/2}
 +\varepsilon^{1/6}A(r+140|\log\varepsilon|^2;x)\bigr)\\
&+C_*(\rho^{-1}+L^{-1}+e^{-L})
\max_{\substack{\beta,\,1\leq j\leq n}}
\sup_{\substack{q\in B^+_{r+12|\log\varepsilon|}(x)\cap\mathcal M_\beta^0\\
|t_\beta(q)|\leq K|\log\varepsilon|+C\rho}}
\|\phi_{y_j}\|_{C^{1,\theta}(B_1^+(p))},
\end{align}
where $C_*$ is independent of $L$ and $\rho$.
\end{lemma}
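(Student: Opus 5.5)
We will mirror the inner argument of Lemma \ref{C1thetainner -estimates-lemma0}, now applied to $\phi_{y_i}$ via the divergence-form equation \eqref{e.s8.divergence.equation}. Fix $p\in B_r^+(x)\cap\Omega_\alpha^3$ and set $y=\Pi_\alpha(p)$. We choose a cutoff $\chi$ in the $\alpha$-coordinates such that $\chi=1$ on $Y_{\Sigma_\alpha}^{-1}(B_1^+(p))$, its horizontal support lies in $B_{2\rho}^\alpha(y)$, and $\partial_{y_1}\chi=0$ on $\{y_1=0\}$. We also use the $z$-cutoff $\tilde\xi$ of Section \ref{s.cutoff.project}, which has scale $L$.

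We then define
\[
\psi=\chi\tilde\xi\,\phi_{y_i}-c\,\Hb_\alpha',\qquad c(y)=\frac{\int_{\mathbb R}\chi\tilde\xi\,\phi_{y_i}\Hb_\alpha'\,dz}{\int_{\mathbb R}(\Hb_\alpha')^2dz},
\]
so that $\int\psi\Hb_\alpha'\,dz=0$. Using \eqref{e.s8.first.moment}, \eqref{e.s8.moment.bound} and the exponential decay of $\Hb_\alpha'$ beyond $|z|>L$, we will bound $c$ in $C^{1,\theta}$. The bound will be $C(\varepsilon^2+A^2)$ plus $e^{-L}$ times the supremum of $\|\phi_{y_j}\|_{C^{1,\theta}}$.

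Next we compute $(\Delta_{\alpha,0}+\partial_{zz}-W''(\Hb_\alpha))\psi$. The result has the form $\partial_{y_i}(\chi\tilde\xi\widetilde E^\circ_\alpha)+\mathsf f$, plus a multiple $q(y)\Hb_\alpha'$ of the kernel direction, as in Claim \ref{innerpdephialpha}. Here $\mathsf f$ collects several pieces:
\begin{itemize}
\item the source $\chi\tilde\xi E^{i,\circ}$;
\item terms in which the derivative falls on $\chi$ or $\tilde\xi$, of size $(\rho^{-1}+L^{-1})\|\phi_{y_i}\|_{C^{1,\theta}}$ on the enlarged support;
\item terms from $(\partial_{y_i}\chi)\widetilde E^\circ_\alpha$;
\item the $c$-terms, which are controlled as above.
\end{itemize}
The boundary data come from Proposition \ref{p.s8.boundary.data}. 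For $i=1$ this is Dirichlet data for $\chi\tilde\xi\phi_{y_1}$, corrected by $c\Hb_\alpha'$. For $i\ge2$ it is Neumann data $\partial_{y_1}\psi$, and the flux $\chi\tilde\xi\widetilde E^\circ_\alpha$ is tangent to the boundary, using \eqref{e.s8.density.divergence}.

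We then apply the inner coercive estimate for the linearized operator on $\Sigma_\alpha\times\mathbb R$ with orthogonality (Proposition \ref{p.lin.ac} and Appendix \ref{s.gluing.a.apriori}), in its divergence-form, mixed-boundary version. Projecting the equation onto $\Hb_\alpha'$ removes $q$, as in \eqref{eqpalpha=on}. This gives
\[
\|\psi\|_{C^{1,\theta}}\lesssim\|\mathsf f\|_{C^0}+\|\chi\tilde\xi\widetilde E^\circ_\alpha\|_{C^\theta}+\text{boundary norms}.
\]
Inserting \eqref{e.s8.source.bound} yields the first line of \eqref{e.s8.inner.contraction}.

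The cutoff-commutator terms involve $\phi_{y_i}$ on $\operatorname{spt}(\chi\tilde\xi)$, a region possibly meeting other sheets' neighbourhoods. We convert these to the sheet-adapted quantities $\phi_{y_j}$ in $\mathcal M_\beta^0$ using \eqref{e.s8.coordinate.change}–\eqref{e.s8.coordinate.error}; the conversion costs only $C(\varepsilon^2+\varepsilon^{1/6}A)$. This produces the factor $C_*(\rho^{-1}+L^{-1}+e^{-L})$. Since $|z|\le L/2$ on $B_1^+(p)$, we recover $\phi_{y_i}=\psi+c\Hb_\alpha'$ there.

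The main obstacle will be keeping $C_*$ independent of $L$ and $\rho$ while the coercive estimate is applied to a divergence-form source with mixed boundary data. In particular, for $i=1$ we only have Dirichlet control of $\phi_{y_1}$, and the orthogonality correction $c\Hb_\alpha'$ perturbs that trace. We will handle this by including $\|c\|_{C^{1,\theta}(\partial_0)}$ in the boundary term, which \eqref{e.s8.first.moment} makes small.
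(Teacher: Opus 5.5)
Your proposal is correct. It follows the same cutoff scheme and error bookkeeping as the paper: the same $\chi$ and $\tilde\xi$, the source bound \eqref{e.s8.source.bound}, the traces from Proposition \ref{p.s8.boundary.data}, the moment bound \eqref{e.s8.moment.bound}, and the conversion \eqref{e.s8.coordinate.change}--\eqref{e.s8.coordinate.error}. The genuine difference is how you handle the translation kernel.

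The paper does not project. It applies the inner estimate \eqref{e.s8.inner.apriori.full} directly to $\sfu=\chi\tilde\xi\phi_{y_i}$. In that estimate the orthogonality defect $\|\int_{\mathbb R}\sfu\Hb'_\alpha\,dz\|_{C^0}$ appears as an extra term on the right-hand side. The defect is then split into the first moment, controlled by \eqref{e.s8.moment.bound}, and an $e^{-L}$ tail.

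You instead subtract $c\Hb'_\alpha$ to enforce exact orthogonality, as in Section \ref{s.cutoff.project}, and use the orthogonal coercive estimate. Note that $c\int(\Hb'_\alpha)^2\,dz$ is exactly the paper's defect. The price is that you need $c$ in $C^{1,\theta}$ rather than only in $C^0$. That control is available, since \eqref{e.s8.moment.bound} gives $C^{1,\theta}$ bounds on the first moment and the $e^{-L}$ tail estimate also works in $C^{1,\theta}$.

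Your route also carries extra bookkeeping:
\begin{enumerate}
\item[(i)] The terms $2\langle\nabla c,\nabla\Hb'_\alpha\rangle+c\,\Delta_{\alpha,0}\Hb'_\alpha+c\,\bar\xi_1'$ must be estimated.
\item[(ii)] The trace of $c$ on $\partial_0$ is needed for $i=1$, and $\partial_{y_1}c$ for $i\geq2$.
\item[(iii)] The kernel coefficient $q$ must be eliminated through an identity like \eqref{eqpalpha=on}. This matters because $q$ contains $\Delta_{\alpha,0}c$, which your $C^{1,\theta}$ bound does not control.
\end{enumerate}
When you do (iii), observe that the projection of $\partial_{y_i}(\chi\tilde\xi\widetilde E^\circ_\alpha)$ onto $\Hb'_\alpha$ is, up to $h_\alpha$-small terms, a $y_i$-derivative of a $C^\theta$ function. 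So $q\Hb'_\alpha$ can be kept in divergence form, and the $C^0$/$C^\theta$ source structure survives.

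What your route buys is that the linear estimate you invoke is the orthogonal one, closer to Proposition \ref{p.lin.ac} as stated. What the paper's route buys is that no projected equation has to be written down. This is convenient with divergence-form sources and with the Dirichlet trace for $i=1$. In the paper's blow-up/Liouville argument, the defect term simply kills the limit $c\Hb'$ allowed by Lemma \ref{l.entire.lin.ac}.

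In either version, the divergence-form, mixed-boundary $C^{1,\theta}$ inner estimate with constant independent of $L$ and $\rho$ is asserted via the Appendix \ref{s.gluing.a.apriori} blow-up argument rather than proved in detail.
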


\begin{proof}
We use the following inner gluing estimate: if
\[
 (\Delta_{\alpha,0}+\partial_{zz}-W''(\Hb_\alpha))\sfu
 =\snf+\partial_{y_i}\snG^i,
\]
then
\begin{equation}
 \|\sfu\|_{C^{1,\theta}}
 \leq C_\ast\left(
 \|\snf\|_{C^0}+\|\snG\|_{C^\theta}+\|\psi\|_{C^{1,\theta}}
 +\left\|\int_{\mathbb R}\sfu\Hb_\alpha'\,dz\right\|_{C^0}
 \right),
\label{e.s8.inner.apriori.full}
\end{equation}
where $\psi$ is the Dirichlet trace when $i=1$; for $i\geq2$, replace its
norm by $\|\partial_{y_1}\sfu\|_{C^\theta}$.  The constant $C_\ast$ in
\eqref{e.s8.inner.apriori.full} is independent of $L$, $\rho$, and the size
of the normal support.  This is the standard inner gluing estimate obtained
from Proposition \ref{t.sch}  and Lemmas \ref{l.entire.lin.ac}-\ref{l.entire.lin.ac'}.

Fix $p$ as in the statement and put $y=\Pi_\alpha(p)$.  Let
$\tilde\xi$ be the normal cutoff used in Section \ref{s.cutoff.project} , with
\[
 \tilde\xi=1\quad\hbox{on }[-L,L],\qquad
 \tilde\xi=0\quad\hbox{outside }[-2L,2L],
\]
\[
 |\tilde\xi'|\leq C_\ast L^{-1},\qquad
 |\tilde\xi''|\leq C_\ast L^{-2}.
\]
Choose a base cutoff $\chi$ equal to one on $\Pi_\alpha(B_1^+(p))$, with
$\operatorname{spt}\chi\subset B_{C_\ast\rho}^\alpha(y)$ and
\[
 |D\chi|\leq C_\ast\rho^{-1},\qquad
 |D^2\chi|\leq C_\ast\rho^{-2},\qquad
 \partial_{y_1}\chi=0\quad\hbox{on }\{y_1=0\}.
\]
For sufficiently large fixed $L$, the condition $|t_\alpha(p)|\leq L/2$
implies that $\chi\tilde\xi=1$ on
$Y_{\Sigma_\alpha}^{-1}(B_1^+(p))$.  Set
\[
 \sfu=\chi\tilde\xi\phi_{y_i}.
\]
Then
\begin{align*}
(\Delta_{\alpha,0}+\partial_{zz}-W''(\Hb_\alpha))\sfu &= \partial_{y_i}(\chi\tilde\xi\widetilde E_\alpha^\circ)
 +\chi\tilde\xi E^{i,\circ}
 -(\partial_{y_i}\chi)\tilde\xi\widetilde E_\alpha^\circ\\
&\qquad+2\tilde\xi
  \langle\nabla_{\alpha,0}\chi,\nabla_{\alpha,0}\phi_{y_i}\rangle
 +(\Delta_{\alpha,0}\chi)\tilde\xi\phi_{y_i}
 +\chi\bigl(2\tilde\xi'\partial_z\phi_{y_i}
             +\tilde\xi''\phi_{y_i}\bigr).
\end{align*}
By \eqref{e.s8.source.bound}, the first line on the right and the required
boundary data contribute at most
\[
 C\Bigl(
 \varepsilon^2
 +A(r+140|\log\varepsilon|^2;x)^{3/2}
 +\varepsilon^{1/6}A(r+140|\log\varepsilon|^2;x)
 \Bigr).
\]
The remaining terms are bounded by
\[
 C_\ast(\rho^{-1}+L^{-1})
 \|\phi_{y_i}\|_{C^{1,\theta}
 (B_{C_\ast\rho}^\alpha(y)\times[-2L,2L])}.
\]

It remains to estimate the integral in
\eqref{e.s8.inner.apriori.full}.  Since $\chi$ is independent of $z$,
\begin{align*}
 \int_{\mathbb R}\sfu\Hb_\alpha'\,dz
 ={}&\chi\int_{\mathbb R}\phi_{y_i}\Hb_\alpha'\,dz
 +\chi\int_{\mathbb R}(\tilde\xi-1)
                         \phi_{y_i}\Hb_\alpha'\,dz.
\end{align*}
By \eqref{e.s8.moment.bound}, the first term is bounded by
\[
 C_\ast\bigl(\varepsilon^2
 +A(r+120|\log\varepsilon|^2;x)^2\bigr),
\]
and hence by the expression in
\eqref{e.s8.inner.contraction}, since $A^2\leq A^{3/2}$. By exponential decay of $\Hb_\alpha'$, the second
term is bounded by $C_\ast e^{-L}$ times the supremum of
$|\phi_{y_i}|$ on $B_{C_\ast\rho}^\alpha(y)\times \mathbb{R}$.  At each point, choose a closest sheet and use
\eqref{e.s8.coordinate.change}-\eqref{e.s8.coordinate.error}.  This gives
\begin{align*}
&\left\|\int_{\mathbb R}(\tilde\xi-1)
             \phi_{y_i}\Hb_\alpha'\,dz\right\|_{C^0}\\
&\quad\leq C_\ast e^{-L}
 \max_{\substack{\beta\\1\leq j\leq n}}
 \sup_{\substack{
 q\in B_{r+12|\log\varepsilon|}^+(x)\cap\mathcal M_\beta^0\\
 |t_\beta(q)|\leq K|\log\varepsilon|+C_\ast\rho}}
 \|\phi_{y_j}\|_{C^{1,\theta}(B_1^+(q))}\\
&\qquad+C\Bigl(
 \varepsilon^2+A(r+140|\log\varepsilon|^2;x)^{3/2}
 +\varepsilon^{1/6}A(r+140|\log\varepsilon|^2;x)
 \Bigr).
\end{align*}

On $y_1=0$,
\[
 \partial_{y_1}\sfu
 =\chi\tilde\xi\,\partial_{y_1}\phi_{y_i}
 \quad(i\geq2),
 \qquad
 \sfu=\chi\tilde\xi\phi_{y_1}
 \quad(i=1),
\]
because $\partial_{y_1}\chi=0$.  Proposition \ref{p.s8.boundary.data}
therefore supplies exactly the Neumann and Dirichlet norms required in
\eqref{e.s8.inner.apriori.full}.
Applying \eqref{e.s8.inner.apriori.full} with the preceding estimates proves
\eqref{e.s8.inner.contraction}.
\end{proof}

\begin{proof}[Proof of Proposition \ref{p.s8.horizontal.derivatives}]
Choose $\rho$ sufficiently large and then $L$ sufficiently large so that the coefficients in Lemmas \ref{l.s8.outer} and \ref{l.s8.inner} are at most $1/16$.  Since $\Omega_\alpha^2\cup\Omega_\alpha^3=\mathcal M_\alpha^0$, those lemmas give
\begin{align}\label{e.s8.iteration}
&\max_{\substack{\alpha,\,1\leq i\leq n}}
\sup_{\substack{p\in B_r^+(x)\cap\mathcal M_\alpha^0\\|t_\alpha(p)|\leq K|\log\varepsilon|}}
\|\phi_{y_i}\|_{C^{1,\theta}(B_1^+(p))}\notag\\
&\quad\leq\frac1{16}
\max_{\substack{\beta,\,1\leq j\leq n}}
\sup_{\substack{p\in B^+_{r+12|\log\varepsilon|}(x)\cap\mathcal M_\beta^0\\
|t_\beta(p)|\leq K|\log\varepsilon|+C\rho}}
\|\phi_{y_j}\|_{C^{1,\theta}(B_1^+(p))}\notag\\
&\qquad+C\Bigl(\varepsilon^2+A(r+140|\log\varepsilon|^2;x)^{3/2}
+\varepsilon^{1/6}A(r+140|\log\varepsilon|^2;x)\Bigr).
\end{align}

The same iteration argument as in the proof of Lemma \ref{C1thetainner -estimates-lemma0} concludes the proof. 
\end{proof}

\begin{proof}[Proof of Corollary \ref{c.s8.shift.derivatives}]
Fix $y\in\Sigma_\alpha\cap B_r^+(x)$ and apply \eqref{horizontalphicontrolh} on $B_1^\alpha(y)$.  Proposition \ref{p.s8.horizontal.derivatives} controls $\|\nabla_{\alpha,0}\phi(\cdot,0)\|_{C^{1,\theta}(B_1^\alpha(y))}$.

In the product
\[
\max_{\beta:\,|t_\beta(y,0)|\leq9|\log\varepsilon|}
\|\nabla_{\beta,0}h_\beta\|_{C^{1,\theta}}
\bigl(\max_{B_1^\alpha(y)}e^{-D_\alpha}\bigr)
\]
from \eqref{horizontalphicontrolh} the norm indexed by $\beta$ is taken on $B_2^\beta(\Pi_\beta(y,0))$. Use the rough bound of Proposition \ref{aprior C2alpha Hhphi} to conclude that this product is at most
\[
C\bigl(\varepsilon^2+A(r+200|\log\varepsilon|^2;x)\bigr)
A(r+200|\log\varepsilon|^2;x)
\leq C\bigl(\varepsilon^2+A(r+200|\log\varepsilon|^2;x)^{3/2}\bigr).
\]
The explicit term $\varepsilon^{1/6}e^{-D_\alpha}$ in \eqref{horizontalphicontrolh} gives the last term in \eqref{e.s8.shift}.  Taking the supremum proves the corollary.
\end{proof}

\begin{proof}[Proof of Corollary \ref{c.s8.improved.toda}]
Subtract $\Delta_{\alpha,0}h_\alpha$ from \eqref{H+lhimproved}.  The coefficient $2/\sigma_0$ is the coefficient established by the projected interaction calculation in the proof of Proposition \ref{toda system rough}; it is unchanged by the shift estimate.  Uniform coordinate bounds give
\[
\|\Delta_{\alpha,0}h_\alpha\|_{C^\theta}
\leq C\|\nabla_{\alpha,0}h_\alpha\|_{C^{1,\theta}}.
\]
Use \eqref{e.s8.shift} for this term and \eqref{improvetoda1} for $E_\alpha^0$.  Since
$\varepsilon^{1/3}A\leq\varepsilon^{1/6}A$, and the radius in \eqref{improvetoda1} is smaller than the radius in \eqref{e.s8.toda.error}, we obtain \eqref{e.s8.toda}-\eqref{e.s8.toda.error}.
\end{proof}


\part{Stability of Toda system implies curvature estimates} 
\section{Stability inequality for Toda system}\label{s.stability.toda}

We transfer the stability inequality for $u$ to the Toda system governing the sheet distances.  As in Section 8 of \cite{wang-weiadv}, the test function associated with $\Sigma_\alpha$ is the product of a function on $\Sigma_\alpha$ and the full translation derivative
\[
g_\alpha'=\Hb_\alpha'+\Gb_\alpha'
\]
introduced in Section \ref{s.approx.toda}.  Since $\Gb_\alpha$ depends also on its first argument, horizontal derivatives of $\Gb_\alpha'$ occur in the equation for $g_\alpha'$.  We therefore combine the horizontal and normal parts of the diagonal energy by the divergence theorem before estimating either part.  The interaction integrals and the ordered cross terms are then treated as in Sections 8.3--8.4 of \cite{wang-weiadv}, with the additional terms produced by the twisted Fermi coordinates and by the boundary.

Fix
\[
x\in B^+_{5R/6}(0),\qquad 0<r\leq R/80,\qquad 1<\tau<2.
\]
All functions on a sheet are smooth up to $\partial_0\Sigma_\alpha$, compactly supported in $\Sigma_\alpha\cap B_{r+8|\log \varepsilon|}^+(x)$, and extended by zero outside that support.  They need not vanish on $\partial_0\Sigma_\alpha$. Unless another measure is displayed, an integral over $\Sigma_\alpha$ is taken with respect to $dA_{\alpha,0}$.

\begin{proposition}[Stability inequality]\label{p.stability.toda}
Let $\eta_\alpha\in C_c^\infty(\Sigma_\alpha\cap B_{r}^+(x))$.
For all sufficiently small $\varepsilon$, there is a nonnegative quadratic form $Q$ such that
	\begin{align}\label{e.s9.stability}
		\sum_\alpha\int_{\Sigma_\alpha}|\nabla_{\alpha,0}\eta_\alpha|^2+Q(\eta)
		\geq{}&\sum_\alpha\frac{2A_{(-1)^{\alpha-1}}^2}{\tau\sigma_0}
		\int_{\Sigma_\alpha}e^{-|t_{\alpha-1}(y,0)|}\bigl[\eta_\alpha(y)+\eta_{\alpha-1}(\Pi_{\alpha-1}(y,0))\bigr]^2.
	\end{align}
	Moreover,
	\begin{align}\label{e.s9.Q}
		0\leq Q(\eta)\leq{}&C(N,\tau)\Bigl(\varepsilon^{1/4}
		+A(r+240|\log\varepsilon|^2;x)^{1/2}\Bigr)
		\sum_\alpha\int_{\Sigma_\alpha}|\nabla_{\alpha,0}\eta_\alpha|^2\notag\\
		&+C(N,\tau)\Bigl(\varepsilon^2
		+A(r+240|\log\varepsilon|^2;x)^{7/6}
		+\varepsilon^{1/7}A(r+240|\log\varepsilon|^2;x)\Bigr)
		\sum_\alpha\int_{\Sigma_\alpha}\eta_\alpha^2,
	\end{align}
	where $N$ is the number of non-vanishing $\eta_{\alpha}$.  
\end{proposition}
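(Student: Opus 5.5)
We plug into the stability inequality \eqref{e.stability.definition} (rescaled, $\varepsilon=1$) the test function
\[
\varphi=\sum_\alpha \eta_\alpha(\Pi_\alpha(y,z))\,\tilde\xi_\alpha\, g_\alpha'(y,z),\qquad g_\alpha'=\Hb_\alpha'+\Gb_\alpha',
\]
with $\tilde\xi_\alpha$ a cutoff in $t_\alpha$ on the strip $|t_\alpha|\le 8|\log\varepsilon|$, so that $\varphi\in C^1_c$ of the half-ball. It need not vanish on $\partial_0$, which is allowed since stability holds in $M\cup S$. Expanding $\int|\nabla\varphi|^2+W''(u)\varphi^2\ge 0$ gives diagonal terms (one sheet) and cross terms $\alpha\neq\beta$. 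We treat them as in \cite[Section 8]{wang-weiadv}, writing every integral in twisted Fermi coordinates with respect to the relevant sheet. The metric is converted to $g_{\alpha,0}+dz^2$ using Proposition~\ref{geometric approximation}, Lemma~\ref{l.Z.Z-tld} and \eqref{e.difference.z.tz}. The errors are $O(\varepsilon(1+|z|^2))$ times Dirichlet or $L^2$ energies, and the exponential decay of $g_\alpha'$ absorbs the polynomial factors.

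\textbf{Diagonal term.} We do not estimate the horizontal and normal parts separately. We integrate by parts in $z$ and over $\Sigma_\alpha$ to obtain
\[
\int|\nabla(\eta g')|^2+W''(u)\eta^2 g'^2=\int|\nabla_{\alpha,0}\eta|^2 g'^2-\int\eta^2 g'\bigl(\Delta_{\alpha,0}+\partial_{zz}-W''(u)\bigr)g'+\text{boundary term}.
\]
We then use \eqref{eq:main} together with the equation $\overline{\Hb}'''=W''\overline{\Hb}'+\bar\xi_1'$. The $O(\varepsilon)$ part $W'''(\Hb_\alpha)\Hb'_\alpha\Gb_\alpha$ pairs with $W''(u)-W''(\Hb_\alpha)=W'''(\Hb_\alpha)(\Gb_*+\phi+\dots)$ and cancels at leading order. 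The remainder is controlled by the $\phi$ bounds of Proposition~\ref{aprior C2alpha Hhphi} and Proposition~\ref{p.s8.horizontal.derivatives}, and by the shift bounds of Corollary~\ref{c.s8.shift.derivatives}. What survives is $\int\eta^2 g'^2\bigl(W''(\Hb_*)-W''(\Hb_\alpha)\bigr)$, the interaction with the neighbours, together with errors of size $(\varepsilon^2+A^{7/6}+\varepsilon^{1/7}A)\int\eta^2$.

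\textbf{Boundary term.} The boundary term on $\partial_0$ is $\int_{\partial_0}\eta^2 g'\partial_\nu g'$. By Lemma~\ref{l.dz(nu)} and the Neumann condition in \eqref{G-equation}, $\partial_\nu g'$ is $O(\varepsilon)$ but not smaller, because the $z$-derivative of the condition $\partial_{y_1}\Gb+w\,z\,\Hb'=0$ leaves $w\Hb'$. We therefore get a Robin term of size $\varepsilon\int_{\partial_0\Sigma_\alpha}\eta^2$. By the trace inequality on $\Sigma_\alpha$ this is at most
\[
\varepsilon^{1/4}\int|\nabla\eta|^2+\varepsilon^{7/4}\int\eta^2,
\]
which fits into the form of $Q$ in \eqref{e.s9.Q}. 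This is the source of the $\varepsilon^{1/4}$ factor in front of the Dirichlet energy.

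\textbf{Cross terms and conclusion.} For the cross terms with $\beta=\alpha\pm1$ we use \eqref{e.s8.coordinate.change} and $\partial_zt_\beta=1+O(\varepsilon^{1/3})$ from Proposition~\ref{p.conseq.almost.parallel}. Nonadjacent sheets contribute $O(A^{2})$ via Lemma~\ref{l.sum.of.exp}. The adjacent interaction integrals $\int\Hb_\alpha'\Hb_{\alpha\pm1}'$ and $\int(W''(\Hb_*)-1)\Hb'^2$ are computed by \cite[Lemma B.1]{wang-weiadv} and \eqref{Hsolution asymptotics}. They combine into $-\frac{2A^2}{\sigma_0}e^{-|t_{\alpha-1}|}[\eta_\alpha+\eta_{\alpha-1}\circ\Pi_{\alpha-1}]^2$, up to a factor $1+o(1)$. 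Dividing by $\int g'^2=\sigma_0+O(\varepsilon)$ and absorbing the $1+o(1)$ losses into $\tau>1$ gives \eqref{e.s9.stability}.

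The main obstacle is the diagonal cancellation in the presence of $\Gb_\alpha$, in particular the mixed terms $\Gb''_{\alpha,i}\partial_{y_j}h_\alpha$ and the horizontal derivatives of $\Gb'$. These are only $O(\varepsilon)$ pointwise, and they must be shown either to cancel through the $\Gb$ equation or to be small after multiplication by the improved bounds on $h$. The Cauchy–Schwarz splittings with the mixed products $\eta\nabla\eta$ are what produce the $A^{1/2}$ coefficient and the $C(N,\tau)$ dependence.
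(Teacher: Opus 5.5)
You set the proof up the same way as the paper (test function $\eta_\alpha g_\alpha'$, Green's identity, equation for $g_\alpha'$, boundary flux, cross terms). However, two steps fail as written.

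\textbf{The diagonal interaction coefficient is wrong.} After the $\Gb_\alpha$ cancellation, you claim that what remains of $\int\eta_\alpha^2(g_\alpha')^2(W''(u)-W''(\Hb_\alpha))$ is $\int\eta_\alpha^2(g_\alpha')^2(W''(\Hb_*)-W''(\Hb_\alpha))$, plus errors controlled by Propositions~\ref{aprior C2alpha Hhphi} and~\ref{p.s8.horizontal.derivatives}. The term linear in $\phi$, namely $\int W'''(\Hb_\alpha)(\Hb_\alpha')^2\phi\,dz$, is not such an error. Proposition~\ref{aprior C2alpha Hhphi} only gives $O(\varepsilon^2+A)$ for it, and Proposition~\ref{p.s8.horizontal.derivatives} controls $\phi_{y_i}$, not $\phi$. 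The bound $O(A)$ is its true size. Up to cut-off errors, $W'''(\Hb_\alpha)(\Hb_\alpha')^2=-(-\partial_{zz}+W''(\Hb_\alpha))\Hb_\alpha''$. Integrating by parts and inserting \eqref{e.s8.undifferentiated} therefore shows that this term equals $\int\Hb_\alpha''\mathcal I_1\,dz$, up to admissible errors. That integral is of exact order $e^{-|t_{\alpha\pm1}(y,0)|}$ and does not vanish in general. For instance, for $W(t)=(1-t^2)^2/8$, $\int(W''(\Hb_*)-W''(\Hb_\alpha))(\Hb_\alpha')^2\,dz$ alone gives $-A_1^2e^{-|t_\beta(y,0)|}$ per neighbour, only half of the required $-2A_1^2e^{-|t_\beta(y,0)|}$. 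With that coefficient the diagonal and cross terms do not combine into $[\eta_\alpha+\eta_{\alpha-1}\circ\Pi_{\alpha-1}]^2$.

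The missing idea is the one of \cite[Section 8]{wang-weiadv} used in Lemma~\ref{l.s9.diagonal.interaction}. Remove $\Gb_*$ from the argument of $W''$, test the equation for $\phi$ in the form \eqref{e.s9.phi.without.Gb} against $\Hb_\alpha''$, and integrate by parts once in $z$ to get \eqref{e.s9.differentiated.interaction}. This trades the diagonal integral for the adjacent integrals $\int(W''(\Hb_*)-W''(\Hb_\beta))\Hb_\alpha'\Hb_\beta'\,dz$ of Lemma~\ref{l.s9.adjacent}. After the trade, $\phi$ appears only through $\phi_z$ times the small factor $W''(u-\Gb_*)-W''(\Hb_\alpha)$, and through horizontal derivatives controlled by \eqref{e.s9.horizontal.estimates}. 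By contrast, the terms $\Gb''_{\alpha,i}\partial_{y_j}h_\alpha$ that you name as the main obstacle are harmless. They carry a factor $\nabla_{\alpha,0}h_\alpha$, so \eqref{e.s9.horizontal.estimates} makes them $O(\varepsilon(\varepsilon^2+A^{3/2}+\varepsilon^{1/6}A))$.

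\textbf{The Robin term does not fit into $Q$.} Your bound $\varepsilon^{1/4}\int|\nabla\eta|^2+\varepsilon^{7/4}\int\eta^2$ is not of the form \eqref{e.s9.Q}, because $\varepsilon^{7/4}$ is not bounded by $\varepsilon^2+A^{7/6}+\varepsilon^{1/7}A$ when $A$ is small. No splitting can place this term inside $Q$. Bounding $\varepsilon\|\eta\|_{L^2}\|\nabla\eta\|_{L^2}$ by $a\|\nabla\eta\|^2+b\|\eta\|^2$ requires $ab\gtrsim\varepsilon^2$. The two coefficients allowed in \eqref{e.s9.Q}, however, have product $O(\varepsilon^{9/4})$ when $A\leq\varepsilon^2$. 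The paper instead uses a \emph{fixed} $\delta$ in Lemma~\ref{l.s9.boundary}, with mass coefficient $C(\delta)\varepsilon^2$. It then absorbs $\delta\int|\nabla_{\alpha,0}\eta_\alpha|^2$ into the slack $(\tau-1)\sigma_0$, which is why $\tau$ appears in \eqref{e.s9.stability}. This matters downstream: in Lemma~\ref{l.s10.stability.transfer} the mass coefficient of $Q$ is divided by $\kappa\geq M\varepsilon^2|\log\varepsilon|^2$. Near that threshold $\varepsilon^{7/4}/\kappa$ is unbounded, whereas $\varepsilon^2/\kappa\leq 1/M$.
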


\subsection{The diagonal terms}

Set
\begin{equation}\label{e.s9.test}
	\varphi=\sum_\alpha\varphi_\alpha,\qquad
	\varphi_\alpha=\eta_\alpha g_\alpha',\qquad
	g_\alpha'=\Hb_\alpha'+\Gb_\alpha',
\end{equation}
so that each $\varphi_{\alpha}$ is supported in $B^+_{r+20|\log\varepsilon|}(x)$.
Here $h_\alpha$ and $\eta_\alpha$ are extended constantly along the $\alpha$-fibres.  Every quantity carrying an index $\beta$ is first calculated in the twisted Fermi coordinates of $\Sigma_\beta$ and then evaluated at the ambient point through $(\Pi_\beta,t_\beta)$. 

For $k\geq0$ we retain the conventions of Section \ref{s.approx.toda}:
\begin{equation}\label{e.s9.profile.conventions}
	\Hb^{(k)}_{\alpha}=
	\Hb^{(k)}_{\alpha}(y, z)=\bar{\Hb}^{(k)}((-1)^{\alpha}(z-h_{\alpha}(y)))\quad \Gb^{(k)}_{\alpha}=\Gb^{(k)}_{\alpha}(y, z)=\bar{\Gb}^{(k)}((-1)^{\alpha}(z-h_{\alpha}(y)))
\end{equation}
 The cut-offs introduced in Section \ref{s.approx.toda} imply that all differentiated one-dimensional factors vanish at the endpoints of every $z$-integration below.  Thus integrations by parts in $z$ have no endpoint terms.

For every fixed $k$, every fixed polynomial factor, and every $0<\sigma<1$,
\begin{equation}\label{e.s9.profile.decay}
	|\Hb_\alpha^{(k)}|\leq C_ke^{-|z-h_\alpha|}\ \ (k\geq 1),\quad
	|\Gb_\alpha^{(k)}|+|\nabla_{\alpha,0}\Gb_\alpha^{(k)}|
	\leq C_{k,\sigma}\varepsilon e^{-\sigma|z-h_\alpha|}\ \ (k\geq 0).
\end{equation}
Taking $\sigma=31/32$ and decreasing $\varepsilon_0$ gives, on the support of the differentiated factors,
\[
|\Gb_\alpha^{(k)}|+|\nabla_{\alpha,0}\Gb_\alpha^{(k)}|
\leq C_k\varepsilon^{3/4}e^{-|z-h_\alpha|}.
\]
for $k\geq 0$.
Indeed, $|z-h_\alpha|\leq8|\log\varepsilon|+1$ there and
$\varepsilon e^{|z-h_\alpha|/32}\leq C\varepsilon^{3/4}$.  Consequently, for every fixed $m\geq0$,
\begin{equation}\label{e.s9.profile.moments}
	\int_{\mathbb R}(g_\alpha')^2\,dz=\sigma_0+O(\varepsilon),\qquad
	\int_{\mathbb R}(1+|z|)^m\bigl(|g_\alpha'|^2+|\partial_zg_\alpha'|^2+|\partial_{zz}g_\alpha'|^2\bigr)\,dz\leq C_m.
\end{equation}

Proposition \ref{aprior C2alpha Hhphi}, applied on the enlarged domains used below, gives
\begin{align}\label{e.s9.rough.estimates}
	&\|\phi\|_{C^{2,\theta}}+\max_\alpha\|h_\alpha\|_{C^{2,\theta}}
	+\max_\alpha\|H_\alpha(\cdot,0)+\Delta_{\alpha,0}h_\alpha\|_{C^\theta}\leq C\bigl(\varepsilon^2+A(r+240|\log\varepsilon|^2;x)\bigr),
\end{align}
where each norm is taken on the corresponding portion of the enlarged ball.  The improved horizontal estimates proved in the preceding section yield
\begin{align}\label{e.s9.horizontal.estimates}
	\max_{\alpha,i}\|\phi_{y_i}\|_{C^{1,\theta}}
	+&\max_\alpha\|\nabla_{\alpha,0}h_\alpha\|_{C^{1,\theta}}\notag\\
	&\leq C\bigl(\varepsilon^2
	+A(r+240|\log\varepsilon|^2;x)^{3/2}
	+\varepsilon^{1/6}A(r+240|\log\varepsilon|^2;x)\bigr).
\end{align}
Throughout the remainder of this section, the two scalar functions produced by differentiating the equations for $\Hb_\alpha$ and $\Gb_\alpha$ are denoted by
\[
 \mathbb{E}_{\alpha}(\Hb,h),\qquad \mathbb{E}_{\alpha}(\Gb,h).
\]

\begin{lemma}\label{l.s9.gprime}
We have the following
	\begin{equation}\label{e.s9.Hbprime}
		\Hb_\alpha'''=-\Delta_{\alpha,0}\Hb_\alpha'
		+W''(\Hb_\alpha)\Hb_\alpha'+\mathbb E_\alpha(\Hb,h),
	\end{equation}
	\begin{equation}\label{e.s9.Gbprime}
		\Gb_\alpha'''=-\Delta_{\alpha,0}\Gb_\alpha'
		+W''(\Hb_\alpha)\Gb_\alpha'
		+W'''(\Hb_\alpha)\Hb_\alpha'\Gb_\alpha+\mathbb E_\alpha(\Gb,h),
	\end{equation}
	where
	\begin{align}\label{e.s9.profile.errors}
		\mathbb E_\alpha(\Hb,h)={}&|\nabla_{\alpha,0}h_\alpha|^2\Hb_\alpha'''
		-(-1)^\alpha(\Delta_{\alpha,0}h_\alpha)\Hb_\alpha''
		+\bar\xi_1'\bigl((-1)^\alpha(z-h_\alpha)\bigr),\notag\\
		\mathbb E_\alpha(\Gb,h)={}&|\nabla_{\alpha,0}h_\alpha|^2\Gb_\alpha'''
		-(-1)^\alpha(\Delta_{\alpha,0}h_\alpha)\Gb_\alpha''\notag\\
		&-2(-1)^\alpha\sum_{i,j}g_\alpha^{ij}(y,0)\Gb_{\alpha,i}''\,\partial_{y_j}h_\alpha
		+\xi_{2,\alpha}'.
	\end{align}
Hence
	\begin{equation}\label{e.s9.gprime.equation}
		(\Delta_{\alpha,0}+\partial_{zz}-W''(\Hb_\alpha))g_\alpha'
		=W'''(\Hb_\alpha)\Hb_\alpha'\Gb_\alpha
		+\mathbb E_\alpha(\Hb,h)+\mathbb E_\alpha(\Gb,h).
	\end{equation}
If $\Delta$ is the ambient Laplacian, then
\begin{align}
 (-\Delta+W''(u))g'_\alpha
 &=\bigl(W''(u)-W''(\Hb_\alpha)\bigr)g'_\alpha
 -W'''(\Hb_\alpha)\Hb'_\alpha \Gb_\alpha
 \notag\\
 &\quad-\mathbb E_\alpha(\Hb,h)-\mathbb E_\alpha(\Gb,h)
 -(\Delta_{\alpha,z}-\Delta_{\alpha,0})g'_\alpha
 +H_\alpha\partial_zg'_\alpha-\mathscr E_\alpha g'_\alpha.
 \label{e.s9.gprime.ambient}
\end{align}

\end{lemma}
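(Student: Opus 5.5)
The plan is to obtain \eqref{e.s9.Hbprime} and \eqref{e.s9.Gbprime} by differentiating, in the profile variable, the identities already recorded for $\overline{\Hb}$ and $\bgab$, and then to evaluate everything at the shifted argument $(-1)^\alpha(z-h_\alpha(y))$.

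For the heteroclinic factor, \eqref{barHb' equation} gives $\overline{\Hb}'''=W''(\overline{\Hb})\overline{\Hb}'+\bar\xi_1'$ at the shifted argument. We then write $\Hb_\alpha'''$ for this profile derivative and compute $\Delta_{\alpha,0}\Hb_\alpha'$ by the chain rule, using that $\Hb_\alpha'$ depends on $y$ only through $h_\alpha$. This is exactly the identity in \eqref{e.chain.rule}:
\[
\Delta_{\alpha,0}\Hb_\alpha'=-(-1)^\alpha(\Delta_{\alpha,0}h_\alpha)\Hb_\alpha''+|\nabla_{\alpha,0}h_\alpha|^2\Hb_\alpha'''.
\]
Substituting, we get $\Hb_\alpha'''=-\Delta_{\alpha,0}\Hb_\alpha'+W''(\Hb_\alpha)\Hb_\alpha'+\mathbb E_\alpha(\Hb,h)$ with the stated error. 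Here the sign conventions must be tracked: $\partial_z$ of the profile carries a factor $(-1)^\alpha$, but a double $z$-derivative does not.

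For the correction, we differentiate \eqref{G-equation} in its second argument. Since $\zeta_\varepsilon$ is already absorbed into $\bar\xi_{2,\alpha}$, this gives
\[
\Delta_{\alpha,0}\partial_z\bgab+\partial_z^3\bgab-W''(\bH)\partial_z\bgab-W'''(\bH)\bH'\bgab=\partial_z\bar\xi_{2,\alpha}.
\]
We then evaluate at the shifted argument. The only new point is that $\Delta_{\alpha,0}$ of the composite $\Gb_\alpha'(y,z)=\partial_z\bgab(y,(-1)^\alpha(z-h_\alpha(y)))$ differs from the first-argument Laplacian evaluated at the shifted point. This difference is exactly the computation of Lemma \ref{lem:maing} \eqref{eq:main}, namely the terms $|\nabla h_\alpha|^2\Gb_\alpha'''$, $-(-1)^\alpha\Delta h_\alpha\Gb_\alpha''$ and the cross term $-2(-1)^\alpha g^{ij}\Gb''_{\alpha,i}\partial_jh_\alpha$. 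Rearranging yields \eqref{e.s9.Gbprime}. The one step needing care is the replacement $W''(\bH)\to W''(\Hb_\alpha)$: the cut-off profile and the uncut one agree where $\zeta_\varepsilon=1$, and the discrepancy is already part of $\bar\xi_{2,\alpha}$ by \eqref{e.cutoff.residual.correct}. So we simply quote \eqref{eq:main}, which was derived from \eqref{G-equation} with $W''(\Hb_\alpha)$ and $\xi'_{2,\alpha}$.

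Adding the two identities gives \eqref{e.s9.gprime.equation}, since $\partial_{zz}g_\alpha'$ equals $\Hb_\alpha'''+\Gb_\alpha'''$ (the two sign factors cancel).

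For the ambient identity, we apply Lemma \ref{l.Lapl.tw.Fer}:
\[
\Delta g_\alpha'=\Delta_{\alpha,z}g_\alpha'+\partial_{zz}g_\alpha'-H_\alpha\partial_zg_\alpha'+\mathscr E_\alpha g_\alpha'.
\]
We write $\Delta_{\alpha,z}=\Delta_{\alpha,0}+(\Delta_{\alpha,z}-\Delta_{\alpha,0})$, substitute \eqref{e.s9.gprime.equation} for $(\Delta_{\alpha,0}+\partial_{zz})g_\alpha'$, and add $W''(u)g_\alpha'=W''(\Hb_\alpha)g_\alpha'+(W''(u)-W''(\Hb_\alpha))g_\alpha'$. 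Collecting the terms with signs then gives \eqref{e.s9.gprime.ambient}.

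The main obstacle is purely bookkeeping: the sign factors $(-1)^\alpha$ and the distinction between first-argument derivatives $\Gb_{\alpha,i}$ and total $y$-derivatives in the cross term. I would verify these against \eqref{e.s6.shift.derivatives}.
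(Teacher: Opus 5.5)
Your proposal is correct and is essentially the paper's argument: differentiate the profile equations for $\bH$ and $\overline{\Gb}_\alpha$ in the normal variable, apply the chain rule to $(-1)^\alpha(z-h_\alpha(y))$ (the paper does this in geodesic coordinates for $g_{\alpha,0}$ at a point, while you quote \eqref{e.chain.rule} and \eqref{eq:main}), add, and then use Lemma~\ref{l.Lapl.tw.Fer} for the ambient identity, a step the paper's proof leaves implicit. The ``replacement'' $W''(\bH)\to W''(\Hb_\alpha)$ you flag is superfluous: $\Hb_\alpha$ is already built from the cut-off profile $\bH$, so the two coincide identically at the shifted argument and no discrepancy needs to be absorbed into $\bar\xi_{2,\alpha}$.
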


\begin{proof}
Fix an arbitrary base point and choose geodesic coordinates for $g_{\alpha,0}$ at that point.  There the Christoffel symbols vanish, so differentiating the equations defining $\Hb_\alpha$ and $\Gb_\alpha$ and applying the chain rule to $z-h_\alpha(y)$ gives
	\[
	\partial_{y_i}g_\alpha'=\Gb_{\alpha,i}'-(-1)^\alpha g_\alpha''h_{\alpha,i},
	\]
	\[
	\partial_{y_iy_j}g_\alpha'=\Gb_{\alpha,ij}'-(-1)^\alpha
	\bigl(\Gb_{\alpha,i}''h_{\alpha,j}+\Gb_{\alpha,j}''h_{\alpha,i}+g_\alpha''h_{\alpha,ij}\bigr)
	+g_\alpha'''h_{\alpha,i}h_{\alpha,j}.
	\]
	Contracting the second identity with $g_\alpha^{ij}(y,0)$ proves \eqref{e.s9.Hbprime}-\eqref{e.s9.profile.errors}; adding \eqref{e.s9.Hbprime} and \eqref{e.s9.Gbprime} proves \eqref{e.s9.gprime.equation}.  This is the analogue of differentiating \cite[(4.1), p. 15]{wang-weiadv} and applying the resulting identity as in \cite[Section 8.2, p. 34]{wang-weiadv}; unlike in that Euclidean calculation, the first-argument derivatives of $\Gb_\alpha$ and the term $\mathscr E_\alpha g_\alpha'$ must also be retained.
\end{proof}

Recall that by the definition of twisted Fermi coordinates and the geometric approximation identities of Section \ref{section twisted Fermi}, in twisted Fermi coordinate with respect to $\Sigma_\alpha$, we have
	\begin{equation}\label{e.s9.gradient.split}
		|\nabla f|^2=|\nabla_{\alpha,z}f|^2
		+\bigl(\partial_zf-\sum_iT^i\partial_{y_i}f\bigr)^2,
		\qquad d\operatorname{vol}_g=\lambda_\alpha(y,z)\,dy\,dz,
	\end{equation}
	and
	\begin{equation}\label{e.s9.laplacian.split}
	\begin{split}
\Delta f={}&\Delta_{\alpha,z}f+\partial_{zz}f-H_\alpha(y,z)\partial_zf
 +\mathscr{E}_{\alpha}f,\notag\\
 \mathscr{E}_{\alpha}f={}&\lambda_\alpha^{-1}\partial_{y_i}
 \bigl(\lambda_\alpha T_\alpha^iT_\alpha^j\partial_{y_j}f\bigr)
 -T_\alpha^i\partial_{y_i z}f
 -\lambda_\alpha^{-1}\partial_z
 \bigl(\lambda_\alpha T_\alpha^j\partial_{y_j}f\bigr).
\end{split}
	\end{equation}
	Moreover,
	\begin{equation}\label{e.s9.density.derivative}
		\partial_z\lambda_\alpha
		=\bigl(-H_\alpha(y,z)+\operatorname{div}_{\alpha,z}T\bigr)\lambda_\alpha,
	\end{equation}
	and, for a fixed integer $m$,
	\begin{equation}\label{e.s9.Hlambda.derivative}
		\bigl|\partial_z\bigl(H_\alpha(y,z)\lambda_\alpha(y,z)\bigr)\bigr|
		\leq C\varepsilon^2(1+|z|^m)\lambda_\alpha(y,0).
	\end{equation}
	Finally,
	\begin{align}\label{e.s9.trace}
		\int_{\partial_0\Sigma_\alpha}\eta_\alpha^2\,d\sigma_{\alpha,0}
		\leq{}&C\left(\int_{\Sigma_\alpha}\eta_\alpha^2\,dA_{\alpha,0}\right)^{1/2}
		\left(\int_{\Sigma_\alpha}|\nabla_{\alpha,0}\eta_\alpha|^2\,dA_{\alpha,0}\right)^{1/2}+C\varepsilon\int_{\Sigma_\alpha}\eta_\alpha^2\,dA_{\alpha,0}.
	\end{align}
To prove this, integrate $ \operatorname{div}_{\alpha,0}(\eta_\alpha^2\partial_{y_1})$ over $\Sigma_\alpha$, and use
$|\mathrm{div}_{\alpha,0}\partial_{y_1}|\leq C\varepsilon$.

\begin{lemma}[Flux through the boundary]\label{l.s9.boundary}
	For every $\delta>0$, the sum of the absolute values of all diagonal and cross boundary integrals
	\[
	\int_{\partial_0B^+_{r+8|\log\varepsilon|}(x)}
	\eta_\alpha\eta_\beta g_\alpha'\partial_\nu g_\beta'\,d\sigma_g
	\]
	that arise from \eqref{e.s9.test} is bounded by
	\begin{align}\label{e.s9.boundary.flux}
		&\delta\sum_\alpha\int_{\Sigma_\alpha}|\nabla_{\alpha,0}\eta_\alpha|^2\,dA_{\alpha,0}\notag\\
		&\quad+C(N,\delta)\bigl(\varepsilon^2
		+A(r+240|\log\varepsilon|^2;x)^{7/6}
		+\varepsilon^{1/7}A(r+240|\log\varepsilon|^2;x)\bigr)
		\sum_\alpha\int_{\Sigma_\alpha}\eta_\alpha^2\,dA_{\alpha,0}.
	\end{align}
\end{lemma}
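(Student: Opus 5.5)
The plan is to write each boundary integrand in the $\beta$-coordinates and show that $\partial_\nu g_\beta'$ is small. After that, the trace inequality \eqref{e.s9.trace} converts the boundary integrals into interior quantities. Write $g_\beta'=\Hb_\beta'+\Gb_\beta'$ in twisted Fermi coordinates of $\Sigma_\beta$. Exactly as in the computation \eqref{Nalpha-G-H}--\eqref{parnuG+H'(+0)} of Proposition \ref{improved neumann phi}, but with one more $z$-derivative, we expand
\[
\partial_\nu g_\beta'=(-1)^\beta g_\beta''\Bigl(J_\beta-\sum_j(\partial_{y_j}h_\beta)\partial_\nu y_j\Bigr)+\sum_j\Gb'_{\beta,j}\,\partial_\nu y_j .
\]
We then substitute the boundary equation for $\Gb_{\beta,1}'$, namely the $z$-derivative of \eqref{G-equation}, in the form $\partial_{y_1}\bar\Gb_\beta'=-w_\beta\,\partial_z(z\bar\Hb')+\delta_\beta'$. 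This produces the same cancellation of the order-$\varepsilon$ term $w_\beta\,\partial_z(z\Hb_\beta')$, with $J_\beta=w_\beta z+\widetilde J_\beta$. What remains is bounded pointwise by
\[
Ce^{-|t_\beta|/2}\Bigl(\varepsilon^2+\varepsilon\|h_\beta\|_{C^1}+|\partial_{y_1}h_\beta|+\|\nabla h_\beta\|\,\varepsilon(1+|z|^3)\Bigr).
\]
The bound uses Lemma \ref{l.dz(nu)}, the estimate $\|\partial_\nu y_j-\delta_{1j}\|\lesssim\varepsilon(1+|z|^3)$, and the decay \eqref{e.s9.profile.decay}.

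Next we insert the sharper shift bounds. The Neumann estimate \eqref{neumann h improved relate to phi}, combined with \eqref{e.s9.rough.estimates}, gives $|\partial_{y_1}h_\beta|\lesssim\varepsilon^2+A^{3/2}$. The horizontal estimate \eqref{e.s9.horizontal.estimates} gives
\[
\varepsilon\|\nabla h_\beta\|\lesssim\varepsilon^3+\varepsilon A^{3/2}+\varepsilon^{7/6}A .
\]
Polynomial factors in $z$ cost at most $|\log\varepsilon|^m$, and these are absorbed into a slight loss of exponent. The result is $|\partial_\nu g_\beta'|\lesssim e^{-|t_\beta|/2}\,\mathsf{e}$, where $\mathsf{e}=\varepsilon^2+A^{7/6}+\varepsilon^{1/7}A$ with $A=A(r+240|\log\varepsilon|^2;x)$. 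In fact this gives a little more room than needed.

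For the integrals themselves, we parametrize $\partial_0B^+$ by $\partial_0\Sigma_\alpha\times\mathbb R$ through the $\alpha$-coordinates. In the diagonal case $\beta=\alpha$, the $z$-integral of $|g_\alpha'|e^{-|z|/2}$ is bounded, so the diagonal term is at most $C\mathsf{e}\int_{\partial_0\Sigma_\alpha}\eta_\alpha^2$. In the cross case $\beta\neq\alpha$, we have $\eta_\beta$ evaluated at $\Pi_\beta$. Here we use $|\eta_\alpha\eta_\beta|\le\tfrac12(\eta_\alpha^2+\eta_\beta^2)$ and Lemma \ref{l.interaction.int} to integrate $e^{-|z|-|t_\beta|/2}$ over $z$. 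The bi-Lipschitz property of $\Pi_\beta$ on boundaries, from Lemma \ref{l.t.Pi}, moves each piece to $\partial_0\Sigma_\beta$. Only sheets within $17|\log\varepsilon|$ contribute non-negligibly; the others are $O(\varepsilon^{8})$. Since at most $N$ of the $\eta$'s are nonzero, summing gives the bound $C(N)\,\mathsf{e}\sum_\alpha\int_{\partial_0\Sigma_\alpha}\eta_\alpha^2$.

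Finally, we apply \eqref{e.s9.trace} together with Young's inequality, $C\mathsf{e}\,\|\eta\|\,\|\nabla\eta\|\le\delta\|\nabla\eta\|^2+C\delta^{-1}\mathsf{e}^2\|\eta\|^2$. Because $\mathsf{e}^2\le\mathsf{e}$, this yields \eqref{e.s9.boundary.flux}. The main obstacle is the first step. The order-$\varepsilon$ part of $\partial_\nu g_\beta'$ does not vanish trivially, and in the cross terms it is evaluated in another sheet's coordinates. I would first verify the cancellation sheet-by-sheet in its own coordinates, as above, before converting, so that the conversion only introduces the harmless $\varepsilon^{1/6}$ errors from Proposition \ref{p.conseq.almost.parallel}.
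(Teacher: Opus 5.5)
Your first step claims a cancellation that does not occur. In \eqref{e.s9.normal.chain}, the term $(-1)^\beta J_\beta g_\beta''$ contributes at order $\varepsilon$ only $(-1)^\beta w_\beta z\,\Hb_\beta''$. The differentiated boundary condition, however, gives $\Gb'_{\beta,1}=-w_\beta\bigl(\Hb_\beta'+(-1)^\beta(z-h_\beta)\Hb_\beta''\bigr)+\dots$. The piece $-w_\beta\Hb_\beta'$ arises when $\partial_z$ falls on the factor $z$ in $z\bH'$, and nothing cancels it. What you actually get is
\[
\partial_\nu g_\beta'=-w_\beta\Hb_\beta'+O\bigl(e^{-|t_\beta|/2}\mathsf e\bigr),
\]
which is \eqref{e.s9.normal.expansion}. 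The other terms there, including $(-1)^\beta w_\beta h_\beta\Hb_\beta''$, are indeed controlled by your shift estimates.

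The first term is of exact order $\varepsilon$ wherever $w_\beta=-\sff_{\partial_0}(\widetilde Z,\widetilde Z)\neq0$, cf.\ \eqref{e.w.boundary.geometry}. The reason is that $g_\beta'$ is a $z$-derivative. The cancellation you import from Proposition \ref{improved neumann phi} holds for $\partial_\nu(\Hb_\beta+\Gb_\beta)$. Passing to $\partial_\nu\partial_z$ costs the commutator $[\partial_\nu,\partial_z]$, which on functions of $z$ alone equals $-(\partial_zJ_\beta)\partial_z$, with $\partial_zJ_\beta=w_\beta$ at $z=0$.

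Geometrically, the diagonal boundary integral is to leading order $-\sigma_0\int_{\partial_0\Sigma_\alpha}w_\alpha\eta_\alpha^2\,d\sigma_{\alpha,0}$. This is the boundary second fundamental form term in the second variation of a free boundary hypersurface, so no refinement of the correction $\Gb$ can remove it. In the relevant regime (e.g.\ $A\lesssim\varepsilon^2|\log\varepsilon|^2$) one has $\varepsilon\gg\mathsf e$. Hence your bound $|\partial_\nu g_\beta'|\lesssim e^{-|t_\beta|/2}\mathsf e$ is false, and checking it sheet by sheet will fail.

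The lemma still holds, and your last step already contains the mechanism that saves it; this is the paper's route. Carry the coefficient $\varepsilon+\mathsf e$ instead of $\mathsf e$ through the $z$-integration and the cross terms. Your treatment of the cross terms is fine: $2|\eta_\alpha\eta_\beta|\le\eta_\alpha^2+\eta_\beta^2$, Lemma \ref{l.interaction.int}, and the bi-Lipschitz projections. Then apply \eqref{e.s9.trace} with Young's inequality:
\begin{align*}
(\varepsilon+\mathsf e)\int_{\partial_0\Sigma_\alpha}\eta_\alpha^2\,d\sigma_{\alpha,0}
&\le\delta\int_{\Sigma_\alpha}|\nabla_{\alpha,0}\eta_\alpha|^2\,dA_{\alpha,0}\\
&\quad+C(\delta)\bigl((\varepsilon+\mathsf e)^2+\varepsilon(\varepsilon+\mathsf e)\bigr)\int_{\Sigma_\alpha}\eta_\alpha^2\,dA_{\alpha,0}.
\end{align*}
Since $\mathsf e\le3$, we have $(\varepsilon+\mathsf e)^2+\varepsilon(\varepsilon+\mathsf e)\le C(\varepsilon^2+A^{7/6}+\varepsilon^{1/7}A)$. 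So the order-$\varepsilon$ Robin term is not cancelled but absorbed, at the cost of a $\delta$-fraction of the Dirichlet energy. This is the real reason the $\delta$-term appears in \eqref{e.s9.boundary.flux}.
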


\begin{proof}
The following identity holds on $\del_0 \Sia$.
\begin{equation}\label{e.s9.normal.chain}
		\partial_\nu g_\alpha'
		=\sum_i(\partial_\nu y_i)
		\bigl(\Gb_{\alpha,i}'-(-1)^\alpha g_\alpha''\partial_{y_i}h_\alpha\bigr)
		+(-1)^\alpha J_\alpha g_\alpha''.
	\end{equation}
	Differentiating the boundary equation for $\Gb_\alpha$ gives
	\[
	\Gb_{\alpha,1}'
	=-w_\alpha\bigl(\Hb_\alpha'+(-1)^\alpha(z-h_\alpha)\Hb_\alpha''\bigr)
	+\delta_\alpha'\bigl(y,(-1)^\alpha(z-h_\alpha)\bigr).
	\]
	Using $J_\alpha=w_\alpha z+(J_\alpha-w_\alpha z)$ in \eqref{e.s9.normal.chain}, we obtain
	\begin{align}\label{e.s9.normal.expansion}
		\partial_\nu g_\alpha'={}&-w_\alpha\Hb_\alpha'
		+(-1)^\alpha\bigl(w_\alpha h_\alpha+J_\alpha-w_\alpha z\bigr)\Hb_\alpha''
		+(-1)^\alpha J_\alpha\Gb_\alpha''\notag\\
		&-(-1)^\alpha g_\alpha''\partial_\nu h_\alpha
		+\sum_i(\partial_\nu y_i-\delta_{1i})\Gb_{\alpha,i}'
		+\delta_\alpha'\bigl(y,(-1)^\alpha(z-h_\alpha)\bigr).
	\end{align}
	The first term is $O(\varepsilon)$.  Moreover,
	\[
	\partial_\nu h_\alpha=\partial_{y_1}h_\alpha
	+\sum_i(\partial_\nu y_i-\delta_{1i})\partial_{y_i}h_\alpha,
	\qquad |\partial_\nu y_i-\delta_{1i}|\leq C\varepsilon(1+|z|^3).
	\]
	Equations \eqref{e.s9.profile.decay}, \eqref{e.s9.horizontal.estimates}, and \eqref{e.s9.normal.expansion} show that the $z$-integral of each boundary product is bounded by
	\[
	C\bigl(\varepsilon+\varepsilon^2
	+A(r+240|\log\varepsilon|^2;x)^{3/2}
	+\varepsilon^{1/6}A(r+240|\log\varepsilon|^2;x)\bigr)
	\]
	times the corresponding squared boundary test functions.  For a cross product, use
	$2|\eta_\alpha\eta_\beta|\leq\eta_\alpha^2+\eta_\beta^2$.  The boundary area measure is uniformly comparable to $d\sigma_{\alpha,0}\,dz$.
	
	
	Apply \eqref{e.s9.trace} and Young's inequality, choosing the coefficient of $|\nabla_{\alpha, 0}\eta_\alpha|^2$ to be $\delta$.  Since $0\leq A(\cdot;\cdot)\leq1$,
	\[
	(\varepsilon+\varepsilon^2+A^{3/2}+\varepsilon^{1/6}A)^2
	+\varepsilon(\varepsilon+\varepsilon^2+A^{3/2}+\varepsilon^{1/6}A)
	\leq C(\varepsilon^2+A^{7/6}+\varepsilon^{1/7}A).
	\]
	Here every $A$ in the numerical inequality is evaluated at $r+240|\log\varepsilon|^2$ and $x$.  This proves \eqref{e.s9.boundary.flux}.
\end{proof}

\begin{lemma}[Combined diagonal contribution]\label{l.s9.diagonal.energy}
	For every $\delta>0$, the difference between
	\[
	\int_{B^+_{r+20|\log\varepsilon|}(x)}
	\bigl(|\nabla\varphi_\alpha|^2+W''(u)\varphi_\alpha^2\bigr)\,d\operatorname{vol}_g
	\]
	and
	\begin{align}\label{e.s9.diagonal.reference}
		&\sigma_0\int_{\Sigma_\alpha}|\nabla_{\alpha,0}\eta_\alpha|^2\,dA_{\alpha,0}\notag\\
		&\quad+\int_{\Sigma_\alpha}\eta_\alpha^2
		\int_{\mathbb R}\Bigl[
		\bigl(W''(u)-W''(\Hb_\alpha)\bigr)(g_\alpha')^2
		-W'''(\Hb_\alpha)\Hb_\alpha'\Gb_\alpha g_\alpha'
		\Bigr]\lambda_\alpha(y,z)\,dz\,dy
	\end{align}
	is bounded in absolute value by
	\begin{align}\label{e.s9.diagonal.energy.error}
		&(C\varepsilon^{1/4}+\delta)
		\int_{\Sigma_\alpha}|\nabla_{\alpha,0}\eta_\alpha|^2\,dA_{\alpha,0}\notag\\
		&\quad+C(\delta)\bigl(\varepsilon^2
		+A(r+240|\log\varepsilon|^2;x)^{7/6}
		+\varepsilon^{1/7}A(r+240|\log\varepsilon|^2;x)\bigr)
		\int_{\Sigma_\alpha}\eta_\alpha^2\,dA_{\alpha,0}.
	\end{align}
\end{lemma}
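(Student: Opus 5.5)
The plan is to integrate by parts in the ambient metric and then transfer everything to the product structure of the $\alpha$-coordinates. Since $\varphi_\alpha=\eta_\alpha g_\alpha'$ with $\eta_\alpha$ constant along $Z_\alpha$-fibres, the divergence theorem on $B^+_{r+20|\log\varepsilon|}(x)$ gives
\[
\int|\nabla\varphi_\alpha|^2+W''(u)\varphi_\alpha^2
=\int|\nabla\eta_\alpha|^2(g_\alpha')^2
+\int\eta_\alpha^2\,g_\alpha'(-\Delta+W''(u))g_\alpha'
-\int_{\partial_0}\eta_\alpha^2 g_\alpha'\partial_\nu g_\alpha'\,d\sigma_g .
\]
This uses $\operatorname{div}(\eta^2 g'\nabla g')$ together with the identity $\int 2\eta g'\langle\nabla\eta,\nabla g'\rangle=\tfrac12\int\langle\nabla\eta^2,\nabla (g')^2\rangle$, which is rewritten via the divergence theorem. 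There are no outer boundary terms, because of the support assumptions and the cutoffs. The boundary term is handled directly by Lemma \ref{l.s9.boundary} with parameter $\delta$.

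For the bulk term with $(-\Delta+W''(u))g_\alpha'$, substitute \eqref{e.s9.gprime.ambient}. The first two terms give exactly the second line of \eqref{e.s9.diagonal.reference}. The remaining terms are estimated one by one.
\begin{enumerate}
\item The contributions of $\mathbb E_\alpha(\Hb,h)$ and $\mathbb E_\alpha(\Gb,h)$ carry factors $|\nabla h_\alpha|^2$, $\Delta_{\alpha,0}h_\alpha$, $\nabla h_\alpha$ or $\bar\xi'$. By \eqref{e.s9.horizontal.estimates} these factors are $O(\varepsilon^2+A^{3/2}+\varepsilon^{1/6}A)$, and they are multiplied by profile integrals bounded via \eqref{e.s9.profile.moments}.
\item The geometric terms $(\Delta_{\alpha,z}-\Delta_{\alpha,0})g_\alpha'$ and $\mathscr E_\alpha g_\alpha'$ only see horizontal derivatives of $g_\alpha'$. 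These derivatives are $O(\varepsilon+|\nabla h|)$ by the chain rule and \eqref{e.s9.profile.decay}. Combined with the coefficient bounds $\varepsilon(1+|z|^2)$, they give a contribution of order $\varepsilon^2\int\eta_\alpha^2$ after absorbing polynomial weights.
\item The term $H_\alpha\partial_z g_\alpha'$, paired with $g_\alpha'\lambda_\alpha$, equals $\tfrac12 H_\alpha\lambda_\alpha\partial_z(g_\alpha')^2$. Integrating by parts in $z$ and using \eqref{e.s9.Hlambda.derivative} gives a contribution of order $\varepsilon^2\int\eta_\alpha^2$. This is why the odd-in-$z$ mean curvature term $O(\varepsilon)$ does not appear at first order.
\end{enumerate}

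For the gradient term, use \eqref{e.s9.gradient.split}. Because $\partial_z\eta_\alpha=0$, we get $|\nabla\eta_\alpha|^2=|\nabla_{\alpha,z}\eta_\alpha|^2+(T^i\partial_{y_i}\eta_\alpha)^2$. Compare $g^{ij}_\alpha(y,z)$ and $\lambda_\alpha(y,z)$ with their values at $z=0$ using \eqref{e.g_ij.approx} and \eqref{e.vol.form}. The errors are $\varepsilon|z|(1+|z|)$, and $|T|^2\lesssim\varepsilon^2z^4$. After weighting by $(g_\alpha')^2$, which decays exponentially, these give an $O(\varepsilon)$ relative error. Then apply \eqref{e.s9.profile.moments}, which gives $\int(g_\alpha')^2dz=\sigma_0+O(\varepsilon)$, to obtain $\sigma_0\int|\nabla_{\alpha,0}\eta_\alpha|^2$ up to $C\varepsilon^{1/4}\int|\nabla_{\alpha,0}\eta_\alpha|^2$. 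Finally, collect all $\int\eta_\alpha^2$ errors, using the numerical inequality from Lemma \ref{l.s9.boundary} to dominate them by $\varepsilon^2+A^{7/6}+\varepsilon^{1/7}A$.

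The main obstacle is the horizontal part of $\mathbb E_\alpha(\Gb,h)$ and the mixed $T\ast\partial_{yz}$ terms in $\mathscr E_\alpha g_\alpha'$. These involve $\Gb'_{\alpha,i}$, which is only $O(\varepsilon)$ and not small relative to $h$. If they do not pair into exact $\varepsilon^2$ terms, the plan is to integrate them by parts horizontally onto $\eta_\alpha$. This produces a new boundary term, which is absorbed by the trace inequality \eqref{e.s9.trace} with a $\delta$-loss. It also produces a term $\varepsilon\int|\eta\nabla\eta|$, which is split by Young's inequality into $\varepsilon^{1/4}|\nabla\eta|^2+\varepsilon^{7/4}\eta^2$.
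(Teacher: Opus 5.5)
Your proposal follows the paper's proof essentially step for step: Green's identity with the flux term handled by Lemma \ref{l.s9.boundary}, substitution of \eqref{e.s9.gprime.ambient}, the $z$-integration by parts of the $H_\alpha g_\alpha'\partial_z g_\alpha'$ term via \eqref{e.s9.Hlambda.derivative}, and comparison of $g_\alpha^{ij}(y,z)$ and $\lambda_\alpha(y,z)$ with their values at $z=0$ for the gradient term. The ``main obstacle'' you anticipate does not arise, because every occurrence of $\Gb'_{\alpha,i}$ or $\Gb''_{\alpha,i}$ is multiplied by $\nabla_{\alpha,0}h_\alpha$, $T$ or $\nabla T$, so your own item 2 already gives $O(\varepsilon^2+\varepsilon|\nabla_{\alpha,0}h_\alpha|)$; note also that your fallback would fail as written, since splitting into $\varepsilon^{1/4}|\nabla\eta_\alpha|^2+\varepsilon^{7/4}\eta_\alpha^2$ leaves a mass error of order $\varepsilon^{7/4}$, which is not bounded by $C\varepsilon^2$ (you would need $\delta|\nabla_{\alpha,0}\eta_\alpha|^2+C(\delta)\varepsilon^2\eta_\alpha^2$ instead).
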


\begin{proof}
	Green's identity, with $\nu$ the inward normal to the boundary, gives
	\begin{align}\label{e.s9.green.diagonal}
		\int_{B^+_{r+8|\log \varepsilon|}(x)}\bigl(|\nabla(\eta_\alpha g_\alpha')|^2
		+W''(u)\eta_\alpha^2(g_\alpha')^2\bigr)\,d\operatorname{vol}_g
		={}&\int_{B^+_{r+8|\log \varepsilon|}(x)}(g_\alpha')^2|\nabla\eta_\alpha|^2\,d\operatorname{vol}_g\notag\\
		&+\int_{B^+_{r+8|\log \varepsilon|}(x)}\eta_\alpha^2g_\alpha'(-\Delta+W''(u))g_\alpha'\,d\operatorname{vol}_g\notag\\
		&-\int_{\partial_0B^+_{r+8|\log \varepsilon|}(x)}\eta_\alpha^2g_\alpha'\partial_\nu g_\alpha'\,d\sigma_g.
	\end{align}
	
	Since $\eta_\alpha$ is constant along the coordinate fibres, \eqref{e.s9.gradient.split}, \eqref{e.s9.profile.moments}, and the coefficient estimates give
	\[
	\int_{B^+_{r+8|\log \varepsilon|}(x)}(g_\alpha')^2|\nabla\eta_\alpha|^2\,d\operatorname{vol}_g
	=\sigma_0\int_{\Sigma_\alpha}|\nabla_{\alpha,0}\eta_\alpha|^2\,dA_{\alpha,0}
	+O(\varepsilon)\int_{\Sigma_\alpha}|\nabla_{\alpha,0}\eta_\alpha|^2\,dA_{\alpha,0}.
	\]
	The terms containing the tangential component $T$ obey the same bound.
	
	Substitution of \eqref{e.s9.gprime.ambient} in the second line of
\eqref{e.s9.green.diagonal} leaves the two terms in
\eqref{e.s9.diagonal.reference} and the four integrals
\begin{align}
 &-\int_{B^+_{r+8|\log \varepsilon|}(x)}\eta_\alpha^2g'_\alpha
 \bigl(E_\alpha(\Hb,h)+E_\alpha(\Gb,h)\bigr)\,d\operatorname{vol}_g,
 \notag\\
 &-\int_{B^+_{r+8|\log \varepsilon|}(x)}\eta_\alpha^2g'_\alpha
 (\Delta_{\alpha,z}-\Delta_{\alpha,0})g'_\alpha\,d\operatorname{vol}_g,
 \notag\\
 &\int_{B^+_{r+8|\log \varepsilon|}(x)}\eta_\alpha^2H_\alpha g'_\alpha\partial_zg'_\alpha\,d\operatorname{vol}_g,
 \qquad
 -\int_{B^+_{r+8|\log \varepsilon|}(x)}\eta_\alpha^2g'_\alpha\mathscr E_\alpha g'_\alpha\,d\operatorname{vol}_g.
 \label{e.s9.diagonal.remainders}
\end{align}
The first, second and fourth lines are estimated by
\eqref{e.s9.profile.decay}-\eqref{e.s9.horizontal.estimates} and the geometric coefficient bounds.  In the
third line, integrate in $z$ before estimating:
\[
 \int_{\mathbb R}H_\alpha g'_\alpha\partial_zg'_\alpha\lambda_\alpha\,dz
 =-\frac12\int_{\mathbb R}(g'_\alpha)^2
 \partial_z(H_\alpha\lambda_\alpha)\,dz
 =O(\varepsilon^2)\lambda_\alpha(y,0),
\]
where \eqref{e.s9.Hlambda.derivative} is used. Finally apply
Lemma \ref{l.s9.boundary} to the last line of \eqref{e.s9.green.diagonal}.  This proves the
lemma.
\end{proof}

\subsection{Interaction estimates}

\begin{lemma}[Overlap and projection estimates]\label{l.s9.overlap}
	For $\beta\neq\alpha$ and every fixed nonnegative integer $m$,
	\begin{align}\label{e.s9.overlap}
		&\int_{\mathbb R}(1+|z|+|t_\beta(y,z)|)^m
		e^{-|z|-|t_\beta(y,z)|}\,dz\notag\\
		&\hspace{25mm}\leq C_m(1+|t_\beta(y,0)|)^{m+1}e^{-|t_\beta(y,0)|}.
	\end{align}
	The sum of the right-hand side over $\beta\neq\alpha$ is bounded by
	$C_mA(r+240|\log\varepsilon|^2;x)^{11/12}$, while the sum over $|\beta-\alpha|\geq2$ is bounded by
	\[
	C_mA(r+240|\log\varepsilon|^2;x)^{3/2}+C_m\varepsilon^2.
	\]
	For $\beta=\alpha\pm1$ and $|t_\beta(y,0)|\leq100|\log\varepsilon|$,
	\begin{align}\label{e.s9.projection}
		|t_\alpha(\Pi_\beta(y,0),0)+t_\beta(y,0)|&\leq C\varepsilon^{1/3},\notag\\
		(\Pi_\beta|_{\Sigma_\alpha})^*dA_{\beta,0}&=(1+O(\varepsilon^{1/3}))dA_{\alpha,0},\notag\\
		|\Pi_\alpha(\Pi_\beta(y,0),0)-y|&\leq C\varepsilon^{1/3}.
	\end{align}
\end{lemma}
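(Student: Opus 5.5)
The plan is to reduce everything to the almost-parallel structure of the sheets from Proposition \ref{p.conseq.almost.parallel} and Proposition \ref{l.projection.error}, and then do a one-dimensional computation. For \eqref{e.s9.overlap}, we first use \eqref{eq:tb-tb0}: on the strip $|z|,|t_\beta|\leq K|\log\varepsilon|$ one has $t_\beta(y,z)=t_\beta(y,0)+z+o(\varepsilon^{1/3})$. Set $t=t_\beta(y,0)$. The integrand is then bounded by $C_m(1+|z|+|z+t|)^m e^{-|z|-|z+t|}$.

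We split the line into three pieces.
\begin{itemize}
\item On the segment between $0$ and $-t$, the exponent equals $-|t|$ and the polynomial factor is at most $(1+2|t|)^m$. This segment contributes $C(1+|t|)^{m+1}e^{-|t|}$.
\item Outside the segment, the exponent is $-|t|-2\operatorname{dist}(z,[0,-t])$. Integrating the polynomial against $e^{-2s}$ gives $C(1+|t|)^m e^{-|t|}$.
\item Outside the logarithmic strip, where the comparison with $t_\beta(y,0)+z$ is unavailable, we use only $|t_\beta|\geq0$ and $|z|\geq K|\log\varepsilon|$. For $K$ large this contributes $O(\varepsilon^{10})$.
\end{itemize}
To absorb the $\varepsilon^{10}$ into the stated bound, we note that only $\beta$ with $|t|\leq 100|\log\varepsilon|$ matter. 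For those $\beta$, $\varepsilon^{10}\leq (1+|t|)^{m+1}e^{-|t|}$ as long as the strip width is chosen larger than $|t|$. Alternatively, we apply Lemma \ref{l.interaction.int} directly, which is exactly \eqref{e.s9.overlap} on the strip.

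For the sums over $\beta$, we write $(1+|t|)^{m+1}e^{-|t|}\leq C_m e^{-\frac{11}{12}|t|}$ and apply \eqref{exponential sum1} with $\sigma=11/12$. This gives $C e^{-\frac{11}{12}D_\alpha(y)}\leq C A^{11/12}$, with $A$ evaluated at $r+240|\log\varepsilon|^2$ because $y$ lies in the support ball. For $|\beta-\alpha|\geq2$, the ordering of graphical sheets together with the distance comparison \eqref{e.proj.3} gives $|t_\beta(y,0)|\geq |t_{\alpha\pm1}(y,0)|+|t_\beta(\Pi_{\alpha\pm1}(y,0))|-o(1)\geq 2D_{\min}-o(1)$, where both distances are at least the sheet separation. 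Hence $|t_\beta|\geq \frac{7}{4}D$ roughly. If the distance exceeds $2|\log\varepsilon|$ the term is $O(\varepsilon^2)$; otherwise a geometric-series argument (as in Lemma \ref{l.sum.of.exp}) yields $C e^{-\frac{3}{2}\cdot\text{(nearest distance)}}\leq CA^{3/2}$. This step is the main obstacle. It requires that the second-nearest sheet on one side sits at distance at least about $2D_\alpha$, which we deduce via projection to $\Sigma_{\alpha\pm1}$ and \eqref{e.proj.3}–\eqref{e.proj.4}. The $o(1)$ errors only cost constants.

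For \eqref{e.s9.projection}, the first bound is \eqref{e.proj.2} evaluated at $x=(y,0)\in\Sigma_\alpha$. There $t_\alpha(x)=0$ and $\Pi_\alpha(x)=x$, so $|t_\alpha(\Pi_\beta(x))+t_\beta(x)|\leq C\varepsilon^{1/2}|\log\varepsilon|^{3/2}\leq C\varepsilon^{1/3}$. The third bound is \eqref{e.proj.1} with the roles of $\alpha$ and $\beta$ swapped, combined with the comparability of intrinsic and coordinate distances (Remark \ref{r.ball.comparison}). For the area form, we compute the differential of $\Pi_\beta|_{\Sigma_\alpha}$ in $\alpha$-coordinates. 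Since $\Pi_\beta$ is constant along $Z_\beta$, and on $\Sigma_\alpha$ the tangent space is nearly orthogonal to $\nabla t_\beta$ by \eqref{e.proj.5} and \eqref{0Dalphaztbeta}, $D\Pi_\beta$ acts on $T\Sigma_\alpha$ as an almost isometry. The metric deviation is controlled by $1-\langle\nabla t_\alpha,\nabla t_\beta\rangle$, together with the $\varepsilon|t_\beta|^2$ metric variation from \eqref{e.g_ij.approx} and Lemma \ref{l.Z.Z-tld}. Both are $O(\varepsilon^{1/3})$ on $|t_\beta|\leq100|\log\varepsilon|$, so the Jacobian is $1+O(\varepsilon^{1/3})$.
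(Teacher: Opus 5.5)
Your proposal is correct and follows essentially the paper's route. You use the affine comparison $t_\beta(y,z)\approx t_\beta(y,0)+z$ on a logarithmic strip together with Lemma \ref{l.interaction.int} for \eqref{e.s9.overlap}, and exponential separation of the ordered sheets for the two sums. The first and third lines of \eqref{e.s9.projection} come from Proposition \ref{l.projection.error} at a point of $\Sigma_\alpha$, and the area form from differentiating the twisted-flow identity.

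One small fix: the tail outside the strip must be smaller than $e^{-|t|}$, which can be as small as $\varepsilon^{100}$, so take the strip width $K>100$ rather than settling for $O(\varepsilon^{10})$.

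Your remark that the area-form error is controlled by $1-\langle\nabla t_\alpha,\nabla t_\beta\rangle$ is worth keeping. It makes the error quadratic in the tangential part of $\nabla t_\beta$, and this is what actually yields $O(\varepsilon^{1/3})$. The linear bound $\varepsilon^{1/4}|\log\varepsilon|^{3/4}$ quoted in the paper is not itself $O(\varepsilon^{1/3})$.
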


\begin{proof}
	The signed-distance comparison of Proposition \ref{l.projection.error} gives
	\[
	|t_\beta(y,z)-t_\beta(y,0)-z|
	\leq C\varepsilon^{1/2}|z|(1+|z|).
	\]
The two estimates of Lemma \ref{l.interaction.int}, applied with $\sigma=\varsigma=1$, therefore give
\eqref{e.s9.overlap}.  Exponential separation of the ordered sheets and
	$(1+s)^me^{-s/12}\leq C_m$ give the two asserted sums.
	
The first and third lines of \eqref{e.s9.projection} follow from the signed-distance and projection comparison in Proposition \ref{l.projection.error}.  Differentiating
	\[
	Y_{\Sigma_\beta}(\Pi_\beta(y,0),t_\beta(y,0))=y
	\]
	along $\Sigma_\alpha$ gives the area-form comparison.  Indeed, the derivative of the normal flow changes the base metric by $O(\varepsilon(1+|\log\varepsilon|^2))$, whereas the tangential component of $\nabla t_\beta$ along $\Sigma_\alpha$ is bounded by $C\varepsilon^{1/4}|\log\varepsilon|^{3/4}$.  The pulled-back metric therefore changes by $O(\varepsilon^{1/3})$ for small $\varepsilon$.  If the separation exceeds $100|\log\varepsilon|$, the exponential factor is $O(\varepsilon^2)$, and the coarse bounded-Jacobian estimates suffice.
\end{proof}

By the one-dimensional computation used in Section 8.3 of \cite{wang-weiadv}, we have the following lemma.

\begin{lemma}[Adjacent one-dimensional interaction]\label{l.s9.adjacent}
	For $\beta=\alpha\pm1$,
	\begin{align}\label{e.s9.adjacent}
		&\int_{\mathbb R}\bigl(W''(\Hb_*)-W''(\Hb_\beta)\bigr)
		\Hb_\alpha'\Hb_\beta'\,dz\notag\\
		&\quad=-2A_{(-1)^{\min\{\alpha,\beta\}}}^2e^{-|t_\beta(y,0)|}
		+O\Bigl(\varepsilon^2+A(r+240|\log\varepsilon|^2;x)^{7/6}
		+\varepsilon^{1/7}A(r+240|\log\varepsilon|^2;x)\Bigr).
	\end{align}
	The sum of the absolute values of the same integrals over $|\beta-\alpha|\geq2$ is bounded by
	$C(\varepsilon^2+A(r+240|\log\varepsilon|^2;x)^{3/2})$.
\end{lemma}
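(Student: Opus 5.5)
We compute the integral in the twisted Fermi coordinates of $\Sigma_\alpha$, reducing it to the one-dimensional interaction computation of \cite[Section 8.3]{wang-weiadv}. Consider first $\beta=\alpha+1$; the case $\beta=\alpha-1$ is symmetric. Write $t_\beta(y,z)=t_\beta(y,0)+z+e(y,z)$, where $|e|\leq C\varepsilon^{1/2}|z|(1+|z|)$ by Proposition \ref{l.projection.error} (see the proof of Lemma \ref{l.s9.overlap}). The shifts satisfy $|h_\alpha|+|h_\beta|\lesssim\varepsilon^2+A$ by \eqref{e.s9.rough.estimates}.

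First I localize. Outside the strip where $|z|$ and $|t_\beta(y,z)|$ are both at most $8|\log\varepsilon|+1$, one of the profiles $\Hb_\alpha'$, $\Hb_\beta'$ vanishes because of the cutoff, so only this strip matters. On it, I split
\[
W''(\Hb_*)-W''(\Hb_\beta)=\bigl(W''(\Hb_\beta+\Hb_\alpha\mp1)-W''(\Hb_\beta)\bigr)+O\Bigl(\sum_{\gamma\neq\alpha,\beta}e^{-|t_\gamma|}\Bigr).
\]
The sheets $\gamma$ not adjacent to both contribute, after multiplication by $\Hb_\alpha'\Hb_\beta'$ and integration, terms like $e^{-|t_\beta(y,0)|}e^{-|t_\gamma(y,0)|}$ up to polynomial factors. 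By Lemma \ref{l.s9.overlap} and Lemma \ref{l.sum.of.exp}, these are $O(A^{3/2}+\varepsilon^2)$.

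Next I evaluate the two-sheet integral. I replace $\Hb_\alpha,\Hb_\beta$ by the unshifted, uncut heteroclinics $\bH$ evaluated at $(-1)^\alpha z$ and $(-1)^\beta(z+t_\beta(y,0))$. This costs the following errors:
\begin{itemize}
\item the cutoff error, which is $O(\varepsilon^3)$ by \eqref{xiestimates};
\item the shift error, which is $O\bigl((|h_\alpha|+|h_\beta|)(1+|t_\beta|)^2e^{-|t_\beta(y,0)|}\bigr)\lesssim (\varepsilon^2+A)A^{11/12}$;
\item the curvature error from $e$, which is $O(\varepsilon^{1/2}|\log\varepsilon|^3A)$.
\end{itemize}
Each of these is absorbed into $\varepsilon^2+A^{7/6}+\varepsilon^{1/7}A$; the last one uses $\varepsilon^{1/2}|\log\varepsilon|^3\leq\varepsilon^{1/7}$.

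The resulting exact one-dimensional integral $\int(W''(\bH(z)+\bH(-z-t)-1)-W''(\bH(-z-t)))\bH'(z)\bH'(-z-t)\,dz$ has the asymptotics $-2A^2e^{-t}+O(t^2e^{-3t/2})$, as in \cite[Lemma B.1]{wang-weiadv}. This follows from splitting at $z=-t/2$, using \eqref{Hsolution asymptotics}, and using the identity $(\bH')''=W''(\bH)\bH'$. The $O(t^2e^{-3t/2})$ remainder is $\lesssim A^{7/6}$. The sign bookkeeping of $(-1)^\alpha$ selects the constant $A_{(-1)^{\min\{\alpha,\beta\}}}$.

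For $|\beta-\alpha|\geq2$, the integrand is bounded by $Ce^{-|z|-|t_\beta|}\sum_{\gamma\neq\beta}e^{-|t_\gamma|}$. Lemma \ref{l.s9.overlap} and the sum bound over $|\beta-\alpha|\geq2$ then give $C(\varepsilon^2+A^{3/2})$.

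The main obstacle is tracking the shift error in the adjacent case, so that only $\varepsilon^2+A^{7/6}$ is lost rather than $A$. I will use the crude bound $h=O(\varepsilon^2+A)$ together with the extra factor $e^{-|t_\beta|}\lesssim A$.
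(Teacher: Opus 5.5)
Your proposal is correct and follows essentially the paper's route: the paper simply appeals to the one-dimensional interaction computation of Wang--Wei (cf.\ Appendix~\ref{s.1d.sol}, whose $O(e^{-7T/6})$ remainder serves as well as your $O(t^2e^{-3t/2})$), and you additionally spell out how the shift, cutoff, twisted-coordinate and non-adjacent-sheet errors are absorbed into $\varepsilon^2+A^{7/6}+\varepsilon^{1/7}A$. Only cosmetic fixes are needed: since $t_{\alpha+1}(y,0)<0$, your leading term should read $-2A^2e^{-|t|}$ rather than $-2A^2e^{-t}$, and in the paper's notation $\bH$ denotes the \emph{cut-off} profile, so the uncut heteroclinic should be written $\Hb$.
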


\begin{lemma}[Corrected diagonal interaction]\label{l.s9.diagonal.interaction}
We have
	\begin{align}\label{e.s9.diagonal.interaction}
		&\Bigl|\int_{\Sigma_\alpha}\eta_\alpha^2
		\int_{\mathbb R}\bigl[
		\bigl(W''(u)-W''(\Hb_\alpha)\bigr)(g_\alpha')^2
		-W'''(\Hb_\alpha)\Hb_\alpha'\Gb_\alpha g_\alpha'
		\bigr]\lambda_\alpha\,dz\,dy\notag\\
		&\quad+2\int_{\Sigma_\alpha}\eta_\alpha^2
		\bigl[A_{(-1)^{\alpha-1}}^2e^{-|t_{\alpha-1}(y,0)|}
		+A_{(-1)^\alpha}^2e^{-|t_{\alpha+1}(y,0)|}\bigr]dA_{\alpha,0}\Bigr|\notag\\
		&\hspace{15mm}\leq C\bigl(\varepsilon^2
		+A(r+240|\log\varepsilon|^2;x)^{7/6}
		+\varepsilon^{1/7}A(r+240|\log\varepsilon|^2;x)\bigr)
		\int_{\Sigma_\alpha}\eta_\alpha^2\,dA_{\alpha,0}.
	\end{align}
\end{lemma}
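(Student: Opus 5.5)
The plan is to split the integrand into the heteroclinic--heteroclinic part, which produces the two adjacent interaction terms, and the parts involving $\Gb_\alpha$ and $\phi$, which should all be errors. We write $u=\Hb_*+\Gb_*+\phi$ and $g_\alpha'=\Hb_\alpha'+\Gb_\alpha'$, and expand
\[
W''(u)-W''(\Hb_\alpha)=\bigl(W''(\Hb_*)-W''(\Hb_\alpha)\bigr)+W'''(\Hb_*)(\Gb_*+\phi)+O\bigl((\Gb_*+\phi)^2\bigr).
\]
Multiplying by $(g_\alpha')^2=(\Hb_\alpha')^2+2\Hb_\alpha'\Gb_\alpha'+(\Gb_\alpha')^2$ gives the following contributions:
\begin{itemize}
\item $\bigl(W''(\Hb_*)-W''(\Hb_\alpha)\bigr)(\Hb_\alpha')^2$, the principal term;
\item $W'''(\Hb_*)\Gb_\alpha(\Hb_\alpha')^2$, which must combine with $-W'''(\Hb_\alpha)\Hb_\alpha'\Gb_\alpha g_\alpha'$;
\item $W'''(\Hb_*)\phi(\Hb_\alpha')^2$;
\item terms carrying $\Gb_\beta$ for $\beta\neq\alpha$;
\item terms quadratic in $\Gb$, in $\phi$, or in both.
\end{itemize}

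\textbf{Principal term.} On $\mathcal M_\alpha^0$ we use the Taylor expansion from the proof of Lemma \ref{interaction_terms}:
\[
W''(\Hb_*)-W''(\Hb_\alpha)=W'''(\Hb_\alpha)\sum_{\beta\neq\alpha}\bigl(\Hb_\beta-\operatorname{sgn}(\alpha-\beta)(-1)^\beta\bigr)+O\Bigl(\bigl(\sum_{\beta\neq\alpha} e^{-|t_\beta|}\bigr)^2\Bigr).
\]
The $z$-integral of $W'''(\Hb_\alpha)(\Hb_\alpha')^2(\Hb_\beta\mp1)$ is the same one-dimensional computation as in \cite[Section 8.3]{wang-weiadv}. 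Integrating by parts (since $W'''(\Hb)\Hb'=(W''(\Hb))'$) relates it to Lemma \ref{l.s9.adjacent}, so for $\beta=\alpha\pm1$ we get $-2A^2e^{-|t_\beta(y,0)|}$ up to the stated error. Lemma \ref{l.s9.overlap} bounds the non-adjacent sheets by $A^{3/2}+\varepsilon^2$ and the quadratic tails similarly. Replacing $\lambda_\alpha(y,z)$ by $\lambda_\alpha(y,0)$ costs $\varepsilon|z|(1+|z|)$ by \eqref{e.vol.form}, and this is absorbed by the exponential weight, giving $\varepsilon^{1/7}A$ after \eqref{e.s9.overlap}.

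\textbf{Cancellation of the $\Gb_\alpha$ terms.} The $\Gb_\alpha$ contribution is $W'''(\Hb_*)\Gb_\alpha(\Hb_\alpha')^2$ plus the cross term $2\bigl(W''(\Hb_*)-W''(\Hb_\alpha)\bigr)\Hb_\alpha'\Gb_\alpha'$. We replace $W'''(\Hb_*)$ by $W'''(\Hb_\alpha)$ at a cost of $\varepsilon e^{-D_\alpha}$, which is bounded by $\varepsilon^2+A^{3/2}$ via Young's inequality as in \eqref{I2 formula2}. After this replacement, $W'''(\Hb_\alpha)\Gb_\alpha(\Hb_\alpha')^2$ cancels exactly against the $\Hb_\alpha'$ part of the subtracted term $-W'''(\Hb_\alpha)\Hb_\alpha'\Gb_\alpha g_\alpha'$. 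This cancellation is the reason that term appears in \eqref{e.s9.diagonal.reference}. What remains, $-W'''(\Hb_\alpha)\Hb_\alpha'\Gb_\alpha\Gb_\alpha'$, is $O(\varepsilon^2)$ by \eqref{e.s9.profile.decay}. The cross term with $\Gb_\alpha'$ has size $\varepsilon\sum_{\beta\neq\alpha}e^{-\sigma(|z|+|t_\beta|)}$, which is again $\lesssim\varepsilon^2+A^{3/2}$. The $\Gb_\beta$ terms for $\beta\neq\alpha$ are handled the same way, using the decay of $\Gb_\beta$ in $t_\beta$ against the decay of $\Hb_\alpha'$ in $z$.

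\textbf{The $\phi$ term (main obstacle).} Here we only know $\|\phi\|_{C^{2,\theta}}\lesssim\varepsilon^2+A$ from \eqref{e.s9.rough.estimates}, so a linear term $\int W'''(\Hb_\alpha)\phi(\Hb_\alpha')^2\,dz$ is too large. We first write it as $\int\phi\,\partial_z\bigl(W''(\Hb_\alpha)\bigr)\Hb_\alpha'\,dz$. Then we use $\Hb_\alpha'''=W''(\Hb_\alpha)\Hb_\alpha'+\bar\xi_1'$ to express $W'''(\Hb_\alpha)(\Hb_\alpha')^2$ as $\partial_z\bigl(\Hb_\alpha'''\bigr)-W''(\Hb_\alpha)\Hb_\alpha''$ up to cutoff errors. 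Integrating by parts then moves one derivative onto $\phi$, so the integral becomes a combination of pairings of $\phi$ with $\Hb_\alpha''$ and of $\partial_z\phi$ with $\Hb_\alpha''$.

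Next we invoke the equation \eqref{e.s8.undifferentiated} for $\phi$, in which $(\Delta_{\alpha,0}+\partial_{zz}-W''(\Hb_*))\phi$ equals $(-1)^\alpha\Hb_\alpha'(H_\alpha+\Delta_{\alpha,0}h_\alpha)+\mathcal I_1+\widetilde E_\alpha$. We use it together with the orthogonality \eqref{phi perp} and its derivatives \eqref{e.s8.first.moment}--\eqref{e.s8.second.moment}. The goal is to identify $\int W'''(\Hb_\alpha)(\Hb_\alpha')^2\phi\,dz$ with the projection of the right-hand side onto the odd/even kernel element. Concretely, we test the linearized equation against $\partial_z$ of the translation mode, that is, against the even function $\Hb_\alpha''$. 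The contributions of the source terms are then:
\begin{itemize}
\item $\Delta_{\alpha,0}\phi$: horizontal, so bounded by \eqref{e.s9.horizontal.estimates} at the order $A^{3/2}+\varepsilon^{1/6}A$;
\item $\Hb_\alpha'(H_\alpha+\Delta_{\alpha,0}h_\alpha)$: vanishes after integration against $\Hb_\alpha''$ by parity, up to the $h_\alpha$-shift;
\item $\mathcal I_1$: gives an interaction of size $A$ times an odd moment, which should be $O(A^{7/6})$ after using the product structure $e^{-|z|-|t_\beta|}$ and Lemma \ref{l.s9.overlap};
\item $\widetilde E_\alpha$: $\varepsilon^2+A^{3/2}$ by \eqref{e.s8.E.bound}.
\end{itemize}

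This parity/projection step is where I expect the real difficulty. If it fails, the fallback is to combine $A^{11/12}$ from Lemma \ref{l.s9.overlap} with $\|\phi\|$ only in the cross terms, and to isolate the pure-$\alpha$ term exactly as in \cite[Section 8.3]{wang-weiadv}. Finally, the quadratic terms in $\Gb$ and $\phi$ are bounded by $(\varepsilon+\varepsilon^2+A)^2\lesssim\varepsilon^2+A^{7/6}$, and summing all contributions gives \eqref{e.s9.diagonal.interaction}.
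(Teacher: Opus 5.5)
There is a genuine gap at the step you flagged. It cannot be closed by a parity argument, because the term linear in $\phi$ is of exact order $A$ and enters the leading coefficient. Correspondingly, your principal term does \emph{not} equal $-2A^2e^{-|t_\beta(y,0)|}$ on its own.

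Write $\delta_\beta=\Hb_\beta-\operatorname{sgn}(\alpha-\beta)(-1)^\beta$. Near $\Sigma_\alpha$ this is, to leading order, a constant multiple of $e^{-|t_\beta(y,0)|}e^{\pm z}$, so it has no parity in $z$. Integrating your principal term by parts gives
\begin{equation*}
\int W'''(\Hb_\alpha)(\Hb_\alpha')^2\delta_\beta\,dz=-\int\bigl(W''(\Hb_\alpha)-1\bigr)\Hb_\alpha''\delta_\beta\,dz-(-1)^\alpha\int\bigl(W''(\Hb_\alpha)-1\bigr)\Hb_\alpha'\,\partial_z\Hb_\beta\,dz .
\end{equation*}
Only the second integral is of the type in Lemma \ref{l.s9.adjacent} and produces $-2A^2e^{-|t_\beta(y,0)|}$. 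The first integral, which you dropped, is of the same order $A$.

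The same object appears in your $\phi$-term. Your own identity $W'''(\Hb_\alpha)(\Hb_\alpha')^2=\partial_{zz}\Hb_\alpha''-W''(\Hb_\alpha)\Hb_\alpha''$ (up to cut-off terms), together with \eqref{e.s8.undifferentiated}, shows that the $\phi$-term equals $\int\mathcal I_1\Hb_\alpha''\,dz$ up to admissible errors. By \eqref{e.s6.I1.expansion}, its leading part is $\int(W''(\Hb_\alpha)-1)\Hb_\alpha''\delta_\beta\,dz$. This is not an odd moment and it is not $O(A^{7/6})$.

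A concrete check: take $W(t)=(1-t^2)^2/8$, so $\overline{\Hb}(s)=\tanh(s/2)$ and $A_1=2$. Then $\int W'''(\overline{\Hb})(\overline{\Hb}')^2e^{-s}\,ds=-A_1$. So your principal term is, to leading order, $-A_1^2e^{-|t_\beta(y,0)|}$, which is only half the required value, and the $\phi$-term supplies the other half. Both of your intermediate claims are false (principal term $=-2A^2e^{-T}$ plus error, and $\mathcal I_1$-contribution negligible); their errors happen to cancel.

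The decomposition can be repaired by keeping this term rather than trying to kill it. The leading part of the $\phi$-contribution cancels exactly the first integral in the display above. What remains is the adjacent integral $\int(W''(\Hb_\alpha)-1)\Hb_\alpha'\Hb_\beta'\,dz\approx-2A^2e^{-|t_\beta(y,0)|}$ for $\beta=\alpha\pm1$.

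The paper reaches the same point without ever isolating $\phi$. It keeps $\phi$ inside $W''(u-\Gb_*)=W''(\Hb_*+\phi)$, tests \eqref{e.s9.phi.without.Gb} against $\Hb_\alpha''$, and integrates by parts once. This expresses $\int(W''(u-\Gb_*)-W''(\Hb_\alpha))(\Hb_\alpha')^2\,dz$ directly through the cross integrals of Lemma \ref{l.s9.adjacent}; only afterwards are $\Gb_*$ and $g_\alpha'$ restored.

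The rest of your proposal is in line with the paper. This covers the own-sheet cancellation against $-W'''(\Hb_\alpha)\Hb_\alpha'\Gb_\alpha g_\alpha'$, the foreign $\Gb_\beta$ terms, the quadratic terms, and the replacement of $\lambda_\alpha(y,z)$ by $\lambda_\alpha(y,0)$.
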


\begin{proof}
	A direct Taylor estimate of the term linear in $\phi$ gives only an $O(A)$ bound.  To recover the required error, we first remove $\Gb_*$ from the nonlinear argument and use the equation for $\phi$.
	
	The identity
	\[
	\Delta(u-\Gb_*)-W'(u-\Gb_*)
	=W'(u)-W'(u-\Gb_*)-\sum_\beta W''(\Hb_\beta)\Gb_\beta
	-\sum_\beta\bigl(\Delta\Gb_\beta-W''(\Hb_\beta)\Gb_\beta\bigr)
	\]
	combined with the equation for $\phi=u-\Hb_*-\Gb_*$ gives (recall the notation introduced in \eqref{xiRi})
		\begin{align}\label{e.s9.phi.without.Gb}
		\phi_{zz}-W'(u-\Gb_*)+\sum_\beta W'(\Hb_\beta)
		={}&\Delta(u-\Gb_*)-W'(u-\Gb_*)\notag\\
		&-\Delta_{\alpha,z}\phi+H_\alpha(y,z)\phi_z-\mathscr E_\alpha\phi\notag\\
		&+\sum_\beta\Bigl[(-1)^\beta\Hb_\beta'\mathcal R_{\beta,1}
		-\Hb_\beta''\mathcal R_{\beta,2}-\mathscr E_\beta\Hb_\beta\Bigr]\notag\\
		&-\sum_\beta\bar\xi_1\bigl((-1)^\beta(t_\beta-h_\beta\circ\Pi_\beta)\bigr).
	\end{align}
	Multiply \eqref{e.s9.phi.without.Gb} by $\Hb_\alpha''$ and integrate in $z$.  The horizontal derivatives of $\phi$ are controlled by \eqref{e.s9.horizontal.estimates}.  For the term involving the $\alpha$-sheet, use
	$\int_{\mathbb R}\Hb_\alpha'\Hb_\alpha''\,dz=0$ and subtract
	$H_\alpha(y,0)+\Delta_{\alpha,0}h_\alpha(y)$ from $\mathcal R_{\alpha,1}$.  The difference is bounded by
	$C\varepsilon^2(1+|z|^m)+C\varepsilon(1+|z|^m)\|\nabla_{\alpha,0}h_\alpha\|_{C^1}$.  The terms with $\beta\neq\alpha$ are summed using Lemma \ref{l.s9.overlap}.  The cut-off terms, the operator terms, and the term involving $\Gb_*$ obey the same bound.  Therefore,
	\begin{align}\label{e.s9.Hbsecond.test}
		&\Bigl|\int_{\mathbb R}
		\bigl[\phi_{zz}-W'(u-\Gb_*)+\sum_\beta W'(\Hb_\beta)\bigr]\Hb_\alpha''\,dz\Bigr|\notag\\
		&\quad\leq C\bigl(\varepsilon^2
		+A(r+240|\log\varepsilon|^2;x)^{3/2}
		+\varepsilon^{1/6}A(r+240|\log\varepsilon|^2;x)\bigr).
	\end{align}
	
	Integrating the left-hand side of \eqref{e.s9.Hbsecond.test} by parts once gives
	\begin{align}\label{e.s9.differentiated.interaction}
		&\int_{\mathbb R}
		\bigl[\phi_{zz}-W'(u-\Gb_*)+\sum_\beta W'(\Hb_\beta)\bigr]\Hb_\alpha''\,dz\notag\\
		={}&\int_{\mathbb R}\bigl(W''(u-\Gb_*)-W''(\Hb_\alpha)\bigr)(\Hb_\alpha')^2\,dz\notag\\
		&+(-1)^\alpha\int_{\mathbb R}
		\bigl(W''(u-\Gb_*)-W''(\Hb_\alpha)\bigr)\Hb_\alpha'\phi_z\,dz\notag\\
		&+\sum_{\beta\neq\alpha}(-1)^{\alpha+\beta}
		\int_{\mathbb R}\bigl(W''(u-\Gb_*)-W''(\Hb_\beta)\bigr)
		\Hb_\alpha'\Hb_\beta'\,\partial_z(t_\beta-h_\beta\circ\Pi_\beta)\,dz\notag\\
		&-(-1)^\alpha\int_{\mathbb R}
		\bar\xi_1'\bigl((-1)^\alpha(z-h_\alpha)\bigr)\phi_z\,dz.
	\end{align}
	Indeed,
	\[
	\partial_z\Hb_\alpha''=(-1)^\alpha
	\Bigl(W''(\Hb_\alpha)\Hb_\alpha'
	+\bar\xi_1'\bigl((-1)^\alpha(z-h_\alpha)\bigr)\Bigr),
	\]
	\[
	\partial_z(u-\Gb_*)=\phi_z+
	\sum_\beta(-1)^\beta\Hb_\beta'\partial_z(t_\beta-h_\beta\circ\Pi_\beta).
	\]	
The second and fourth lines on the right of \eqref{e.s9.differentiated.interaction} are $O(\varepsilon^2+A^{3/2})$.  In the third line, replacing $W''(u-\Gb_*)$ by $W''(\Hb_*)$ has the same cost.  Moreover, by Proposition \ref{p.conseq.almost.parallel}
	\[
	\partial_z(t_\beta-h_\beta\circ\Pi_\beta)-1
	=O(\varepsilon^{1/3})+O(\varepsilon^{1/5})\|\nabla h_\beta\|_{C^0}.
	\]
	Lemma \ref{l.s9.overlap} bounds the cost of removing this factor by
	$C(\varepsilon^2+A^{7/6}+\varepsilon^{1/7}A)$.  Since $(-1)^{\alpha+\beta}=-1$ for adjacent sheets, Lemma \ref{l.s9.adjacent} yields
	\begin{align}\label{e.s9.Hb.diagonal}
		&\int_{\mathbb R}\bigl(W''(u-\Gb_*)-W''(\Hb_\alpha)\bigr)(\Hb_\alpha')^2\,dz\notag\\
		&\quad=-2\bigl[A_{(-1)^{\alpha-1}}^2e^{-|t_{\alpha-1}(y,0)|}
		+A_{(-1)^\alpha}^2e^{-|t_{\alpha+1}(y,0)|}\bigr]\notag\\
		&\qquad+O\bigl(\varepsilon^2+A(r+240|\log\varepsilon|^2;x)^{7/6}
		+\varepsilon^{1/7}A(r+240|\log\varepsilon|^2;x)\bigr).
	\end{align}
	
	It remains to restore $\Gb_*$ and $g_\alpha'$ and to replace $\lambda_\alpha(y,z)$ by $\lambda_\alpha(y,0)$.  Taylor expansion gives
	\[
	W''(u)=W''(u-\Gb_*)+W'''(u-\Gb_*)\Gb_*+O(\varepsilon^2),
	\qquad
	(g_\alpha')^2=(\Hb_\alpha')^2+2\Hb_\alpha'\Gb_\alpha'+(\Gb_\alpha')^2.
	\]
	After subtracting \eqref{e.s9.Hb.diagonal}, the terms linear in the own-sheet correction are
	\[
	2\bigl(W''(u-\Gb_*)-W''(\Hb_\alpha)\bigr)\Hb_\alpha'\Gb_\alpha',
	\qquad
	\bigl(W'''(u-\Gb_*)-W'''(\Hb_\alpha)\bigr)\Gb_\alpha(\Hb_\alpha')^2.
	\]
	The subtraction of
	$W'''(\Hb_\alpha)\Hb_\alpha'\Gb_\alpha g_\alpha'$ in \eqref{e.s9.diagonal.reference} is precisely what produces this cancellation.  The part involving $\phi$ is
	$O(\varepsilon(\varepsilon^2+A))$.  Notice that
	\[
	\int_{\mathbb R}(1+|z|^m)e^{-(1+\sigma)|z|}\min\{1,e^{z-t}\}\,dz
	\leq C_{m,\sigma}e^{-t}.
	\]
	Taking $\sigma=7/8$ and using
	$\varepsilon A^{7/8}\leq(\varepsilon^2+A^{7/4})/2\leq(\varepsilon^2+A^{7/6})/2$
	bounds all linear foreign-sheet terms.  Terms containing two corrections are $O(\varepsilon^2)$.  Finally,
	$|\lambda_\alpha(y,z)-\lambda_\alpha(y,0)|\leq C\varepsilon(1+|z|^m)\lambda_\alpha(y,0)$, and the same estimate controls the density error.  Multiplying by $\eta_\alpha^2$ and integrating in $y$ proves \eqref{e.s9.diagonal.interaction}.
\end{proof}

\subsection{Cross terms}

\begin{lemma}[Cross terms]\label{l.s9.cross}
	For the ordered sum over $\alpha\neq\beta$,
	\begin{align}\label{e.s9.cross}
		&\Bigl|\sum_{\alpha\neq\beta}
		\int_{B^+_{r+20|\log\varepsilon|}(x)}
		\bigl(\langle\nabla\varphi_\alpha,\nabla\varphi_\beta\rangle
		+W''(u)\varphi_\alpha\varphi_\beta\bigr)\,d\operatorname{vol}_g\notag\\
		&\quad+2\sum_\alpha\sum_{\beta=\alpha\pm1}
		A_{(-1)^{\min\{\alpha,\beta\}}}^2
		\int_{\Sigma_\alpha}e^{-|t_\beta(y,0)|}
		\eta_\alpha(y)\eta_\beta(\Pi_\beta(y,0))\,dA_{\alpha,0}\Bigr|\notag\\
		&\quad\leq C(N)\Bigl(\varepsilon^{1/4}
		+A(r+240|\log\varepsilon|^2;x)^{1/2}\Bigr)
		\sum_\alpha\int_{\Sigma_\alpha}|\nabla_{\alpha,0}\eta_\alpha|^2\,dA_{\alpha,0}\notag\\
		&\qquad+C(N)\Bigl(\varepsilon^2
		+A(r+240|\log\varepsilon|^2;x)^{7/6}
		+\varepsilon^{1/7}A(r+240|\log\varepsilon|^2;x)\Bigr)
		\sum_\alpha\int_{\Sigma_\alpha}\eta_\alpha^2\,dA_{\alpha,0}.
	\end{align}
\end{lemma}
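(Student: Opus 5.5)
For each ordered pair $\alpha\neq\beta$ I would write the cross integral in the twisted Fermi coordinates of $\Sigma_\alpha$ and apply Green's identity to move all derivatives onto $g_\beta'$:
\[
\int\langle\nabla\varphi_\alpha,\nabla\varphi_\beta\rangle+W''(u)\varphi_\alpha\varphi_\beta
=\int\eta_\alpha g_\alpha' g_\beta'\langle\nabla\eta_\alpha-\nabla\eta_\beta,\nabla\eta_\beta\rangle\ \text{-type terms}+\int\eta_\alpha\eta_\beta g_\alpha'(-\Delta+W''(u))g_\beta'-\int_{\partial_0}\eta_\alpha\eta_\beta g_\alpha'\partial_\nu g_\beta' .
\]
More precisely, I would expand $\langle\nabla(\eta_\alpha g_\alpha'),\nabla(\eta_\beta g_\beta')\rangle$ and group the result into three pieces. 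The first is a gradient–gradient piece $\int g_\alpha'g_\beta'\langle\nabla\eta_\alpha,\nabla\eta_\beta\rangle$. The second is a mixed piece, $\int \eta_\beta g_\beta'\langle\nabla\eta_\alpha,\nabla g_\alpha'\rangle$ together with its symmetric counterpart. The third is the piece $\int\eta_\alpha\eta_\beta g_\alpha'(-\Delta+W''(u))g_\beta'$, plus a boundary flux. The boundary flux is handled directly by Lemma \ref{l.s9.boundary}. For $(-\Delta+W''(u))g_\beta'$ I would use \eqref{e.s9.gprime.ambient} with $\alpha$ replaced by $\beta$. Its leading term is $(W''(u)-W''(\Hb_\beta))g_\beta'$. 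Every other term is either $O(\varepsilon^2+A^{3/2}+\varepsilon^{1/6}A)$ times an exponentially decaying factor in $t_\beta$, by \eqref{e.s9.horizontal.estimates} and \eqref{e.s9.profile.decay}, or a correction carrying one $\Gb$ factor. When either of these is multiplied by $g_\alpha'$ and integrated in $z$, Lemma \ref{l.s9.overlap} gives a factor $(1+|t_\beta|)^{m+1}e^{-|t_\beta|}$, and this produces the error $\varepsilon^2+A^{7/6}+\varepsilon^{1/7}A$, with the $\Gb$ terms costing $\varepsilon A^{11/12}\lesssim\varepsilon^2+A^{7/6}$.

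For the main term I would replace $u$ by $\Hb_*$ at cost $O(\varepsilon+\|\phi\|_{C^0})$ times the overlap, and $g'$ by $\Hb'$ at cost $O(\varepsilon^{3/4})$ times the overlap. Then Lemma \ref{l.s9.adjacent} gives the value $-2A^2_{(-1)^{\min}}e^{-|t_\beta(y,0)|}$ for adjacent sheets and $O(\varepsilon^2+A^{3/2})$ for non-adjacent ones. The factor $\eta_\beta$ is constant on $\beta$-fibres, so it equals $\eta_\beta(\Pi_\beta(y,z))$. I would replace it by $\eta_\beta(\Pi_\beta(y,0))$ using Lemma \ref{l.DPi.beta}, namely $|D\Pi_\beta(\partial_z)|\lesssim\varepsilon^{1/5}$. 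The error $\varepsilon^{1/5}|z|\,|\nabla\eta_\beta|$ is then absorbed by Cauchy–Schwarz into $\varepsilon^{1/4}$-type multiples of the gradient energy plus $\varepsilon^{1/7}A$ times $\int\eta^2$, after shrinking the exponent. I would also change the measure to $dA_{\alpha,0}$ using \eqref{e.s9.projection}.

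The step I expect to be the main obstacle is the gradient–gradient and mixed pieces. The interior argument of Wang–Wei (Section 8.4) bounds $\int g_\alpha'g_\beta'\langle\nabla\eta_\alpha,\nabla\eta_\beta\rangle$ by $\sup_y\int g_\alpha'g_\beta'\,dz\lesssim A^{11/12}$ times the Dirichlet energies. Because $\eta_\beta$ is extended along $Z_\beta$ rather than $Z_\alpha$, I would decompose $\nabla\eta_\beta$ into its $\alpha$-horizontal part and a part of size $\varepsilon^{1/5}|\nabla\eta_\beta|$, using \eqref{0Dalphaztbeta}. This yields the coefficient $A^{1/2}+\varepsilon^{1/4}$. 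For the mixed terms, $\nabla g_\alpha'$ has a normal part $g_\alpha''\nabla t_\alpha$ of size $e^{-|z|}$, so I would pair it via $2ab\le \delta a^2+\delta^{-1}b^2$. Concretely, I would put $A^{1/2}|\nabla\eta_\alpha|^2$ against $A^{-1/2}\eta_\beta^2(\int|g_\alpha''g_\beta'|dz)^2\lesssim A^{-1/2}A^{11/6}=A^{4/3}\le A^{7/6}$. The horizontal part of $\nabla g_\alpha'$ is $O(\varepsilon^{3/4}+|\nabla h_\alpha|)$ and is harmless. Summing over at most $N$ active indices, with Lemma \ref{l.sum.of.exp}, gives the constant $C(N)$.
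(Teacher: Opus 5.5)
Your proposal follows essentially the same route as the paper: Green's identity putting all derivatives on $g_\beta'$, Lemma \ref{l.s9.boundary} for the boundary flux, and the $\beta$-version of \eqref{e.s9.gprime.ambient} with principal part $(W''(u)-W''(\Hb_\beta))g_\alpha'g_\beta'$. The remaining steps also match the paper: replacing $u$ and $g'$ by $\Hb_*$ and $\Hb'$, freezing $\eta_\beta$ at height zero via $|D\Pi_\beta(\partial_z)|\lesssim\varepsilon^{1/5}$, applying Lemma \ref{l.s9.adjacent}, and handling the gradient and mixed pieces by the overlap bound $A^{11/12}$ and Cauchy's inequality ($A^{1/2}$ against $A^{4/3}$).

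Two harmless remarks. First, the mixed piece that Green's identity actually produces is $\eta_\alpha\langle g_\beta'\nabla g_\alpha'-g_\alpha'\nabla g_\beta',\nabla\eta_\beta\rangle$, so the roles of $\alpha$ and $\beta$ in your description are swapped. Second, your decomposition of $\nabla\eta_\beta$ is unnecessary: $|\nabla\eta_\beta|\le C|\nabla_{\beta,0}\eta_\beta|\circ\Pi_\beta$ already gives the $A^{11/12}$ bound once the weighted integral is computed in $\beta$-coordinates.
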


\begin{proof}
	For a fixed ordered pair, Green's identity gives
	\begin{align}\label{e.s9.cross.green}
		\int_{B^+_{r+8|\log \varepsilon|}(x)}\bigl(\langle\nabla\varphi_\alpha,\nabla\varphi_\beta\rangle
		+W''(u)\varphi_\alpha\varphi_\beta\bigr)
		={}&\int_{B^+_{r+8|\log \varepsilon|}(x)} g_\alpha'g_\beta'\langle\nabla\eta_\alpha,\nabla\eta_\beta\rangle\notag\\
		&+\int_{B^+_{r+8|\log \varepsilon|}(x)}\eta_\alpha
		\langle g_\beta'\nabla g_\alpha'-g_\alpha'\nabla g_\beta',\nabla\eta_\beta\rangle\notag\\
		&+\int_{B^+_{r+8|\log \varepsilon|}(x)}\eta_\alpha\eta_\beta g_\alpha'(-\Delta+W''(u))g_\beta'\notag\\
		&-\int_{\partial_0 B^+_{r+8|\log \varepsilon|}(x)}\eta_\alpha\eta_\beta g_\alpha'\partial_\nu g_\beta'.
	\end{align}
	All unlabelled integrals in this proof are ambient integrals over the common support.  Formula \eqref{e.s9.cross.green} follows by expanding both gradients and integrating only the derivative falling on $g_\beta'$.	
	The decay estimates imply
	\[
	|g_\alpha'|+|\nabla g_\alpha'|\leq Ce^{-|t_\alpha|},
	\qquad |\nabla\eta_\alpha|\leq C|\nabla_{\alpha,0}\eta_\alpha|\circ\Pi_\alpha.
	\]
	Lemma \ref{l.s9.overlap}, applied in either sheet's coordinates, gives
	\[
	\int_{B^+_{r+8|\log \varepsilon|}(x)} e^{-|t_\alpha|-|t_\beta|}|\eta_\alpha\circ\Pi_\alpha|^2\,d\operatorname{vol}_g
	\leq CA^{11/12}\int_{\Sigma_\alpha}\eta_\alpha^2\,dA_{\alpha,0},
	\]
	\[
	\int_{B^+_{r+8|\log \varepsilon|}(x)} e^{-|t_\alpha|-|t_\beta|}
	|\nabla_{\beta,0}\eta_\beta|^2\circ\Pi_\beta\,d\operatorname{vol}_g
	\leq CA^{11/12}\int_{\Sigma_\beta}|\nabla_{\beta,0}\eta_\beta|^2\,dA_{\beta,0},
	\]
	where, in this proof, $A=A(r+240|\log\varepsilon|^2;x)$.  These bounds also hold with fixed polynomial factors.  The first line of \eqref{e.s9.cross.green} is therefore bounded by $CA^{11/12}$ times the two gradient energies.  Cauchy's inequality bounds its second line by
	\[
	CA^{1/2}\int_{\Sigma_\beta}|\nabla_{\beta,0}\eta_\beta|^2\,dA_{\beta,0}
	+CA^{4/3}\int_{\Sigma_\alpha}\eta_\alpha^2\,dA_{\alpha,0}.
	\]
	Since $A^{11/12}\leq A^{1/2}$ and $A^{4/3}\leq A^{7/6}$ for $0\leq A\leq1$, these are allowed errors.  Lemma \ref{l.s9.boundary} controls the boundary integral.
	
	Substituting the $\beta$-version of \eqref{e.s9.gprime.equation} and \eqref{e.s9.laplacian.split} in the third line of \eqref{e.s9.cross.green}, its only principal part is
	\begin{equation}\label{e.s9.cross.principal}
		\int_{B^+_{r+8|\log \varepsilon|}(x)}\eta_\alpha\eta_\beta
		\bigl(W''(u)-W''(\Hb_\beta)\bigr)g_\alpha'g_\beta'\,d\operatorname{vol}_g.
	\end{equation}
	The remaining scalar factors multiplying $\eta_\alpha\eta_\beta g_\alpha'$ are
	\[
	-W'''(\Hb_\beta)\Hb_\beta'\Gb_\beta,
	\quad -\mathbb E_\beta(\Hb,h)-\mathbb E_\beta(\Gb,h),
	\quad -(\Delta_{\beta,t_\beta}-\Delta_{\beta,0})g_\beta',
	\quad H_\beta(\cdot,t_\beta)\partial_{t_\beta}g_\beta',
	\quad -\mathscr E_\beta g_\beta'.
	\]
Equations \eqref{e.s9.profile.decay}-\eqref{e.s9.horizontal.estimates}, the coefficient bounds, Lemma \ref{l.s9.overlap}, and Young's inequality give the mass error in \eqref{e.s9.cross}.  For example,
	\[
	\varepsilon A^{11/12}\leq C(\varepsilon^2+A^{11/6}),\qquad
	\varepsilon^{3/4}A^{11/12}\leq C(\varepsilon^2+A^{22/15})
	\leq C(\varepsilon^2+A^{7/6}).
	\]
	
	In \eqref{e.s9.cross.principal}, replace $u$ by $\Hb_*$ and both $g'$ factors by the corresponding $\Hb'$ factors.  The preceding estimates control the error, and replacing $\lambda_\alpha(y,z)$ by $\lambda_\alpha(y,0)$ has the same bound.  It remains to replace the transported test function by its value at height zero.  The exact identity is
	\begin{equation}\label{e.s9.freeze.test}
		\eta_\beta(\Pi_\beta(y,z))-\eta_\beta(\Pi_\beta(y,0))
		=\int_0^z\sum_j(\partial_{y_j}\eta_\beta)(\Pi_\beta(y,s))
		\partial_s\Pi_\beta^j(y,s)\,ds.
	\end{equation}
On the overlap,
$|\partial_s\Pi_\beta|\leq C\varepsilon^{1/5}$.
The $s$- and $z$-intervals have length
$O(|\log\varepsilon|)$, and the maps
$y\mapsto\Pi_\beta(y,s)$ have uniformly bounded Jacobians.
Therefore, Cauchy's inequality in $y$, $s$, and $z$ gives
\[
\begin{aligned}
&C\varepsilon^{1/5}|\log\varepsilon|^2
A(r+240|\log\varepsilon|^2;x)
\left(
\int_{\Sigma_\alpha}\eta_\alpha^2\,dA_{\alpha,0}
+
\int_{\Sigma_\beta}
|\nabla_{\beta,0}\eta_\beta|^2\,dA_{\beta,0}
\right)
\\
&\qquad\leq
C\varepsilon^{1/7}
A(r+240|\log\varepsilon|^2;x)
\left(
\int_{\Sigma_\alpha}\eta_\alpha^2\,dA_{\alpha,0}
+
\int_{\Sigma_\beta}
|\nabla_{\beta,0}\eta_\beta|^2\,dA_{\beta,0}
\right),
\end{aligned}
\]
where the last inequality holds for sufficiently small $\varepsilon$,
since
$\varepsilon^{2/35}|\log\varepsilon|^2\leq C$.
Lemma \ref{l.s9.adjacent} now gives the neighbouring principal terms in \eqref{e.s9.cross}, and the nonadjacent terms satisfy the stated error estimate.  Summing over the finitely many ordered pairs proves the lemma.
\end{proof}

\begin{lemma}[Changing the base sheet]\label{l.s9.change.base}
	In the terms involving a neighbouring pair $\{\alpha-1,\alpha\}$, one may change the base from $\Sigma_{\alpha-1}$ to $\Sigma_\alpha$, transport the exponential weight and the area measure, and replace
	$\eta_\alpha(\Pi_\alpha(\Pi_{\alpha-1}(y,0),0))$ by $\eta_\alpha(y)$.  The sum of all resulting errors is bounded by
	\begin{align}\label{e.s9.change.base}
		&C(N)\Bigl(\varepsilon^{1/4}+A(r+240|\log\varepsilon|^2;x)^{1/2}\Bigr)
		\sum_\gamma\int_{\Sigma_\gamma}|\nabla_{\gamma,0}\eta_\gamma|^2\,dA_{\gamma,0}\notag\\
		&\quad+C(N)\Bigl(\varepsilon^2+A(r+240|\log\varepsilon|^2;x)^{7/6}
		+\varepsilon^{1/7}A(r+240|\log\varepsilon|^2;x)\Bigr)
		\sum_\gamma\int_{\Sigma_\gamma}\eta_\gamma^2\,dA_{\gamma,0}.
	\end{align}
	This applies to the squared factors in Lemma \ref{l.s9.diagonal.interaction} and the mixed factors in Lemma \ref{l.s9.cross}.
\end{lemma}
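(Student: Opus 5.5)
The plan is a change of variables along $\Pi_\alpha|_{\Sigma_{\alpha-1}}$, followed by three separate errors: (a) the Jacobian of the change of variables, (b) the transported exponential weight, and (c) replacing $\eta_\alpha(\Pi_\alpha(\Pi_{\alpha-1}(y,0),0))$ by $\eta_\alpha(y)$.

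Fix a neighbouring pair with $\beta=\alpha-1$. Consider a term of the form
\[
\int_{\Sigma_\beta}e^{-|t_\alpha(y',0)|}\,F\bigl(\eta_\beta(y'),\eta_\alpha(\Pi_\alpha(y',0))\bigr)\,dA_{\beta,0}(y'),
\]
with $F$ either a square or a product. Substitute $y'=\Pi_\beta(y,0)$ for $y\in\Sigma_\alpha$. First split off the region where $|t_\alpha(y',0)|>100|\log\varepsilon|$. There the weight is $O(\varepsilon^{100})$, and the uniformly bounded Jacobians of $\Pi_\beta|_{\Sigma_\alpha}$ (Lemma \ref{l.t.Pi}) give a contribution $C\varepsilon^2\sum_\gamma\int\eta_\gamma^2$.

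On the remaining region, Lemma \ref{l.s9.overlap}, namely \eqref{e.s9.projection}, gives three facts. The Jacobian satisfies $(\Pi_\beta|_{\Sigma_\alpha})^*dA_{\beta,0}=(1+O(\varepsilon^{1/3}))dA_{\alpha,0}$. The distances satisfy $|t_\alpha(\Pi_\beta(y,0),0)+t_\beta(y,0)|\leq C\varepsilon^{1/3}$, so the transported weight equals $e^{-|t_\beta(y,0)|}(1+O(\varepsilon^{1/3}))$. Finally, $|\Pi_\alpha(\Pi_\beta(y,0),0)-y|\leq C\varepsilon^{1/3}$. The first two are purely multiplicative errors. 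They cost $C\varepsilon^{1/3}A\sum_\gamma\int\eta_\gamma^2$, since the weight is bounded by $A=A(r+240|\log\varepsilon|^2;x)$ on the support, and $\varepsilon^{1/3}A\leq\varepsilon^{1/7}A$. The same substitution transports $\eta_\beta(\Pi_\beta(y,0))$ unchanged.

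The main step is (c). Write
\[
\eta_\alpha(\Pi_\alpha(\Pi_\beta(y,0),0))-\eta_\alpha(y)=\int_0^1\nabla_{\alpha,0}\eta_\alpha(\gamma_y(s))\cdot\dot\gamma_y(s)\,ds,
\]
where $\gamma_y$ is a short path on $\Sigma_\alpha$ of length $O(\varepsilon^{1/3})$. The map $y\mapsto\Pi_\alpha(\Pi_\beta(y,0),0)$ is a bi-Lipschitz perturbation of the identity with Jacobian $1+O(\varepsilon^{1/3})$, again by \eqref{e.projection}-type comparisons. Hence Cauchy--Schwarz in $(y,s)$ bounds the $L^2(e^{-|t_\beta|}dA_{\alpha,0})$ norm of the difference by
\[
C\varepsilon^{1/3}A^{1/2}\,\|\nabla_{\alpha,0}\eta_\alpha\|_{L^2(\Sigma_\alpha)}.
\]
In a square $F=[\eta_\alpha+\eta_\beta]^2$ or a product $\eta_\alpha\eta_\beta$, the difference multiplies a factor whose weighted $L^2$ norm is at most $CA^{1/2}\big(\sum_\gamma\int\eta_\gamma^2\big)^{1/2}$.

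Young's inequality then splits the product into two pieces. One is $C\varepsilon^{1/3}A^{1/2}\sum_\gamma\int|\nabla_{\gamma,0}\eta_\gamma|^2$, which is absorbed by the gradient coefficient $\varepsilon^{1/4}+A^{1/2}$. The other is $C\varepsilon^{1/3}A\sum_\gamma\int\eta_\gamma^2$, which is absorbed by $\varepsilon^{1/7}A$. The quadratic piece $C\varepsilon^{2/3}A\int|\nabla\eta_\alpha|^2$ is absorbed in the same way. The one point needing care is this Young's splitting, which must not produce a pure $A$ coefficient on the mass term.

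Summing over the at most $N$ neighbouring pairs that carry nonvanishing test functions introduces the factor $C(N)$ and gives \eqref{e.s9.change.base}. The argument is identical for the squared factors of Lemma \ref{l.s9.diagonal.interaction} and the mixed factors of Lemma \ref{l.s9.cross}.
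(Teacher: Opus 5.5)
Your proposal is correct and follows the paper's proof essentially step by step: a change of variables through $\Pi_{\alpha-1}|_{\Sigma_\alpha}$, purely multiplicative $O(\varepsilon^{1/3})$ errors for the weight and the area form from \eqref{e.s9.projection}, the difference estimate $\int|\eta_\alpha(\Pi_\alpha(\Pi_{\alpha-1}(y,0),0))-\eta_\alpha(y)|^2\,dA_{\alpha,0}\leq C\varepsilon^{2/3}\int|\nabla_{\alpha,0}\eta_\alpha|^2\,dA_{\alpha,0}$ combined with $e^{-|t_{\alpha-1}|}\leq CA$ and Cauchy/Young, and a coarse bound once the separation exceeds $100|\log\varepsilon|$. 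One justification needs tightening: the interpolated maps $y\mapsto\gamma_y(s)$ are uniformly bi-Lipschitz because the derivative of the composite projection differs from the identity by $O(\varepsilon^{1/5})$ in graph coordinates, as the paper notes, and this does not follow from the $C^0$ displacement bound together with a Jacobian-determinant bound alone.
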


\begin{proof}
	First suppose that the separation is at most $100|\log\varepsilon|$.  Use the inverse of
	$\Pi_{\alpha-1}|_{\Sigma_\alpha}$ for the change of variables.  The first two estimates in \eqref{e.s9.projection} show that changing the exponential weight and the area measure has relative error $O(\varepsilon^{1/3})$, hence absolute error
	$C\varepsilon^{1/3}A(r+240|\log\varepsilon|^2;x)$ times the two test-function masses.
	
	The reverse projection need not be the inverse map.  By \eqref{e.s9.projection}, its composition with the first projection differs from the identity by $O(\varepsilon^{1/3})$.  Its derivative in graph coordinates differs from the identity by $O(\varepsilon^{1/5})$.  Therefore
	\begin{equation}\label{e.s9.projection.composition}
		\int_{\{y\in \Sigma_\alpha:|t_{\alpha-1}(y,0)|\leq 100|\log \varepsilon|\}}\bigl|\eta_\alpha(\Pi_\alpha(\Pi_{\alpha-1}(y,0),0))-\eta_\alpha(y)\bigr|^2\,dA_{\alpha,0}
		\leq C\varepsilon^{2/3}\int_{\Sigma_\alpha}|\nabla_{\alpha,0}\eta_\alpha|^2\,dA_{\alpha,0}.
	\end{equation}
	The integral on the left is over the logarithmic comparison region; the test function is zero outside its support.  To compare squares, use $a^2-b^2=(a-b)(a+b)$; for a mixed product, leave the other factor unchanged.  Cauchy's inequality, \eqref{e.s9.projection.composition}, and
	$e^{-|t_{\alpha-1}|}\leq CA(r+240|\log\varepsilon|^2;x)$ prove \eqref{e.s9.change.base}.
	
	If the separation exceeds $100|\log\varepsilon|$, the corresponding term is $O(\varepsilon^2)$ times the masses before and after the base change, by the coarse Jacobian bound in Lemma \ref{l.s9.overlap}.  Restricting the comparison to the logarithmic region differentiates no cut-off, because only the integrals are being compared.  Summing over the finitely many pairs proves the lemma.
\end{proof}

\begin{proof}[Proof of Proposition \ref{p.stability.toda}]
Since $\operatorname{supp}\varphi_\alpha\subset B^+_{r+20|\log\varepsilon|}(x)$ for every $\alpha$, the stability assumption gives
	\begin{equation}\label{e.s9.stability.test}
		0\leq\int_{B^+_{r+20|\log\varepsilon|}(x)}
		\bigl(|\nabla\varphi|^2+W''(u)\varphi^2\bigr)\,d\operatorname{vol}_g.
	\end{equation}
	If necessary, the compactly supported $C^1$ test is approximated in $H^1$ by smooth functions up to the boundary. 
	
	Expand the right-hand side of \eqref{e.s9.stability.test} into diagonal terms and the ordered cross sum.  Apply Lemmas \ref{l.s9.diagonal.energy}, \ref{l.s9.diagonal.interaction}, and \ref{l.s9.cross}.  Before changing the base of a neighbouring term, the negative principal contribution is
	\begin{align}\label{e.s9.principal.before.change}
		&-2\sum_\alpha\sum_{\beta=\alpha\pm1}
		A_{(-1)^{\min\{\alpha,\beta\}}}^2
		\int_{\Sigma_\alpha}e^{-|t_\beta(y,0)|}\eta_\alpha(y)^2\,dA_{\alpha,0}\notag\\
		&\quad-2\sum_\alpha\sum_{\beta=\alpha\pm1}
		A_{(-1)^{\min\{\alpha,\beta\}}}^2
		\int_{\Sigma_\alpha}e^{-|t_\beta(y,0)|}
		\eta_\alpha(y)\eta_\beta(\Pi_\beta(y,0))\,dA_{\alpha,0}.
	\end{align}
	Fix the unordered neighbouring pair $\{\alpha-1,\alpha\}$.  By Lemma \ref{l.s9.change.base}, all four terms may be written on $\Sigma_\alpha$.  Their common coefficient is $A_{(-1)^{\alpha-1}}^2$.  The two diagonal terms and the two ordered mixed terms then combine as
	\[
	-2A_{(-1)^{\alpha-1}}^2e^{-|t_{\alpha-1}(y,0)|}
	\bigl[\eta_\alpha(y)+\eta_{\alpha-1}(\Pi_{\alpha-1}(y,0))\bigr]^2.
	\]
	Each unordered neighbouring pair is counted exactly once after this regrouping.
	
	Choose the finitely many constants $\delta$ in Lemmas \ref{l.s9.boundary} and \ref{l.s9.diagonal.energy} so that the sum of their gradient contributions is at most
	$(\tau-1)\sigma_0$ times the total gradient energy.  These choices are made after $N$ and $\tau$ are fixed and before $\varepsilon$ is decreased.  Moving the negative square in \eqref{e.s9.principal.before.change} to the other side of \eqref{e.s9.stability.test}, dividing by $\tau\sigma_0$, and defining $Q(\eta)$ to be the sum of the two resulting nonnegative error terms give \eqref{e.s9.stability} and \eqref{e.s9.Q}.
\end{proof}

\section{A separation estimate from local bounds for the Emden equation}\label{s.separation.emden}

Recall that $R=\varepsilon^{-1}$ and
\[
A(r;x)=\max_\alpha\max_{y\in\Sigma_\alpha\cap B_r^+(x)}e^{-D_\alpha(y)}.
\]
We use the geometric and separation estimates of Section \ref{section twisted Fermi}, the improved Toda equation in Corollary \ref{c.s8.improved.toda}, and the stability inequality in Proposition \ref{p.stability.toda}.  In particular, Lemma \ref{l.D_al.infty} implies
\[
A(5R/6;0)=o(1)
\]
uniformly as $\varepsilon\to0$. Fix once and for all a constant
\[
0<\delta<\frac1{48}.
\]
This value of \(\delta\) remains fixed throughout this section.

\begin{proposition}[Decay above the logarithmic comparison scale]\label{p.s10.decay}
	Suppose that $1\leq n\leq9$, where $n$ is the dimension of a sheet.  There are fixed constants $M\gg K\gg1$ such that the following holds for all sufficiently small $\varepsilon$.  Let $2R/3\leq r\leq5R/6$ and $\kappa=A(r;0)$.  If
	\begin{equation}\label{e.s10.threshold.general}
		\kappa\geq M\varepsilon^2|\log\varepsilon|^2,
	\end{equation}
	then
	\begin{equation}\label{e.s10.decay}
		A(r-KR_*;0)\leq\frac{\kappa}{2},
		\qquad
		R_*=\max\{\kappa^{-1/2},200|\log\varepsilon|^2\}.
	\end{equation}
\end{proposition}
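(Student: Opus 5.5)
The plan is to follow the scheme of \cite[Section 9]{wang-weiadv}, replacing the Green-function/Morrey step by a Farina-type $L^p$ argument plus blow-up, as announced in the outline. The first step is to rescale at the scale $R_*$. Fix a sheet and, for $\alpha$, consider the height differences $d_\alpha(y)=f_{\alpha}-f_{\alpha-1}$ measured along the base $\{z=0\}$ in boundary Fermi coordinates. These differences satisfy $\partial_{y_1}d_\alpha=0$ on $\partial_0$, which is why we use them rather than the intrinsic separation. By Claim-type distance comparison, using Proposition \ref{l.projection.error} and Lemma \ref{l.dz(nu)}, we compare $|t_{\alpha-1}(y,0)|$ with $d_\alpha$ up to an additive error $O(\varepsilon|\log\varepsilon|^2)$, coming from the first-order boundary metric terms. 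Exponentiating gives $e^{-|t_{\alpha-1}|}=e^{-d_\alpha}(1+O(\varepsilon|\log\varepsilon|^2))$. The threshold \eqref{e.s10.threshold.general} is chosen exactly so that this relative error, times $\kappa$, is $\ll\kappa$ and $\ll\varepsilon^2 M|\log\varepsilon|^2$. Subtracting the improved Toda equations (Corollary \ref{c.s8.improved.toda}) for consecutive sheets, and using $H_\alpha=-\Delta f_\alpha+O(\varepsilon\cdot\text{lower order})$, we obtain an approximate Toda system
\[
-\Delta d_\alpha = c\bigl(2e^{-d_\alpha}-e^{-d_{\alpha-1}}-e^{-d_{\alpha+1}}\bigr)+O\bigl(\varepsilon^2+\kappa^{7/6}+\varepsilon^{1/6}\kappa\bigr)
\]
on $B^+_{r}$, with homogeneous Neumann data.

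Second, we rescale by $R_*$: set $\tilde d_\alpha(\tilde y)=d_\alpha(y_0+R_*\tilde y)-\log(R_*^2)$, so that the normalized interactions $v_\alpha=R_*^2e^{-d_\alpha}$ are $O(1)$ (since $R_*^2\kappa\le 1$). The error becomes $o(1)$ after multiplying by $R_*^2$, because $R_*^2\varepsilon^2\le \kappa^{-1}\varepsilon^2\le M^{-1}|\log\varepsilon|^{-2}$ and $R_*^2\kappa^{7/6}\le\kappa^{1/6}$. In the same rescaled variables, the stability inequality of Proposition \ref{p.stability.toda}, with $\tau$ close to $1$ and $Q$ absorbed, gives stability of the scalar Emden-type system. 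Test it with $\eta_\alpha=v_\alpha^{q}\psi$, with Neumann boundary allowed since the test functions need not vanish on $\partial_0$, and integrate the equation against $v^{2q-1}\psi^2$ by parts. The boundary terms vanish by the Neumann condition on $d_\alpha$, up to errors controlled by the trace inequality \eqref{e.s9.trace}. Following \cite{farina2007stable}, this yields $\int_{B_1^+}v_\alpha^{p}\le C$ for every $p<5$, uniformly in the number of layers; the cross terms $v_{\alpha\pm1}$ are handled by the ordering of sheets exactly as in the two-component reduction.

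Third, we upgrade to local boundedness. Since $n\le 9$ we may pick $n/2<p<5$. A contradiction/blow-up argument then applies: suppose $\sup_{B^+_{1/2}}v_\alpha\to\infty$ along a sequence, rescale at a near-maximum point (doubling lemma), and pass to a limit. This limit is an entire, or half-space Neumann (reflected evenly), stable solution of $-\Delta d=2e^{-d}$ with $e^{-d}\in L^p$, $p>n/2$. That is impossible, because scaling invariance makes the $L^p$ norm on the unit ball go to zero when $p>n/2$, whereas nontriviality requires a positive lower bound. Hence $v_\alpha\le C$ on $B^+_{1/2}$. Then $-\Delta d_\alpha$ is bounded, and a Neumann Harnack/mean-value argument combined with smallness shows $v\le C_0$ pointwise, i.e.\ $e^{-d_\alpha}\le C_0R_*^{-2}$.

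Finally, we convert back. If $R_*=\kappa^{-1/2}$, this gives $A\le C_0\kappa$, which is not yet a factor $1/2$. To gain it we use the freedom in $K$: apply the argument at scale $KR_*$ instead, giving $A(r-KR_*)\le C_0(KR_*)^{-2}\le C_0K^{-2}\kappa\le\kappa/2$ (if $R_*=200|\log\varepsilon|^2$, use instead that $\kappa\ge M\varepsilon^2|\log\varepsilon|^2$ forces $\kappa^{-1/2}\le R_*$ and argue identically). I expect the main obstacle to be the $L^p$ stability step near the boundary. There, the Robin-type boundary terms in $Q$ and the distance–height discrepancy must remain subcritical uniformly in the layer count, and the constant $C(N,\tau)$ in \eqref{e.s9.Q} has to be localized to finitely many layers using exponential separation.
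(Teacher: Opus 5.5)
Your overall architecture matches the paper's: Neumann height differences, subtraction of the improved Toda equations, transfer of stability, a Farina-type $L^p$ bound with $n/2<p<5$, a doubling blow-up to remove the a priori bound, and the factor $1/2$ gained by enlarging the ball by $K$. However, the step that carries the whole content of the threshold \eqref{e.s10.threshold.general} is wrong as you state it.

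You never recentre the chart. In the global chart one only has $|\nabla f_\alpha|\leq\eta_0$ and $|g-\mathrm{Id}|\lesssim\eta_0$, so a vertical gap and a signed distance of size $|\log\varepsilon|$ differ by $O(\eta_0|\log\varepsilon|)$. Neither Proposition \ref{l.projection.error} nor Lemma \ref{l.dz(nu)} (which concerns $\partial_\nu z$) yields your additive $O(\varepsilon|\log\varepsilon|^2)$. The paper normalizes a boundary Fermi chart at the bad point $x$, with $f_\alpha(0)=0$, $\nabla f_\alpha(0)=0$, $g(0)=\mathrm{Id}$. Then the slopes and $g-\mathrm{Id}$ grow like $\varepsilon|y|$, and the upper sheet picks up an extra slope $\sqrt{\varepsilon|\log\varepsilon|}$ from the gap-gradient bound. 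On the base ball of radius $K\kappa^{-1/2}$, where the Farina argument has to run, Claim \ref{c.s10.distance.comparison} gives the error $C(K)(\varepsilon\kappa^{-1/2}|\log\varepsilon|+\varepsilon^2\kappa^{-1}|\log\varepsilon|+\varepsilon|\log\varepsilon|^2)$. Its leading term is at most $C(K)M^{-1/2}$ precisely because $\kappa\geq M\varepsilon^2|\log\varepsilon|^2$.

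The same recentring is what makes $|B_\alpha-\mathrm{Id}|$ small in the divergence-form gap operator. It also controls the transport error $|\Pi_\alpha(y,f_{\alpha+1}(y))-y|$ in the stability transfer. Your account of the threshold is therefore vacuous: a relative error $\varepsilon|\log\varepsilon|^2$ needs no threshold at all. These errors must genuinely be $o(1)$ and not merely $O(1)$. The coefficient of $V^2$ has to match the stability constant $4A_1^2/\sigma_0$ up to a factor close to $1$, because for $n=9$ the admissible window is only $7/4<q<2/\tau$.

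Further points also need repair.

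\emph{Sign.} Your sign is reversed. Adjacent layers attract, and the gap satisfies $\Delta d\leq\frac{4A_1^2}{\sigma_0}e^{-d}+\cdots$ (cf. \eqref{e.s10.gap.inequality}). This is what makes $V=e^{-d}$ a subsolution of $-\Delta V\leq cV^2-|\nabla V|^2/V$. For your equation $-\Delta d=2e^{-d}$ the linearization is positive, so stability carries no information.

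\emph{Number of layers.} The layer-count problem you flag is not solved by localizing $C(N,\tau)$; it is avoided. The paper discards the interactions of $\Sigma_\alpha$ and $\Sigma_{\alpha+1}$ with their outer neighbours, which have a favourable sign, to get a one-sided inequality for a single gap. It then applies Proposition \ref{p.stability.toda} with the same base function on those two sheets only. Thus $N=2$, and $(\eta_{\alpha+1}+\eta_\alpha\circ\Pi_\alpha)^2\approx4\eta^2$ reproduces exactly the constant $4A_1^2/\sigma_0$. Testing with $\eta_\alpha=v_\alpha^q\psi$ on every sheet leaves your constants depending on the number of layers.

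\emph{Scale.} Rescale at $K\kappa^{-1/2}$, not at $KR_*$. You have $R_*^2\kappa\geq1$, not $\leq1$. When $R_*=200|\log\varepsilon|^2>\kappa^{-1/2}$, the rescaled linear error coefficient $(KR_*)^2\kappa^{3/2}$ need not be small, and neither need the mass term coming from $Q$. The $200|\log\varepsilon|^2$ in $R_*$ only reserves room for the radius enlargements in Corollary \ref{c.s8.improved.toda} and Proposition \ref{p.stability.toda}.

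\emph{Boundary term.} The boundary term in the Farina computation vanishes because of the exact conormal condition $\langle B_\alpha\nabla(f_{\alpha+1}-f_\alpha),\nu\rangle=0$ (Remark \ref{r.s10.conormal}). It is not handled by the trace inequality, which has already been used inside $Q$.
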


\begin{remark}\label{r.s10.threshold}
		If the boundary is totally geodesic, the same conclusion holds under the weaker hypothesis
	\begin{equation}\label{e.s10.threshold.tg}
		\kappa\geq M\varepsilon^2|\log\varepsilon|.
	\end{equation}
The distinction between \eqref{e.s10.threshold.general} and \eqref{e.s10.threshold.tg} occurs in the comparison between the signed Riemannian distance and the Euclidean distance in a coordinate chart.  In a boundary Fermi chart, one has
	\[
	g-\mathrm{Id}=O(\varepsilon|y|+\varepsilon|z|).
	\]
	On a base ball of radius $K\kappa^{-1/2}$, replacing a distance of order $|\log\varepsilon|$ by the coordinate difference can therefore produce an error of order
	\[
	C(K)\varepsilon\kappa^{-1/2}|\log\varepsilon|.
	\]
	Hypothesis \eqref{e.s10.threshold.general} makes this error small.  When the boundary is totally geodesic, all first derivatives of the normalised metric vanish at the centre of a boundary Fermi chart, and the corresponding metric error is quadratic.  This yields \eqref{e.s10.threshold.tg}.  
\end{remark}

The proof follows the reduction in \cite[Section 9]{wang-weiadv}: derive a scalar inequality for the gap between two adjacent graphs, transfer the stability inequality to the resulting exponential term, obtain an $L^p$ estimate with $p>n/2$, and apply a local boundedness argument. 

\subsection{The geometric reduction}\label{ss.s10.geometric}

To prove \eqref{e.s10.decay}, suppose that $x\in\Sigma_\alpha\cap B_{r-KR_*}^+(0)$ satisfies
\begin{equation}\label{e.s10.badpoint}
	e^{-D_\alpha(x)}>\frac{\kappa}{2}.
\end{equation}
Choose a point on a neighbouring sheet that attains $D_\alpha(x)$.  Reversing the graphical ordering if necessary, denote it by $\Sigma_{\alpha+1}$.  Since $W$ is even, $A_1=A_{-1}$, so this relabelling does not change the interaction coefficient.  At this point,
\begin{equation}\label{e.s10.logdistance}
	|t_{\alpha+1}(x)|<\log\frac{2}{\kappa}\leq2|\log\varepsilon|
\end{equation}
for all sufficiently small $\varepsilon$.

The two sheets are the graphs of the functions $f_\alpha$ and $f_{\alpha+1}$ already introduced in Section \ref{section twisted Fermi}, and $x=(x', f_\alpha(x'))$.  Recall that
\begin{equation}\label{e.s10.graph.C2}
	|\nabla f_\alpha|+|\nabla f_{\alpha+1}|\leq C,
	\qquad
	|\nabla^2 f_\alpha|+|\nabla^2 f_{\alpha+1}|\leq C\varepsilon.
\end{equation}
In a boundary chart,
\begin{equation}\label{e.s10.graph.boundary}
	\partial_{y_1}f_\alpha=\partial_{y_1}f_{\alpha+1}=0
	\qquad\text{on }\{y_1=0\},
\end{equation}
because $\partial_{y_1}$ is the ambient inward unit normal there.  At the centre of the normalized chart,
\begin{equation}\label{e.s10.normalization}
	f_\alpha(0)=0,
	\qquad
	\nabla f_\alpha(0)=0,
	\qquad
	g(0)=\mathrm{Id}.
\end{equation}
Consistently with Sections \ref{s.preliminary.setup}-\ref{s.approx.toda}, $\mathbb B_s(x')\subset\mathbb R^n$ denotes the Euclidean ball, and $\mathbb B_s^+(x')=\mathbb B_s(x')\cap\{y_1\geq0\}$ in a boundary chart; in an interior chart we use $\mathbb B_s(x')$.  We reserve $B_s^+(\cdot)$ for ambient Fermi coordinate balls, $B_s^\alpha(\cdot)$ for the sheet coordinate balls, and $B_s^{0,\alpha}(\cdot)$ for the boundary coordinate balls of Definition \ref{def.notation.balls}.  In the graphical and scalar calculations, $\nabla$ and $\operatorname{div}$ denote Euclidean coordinate operations on the base; intrinsic derivatives on a sheet retain their earlier subscripts.

The estimates below are used on $\mathbb B_{2K\kappa^{-1/2}}^+(x')$, together with a fixed multiple of $|\log\varepsilon|$ in the normal and tangential directions.  

Set
\[
f_\alpha^t=(1-t)f_\alpha+tf_{\alpha+1},
\qquad
t_\beta(y)=t_\beta(y,f_\alpha(y)).
\]
The vector $\nu=e_1$ denotes the inward normal to the boundary.  

\begin{lemma}[Riemannian gap inequality]\label{l.s10.gap}
Under \eqref{e.s10.threshold.general}, for fixed $K$, $M$, and all sufficiently small $\varepsilon$, there is a symmetric matrix $B_\alpha$ such that
\begin{equation}\label{e.s10.B.bounds}
|B_\alpha-\mathrm{Id}|\leq C(K)M^{-\delta},
\qquad
|\nabla B_\alpha|\leq C\varepsilon,
\qquad
\frac12\mathrm{Id}\leq B_\alpha\leq2\mathrm{Id}.
\end{equation}
For every $y\in\mathbb B_{2K\kappa^{-1/2}}^+(x')$ such that
$f_{\alpha+1}(y)-f_\alpha(y)<4|\log\varepsilon|+1$,
\begin{equation}\label{e.s10.gap.inequality}
\operatorname{div}\!\left(B_\alpha\nabla(f_{\alpha+1}-f_\alpha)\right)
\leq\frac{4A_1^2}{\sigma_0}\bigl(1+C(K)M^{-\delta}\bigr)e^{-(f_{\alpha+1}-f_\alpha)}
+C(K)M^{-\delta}\kappa.
\end{equation}
On the boundary $\partial_0\mathbb B_{2K\kappa^{-1/2}}^+(x')$, we have
\[
\left\langle B_\alpha\nabla(f_{\alpha+1}-f_\alpha),\nu\right\rangle=0.
\]
\end{lemma}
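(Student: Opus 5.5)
We derive the inequality by writing the mean curvature of each graph $\Sigma_\alpha,\Sigma_{\alpha+1}$ in divergence form with respect to the base coordinates, subtracting, and inserting the improved Toda equation of Corollary \ref{c.s8.improved.toda}. For a graph $x_{n+1}=f(y)$ over the base in the Fermi chart, the mean curvature is $H=\lambda^{-1}\operatorname{div}\bigl(a(y,f,\nabla f)\bigr)+b(y,f,\nabla f)$, where $a$ is the metric-weighted unit-normal flux and $b$ collects Christoffel terms of size $O(\varepsilon)$. Writing $H_{\alpha+1}-H_\alpha$ and interpolating along $f^t_\alpha$, we set
\[
B_\alpha=\int_0^1 \lambda\,\partial_p a(y,f^t_\alpha,\nabla f^t_\alpha)\,dt
\]
(symmetrised if needed). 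By \eqref{e.s10.normalization}, \eqref{e.s10.graph.C2} and \eqref{e.rescaled.bound.gEucl}, on $\mathbb B^+_{2K\kappa^{-1/2}}(x')$ we have $|\nabla f_\alpha|\le C\varepsilon K\kappa^{-1/2}$ and $|g-\mathrm{Id}|\le C\varepsilon(K\kappa^{-1/2}+|\log\varepsilon|)$. Hypothesis \eqref{e.s10.threshold.general} makes both quantities $\le C(K)M^{-1/2}|\log\varepsilon|^{-1}$, which is at most $C(K)M^{-\delta}$. This gives \eqref{e.s10.B.bounds}. The bound $|\nabla B_\alpha|\le C\varepsilon$ follows from the $C^2$ bounds on $f$ and the rescaled metric derivatives. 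The dependence of $a$ on $f$, together with $b$, contributes zeroth-order terms that are linear in $f_{\alpha+1}-f_\alpha$, with coefficients $O(\varepsilon^2)+O(\varepsilon|\nabla f|)$. Because the gap is at most $4|\log\varepsilon|+1$, these are bounded by $C(K)\varepsilon\kappa^{-1/2}|\log\varepsilon|\cdot\varepsilon$ plus similar terms, hence by $C(K)M^{-\delta}\kappa$.

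Next we bound $H_{\alpha+1}-H_\alpha$ at base points $y$. Corollary \ref{c.s8.improved.toda} gives $H_\alpha(y,0)\le -\frac{2A_1^2}{\sigma_0}e^{-|t_{\alpha+1}|}+(\text{positive }e^{-|t_{\alpha-1}|}\text{ term})+\mathrm{err}$. We need a sign convention here: with the normal pointing upward, the sheet below pulls and the sheet above pushes. Combined with the analogous expression for $\Sigma_{\alpha+1}$, the terms coming from the far neighbours $\alpha-1$ and $\alpha+2$ have the favourable sign and can be discarded, because we only need an upper bound. The remaining adjacent terms sum to $\frac{4A_1^2}{\sigma_0}e^{-|t_{\alpha+1}|}$, with the $\alpha+1$ version transported back using Lemma \ref{l.s9.overlap} and \eqref{e.s9.projection}, at relative cost $O(\varepsilon^{1/3})$. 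The error \eqref{e.s8.toda.error} is at most $\varepsilon^2+A^{3/2}+\varepsilon^{1/6}A$, where $A$ is evaluated on a ball of radius $r-KR_*+200|\log\varepsilon|^2\le r$ since $R_*\ge 200|\log\varepsilon|^2$. This gives $A\le\kappa$. Using $\kappa=o(1)$ and $\varepsilon^2\le M^{-1}\kappa$, the error is bounded by $C M^{-\delta}\kappa$.

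The main obstacle is converting the Riemannian distance $|t_{\alpha+1}(y,f_\alpha(y))|$ into the coordinate gap $f_{\alpha+1}-f_\alpha$ inside the exponential. This is the distance-gap comparison mentioned in the introduction. We follow the integral curve of $Z_\alpha$ (equivalently the geodesic, by Lemma \ref{l.Fc.vs.tFc.1}) from $(y,f_\alpha)$ to $\Sigma_{\alpha+1}$. By Lemma \ref{l.vect.Z}(v), \eqref{e.na.Z} and the metric bounds, its length differs from the vertical coordinate difference by at most $C(|g-\mathrm{Id}|+|\nabla f|^2)\cdot|\log\varepsilon|\le C(K)\varepsilon\kappa^{-1/2}|\log\varepsilon|^2$. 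Under \eqref{e.s10.threshold.general} this is $\le C(K)M^{-1/2}$. Therefore $e^{-|t_{\alpha+1}|}\le(1+C(K)M^{-\delta})e^{-(f_{\alpha+1}-f_\alpha)}$, which yields \eqref{e.s10.gap.inequality}. This is the step that consumes exactly the $|\log\varepsilon|^2$ in the threshold, and it needs the first-order metric error to be controlled carefully on the whole ball.

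Finally, for the boundary condition we use \eqref{e.s10.graph.boundary} together with $g_{1i}=0$ from \eqref{e.g.Ga.0.ball}. Together these show that the flux $a$ has vanishing $e_1$-component wherever $\partial_1 f=0$ on $\{y_1=0\}$. Consequently $\partial_pa$ maps vectors with vanishing first component to vectors with vanishing first component, i.e.\ $(B_\alpha)_{1j}=0$ for $j\ge2$ there. Then $\langle B_\alpha\nabla(f_{\alpha+1}-f_\alpha),\nu\rangle=(B_\alpha)_{11}\partial_1(f_{\alpha+1}-f_\alpha)=0$.
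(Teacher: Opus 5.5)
Your architecture matches the paper's: divergence form of the graph mean curvature, $B_\alpha$ as a $t$-average of $\partial^2_{pp}$ of the area integrand along $f^t_\alpha$, discarding the $\alpha-1$ and $\alpha+2$ interactions, a gap--distance comparison, and the conormal condition from $g_{1i}=0$ and $\partial_1f=0$ (which is exactly Remark~\ref{r.s10.conormal}). However, the lower-order terms are not handled correctly, and the argument fails at that step as written.

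The bound you claim for them is false. At the threshold $\kappa=M\varepsilon^2|\log\varepsilon|^2$,
\[
\frac{\varepsilon^2\kappa^{-1/2}|\log\varepsilon|}{\kappa}=M^{-3/2}\varepsilon^{-1}|\log\varepsilon|^{-2}\to\infty,
\]
so coefficients of size $\varepsilon|\nabla f|$ multiplied by the gap cannot be absorbed into $C(K)M^{-\delta}\kappa$. You have also overlooked first-order terms. Writing $w=f_{\alpha+1}-f_\alpha$, expanding $\operatorname{div}(\partial_z a\,w)$ and the $p$-dependence of $b$ produces terms of the form (first derivatives of $g$)$\cdot\nabla w$. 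With the bounds available ($\partial g=O(\varepsilon)$, and $|\nabla w|\lesssim\sqrt{\varepsilon|\log\varepsilon|}$ at best, even at $x'$), these can be of size $\varepsilon^{3/2}|\log\varepsilon|^{1/2}\gg\kappa$ near the threshold.

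The missing idea is the variational structure. Set $F=\sqrt{\det g}\,\varpi$ and orient graphs upward. Then every graph satisfies exactly $\sqrt{\det g}\,H=\operatorname{div}(\partial_pF)-\partial_zF$, with the total $y$-divergence and everything evaluated at $(y,f,\nabla f)$. Multiply each Toda inequality by its own density $\sqrt{\det g}(y,f_\gamma(y))$ before subtracting, as the paper does, and interpolate along $f^t_\alpha$. This gives
\[
\bigl(\sqrt{\det g}\,H\bigr)\big|_{f_{\alpha+1}}-\bigl(\sqrt{\det g}\,H\bigr)\big|_{f_{\alpha}}
=\operatorname{div}(B_\alpha\nabla w)+(\operatorname{div}c-d)\,w,
\qquad
c=\int_0^1\partial^2_{zp}F\,dt,\quad d=\int_0^1\partial^2_{zz}F\,dt,
\]
because the two terms $\pm c\cdot\nabla w$ cancel identically. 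The coefficient $\operatorname{div}c-d$ contains only second derivatives (or squares of first derivatives) of $g$, or first derivatives of $g$ times $\nabla^2f^t_\alpha$. Hence it is $O(\varepsilon^2)$, and the remainder is $O(\varepsilon^2|\log\varepsilon|)\leq M^{-1}|\log\varepsilon|^{-1}\kappa$. This is the term $\varepsilon^2(1+|\log\varepsilon|)/\kappa$ in the paper.

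Two further steps also need repair.

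\emph{Slope of the upper sheet.} The normalization \eqref{e.s10.normalization} controls only $\nabla f_\alpha$. But $B_\alpha$ is evaluated along $f^t_\alpha$, and the gap comparison involves both graphs. With only $|\nabla f_{\alpha+1}|\leq\eta_0$ from (H2), you get $|B_\alpha-\mathrm{Id}|=O(\eta_0^2)$ instead of $C(K)M^{-\delta}$, and an error $O(\eta_0^2|\log\varepsilon|)$ in the exponent. The paper obtains $|\nabla w(x')|\leq C\sqrt{\varepsilon|\log\varepsilon|}$ from $w\geq0$, $w(x')\lesssim|\log\varepsilon|$ and $|\nabla^2w|\leq C\varepsilon$, using Taylor's inequality in a direction of decrease. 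It then integrates the Hessian bound (Claim~\ref{c.s10.graph.estimates}).

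\emph{Distance comparison.} Under \eqref{e.s10.threshold.general}, your error $C(K)\varepsilon\kappa^{-1/2}|\log\varepsilon|^2$ is only $\leq M^{-1/2}|\log\varepsilon|$, not $C(K)M^{-1/2}$. Merging $\varepsilon\kappa^{-1/2}|\log\varepsilon|$ and $\varepsilon|\log\varepsilon|^2$ into one product loses a logarithm. Kept separate, the error is
\[
C(K)\bigl(\varepsilon\kappa^{-1/2}|\log\varepsilon|+\varepsilon^2\kappa^{-1}|\log\varepsilon|+\varepsilon|\log\varepsilon|^2\bigr),
\]
as in \eqref{e.s10.distance.gap}. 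It is the first term that uses exactly the $|\log\varepsilon|^2$ in the threshold.

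In the same step, Lemma~\ref{l.vect.Z}(v) only bounds the tilt of $Z_\alpha$ by $\eta_0+\Lambda_0$. Moreover, the $Z_\alpha$-curve from $(y,f_\alpha(y))$ measures $|t_\alpha|$ at its endpoint on $\Sigma_{\alpha+1}$, not $|t_{\alpha+1}(y,f_\alpha(y))|$. The $\Sigma_{\alpha+1}$ Toda equation must be evaluated at $(y,f_{\alpha+1}(y))$ to match your graph difference, and \eqref{e.s9.projection} concerns the different point $\Pi_{\alpha+1}(y,0)$. The paper therefore compares both $|t_{\alpha+1}(y,f_\alpha(y))|$ and $|t_\alpha(y,f_{\alpha+1}(y))|$ directly with $w(y)$. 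It uses a vertical segment for the upper bound, and the lower bound $d/\sqrt{1+s^2}$ on the distance to a graph of slope $s$.
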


For completeness, we record the matrix in Lemma \ref{l.s10.gap}.  If $p=(p_1,\ldots,p_n)$ is the covector defining a graph, then
\begin{equation}\label{e.s10.varpi}
\varpi(y,z,p)^2
=g^{n+1,n+1}(y,z)-2\sum_i g^{i,n+1}(y,z)p_i
+\sum_{i,j}g^{ij}(y,z)p_ip_j.
\end{equation}
Thus the graph-area density relative to $dy$ is $\sqrt{\det g(y,z)}\,\varpi(y,z,p)$, where the determinant is that of the ambient $(n+1)\times(n+1)$ metric matrix.  Define
\begin{equation}\label{e.s10.B.definition}
B_{\alpha,ij}(y)
=\int_0^1
\left[\partial_{p_ip_j}^2\!\left(\sqrt{\det g}\,\varpi\right)\right]
\bigl(y,f_\alpha^t(y),\nabla f_\alpha^t(y)\bigr)\,dt.
\end{equation}

\begin{remark}\label{r.s10.conormal}
At $y_1=0$, boundary Fermi coordinates give $g^{1j}=0$ for $j\neq1$, $g^{11}=1$, and $g^{1,n+1}=0$.  Equation \eqref{e.s10.graph.boundary} gives $p_1=0$ for $p=\nabla f_\alpha^t$.  Differentiating \eqref{e.s10.varpi} therefore yields
\[
\partial_{p_1}\!\left(\sqrt{\det g}\,\varpi\right)=0,
\qquad
\partial_{p_1p_j}^2\!\left(\sqrt{\det g}\,\varpi\right)=0
\quad(j\geq2).
\]
Consequently $B_\alpha\nu=B_{\alpha,11}\nu$, and \eqref{e.s10.graph.boundary} gives the asserted conormal condition. 
\end{remark}

\begin{proof}[Proof of Lemma \ref{l.s10.gap}]
We begin with the two geometric estimates used in the proof.

\begin{claim}\label{c.s10.graph.estimates}
For every $y\in\mathbb B_{2K\kappa^{-1/2}+20|\log\varepsilon|}^+(x')$, we have
\begin{align}
|\nabla f_\alpha|
&\leq C(K)\varepsilon\kappa^{-1/2}+C\varepsilon|\log\varepsilon|,
\label{e.s10.lower.slope}\\
|\nabla f_{\alpha+1}|
&\leq C(K)\varepsilon\kappa^{-1/2}
+C\sqrt{\varepsilon|\log\varepsilon|}
+C\varepsilon|\log\varepsilon|,
\label{e.s10.upper.slope}\\
|g-\mathrm{Id}|
&\leq C(K)(\varepsilon\kappa^{-1/2}+\varepsilon|\log\varepsilon|),
\qquad |\partial g|\leq C\varepsilon,
\qquad |\partial^2g|\leq C\varepsilon^2.
\label{e.s10.metric}
\end{align}
\end{claim}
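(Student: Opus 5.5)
Claim \ref{c.s10.graph.estimates} reduces to Taylor expansion about the normalized centre \eqref{e.s10.normalization}, combined with the rescaled bounds of Section \ref{ss.twisted.Fermi.setup}. We start with the metric estimate \eqref{e.s10.metric}. The derivative bounds are immediate from \eqref{e.rescaled.bound.gEucl}: in the rescaled boundary Fermi chart one has $|\partial g|\leq C\varepsilon$ and $|\partial^2 g|\leq C\varepsilon^2$. For $|g-\mathrm{Id}|$, we use $g(0)=\mathrm{Id}$ from \eqref{e.s10.normalization} and integrate $|\partial g|\leq C\varepsilon$ along a segment from the centre. The points we need have base coordinate at most $2K\kappa^{-1/2}+20|\log\varepsilon|$ and height at most a fixed multiple of $|\log\varepsilon|$, since both sheets lie within $2|\log\varepsilon|$ by \eqref{e.s10.logdistance}, and this is propagated using the slope bounds below. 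The mean value theorem then gives $|g-\mathrm{Id}|\leq C\varepsilon(K\kappa^{-1/2}+|\log\varepsilon|)$, which is \eqref{e.s10.metric}.

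Next we prove \eqref{e.s10.lower.slope}. By \eqref{e.s10.normalization} we have $\nabla f_\alpha(0)=0$, and by \eqref{e.na.f} we have $|\nabla^2 f_\alpha|\leq C\varepsilon$. Integrating along a segment from $0$ to $y$ gives $|\nabla f_\alpha(y)|\leq C\varepsilon|y|\leq C(K)\varepsilon\kappa^{-1/2}+C\varepsilon|\log\varepsilon|$. Near the boundary the half-ball is convex, so the segment stays in the domain and the estimate is unaffected.

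The upper slope bound \eqref{e.s10.upper.slope} is the main point. The idea is that a second graph which stays logarithmically close cannot tilt much relative to the first. First, at the base point we compare normals. Proposition \ref{p.conseq.almost.parallel} (via \eqref{e.proj.5}) gives $|1-\langle\nabla t_\alpha,\nabla t_{\alpha+1}\rangle|\lesssim\varepsilon^{1/2}|\log\varepsilon|^{3/2}$ wherever both distances are $O(|\log\varepsilon|)$. Hence the unit normals of the two sheets at comparable points differ by $O(\sqrt{\varepsilon^{1/2}|\log\varepsilon|^{3/2}})$, which is cruder than needed. A sharper route is the interpolation inequality for $F=f_{\alpha+1}-f_\alpha$. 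On the region considered, $0\leq F\leq C|\log\varepsilon|$; this follows from the ordering, \eqref{e.s10.logdistance}, and the fact that $F$ varies slowly. Moreover $|\nabla^2F|\leq C\varepsilon$ by \eqref{e.na.f}. The standard one-dimensional Landau inequality on segments of length $\ell=\sqrt{|\log\varepsilon|/\varepsilon}$ then gives $|\nabla F|\leq C(\|F\|_\infty/\ell+\varepsilon\ell)\leq C\sqrt{\varepsilon|\log\varepsilon|}$. At the boundary one uses segments pointing into the half-ball, or the even reflection permitted by \eqref{e.s10.graph.boundary}. Adding \eqref{e.s10.lower.slope} then yields \eqref{e.s10.upper.slope}.

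The obstacle is justifying $F\leq C|\log\varepsilon|$ on the whole enlarged ball, and not only near $x'$. We plan to restrict to the region where $F<4|\log\varepsilon|+1$, as in the statement of Lemma \ref{l.s10.gap}, and apply Landau on segments staying inside a slightly larger such region. If that fails, we fall back on bounding $F$ by comparing $t_{\alpha+1}$ with the coordinate height difference, using \eqref{e.s10.metric}. The conversion between the signed distance and the vertical gap costs only factors $1+O(|\nabla f|^2+|g-\mathrm{Id}|)$, which are absorbed.
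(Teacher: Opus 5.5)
\textbf{Your treatment of \eqref{e.s10.lower.slope} and of the metric bounds \eqref{e.s10.metric} is essentially the paper's.} One imprecision there: the graph heights over $\mathbb B^+_{2K\kappa^{-1/2}+20|\log\varepsilon|}(x')$ are only $O(K\kappa^{-1/2}+|\log\varepsilon|)$, not $O(|\log\varepsilon|)$, since $|f_\alpha(y)|$ can reach order $\varepsilon K^2\kappa^{-1}$. This is harmless, because that is all the mean value theorem needs.

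\textbf{The genuine gap is in \eqref{e.s10.upper.slope}.} A two-sided Landau inequality at every $y$ in the ball requires $\sup F\lesssim|\log\varepsilon|$ on a $\sqrt{|\log\varepsilon|/\varepsilon}$-neighbourhood of each such $y$. Nothing provides this. Estimate \eqref{e.s10.logdistance} controls the gap only at $x'$. The quantity $\kappa=A(r;0)$ bounds gaps from \emph{below} ($e^{-D_\alpha}\leq\kappa$), not from above. So away from $x'$ the two sheets may separate far beyond $C|\log\varepsilon|$, and ``$F$ varies slowly'' is exactly what is to be proved.

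Neither fallback repairs this.
\begin{itemize}
\item Restricting to $\{F<4|\log\varepsilon|+1\}$ gives the bound only on that sublevel set, and your segments of length $\ell$ can leave it. The claim is used on the whole ball: for $|B_\alpha-\mathrm{Id}|$ and uniform ellipticity in Lemma \ref{l.s10.gap}, and for the area comparison of $\Sigma_{\alpha+1}$ in Lemma \ref{l.s10.stability.transfer}.
\item Comparing $t_{\alpha+1}$ with the vertical gap is circular, since that is Claim \ref{c.s10.distance.comparison}, which uses these slope bounds. It also again gives no upper bound on the gap away from $x$.
\end{itemize}

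\textbf{The missing idea is that only the sign of $F$ matters along the segment.} The sheets are disjoint and ordered, so $F\geq0$ everywhere. Taylor's formula in the direction $e=-\nabla F(y)/|\nabla F(y)|$ then gives $0\leq F(y+se)\leq F(y)-s|\nabla F(y)|+C\varepsilon s^2$, that is, $|\nabla F(y)|\leq F(y)/s+C\varepsilon s$. This uses an upper bound on $F$ only at the single point $y$. The even reflection allowed by \eqref{e.s10.graph.boundary} handles segments that would cross $\{y_1=0\}$.

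Apply this at $y=x'$, where $F(x')\leq C|\log\varepsilon|$, with $s=\sqrt{|\log\varepsilon|/\varepsilon}$. This gives $|\nabla f_{\alpha+1}(x')|\leq|\nabla f_\alpha(x')|+C\sqrt{\varepsilon|\log\varepsilon|}$. Then propagate to the whole ball by integrating $|\nabla^2 f_{\alpha+1}|\leq C\varepsilon$ along segments from $x'$ of length $O(K\kappa^{-1/2}+|\log\varepsilon|)$. That propagation produces the terms $C(K)\varepsilon\kappa^{-1/2}+C\varepsilon|\log\varepsilon|$ in \eqref{e.s10.upper.slope}, and it is how the paper argues.
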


\begin{proof}
The first estimate follows from \eqref{e.s10.normalization}, \eqref{e.s10.graph.C2}, and the distance between 0 and $x'$.  At $x'$, the uniform metric and graph bounds together with \eqref{e.s10.logdistance} first give
$f_{\alpha+1}-f_\alpha\leq C|\log\varepsilon|$.  The positive difference of two disjoint graphs whose Hessians are bounded by $C\varepsilon$ satisfies
\begin{equation}\label{e.s10.gap.gradient}
|\nabla(f_{\alpha+1}-f_\alpha)|\leq C\sqrt{\varepsilon|\log\varepsilon|}
\end{equation}
at that point.  Indeed, Taylor's inequality in a unit direction chosen to decrease the difference gives
\[
0\leq(f_{\alpha+1}-f_\alpha)(y)
-s|\partial(f_{\alpha+1}-f_\alpha)(y)|+C\varepsilon s^2.
\]
Taking $s$ comparable to $\sqrt{|\log\varepsilon|/\varepsilon}$ gives \eqref{e.s10.gap.gradient}.  

Combining \eqref{e.s10.gap.gradient} with \eqref{e.s10.lower.slope} at $x'$  gives \eqref{e.s10.upper.slope} there.  Integrating \eqref{e.s10.graph.C2} along base segments of length $O(K\kappa^{-1/2}+|\log\varepsilon|)$ proves \eqref{e.s10.lower.slope}-\eqref{e.s10.upper.slope} on $B_{2K\kappa^{-1/2}+20|\log\varepsilon|}^+(x')$.  The graph values on this set remain in an ambient coordinate neighbourhood of radius $C(K)(\kappa^{-1/2}+|\log\varepsilon|)$.  Taylor's theorem for the rescaled metric then gives \eqref{e.s10.metric}.
\end{proof}

\begin{claim}\label{c.s10.distance.comparison}
For every $y\in\mathbb B_{2K\kappa^{-1/2}}^+(x')$ satisfying
$f_{\alpha+1}(y)-f_\alpha(y)\leq4|\log\varepsilon|+1$,
\begin{align}
&\left||t_{\alpha+1}(y,f_\alpha(y))|-(f_{\alpha+1}-f_\alpha)(y)\right|
+\left||t_\alpha(y,f_{\alpha+1}(y))|-(f_{\alpha+1}-f_\alpha)(y)\right|
\notag\\
&\quad\leq C(K)\left(
\varepsilon\kappa^{-1/2}|\log\varepsilon|
+\varepsilon^2\kappa^{-1}|\log\varepsilon|
+\varepsilon|\log\varepsilon|^2\right)
\leq C(K)M^{-\delta}.
\label{e.s10.distance.gap}
\end{align}
In particular,
\begin{equation}\label{e.s10.weights.compare}
e^{-(f_{\alpha+1}-f_\alpha)}
=\bigl(1+O(C(K)M^{-\delta})\bigr)
 e^{-|t_{\alpha+1}(y,f_\alpha(y))|},
\end{equation}
and the analogous formula holds with the two sheets interchanged.  If
$(y,f_\alpha(y))\in\Sigma_\alpha\cap B_r^+(0)$ and
$(y,f_{\alpha+1}(y))\in\Sigma_{\alpha+1}\cap B_r^+(0)$, then
\[
e^{-|t_{\alpha+1}(y,f_\alpha(y))|}\leq\kappa,
\qquad
e^{-|t_\alpha(y,f_{\alpha+1}(y))|}\leq\kappa.
\]
\end{claim}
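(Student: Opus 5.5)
The plan is to compare the twisted Fermi time $t_{\alpha+1}$ at the point $q=(y,f_\alpha(y))$ with the vertical coordinate gap by moving along the vertical segment $s\mapsto (y,f_\alpha(y)+s)$, $0\le s\le d:=(f_{\alpha+1}-f_\alpha)(y)$. The segment ends at $(y,f_{\alpha+1}(y))\in\Sigma_{\alpha+1}$, where $t_{\alpha+1}=0$. The fundamental theorem of calculus then gives
\[
|t_{\alpha+1}(q)| = \Bigl|\int_0^{d}\langle\nabla t_{\alpha+1},\partial_{x_{n+1}}\rangle\,ds\Bigr|.
\]
So it suffices to show $\langle \nabla t_{\alpha+1},\partial_{x_{n+1}}\rangle = 1+O(\cdot)$ along the segment. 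Since the segment has length at most $4|\log\varepsilon|+1$, any pointwise error is multiplied by a factor $|\log\varepsilon|$.

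To estimate the integrand, use $\nabla t_{\alpha+1}=\widetilde Z_{\alpha+1}$ (Lemma \ref{l.Fc.vs.tFc.1}), a unit vector field. On $\Sigma_{\alpha+1}$ it equals the unit normal $\nabla F/|\nabla F|$ with $F=x_{n+1}-f_{\alpha+1}$. Hence
\[
\langle \widetilde Z,\partial_{x_{n+1}}\rangle = \frac{g^{n+1,n+1}-g^{n+1,i}\partial_if_{\alpha+1}}{\varpi}\cdot\frac{1}{\ldots}.
\]
A careful expansion with $g=\mathrm{Id}+O(|g-\mathrm{Id}|)$ gives $1+O(|g-\mathrm{Id}|+|\nabla f_{\alpha+1}|^2)$. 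The error is quadratic in the slope because the constant term $1-1/\sqrt{1+|p|^2}$ is $O(|p|^2)$. Off the sheet, Lemma \ref{l.tilde.t.Pi} (equivalently $|\nabla^2 t|\lesssim\varepsilon(1+|t|)$) shows that $\widetilde Z$ changes by $O(\varepsilon|\log\varepsilon|)$ over a distance $O(|\log\varepsilon|)$. Inserting Claim \ref{c.s10.graph.estimates} then gives the integrand error
\[
C(K)\bigl(\varepsilon\kappa^{-1/2}+\varepsilon|\log\varepsilon|\bigr)+C(K)\bigl(\varepsilon^2\kappa^{-1}+\varepsilon|\log\varepsilon|\bigr)+C\varepsilon|\log\varepsilon|,
\]
where the middle term comes from $|\nabla f_{\alpha+1}|^2$, using $(\sqrt{\varepsilon|\log\varepsilon|})^2=\varepsilon|\log\varepsilon|$. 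Multiplying by the length $|\log\varepsilon|$ produces exactly the three terms in \eqref{e.s10.distance.gap}. The symmetric quantity $|t_\alpha(y,f_{\alpha+1}(y))|$ is handled identically, with the roles of the sheets swapped, integrating $\langle\nabla t_\alpha,\partial_{x_{n+1}}\rangle$ from $\Sigma_\alpha$ upward.

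The last inequality uses \eqref{e.s10.threshold.general}. Since $\kappa\ge M\varepsilon^2|\log\varepsilon|^2$, we have $\varepsilon\kappa^{-1/2}|\log\varepsilon|\le M^{-1/2}$ and $\varepsilon^2\kappa^{-1}|\log\varepsilon|\le M^{-1}|\log\varepsilon|^{-1}$, while $\varepsilon|\log\varepsilon|^2\to0$. With $\delta<1/48<1/2$ all three are $\le C(K)M^{-\delta}$ for small $\varepsilon$. Then \eqref{e.s10.weights.compare} follows by exponentiating, since $e^{O(\eta)}=1+O(\eta)$ for bounded $\eta$. For the final bounds, $e^{-|t_{\alpha+1}(y,f_\alpha(y))|}\le e^{-D_\alpha(y,f_\alpha(y))}\le A(r;0)=\kappa$ whenever the point lies in $\Sigma_\alpha\cap B_r^+(0)$, by definition of $D_\alpha$ and $A$. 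The same holds for the other sheet.

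The main obstacle is making the integrand estimate rigorous off $\Sigma_{\alpha+1}$ and near the boundary. The twisted coordinates are not needed here, since $t_{\alpha+1}=\tilde t_{\alpha+1}$, but one must ensure the vertical segment stays in the region where Lemma \ref{l.tilde.t.Pi} applies. One must also confirm that the first-order metric error $g-\mathrm{Id}=O(\varepsilon|y|)$ from Claim \ref{c.s10.graph.estimates} is genuinely linear, not quadratic, so that it enters as $\varepsilon\kappa^{-1/2}$. This is the source of the $|\log\varepsilon|^2$ threshold noted in Remark \ref{r.s10.threshold}. To control the variation of $\widetilde Z$ along the segment, I would first try integrating $|\nabla\widetilde Z|\lesssim\varepsilon(1+|t|)$.
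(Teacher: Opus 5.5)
Your argument is correct, but it takes a different route from the paper.

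\textbf{The paper's route.} It treats $|t_{\alpha+1}(y,f_\alpha(y))|$ purely as a Riemannian distance. The vertical segment gives the upper bound $(1+C|g-\mathrm{Id}|)\,d$. The lower bound comes from the elementary fact that a Euclidean graph of slope at most $s$ lies at distance at least $d/\sqrt{1+s^2}$ from a point at vertical gap $d$. This is combined with a Riemannian--Euclidean comparison on a coordinate neighbourhood of radius $C|\log\varepsilon|$, to which all minimizing segments are confined.

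\textbf{Your route.} You write $|t_{\alpha+1}(y,f_\alpha(y))|=\int_0^d\partial_{x_{n+1}}t_{\alpha+1}\,ds$ and control the integrand. On $\Sigma_{\alpha+1}$ it equals $dF(\partial_{x_{n+1}})/|\nabla F|=1/\varpi=1+O(|g-\mathrm{Id}|+|\nabla f_{\alpha+1}|^2)$; your displayed formula with ``$\cdot\frac{1}{\ldots}$'' should simply read this. Along the segment the integrand varies by at most $\int(|\nabla^2 t_{\alpha+1}|+|\Gamma|)\,ds$.

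\textbf{Comparison.} Your approach yields both inequalities at once. It never locates minimizing geodesics, and it needs the slope of $\Sigma_{\alpha+1}$ only at the single point $y$. The price is that you need smoothness and Hessian bounds for $t_{\alpha+1}$ along the whole segment, whereas the paper uses only that $t_{\alpha+1}$ is a distance.

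\textbf{One bookkeeping point.} The bound $|\nabla^2 t|\lesssim\varepsilon(1+|t|)$ from Lemma \ref{l.tilde.t.Pi} gives a variation of $O(\varepsilon|\log\varepsilon|^2)$ along a segment of length $O(|\log\varepsilon|)$, not $O(\varepsilon|\log\varepsilon|)$ as you state. After integrating, that yields $\varepsilon|\log\varepsilon|^3$ instead of the $\varepsilon|\log\varepsilon|^2$ term in \eqref{e.s10.distance.gap}. To recover exactly the stated bound, use $|\nabla^2 t_{\alpha+1}|\lesssim\varepsilon$ on the logarithmic strip. This follows from the Riccati equation \eqref{e.Riccati}, with $|\sff_{\Sigma_{\alpha+1}}|=O(\varepsilon)$ and $|\operatorname{Rm}_g|=O(\varepsilon^2)$. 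In any case the extra logarithm is harmless for the bound $C(K)M^{-\delta}$ and for \eqref{e.s10.weights.compare}, which are all that is used later.
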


\begin{proof}
A vertical segment gives the upper distance bound
\[
|t_{\alpha+1}(y,f_\alpha(y))|
\leq(1+C|g-\mathrm{Id}|)(f_{\alpha+1}-f_\alpha)(y),
\]
and similarly from the upper sheet.  For the reverse inequality, a Euclidean graph of slope at most $s$ has distance at least $d/\sqrt{1+s^2}$ from a point whose vertical gap is $d$.  Indeed, at horizontal displacement $a$ the vertical difference is at least $\max\{d-s|a|,0\}$, and minimizing
\[
|a|^2+\max\{d-s|a|,0\}^2
\]
gives $d^2/(1+s^2)$.  The comparison applies to the extended graphs used in the definition of the signed distances in Section \ref{section twisted Fermi}.  The vertical upper bound confines every minimizing segment to a coordinate neighbourhood of radius $C|\log\varepsilon|$.

By Claims \ref{c.s10.graph.estimates}, the squared slope and the metric error on this neighbourhood are bounded by
\[
C(K)(\varepsilon^2\kappa^{-1}+\varepsilon|\log\varepsilon|),
\qquad
C(K)(\varepsilon\kappa^{-1/2}+\varepsilon|\log\varepsilon|),
\]
respectively.  Multiplying by a gap of order $|\log\varepsilon|$ proves the first inequality in \eqref{e.s10.distance.gap}.  Under \eqref{e.s10.threshold.general}, its three terms are bounded by
\[
C(K)M^{-1/2},
\qquad C(K)M^{-1}|\log\varepsilon|^{-1},
\qquad C(K)\varepsilon|\log\varepsilon|^2.
\]
Choose $M$ after $K$ and then choose $\varepsilon$ sufficiently small.  Since $\delta<1/48$, this proves the second inequality in \eqref{e.s10.distance.gap}; \eqref{e.s10.weights.compare} follows by exponentiation.  If $(y,f_\alpha(y))\in\Sigma_\alpha\cap B_r^+(0)$, then the definition of $A(r;0)$ gives
$e^{-|t_{\alpha+1}(y,f_\alpha(y))|}\leq e^{-D_\alpha(y)}\leq\kappa$.  At $(y,f_{\alpha+1}(y))\in\Sigma_{\alpha+1}\cap B_r^+(0)$, the definition of $A(r;0)$ gives
$e^{-|t_\alpha(y,f_{\alpha+1}(y))|}\leq\kappa$.
\end{proof}

We now need to compute the operator in \eqref{e.s10.gap.inequality}.  
The proof of Lemma \ref{mean curvature expansion tf} in Appendix \ref{ap:geometric.approx} already contains the essential computations to check that by Claims \ref{c.s10.graph.estimates} for every $y\in\mathbb B_{2K\kappa^{-1/2}}^+(x')$,
\[
|B_\alpha-\mathrm{Id}|
\leq C(K)(\varepsilon\kappa^{-1/2}+\varepsilon^2\kappa^{-1}
+\varepsilon|\log\varepsilon|),
\qquad
|\nabla B_\alpha|\leq C\varepsilon.
\]
After $M$ is fixed and $\varepsilon$ is decreased, these inequalities give \eqref{e.s10.B.bounds}.  Remark \ref{r.s10.conormal} gives the boundary condition.

Fix
\[
y\in\mathbb B_{2K\kappa^{-1/2}}^+(x')
\quad\text{with}\quad
f_{\alpha+1}(y)-f_\alpha(y)<4|\log\varepsilon|+1.
\]
Apply Corollary 8.3 at $(y,f_\alpha(y))$ and $(y,f_{\alpha+1}(y))$.  By the radius choice in the proof of Proposition \ref{p.s10.decay},
\[
B_{300|\log\varepsilon|^2}^+\bigl((y,f_\gamma(y))\bigr)
\subset B_r^+(0),
\qquad \gamma\in\{\alpha,\alpha+1\}.
\]
The remainder in each of the two Toda equations is
\[
C\bigl(\varepsilon^2+\kappa^{3/2}+\varepsilon^{1/6}\kappa\bigr).
\]

In the equation for the lower sheet, discard the nonnegative interaction with its lower neighbour; in the equation for the upper sheet, discard the nonpositive interaction with its upper neighbour.  Since $A_1=A_{-1}$,
\[
H_\alpha\geq-\frac{2A_1^2}{\sigma_0}
 e^{-|t_{\alpha+1}(y,f_\alpha(y))|}
-C(\varepsilon^2+\kappa^{3/2}+\varepsilon^{1/6}\kappa),
\]
while
\[
H_{\alpha+1}\leq\frac{2A_1^2}{\sigma_0}
 e^{-|t_\alpha(y,f_{\alpha+1}(y))|}
+C(\varepsilon^2+\kappa^{3/2}+\varepsilon^{1/6}\kappa).
\]
Multiply these inequalities by the corresponding positive densities and subtract the lower-sheet inequality from the upper-sheet inequality to obtain \eqref{e.s10.gap.inequality}. Indeed, after division by $\kappa$, all remaining terms are bounded by
\[
C(K)\left(
\frac{\varepsilon^2(1+|\log\varepsilon|)}{\kappa}
+\kappa^{1/2}+\varepsilon^{1/6}\right).
\]
The first term is at most $C(K)/M$ under \eqref{e.s10.threshold.general}, and the other two tend to zero uniformly because $A(5R/6;0)=o(1)$.  This proves the lemma.
\end{proof}

\subsection{The rescaled inequality and its stability}\label{ss.s10.rescaled}

Use the following rescaling:
\[
v_\alpha(y)
=(f_{\alpha+1}-f_\alpha)(\kappa^{-1/2}y)-|\log\kappa|,
\qquad
\widetilde B_\alpha(y)=B_\alpha(\kappa^{-1/2}y).
\]
After rescaling, the base coordinate of $x$ is $\kappa^{1/2}x'$.  Then set
\begin{equation}\label{e.s10.V.definition}
V_\alpha(y)=\max\left\{e^{-v_\alpha(y)},\frac{\varepsilon^4}{\kappa}\right\}.
\end{equation}
At every point
\[
y\in\mathbb B_K^+(\kappa^{1/2}x')
\quad\text{such that}\quad
(f_{\alpha+1}-f_\alpha)(\kappa^{-1/2}y)
\leq4|\log\varepsilon|,
\]
one has $V_\alpha(y)=e^{-v_\alpha(y)}$.
The proof of Proposition \ref{p.s10.decay} shows that
$(f_{\alpha+1}-f_\alpha)(x')<4|\log\varepsilon|$, and hence $V_\alpha(\kappa^{1/2}x')=e^{-v_\alpha(\kappa^{1/2}x')}$.

For every $y\in\mathbb B_K^+(\kappa^{1/2}x')$ satisfying
$(f_{\alpha+1}-f_\alpha)(\kappa^{-1/2}y)<4|\log\varepsilon|$, rescaling \eqref{e.s10.gap.inequality} gives
\[
\operatorname{div}(\widetilde B_\alpha\nabla v_\alpha)
\leq\frac{4A_1^2}{\sigma_0}\bigl(1+C(K)M^{-\delta}\bigr)e^{-v_\alpha}
+C(K)M^{-\delta}.
\]
Consequently, $V_\alpha$ is a positive, continuous, locally Lipschitz weak subsolution of
\begin{equation}\label{e.s10.V.inequality}
\left\{
\begin{aligned}
-\operatorname{div}(\widetilde B_\alpha\nabla V_\alpha)
&\leq\frac{4A_1^2}{\sigma_0}\bigl(1+C(K)M^{-\delta}\bigr)V_\alpha^2
-V_\alpha^{-1}\left\langle\widetilde B_\alpha\nabla V_\alpha,
\nabla V_\alpha\right\rangle
+C(K)M^{-\delta}V_\alpha,\\
\left\langle\widetilde B_\alpha\nabla V_\alpha,\nu\right\rangle&=0
\qquad\text{on }\{y_1=0\}.
\end{aligned}
\right.
\end{equation}
The rescaled coefficient satisfies
\[
|\widetilde B_\alpha-\mathrm{Id}|\leq C(K)M^{-\delta},
\qquad
|\nabla\widetilde B_\alpha|\leq C\varepsilon\kappa^{-1/2},
\qquad
\frac12\mathrm{Id}\leq\widetilde B_\alpha\leq2\mathrm{Id}.
\]

\begin{lemma}[Transfer of almost stability]\label{l.s10.stability.transfer}
Fix $\tau\in(1,2)$ independently of $K$, $M$, and $\varepsilon$.  For every $\eta\in C^1(\mathbb B_K^+(\kappa^{1/2}x'))$ supported away from $\partial_+\mathbb B_K^+(\kappa^{1/2}x')$,
\begin{align}
\frac{4A_1^2}{\sigma_0}\int_{\mathbb B_K^+(\kappa^{1/2}x')}V_\alpha\eta^2
&\leq\bigl(\tau+C(K,\tau)M^{-\delta}\bigr)
\int_{\mathbb B_K^+(\kappa^{1/2}x')}\left\langle\widetilde B_\alpha\nabla\eta,\nabla\eta\right\rangle
\notag\\
&\quad+C(K,\tau)M^{-\delta}\int_{\mathbb B_K^+(\kappa^{1/2}x')}\eta^2.
\label{e.s10.almost.stability}
\end{align}
and similarly in a full ball in the interior.
\end{lemma}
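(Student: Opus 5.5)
We derive \eqref{e.s10.almost.stability} from Proposition \ref{p.stability.toda}, following \cite[Section 9]{wang-weiadv}. The test functions will be supported only on the two adjacent sheets $\Sigma_\alpha$ and $\Sigma_{\alpha+1}$, so that $N=2$. Given $\eta$, we first unscale it: set $\hat\eta(y)=\eta(\kappa^{1/2}y)$ on the base ball $\mathbb B^+_{K\kappa^{-1/2}}(x')$. We then lift it to the lower sheet by
\[
\eta_\alpha(y,f_\alpha(y))=\hat\eta(y)
\]
in graph coordinates, which is the sheet coordinate system of Definition \ref{def.notation.balls}. On the upper sheet we take the antisymmetric choice
\[
\eta_{\alpha+1}=-\hat\eta\circ(\text{graph projection}).
\]
With this choice the combination $\eta_{\alpha+1}+\eta_\alpha\circ\Pi_\alpha$ doubles on the pair, rather than cancelling. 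Concretely, we use the term of \eqref{e.s9.stability} with index $\alpha+1$, whose weight is $e^{-|t_\alpha|}$ and whose bracket is $[\eta_{\alpha+1}+\eta_\alpha(\Pi_\alpha)]^2$. With opposite signs this bracket becomes $[\hat\eta(\Pi)-\hat\eta]^2$ up to a sign convention, so the sign must be fixed so that the bracket equals $4\hat\eta^2$ modulo projection errors. I will therefore take $\eta_{\alpha+1}=+\hat\eta$, which makes the bracket $(2\hat\eta)^2$ and the gradient energy $2\int|\nabla\hat\eta|^2$. All other $\eta_\gamma$ vanish, so the remaining interaction terms are nonnegative and can be dropped.

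The right-hand side of \eqref{e.s9.stability} is then at least
\[
\frac{2A_1^2}{\tau\sigma_0}\int_{\Sigma_{\alpha+1}}e^{-|t_\alpha|}\,4\hat\eta^2 .
\]
This is to be compared with $2\int\langle B\nabla\hat\eta,\nabla\hat\eta\rangle+Q$. Dividing by $2$ produces the coefficient $4A_1^2/(\tau\sigma_0)$ on the left and the Dirichlet energy on the right. Multiplying through by $\tau$ gives the structure of \eqref{e.s10.almost.stability}.

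The conversions are as follows.
\begin{itemize}
\item \emph{Weight.} Claim \ref{c.s10.distance.comparison}, together with \eqref{e.s10.weights.compare}, replaces $e^{-|t_\alpha|}$ by $e^{-(f_{\alpha+1}-f_\alpha)}$ up to a factor $1+C(K)M^{-\delta}$.
\item \emph{Projection.} Lemma \ref{l.s9.change.base} and \eqref{e.s9.projection} replace $\eta_\alpha\circ\Pi_\alpha$ by $\hat\eta$ on the upper sheet, with errors absorbed into the $Q$-type terms.
\item \emph{Area and energy.} Claim \ref{c.s10.graph.estimates} shows that the area forms $dA_{\gamma,0}$ and the intrinsic gradients $|\nabla_{\gamma,0}\eta_\gamma|^2$ agree with $dy$ and with the Euclidean energy up to factors $1+C(K)M^{-\delta}$. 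Since $|\widetilde B_\alpha-\mathrm{Id}|\le C(K)M^{-\delta}$, the Euclidean energy is in turn comparable to $\langle\widetilde B_\alpha\nabla\eta,\nabla\eta\rangle$.
\item \emph{Truncation.} Where the gap exceeds $4|\log\varepsilon|$ we use $V_\alpha\le\max\{e^{-v},\varepsilon^4/\kappa\}$. There $e^{-v}\le\varepsilon^4/\kappa$, and this contributes at most $\varepsilon^4\kappa^{-1}\int\eta^2\le M^{-1}\int\eta^2$, which is absorbed.
\end{itemize}

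Rescaling $y\mapsto\kappa^{1/2}y$ multiplies the Dirichlet energy by $\kappa^{-1}$ relative to the mass terms. The weight $e^{-(f_{\alpha+1}-f_\alpha)}$ becomes $\kappa V_\alpha$, so the inequality after dividing by $\kappa^{1-n/2}$ has the form \eqref{e.s10.almost.stability}. The error $Q$ from \eqref{e.s9.Q} must also be rescaled. Its gradient coefficient is $\varepsilon^{1/4}+A^{1/2}=o(1)$, which is absorbed into $C(K,\tau)M^{-\delta}$ once $\varepsilon$ is small. Its mass coefficient is $\varepsilon^2+A^{7/6}+\varepsilon^{1/7}A$, where $A=A(r'+240|\log\varepsilon|^2;x)\le\kappa$ provided the ball lies in $B_r^+(0)$. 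This is guaranteed by $KR_*$ and the choice $R_*\ge200|\log\varepsilon|^2$. After division by $\kappa$ the mass coefficient becomes $\varepsilon^2/\kappa+\kappa^{1/6}+\varepsilon^{1/7}\le M^{-1}+o(1)$, using \eqref{e.s10.threshold.general}. All of these are $\le C(K,\tau)M^{-\delta}$.

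Boundary points need no extra argument. Proposition \ref{p.stability.toda} permits test functions that do not vanish on $\partial_0\Sigma$, which is why $\eta$ is only required to vanish near $\partial_+$. In the interior case the same argument runs on full balls.

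The main obstacle is the bookkeeping of the projection errors between the two sheets. The error in \eqref{e.s9.projection} is only $O(\varepsilon^{1/3})$ in absolute terms and $O(\varepsilon^{1/5})$ in derivative, and we need this to remain small relative to $\kappa$ times the mass after rescaling. Lemma \ref{l.s9.change.base} already bounds it by $\varepsilon^{1/4}$ times the energy plus $\varepsilon^{1/7}A$ times the mass, so I expect the argument to close. The point to check carefully is that the sign choice really yields the factor $4$ rather than $0$.
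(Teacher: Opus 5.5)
Your route is the paper's. You lift the same rescaled function by graph coordinates to both $\Sigma_\alpha$ and $\Sigma_{\alpha+1}$ with the \emph{same} sign; the bracket in \eqref{e.s9.stability} is a sum, so the sign is not the delicate point. You then use $N=2$, the weight comparison \eqref{e.s10.weights.compare}, the area and energy comparisons of Claim \ref{c.s10.graph.estimates}, the rescaled bound on $Q$ from \eqref{e.s9.Q}, and the set where $V_\alpha=\varepsilon^4/\kappa$.

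The gap is the transport step. Lemma \ref{l.s9.change.base} and \eqref{e.s9.projection} do not give what you need. They compare the round trip $\Pi_\alpha(\Pi_{\alpha+1}(\cdot,0),0)$ with the identity, and their errors are already inside $Q$. That bound holds for arbitrary $(\eta_\gamma)$ and carries no information about how your two test functions are related, so the mismatch cannot be "absorbed into $Q$-type terms." With your choice, the bracket on $\Sigma_{\alpha+1}$ is $\hat\eta(y)+\hat\eta(P(y))$, where $\Pi_\alpha(y,f_{\alpha+1}(y))=(P(y),f_\alpha(P(y)))$. So you must control the offset $P(y)-y$ between the twisted Fermi projection and the vertical graph identification. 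This offset is the tilt of $Z_\alpha$ against $\partial_{x_{n+1}}$ times the gap. The slope estimates of Claim \ref{c.s10.graph.estimates} together with Remark \ref{r.tZ-Z} give $|P(y)-y|\le C(K)(\varepsilon\kappa^{-1/2}|\log\varepsilon|+\sqrt{\varepsilon}\,|\log\varepsilon|^{3/2}+\varepsilon|\log\varepsilon|^3)$; this is \eqref{e.s10.projection.displacement}. At the threshold \eqref{e.s10.threshold.general}, this is only $C(K)M^{-1/2}$ in unscaled coordinates, not $O(\varepsilon^{1/3})$.

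The step closes because of the scaling, not because the offset is small in absolute terms. Since $\hat\eta(y)=\eta(\kappa^{1/2}y)$, the mean value theorem gives
$|\hat\eta(P(y))-\hat\eta(y)|^2\le\kappa|P(y)-y|^2\int_0^1|\nabla\eta|^2\bigl(\kappa^{1/2}((1-s)y+sP(y))\bigr)\,ds$.
Combine this with $e^{-|t_\alpha|}\le\kappa$ and a bounded-Jacobian change of variables. The weighted transport error is then at most $C\,\kappa\sup|P-y|^2\cdot\kappa^{1-n/2}\int|\nabla\eta|^2$, where
$\kappa|P-y|^2\le C(K)(\varepsilon^2|\log\varepsilon|^2+\kappa\varepsilon|\log\varepsilon|^3+\kappa\varepsilon^2|\log\varepsilon|^6)$.

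So the error is controlled by the rescaled Dirichlet energy, not by $\kappa$ times the mass as you anticipate. It enters through $(a+b)^2\ge4(1-M^{-\delta})a^2-M^{\delta}(a-b)^2$, and the factor $M^{\delta}$ is absorbed once $\varepsilon$ is small for fixed $K$ and $M$. With this estimate in place of the appeal to Lemma \ref{l.s9.change.base}, the rest of your argument goes through as in the paper.
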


\begin{proof}
We put the same base test function on the two adjacent sheets, compare its
projected values, and rescale, as in \cite[Lemma 9.5 and (9.26)]{wang-weiadv}.
Throughout the proof, $\eta$ denotes the rescaled test function, extended
by zero outside $\mathbb B_K^+(\kappa^{1/2}x')$.  Set
\[
\eta_\gamma(y,f_\gamma(y))=\eta(\kappa^{1/2}y)
\quad(\gamma=\alpha,\alpha+1),
\qquad \eta_\gamma=0\quad(\gamma\notin\{\alpha,\alpha+1\}).
\]
Proposition \ref{p.stability.toda}, with $N=2$, gives
\begin{align*}
\frac{2A_1^2}{\tau\sigma_0}
\int_{\Sigma_{\alpha+1}}e^{-|t_\alpha|}
 (\eta_{\alpha+1}+\eta_\alpha\circ\Pi_\alpha)^2\,dA_{\alpha+1,0}
\leq{}&\sum_{\gamma=\alpha,\alpha+1}
\int_{\Sigma_\gamma}|\nabla_{\gamma,0}\eta_\gamma|^2\,dA_{\gamma,0}
+Q((\eta_\gamma)_\gamma),
\end{align*}
Claim \ref{c.s10.graph.estimates},
Remark \ref{r.tZ-Z}, and the coordinate-flow estimates used in the proof of
Lemma \ref{l.s9.change.base} give, whenever $y\in\mathbb B_{K\kappa^{-1/2}}^+(x')$ is such that $f_{\alpha+1}(y)-f_\alpha(y)\leq4|\log\varepsilon|$,
\begin{align}
|\Pi_\alpha(y,f_{\alpha+1}(y))-y|
&\leq C(K)\Bigl(
\varepsilon\kappa^{-1/2}|\log\varepsilon|
+\sqrt{\varepsilon}\,|\log\varepsilon|^{3/2}
+\varepsilon|\log\varepsilon|^3\Bigr).
\label{e.s10.projection.displacement}
\end{align}

\noindent \textit{Claim}: Let $\mathcal{S}\coloneqq \{y\in\mathbb B_{K\kappa^{-1/2}}^+(x'): f_{\alpha+1}(y)-f_\alpha(y)\leq4|\log\varepsilon|\}$. Then 
\begin{align}\label{e.s10.test.transport}
&\int_{\mathcal{S}}
 e^{-|t_\alpha(y,f_{\alpha+1}(y))|}
 \left|\eta_\alpha(\Pi_\alpha(y,f_{\alpha+1}(y)))
       -\eta_{\alpha+1}(y,f_{\alpha+1}(y))\right|^2\,dy
\notag\\
&\qquad\qquad\leq C(K)M^{-2\delta}\kappa^{1-n/2}
 \int_{\mathbb B_K^+(\kappa^{1/2}x')}|\nabla\eta|^2\,dy.
 \end{align}
\noindent \textit{Proof of Claim.} To obtain \eqref{e.s10.test.transport}, let $\Pi_\alpha(y, f_{\alpha + 1}(y))=(P(y), f_\alpha(P(y)))$ and notice that 
\begin{equation}\label{e.estimate.difference.projections}
\begin{aligned}
	|\eta_\alpha(\Pi_\alpha(y,f_{\alpha+1}(y)))
       -&\eta_{\alpha+1}(y,f_{\alpha+1}(y))|^2 = |\eta(\kappa^{1/2}P(y))-\eta(\kappa^{1/2}y)|^2\\
       &\leq \kappa |P(y)-y|^2\int_0^1|\nabla\eta(\kappa^{1/2}((1-s)y+sP(y)))|^2\, ds.
\end{aligned}       
\end{equation}
By \eqref{e.s10.projection.displacement}, 
\[
|P(y)-y|^2\leq C(K)\bigl(\varepsilon^2\kappa^{-1}|\log \varepsilon|^2+\varepsilon \kappa |\log \varepsilon|^3+\varepsilon^2\kappa |\log \varepsilon|^6\bigr), 
\]
so that, for fixed $K$ and $M$, we can pick $\varepsilon$ small enough that 
\[
C(K)\bigl(\varepsilon^2\kappa^{-1}|\log \varepsilon|^2+\varepsilon \kappa |\log \varepsilon|^3+\varepsilon^2\kappa |\log \varepsilon|^6\bigr)\leq C(K)M^{-2\delta}
\]
Using $e^{-|t_\alpha|}\leq\kappa$ and integrating the left-hand side of \eqref{e.estimate.difference.projections} over $\mathcal{S}$ and the right-hand side over $\mathbb{B}^+_{K\kappa^{-1/2}(x')}$ gives \eqref{e.s10.test.transport}. \qed

Now notice that $(\eta_{\alpha+1}+\eta_\alpha\circ\Pi_\alpha)^2\geq 4(1-M^{-\delta})\eta_{\alpha+1}^2-M^\delta|\eta_{\alpha+1}-\eta_\alpha\circ\Pi_\alpha|^2$, so the previous claim gives
\begin{align*}
\int_{\mathcal{S}}
 e^{-|t_\alpha(y,f_{\alpha+1}(y))|}(\eta_{\alpha+1}+\eta_\alpha\circ\Pi_\alpha)^2\,dy\geq{}& 4(1-M^{-\delta})
 \int_{\mathcal{S}}
 e^{-|t_\alpha(y,f_{\alpha+1}(y))|}\eta(\kappa^{1/2}y)^2\,dy\\
&\quad-C(K)M^{-\delta}\kappa^{1-n/2}
 \int_{\mathbb B_K^+(\kappa^{1/2}x')}|\nabla\eta|^2\,dy.
\end{align*}
Under the graph parametrizations, Claim \ref{c.s10.graph.estimates} gives
\[
dA_{\gamma,0}=\bigl(1+O(C(K)M^{-\delta})\bigr)\,dy
\quad(\gamma=\alpha,\alpha+1),
\]
and
\[
\int_{\Sigma_\gamma}|\nabla_{\gamma,0}\eta_\gamma|^2\,dA_{\gamma,0}
\leq\bigl(1+C(K)M^{-\delta}\bigr)
\int_{\mathbb B_{K\kappa^{-1/2}}^+(x')}
\left|\nabla_y[\eta(\kappa^{1/2}y)]\right|^2\,dy.
\]
The weight comparison in \eqref{e.s10.weights.compare} and the formula
immediately following it give
\[
e^{-|t_\alpha(y,f_{\alpha+1}(y))|}
=\bigl(1+O(C(K)M^{-\delta})\bigr)
 e^{-(f_{\alpha+1}-f_\alpha)(y)}
\]
when $f_{\alpha+1}(y)-f_\alpha(y)\leq4|\log\varepsilon|$.
Now set $Y=\kappa^{1/2}y$.  On this set,
$e^{-(f_{\alpha+1}-f_\alpha)(y)}=\kappa V_\alpha(Y)$, so
\begin{align*}
&\int_{\mathcal S}
 e^{-(f_{\alpha+1}-f_\alpha)(y)}\eta(\kappa^{1/2}y)^2\,dy=\kappa^{1-n/2}
 \int_{\mathcal{S}'}
 V_\alpha(Y)\eta(Y)^2\,dY,
\end{align*}
where $\mathcal{S'}=\{Y\in\mathbb B_K^+(\kappa^{1/2}x'):(f_{\alpha+1}-f_\alpha)(\kappa^{-1/2}Y)\leq4|\log\varepsilon|\}$, whereas
\[
\int_{\mathbb B_{K\kappa^{-1/2}}^+(x')}
\left|\nabla_y[\eta(\kappa^{1/2}y)]\right|^2\,dy
=\kappa^{1-n/2}
 \int_{\mathbb B_K^+(\kappa^{1/2}x')}|\nabla\eta|^2\,dY.
\]
Substitute these comparisons and the lower bound for the square into the
stability inequality, and cancel $\kappa^{1-n/2}$.  We obtain
\begin{align*}
&\frac{8A_1^2}{\tau\sigma_0}\bigl(1-C(K)M^{-\delta}\bigr)
 \int_{\mathcal{S}'}
 V_\alpha\eta^2\,dY\leq\bigl(2+C(K,\tau)M^{-\delta}\bigr)
 \int_{\mathbb B_K^+(\kappa^{1/2}x')}|\nabla\eta|^2\,dY
 +\kappa^{n/2-1}Q((\eta_\gamma)_\gamma).
\end{align*}
Moreover,
\[
\int_{\Sigma_\gamma}\eta_\gamma^2\,dA_{\gamma,0}
\leq\bigl(1+C(K)M^{-\delta}\bigr)\kappa^{-n/2}
\int_{\mathbb B_K^+(\kappa^{1/2}x')}\eta^2\,dY
\quad(\gamma=\alpha,\alpha+1).
\]
Thus \eqref{e.s9.Q}, with $N=2$, gives
\begin{align*}
\kappa^{n/2-1}Q((\eta_\gamma)_\gamma)
&\leq C(2,\tau)(\varepsilon^{1/4}+\kappa^{1/2})
 \int_{\mathbb B_K^+(\kappa^{1/2}x')}|\nabla\eta|^2\,dY\\
&\quad+C(2,\tau)\left(\frac{\varepsilon^2}{\kappa}
 +\kappa^{1/6}+\varepsilon^{1/7}\right)
 \int_{\mathbb B_K^+(\kappa^{1/2}x')}\eta^2\,dY\\
&\leq C(K,\tau)M^{-\delta}
 \left(\int_{\mathbb B_K^+(\kappa^{1/2}x')}|\nabla\eta|^2\,dY
       +\int_{\mathbb B_K^+(\kappa^{1/2}x')}\eta^2\,dY\right).
\end{align*}
Insert this bound for $Q$ into the inequality with coefficient
$8A_1^2/(\tau\sigma_0)$.  For $M$ sufficiently large,
$C(K)M^{-\delta}\leq1/2$.  Divide by $2$ and multiply by
$\tau/(1-C(K)M^{-\delta})$ to obtain
\begin{align}\label{e.s10.stability.S}
&\frac{4A_1^2}{\sigma_0}
 \int_{\mathcal{S}'}
 V_\alpha\eta^2\,dY\\
&\qquad\leq\bigl(\tau+C(K,\tau)M^{-\delta}\bigr)
 \int_{\mathbb B_K^+(\kappa^{1/2}x')}|\nabla\eta|^2\,dY
 +C(K,\tau)M^{-\delta}
 \int_{\mathbb B_K^+(\kappa^{1/2}x')}\eta^2\,dY.
\end{align}
By \eqref{e.s10.B.bounds},
\begin{equation}\label{e.s.10.ellipticity}
|\nabla\eta|^2\leq\bigl(1+C(K)M^{-\delta}\bigr)
\langle\widetilde B_\alpha\nabla\eta,\nabla\eta\rangle.
\end{equation}
Moreover, on $\mathbb{B}^+_K(\kappa^{1/2}x')\setminus \mathcal{S}'$, we have $V_\alpha=\varepsilon^4/\kappa$. Adding the resulting contribution to \eqref{e.s10.stability.S} and using \eqref{e.s.10.ellipticity} gives \eqref{e.s10.almost.stability}.
\end{proof}

\subsection{Local bounds for the scalar inequality}\label{ss.s10.scalar}

We now state the scalar estimates for an arbitrary base-ball centre $y_0$.  In the application to the function $V_\alpha$ constructed above, take $y_0=\kappa^{1/2}x'$.  We regard \eqref{e.s10.V.inequality}--\eqref{e.s10.almost.stability} as scalar weak inequalities on $\mathbb B_K^+(y_0)$.  If $\mathbb B_K(y_0)\subset\{y_1>0\}$, replace every $\mathbb B_s^+(y_0)$ below by $\mathbb B_s(y_0)$ and omit the conormal condition.  The matrix $\widetilde{B}_\alpha$ is symmetric and has ellipticity constants between $1/2$ and $2$.  Test functions may meet $\mathbb B_K(y_0)\cap\{y_1=0\}$ but have support disjoint from $\partial\mathbb B_K(y_0)\cap\{y_1>0\}$.  All constants are uniform for every centre $y_0$ with $\mathbb B_K^+(y_0)\neq\varnothing$.  Once $\tau$ is fixed, its dependence is included in $C(K)$.

\begin{proposition}[Smallness from almost stability]\label{p.s10.scalar.smallness}
Suppose that $1\leq n\leq9$.  Choose, once and for all,
\begin{equation}\label{e.s10.exponents}
\frac74<q<2,
\qquad
1<\tau<\frac2q,
\qquad
p=2q+1.
\end{equation}
The only properties used below are
\[
p>\frac n2
\qquad\text{and}\qquad
2q-\tau q^2>0.
\]
For $K$ sufficiently large and then $M$ sufficiently large, every positive locally Lipschitz weak subsolution of \eqref{e.s10.V.inequality} on $\mathbb B_K^+(y_0)$ that satisfies \eqref{e.s10.almost.stability} obeys
\[
\|V_\alpha\|_{L^\infty(\mathbb B_1^+(y_0))}\leq\frac14.
\]
\end{proposition}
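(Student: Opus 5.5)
The plan follows Farina's approach to stable solutions of $-\Delta u = e^u$, adapted to the divergence-form operator $\widetilde B_\alpha$ and to half-balls with the conormal condition. Write $\lambda=4A_1^2/\sigma_0$, and let $o_M(1)$ denote quantities bounded by $C(K)M^{-\delta}$.

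\textbf{Step 1: $L^p$ estimate.} Test the weak inequality \eqref{e.s10.V.inequality} with $V_\alpha^{2q-1}\eta^2$, where $\eta$ is a cutoff equal to $1$ on $\mathbb B_{K/2}^+(y_0)$ and supported away from $\partial_+\mathbb B_K^+(y_0)$. The conormal condition means no boundary term appears on $\{y_1=0\}$. The term $-V_\alpha^{-1}\langle\widetilde B_\alpha\nabla V_\alpha,\nabla V_\alpha\rangle$ has a favorable sign, so it can be dropped (or kept as a bonus). This yields
\[
(2q-1)\int V^{2q-2}\langle\widetilde B\nabla V,\nabla V\rangle\eta^2\le \lambda(1+o_M(1))\int V^{2q+1}\eta^2+\text{cutoff terms}+o_M(1)\int V^{2q}\eta^2 .
\]
Next apply the almost stability \eqref{e.s10.almost.stability} with test function $V^q\eta$. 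This gives
\[
\lambda\int V^{2q+1}\eta^2\le(\tau+o_M(1))\Bigl(q^2\int V^{2q-2}\langle\widetilde B\nabla V,\nabla V\rangle\eta^2+\text{cutoff terms}\Bigr)+o_M(1)\int V^{2q}\eta^2 .
\]
Combining the two inequalities absorbs the gradient term, since $\tau q^2<2q-1$ is the condition needed here; I would choose $q$ and $\tau$ accordingly, noting that the stated $2q-\tau q^2>0$ improves to this once the discarded gradient term is used. The result is $\int V^{2q+1}\eta^2\lesssim C(K)\int V^{2q}|\nabla\eta|^2+o_M(1)\int V^{2q}\eta^2$. Replacing $\eta$ by $\eta^m$ with $m$ large and applying Hölder and Young then gives
\[
\int_{\mathbb B_{K/2}^+}V^{p}\le C\,K^{\,n-2p}+o_M(1)K^n ,\qquad p=2q+1 .
\]
This is the scale-invariant Farina bound: the power $K^{n-2p}$ tends to zero as $K\to\infty$ because $p>n/2$, which holds since $n\le 9<2(2q+1)$.

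\textbf{Step 2: local boundedness.} View $V$ as a subsolution of $-\operatorname{div}(\widetilde B\nabla V)\le c(y)V$ with $c=\lambda(1+o_M(1))V+o_M(1)$. Under the conormal condition, even reflection across $\{y_1=0\}$ (with $\widetilde B$ reflected, which is legitimate since $\widetilde B\nu\parallel\nu$ by Remark \ref{r.s10.conormal}) turns the half-ball problem into a full-ball one; alternatively, Moser iteration can be run directly with Neumann test functions. Because $\|c\|_{L^{p}(\mathbb B_2^+(y_0))}\lesssim\|V\|_{L^p}+o_M(1)$ with $p>n/2$, De Giorgi--Moser gives $\sup_{\mathbb B_1^+}V\le C(\|V\|_{L^p(\mathbb B_2^+)})\,\|V\|_{L^1(\mathbb B_2^+)}$, provided the $L^p$ norm of $c$ is small. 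By Step 1 applied on balls of radius $K'$ comparable to $K$ centred at points near $y_0$ (translation invariance), $\|V\|_{L^p(\mathbb B_2^+(y))}$ becomes arbitrarily small once $K$ is large and then $M$ is large. Hence $\sup_{\mathbb B_1^+}V\le 1/4$.

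\textbf{Main obstacle.} The hardest part is getting the $L^p$ norm on unit balls small from the integral bound on $\mathbb B_{K/2}^+$. Smallness on unit balls does not follow just from the total being bounded by $CK^{n-2p}$; one needs $K^{n-2p}$ to be small itself, which holds precisely because $p>n/2$. This is where the dimension restriction enters. The second delicate point is to make sure the term $o_M(1)K^n$ stays small: first fix $K$, then take $M$ large so that $C(K)M^{-\delta}K^n\ll 1$. The expected fallback is to argue by contradiction with a blow-up, as the paper's outline mentions. Suppose $\sup V>1/4$ along a sequence with $M\to\infty$. Normalize at a near-maximum point; the uniform $L^p$ bound with $p>n/2$ together with Moser gives local bounds, so the rescaled limits converge to a stable solution of $-\Delta V=\lambda V^2-|\nabla V|^2/V$, i.e. $-\Delta v=\lambda e^{-v}$, on $\mathbb R^n$ or on a half-space with Neumann condition (which reduces to $\mathbb R^n$ by even reflection). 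Farina's Liouville theorem rules out such solutions for $n\le9$, giving the contradiction.
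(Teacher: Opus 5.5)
Your Step 1 is the paper's Lemma \ref{l.s10.Lp}, but the absorption fails as you wrote it. If you discard $-V_\alpha^{-1}\langle\widetilde B_\alpha\nabla V_\alpha,\nabla V_\alpha\rangle$, the gradient coefficient is $2q-1$. The condition $\tau q^2<2q-1$ can then never hold, since $q^2-(2q-1)=(q-1)^2\ge 0$ and $\tau>1$.

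So that term is not an optional bonus; it is the crux of the argument, as your own parenthetical hints. Keeping it raises the coefficient to $2q$, and then $2q-\tau q^2>0$ is exactly the absorption condition (Farina's range $q<2/\tau$). This is how the paper argues in \eqref{e.s10.power.test}--\eqref{e.s10.absorption.identity}.

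You also need the constant in front of $\int V_\alpha^{2q}|\nabla\eta|^2$ to be independent of $K$; you wrote $C(K)$. All $K$-dependence sits in the $C(K)M^{-\delta}$ perturbations, which are absorbed once $M$ is large. Otherwise $CK^{n-2p}$ would not become small. With these two fixes, your bound $\int_{\mathbb B_{K/2}^+(y_0)}V_\alpha^p\le CK^{n-2p}+C(K)M^{-\delta p}K^n$ is correct.

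Your Step 2 is valid but takes a different route from the paper. The paper runs Moser iteration only for subsolutions with an a priori bound $V_\alpha\le\Lambda$ (Lemma \ref{l.s10.bounded.moser}). It then removes that bound by a point-selection blow-up (Lemma \ref{l.s10.remove.bound}). Rescaling at a near-maximum point gives a subsolution bounded by $4$ and equal to $1$ at some point, while its $L^p$ norm scales like $\lambda_i^{n/(2p)-1}\|V_i\|_{L^p}\to 0$ because $p>n/2$. That is a contradiction.

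You instead freeze $c=\frac{4A_1^2}{\sigma_0}(1+o_M(1))V_\alpha+o_M(1)$ as a linear potential in $L^p$ with $p>n/2$. You then invoke the Serrin--Trudinger form of local boundedness, where $\int c\,w^2$ is controlled by interpolating between $L^2$ and $L^{2^*}$ and the constant depends only on $\|c\|_{L^p}$. This is legitimate: $V_\alpha$ is locally Lipschitz, so the test functions $V_\alpha^\beta\eta^2$ are admissible. At the boundary, the conormal condition lets you either reflect evenly (De Giorgi--Moser needs only bounded measurable coefficients) or use Neumann test functions directly.

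Your version is a single step and gives an explicit bound, at the cost of the unbounded-potential Moser iteration. The paper's version needs only the bounded-coefficient iteration plus a soft scaling argument. In both, $p>n/2$ is where the dimension restriction enters. The Farina Liouville fallback you mention is unnecessary.
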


The proof is divided into the following three lemmas.

\begin{lemma}[Bounded local subsolutions]\label{l.s10.bounded.moser}
Suppose that $C(K)M^{-\delta}\leq1$, and let $V_\alpha$ satisfy \eqref{e.s10.V.inequality} on $\mathbb B_2^+(y_0)$ with $0<V_\alpha\leq\Lambda$.  For every $p\geq2$,
\begin{equation}\label{e.s10.local.moser}
\|V_\alpha\|_{L^\infty(\mathbb B_1^+(y_0))}
\leq c_0(n,p,W,\Lambda)\|V_\alpha\|_{L^p(\mathbb B_2^+(y_0))}.
\end{equation}
More generally, for every fixed $0<r_1<r_2\leq2$,
\[
\|V_\alpha\|_{L^\infty(\mathbb B_{r_1}^+(y_0))}
\leq c_0(n,p,W,\Lambda,r_1,r_2)
\|V_\alpha\|_{L^p(\mathbb B_{r_2}^+(y_0))}.
\]
\end{lemma}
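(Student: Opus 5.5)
The plan is to treat \eqref{e.s10.V.inequality} as a linear subsolution inequality with a bounded potential and to run the standard Moser iteration, using even reflection across $\{y_1=0\}$ or simply the conormal condition to handle the boundary. Since $0<V_\alpha\leq\Lambda$, the nonlinear term satisfies $\frac{4A_1^2}{\sigma_0}(1+C(K)M^{-\delta})V_\alpha^2\leq c(W,\Lambda)V_\alpha$. The gradient term $-V_\alpha^{-1}\langle\widetilde B_\alpha\nabla V_\alpha,\nabla V_\alpha\rangle$ is nonpositive, so we may drop it. With $C(K)M^{-\delta}\leq1$, $V_\alpha$ is then a nonnegative weak subsolution of
\[
-\operatorname{div}(\widetilde B_\alpha\nabla V_\alpha)\leq c_1 V_\alpha,\qquad \langle\widetilde B_\alpha\nabla V_\alpha,\nu\rangle=0\ \text{on }\{y_1=0\},
\]
with $c_1=c_1(W,\Lambda)$ and $\widetilde B_\alpha$ symmetric, with ellipticity between $1/2$ and $2$. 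Only measurable bounded coefficients are needed, so the bound on $\nabla\widetilde B_\alpha$ plays no role.

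\textbf{Step 1 (Caccioppoli).} Test the weak inequality with $\psi=\zeta^2V_\alpha^{\beta}$, where $\beta\geq1$ and $\zeta$ is a cutoff supported away from $\partial\mathbb B_{r_2}(y_0)\cap\{y_1>0\}$ but allowed to be nonzero on $\{y_1=0\}$. This test function is admissible because $V_\alpha$ is locally Lipschitz and positive. The boundary integral vanishes by the conormal condition, which is exactly why test functions need not vanish on the flat boundary. The usual computation then gives
\[
\int\zeta^2|\nabla V_\alpha^{(\beta+1)/2}|^2\leq C(\beta+1)^2\int(|\nabla\zeta|^2+c_1\zeta^2)V_\alpha^{\beta+1}.
\]

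\textbf{Step 2 (Sobolev and iteration).} Apply the Sobolev inequality on half-balls. The Sobolev constant for $H^1$ functions on $\mathbb B_r^+$ that vanish only on the curved part is uniform, which one sees by even reflection to the full ball. This gives the reverse Hölder gain $L^{\chi(\beta+1)}\leq C(\beta)L^{\beta+1}$ on shrinking radii, with $\chi=n/(n-2)$ when $n\geq3$ and any $\chi>1$ when $n\leq2$. Iterating along radii decreasing geometrically from $r_2$ to $r_1$, starting from the exponent $p\geq2$, and summing the logarithms of the constants yields
\[
\|V_\alpha\|_{L^\infty(\mathbb B_{r_1}^+)}\leq c_0\|V_\alpha\|_{L^p(\mathbb B_{r_2}^+)}.
\]
The case $r_1=1$, $r_2=2$ is \eqref{e.s10.local.moser}.

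\textbf{Interior balls and the main obstacle.} For interior balls the same argument applies without the boundary term. The main point requiring care is Step 1 near $\{y_1=0\}$: the conormal condition must be used in weak form, so that the flux term is genuinely absent. This holds because $V_\alpha$ is a weak subsolution in the Neumann sense, obtained as the maximum of two Neumann subsolutions, and the maximum of subsolutions is again a subsolution. Once the boundary term is disposed of, the argument is routine, and the constant's dependence on $n,p,W,\Lambda,r_1,r_2$ is explicit.
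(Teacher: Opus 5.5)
Your proposal is correct and follows the paper's proof step by step. You discard the nonpositive quadratic-gradient term and use $V_\alpha\leq\Lambda$ and $C(K)M^{-\delta}\leq1$ to reduce to $-\operatorname{div}(\widetilde B_\alpha\nabla V_\alpha)\leq c_1V_\alpha$ with zero conormal flux; you test with $\zeta^2V_\alpha^{\beta}$ (the paper's $\psi^2V_\alpha^{s-1}$) to get the Caccioppoli bound; you apply Sobolev after zero extension across the curved boundary and even reflection across $\{y_1=0\}$; and you iterate on shrinking radii starting from the exponent $p$.
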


\begin{proof}
Discard the nonpositive quadratic-gradient term in \eqref{e.s10.V.inequality}.  Since $C(K)M^{-\delta}\leq1$ and $V_\alpha\leq\Lambda$,
\[
-\operatorname{div}(\widetilde B_\alpha\nabla V_\alpha)
\leq\left(1+\frac{8A_1^2}{\sigma_0}\Lambda\right)V_\alpha.
\]
For $s\geq2$ and $\psi\in C^1(\mathbb B_2^+(y_0))$ satisfying $\operatorname{supp}\psi\cap
\bigl(\partial\mathbb B_2(y_0)\cap\{y_1>0\}\bigr)=\emptyset$, test this inequality with $\psi^2V_\alpha^{s-1}$.  

All integrals through \eqref{e.s10.moser.energy} are over $\mathbb B_2^+(y_0)$.  The conormal condition removes the integral over
$\mathbb B_2(y_0)\cap\{y_1=0\}$.  Uniform ellipticity and Young's inequality give
\begin{align*}
(s-1)\int\psi^2V_\alpha^{s-2}
\left\langle\widetilde B_\alpha\nabla V_\alpha,\nabla V_\alpha\right\rangle
&\leq C(W)(1+\Lambda)\int\psi^2V_\alpha^s\\
&\quad+2\, \Bigl|\int\psi V_\alpha^{s-1}
\left\langle\widetilde B_\alpha\nabla V_\alpha,\nabla\psi\right\rangle\Bigr|\\
&\leq C(W)(1+\Lambda)\int\psi^2V_\alpha^s
+\frac{s-1}{2}\int\psi^2V_\alpha^{s-2}
\left\langle\widetilde B_\alpha\nabla V_\alpha,\nabla V_\alpha\right\rangle\\
&\quad+\frac{C}{s-1}\int V_\alpha^s|\nabla\psi|^2.
\end{align*}
Subtract the repeated gradient term and use
$\nabla V_\alpha^{s/2}=(s/2)V_\alpha^{s/2-1}\nabla V_\alpha$.  It follows that
\begin{equation}\label{e.s10.moser.energy}
\int\bigl|\nabla(\psi V_\alpha^{s/2})\bigr|^2
\leq C(W)s^2\int V_\alpha^s
\left((1+\Lambda)\psi^2+|\nabla\psi|^2\right).
\end{equation}

Apply the Euclidean Sobolev inequality to $\psi V_\alpha^{s/2}$ after extending it by zero across
$\partial\mathbb B_2(y_0)\cap\{y_1>0\}$.  If
$\operatorname{supp}\psi\cap\{y_1=0\}\neq\emptyset$, take its even $H^1$ extension across $\{y_1=0\}$. For $n>2$ use the exponent $2n/(n-2)$; for $n=1,2$ use exponent $4$, including the $L^2$ term on the bounded support.

At the $j$-th step, use the nested half-balls
$\mathbb B_{1+2^{-j}}^+(y_0)$ and
$\mathbb B_{1+2^{-j-1}}^+(y_0)$, begin with exponent $s=p$, and choose a cutoff supported in
$\mathbb B_{1+2^{-j}}^+(y_0)$, equal to one on
$\mathbb B_{1+2^{-j-1}}^+(y_0)$, with derivative bounded by $C2^j$.  Equation \eqref{e.s10.moser.energy} multiplies the next norm by at most
\[
\bigl(C(W)s^2(1+\Lambda+4^j)\bigr)^{1/s}.
\]
The exponents grow geometrically, by $n/(n-2)$ when $n>2$ and by $2$ when $n=1,2$.  Hence the sums of $1/s$ and $j/s$ over the iteration are finite, and so is the product of the displayed factors.  This proves \eqref{e.s10.local.moser}.
\end{proof}

\begin{lemma}[$L^p$ estimate]\label{l.s10.Lp}
For the choices in \eqref{e.s10.exponents}, if $K$ is sufficiently large and then $M$ is sufficiently large, every subsolution satisfying \eqref{e.s10.V.inequality} and \eqref{e.s10.almost.stability} on $\mathbb B_K^+(y_0)$ satisfies
\begin{equation}\label{e.s10.Lp.bound}
\|V_\alpha\|_{L^p(\mathbb B_4^+(y_0))}^p
\leq CK^{n-2p}+C(K)M^{-\delta p}K^n.
\end{equation}
The constant in the first term is independent of $K$ and $M$.  Consequently, the left side can be made arbitrarily small by choosing $K$ first and then $M$.
\end{lemma}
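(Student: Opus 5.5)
This is Farina's argument: test the stability inequality with a power of $V_\alpha$ and the subsolution inequality with a related power. Then combine the two to control $\int V_\alpha^{2q+1}\psi^2$ by cutoff terms only.

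Write $\Lambda_0=4A_1^2/\sigma_0$ and $\epsilon_M=C(K)M^{-\delta}$. Fix a cutoff $\psi\in C^1_c(\mathbb B_K(y_0))$ with $\psi=1$ on $\mathbb B_4(y_0)$ and $|\nabla\psi|\leq C/K$. Its support meets $\{y_1=0\}$ possibly, but not $\partial\mathbb B_K(y_0)\cap\{y_1>0\}$.

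\emph{Step 1 (stability).} Insert $\eta=V_\alpha^{q}\psi^{m}$, with $m$ large, into \eqref{e.s10.almost.stability}. Expanding the gradient gives
\[
\Lambda_0\int V_\alpha^{2q+1}\psi^{2m}\leq(\tau+\epsilon_M)\Bigl[q^2\int V_\alpha^{2q-2}\psi^{2m}\langle\widetilde B_\alpha\nabla V_\alpha,\nabla V_\alpha\rangle+\text{cross}+\int V_\alpha^{2q}|\nabla\psi^m|^2\Bigr]+\epsilon_M\int V_\alpha^{2q}\psi^{2m}.
\]
The cross term is absorbed by Young's inequality, at the cost of a factor $(1+\theta)$ on the gradient term and $C_\theta$ on the cutoff term.

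\emph{Step 2 (equation).} Test \eqref{e.s10.V.inequality} with $V_\alpha^{2q-1}\psi^{2m}$. The boundary term on $\{y_1=0\}$ vanishes by the conormal condition. Moving the quadratic-gradient term to the left gives
\[
(2q-1)\int V_\alpha^{2q-2}\psi^{2m}\langle\widetilde B\nabla V,\nabla V\rangle+\int V_\alpha^{2q-2}\psi^{2m}\langle\widetilde B\nabla V,\nabla V\rangle\leq\Lambda_0(1+\epsilon_M)\int V_\alpha^{2q+1}\psi^{2m}+\epsilon_M\int V_\alpha^{2q}\psi^{2m}+\text{cutoff cross}.
\]
So $2q\int V^{2q-2}\psi^{2m}\langle\widetilde B\nabla V,\nabla V\rangle\lesssim\Lambda_0\int V^{2q+1}\psi^{2m}+\cdots$. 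Here the $-V^{-1}\langle\widetilde B\nabla V,\nabla V\rangle$ term is what upgrades $2q-1$ to $2q$.

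\emph{Step 3 (combine).} Substitute Step 2 into Step 1. The gradient integral is bounded by $\frac{\Lambda_0}{2q}\int V^{2q+1}\psi^{2m}$ plus errors, so
\[
\Lambda_0\Bigl(1-\tfrac{\tau q^2(1+\theta)(1+\epsilon_M)}{2q}\Bigr)\int V^{2q+1}\psi^{2m}\leq C\int V^{2q}\bigl(|\nabla\psi|^2\psi^{2m-2}+\epsilon_M\psi^{2m}\bigr).
\]
The coefficient on the left is positive precisely because $2q-\tau q^2>0$, once $\theta$ is small and $M$ is large; this bookkeeping of the constants is the only delicate point. Then Hölder's inequality with exponents $\frac{2q+1}{2q}$ and $2q+1$ (for $m\geq 2q+1$) gives
\[
\int V^{2q}\psi^{2m-2}|\nabla\psi|^2\leq\Bigl(\int V^{2q+1}\psi^{2m}\Bigr)^{\frac{2q}{2q+1}}\Bigl(\int|\nabla\psi|^{2(2q+1)}\Bigr)^{\frac1{2q+1}}.
\]
The last factor is $\lesssim(K^{n-2p})^{1/p}$, since $p=2q+1$ and $|\nabla\psi|\lesssim K^{-1}$ on a set of volume $\lesssim K^n$. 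The $\epsilon_M$ term is handled in the same way, giving $(\epsilon_M^pK^n)^{1/p}$. Absorbing yields $\int_{\mathbb B_4^+}V^p\leq CK^{n-2p}+C(K)M^{-\delta p}K^n$, which is \eqref{e.s10.Lp.bound}.

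Finally, $p>n/2$ for $n\leq 9$ since $p>4.5$, so $K^{n-2p}\to0$: choose $K$ first, then $M$.

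\emph{Justification of the tests.} $V_\alpha$ is positive and locally Lipschitz, with $V_\alpha\geq\varepsilon^4/\kappa$, so these test functions are admissible. The even reflection across $\{y_1=0\}$ is not needed, because the conormal condition already kills the boundary integrals.
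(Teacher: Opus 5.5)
Your proposal is correct and follows essentially the same Farina-type route as the paper. You test the subsolution inequality with $V_\alpha^{2q-1}$ times a cutoff, where the quadratic-gradient term upgrades $2q-1$ to $2q$; you test almost stability with $\eta=V_\alpha^q$ times a cutoff; you combine the two using $2q-\tau q^2>0$; and you finish with H\"older with exponents $p/(p-1)$ and $p$. The differences are cosmetic: you eliminate the gradient integral rather than $\int V_\alpha^{2q+1}\psi^2$ when combining, and you build the power $\psi^m$ (with $m\geq p$) into the test functions from the start instead of substituting $\psi^p$ at the end.
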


\begin{proof}
Let $\psi\geq0$ belong to $C^1(\mathbb B_K^+(y_0))$ and satisfy $\operatorname{supp}\psi\cap
\bigl(\partial\mathbb B_K(y_0)\cap\{y_1>0\}\bigr)=\emptyset.$
All integrals below are over $\mathbb B_K^+(y_0)$.  Since $q$ and $\tau$ are fixed, the constants denoted by $C$ in this proof may depend on them, but not on $K$ or $M$.

Test \eqref{e.s10.V.inequality} with $V_\alpha^{2q-1}\psi^2$.  The conormal condition removes the boundary integral on $\{y_1=0\}$, and the negative quadratic-gradient term changes the coefficient of the first integral from $2q-1$ to $2q$.  Thus
\begin{align}
2q\int V_\alpha^{2q-2}\psi^2
\left\langle\widetilde B_\alpha\nabla V_\alpha,\nabla V_\alpha\right\rangle
+2\int V_\alpha^{2q-1}\psi
\left\langle\widetilde B_\alpha\nabla V_\alpha,\nabla\psi\right\rangle
&\leq\frac{4A_1^2}{\sigma_0}\bigl(1+C(K)M^{-\delta}\bigr)
\int V_\alpha^{2q+1}\psi^2
\notag\\
&\quad+C(K)M^{-\delta}\int V_\alpha^{2q}\psi^2.
\label{e.s10.power.test}
\end{align}
Apply \eqref{e.s10.almost.stability} with $\eta=V_\alpha^q\psi$:
\begin{align}
\frac{4A_1^2}{\sigma_0}\int V_\alpha^{2q+1}\psi^2
&\leq\bigl(\tau+C(K)M^{-\delta}\bigr)
\Big[q^2\int V_\alpha^{2q-2}\psi^2
\left\langle\widetilde B_\alpha\nabla V_\alpha,\nabla V_\alpha\right\rangle
\notag\\
&\qquad+2q\int V_\alpha^{2q-1}\psi
\left\langle\widetilde B_\alpha\nabla V_\alpha,\nabla\psi\right\rangle
+\int V_\alpha^{2q}
\left\langle\widetilde B_\alpha\nabla\psi,\nabla\psi\right\rangle\Big]
\notag\\
&\quad+C(K)M^{-\delta}\int V_\alpha^{2q}\psi^2.
\label{e.s10.stability.power}
\end{align}
Substitute this estimate into \eqref{e.s10.power.test}.  After increasing $C(K)$, all perturbations of the fixed coefficients are bounded by $C(K)M^{-\delta}$, and hence
\begin{align}
&\bigl(2q-\tau q^2-C(K)M^{-\delta}\bigr)
\int V_\alpha^{2q-2}\psi^2
\left\langle\widetilde B_\alpha\nabla V_\alpha,\nabla V_\alpha\right\rangle
\notag\\
&\quad\leq C\left|\int V_\alpha^{2q-1}\psi
\left\langle\widetilde B_\alpha\nabla V_\alpha,\nabla\psi\right\rangle\right|
+C\int V_\alpha^{2q}
\left\langle\widetilde B_\alpha\nabla\psi,\nabla\psi\right\rangle
\notag\\
&\qquad+C(K)M^{-\delta}\int V_\alpha^{2q}\psi^2.
\label{e.s10.absorption.identity}
\end{align}
The number $2q-\tau q^2$ is positive by \eqref{e.s10.exponents}.  Choose $M$ so large that
\[
C(K)M^{-\delta}\leq\frac14(2q-\tau q^2).
\]
By Cauchy--Schwarz and Young's inequality, we have
\begin{align*}
C\left|\int V_\alpha^{2q-1}\psi
\left\langle\widetilde B_\alpha\nabla V_\alpha,\nabla\psi\right\rangle\right|
&\leq\frac14(2q-\tau q^2)
\int V_\alpha^{2q-2}\psi^2
\left\langle\widetilde B_\alpha\nabla V_\alpha,\nabla V_\alpha\right\rangle\\
&\quad+C\int V_\alpha^{2q}
\left\langle\widetilde B_\alpha\nabla\psi,\nabla\psi\right\rangle.
\end{align*}
Absorbing the first term and using uniform ellipticity yields
\[
\int V_\alpha^{2q-2}\psi^2|\nabla V_\alpha|^2
\leq C\int V_\alpha^{2q}|\nabla\psi|^2
+C(K)M^{-\delta}\int V_\alpha^{2q}\psi^2.
\]
Insert this estimate into \eqref{e.s10.stability.power}; the mixed term is handled by the same Cauchy--Schwarz and Young inequalities.  We obtain
\begin{equation}\label{e.s10.stable.power}
\int V_\alpha^{2q+1}\psi^2
\leq C\int V_\alpha^{2q}|\nabla\psi|^2
+C(K)M^{-\delta}\int V_\alpha^{2q}\psi^2.
\end{equation}

Now replace $\psi$ in \eqref{e.s10.stable.power} by $\psi^p$, where $p=2q+1$.  The fixed factor $p^2$ is absorbed into $C$.  H\"older's inequality gives
\[
\int V_\alpha^{p-1}\psi^{2p-2}|\nabla\psi|^2
\leq
\left(\int V_\alpha^p\psi^{2p}\right)^{(p-1)/p}
\left(\int|\nabla\psi|^{2p}\right)^{1/p}
\]
and
\[
\int V_\alpha^{p-1}\psi^{2p}
\leq
\left(\int V_\alpha^p\psi^{2p}\right)^{(p-1)/p}
\left(\int\psi^{2p}\right)^{1/p}.
\]
Dividing by the common factor, when it is nonzero, and raising to the power $p$ yields
\begin{equation}\label{e.s10.Holder}
\int V_\alpha^p\psi^{2p}
\leq C\int|\nabla\psi|^{2p}
+C(K)M^{-\delta p}\int\psi^{2p}.
\end{equation}

Finally, choose a radial cutoff about $y_0$ which is one on $\mathbb B_4^+(y_0)$, vanishes outside $\mathbb B_{K/2}^+(y_0)$, and satisfies $|\nabla\psi|\leq C/K$.  Its restriction to the half-space is admissible.  Since $|\mathbb B_{K/2}^+(y_0)|\leq CK^n$, equation \eqref{e.s10.Holder} gives \eqref{e.s10.Lp.bound}.  Finally,
\[
p=2q+1>\frac92\geq\frac n2,
\qquad\text{so}\qquad n-2p<0.
\]
Hence $CK^{n-2p}$ is made small by choosing $K$ large, and, after $K$ is fixed, $C(K)M^{-\delta p}K^n$ is made small by choosing $M$ large.  This proves the lemma.
\end{proof}

\begin{lemma}[Removal of the a priori upper bound]\label{l.s10.remove.bound}
Let $p>n/2$, $p\geq2$, and suppose that $V_\alpha$ satisfies \eqref{e.s10.V.inequality} on $\mathbb B_4^+(y_0)$, with $C(K)M^{-\delta}\leq1$ and
\[
\|V_\alpha\|_{L^p(\mathbb B_4^+(y_0))}\leq1.
\]
There is a constant $C_0=C_0(n,p,W)$ such that
\[
\|V_\alpha\|_{L^\infty(\mathbb B_2^+(y_0))}\leq C_0.
\]
\end{lemma}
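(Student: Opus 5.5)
We plan to prove Lemma \ref{l.s10.remove.bound} by a blow-up (contradiction) argument in the spirit of Farina \cite{farina2007stable}, reducing to the a priori bounded case of Lemma \ref{l.s10.bounded.moser}. Because $p>n/2$ is subcritical for the quadratic nonlinearity $V^2$, the $L^p$ smallness becomes scale-invariantly small after rescaling, and this rules out concentration.

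First we use continuity and consider the scaled quantity
\[
F(y)=\bigl(2-|y-y_0|\bigr)^2\,V_\alpha(y),\qquad y\in\mathbb B_2^+(y_0)
\]
(more generally one may take $\mathbb B_4^+$ with the weight $(4-|y-y_0|)^2$). The exponent $2$ matches the scaling $V\mapsto \lambda^2V(\lambda\cdot)$ of the model equation $-\Delta V\lesssim V^2$. Assume $\sup F=F(\bar y)=:\mu^2$ is large; otherwise we are done. Set $d=2-|\bar y-y_0|$ and $\lambda=\mu^{-1}d$, so that $\lambda^2V_\alpha(\bar y)=1$ and $\lambda\le d/\mu\ll d$. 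On $\mathbb B_{d/2}^+(\bar y)$ the maximality gives $V_\alpha\le 4V_\alpha(\bar y)$. The rescaled function $\widetilde V(Y)=\lambda^2V_\alpha(\bar y+\lambda Y)$ therefore satisfies $0<\widetilde V\le4$ and $\widetilde V(0)=1$ on a half-ball (or a full ball if the boundary is far) of radius $\mu/2\ge2$. Near $\{y_1=0\}$ we apply the Neumann version, noting that the rescaled domain is again a half-space region.

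Next we check that $\widetilde V$ is a subsolution of a rescaled version of \eqref{e.s10.V.inequality}. The coefficient matrix becomes $\widetilde B_\alpha(\bar y+\lambda\cdot)$, which keeps its ellipticity bounds, and the conormal condition persists. The quadratic coefficient is unchanged, the quadratic-gradient term keeps its sign, and the linear term acquires the factor $\lambda^2\le1$. Hence Lemma \ref{l.s10.bounded.moser}, with $\Lambda=4$ and constants independent of $\lambda$ (since the linear coefficient only decreases), gives
\[
1=\widetilde V(0)\le c_0\|\widetilde V\|_{L^p(\mathbb B_2^+(0))}
= c_0\,\lambda^{2-n/p}\|V_\alpha\|_{L^p(\mathbb B_{2\lambda}^+(\bar y))}\le c_0\lambda^{2-n/p}.
\]
Since $p>n/2$, we have $2-n/p>0$. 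This forces $\lambda\ge c_1(n,p,W)>0$, so $\mu\le d/c_1\le 2/c_1$. Consequently $\sup F\le C$, and on $\mathbb B_1^+(y_0)$ we get $V_\alpha\le C$, which is the claim after adjusting the radius bookkeeping (stated on $\mathbb B_2^+$ using the domain $\mathbb B_4^+$).

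We expect the main obstacle to be the uniform application of Lemma \ref{l.s10.bounded.moser} when $\bar y$ is close to, but not on, the boundary $\{y_1=0\}$. The rescaled ball may then intersect the boundary in an off-centre way. We would handle this by splitting into two cases. If the distance from $\bar y$ to the boundary is at least $2\lambda$, we use the interior estimate. Otherwise we recentre at the boundary projection of $\bar y$ and use the nested half-ball version of Lemma \ref{l.s10.bounded.moser} with radii $r_1<r_2$, exploiting the even reflection already used in that lemma. The only other point to check is that $V_\alpha$ is continuous, so that the supremum of $F$ is attained, and this holds since $V_\alpha$ is locally Lipschitz.
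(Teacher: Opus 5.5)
Your argument is correct and is essentially the paper's proof. Both select a point by maximising a weight squared times $V_\alpha$, rescale by $\lambda=V_\alpha(\bar y)^{-1/2}$ to a subsolution bounded by $4$ that equals $1$ at some point, apply Lemma \ref{l.s10.bounded.moser} on an interior ball or a recentred boundary half-ball, and conclude from the factor $\lambda^{2-n/p}$ and $p>n/2$. The only difference is that you argue directly with the global weight $(2-|y-y_0|)^2$ and get an explicit constant, while the paper argues by contradiction, using the localized selection of $(r_i-|y-x_i|)^2V_i$ on $\overline{\mathbb B}_{r_i}(x_i)$ with $r_i=V_i(x_i)^{-1/4}$. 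For the bound on $\mathbb B_2^+(y_0)$, use $(3-|y-y_0|)^2$ on $\overline{\mathbb B}_3^+(y_0)$ rather than the radius-$4$ weight, so that the maximum is certainly attained.
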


\begin{proof}
Suppose otherwise.  Let $V_i$ be a sequence of subsolutions violating the conclusion.  For the $i$-th subsolution, $y_0$ denotes the centre of the half-ball $\mathbb B_4^+(y_0)$ in the statement and may depend on $i$.  Choose $x_i\in \mathbb B_2^+(y_0)$ such that $V_i(x_i)\to\infty$, and set
\[
r_i=V_i(x_i)^{-1/4}.
\]
For all large $i$,
$\overline{\mathbb B}_{r_i}(x_i)\cap\{y_1\geq0\}\subset\mathbb B_3^+(y_0)$.  Let $y_i$ maximize
\[
(r_i-|y-x_i|)^2V_i(y)
\]
on $\overline{\mathbb B}_{r_i}(x_i)\cap\{y_1\geq0\}$. The maximum is positive and is not attained on
$\partial\mathbb B_{r_i}(x_i)\cap\{y_1>0\}$, where
$(r_i-|y-x_i|)^2=0$.  Put
\[
r_i'=r_i-|y_i-x_i|>0,
\qquad
\lambda_i=V_i(y_i).
\]
Maximality gives
\begin{equation}\label{e.s10.point.selection}
r_i'\sqrt{\lambda_i}
\geq r_i\sqrt{V_i(x_i)}
=V_i(x_i)^{1/4}\to\infty.
\end{equation}
Since $r_i'\leq r_i\to0$, it follows that $\lambda_i\to\infty$.  If $|y-y_i|\leq r_i'/2$, then
$r_i-|y-x_i|\geq r_i'/2$; hence
\begin{equation}\label{e.s10.local.bound}
V_i(y)\leq4\lambda_i
\qquad\text{on }\mathbb B_{r_i'/2}(y_i)\cap\{y_1\geq0\}.
\end{equation}

Let $d_i$ be the Euclidean distance from $y_i$ to the boundary.  First suppose that $d_i\sqrt{\lambda_i}\geq4$.  Define
\[
\widetilde V_i(\widetilde y)
=\lambda_i^{-1}V_i(y_i+\lambda_i^{-1/2}\widetilde y).
\]
By \eqref{e.s10.point.selection}--\eqref{e.s10.local.bound}, this function is defined on $\mathbb B_2(0)$ for all large $i$ and satisfies
\[
\widetilde V_i(0)=1,
\qquad
0<\widetilde V_i\leq4.
\]
The coefficient in the rescaled equation is
\[
\widetilde B_i(y_i+\lambda_i^{-1/2}\widetilde y),
\]
not $\widetilde B_i(\widetilde y)$.  After discarding the negative quadratic-gradient term and dividing the rescaled equation by $\lambda_i^2$, we obtain
\begin{equation}\label{e.s10.blowup.equation}
-\operatorname{div}_{\widetilde y}\!\left(
\widetilde B_i(y_i+\lambda_i^{-1/2}\widetilde y)
\nabla_{\widetilde y}\widetilde V_i\right)
\leq\left(1+\frac{32A_1^2}{\sigma_0}\right)\widetilde V_i
\qquad\text{in }\mathbb B_2(0).
\end{equation}
Indeed, the term $C(K)M^{-\delta}V_i$ becomes
$C(K)M^{-\delta}\lambda_i^{-1}\widetilde V_i$ after division by $\lambda_i^2$, and its coefficient is at most one for all large $i$; ellipticity is unchanged.  Lemma \ref{l.s10.bounded.moser}, or its proof applied to \eqref{e.s10.blowup.equation}, now yields
\[
\begin{aligned}
1
&\leq c_0(n,p,W,4)\|\widetilde V_i\|_{L^p(\mathbb B_2(0))}\\
&=c_0(n,p,W,4)\lambda_i^{n/(2p)-1}
\|V_i\|_{L^p(\mathbb B_{2/\sqrt{\lambda_i}}(y_i))}\\
&\leq c_0(n,p,W,4)\lambda_i^{n/(2p)-1}\to 0.
\end{aligned}
\]
because $p>n/2$.  For all large $i$,
\[
\mathbb B_{2/\sqrt{\lambda_i}}(y_i)
\subset\mathbb B_{r_i'/2}(y_i)\cap\{y_1\geq0\}
\subset\mathbb B_4^+(y_0),
\]
which gives the last inequality.  This is a contradiction.

Now suppose that $d_i\sqrt{\lambda_i}<4$.  Let $y_i'$ be the Euclidean projection of $y_i$ onto the boundary and retain the same value $\lambda_i=V_i(y_i)$.  Define
\[
\widetilde V_i(\widetilde y)
=\lambda_i^{-1}V_i(y_i'+\lambda_i^{-1/2}\widetilde y),
\qquad
\widetilde y_i=\sqrt{\lambda_i}(y_i-y_i').
\]
Then $|\widetilde y_i|<4$ and $\widetilde V_i(\widetilde y_i)=1$.  By \eqref{e.s10.point.selection}, $r_i'\sqrt{\lambda_i}>40$ for all large $i$.  The inclusion
\[
y_i'+\lambda_i^{-1/2}\mathbb B_{16}^+(0)
\subset\mathbb B_{20/\sqrt{\lambda_i}}(y_i)\cap\{y_1\geq0\}
\subset\mathbb B_{r_i'/2}(y_i)\cap\{y_1\geq0\}
\]
follows from $|\widetilde y_i|<4$ and
$r_i'\sqrt{\lambda_i}>40$.  Thus \eqref{e.s10.local.bound} gives
$\widetilde V_i\leq4$ on $\mathbb B_{16}^+(0)$.  The coefficient is
\[
\widetilde B_i(y_i'+\lambda_i^{-1/2}\widetilde y),
\]
and
\[
-\operatorname{div}_{\widetilde y}\!\left(
\widetilde B_i(y_i'+\lambda_i^{-1/2}\widetilde y)
\nabla_{\widetilde y}\widetilde V_i\right)
\leq\left(1+\frac{32A_1^2}{\sigma_0}\right)\widetilde V_i
\qquad\text{in }\mathbb B_{16}^+(0),
\]
with zero conormal derivative on
$\mathbb B_{16}(0)\cap\{\widetilde y_1=0\}$.  The bounded Moser estimate on the radii $8$ and $16$, followed by the exact scaling of the $L^p$ norm, gives
\[
1\leq\|\widetilde V_i\|_{L^\infty(\mathbb B_8^+(0))}
\leq C(n,p,W)\|\widetilde V_i\|_{L^p(\mathbb B_{16}^+(0))}
\leq C(n,p,W)\lambda_i^{n/(2p)-1}
\|V_i\|_{L^p(\mathbb B_4^+(y_0))}\to 0.
\]
This is again a contradiction.  
\end{proof}

\begin{proof}[Proof of Proposition \ref{p.s10.scalar.smallness}]
Fix $q$, $\tau$, and $p$ as in \eqref{e.s10.exponents}.  Lemma \ref{l.s10.Lp} allows us to choose $K$ and then $M$ so that
\begin{equation}\label{e.s10.Lp.small}
\|V_\alpha\|_{L^p(\mathbb B_4^+(y_0))}
\leq\min\left\{1,\bigl(4c_0(n,p,W,C_0)\bigr)^{-1}\right\}.
\end{equation}
By the first bound, we can apply Lemma \ref{l.s10.remove.bound}; hence
\[
V_\alpha\leq C_0\qquad\text{on }\mathbb B_2^+(y_0).
\]
Apply Lemma \ref{l.s10.bounded.moser} on the radii $1$ and $2$ with $\Lambda=C_0$.  Then
\[
\|V_\alpha\|_{L^\infty(\mathbb B_1^+(y_0))}
\leq c_0(n,p,W,C_0)\|V_\alpha\|_{L^p(\mathbb B_2^+(y_0))}
\leq\frac14.
\]
\end{proof}

\subsection{Completion of the separation estimate}\label{ss.s10.completion}

\begin{proof}[Proof of Proposition \ref{p.s10.decay}]
We first assume \eqref{e.s10.threshold.general}.  Fix any $q$, $\tau$, and $p$ satisfying \eqref{e.s10.exponents}.  Choose the radius $K$ using \eqref{e.s10.Lp.bound}, and then choose $M$ sufficiently large for \eqref{e.s10.Lp.small}, Claims \ref{c.s10.graph.estimates}--\ref{c.s10.distance.comparison}, and Lemma \ref{l.s10.stability.transfer}.  Increase $M$ once more so that the final quantity in \eqref{e.s10.distance.gap} is at most $\log(4/3)$.  Finally choose $\varepsilon$ sufficiently small, uniformly in the layer index and in $r$.

For every
\[
y\in\mathbb B_{2K\kappa^{-1/2}+20|\log\varepsilon|}^+(x'),
\qquad
\min_{\gamma\in\{\alpha,\alpha+1\}}|z-f_\gamma(y)|
\leq20|\log\varepsilon|,
\]
the point $(y,z)$, the graph points $(y,f_\alpha(y))$ and
$(y,f_{\alpha+1}(y))$, and the normal-flow points used in
Claim \ref{c.s10.distance.comparison} and Lemma \ref{l.s10.stability.transfer}
all belong to
\[
B_{C(K)(\kappa^{-1/2}+|\log\varepsilon|)+300|\log\varepsilon|^2}^+(x).
\]
Under \eqref{e.s10.threshold.general}, for all sufficiently small $\varepsilon$,
\[
C(K)(\kappa^{-1/2}+|\log\varepsilon|)
+300|\log\varepsilon|^2<\frac{R}{80},
\qquad
x\in B_{r-KR_*}^+(0)\subset B_{5R/6}^+(0).
\]
The two-sheet test functions in Lemma \ref{l.s10.stability.transfer} are supported in
\[
B_{C(K)(\kappa^{-1/2}+|\log\varepsilon|)+120|\log\varepsilon|^2}^+(x)
\subset B_r^+(0),
\]
so Proposition \ref{p.stability.toda} applies.  For every
$y\in\mathbb B_{2K\kappa^{-1/2}}^+(x')$ and
$\gamma\in\{\alpha,\alpha+1\}$ apply Corollary \ref{c.s8.improved.toda} on
\[
B_{300|\log\varepsilon|^2}^+\bigl((y,f_\gamma(y))\bigr)
\subset B_r^+(0).
\]

Recall that $x=(x',f_\alpha(x'))\in\Sigma_\alpha\cap B_{r-KR_*}^+(0)$ is the point fixed in \eqref{e.s10.badpoint}.  The coordinate construction uses a full geodesic chart when
\[
B_{C(K)(\kappa^{-1/2}+|\log\varepsilon|)+300|\log\varepsilon|^2}^+(x)
\cap\partial_0B_R^+(0)=\varnothing.
\]
Apply the lower-distance calculation in the proof of Claim \ref{c.s10.distance.comparison} under the explicit condition
$f_{\alpha+1}(x')-f_\alpha(x')\leq C|\log\varepsilon|$, together with Claim \ref{c.s10.graph.estimates}.  Since \eqref{e.s10.logdistance} gives $|t_{\alpha+1}(x)|<2|\log\varepsilon|$, it follows that
$f_{\alpha+1}(x')-f_\alpha(x')<3|\log\varepsilon|$ for all sufficiently small $\varepsilon$.  The hypothesis of Claim \ref{c.s10.distance.comparison} therefore holds at $x'$, and
$V_\alpha(\kappa^{1/2}x')=\kappa^{-1}e^{-(f_{\alpha+1}-f_\alpha)(x')}$.  By \eqref{e.s10.badpoint}, \eqref{e.s10.weights.compare}, and the choice of $M$,
\begin{equation}\label{e.s10.final.lower}
V_\alpha(\kappa^{1/2}x')
=\kappa^{-1}e^{-(f_{\alpha+1}-f_\alpha)(x')}
\geq\frac34\kappa^{-1}e^{-|t_{\alpha+1}(x)|}
>\frac38.
\end{equation}
By Lemma \ref{l.s10.gap} and Lemma \ref{l.s10.stability.transfer}, we can apply Proposition \ref{p.s10.scalar.smallness} with $y_0=\kappa^{1/2}x'$, which gives $V_\alpha(\kappa^{1/2}x')\leq1/4$.  This contradicts \eqref{e.s10.final.lower}. Taking the maximum over the sheets proves \eqref{e.s10.decay} under \eqref{e.s10.threshold.general}.
\end{proof}

\begin{corollary}\label{c.s10.global.decay}
Under the assumptions of Theorem \ref{t.main.ball},
\begin{equation}\label{e.s10.global.decay}
A(2R/3;0)\leq C\varepsilon^2|\log\varepsilon|^2.
\end{equation}
\end{corollary}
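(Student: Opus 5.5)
The corollary will follow by iterating Proposition \ref{p.s10.decay}, starting from the qualitative bound $A(5R/6;0)=o(1)$ given by Lemma \ref{l.D_al.infty}. Define radii $r_0=5R/6$ and $\kappa_k=A(r_k;0)$. As long as $\kappa_k\geq M\varepsilon^2|\log\varepsilon|^2$ and $r_k\geq 2R/3$, Proposition \ref{p.s10.decay} applies at radius $r_k$. Setting $r_{k+1}=r_k-KR_{*,k}$ with $R_{*,k}=\max\{\kappa_k^{-1/2},200|\log\varepsilon|^2\}$, it gives $\kappa_{k+1}\leq\kappa_k/2$. Since $A$ is nondecreasing in the radius, once $\kappa_k<M\varepsilon^2|\log\varepsilon|^2$ at some $r_k\geq 2R/3$, we obtain $A(2R/3;0)\leq\kappa_k\leq M\varepsilon^2|\log\varepsilon|^2$, which is \eqref{e.s10.global.decay}.

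The real content is the bookkeeping showing that the radii do not fall below $2R/3$ before the threshold is reached. The total radius lost is $K\sum_k R_{*,k}\leq K\sum_k\kappa_k^{-1/2}+200K|\log\varepsilon|^2\,k_{\max}$.

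For the first sum, $\kappa_k\leq 2^{-k}\kappa_0$ goes the wrong way for a direct estimate. I will bound it backwards from the final step instead. As long as the iteration continues, $\kappa_j\geq M\varepsilon^2|\log\varepsilon|^2$ for $j\leq k$, and $\kappa_j\geq 2^{k-j}\kappa_k$. Hence
\[
\sum_{j\leq k}\kappa_j^{-1/2}\leq \kappa_k^{-1/2}\sum_{i\geq0}2^{-i/2}\leq C\bigl(M\varepsilon^2|\log\varepsilon|^2\bigr)^{-1/2}=C M^{-1/2}\varepsilon^{-1}|\log\varepsilon|^{-1},
\]
which is $o(R)$.

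For the second sum, the number of steps is at most $\log_2(\kappa_0/(M\varepsilon^2|\log\varepsilon|^2))\leq C|\log\varepsilon|$. So this contribution is $O(|\log\varepsilon|^3)=o(R)$.

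Combining the two estimates, the total loss is $o(R)<R/6$ for $\varepsilon$ small, with $K,M$ fixed as in Proposition \ref{p.s10.decay}. Therefore every $r_k$ stays in $[2R/3,5R/6]$ and the iteration is legitimate.

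The main point to check carefully is the first geometric-sum estimate: the radius loss is dominated by the last step, where $\kappa^{-1/2}\sim \varepsilon^{-1}|\log\varepsilon|^{-1}M^{-1/2}$, and this is only barely below $R$. That is exactly where the threshold $\varepsilon^2|\log\varepsilon|^2$ (rather than anything smaller) is compatible with staying inside $B_{5R/6}^+$. One must also verify the remaining hypotheses of Proposition \ref{p.s10.decay} at each stage, namely that $r_k\in[2R/3,5R/6]$; this holds by the above, uniformly in $k$.
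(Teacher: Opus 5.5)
Your proposal is correct. It iterates Proposition \ref{p.s10.decay} and uses the monotonicity of $A(\cdot\,;0)$, as the paper does, but it controls the radius loss differently.

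The paper argues by contradiction. Assuming $A(2R/3;0)>M\varepsilon^2|\log\varepsilon|^2$, monotonicity gives the same lower bound for $A(r;0)$ at every $r\in[2R/3,5R/6]$. Hence every step is at most the fixed length $KR/(\sqrt M|\log\varepsilon|)$, and at least $\lfloor\sqrt M|\log\varepsilon|/(6K)\rfloor$ halvings fit into the interval of length $R/6$. It then enlarges $M$, with $K$ fixed, so that $\sqrt M\log2/(6K)>3$. This gives $A(2R/3;0)\leq2\varepsilon^{\sqrt M\log2/(6K)}$, which lies below the threshold, a contradiction.

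You run the iteration directly with the actual steps $KR_{*,k}$ and sum $\kappa_j^{-1/2}$ backward from the last admissible step. This shows the total loss is $O\bigl(KM^{-1/2}R|\log\varepsilon|^{-1}+K|\log\varepsilon|^3\bigr)=o(R)$. Your route needs neither the contradiction hypothesis nor any further enlargement of $M$, and it shows that only a vanishing fraction of the available $R/6$ is used. It does require an inductive check that each $r_k\geq2R/3$ before the proposition is applied, which you supply. The paper's fixed step makes the count of admissible iterations immediate, at the price of choosing $M$ large compared with $K$.

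Your closing remark misattributes the source of the squared logarithm, although your proof does not depend on it. The threshold $\varepsilon^2|\log\varepsilon|^2$ is not forced by the radius bookkeeping. With a threshold $M\varepsilon^2$, your backward sum would still give a total loss $O(KR/\sqrt M)$, which is below $R/6$ once $M$ is large compared with $K$. The threshold is the hypothesis \eqref{e.s10.threshold.general} of Proposition \ref{p.s10.decay}. It comes from the distance comparison in Claim \ref{c.s10.distance.comparison}, as explained in Remark \ref{r.s10.threshold}.
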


\begin{proof}
Increase $M$ in Proposition \ref{p.s10.decay}, while keeping its $K$ fixed, so that
\[
\frac{\sqrt M\log2}{6K}>3.
\]
This preserves Proposition \ref{p.s10.decay}.  Suppose, to the contrary, that
\[
A(2R/3;0)>M\varepsilon^2|\log\varepsilon|^2.
\]
By monotonicity,
\[
A(r;0)>M\varepsilon^2|\log\varepsilon|^2
\qquad\text{for every }r\in[2R/3,5R/6].
\]  Hence, for all sufficiently small $\varepsilon$ and every such $r$,
\[
\max\{A(r;0)^{-1/2},500|\log\varepsilon|^2\}
\leq\frac{R}{\sqrt M|\log\varepsilon|}.
\]
Indeed,
\[
A(r;0)^{-1/2}\leq\frac{R}{\sqrt M|\log\varepsilon|}
\]
follows from $A(r;0)>M\varepsilon^2|\log\varepsilon|^2$, while
\[
500|\log\varepsilon|^2\leq\frac{R}{\sqrt M|\log\varepsilon|}
\]
holds after $\varepsilon$ is decreased.

Since $A(\,\cdot\,;0)$ is nondecreasing, Proposition \ref{p.s10.decay} therefore implies
\[
A\!\left(r-\frac{KR}{\sqrt M|\log\varepsilon|};0\right)
\leq\frac12A(r;0)
\qquad\text{whenever }r\in[2R/3,5R/6]
\]
provided that the radius on the left is at least $2R/3$.  Starting at $5R/6$, this estimate can be iterated at least
\[
\left\lfloor\frac{\sqrt M|\log\varepsilon|}{6K}\right\rfloor
\]
times before reaching $2R/3$.  Since $A(5R/6;0)\leq1$, it follows that
\[
\begin{aligned}
A(2R/3;0)
&\leq2^{-\left\lfloor\sqrt M|\log\varepsilon|/(6K)\right\rfloor}\\
&\leq2\varepsilon^{\sqrt M\log2/(6K)}
<M\varepsilon^2|\log\varepsilon|^2
\end{aligned}
\]
for all sufficiently small $\varepsilon$.  This contradicts
$A(2R/3;0)>M\varepsilon^2|\log\varepsilon|^2$ and proves \eqref{e.s10.global.decay}.
\end{proof}

\section{Final estimates and the boundary correction on nonzero levels}\label{s.final.estimates}

Throughout this section, $1\leq n\leq9$ and $R=\varepsilon^{-1}$.  We use the geometric estimates of Section \ref{section twisted Fermi}, the corrected approximate solution of Section \ref{s.approx.toda}, the estimates of Sections \ref{Inner-outer gluing regularity estimates of error} and \ref{improved estimates section}, and the separation estimate of Section \ref{s.separation.emden}.  Constants depend only on the fixed data in Theorem \ref{t.main.ball}; constants in statements concerning nonzero levels may also depend on $b'$.  

There are two differences from the interior conclusion in \cite[Proof of Theorem 1.1]{wang-weiadv}.  First, Proposition \ref{p.s10.decay} gives a squared logarithm for a general boundary.  Second, the correction $\overline{\Gb}_\alpha$ constructed in Section \ref{s.approx.toda} need not vanish at a nonzero value of the heteroclinic variable.  The latter term enters the second derivatives of the graph of $\{u=t\}$ and therefore cannot be absorbed into the error.

\subsection{Separation and estimates on the zero level}

\begin{proposition}[Separation bounds]\label{p.s11.separation}
Under the assumptions of Theorem \ref{t.main.ball}, 
\begin{equation}\label{e.s11.separation.general}
 A(2R/3;0)\leq C\varepsilon^2|\log\varepsilon|^2.
\end{equation}
Consequently, for every $y\in\Sigma_\alpha\cap B_{2R/3}^+(0)$ and every existing neighbouring sheet,
\begin{equation}\label{e.s11.distance.lower}
 D_\alpha(y)\geq2|\log\varepsilon|-2\log|\log\varepsilon|-C.
\end{equation}
\end{proposition}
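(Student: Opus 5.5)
The first estimate \eqref{e.s11.separation.general} is exactly the content of Corollary \ref{c.s10.global.decay}. That corollary was derived from Proposition \ref{p.s10.decay} under the assumptions of Theorem \ref{t.main.ball}, with constants depending only on the fixed data. So the plan is to quote it directly, checking only that the hypotheses match: $1\leq n\leq 9$, the rescaled setting $R=\varepsilon^{-1}$, and $\varepsilon$ small. No new argument should be needed for this part.

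For the lower bound \eqref{e.s11.distance.lower}, I would unwind the definition \eqref{definition of A(r; x)} of $A$. For $y\in\Sigma_\alpha\cap B^+_{2R/3}(0)$, the definition gives
\[
e^{-D_\alpha(y)}\leq A(2R/3;0)\leq C\varepsilon^2|\log\varepsilon|^2 .
\]
Taking logarithms yields
\[
D_\alpha(y)\geq -\log C+2|\log\varepsilon|-2\log|\log\varepsilon|,
\]
which is the claim after renaming the constant.

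The phrase ``every existing neighbouring sheet'' needs a separate check. When $\Sigma_{\alpha\pm1}$ exists, $|t_{\alpha\pm1}(y)|\geq D_\alpha(y)$ by the definition \eqref{definition D alpha} of $D_\alpha$ as a minimum over $\beta\neq\alpha$, so the same bound holds for each neighbour individually. When there is no other sheet, $D_\alpha=+\infty$ and the statement is vacuous.

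The only point needing care, and the nearest thing to an obstacle, is that $A$ is defined using the twisted Fermi times $t_\beta$. These coincide with signed distances by Lemma \ref{l.Fc.vs.tFc.1}, so no conversion is required. I should also confirm that the maximum in $A$ ranges over all sheets meeting $B^+_{2R/3}(0)$, so that it covers every $y$ in the statement.
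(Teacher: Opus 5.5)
Your proposal is correct and matches the paper's proof: quote Corollary~\ref{c.s10.global.decay} for \eqref{e.s11.separation.general}, take logarithms in $e^{-D_\alpha(y)}\leq A(2R/3;0)$, and note that $D_\alpha=+\infty$ when there is no neighbouring sheet. The check comparing twisted Fermi times with signed distances is harmless but unnecessary, because $A$ is defined directly through $D_\alpha$.
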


\begin{remark}
If $\partial_0B_{3R}^+(0)$ is totally geodesic, then
\begin{equation}\label{e.s11.separation.tg}
 A(2R/3;0)\leq C\varepsilon^2|\log\varepsilon|.
\end{equation}
\end{remark}

\begin{remark}[The boundary condition in the intrinsic separation equation]\label{r.s11.intrinsic.boundary}
The intrinsic improvement discussed in \cite[Section 10.2 and Remarks 10.9--10.10]{wang-weiadv} cannot be imported without a new boundary argument.  To see the obstruction, let $\Sigma_{\alpha+1}$ be the upper neighbour of $\Sigma_\alpha$ and set
\[
 \rho(y)=|t_{\alpha+1}(y,0)|=-t_{\alpha+1}(y,0)
 \qquad (y\in\Sigma_\alpha).
\]
At a point of $\partial_0\Sigma_\alpha$, the inward normal $\nu$ is tangent to $\Sigma_\alpha$.  Lemma \ref{l.dz(nu)}, applied in the coordinates of $\Sigma_{\alpha+1}$, gives the exact identity
\begin{equation}\label{e.s11.gap.boundary}
\begin{split}
 \partial_\nu\rho(y)
 &=-J_{\alpha+1}\bigl(\Pi_{\alpha+1}(y,0),t_{\alpha+1}(y,0)\bigr)\\
 &=w_{\alpha+1}\bigl(\Pi_{\alpha+1}(y,0)\bigr)\rho(y)
 -\widetilde J_{\alpha+1}\bigl(\Pi_{\alpha+1}(y,0),-\rho(y)\bigr).
\end{split}
\end{equation}
The last term is bounded by
\[
 C\varepsilon^2\rho(y)^2\bigl(1+\rho(y)^3\bigr),
\]
whereas the first term is generally of order $\varepsilon\rho(y)$.  At the scale $\kappa\simeq\varepsilon^2|\log\varepsilon|$, the rescaling $y-y_0=\kappa^{-1/2}\widetilde y$ changes this boundary term into one of size $\varepsilon\kappa^{-1/2}\rho\simeq |\log\varepsilon|^{1/2}$.
It is therefore not a vanishing Neumann error.  Nonnegative ambient Ricci curvature does not imply $w_\alpha=0$.
\end{remark}

\begin{proof}
Equation \eqref{e.s11.separation.general} is Corollary \ref{c.s10.global.decay}.  Since $e^{-D_\alpha(y)}\leq A(2R/3;0)$, taking logarithms gives \eqref{e.s11.distance.lower}.  If $\Sigma_\alpha$ has no neighbouring sheet, then $D_\alpha=+\infty$ and there is nothing to prove.

\end{proof}

\begin{proposition}[Zero-level estimates]\label{p.s11.zero.level}
On $B_{5R/8}^+(0)$ one has
\begin{align}\label{e.s11.zero.basic}
&\|\phi\|_{C^{2,\theta}(B_{5R/8}^+(0))}
 +\max_\alpha\|h_\alpha\|_{C^{2,\theta}(\Sigma_\alpha\cap B_{5R/8}^+(0))}\notag\\
&\qquad
 +\max_\alpha\|H_\alpha(\cdot,0)+\Delta_{\alpha,0}h_\alpha\|_{C^\theta(\Sigma_\alpha\cap B_{5R/8}^+(0))}
 \leq C\varepsilon^2|\log\varepsilon|^2,
\end{align}
\begin{equation}\label{e.s11.zero.shift}
 \max_\alpha\|\nabla_{\alpha,0}h_\alpha\|_{C^{1,\theta}(\Sigma_\alpha\cap B_{5R/8}^+(0))}
 \leq C\varepsilon^2.
\end{equation}
For every fixed $D>0$,
\begin{equation}\label{e.s11.zero.horizontal}
 \max_{\substack{\alpha\\1\leq i\leq n}}
 \|\phi_{y_i}\|_{C^{1,\theta}(B_{5R/8}^+(0)\cap\mathcal M_\alpha^0\cap\{|t_\alpha|\leq D\})}
 \leq C(D)\varepsilon^2.
\end{equation}
Moreover,
\begin{align}\label{e.s11.zero.toda}
\max_\alpha\bigl\|&
 H_{\alpha,0}+\Delta_{\alpha,0}h_\alpha
 -\frac{2}{\sigma_0}\bigl(
 A_{(-1)^{\alpha-1}}^2e^{-|t_{\alpha-1}(\cdot,0)|}
 -A_{(-1)^\alpha}^2e^{-|t_{\alpha+1}(\cdot,0)|}
 \bigr)\bigr\|_{C^\theta(\Sigma_\alpha\cap B_{5R/8}^+(0))}\\
 &\quad\leq C\varepsilon^2.
\end{align}
In particular,
\begin{equation}\label{e.s11.zero.H}
 \max_\alpha\|H_\alpha(\cdot,0)\|_{C^\theta(\Sigma_\alpha\cap B_{5R/8}^+(0))}
 \leq C\varepsilon^2|\log\varepsilon|^2.
\end{equation}

Let $H_{\alpha,\varepsilon}$ and $\sff_{\alpha,\varepsilon}$ be the mean curvature and second fundamental form of the corresponding zero sheet in the original metric $\hat g$.  On $B_{1/2}^+(0)$,
\begin{equation}\label{e.s11.zero.original.H}
 \|H_{\alpha,\varepsilon}\|_{L^\infty}
 \leq C\varepsilon|\log\varepsilon|^2,
 \qquad
 [H_{\alpha,\varepsilon}]_{C^\theta}
 \leq C\varepsilon^{1-\theta}|\log\varepsilon|^2,
\end{equation}
and
\begin{equation}\label{e.s11.zero.original.II}
 \|\sff_{\alpha,\varepsilon}\|_{C^\theta}\leq C.
\end{equation}

If $n=1$, then
\[
 \|\sff_{\alpha,\varepsilon}\|_{L^\infty}
 \leq C\varepsilon|\log\varepsilon|^2,
 \qquad
 [\sff_{\alpha,\varepsilon}]_{C^\theta}
 \leq C\varepsilon^{1-\theta}|\log\varepsilon|^2.
\]
\end{proposition}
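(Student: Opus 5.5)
The plan is to insert the separation bound of Proposition \ref{p.s11.separation} into the conditional estimates of Sections \ref{Inner-outer gluing regularity estimates of error} and \ref{improved estimates section}, and then rescale back to the original metric.

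\emph{Rough bounds on $B^+_{5R/8}(0)$.} For $x\in B_{5R/8}^+(0)$ and a bounded radius $r$, the enlarged radius $r+240|\log\varepsilon|^2$ still lies inside $2R/3$ once $\varepsilon$ is small. Hence every $A(\cdot;x)$ that appears is controlled by $A(2R/3;0)\leq C\varepsilon^2|\log\varepsilon|^2$.
\begin{itemize}
\item Proposition \ref{aprior C2alpha Hhphi} then gives \eqref{e.s11.zero.basic} directly. We cover $B^+_{5R/8}(0)$ by such balls and note that the norms are local, so the bound is uniform.
\item Corollary \ref{c.s8.shift.derivatives} and Proposition \ref{p.s8.horizontal.derivatives} give the bound $\varepsilon^2+A^{3/2}+\varepsilon^{1/6}A$, which is at most $\varepsilon^2+\varepsilon^3|\log\varepsilon|^3+\varepsilon^{13/6}|\log\varepsilon|^2\leq C\varepsilon^2$. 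This yields \eqref{e.s11.zero.shift} and, taking $K=D$, also \eqref{e.s11.zero.horizontal}.
\item Corollary \ref{c.s8.improved.toda}, with the same substitution, gives \eqref{e.s11.zero.toda}. This uses that $\|\Delta_{\alpha,0}h_\alpha\|_{C^\theta}$ is also $O(\varepsilon^2)$ by \eqref{e.s11.zero.shift}.
\end{itemize}

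\emph{Mean curvature in the rescaled metric.} For \eqref{e.s11.zero.H}, we bound $e^{-|t_{\alpha\pm1}(\cdot,0)|}\leq A\leq C\varepsilon^2|\log\varepsilon|^2$. For its $C^\theta$ norm, the derivative $|\nabla_{\alpha,0}t_\beta|$ is bounded, so the H\"older seminorm of the exponential on unit balls is also $O(A)$ (as in the proof of Corollary \ref{H+htheta}).

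Here it matters that $\Sigma_\alpha$ is itself the zero level set. The zero sheet is $\{z=0\}$ by construction, and $H_\alpha(\cdot,0)$ is its mean curvature, so no graph correction is needed. The oddness of $\overline{\Gb}_\alpha$ only entered earlier, through the optimal shift construction.

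\emph{Rescaling to $\hat g$.} Under $g(x)=\hat g(\varepsilon x)$, curvatures scale as $H_{\alpha,\varepsilon}(\varepsilon y)=\varepsilon^{-1}H_\alpha(y,0)$. A unit ball in the rescaled picture has radius $\varepsilon$ originally, so a $C^\theta$ seminorm picks up a factor $\varepsilon^{-1-\theta}$, provided we control H\"older quotients at all distances. For separations $\geq\varepsilon$ we simply use $2\|\cdot\|_{L^\infty}$, which costs $\varepsilon^{-\theta}\cdot\varepsilon|\log\varepsilon|^2$. This gives \eqref{e.s11.zero.original.H}, and $B^+_{5R/8}(0)$ corresponds to $B^+_{5/8}\supset B_{1/2}^+$.

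For \eqref{e.s11.zero.original.II} we use Proposition \ref{l.grad.A}. It gives $|\nabla^k_\Sigma\sff|=O(\varepsilon)$ in rescaled units, hence $\|\sff\|_{C^1}\leq C$ originally, which implies the $C^\theta$ bound. When $n=1$, $\sff$ is the geodesic curvature and coincides with $H$, so the last display follows from \eqref{e.s11.zero.original.H}.

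\emph{Main obstacle.} The one step needing care is the bookkeeping of radii. Each application of Propositions \ref{aprior C2alpha Hhphi}, \ref{p.s8.horizontal.derivatives} and Corollary \ref{c.s8.improved.toda} requires the centre to lie in $B^+_{5R/6}(0)$ and the ball enlarged by $O(|\log\varepsilon|^2)$ to stay inside $B^+_{2R/3}(0)$, where \eqref{e.s11.separation.general} holds. Since $5R/8+240|\log\varepsilon|^2<2R/3$ for small $\varepsilon$, this is satisfied.
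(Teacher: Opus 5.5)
Your treatment of \eqref{e.s11.zero.basic} through \eqref{e.s11.zero.original.H} follows the paper's argument, and so does your $n=1$ case. You insert $A(2R/3;0)\le C\varepsilon^2|\log\varepsilon|^2$ into Proposition \ref{aprior C2alpha Hhphi}, Corollary \ref{c.s8.shift.derivatives}, Proposition \ref{p.s8.horizontal.derivatives} and Corollary \ref{c.s8.improved.toda}, control the $C^\theta$ norm of $e^{-|t_{\alpha\pm1}(\cdot,0)|}$ by the mean value theorem, and rescale.

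The genuine gap is your proof of \eqref{e.s11.zero.original.II}. Proposition \ref{l.grad.A} gives $|\nabla_\Sigma^k\sff_\Sigma|=O(\varepsilon)$ in the rescaled metric for every $k$, not $O(\varepsilon^{k+1})$. Since $\nabla^k\sff$ scales like $(\text{length})^{-(k+1)}$, going back to $\hat g$ yields only $|\sff_{\alpha,\varepsilon}|\le C$ and $|\nabla\sff_{\alpha,\varepsilon}|\le C\varepsilon^{-1}$. Hence you get only $[\sff_{\alpha,\varepsilon}]_{C^\theta}\le C\varepsilon^{-\theta}$, which blows up. Proposition \ref{l.grad.A} is a unit-scale estimate in the rescaled picture and merely repackages the a priori information in (H1) and (H3); it cannot upgrade $C^{1,1}$ to $C^{2,\theta}$. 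A consistency check shows this: Proposition \ref{l.grad.A} holds for every level $\{u=t\}$ with $|t|\le1-b'$. Your deduction would therefore give uniform $C^\theta$ curvature bounds on all transition levels. That contradicts the discussion of Kowalczyk's solutions in Section \ref{sec:comparison-estimates}, where the $C^\theta$ curvature seminorm of nonzero levels can grow like $\varepsilon^{-\theta}$.

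The missing idea is that \eqref{e.s11.zero.original.II} must come from the mean curvature decay via elliptic regularity for the graph. In the graph coordinates of (H2), $\hat f_{\alpha,\varepsilon}$ solves the prescribed mean curvature equation. This is a quasilinear equation whose principal coefficients depend on $(x',\hat f_{\alpha,\varepsilon},\nabla\hat f_{\alpha,\varepsilon})$. The slope and $C^{1,1}$ bounds in (H2), together with the bounds on $\hat g$, make it uniformly elliptic with coefficients bounded in $C^\theta$. Its right-hand side is $H_{\alpha,\varepsilon}$ times the area factor, which \eqref{e.s11.zero.original.H} bounds in $C^\theta$; this is where the Toda and separation analysis actually enters. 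Because $\partial_{x_1}\hat f_{\alpha,\varepsilon}=0$ on $\partial_0\mathbb{B}_2^+(0)$, the interior and Neumann Schauder estimates of Proposition \ref{t.sch} give a uniform $C^{2,\theta}$ bound for $\hat f_{\alpha,\varepsilon}$ on $\mathbb{B}_{1/2}^+(0)$. That bound is equivalent to \eqref{e.s11.zero.original.II}.

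A minor point: in Proposition \ref{p.s8.horizontal.derivatives} you need not take $K=D$. The strip there is $\{|t_\alpha|\le K|\log\varepsilon|\}$ with a fixed $K\ge10$, which already contains $\{|t_\alpha|\le D\}$ once $\varepsilon$ is small.
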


\begin{remark}
If the boundary is totally geodesic, the factors $|\log\varepsilon|^2$ in \eqref{e.s11.zero.basic}, \eqref{e.s11.zero.H}, and \eqref{e.s11.zero.original.H} may be replaced by $|\log\varepsilon|$; estimates \eqref{e.s11.zero.shift} and \eqref{e.s11.zero.horizontal} remain unchanged.
\end{remark}

\begin{proof}
Fix $x\in B_{5R/8}^+(0)$.  For all sufficiently small $\varepsilon$,
\[
 B_{2+200|\log\varepsilon|^2}^+(x)\subset B_{2R/3}^+(0).
\]
Apply Proposition \ref{aprior C2alpha Hhphi}, Corollary \ref{c.s8.shift.derivatives}, and Proposition \ref{p.s8.horizontal.derivatives} with centre $x$ and radius $2$.  Every interaction quantity on their right-hand sides is bounded by
\[
 A(2+200|\log\varepsilon|^2;x)\leq A(2R/3;0).
\]
Therefore
\begin{align*}
&\|\phi\|_{C^{2,\theta}(B_2^+(x))}
 +\max_\alpha\|h_\alpha\|_{C^{2,\theta}(\Sigma_\alpha\cap B_2^+(x))}\\
&\qquad
 +\max_\alpha\|H_\alpha(\cdot,0)+\Delta_{\alpha,0}h_\alpha\|_{C^\theta(\Sigma_\alpha\cap B_2^+(x))}
 \leq C\varepsilon^2|\log\varepsilon|^2.
\end{align*}
Moreover,
\[
 (\varepsilon^2|\log\varepsilon|^2)^{3/2}
 =\varepsilon^3|\log\varepsilon|^3\leq C\varepsilon^2,
 \qquad
 \varepsilon^{1/6}(\varepsilon^2|\log\varepsilon|^2)
 =\varepsilon^{13/6}|\log\varepsilon|^2\leq C\varepsilon^2.
\]
Since $D$ is fixed, $D<K|\log\varepsilon|$ for sufficiently small $\varepsilon$.  Proposition \ref{p.s8.horizontal.derivatives} and Corollary \ref{c.s8.shift.derivatives} then give
\[
 \max_\alpha\|\nabla_{\alpha,0}h_\alpha\|_{C^{1,\theta}(\Sigma_\alpha\cap B_2^+(x))}
 +\max_{\substack{\alpha\\1\leq i\leq n}}
 \|\phi_{y_i}\|_{C^{1,\theta}(B_2^+(x)\cap\mathcal M_\alpha^0\cap\{|t_\alpha|\leq D\})}
 \leq C\varepsilon^2.
\]
This proves \eqref{e.s11.zero.shift} and \eqref{e.s11.zero.horizontal}.  

Assume now that $n=1$.  At every point of the original zero sheet, choose a unit tangent vector $\tau$.  Since the tangent space is one-dimensional,
\[
 H_{\alpha,\varepsilon}
 =\sff_{\alpha,\varepsilon}(\tau,\tau),
 \qquad
 |\sff_{\alpha,\varepsilon}|_{\hat g}
 =|H_{\alpha,\varepsilon}|.
\]
In a graph coordinate, the covariant tensor $\sff_{\alpha,\varepsilon}$ is $H_{\alpha,\varepsilon}$ times the induced tangent metric.  The $C^{1,1}$ graph bound in (H2) and the fixed $C^4$ bound for $\hat g$ give a uniform $C^\theta$ bound for that induced metric.  Hence
\[
 [\sff_{\alpha,\varepsilon}]_{C^\theta}
 \leq C\left(
 [H_{\alpha,\varepsilon}]_{C^\theta}
 +\|H_{\alpha,\varepsilon}\|_{L^\infty}
 \right).
\]

Corollary \ref{c.s8.improved.toda}, together with \eqref{e.s11.zero.shift} and the preceding two numerical estimates, gives \eqref{e.s11.zero.toda}.  More precisely, for every $y_0\in\Sigma_\alpha\cap B_{5R/8}^+(0)$ and every existing neighbour $\Sigma_{\alpha\pm1}$,
\[
 \|e^{-|t_{\alpha\pm1}(\cdot,0)|}\|_{C^\theta(B_1^\alpha(y_0))}
 \leq C A(2R/3;0).
\]
Indeed, $B_1^\alpha(y_0)\subset\Sigma_\alpha\cap B_{2R/3}^+(0)$ for small $\varepsilon$, and the coordinate tangent derivatives of $t_{\alpha\pm1}(\cdot,0)$ are uniformly bounded on $B_1^\alpha(y_0)$.  The mean value theorem controls pairs of points in $B_1^\alpha(y_0)$ at base-coordinate distance less than one; pairs at distance at least one are controlled by the supremum $A(2R/3;0)$.  Equations \eqref{e.s11.zero.shift}, \eqref{e.s11.zero.toda}, and \eqref{e.s11.separation.general} prove \eqref{e.s11.zero.H}.

Under the dilation $x\mapsto\varepsilon x$,
\begin{equation}\label{e.s11.scaling.H}
 H_{\alpha,\varepsilon}(\varepsilon y)=\varepsilon^{-1}H_\alpha(y,0),
 \qquad
 [H_{\alpha,\varepsilon}]_{C^\theta}
 =\varepsilon^{-1-\theta}[H_\alpha(\cdot,0)]_{C^\theta},
\end{equation}
up to fixed equivalence constants for the graphical coordinates.  This proves \eqref{e.s11.zero.original.H}.

It remains to prove \eqref{e.s11.zero.original.II}.  In the graph coordinates from (H2), the mean-curvature equation for the original zero sheet is a uniformly elliptic quasilinear equation whose principal coefficient is the inverse graph metric divided by the graph area factor; see the computation in the proof of Lemma \ref{mean curvature expansion tf}.  The fixed slope and $C^{1,1}$ bounds in (H2) give uniform ellipticity and uniform $C^\theta$ bounds for the coefficients.  The boundary condition is $\partial_{x_1}\hat f_{\alpha,\varepsilon}=0$.  Since \eqref{e.s11.zero.original.H} gives a uniform $C^\theta$ bound for the right-hand side, the local interior and Neumann Schauder estimates of Appendix \ref{s.ell.est}, applied on coordinate neighbourhoods contained in $B_{5/8}^+(0)$, give a uniform $C^{2,\theta}$ bound for the graph on $B_{1/2}^+(0)$.  This is equivalent to \eqref{e.s11.zero.original.II}.  This last step is the boundary version of the closing elliptic argument in \cite[Proof of Theorem 1.1, equations (11.1)--(11.2)]{wang-weiadv}.

If the boundary is totally geodesic, use \eqref{e.s11.separation.tg} in the same argument.  The two error terms from Section \ref{improved estimates section} are still $O(\varepsilon^2)$.
\end{proof}

Notice that this concludes the proof of Theorem \ref{t.main.ball}.

\subsection{Nonzero levels}

For the remainder of the section, fix $t\in[-1+b',1-b']$ and set
\[
 s(t)=\Hb^{-1}(t).
\]
We orient every level graph by the normal having positive $z$-component.  This orientation agrees with the one used for $H_\alpha(\cdot,0)$ at $t=0$; the normal $\nabla u/|\nabla u|$ differs from it by the constant sign $(-1)^\alpha$ near $\Sigma_\alpha$.

\begin{lemma}[Graphs of the nonzero levels]\label{l.s11.level.graph}
For each $\alpha$, the portion of $\{u=t\}$ associated with $\Sigma_\alpha$ and lying over $\Sigma_\alpha\cap B_{3R/5}^+(0)$ is the unique graph
\begin{equation}\label{e.s11.level.height}
 z=\ell(y,t)=h_\alpha(y)+(-1)^\alpha s(t)+\eta(y,t),
 \qquad
 u(y,\ell(y,t))=t,
\end{equation}
in a normal strip whose width depends only on $b'$ and $c_0$.  Uniformly in $t$ and $\alpha$,
\begin{equation}\label{e.s11.eta.rough}
 \|\eta(\cdot,t)\|_{C^{2,\theta}(\Sigma_\alpha\cap B_{3R/5}^+(0))}
 \leq C(b')\varepsilon,
\end{equation}
and
\begin{equation}\label{e.s11.eta.correction}
 \left\|
 \eta(\cdot,t)
 +\frac{(-1)^\alpha\overline{\Gb}_\alpha(\cdot,s(t))}{\Hb'(s(t))}
 \right\|_{C^{2,\theta}(\Sigma_\alpha\cap B_{3R/5}^+(0))}
 \leq C(b')\varepsilon^2|\log\varepsilon|^2.
\end{equation}
At $t=0$ one has $\ell(\cdot,0)=0$ and $\eta(\cdot,0)=-h_\alpha$.  Every point of
$B_{R/2}^+(0)\cap\{|u|\leq1-b'\}$ belongs to one of the graphs \eqref{e.s11.level.height}.
\end{lemma}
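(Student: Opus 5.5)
We plan to obtain the level graph from the implicit function theorem applied to $u=g_*+\phi$ in the twisted Fermi coordinates of $\Sigma_\alpha$, followed by a first-order expansion of the defining equation $u(y,\ell(y,t))=t$. Fix $\alpha$ and $y\in\Sigma_\alpha\cap B^+_{3R/5}(0)$. On the strip $|z-h_\alpha(y)|\leq S(b')$, where $S(b')$ is chosen so that $|\bar\Hb|\leq 1-b'/2$ is impossible outside it, we write
\[
u(y,z)=\Hb_\alpha(y,z)+\Gb_\alpha(y,z)+\Psi(y,z),\qquad \Psi=\phi+\sum_{\beta\neq\alpha}\bigl(g_\beta+\operatorname{sgn}(\beta-\alpha)(-1)^\beta\bigr).
\]
By Proposition~\ref{p.s11.separation} and Proposition~\ref{p.s11.zero.level}, on this strip $\|\Psi\|_{C^{2,\theta}}\leq C\varepsilon^2|\log\varepsilon|^2$. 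For the foreign sheets we use $e^{-|t_\beta|}\lesssim e^{-D_\alpha+S}\lesssim C(b')A(2R/3;0)$, and for the horizontal derivatives of $t_\beta$ we use Proposition~\ref{p.conseq.almost.parallel}. Also $\|\Gb_\alpha\|_{C^{2,\theta}}\leq C\varepsilon$ by Corollary~\ref{improved G estimates}. Since $\partial_z\Hb_\alpha=(-1)^\alpha\bar\Hb'((-1)^\alpha(z-h_\alpha))$ is bounded below by $c(b')$ on the strip, the implicit function theorem gives a unique solution $\ell(y,t)$ with $C^{2,\theta}$ dependence. Existence and uniqueness in the strip follow from monotonicity in $z$, using Lemma~\ref{l.nab.u}. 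Writing $\ell=h_\alpha+(-1)^\alpha s(t)+\eta$ defines $\eta$.

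Next we expand. Setting $\sigma=(-1)^\alpha(z-h_\alpha)$, at $z=\ell$ we have $\sigma=s(t)+(-1)^\alpha\eta$, and the equation becomes
\[
\bar\Hb\bigl(s+(-1)^\alpha\eta\bigr)+\overline{\Gb}_\alpha\bigl(y,s+(-1)^\alpha\eta\bigr)+\Psi=t .
\]
Here $\bar\Hb=\Hb$ on the strip, since the cutoff is inactive for $|\sigma|\leq 4|\log\varepsilon|$. Taylor expansion gives
\[
(-1)^\alpha\Hb'(s)\eta=-\overline{\Gb}_\alpha(y,s)+O(\eta^2)+O(\varepsilon|\eta|)-\Psi .
\]
A first pass with the rough sizes yields $|\eta|\leq C(b')\varepsilon$, which is \eqref{e.s11.eta.rough} at the $C^0$ level. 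Feeding this back shows that the remainder is $O(\varepsilon^2+\varepsilon^2|\log\varepsilon|^2)$, and \eqref{e.s11.eta.correction} follows at $C^0$.

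For the $C^{2,\theta}$ norms we differentiate the identity $F(y,\eta(y))=0$ in $y$, where $F$ is the expression above. Horizontal derivatives of $\overline{\Gb}_\alpha(y,\cdot)$ are $O(\varepsilon)$ by \eqref{e.G.est}. Derivatives of $\Psi$ along the graph involve $\partial_{y_i}\Psi$ and $\partial_z\Psi$ composed with $\ell$, and are controlled by the same $C^{2,\theta}$ bound. Derivatives of $h_\alpha$ are $O(\varepsilon^2)$ in $C^{1,\theta}$ by \eqref{e.s11.zero.shift}, and they enter only through $\sigma$ (in $\Gb_\alpha$) and the chain rule, giving products of size $\varepsilon\cdot\varepsilon^2$. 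The quadratic terms $\eta^2$ and $\varepsilon\eta$ in $C^{2,\theta}$ are $O(\varepsilon^2)$ by the product inequality once $\|\eta\|_{C^{2,\theta}}\leq C\varepsilon$ is known. That bound itself comes from the implicit function theorem estimate $\|D^k\eta\|\lesssim\|D^kF\|/\inf|\partial_\eta F|$, applied iteratively for $k=1,2$ and then to the Hölder quotient.

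Finally, at $t=0$ we have $s=0$ and $\overline{\Gb}_\alpha(\cdot,0)=0$ by oddness, and $u(y,0)=0$ since $\Sigma_\alpha=\{z=0\}$; uniqueness then gives $\ell=0$, that is $\eta=-h_\alpha$. For the covering statement, a point $p$ with $|u(p)|\leq 1-b'$ lies within distance $S(b')$ of some sheet. Otherwise every $|t_\beta(p)|$ exceeds $S$, so $|\Hb_*|\geq 1-b'/2$ while $|\Gb_*|+|\phi|$ is small, which is a contradiction. Taking the nearest sheet, $p$ lies in the strip and hence on the graph with $t=u(p)$. We expect the main obstacle to be the uniform $C^{2,\theta}$ control of the foreign-sheet part of $\Psi$ along the graph, which requires the separation estimate together with the almost-parallel derivative bounds in Proposition~\ref{p.conseq.almost.parallel}. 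Near $\partial_0$ we must also check that the coordinate balls used remain admissible half-balls; this works because the argument is pointwise in $y$ plus Hölder quotients on $B^\alpha_1$.
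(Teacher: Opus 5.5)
Your proposal is correct and follows the paper's proof in all essentials. The paper likewise bounds the foreign heteroclinics, the corrections $\Gb_\beta$ ($\beta\neq\alpha$) and $\phi$ in $C^{2,\theta}$ on a fixed strip via the separation estimate, and obtains $\ell$ from the implicit function theorem because $u$ is $C^{2,\theta}$-close to $\Hb((-1)^\alpha(z-h_\alpha))$. It then Taylor-expands the root identity $\Hb(s+(-1)^\alpha\eta)-\Hb(s)+\overline{\Gb}_\alpha(y,s+(-1)^\alpha\eta)+\Psi(y,\ell)=0$ and divides by $\Hb'(s(t))$ to get \eqref{e.s11.eta.correction}.

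The one real difference is the covering statement. The paper follows the integral curve of $-\operatorname{sign}(u(p))\nabla u/|\nabla u|$, which by $|\nabla u|\geq c_0$ reaches a zero sheet within length $(1-b')/c_0$; this is where $c_0$ enters the strip width. Your contradiction argument through $u=\Hb_*+\Gb_*+\phi$ also works, with two adjustments. First, choose $S(b')$ large enough that $\Hb(S)$ minus the foreign tails $\sum_{\beta\neq\alpha}|\Hb_\beta+\operatorname{sgn}(\beta-\alpha)(-1)^\beta|\lesssim e^{-S}$ still exceeds $1-b'/2$. Second, take the uniqueness strip slightly wider than $S$, so that it contains every point within distance $S$ of $\Sigma_\alpha$.
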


\begin{proof}
In this proof, take the fixed number $D$ in \eqref{e.s11.zero.horizontal} to be
\[
 D=2+\max_{|\tau|\leq1-b'}|\Hb^{-1}(\tau)|
       +\frac{2(1-b')}{c_0}.
\]
Fix $y_0\in\Sigma_\alpha\cap B_{3R/5}^+(0)$.  Since $D$ is independent of $\varepsilon$, \eqref{e.s11.distance.lower} implies, for all sufficiently small $\varepsilon$,
\[
 Y_{\Sigma_\alpha}\bigl(B_2^\alpha(y_0)\times[-D,D]\bigr)\subset\mathcal M_\alpha^0.
\]
For every $(y,z)\in B_2^\alpha(y_0)\times[-D,D]$, the cutoff used in the definition of $\Hb_\alpha(y,z;h_\alpha)$ equals one.  Lemma \ref{l.sum.of.exp}, Corollary \ref{improved G estimates}, and the distance and projection estimates of Section \ref{section twisted Fermi} give
\begin{align}\label{e.s11.foreign.terms}
&\Bigl\|
 \sum_{\beta\neq\alpha}
 \Bigl(\Hb_\beta+(-1)^\beta\operatorname{sgn}(\beta-\alpha)\Bigr)
 \Bigr\|_{C^{2,\theta}(B_1^\alpha(y_0)\times[-D,D])}
 \leq C(b')A(2R/3;0),\notag\\
&\Bigl\|\sum_{\beta\neq\alpha}\Gb_\beta\Bigr\|_{C^{2,\theta}(B_1^\alpha(y_0)\times[-D,D])}
 \leq C(b')\varepsilon A(2R/3;0)^{3/4}
 \leq C(b')\varepsilon^2.
\end{align}
The first estimate follows by differentiating the exponentially decaying tails on $B_1^\alpha(y_0)\times[-D,D]$ and summing with Lemma \ref{l.sum.of.exp}.  The second follows from \eqref{e.G.est} with any fixed exponential weight larger than $3/4$.  On $B_1^\alpha(y_0)\times[-D,D]$, the distance and projection estimates control the maps $t_\beta$ and $\Pi_\beta$ that occur in every summand $\Hb_\beta(y,z)$ and $\Gb_\beta(y,z)$, together with their derivatives and $C^\theta$ seminorms.

Equations \eqref{e.s11.zero.basic}, \eqref{e.G.est}, and \eqref{e.s11.foreign.terms} yield
\[
 \left\|u(y,z)-\Hb\bigl((-1)^\alpha(z-h_\alpha(y))\bigr)\right\|_{C^{2,\theta}(B_1^\alpha(y_0)\times[-D,D])}
 \leq C(b')\varepsilon.
\]
Moreover,
\[
 \min_{|\tau|\leq1-b'}\Hb'(\Hb^{-1}(\tau))>0.
\]
The intermediate value theorem and the implicit function theorem therefore give a unique root $z=\ell(y,t)$ for every $y\in B_1^\alpha(y_0)$.  Uniqueness makes the locally defined roots agree on overlaps as $y_0$ varies, producing the graph \eqref{e.s11.level.height} over $\Sigma_\alpha\cap B_{3R/5}^+(0)$.  For every $y\in B_1^\alpha(y_0)$, evaluation at $(y,\ell(y,t))$ gives
\begin{equation}\label{e.s11.implicit.derivatives}
 \ell_i=-\frac{u_i}{u_z},
 \qquad
 \ell_{ij}=-\frac{u_{ij}+u_{iz}\ell_j+u_{jz}\ell_i+u_{zz}\ell_i\ell_j}{u_z}.
\end{equation}
The estimate
\[
 \left\|u(y,z)-\Hb\bigl((-1)^\alpha(z-h_\alpha(y))\bigr)\right\|_{C^{2,\theta}(B_1^\alpha(y_0)\times[-D,D])}
 \leq C(b')\varepsilon,
\]
together with the product and composition estimates in \eqref{e.s11.implicit.derivatives}, gives \eqref{e.s11.eta.rough} on $B_1^\alpha(y_0)$.  Taking the supremum over $y_0\in\Sigma_\alpha\cap B_{3R/5}^+(0)$ gives the $C^0$ bounds on $\Sigma_\alpha\cap B_{3R/5}^+(0)$.  If $p,q\in\Sigma_\alpha\cap B_{3R/5}^+(0)$ have base-coordinate distance less than one, apply the estimate on $B_1^\alpha(p)$; if their base-coordinate distance is at least one, bound the H\"older quotient by twice the $C^0$ norm on $\Sigma_\alpha\cap B_{3R/5}^+(0)$.  This gives the full $C^{2,\theta}$ norm in \eqref{e.s11.eta.rough}.

At the root \eqref{e.s11.level.height}, the exact identity $u=\Hb_*+\Gb_*+\phi$ becomes
\begin{align}\label{e.s11.root.identity}
0={}&\Hb\bigl(s(t)+(-1)^\alpha\eta\bigr)-\Hb(s(t))
 +\overline{\Gb}_\alpha\bigl(y,s(t)+(-1)^\alpha\eta\bigr)
 +\phi(y,\ell)\notag\\
&+\sum_{\beta\neq\alpha}
 \left(\Hb_\beta(y,\ell)+(-1)^\beta\operatorname{sgn}(\beta-\alpha)
 +\Gb_\beta(y,\ell)\right).
\end{align}
Taylor's theorem and the higher-order version of \eqref{e.G.est} give, with $s=s(t)$,
\[
 \left\|
 \Hb\bigl(s+(-1)^\alpha\eta\bigr)-\Hb(s)
 -(-1)^\alpha\Hb'(s)\eta
 \right\|_{C^{2,\theta}(B_1^\alpha(y_0))}
 \leq C(b')\|\eta\|_{C^{2,\theta}(B_1^\alpha(y_0))}^2,
\]
\[
 \left\|
 \overline{\Gb}_\alpha\bigl(\cdot,s+(-1)^\alpha\eta\bigr)
 -\overline{\Gb}_\alpha(\cdot,s)
 \right\|_{C^{2,\theta}(B_1^\alpha(y_0))}
 \leq C(b')\varepsilon\|\eta\|_{C^{2,\theta}(B_1^\alpha(y_0))}.
\]
Substituting \eqref{e.s11.eta.rough}, \eqref{e.s11.zero.basic}, and \eqref{e.s11.foreign.terms} into \eqref{e.s11.root.identity}, and dividing by $\Hb'(s(t))$, gives \eqref{e.s11.eta.correction} on $B_1^\alpha(y_0)$.  Taking the supremum over $y_0\in\Sigma_\alpha\cap B_{3R/5}^+(0)$ gives the $C^0$ part on $\Sigma_\alpha\cap B_{3R/5}^+(0)$.  If $p,q\in\Sigma_\alpha\cap B_{3R/5}^+(0)$ have base-coordinate distance less than one, apply the estimate on $B_1^\alpha(p)$; if their base-coordinate distance is at least one, bound the H\"older quotient by twice that $C^0$ norm.  This proves \eqref{e.s11.eta.correction}.

For $t=0$, $s(0)=0$, $\overline{\Gb}_\alpha(y,0)=0$, and the zero sheet is $z=0$; hence $\eta=-h_\alpha$.  Finally, let $p\in B_{R/2}^+(0)\cap\{|u|\leq1-b'\}$.  The rescaled form of (H1) is $|\nabla u|\geq c_0$ on $\{|u|\leq1-b\}$.  The integral curve of $-\operatorname{sign}(u(p))\nabla u/|\nabla u|$ starting at $p$ reaches a zero of $u$ after length at most $(1-b')/c_0$.  Let $q$ be the zero of $u$ reached by this integral curve and let $\Sigma_\alpha$ be the zero sheet containing $q$.  For sufficiently small $\varepsilon$, one has
\[
 q\in\Sigma_\alpha\cap B_{3R/5}^+(0),
 \qquad
 \Pi_\alpha(p)\in\Sigma_\alpha\cap B_{3R/5}^+(0).
\]
The uniqueness in \eqref{e.s11.level.height}, applied with $t=u(p)$ and base point $\Pi_\alpha(p)$, therefore gives $p\in\Sigma_\alpha(u(p))$.
\end{proof}

\begin{proposition}[Mean curvature of a nonzero level]\label{p.s11.level.H}
Let $H_{\Sigma_\alpha(t)}$ be the mean curvature of the graph \eqref{e.s11.level.height}, with the orientation fixed above.  Uniformly for $t\in[-1+b',1-b']$,
\begin{equation}\label{e.s11.level.H.correction}
 \left\|
 H_{\Sigma_\alpha(t)}-H_\alpha(\cdot,0)
 +\frac{(-1)^\alpha}{\Hb'(s(t))}
 \Delta_{\alpha,0}\overline{\Gb}_\alpha(\cdot,s(t))
 \right\|_{C^\theta(\Sigma_\alpha\cap B_{3R/5}^+(0))}
 \leq C(b')\varepsilon^2|\log\varepsilon|^2.
\end{equation}
Consequently,
\begin{equation}\label{e.s11.level.H.general}
 \|H_{\Sigma_\alpha(t)}\|_{C^\theta(\Sigma_\alpha\cap B_{3R/5}^+(0))}
 \leq C(b')\varepsilon.
\end{equation}
\end{proposition}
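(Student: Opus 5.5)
The plan is to treat the level graph $z=\ell(y,t)$ of Lemma \ref{l.s11.level.graph} as the graph $\Sigma_\alpha[\varphi]$ of the function $\varphi=\ell(\cdot,t)$ in the twisted Fermi coordinates of $\Sigma_\alpha$. We then apply the mean curvature expansion of Lemma \ref{mean curvature expansion tf}. The hypotheses of that lemma hold: by \eqref{e.s11.level.height}, \eqref{e.s11.eta.rough} and \eqref{e.s11.zero.basic} we have $\|\varphi\|_{L^\infty}\leq C(b')$ and $|\nabla_{\alpha,0}\varphi|\leq C\varepsilon$, and $|\nabla^2_{\alpha,0}\varphi|\leq C\varepsilon$. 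The sign convention for the normal with positive $z$-component agrees with the one used for $H_\alpha(\cdot,0)$. Lemma \ref{mean curvature expansion tf} then gives
\[
H_{\Sigma_\alpha(t)}-H_\alpha(\cdot,0)=\mathcal L_{\Sigma_\alpha[\varphi]}\varphi+\bigl(|\sff_{\Sigma_\alpha}|^2+\Ric_g(\bn,\bn)\bigr)\varphi+\mathcal Q(y,\varphi,d\varphi).
\]
The zeroth-order coefficient is $O(\varepsilon^2)$ in $C^\theta$, and $\varphi$ is bounded, so that term is $O(\varepsilon^2)$. For $\mathcal Q$, the bound in Lemma \ref{mean curvature expansion tf} gives $C\varepsilon|d\varphi|^2+C\varepsilon|\varphi|^2|\nabla H_\alpha|+C\varepsilon^3\leq C\varepsilon^2$, provided $|\nabla_{\alpha,0}H_\alpha(\cdot,0)|\leq C\varepsilon$ (Proposition \ref{l.grad.A}). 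The $C^\theta$ version follows from the smoothness of $\mathcal Q$ in its arguments.

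The main term is $\mathcal L_{\Sigma_\alpha[\varphi]}\varphi$. Since $\mathcal L_{\Sigma_\alpha[\varphi]}1=0$, the constant $(-1)^\alpha s(t)$ drops out, leaving $\mathcal L_{\Sigma_\alpha[\varphi]}(h_\alpha+\eta)$. The next step is to compare $\mathcal L_{\Sigma_\alpha[\varphi]}$ with $\mathcal L_{\Sigma_\alpha[0]}=\Delta_{\alpha,0}$. Its coefficients depend smoothly on the induced metric of $\Sigma_{\alpha,\varphi}$, on $\nabla t_\alpha$, and on $d\varphi$. By Proposition \ref{geometric approximation} and Proposition \ref{prope.comparing laplacian}, with $|z|\leq C(b')$, the coefficient differences are $O(\varepsilon)$ in $C^\theta$, and the slope contributions are also $O(\varepsilon)$. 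Hence
\[
\bigl\|(\mathcal L_{\Sigma_\alpha[\varphi]}-\Delta_{\alpha,0})\psi\bigr\|_{C^\theta}\leq C\varepsilon\|\nabla\psi\|_{C^{1,\theta}}.
\]
Applying this to $\psi=h_\alpha+\eta$, whose gradient is $O(\varepsilon)$ in $C^{1,\theta}$ by \eqref{e.s11.zero.shift} and \eqref{e.s11.eta.rough}, gives an $O(\varepsilon^2)$ error.

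It then remains to evaluate $\Delta_{\alpha,0}(h_\alpha+\eta)$. By \eqref{e.s11.zero.shift}, $\|\Delta_{\alpha,0}h_\alpha\|_{C^\theta}\leq C\varepsilon^2$. By \eqref{e.s11.eta.correction}, $\Delta_{\alpha,0}\eta$ equals $-(-1)^\alpha\Hb'(s(t))^{-1}\Delta_{\alpha,0}\overline{\Gb}_\alpha(\cdot,s(t))$ up to $C(b')\varepsilon^2|\log\varepsilon|^2$ in $C^\theta$; here $s(t)$ is constant in $y$. Collecting all the errors gives \eqref{e.s11.level.H.correction}.

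The consequence \eqref{e.s11.level.H.general} then follows from three bounds: $H_\alpha(\cdot,0)=O(\varepsilon^2|\log\varepsilon|^2)$ by \eqref{e.s11.zero.H}, $\|\Delta_{\alpha,0}\overline{\Gb}_\alpha\|_{C^\theta}\leq C\varepsilon$ by \eqref{e.G.est}, and $\Hb'(s(t))$ being bounded below uniformly in $t\in[-1+b',1-b']$.

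The main obstacle I expect is the operator comparison step. The expansion of Lemma \ref{mean curvature expansion tf} is only stated pointwise, so the coefficient differences must be controlled in $C^\theta$ and not merely in $L^\infty$. Near $\partial_0\Sigma_\alpha$, the twisted coordinates have $\partial_z$ non-normal, so the terms $T^i$ enter the coefficients; by Lemma \ref{l.Z.Z-tld} these are only $O(\varepsilon)$ at bounded heights. I would verify this by writing the graph metric explicitly from \eqref{e.s10.varpi}-type formulas in $(y,z)$ coordinates and checking that every coefficient is a smooth function of $(g_{ij}(y,\varphi),g_{iz},d\varphi)$, each of which is within $O(\varepsilon)$ in $C^\theta$ of its value at $(y,0,0)$.
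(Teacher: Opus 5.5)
Your proposal is correct and follows the paper's proof essentially step for step. You expand via Lemma \ref{mean curvature expansion tf}, drop the constant $(-1)^\alpha s(t)$ because $\mathcal L_{\Sigma_\alpha[\ell]}1=0$, compare $\mathcal L_{\Sigma_\alpha[\ell]}$ with $\Delta_{\alpha,0}$ at cost $C\varepsilon\|\nabla_{\alpha,0}(h_\alpha+\eta)\|_{C^{1,\theta}}$, and read off $\Delta_{\alpha,0}\eta$ from \eqref{e.s11.eta.correction}. The one cosmetic difference is how the Hessian hypothesis of that lemma is checked: the paper uses (H3) to get $|\nabla^2_{\alpha,0}\ell|\leq C\Lambda_0\varepsilon$, so the small-constant hypothesis holds literally, whereas your bound $C(b')\varepsilon$ from \eqref{e.s11.eta.rough} does not carry the constant $\Lambda_0$; this is harmless.
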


\begin{remark}\label{r.s11.enhanced.boundary}
Recall if $|\nabla u(x)|\neq 0$, then
\begin{equation}\label{e.s11.enhanced.identity}
 |\mathcal A(u)|^2
 =|\sff_{\{u=u(x)\}}|^2+|\nabla^T\log|\nabla u||^2.
\end{equation}
If in addition $x\in \partial_0B^+_{3R}(0)$,
\begin{equation}\label{e.s11.loggradient.boundary}
 \partial_\nu\log|\nabla u|
 =\sff_{\partial_0B_{3R}^+(0)}\left(
 \frac{\nabla u}{|\nabla u|},\frac{\nabla u}{|\nabla u|}\right).
\end{equation}
In particular, on $\partial_0\Sigma_\alpha$, we have $\partial_\nu\log|\nabla u|=-w_\alpha$, and
\begin{equation}\label{e.s11.zero.boundary.identity}
 |\mathcal A(u)|^2\geq |\sff_{\Sigma_\alpha}|^2+|w_\alpha|^2.
\end{equation}
If $n=1$, then at every regular boundary point, in the original metric,
\begin{equation}\label{e.s11.surface.boundary.identity}
 |\mathcal A(u_\varepsilon)|_{\hat g}^2
 =|\sff_{\{u_\varepsilon=u_\varepsilon(x)\}}|_{\hat g}^2
 +|\sff_{\partial_0B_3^+(0)}|_{\hat g}^2.
\end{equation}
Thus, in ambient dimension two, a decay estimate for $|\mathcal A(u_\varepsilon)|$ at boundary points in the transition layer requires the curvature of the boundary to vanish at the same rate.
\end{remark}

\begin{remark}
If $\partial_0B_3^+(0)$ is totally geodesic, then $\overline{\Gb}_\alpha=0$ for every sheet $\Sigma_\alpha$ meeting $B_{2R/3}^+(0)$. Then uniformly for $t\in[-1+b',1-b']$, 
\begin{equation}\label{e.s11.tg.level.H}
 \|H_{\Sigma_\alpha(t)}-H_\alpha(\cdot,0)\|_{C^\theta(\Sigma_\alpha\cap B_{3R/5}^+(0))}
 \leq C(b')\varepsilon^2,
\end{equation}
\begin{equation}\label{e.s11.tg.level.H.total}
 \|H_{\Sigma_\alpha(t)}\|_{C^\theta(\Sigma_\alpha\cap B_{3R/5}^+(0))}
 \leq C(b')\varepsilon^2|\log\varepsilon|.
\end{equation}
and its second fundamental form satisfies
\begin{equation}
 \|\sff_{\Sigma_\alpha(t)}\|_{L^\infty(B_{R/2}^+(0))} +\varepsilon^{-\theta}[\sff_{\Sigma_\alpha(t)}]_{C^\theta(B_{R/2}^+(0))}
 \leq C(b')\varepsilon.
\end{equation}
If, in addition, $n=1$, then each level set is a curve, so its second fundamental form has one component and its norm equals the absolute value of its mean curvature.  Consequently,
\begin{equation}\label{e.s11.tg.enhanced}
\begin{aligned}
 \sup_{p\in B_{R/2}^+(0)\cap\{|u|\leq1-b'\}}
 |\mathcal A(u)(p)|
 &\leq C(b')\varepsilon^2|\log\varepsilon|.
\end{aligned}
\end{equation}
Unlike the dimension-one conclusion in Proposition \ref{p.s11.zero.level}, this estimate still requires total geodesicity of the boundary, as noticed in Remark \ref{r.s11.enhanced.boundary}.
\end{remark}

\begin{proof}[Proof of Proposition \ref{p.s11.level.H}]
The graph in Lemma \ref{l.s11.level.graph} is a regular level contained in $\{|u|\leq1-b'\}$.  Hence (H3), together with the uniformly controlled twisted-coordinate map, gives $|\nabla_{\alpha,0}\ell|\leq C\eta_0$ and $|\nabla_{\alpha,0}^2\ell|\leq C\Lambda_0\varepsilon$.  The smallness of $\eta_0$ and $\Lambda_0$ is chosen so that Lemma \ref{mean curvature expansion tf} applies to $\ell(\cdot,t)$.  Since the operator in that lemma annihilates constants,
\begin{align}\label{e.s11.mc.expansion}
 H_{\Sigma_\alpha(t)}-H_\alpha(\cdot,0)
 ={}&\mathcal L_{\Sigma_\alpha[\ell]}(h_\alpha+\eta)
 +\bigl(|\sff_{\Sigma_\alpha}|^2
 +\Ric_g(\bn_{\Sigma_\alpha},\bn_{\Sigma_\alpha})\bigr)\ell
 +\mathcal Q(y,\ell,d\ell).
\end{align}
On the bounded strip containing the graph, the coefficient estimates of Proposition \ref{geometric approximation}, together with \eqref{e.s11.eta.rough}, imply
\begin{equation}\label{e.s11.operator.compare}
 \|\mathcal L_{\Sigma_\alpha[\ell]}(h_\alpha+\eta)
 -\Delta_{\alpha,0}(h_\alpha+\eta)\|_{C^\theta}
 \leq C(b')\varepsilon
 \|\nabla_{\alpha,0}(h_\alpha+\eta)\|_{C^{1,\theta}}
 \leq C(b')\varepsilon^2.
\end{equation}
The curvature coefficient in \eqref{e.s11.mc.expansion} is $O(\varepsilon^2)$ in $C^\theta$ by Proposition \ref{l.grad.A} and the rescaled metric bounds.  The estimate for $\mathcal Q$ in Lemma \ref{mean curvature expansion tf}, its differentiated form supplied by Proposition \ref{geometric approximation} and Appendix \ref{ap:geometric.approx}, and \eqref{e.s11.eta.rough} likewise give
\begin{equation}\label{e.s11.lower.order.mc}
 \left\|
 \bigl(|\sff_{\Sigma_\alpha}|^2
 +\Ric_g(\bn_{\Sigma_\alpha},\bn_{\Sigma_\alpha})\bigr)\ell
 +\mathcal Q(y,\ell,d\ell)
 \right\|_{C^\theta}
 \leq C(b')\varepsilon^2.
\end{equation}
Finally, \eqref{e.s11.eta.correction} and \eqref{e.s11.zero.shift} give
\begin{equation}\label{e.s11.laplacian.eta}
 \left\|
 \Delta_{\alpha,0}(h_\alpha+\eta)
 +\frac{(-1)^\alpha}{\Hb'(s(t))}
 \Delta_{\alpha,0}\overline{\Gb}_\alpha(\cdot,s(t))
 \right\|_{C^\theta}
 \leq C(b')\varepsilon^2|\log\varepsilon|^2.
\end{equation}
Combining \eqref{e.s11.mc.expansion}--\eqref{e.s11.laplacian.eta} proves \eqref{e.s11.level.H.correction}.  Equation \eqref{e.G.est} gives
$\|\Delta_{\alpha,0}\overline{\Gb}_\alpha(\cdot,s(t))\|_{C^\theta}\leq C(b')\varepsilon$; together with \eqref{e.s11.zero.H}, this proves \eqref{e.s11.level.H.general}.
\end{proof}
We conclude by making the dependence of the curvature estimates on the second fundamental form of the boundary explicit. The following remark compares the enhanced second fundamental form throughout the transition region with the second fundamental forms of the zero sheets.

\begin{remark}[Dependence on the boundary geometry]\label{r.s11.boundary.dependence}
Under the assumptions of Theorem \ref{t.main.ball}, fix $b<b'<1$. Applying \eqref{e.lin.ac.est} to \eqref{G-equationwithout cutoff}, with $\snf=0$ and $\psi(y,\tau)=-w_\alpha(y)\tau\bH'(\tau)$ and using the identity for $w_\alpha$ in Lemma \ref{l.dz(nu)} gives, for all sufficiently small $\varepsilon$ and uniformly for $|t|\leq1-b'$,
\begin{equation}
\label{e.s11.boundary.mean}
 \|H_{\{u_\varepsilon=t\}}\|_{L^\infty_{\hat g}(\{u_\varepsilon=t\}\cap B_{1/2}^+(0))}
 \leq C(b')\left(\varepsilon|\log\varepsilon|^2+
 \|\sff_{\partial_0B_3^+(0)}\|_{C^2_{\hat g}(\partial_0B_3^+(0))}\right),
\end{equation}
and
\begin{equation}\label{e.s11.boundary.enhanced}
\sup_{B_{1/2}^+(0)\cap\{|u_\varepsilon|\leq1-b'\}}
|\mathcal A(u_\varepsilon)|_{\hat g}\leq C\max_\alpha\|\sff_{\alpha,\varepsilon}\|_{L^\infty_{\hat g}(\Sigma_\alpha\cap B_{3/5}^+(0))}
+C(b')\left(\varepsilon|\log\varepsilon|^2+
\|\sff_{\partial_0B_3^+(0)}\|_{C^2_{\hat g}(\partial_0B_3^+(0))}\right), 
\end{equation}
where the metric $\hat g$ is as in Theorem \ref{t.main.ball}.

If $n=1$, Proposition~\ref{p.s11.zero.level} and
\eqref{e.s11.boundary.enhanced} give
\[
 \sup_{B_{1/2}^+(0)\cap\{|u_\varepsilon|\leq1-b'\}}
 |\mathcal A(u_\varepsilon)|_{\hat g}
 \leq C(b')\left(\varepsilon|\log\varepsilon|^2+
 \|\sff_{\partial_0B_3^+(0)}\|_{C^2_{\hat g}(\partial_0B_3^+(0))}\right).
\]
The boundary second fundamental form term cannot in general be omitted because of
\eqref{e.s11.surface.boundary.identity}.
Thus, as we pointed out in Remark \ref{e.dim.2}, unlike the interior decay estimates in
\cite[Theorem~3.6]{wang-wei} and
\cite[Proposition~C.4]{chodosh-mantoulidis-2d}, no uniform $o(1)$ estimate
for $|\mathcal A(u_\varepsilon)|$ is possible where $|\sff_{\partial_0B_3^+(0)}|_{\hat g}$ stays bounded away from zero.
This applies even on $\{u_\varepsilon=0\}$, although the ordinary curvature
of its components tends to zero by Proposition~\ref{p.s11.zero.level}.

Indeed, if we choose $D$ as in the proof of Lemma~\ref{l.s11.level.graph} and $y_0\in\Sigma_\alpha\cap B_{3R/5}^+(0)$, and note that $-w_\alpha(y)\tau\bH'(\tau)$ is odd in $\tau$, then the product estimate and \eqref{e.lin.ac.est}, with $\snf=0$, give
\begin{align*}
 \|\bgah\|_{C^{2,\theta}(B_1^\alpha(y_0)\times[-D,D])}
 &\leq C_D\|w_\alpha\|_{C^{1,\theta}(\partial_0\Sigma_\alpha)}\\
 &\leq C_D\varepsilon
 \|\sff_{\partial_0B_3^+(0)}\|_{C^2_{\hat g}(\partial_0B_3^+(0))},
\end{align*}
where the second inequality follows by differentiating the identity for $w_\alpha$ in Lemma \ref{l.dz(nu)} and using Proposition \ref{l.grad.A} and \eqref{e.rescaled.bound.sff}. The cutoff equals one for $|z|\leq D$, hence,
\eqref{e.s11.level.H.correction} and \eqref{e.s11.zero.H}, followed by rescaling, give \eqref{e.s11.boundary.mean}.

For \eqref{e.s11.boundary.enhanced}, use $u=\Hb_*+\Gb_*+\phi$,
\eqref{e.s11.zero.basic}, \eqref{e.s11.foreign.terms}, and the
estimate
\begin{align*}
 &\|u(y,z)-\Hb((-1)^\alpha z)\|_{C^2(B_1^\alpha(y_0)\times[-D,D])}\leq C_D\left(\varepsilon^2|\log\varepsilon|^2+
 \varepsilon\|\sff_{\partial_0B_3^+(0)}\|_{C^2_{\hat g}(\partial_0B_3^+(0))}\right).
\end{align*}
Since $\Hb'$ is bounded below on $[-D,D]$, this controls the difference
between $\nabla(\nabla u/|\nabla u|)$ and $(-1)^\alpha\nabla^2t_\alpha$.
By \eqref{e.Riccati} and Lemmas~\ref{l.t.Pi} and~\ref{l.Z.Z-tld},
$|\nabla_Z\nabla^2t_\alpha|\leq C_D\varepsilon^2$ on
$B_1^\alpha(y_0)\times[-D,D]$.
Comparing with the value at $z=0$ and rescaling proves \eqref{e.s11.boundary.enhanced}.
\end{remark}


\part{Appendices} 
\begin{appendix}
\section{Proof of derivative estimates for enhanced second fundamental form $\mathcal{A}$ in Proposition \ref{l.grad.A}}\label{Agradient} 
In this appendix we prove derivative estimates for the enhanced second fundamental form $\mathcal{A}$ in Proposition \ref{l.grad.A}.



\begin{proof}[Proof of Proposition \ref{l.grad.A}]
We prove the estimates on $B^+_{3R-1}(0)\cap\{|u|\leq1-b'\}$, using (H1) and (H3) of Theorem \ref{t.main.ball} on the larger set $\{|u|\leq1-b\}$, as in \cite[proof of Lemma 8.1]{wang-wei}.

Fix $x_0\in B^+_{3R-1}(0)\cap\{|u|\leq1-b'\}$. By the uniform local gradient upper bound and $b'>b$, there is a small ball (if $x_0$ is an interior point) or half-ball (if $x_0$ is a boundary point) centred at $x_0$, denoted by $B$, which lies in $\{|u|<1-b\}$. The radius of $B$ is uniform in $x_0\in \{|u|\leq 1-b'\}$ and $\varepsilon$.
On $B$, by (H1), (H3),  and the metric rescaling,  we have $|\nabla u|\geq c_0>0$.

Let $(x_1, \dots, x_{n+1})$ be the Fermi coordinates of Section \ref{s.graph}, and recall that $u$ satisfies $|u|\leq1$ and the equation $\Delta_g u=W'(u)$, with boundary condition $\partial_1u=0$ on $\{x_1=0\}$, so that by interior/Neumann elliptic regularity (see Appendix \ref{s.ell.est}), 
\begin{equation}\label{e.initial.elliptic.estimate}
 \|u\|_{C^{m+2,\theta}}
 \leq C_m\bigl(\|u\|_{C^0}+\|W'(u)\|_{C^{m,\theta}}\bigr)
 \leq C_m
\end{equation}
for $m\geq 0$ on a ball/half-ball $B'\Subset B$ centred at $x_0$ of, say, half the radius.

Set $w=|\nabla u|^2$, $N=w^{-1/2}\nabla u$, so that $\mathcal A=\nabla N$, and let $N_q=g(N, \partial_{x_q})=w^{-1/2}u_q$. Since $w\geq c_0^2$ on $B'$, then, by the product rule and the chain rule, by \eqref{e.initial.elliptic.estimate}, we obtain 
\begin{equation}\label{e.crude.estimate.A}
 \|w^{-1}\|_{C^{m,\theta}}+\|N\|_{C^{m,\theta}}\leq C_m \quad \text{on } B'.
\end{equation}
Notice that this implies $\|\mathcal A\|_{C^\theta}\leq C$ on $B'$.

Since $\partial_iN_q=\mathcal A_{iq}+\Gamma^k_{iq}N_k$ and $|\Gamma|\leq C\varepsilon$, together with the assumption $|\mathcal A|\leq C\varepsilon$, we conclude that 
\begin{equation}\label{e.bound.DN}
|DN|\leq C\varepsilon\quad\text{ on $B'$.}
\end{equation}
As in \cite[Lemma 8.1]{wang-wei}, Bochner's formula gives
\begin{equation}\label{eqn:div3}
 \operatorname{div}(w\nabla N)
 =-w|\mathcal A|^2N-w\Ric_g(N,N)N+w\Ric_g(N,\cdot)^\#.
\end{equation}
Let us differentiate the Allen--Cahn equation satisfied by $u$, and use \eqref{eqn:div3} to obtain the following scalar equation for $N_q=w^{-1/2}u_q$
\begin{equation}\label{e.v.1}
 \Delta_gN_q+\langle\nabla\log w,\nabla N_q\rangle=F_q,
\end{equation}
where
\begin{align*}
 F_q={}&-|\mathcal A|^2N_q-\Ric_g(N,N)N_q+R_q{}^kN_k
       +2g^{ij}\Gamma^k_{iq}\partial_jN_k\\
 &+g^{ij}\bigl(\partial_i\Gamma^k_{jq}
       -\Gamma^\ell_{ij}\Gamma^k_{\ell q}
       -\Gamma^\ell_{iq}\Gamma^k_{j\ell}
       +(\partial_i\log w)\Gamma^k_{jq}\bigr)N_k.
\end{align*}
Here, repeated coordinate indices are summed and $q$ is fixed.

By the bounds on curvature and the Christoffel coefficients in Theorem \ref{t.main.ball}, \eqref{e.crude.estimate.A}, and \eqref{e.bound.DN},
\[
 \||\mathcal A|^2N_q\|_{C^\theta}
 \leq C\|\mathcal A\|_{C^0}\|\mathcal A\|_{C^\theta}\leq C\varepsilon.
\]
Thus $\|F\|_{C^\theta}\leq C\bigl(\varepsilon+\|\mathcal A\|_{C^0}\|\mathcal A\|_{C^\theta}\bigr)\leq C\varepsilon$.

Now, on $B'$, the oscillation of $N_q$ is $O(\varepsilon)$ because $|DN|\leq C\varepsilon$. If $x_0$ is on the boundary $\partial_0B^+_{3R-1}$, then $u_1=u_{1q}=0$ for $q\geq2$, so we have
\begin{equation}\label{e.w_1}
 w_1=(\partial_1g^{ij})u_i u_j,\quad
 N_1=0,\quad
 \partial_1N_q=-\frac{(\partial_1g^{ij})u_i u_j}{2w}N_q
 \quad(q\geq2).
\end{equation}
In either case, if we let $v_q=N_q-N_q(x_0)$, then by the interior/Neumann elliptic estimates (see Appendix \ref{s.ell.est}) for $q\geq 2$, and the interior/Dirichlet elliptic estimates for $v_1$, then 
\begin{equation}
\|v_q\|_{C^{2,\theta}(B'')}\leq C \varepsilon
\end{equation}
on a ball $B''\Subset B'\Subset B$ of, say, half the radius of $B'$. This gives $\|\mathcal{A}\|_{C^{1, \theta}(B'')}\leq C \varepsilon$. The result then comes from differentiating \eqref{e.v.1} and iterating this argument.
\end{proof}

\section{Proof of geometric approximation estimates and mean curvature expansion in Proposition \ref{geometric approximation}}\label{ap:geometric.approx}

We collect in this appendix the various geometric approximation estimates in Proposition \ref{geometric approximation} that we obtain in twisted Fermi coordinates and the mean curvature expansion under twisted Fermi coordinates in Lemma \ref{mean curvature expansion tf}.

Throughout, $|z|\leq K|\log\varepsilon|$. Tensors on $\Sigma_{\alpha, z}$ are pulled back by $y\mapsto Y_{\Sigma_\alpha}(y,z)$ and measured using $g_{\alpha,0}$.

\begin{proof}[Proof of Proposition \ref{geometric approximation}]
Notice that by definition $Z=\partial_z$. Since $\{\partial_{y_1}, \dots, \partial_{y_n}, \partial_z\}$ are coordinate vector fields, then $0=[\partial_z, \partial_{y_i}]=[Z, \partial_{y_i}]$ for all $1\leq i\leq n$.

By definition $\{\partial_{y_1}|_{(y,z)}, \dots, \partial_{y_n}|_{(y,z)}\}$ are tangent to $\Sigma_{\alpha, z}$ at $(y,z)$. Therefore, $g_{\alpha,ij}(y,z)=g_{ij}(y,z)$, and
\begingroup
\mathtoolsset{showonlyrefs=false}
\begin{equation}\label{e.partial_z.g}
\partial_z g_{\alpha,ij}=Z(g(\partial_{y_i}, \partial_{y_j}))=g(\nabla_Z\partial_{y_i}, \partial_{y_j})+ g(\partial_{y_i},\nabla_Z\partial_{y_j})=g(\nabla_{\partial_{y_i}}Z, \partial_{y_j})+g(\partial_{y_i}, \nabla_{\partial_{y_j}}Z).
\end{equation}
\endgroup
By \eqref{e.na.Z} and \eqref{e.del.y_i.O(1)}, then $|\partial_z g_{\alpha,ij}(y,\tau)|\lesssim \varepsilon (1+|\tau|)$ for $|\tau|<K|\log\varepsilon|$. Therefore, after integrating between $0$ and $|z|$, we obtain the first half of \eqref{e.g_ij.approx}.

By the chain rule 
\begin{equation}\label{e.partial_z.g-1}
\partial_zg_{\alpha}^{ij}=-\sum_{l,k=1}^ng_{\alpha}^{li}g_{\alpha}^{jk}\partial_zg_{\alpha,lk}, 
\end{equation}
and by definition, 
\begin{equation}\label{e.def.Christoffel}
  \Gamma_{\al, ij}^k=\frac{1}{2}\sum_{l=1}^ng_{\alpha}^{kl}(\partial_{y_i}g_{\alpha,lj}+\partial_{y_j}g_{\alpha,il}-\partial_{y_l}g_{\alpha,ij}).  
\end{equation}
The coordinate-field bounds in Section \ref{sectwisted fermi def} give
\begin{align*}
 |\partial_zg_{\alpha,ij}|+|\partial_z\partial_{y_\ell}g_{\alpha,ij}|
   +|\partial_z\Gamma^k_{\alpha,ij}|
 &\leq C\varepsilon(1+|z|),\\
 \sum_{m=1}^2\bigl(|\partial_y^mg_{\alpha,ij}|
                         +|\partial_y^mg_\alpha^{ij}|\bigr)
       +|\partial_{y_\ell}\Gamma^k_{\alpha,ij}|
 &\leq C\varepsilon(1+|z|^2),
\end{align*}
which completes the proof of \eqref{e.g_ij.approx}, and proves \eqref{e.na.g.approx}, \eqref{e.gamma.alpha.approx}, \eqref{e.partial.gamma.alpha.approx}.

For the estimates involving the second fundamental form, recall that, by the Riccati equation in \cite[Appendix A]{chodosh-mantoulidis}, we have
\begin{equation}\label{e.Riccati}
 \nabla_{\widetilde Z}\nabla^2t_\alpha
 =-(\nabla^2t_\alpha)^2
   -g(R(\cdot,\widetilde Z)\widetilde Z,\cdot),
 \qquad \nabla^2t_\alpha(\widetilde Z,\cdot)=0.
\end{equation}
The initial bounds and $|\operatorname{Rm}_g|\leq C\varepsilon^2$ give $|\nabla^2t_\alpha|\leq C\varepsilon$, and $|\nabla_{\widetilde Z}\nabla^2t_\alpha|\leq C\varepsilon^2$. Then, tangential differentiation gives
\[
 \big| (\nabla^4t_\alpha)(\widetilde Z,\cdot,\cdot,\cdot)\big|
 \leq C|\sff_{\alpha,z}|\,|\nabla_{\alpha,z}\sff_{\alpha,z}|
       +C|\nabla\operatorname{Rm}_g|
       +C|\operatorname{Rm}_g|\,|\sff_{\alpha,z}|,
\]
with the remaining slots tangent and normally parallel transported. Since
\[
 \nabla_{\alpha,z}\sff_{\alpha,z}
 =-(\nabla^3t_\alpha)|_{T\Sigma_{\alpha,z}^{\otimes3}},
\]
an ODE argument using Gronwall's inequality for vector valued functions, together with Appendix \ref{Agradient}, yields
\begin{equation}\label{e.nabla4.ta.tZ}
 |\nabla_{\alpha,z}\sff_{\alpha,z}|\leq C\varepsilon,
 \qquad
 \big| (\nabla^4t_\alpha)(\widetilde Z,\cdot,\cdot,\cdot)\big|
 \leq C\varepsilon^2.
\end{equation}
Now recall that by definition
\[
 \partial_z=\widetilde Z+T,\qquad
 \nabla_Z\partial_{y_i}=\nabla_{\partial_{y_i}}Z,\qquad
 \sff_{\alpha,ij}=-(\nabla^2t_\alpha)(\partial_{y_i},\partial_{y_j}),
\]
so that by Lemmas \ref{l.t.Pi} and \ref{l.Z.Z-tld}, we finally obtain the following estimates:
\begin{align*}
 |\partial_z\sff_{\alpha,ij}|
 +|\partial_z(\nabla_{\alpha,z}\sff_{\alpha,z})_{kij}|
 &\leq C\varepsilon^2
  +C|T|\bigl(|\nabla^3t_\alpha|+|\nabla^4t_\alpha|\bigr)+C|\nabla Z|\bigl(|\nabla^2t_\alpha|+|\nabla^3t_\alpha|\bigr)\\
 &\leq C\varepsilon^2(1+|z|^3),
 \end{align*}
 and
 \begin{align*}
 |\partial_zH_\alpha|
 &=\big|(\partial_zg_\alpha^{ij})\sff_{\alpha,ij}
              +g_\alpha^{ij}\partial_z\sff_{\alpha,ij}\big|
 \leq C\varepsilon^2(1+|z|^3).
\end{align*}
Integration between 0 and $|z|$ completes the proof of \eqref{e.sff.approx},\eqref{e.mn.curv.approx}, and \eqref{e.nablasff}.

Finally, for the volume form
\[
 \partial_z\lambda_\alpha
 =\tfrac12\lambda_\alpha g_\alpha^{ij}\partial_zg_{\alpha,ij}
 =\lambda_\alpha(-H_\alpha+\operatorname{div}_{\alpha,z}T),
\]
so that $|\partial_z\lambda_\alpha|\leq C\varepsilon(1+|z|)$ by \eqref{e.difference.z.tz} and \eqref{e.mn.curv.approx}.
\end{proof}

We shall now give an expansion for the mean curvature of the graph of a function $\vp\in C^2(\Sigma_\alpha, )$. This is analogous to \cite[Lemma 2.9]{chodosh-mantoulidis}.

\begin{proof}[Proof of Lemma \ref{mean curvature expansion tf}]
By Proposition \ref{geometric approximation} and \eqref{e.Riccati},
$|\sff_{\alpha,z}|+|H_\alpha|+|\nabla^2t_\alpha|\leq C\varepsilon$ and
\begin{equation}\label{e.mc.normal.first}
 \widetilde ZH_\alpha
 =|\sff_{\alpha,z}|^2+\Ric_g(\widetilde Z,\widetilde Z),
 \qquad |\widetilde Z^2H_\alpha|\leq C\varepsilon^3.
\end{equation}
By the definition of $Z_2$ and $Z$,
\begin{gather*}
 |\nabla Z_2|\leq C\varepsilon,\qquad
 \langle\widetilde Z,Z_2\rangle(y,0)=0,\qquad
 |\partial_z\langle\widetilde Z,Z_2\rangle|\leq C\varepsilon,\\
 |\langle\widetilde Z,Z_2\rangle(y,z)|\leq C\varepsilon|z|
 \quad\Longrightarrow\quad |T(y,z)|\leq C\varepsilon|z|.
\end{gather*}
For $\widetilde y=\widetilde\Pi_\alpha(Y_{\Sigma_\alpha}(y,z))$,
Lemma \ref{l.tilde.t.Pi} yields
\[
 \frac{d\widetilde y}{dz}
 =D\widetilde\Pi_\alpha(T)(Y_{\Sigma_\alpha}(y,z)),
 \qquad |\widetilde y-y|\leq C\varepsilon z^2.
\]
Taylor expansion along the normal geodesic from $\widetilde y$ gives
\[
 H_\alpha(y,z)=H_\alpha(\widetilde y,0)
            +z(\widetilde ZH_\alpha)(\widetilde y,0)
            +O(\varepsilon^3z^2).
\]
By Appendix \ref{Agradient} and the rescaled metric bounds,
\[
 |\nabla_{\alpha,0}^2H_\alpha(\cdot,0)|\leq C\varepsilon,
 \qquad
 |\nabla_{\alpha,0}(\widetilde ZH_\alpha)(\cdot,0)|\leq C\varepsilon^2.
\]
Hence
\begin{align*}
 |H_\alpha(\widetilde y,0)-H_\alpha(y,0)|
 &\leq C\varepsilon z^2|\nabla_{\alpha,0}H_\alpha(y,0)|
           +C\varepsilon^3z^4,\\
 |z|\,|(\widetilde ZH_\alpha)(\widetilde y,0)
                -(\widetilde ZH_\alpha)(y,0)|
 &\leq C\varepsilon^3|z|^3,
\end{align*}
and therefore
\begin{equation}\label{e.mc.height.remainder}
\begin{split}
 &\Big|H_\alpha(y,z)-H_\alpha(y,0)
  -\bigl(|\sff_{\alpha,0}|^2+
        \Ric_g(\bn_{\Sigma_\alpha},\bn_{\Sigma_\alpha})\bigr)(y)z\Big|\\
 &\qquad\leq C\varepsilon z^2|\nabla_{\alpha,0}H_\alpha(y,0)|
           +C\varepsilon^3z^2(1+z^2).
\end{split}
\end{equation}

Write $\varphi_i=\partial_{y_i}\varphi$ and evaluate coefficients at
$(y,\varphi(y))$. The defining function $t_\alpha-\varphi\circ\Pi_\alpha$
gives
\[
 \varpi^2=(1+T^i\varphi_i)^2+|d\varphi|_{g_{\alpha,\varphi}}^2,
 \qquad
 \bn_{\Sigma_\alpha[\varphi]}
 =\frac{(1+T^i\varphi_i)\widetilde Z-
                  \nabla_{\alpha,\varphi}\varphi}{\varpi},
\]
where $\nabla_{\alpha,z}\varphi=\nabla_{\alpha,z}(\varphi\circ\Pi_\alpha)$
and repeated indices are summed. Since
\[
 |T|\,|d\varphi|=o(1),\qquad
 \langle\bn_{\Sigma_\alpha[\varphi]},\widetilde Z\rangle
 =\frac{1+T^i\varphi_i}{\varpi}\geq C^{-1},
\]
the graph metric is uniformly equivalent to $g_{\alpha,0}$.
The stated properties of $\mathcal L_{\Sigma_\alpha[\varphi]}$ follow from
its definition. Since $t_\alpha=\varphi\circ\Pi_\alpha$ on the graph,
the restriction formula for the Laplacian gives
\begin{equation}\label{e.mc.intrinsic.exact}
 H_{\Sigma_\alpha[\varphi]}-\mathcal L_{\Sigma_\alpha[\varphi]}\varphi
 =\frac{\varpi}{1+T^i\varphi_i}H_\alpha(y,\varphi)
 -\frac{\sff_{\alpha,\varphi}
    (\nabla_{\alpha,\varphi}\varphi,\nabla_{\alpha,\varphi}\varphi)}
       {(1+T^i\varphi_i)\varpi}.
\end{equation}
Since
\[
 \frac{\varpi}{1+T^i\varphi_i}-1
 =\frac{|d\varphi|_{g_{\alpha,\varphi}}^2}
 {(1+T^i\varphi_i)(\varpi+1+T^i\varphi_i)},
\]
we obtain
\[
 \big|H_{\Sigma_\alpha[\varphi]}
       -\mathcal L_{\Sigma_\alpha[\varphi]}\varphi-H_\alpha(y,\varphi)\big|
 \leq C\varepsilon|d\varphi|_{g_{\alpha,0}}^2.
\]
Together with \eqref{e.mc.height.remainder}, this proves the expansion.
\end{proof}

\section{The one-dimensional heteroclinic solution $\mathbb{H}$}\label{s.1d.sol}
We summarize in this appendix the results on the one-dimensional heteroclinic solution needed in this work. 

Let $W$ be the even double-well potential fixed in Section 2.  There is a unique increasing solution
$\mathbb H:\mathbb R\to(-1,1)$ of
\[
 \mathbb H''=W'(\mathbb H),
 \qquad
 \mathbb H(0)=0,
 \qquad
 \lim_{t\to\pm\infty}\mathbb H(t)=\pm1.
\]
This is called the \textit{heteroclinic solution}. Since $W$ is even and the normalization fixes the translation, $\mathbb H$ is odd and $\mathbb H'$ is positive and even.
Multiplying the equation by $\mathbb H'$ and using the limits at infinity gives
\begin{equation}\label{eq: heteroclinic.first.order}
    \mathbb{H}^\prime(t) = \sqrt{2W(\mathbb{H}(t))}, \quad \text{for all $t \in \mathbb{R}$}. 
\end{equation}
It is not difficult to check that, as $t \rightarrow \pm \infty$, the function $\mathbb{H}(t)$ converges exponentially to $\pm 1$. More precisely, as $t \rightarrow + \infty$, we have the following asymptotic expansions: there exists a constant $A_1$, depending on $W$, such that for all $t > 0$ large enough, there holds 
\begin{align}\label{Hsolution asymptotics}
    \quad\,\, \mathbb{H}(t) & = 1-  A_1 e^{- t} + O(e^{-2t}), \qquad \mathbb{H}^\prime(t) = A_1 e^{-t} + O(e^{-2t}), \qquad \mathbb{H}^{\prime \prime}(t) = - A_1 e^{- t} + O(e^{-2t}). 
\end{align}
Likewise, as $t\to-\infty$, the analogous expansion holds with a positive constant
$A_{-1}$.  Since $W$ is even and $\mathbb H$ is odd, in the present paper
$A_{-1}=A_1$.  

The correctly rescaled heteroclinic is
$\mathbb H_\varepsilon(t)=\mathbb H(t/\varepsilon)$; it satisfies
$\varepsilon^2\mathbb H_\varepsilon''=W'(\mathbb H_\varepsilon)$. Finally, 
\begin{equation}\label{e.sigma0}
    \sigma_0 = \int_{\mathbb{R}} \left( \frac{1}{2} (\mathbb{H}^\prime(t))^2 + W(\mathbb{H}(t)) \right) dt= \int_{\mathbb{R}}  (\mathbb{H}^\prime(t))^2  dt\in (0, + \infty). 
\end{equation}
\begin{lemma}\label{1dinteraction appendix}
    For all $T > 0$ large, we have the following expansions. 
    \begin{align*}
        \int_{\mathbb{R}} \left( W^{\prime \prime}(\mathbb{H}(t)) - 1 \right) \left(\mathbb{H}(- t - T) + 1 \right) \mathbb{H}^\prime(t) dt = - 2A_{-1}^2 e^{-T} + O(e^{-4T/3}), 
    \end{align*}
    and 
    \begin{equation*}
        \int_{\mathbb{R}} \left( W^{\prime \prime}(\mathbb{H}(t)) - 1 \right) \left(\mathbb{H}(T - t) - 1 \right) \mathbb{H}^\prime(t) dt = 2A_{1}^2 e^{-T} + O(e^{-4T/3}), 
    \end{equation*}
    as well as 
    \begin{align*}
        \int_{\mathbb{R}} \left( W^{\prime \prime}(\mathbb{H}(t) + \mathbb{H}(- t - T) + 1) - W^{\prime \prime}(\mathbb{H}(- t - T)) \right) \mathbb{H}^\prime(- t - T) \mathbb{H}^\prime(t) dt = - 2A_{-1}^2 e^{-T} + O(e^{-7T/6}), 
    \end{align*}
    together with
    \begin{align*}
        \int_{\mathbb{R}} \left( W^{\prime \prime}(\mathbb{H}(t) + \mathbb{H}(T - t) - 1) - W^{\prime \prime}(\mathbb{H}(T - t)) \right) \mathbb{H}^\prime(T - t) \mathbb{H}^\prime(t) dt = - 2A_{1}^2 e^{-T} + O(e^{-7T/6}). 
    \end{align*}
\end{lemma}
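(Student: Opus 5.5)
All four expansions will come from the tail asymptotics \eqref{Hsolution asymptotics}, by splitting the real line at a point between the two centres. We write $E_1$--$E_4$ for the four integrals in the order they are stated; $E_1,E_2$ are the linear integrals and $E_3,E_4$ the nonlinear ones. Since $\mathbb H$ is odd, $\mathbb H'$ is even and $A_{-1}=A_1$, the substitution $t\mapsto -t$ maps the integrals with $\mathbb H(-t-T)+1$ to those with $\mathbb H(T-t)-1$ with a sign change. So it suffices to treat $E_2$ and $E_4$; $E_1$ and $E_3$ then follow from the same computation. Two identities do most of the work. Differentiating $\mathbb H''=W'(\mathbb H)$ shows that $\mathbb H'$ solves $L\mathbb H'=0$ with $L=\partial_{tt}-W''(\mathbb H)$. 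Writing $W''(\mathbb H)-1=(\mathbb H'''-\mathbb H')/\mathbb H'$ then lets us integrate by parts twice.

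For $E_2$, set $f(t)=\mathbb H(T-t)-1$, so that $f''=W'(\mathbb H(T-t))$. We have
\[
\int (W''(\mathbb H)-1)f\,\mathbb H'\,dt=\int(\mathbb H'''-\mathbb H')f\,dt=\int \mathbb H'(f''-f)\,dt .
\]
Next split at $t=T/2$. On $t<T/2$ the argument of $f$ satisfies $T-t>T/2$. Hence, with $W''(\pm1)=1$, we get $f''-f=W'(1+f)-f=O(f^2)=O(e^{-2(T-t)})$, and $\int_{-\infty}^{T/2}e^{-|t|}e^{-2(T-t)}\,dt=O(e^{-3T/2})$. On $t>T/2$ we instead use $\mathbb H'(t)=A_1e^{-t}+O(e^{-2t})$. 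The main term is then $A_1e^{-T}\int_{-\infty}^{\infty}e^{s}\bigl(W'(\mathbb H(s))-(\mathbb H(s)-1)\bigr)\,ds$ with $s=T-t$, up to truncation errors $O(e^{-3T/2})$. The inner integral is a boundary term: integrating by parts, $\int e^{s}(\mathbb H''-\mathbb H+1)\,ds=\bigl[e^{s}(\mathbb H'-\mathbb H+1)\bigr]_{-\infty}^{\infty}$. As $s\to\infty$, \eqref{Hsolution asymptotics} gives $\mathbb H'-\mathbb H+1=2A_1e^{-s}+O(e^{-2s})$, so this equals $2A_1$ (the lower limit contributes $0$). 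Hence $E_2=2A_1^2e^{-T}+O(e^{-4T/3})$. In the actual proof the truncation errors should be tracked on the half-lines $s<T/2$ and $s>T/2$ separately; the stated exponent $4/3$ leaves slack.

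For $E_4$, write $a=\mathbb H(t)-1$ and $b=\mathbb H(T-t)$, so the integrand is $(W''(b+a)-W''(b))\mathbb H'(T-t)\mathbb H'(t)$. Split at $t=T/2$. On $t>T/2$ the factor $\mathbb H'(T-t)$ is $O(e^{-(T-t)})$ and $a=O(e^{-t})$. Taylor expansion gives $W''(b+a)-W''(b)=W'''(b)a+O(a^2)$, and the $O(a^2)$ piece contributes $O(e^{-3T/2})$ after integration. The linear piece becomes $\int W'''(\mathbb H(s))\mathbb H'(s)\,(\mathbb H(T-s)-1)\mathbb H'(T-s)\,ds$ with $s=T-t$. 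Since $W'''(\mathbb H)\mathbb H'=(W''(\mathbb H))'$, we integrate by parts with $\mathbb H'(T-s)\approx A_1e^{-(T-s)}$ and $\mathbb H(T-s)-1\approx -A_1e^{-(T-s)}$. On $t<T/2$ the roles of the two factors swap: we expand $W''$ around $\mathbb H(t)$ in the small quantity $b-1$ and combine this with $W''(b)=1+O(e^{-2(T-t)})$ to the needed order. Collecting the leading pieces reduces them to one-dimensional integrals. These are evaluated through the identities above, namely $\int \mathbb H'(t)e^{t}(W''(\mathbb H)-1)\,dt$ and the boundary value $2A_1$, and they sum to $-2A_1^2e^{-T}$. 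The cruder Taylor remainders, especially near $t\approx T/2$ where both factors are of size $e^{-T/2}$, are what produce the weaker error $O(e^{-7T/6})$.

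I expect the main obstacle to be $E_4$. The difficulty is organizing the Taylor expansion so that the two half-line contributions combine into a single boundary term. This matches \cite[Lemma B.1]{wang-weiadv}, which I would follow for the bookkeeping. It is important to check the sign: it comes out $-2A_1^2$, and the minus sign enters through $W''(\mathbb H)-1\le0$ near the centre, via the identity $E_4=-E_2+O(e^{-7T/6})$. That identity I would derive by adding and subtracting $W''(\mathbb H(t))$, and I would try it first as the shortcut that reduces $E_4$ to $E_2$.
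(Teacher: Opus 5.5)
Your proposal is correct in substance, but it follows a different route from the paper. The paper gives no computation: it cites \cite[Lemma A.1]{wang-wei} for the two linear expansions and \cite[Lemma A.2]{wang-weiadv} for the two nonlinear ones. You derive everything directly from \eqref{Hsolution asymptotics} and the kernel identity $(W''(\mathbb H)-1)\mathbb H'=\mathbb H'''-\mathbb H'$. That identity turns the leading coefficient into the exact boundary term $\bigl[e^{s}(\mathbb H'-\mathbb H+1)\bigr]_{-\infty}^{\infty}=2A_1$. Your computation of $E_2$ is complete. In fact it gives the remainder $O(e^{-3T/2})$, since your two truncation errors are $O(e^{-3T/2})$ and $O(Te^{-2T})$. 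So the exponents $4/3$ and $7/6$ are slack inherited from the references, not something produced by Taylor remainders near $t\approx T/2$. The citation keeps the appendix short. Your route is self-contained, uses the evenness of $W$ (assumed throughout the paper) to halve the work, and gives sharper errors.

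Three points need fixing.

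\emph{The symmetry for the nonlinear pair.} The substitution $t\mapsto -t$ gives $E_1=-E_2$ but $E_3=+E_4$. In the nonlinear integrals the sign from $\mathbb H(t-T)+1=-(\mathbb H(T-t)-1)$ sits inside $W''$ and is absorbed by its evenness. This is why $E_3$ and $E_4$ have the same leading term. Applied literally, your ``sign change'' would give the wrong sign for $E_3$.

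\emph{The decay of $W''(\mathbb H(s))-1$.} It is $O(e^{-s})$, not $O(e^{-2s})$, unless $W'''(1)=0$; for $W=(1-t^2)^2/8$ one has $W'''(1)=3$. This slip is harmless: the term carries the factor $\mathbb H'(T-t)\mathbb H'(t)$ and integrates to $O(e^{-3T/2})$ over $t<T/2$.

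\emph{The $E_4$ paragraph.} As written it is not yet a proof, and it misplaces the main term. For $t>T/2$ the integrand is at most $Ce^{-2t}e^{-|T-t|}$, so that half-line contributes only $O(e^{-3T/2})$ and needs no integration by parts. Your shortcut does work, exactly by adding and subtracting $W''(\mathbb H(t))$. With $a=\mathbb H(t)$ and $b=\mathbb H(T-t)$,
\[
W''(a+b-1)-W''(b)=\bigl[W''(a+b-1)-W''(a)-W''(b)+W''(1)\bigr]+\bigl[W''(a)-1\bigr].
\]
The first bracket vanishes when $a=1$ or $b=1$. Hence it is bounded by $\sup|W^{(4)}|\,|a-1||b-1|$, and against $\mathbb H'(t)\mathbb H'(T-t)$ it integrates to $O(Te^{-2T})$. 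For the second bracket, use $\mathbb H'(s)+\mathbb H(s)-1=O(e^{-2s})$ for $s\geq0$ to get $\int_{\mathbb R}(W''(a)-1)\mathbb H'(t)\mathbb H'(T-t)\,dt=-E_2+O(Te^{-2T})$. Therefore $E_4=-E_2+O(Te^{-2T})=-2A_1^2e^{-T}+O(e^{-3T/2})$.
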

\begin{proof}
    The proof of the first two estimates can be found in \cite[Lemma A.1]{wang-wei}, while for the last two we refer the reader to \cite[Lemma A.2]{wang-weiadv}. 
\end{proof}
We conclude by discussing the spectrum of the linearized operator at $\mathbb{H}$, i.e. 
\begin{equation*}
    \mathcal{L} = - \frac{d^2}{dt^2} + W^{\prime \prime}(\mathbb{H}(t)). 
\end{equation*}
A direct computation shows that $\mathbb{H}^\prime$ is an eigenfunction of $\mathcal{L}$ corresponding to the 0 eigenvalue. One can prove that this is the lowest eigenvalue. On the other hand, we have the following result, which can be proven via a contradiction argument. 
\begin{theorem}\label{t.coercive.a}
    There exists a constant $\mu > 0$ such that for any $\varphi \in H^1(\mathbb{R})$ satisfying the orthogonality $\int_\mathbb{R} \varphi(t) \mathbb{H}^\prime(t) \, dt = 0$, we have 
    \begin{equation*}
        \int_{\mathbb{R}} \left( (\varphi^\prime(t))^2 + W^{\prime \prime}(\mathbb{H}(t)) \varphi^2(t) \right) dt \geq \mu \int_{\mathbb{R}} \left( (\varphi^\prime(t))^2 + \varphi^2(t) \right) dt. 
    \end{equation*}
\end{theorem}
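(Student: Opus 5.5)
The plan is the classical spectral-gap argument for $\mathcal L$ on the orthogonal complement of its kernel, carried out by compactness and contradiction. We take the quadratic form $Q(\varphi)=\int_{\mathbb R}\bigl((\varphi')^2+W''(\mathbb H)\varphi^2\bigr)\,dt$. The first step is to show that $Q\geq0$ on $H^1(\mathbb R)$. Since $\mathbb H'>0$, we may write any $\varphi\in C_c^\infty(\mathbb R)$ as $\varphi=\psi\mathbb H'$. Because $\mathcal L\mathbb H'=0$, an integration by parts gives
\[
Q(\varphi)=\int_{\mathbb R}(\psi')^2(\mathbb H')^2\,dt\geq0.
\]
Density then extends this to $H^1$, and equality holds only when $\psi$ is constant, i.e. $\varphi\in\operatorname{span}\{\mathbb H'\}$.

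The second step is to reduce the claimed coercivity to an $L^2$ statement. We will show that there is $\mu_0>0$ with $Q(\varphi)\geq\mu_0\int\varphi^2$ for all $\varphi\perp\mathbb H'$. Granting this, we write $Q=\int((\varphi')^2+\varphi^2)+\int(W''(\mathbb H)-1)\varphi^2$ and use the bound $|W''(\mathbb H)-1|\leq C_W$. This gives
\[
Q\geq \int\bigl((\varphi')^2+\varphi^2\bigr)-C_W\int\varphi^2 .
\]
Taking a convex combination $Q=\lambda Q+(1-\lambda)Q$ with $\lambda$ small, we bound $\lambda Q$ below by the displayed inequality and $(1-\lambda)Q$ below by $(1-\lambda)\mu_0\int\varphi^2$. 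This yields the full $H^1$ bound with some $\mu>0$.

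The third step, which is the main obstacle, is the $L^2$ bound, and we prove it by contradiction. Suppose there are $\varphi_k\perp\mathbb H'$ with $\|\varphi_k\|_{L^2}=1$ and $Q(\varphi_k)\to0$. Then $\int(\varphi_k')^2\leq Q(\varphi_k)+C_W$ is bounded, so the sequence is bounded in $H^1$. Passing to a subsequence, $\varphi_k\rightharpoonup\varphi$ weakly in $H^1$ and strongly in $L^2_{\mathrm{loc}}$.

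The difficulty is the possible loss of mass at infinity, and we control it using $W''(\pm1)=1$. Fix $T$ such that $W''(\mathbb H(t))\geq1/2$ for $|t|\geq T$. Then
\[
Q(\varphi_k)\geq\int(\varphi_k')^2+\tfrac12\int_{|t|>T}\varphi_k^2-C\int_{|t|\leq T}\varphi_k^2 .
\]
Since $Q(\varphi_k)\to0$ and $\int_{|t|>T}\varphi_k^2=1-\int_{|t|\leq T}\varphi_k^2$, this forces $\liminf\int_{|t|\leq T}\varphi_k^2\geq c>0$. Strong local convergence then gives $\varphi\neq0$.

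Weak lower semicontinuity applies to the terms $\int(\varphi')^2$ and $\int_{|t|>T}W''\varphi^2$, both of which have nonnegative integrands. The remaining term $\int_{|t|\leq T}W''\varphi^2$ passes to the limit by strong $L^2_{\mathrm{loc}}$ convergence. Together these give $Q(\varphi)\leq\liminf Q(\varphi_k)=0$.

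The orthogonality $\int\varphi\mathbb H'=0$ is preserved in the limit, since $\mathbb H'\in L^2$ and $\varphi_k\rightharpoonup\varphi$ in $L^2$. By the first step, $Q(\varphi)=0$ forces $\varphi=c\mathbb H'$, and orthogonality then gives $c=0$. This contradicts $\varphi\neq0$ and completes the proof.
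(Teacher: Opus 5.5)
Your proof is correct and follows the route the paper indicates. The paper only records that $\mathbb{H}'$ spans the bottom of the spectrum of $\mathcal L$ and that coercivity follows from a contradiction argument; your ground-state substitution $\varphi=\psi\mathbb{H}'$, the $L^2$-to-$H^1$ upgrade, and the use of $W''(\pm1)=1$ to rule out loss of mass at infinity supply exactly the omitted details. One step deserves an extra line: you prove the identity $Q(\varphi)=\int(\psi')^2(\mathbb{H}')^2$ only for $C_c^\infty$ functions, but you need the equality case for general $\varphi\in H^1$. Approximate $\varphi$ in $H^1$ by $\varphi_j\in C_c^\infty(\mathbb R)$; then $\varphi_j/\mathbb{H}'\to\varphi/\mathbb{H}'$ in $H^1$ of every compact interval $K$, so $\int_K(\psi')^2(\mathbb{H}')^2\leq Q(\varphi)$, and $Q(\varphi)=0$ forces $\psi$ to be constant.
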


\section{Proofs of exponential interaction decay estimates in Lemmas \ref{l.interaction.int} and  Lemma \ref{l.interaction.int.2}} \label{s.proofs.integrals}

In this appendix we prove Lemmas \ref{l.interaction.int} and \ref{l.interaction.int.2}.

\begin{proof}[Proof of Lemma \ref{l.interaction.int}]
On the interval of integration, Proposition \ref{p.conseq.almost.parallel} gives
\begin{equation}\label{e.tbeta.affine.for.integrals}
 |t_\beta(y,z)-t_\beta(y,0)-z|\leq C\varepsilon^{1/3}.
\end{equation}
The harmless error in \eqref{e.tbeta.affine.for.integrals} may be absorbed in the implicit
constant.  The elementary one-dimensional estimate
\[
 \int_{\mathbb R}(1+|z|)^k
 e^{-\sigma|T+z|-\varsigma|z|}\,dz
 \leq C(1+|T|)^{k+1}e^{-\min\{\sigma,\varsigma\}|T|}
\]
holds when $\sigma,\varsigma>0$. Since
$|t_\beta(y,z)|\leq |t_\beta(y,0)|+|z|+C\varepsilon^{1/3}$, this estimate proves both
\eqref{equation 3 in l.interaction.int} and \eqref{equation 1 in l.interaction.int}.
\end{proof}

\begin{proof}[Proof of Lemma \ref{l.interaction.int.2}]
Assume first that $\beta>\alpha$; the other case follows by reflection.  Then
$t_\beta(y,0)<0$.  The truncated heteroclinic satisfies
\[
 |\bH(s)+1|\leq C\min\{1,e^s\},\qquad |\bH'(z)|\leq Ce^{-|z|}.
\]
Using \eqref{e.tbeta.affine.for.integrals} and splitting at
$z=0$ and $z=|t_\beta(y,0)|$, we obtain
\begin{align*}
 &\int_{\mathbb R}|\bH(t_\beta(y,z))+1|\,|\bH'(z)|\,dz \leq C\int_{\mathbb R}
 \min\{1,e^{-|t_\beta(y,0)|+z}\}e^{-|z|}\,dz
 \leq C(1+|t_\beta(y,0)|)e^{-|t_\beta(y,0)|}.
\end{align*}
This proves \eqref{equation 2 in l.interaction.int}.  The estimate
$(1+s)e^{-s}\leq C_\sigma e^{-\sigma s}$ for $s\geq0$ gives
\eqref{e.interaction.sigma.correct}.
\end{proof}

\section{Some classical elliptic regularity estimates}\label{s.ell.est}
In this section, we state the Schauder estimate for the linear elliptic equation
\begin{equation}\label{e.ell.equ}
L\sfu=a^{ij}D_{ij}\sfu+b^iD_i\sfu+c\sfu=\snf+\operatorname{div}\snG,\quad \lambda \md{\xi}^2 \leq a^{ij}\xi_i\xi_j \leq \Lambda \vert \xi \vert^2.
\end{equation}
Let $\alpha\in(0,1)$ and assume the $C^{0,\alpha}$ norms of the coefficients $a^{ij},\;b^i,\;c$ are bounded by $\La$.

\begin{proposition}
[Schauder estimate]\label{t.sch} Then the following estimates hold, with $C=C(n,\alpha,\lambda,\Lambda)$.

\noindent(a) Interior estimate \cite[Theorems 8.32, 6.2]{gilbarg1977elliptic}, \cite[Theorem 1.2]{vita2022boundary}. \sps\ $\sfu$ satisfies \eqref{e.ell.equ} on $B_2=B_2(0)$. Then on $B_1=B_1(0)$
\begin{align}
&\|\sfu\|_{C^{1,\al}(B_1)}\leq C\left(\|\sfu\|_{C^0(B_2)}+\|\snf\|_{C^0(B_2)}+\|\snG\|_{C^{0,\al}(B_2)}\right)\label{e.sch.int1},\\
&\|\sfu\|_{C^{2,\al}(B_1)}\leq C\left(\|\sfu\|_{C^0(B_2)}+\|\snf\|_{C^{0,\al}(B_2)}+\|\snG\|_{C^{1,\al}(B_2)}\right)\label{e.sch.int2}.
\end{align}
\\
\noindent(b) Boundary estimate for the Dirichlet condition \cite[Corollary 6.7]{gilbarg1977elliptic}, \cite[Theorem 1.2]{vita2022boundary}. \sps\ $\sfu$ satisfies \eqref{e.ell.equ} on $B_2^+=B_2^+(0)$ and $\sfu=\ph$ on $\del_0B_2^+(0)$. Then on $B_1^+=B_1^+(0)$
\begin{align}
&\|\sfu\|_{C^{1,\al}(B_1^+)}\leq C\left(\|\sfu\|_{C^0(B_2^+)}+\|\snf\|_{C^0(B_2^+)}+\|\snG\|_{C^{0,\al}(B_2)} +\|\ph\|_{C^{1, \alpha}(B_2^+)}\right)\label{e.sch.dir1},\\
&\|\sfu\|_{C^{2,\al}(B_1^+)}\leq C\left(\|\sfu|_{C^0(B_2^+)}+\|\snf\|_{C^{0,\al}(B_2^+)}+\|\snG\|_{C^{1,\al}(B_2)} +\|\ph\|_{C^{2,\al}(B_2^+)}\right)\label{e.sch.dir2}.
\end{align}
\\
\noindent(c) Boundary estimate for the Neumann condition \cite[Lemma 6.29]{gilbarg1977elliptic}, \cite[Theorem 1.2]{vita2022boundary}. \sps\ $u$ satisfies \eqref{e.ell.equ} on $B_2^+=B_2^+(0)$ and $\del_1\sfu=\ph$ on $\del_0B_2^+(0)$. Then on $B_1^+=B_1^+(0)$
\begin{align}
&\|\sfu\|_{C^{1,\al}(B_1^+)}\leq C\left(\|\sfu\|_{C^0(B_2^+)}+\|\snf\|_{C^0(B_2^+)}+\|\snG\|_{C^{0,\al}(B_2)} +\|\ph\|_{C^{0, \alpha}(\del_0B_2^+)}\right)\label{e.sch.neu1},\\
&\|\sfu\|_{C^{2,\al}(B_1^+)}\leq C\left(\|\sfu\|_{C^0(B_2^+)}+\|\snf\|_{C^{0,\al}(B_2^+)}+\|\snG\|_{C^{1,\al}(B_2)}+\|\ph\|_{C^{1,\al}(\del_0B_2^+)}\right)\label{e.sch.neu2}.
\end{align}
\end{proposition}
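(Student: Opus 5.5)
The statement is a collection of classical Schauder estimates, so I would not reprove them but assemble them from the cited sources, reducing each case to the standard form. The second-order estimates \eqref{e.sch.int2}, \eqref{e.sch.dir2}, \eqref{e.sch.neu2} follow by writing $\snf+\operatorname{div}\snG$ as a single $C^{0,\alpha}$ right-hand side. This costs $\|\snG\|_{C^{1,\alpha}}$. I would then apply \cite[Theorem 6.2, Corollary 6.7, Lemma 6.29]{gilbarg1977elliptic} on $B_2$, $B_2^+$ with cutoff to $B_1$, $B_1^+$. The usual interpolation step replaces the weighted $C^{2,\alpha}$ norm of $\sfu$ on the right by $\|\sfu\|_{C^0}$.

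For the first-order estimates the divergence term is kept in divergence form. Since $a^{ij}\in C^{0,\alpha}$, I would rewrite $a^{ij}D_{ij}\sfu = D_i(a^{ij}D_j\sfu) - (D_ia^{ij})D_j\sfu$. Here a difficulty appears: $D_i a^{ij}$ need not exist. So I would instead invoke directly the $C^{1,\alpha}$ theory for divergence-form operators. This means \cite[Theorem 8.32]{gilbarg1977elliptic} in the interior, and \cite[Theorem 1.2]{vita2022boundary} at flat Dirichlet/Neumann boundaries, which also covers nondivergence operators with Hölder leading coefficients and $\snf+\operatorname{div}\snG$ data. In our applications the coefficients are in fact smooth (metric coefficients), so the rewriting is harmless there.

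For the Dirichlet case I subtract an extension of $\ph$ to reduce to zero boundary data; $\|\ph\|_{C^{1,\alpha}}$ controls the new right-hand side in divergence form. For Neumann I subtract $x_1\ph$, extended in the $x_1$ direction, so that $\partial_1$ of the new function vanishes. Its contribution enters via $\operatorname{div}(a\nabla(x_1\ph))$, which is of divergence type with $C^{0,\alpha}$ flux bounded by $\|\ph\|_{C^{0,\alpha}}$ up to lower order terms. I would then apply the conormal estimate.

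The main obstacle is the consistency of the Neumann condition $\partial_1\sfu$ with the conormal condition natural for divergence-form data, i.e.\ the flux $\snG\cdot e_1$ on the boundary. I would handle it by absorbing $\snG^1|_{x_1=0}$ into the boundary datum, which is allowed since it is $C^{0,\alpha}$. Together with $a^{1j}=\delta^{1j}$ in Fermi coordinates, this makes conormal and Neumann coincide. Constants depend only on $n,\alpha,\lambda,\Lambda$ by scaling.
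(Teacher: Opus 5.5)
Your route is the paper's: the second-order estimates come from \cite[Theorem 6.2, Corollary 6.7, Lemma 6.29]{gilbarg1977elliptic}, with $\snf+\operatorname{div}\snG$ treated as one $C^{0,\alpha}$ source, and that part is fine. The first-order estimates come from divergence-form $C^{1,\alpha}$ theory, and this is where the gap is.

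You correctly notice that $a^{ij}D_{ij}\sfu$ cannot be put in divergence form when $a^{ij}$ is only $C^{0,\alpha}$. You then dismiss the problem by asserting that \cite[Theorem 8.32]{gilbarg1977elliptic} and \cite[Theorem 1.2]{vita2022boundary} cover nondivergence operators with H\"older leading coefficients. They are divergence-form results. No result can cover that case, because \eqref{e.sch.int1} with $\snG\neq0$ and $C=C(n,\alpha,\lambda,\Lambda)$ is false. Here is a counterexample.
\begin{itemize}
\item Take $\alpha=1/4$ and let everything depend only on $t=x_1$, with $a=\mathrm{diag}(a(t),1,\dots,1)$, $\snf=0$ and $\snG=(G,0,\dots,0)$.
\item Set $G(t)=\delta^{1/4}\sin(t/\delta)$ and $a(t)^{-1}=1+\tfrac12\delta^{1/4}\cos(t/\delta)\chi(t)$. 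Here $\chi(t)=\psi(t/\delta^{1/2})$ for a fixed smooth $\psi$ supported in $[0,2]$ with $\psi(2-s)=-\psi(s)$, $\psi\geq0$ on $[0,1]$, and $\psi=1$ on $[\tfrac14,\tfrac34]$.
\item Solve $a\sfu''=G'$ with $\sfu(0)=\sfu'(0)=0$.
\end{itemize}
The $C^{0,1/4}$ norms of $a$ and $G$ stay bounded, and $\|\sfu\|_{C^0(B_2)}\lesssim\delta^{1/2}$. However, the resonant part $\tfrac14\delta^{-1/2}\chi$ of $\sfu''$ turns $\sfu'$ into a tent of height of order one and width $\delta^{1/2}$. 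Hence $[\sfu']_{C^{0,1/4}(B_1)}\gtrsim\delta^{-1/8}\to\infty$. With $\snG=0$ the first-order estimates do hold for H\"older coefficients; the failure is specific to the divergence source.

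Your closing remark is the right repair, but it proves a weaker statement. For Lipschitz $a^{ij}$, write $a^{ij}D_{ij}\sfu=D_i(a^{ij}D_j\sfu)-(D_ia^{ij})D_j\sfu$ and treat the last term as a bounded drift. Then $C$ also depends on $\|Da^{ij}\|_{L^\infty}$. This is all the paper ever uses, since its operators are Laplace--Beltrami operators with smooth metric coefficients. The paper's own remark, which handles the divergence source by Campanato estimates or by $W^{1,p}$ plus Morrey, is also a divergence-form argument and carries the same unstated restriction. Your Dirichlet reduction needs the same rewriting: $a^{ij}D_{ij}$ applied to a $C^{1,\alpha}$ extension of $\ph$ becomes a divergence term only after it.

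The Neumann case needs two further repairs.
\begin{itemize}
\item Subtracting $x_1\ph$ does not give a flux controlled by $\|\ph\|_{C^{0,\alpha}}$. Indeed $\nabla(x_1\ph)=\ph e_1+x_1\nabla_{x'}\ph$ contains tangential derivatives of $\ph$, which need not exist, so this is not a lower-order term. Absorb the datum into the flux instead. If $\psi$ is a function on $\partial_0B_2^+$ extended constantly in $x_1$, then $\operatorname{div}(\psi e_1)=0$. So replacing $\snG$ by $\snG+\psi e_1$ leaves the equation unchanged and only shifts the conormal boundary flux. When $a^{1j}=\delta^{1j}$, the choice $\psi=\ph-\snG^1|_{\{x_1=0\}}$ gives a homogeneous conormal problem with flux bounded by $\|\snG\|_{C^{0,\alpha}}+\|\ph\|_{C^{0,\alpha}}$.
\item The condition $a^{1j}=\delta^{1j}$ is not a hypothesis of the proposition. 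For general Lipschitz $a$, add the antisymmetric matrix $b$ with $b^{1j}=-a^{1j}$ and $b^{j1}=a^{1j}$ for $j\geq2$, all other entries zero. Then $D_i(b^{ij}D_j\sfu)=(D_ib^{ij})D_j\sfu$ is again only a drift, and the conormal derivative for $a+b$ is $a^{11}\partial_1\sfu$.
\end{itemize}
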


\begin{remark}
In the estimates stated in \cite[Corollary 6.7]{gilbarg1977elliptic} and \cite[Lemma 6.29]{gilbarg1977elliptic}, there are $\|\sfu\|_{C^{2,\al}}$ on the left-hand side and $\|\snf\|_{C^{0,\al}}$ on the right-hand side. However, from the proofs of these two results, it follows that to estimate $\|\sfu\|_{C^{1, \alpha}}$, it is enough to have $\|\snf\|_{C^0}$ on the right hand side (see also equation 4.46 and Theorem 8.33 in \cite{gilbarg1977elliptic}); therefore we have the estimates \eqref{e.sch.dir1} and \eqref{e.sch.neu1}. 

The $C^{1,\theta}$ estimates with a scalar source merely in $C^0$ and a divergence source in $C^\theta$ follow from the corresponding first-order Campanato estimates, or from the boundary estimates in \cite[Theorem 1.2]{vita2022boundary}.  Equivalently, one first takes an exponent
$p>(n+1)/(1-\theta)$ in the local $W^{1,p}$ estimate for the divergence source and then applies Morrey's inequality. 
\end{remark}

\section{Liouville theorems and a priori estimates in gluing procedure}\label{s.gluing.a.apriori}
\newcommand{\tph}{\tilde{\vp}}\newcommand{\tE}{\tilde{E}}\newcommand{\tF}{\tilde{F}}\newcommand{\tps}{\tilde{\ps}}\newcommand{\B}{\bbb}\newcommand{\hph}{\hat{\vp}}
\newcommand{\ep}{\epsilon}
\begin{lemma}\label{l.entire.lin.ac}
	Let $L\vp=\De_y \vp + \del_{ss}\vp - W''(\Hb)\vp$, where $(y,s)\in \bbr^n\times \bbr$. 
	\begin{itemize}
		\item If \(\vp\) is a bounded, smooth solution of \(L\vp=0\) on \(\bbr^n \times \bbr \), then \(\vp(y,s)=c\Hb'(s)\) for some \(c\in \bbr \).
		\item If \(\vp\) is a bounded, smooth solution of \(L\vp=0\) on \(\bbr^n_+ \times \bbr \) and \(\vp=0\) on \(\del \bbr^n_+ \times \bbr\), then \(\vp(y,s)=c\Hb'(s)\) for some \(c\in \bbr \).
		\item If \(\vp\) is a bounded, smooth solution of \(L\vp=0\) on \(\bbr^n_+ \times \bbr \) and \(\del_{\nu} \vp=0 \) on \(\del \bbr^n_+ \times \bbr\), then \(\vp(y,s)=c\Hb'(s)\) for some \(c\in \bbr \).
	\end{itemize}
\end{lemma}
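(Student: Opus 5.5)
The plan is to reduce all three statements to the full-space case and then run the classical Fourier/energy argument in the $y$ variables, using the coercivity of the one-dimensional operator from Theorem \ref{t.coercive.a}.

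\emph{Reductions.} In the Neumann case we reflect evenly across $\{y_1=0\}$. In the Dirichlet case we reflect oddly. In either case the reflected function is a bounded $C^{1}$ weak solution of $L\vp=0$ on $\bbr^n\times\bbr$, because $L$ has no mixed or first-order $y_1$ terms. By elliptic regularity it is then smooth.

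For the odd reflection, the full-space statement gives $\vp=c\Hb'(s)$. Since this is odd in $y_1$, we get $c=0$, which is still of the claimed form with $c=0$. It therefore suffices to prove the first bullet.

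\emph{Full-space case.} Write
\[
\vp(y,s)=a(y)\Hb'(s)+\psi(y,s),\qquad a(y)=\sigma_0^{-1}\int\vp\,\Hb'\,ds,
\]
so that $\int\psi\Hb'\,ds=0$ for every $y$.

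First we record decay in $s$. Since $W''(\Hb(s))\to1$ as $|s|\to\infty$, a comparison argument on $\{|s|>S_0\}$ with barriers $Ce^{-|s|/2}$ shows $|\vp|+|\nabla\vp|\le Ce^{-|s|/2}$. The needed smoothness follows from Schauder estimates (Proposition \ref{t.sch}). Consequently $a$ is bounded and smooth.

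Next, projecting the equation onto $\Hb'$ and integrating by parts in $s$ (using $\mathcal L\Hb'=0$) gives $\Delta_y a=0$. A bounded harmonic function on $\bbr^n$ is constant, so $a\equiv c$.

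It follows that $\psi=\vp-c\Hb'$ is bounded, decays exponentially in $s$, solves $L\psi=0$, and is orthogonal to $\Hb'$ on each fibre.

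\emph{Main obstacle.} The remaining task is to show $\psi\equiv0$. The difficulty is that $\psi$ is only bounded in $y$, not square-integrable, so a global energy identity is not directly available.

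I would set $e(y)=\int\psi^2\,ds$ and compute
\[
\tfrac12\Delta_y e=\int|\nabla_y\psi|^2\,ds+\int\bigl(\psi_s^2+W''(\Hb)\psi^2\bigr)\,ds .
\]
Because $\psi(y,\cdot)\perp\Hb'$, Theorem \ref{t.coercive.a} bounds the last integral below by $\mu e$. Hence
\[
\Delta_y e\ge 2\mu e,\qquad e\ge0 \text{ bounded}.
\]
Comparing on a ball $B_R(y_0)$ with the radial supersolution $\sup e\cdot\cosh(\sqrt{\mu}|y-y_0|)/\cosh(\sqrt{\mu}R)$ forces $e(y_0)\to0$ as $R\to\infty$; this works since $\cosh$ of a radial argument is a supersolution of $\Delta w\le 2\mu w$ up to lower-order terms, or one can use a product of one-dimensional $\cosh$ functions. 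Thus $e\equiv0$, so $\psi\equiv0$ and $\vp=c\Hb'$.

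The step I expect to need the most care is justifying differentiation under the integral and the decay in $s$ uniformly in $y$. The exponential barrier argument handles this, since boundedness of $\vp$ is uniform.
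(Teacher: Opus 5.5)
Your proposal is correct. It is essentially the standard argument that the paper invokes by citing del Pino--Kowalczyk--Wei--Yang without reproducing it: exponential decay in $s$, splitting off the $\mathbb{H}'$-component, $\Delta_y a=0$, then $\Delta_y e\ge 2\mu e$ from Theorem \ref{t.coercive.a}. Your even/odd reflection is a clean way to reduce the two half-space cases to the whole-space one.

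The main slip is the final barrier. With $r=|y-y_0|$, one has $\Delta\cosh(\lambda r)=\lambda^2\cosh(\lambda r)+(n-1)\lambda r^{-1}\sinh(\lambda r)\le n\lambda^2\cosh(\lambda r)$, using $\sinh x\le x\cosh x$, with equality in the limit $r\to0$. So $\lambda=\sqrt{\mu}$ fails near $y_0$ for $n\ge3$; take $\lambda=\sqrt{2\mu/n}$ instead, and scale the exponents in any product of one-dimensional $\cosh$'s the same way.

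A smaller point: the comparison giving decay in $s$ takes place on a region unbounded in $y$. It therefore needs the usual extra term $\epsilon\sum_i\cosh(\sigma y_i)$ in the barrier, as in the proof of Proposition \ref{p.lin.ac}.
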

We omit the proof of this lemma since it is similar to \cite[Proof of Lemma 4.1]{del2010interface}.

\begin{lemma}\label{l.entire.lin.ac'}
	Given an open set $\Om\subset \bbr^{n+1}$, $a^i\in L^{\infty}(\Om)$, $b:\Om \to \bbr$ \w\ $\inf_{\Om}b = b_0>0$, define \(L\vp = \De \vp + a^i \del_i\vp - b\vp\).
	\begin{itemize}
		\item If \(\vp\) is a bounded, $C^2$ solution of \(L\vp=0\) on \(\bbr^n \times \bbr \), then \(\vp \equiv 0\).
		\item If \(\vp\) is a bounded, $C^2$ solution of \(L\vp=0\) on \(\bbr^n_+ \times \bbr \) and \(\vp=0\) on \(\del \bbr^n_+ \times \bbr\), then \(\vp\equiv 0\).
		\item If \(\vp\) is a bounded, $C^2$ solution of \(L\vp=0\) on \(\bbr^n_+ \times \bbr \) and \(\del_{\nu} \vp=0 \) on \(\del \bbr^n_+ \times \bbr\), then \(\vp\equiv 0\).
	\end{itemize}
	
\end{lemma}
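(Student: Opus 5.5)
The plan is to prove Lemma \ref{l.entire.lin.ac'} by the maximum principle, using the positivity $b\geq b_0>0$ of the zeroth-order coefficient together with a barrier that grows at infinity. This growing barrier compensates for the lack of decay of $\vp$. Write $M=\sup|\vp|<\infty$, fix a point $x_0$ (in the half-space cases take $x_0$ on or near $\del\bbr^n_+\times\bbr$), and for $\delta>0$ consider the comparison function
\[
 w_\delta(x)=\delta\cosh\bigl(\mu|x-x_0|\bigr)\quad\text{or more simply}\quad w_\delta(x)=\delta\bigl(1+|x-x_0|^2\bigr),
\]
with $\mu>0$ small. For the quadratic choice one has $Lw_\delta=\delta\bigl(2(n+1)+2a^i(x-x_0)_i-b(1+|x-x_0|^2)\bigr)$. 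Since $a^i$ is bounded and $b\geq b_0$, the negative term $-b_0|x-x_0|^2$ dominates for $|x-x_0|\geq\rho_0$, where $\rho_0=\rho_0(\|a\|_\infty,b_0,n)$. On the bounded region $|x-x_0|<\rho_0$ this crude barrier fails, so I would use the exponential barrier $\delta e^{\mu\langle x, e\rangle}$-type functions instead. A cleaner route is $w_\delta=\delta\sum_{k}\cosh(\mu x_k)$. Then
\[
 Lw_\delta=\delta\sum_k\bigl(\mu^2\cosh(\mu x_k)+a^k\mu\sinh(\mu x_k)-b\cosh(\mu x_k)\bigr)\leq\delta\sum_k(\mu^2+\mu\|a\|_\infty-b_0)\cosh(\mu x_k)<0
\]
once $\mu$ is chosen with $\mu^2+\mu\|a\|_\infty<b_0$. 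So $w_\delta$ is a strict supersolution everywhere, and it tends to $+\infty$ at infinity.

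Full space. I will show that $\vp-w_\delta\leq0$ everywhere. If not, then since $\vp-w_\delta\to-\infty$ at infinity, the positive supremum is attained at an interior point $p$. There $\nabla(\vp-w_\delta)=0$ and $\De(\vp-w_\delta)\leq0$, hence $L(\vp-w_\delta)(p)\leq -b(p)(\vp-w_\delta)(p)<0$. On the other hand $L(\vp-w_\delta)=-Lw_\delta>0$, a contradiction. Applying the same argument to $-\vp$ gives $|\vp|\leq w_\delta$, and letting $\delta\to0$ yields $\vp\equiv0$.

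Half space. The cosh barrier is even in $x_1$ (the coordinate normal to $\del\bbr^n_+\times\bbr$), so $\del_\nu w_\delta=\delta\mu\sinh(\mu x_1)=0$ on $\{x_1=0\}$. In the Dirichlet case, a positive maximum of $\vp-w_\delta$ cannot lie on the boundary, since there $\vp-w_\delta=-w_\delta<0$. It is therefore interior, and the argument above applies. In the Neumann case, suppose the maximum is attained at a boundary point. First I would rule this out via the Hopf lemma: near the boundary $L(\vp-w_\delta)>0$ and the maximum is positive, and $\vp-w_\delta$ is not constant (otherwise $L(\text{const})=-b\cdot\text{const}<0$ contradicts $L(\vp-w_\delta)>0$), so Hopf would force $\del_\nu(\vp-w_\delta)<0$ at that point. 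This contradicts $\del_\nu(\vp-w_\delta)=0$. To apply Hopf to an operator with a zeroth-order term and a positive maximum, I set $v=\vp-w_\delta-m$ with $m$ the maximum value. Then $\Delta v+a^i\del_iv = L(\vp-w_\delta)+b(\vp-w_\delta)>0$ near $p$, so $v\leq0$ is a strict subsolution of an operator without zeroth-order term, and Hopf applies (with $a^i$ bounded; $C^2$ regularity up to the boundary is assumed). Alternatively, one could avoid Hopf by even reflection across $\{x_1=0\}$, which produces an $H^2$/$C^{1,1}$ solution with bounded reflected drift; but the reflected $a^1$ jumps sign, so a weak maximum principle would be needed, and Hopf is cleaner. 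The main technical point is precisely this boundary case. Everything else is a direct maximum-principle argument, and the conclusion $\vp\equiv0$ follows in all three cases after letting $\delta\to0$.
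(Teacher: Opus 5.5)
Your proof is correct and follows the same strategy as the paper: compare $\pm\varphi$ against an exponentially growing strict supersolution, which exists because $b\geq b_0>0$. The differences are in the details. The paper's radial barrier $e^{\pm\lambda|x|}$ is a supersolution only on $\{|x|\geq r\}$, because of the $n\lambda/|x|$ term, so the paper first proves exponential decay of $\varphi$ and then applies the maximum principle a second time. Your separable barrier $\delta\sum_k\cosh(\mu x_k)$, the same one the paper uses in the proof of Proposition \ref{p.lin.ac}, is a global strict supersolution, so you get $|\varphi|\leq w_\delta$ in a single comparison. You also spell out the half-space cases: evenness of the barrier in $x_1$, and Hopf's lemma to exclude a boundary maximum in the Neumann case. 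The paper leaves both of these implicit.
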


\begin{proof}
	Let \(\vp_{\pm} = \exp(\pm \la |x|)\), where \(\la\) is to be specified later. Let \(\max_i \|a^i\|_{L^{\infty}} = a_0\). Then, for \(|x|\neq 0\),
	\begin{equation}
		L\vp_{\pm}\leq \big(\la^2 \pm \frac{n \la}{|x|} + a_0\la - b_0 \big) \vp_{\pm}.
	\end{equation}
We choose \(\la >0 \) \st\ \(\la^2 + a_0\la \leq \frac{b_0 }{4}\) and define \(r = \frac{4n\la}{b_0}\). Then for \(|x|\geq r\), \(L\vp_{\pm}<0\). For \(\ep > 0\), let
\[\vp_{\ep} = \|\vp\|_{L^{\infty}}e^{\la (r - |x|)} + \ep e^{\la |x|}. \]
By the maximum principle, \(|\vp(x)| \leq \vp_{\ep}(x)\) on $\{|x|\geq r\}$; letting \(\ep \to 0\), we obtain $|\vp(x)| \leq \|\vp\|_{L^{\infty}}e^{\la (r - |x|)}$
Again applying the maximum principle, we conclude that \(\vp \equiv 0.\)
\end{proof}

Let us introduce some notation, which will be used in Proposition \ref{p.lin.ac}. Suppose $r>0$, $\Si \subset \bar{\B}^+_r(0)\times \bbr$  is a hypersurface equipped with a Riemannian \mt\ and $\Si$ is the graph of a smooth function defined on $\bar{\B}^+_r(0)$. Then $\del_0\Si$ (resp. $\del_+\Si$) denotes the portion of $\Si$ lying over $\del_0\B^+_r(0)$ (resp. $\del_+\B^+_r(0)$), $\del_{*}\Si = \overline{\del_0\Si} \cap \overline{\del_+\Si}$, $\hat{\Si}= \Si\setminus \del_{*}\Si$. For $\Si'\subset \Si$, $\del_0\Si':= \del \Si' \cap \del_0\Si $. For $\de\in (0,1)$, 
\[\Si_{(-\de r)}=\Si\setminus \{p\in \Si: d(p,\del_0\Si),\;d(p,\del_+\Si)<\de r\}.\]
For $\snf\in L^1(\Si\times \bbr)$, $c_{\snf}(y)=\frac{1}{\sigma_0}\int_{-\infty}^{\infty}\snf(y,s)\Hb'(s)ds$.

\sps\ $\Om\subset \bbr^l$ (for some integer $l$) is the closure of an open set with smooth boundary, $m\geq 0$, $\tht, \si\in (0,1)$ and $\sfu\in C^{m,\tht}(\Om \times \bbr)$. Then
\begin{equation}
	\nm{\sfu}_{C^{m,\tht}_{\si}(\Om)}:=\sup\left\{e^{\si |s|}\sum_{i=0}^{m}(|\sfu|_i+|\sfu|_{i,\tht})(y,s):(y,s)\in \Om\times\bbr\right\}.
\end{equation}

\begin{proposition}\label{p.lin.ac}
	Given $\et>0$ sufficiently small and $\si,\de\in (0,1)$, \te\ two constants $\hat{\ve},C>0$ \st\ the following holds. Suppose
	\begin{enumerate}[(i)]	
		\item $0<\ve\leq\hat{\ve}$, $R=\ve^{-1}$, $\mathsf{h}\in C^{\infty}(\bar{\B}^+_R(0),(-R,R))$, $\Si \subset \bar{\B}^+_R(0)\times (-R,R)$ is the graph of $\mathsf{h}$, $|\mathsf{h}|_{1}\leq \et$, $|\mathsf{h}|_{i} = O(\ve)$ for $i \geq 2$; $\Si$ is parametrized by $(y_1,\dots,y_n)\mapsto (y_1,\dots,y_n,\mathsf{h}(y_1,\dots,y_n))$;
		\item $\hat{\mathtt{g}}$ is a Riemannian \mt\ on $\bar{\B}^+_1(0)\times (-1,1)$, $\|\hat{\mathtt{g}}_{ij}-\de_{ij}\|_{C^3}\leq \et$;
		\item the Riemannian \mt\ $\mathtt{g}$ on $\bar{\B}^+_R(0)\times (-R,R)$ is defined by $\mathtt{g}(x)=\hat{\mathtt{g}}(\ve x)$ and we restrict $\mathtt{g}$ on $\Si$.
	\end{enumerate}
	
	Then, for all $\snf\in C_c^{m-2,\tht}(\hat{\Si}\times \bbr)$ and $\ps\in C_c^{m-1,\tht}(\del_0\Si\times \bbr)$ ($m \geq 2$), \tes\ a unique solution $\varphi \in C^{0,\tht'}(\Si\times \bbr) \cap  C^{m,\tht}(\hat{\Si}\times \bbr)$ (for some $\tht'\in (0,1)$) to the following mixed boundary value problem:
	\begin{equation}\label{e.lin.AC}
\left\{
\begin{aligned}
 -\Delta_\Sigma\phi-\partial_{ss}\phi
 +W''(\mathbb H(s))\phi
 &=\snf-c_\snf(y)\mathbb H'(s)
 &&\text{in }\operatorname{int}(\Sigma)\times\mathbb R,\\
 \partial_{\nu_\Sigma}\phi&=\psi
 &&\text{on }\partial_0\Sigma\times\mathbb R,\\
 \phi&=0
 &&\text{on }\partial_+\Sigma\times\mathbb R,\\
 \int_{\mathbb R}\phi(y,s)\mathbb H'(s)\,ds&=0
 &&\text{for every }y\in\Sigma.
\end{aligned}\right.
\end{equation}
	Moreover, $\|\varphi\|_{C^{0}_{\si}\left(\Si\times \bbr\right)}<\infty$ and
	\begin{equation}\label{e.lin.ac.est}
		\|\varphi\|_{C^{m,\tht}_{\si}\left(\Si_{(-3\de R)}\times \bbr\right)} \leq C \left(\|\ps\|_{C^{m-1,\tht}_{\si} \left(\del_0\Si_{(-\de R)} \times \bbr\right)} + \|\snf\|_{C^{m-2 ,\tht}_{\si} \left(\Si_{(-\de R)}\times \bbr\right)}\right).
	\end{equation}
\end{proposition}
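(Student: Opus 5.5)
The proof splits into three parts: existence via the orthogonal splitting and coercivity, regularity via Schauder estimates, and the weighted estimate \eqref{e.lin.ac.est} via blow-up.

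\textbf{Existence.} We work in the closed subspace
\[
\mathcal H=\Bigl\{\varphi\in H^1(\Sigma\times\mathbb R):\ \varphi=0\text{ on }\partial_+\Sigma\times\mathbb R,\ \int_{\mathbb R}\varphi(y,s)\mathbb H'(s)\,ds=0\text{ for a.e. }y\Bigr\}
\]
with the bilinear form
\[
B(\varphi,\chi)=\int_{\Sigma\times\mathbb R}\bigl(\langle\nabla_\Sigma\varphi,\nabla_\Sigma\chi\rangle+\varphi_s\chi_s+W''(\mathbb H)\varphi\chi\bigr).
\]
Theorem \ref{t.coercive.a}, applied fibrewise and integrated in $y$, shows that $B$ is coercive on $\mathcal H$. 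The right-hand side is the functional
\[
\chi\mapsto\int(\snf-c_\snf\mathbb H')\chi-\int_{\partial_0\Sigma\times\mathbb R}\psi\chi ,
\]
which is bounded on $\mathcal H$ by the trace inequality. Lax--Milgram then gives a weak solution $\varphi\in\mathcal H$. To pass from test functions in $\mathcal H$ to arbitrary test functions, split any test function $\chi$ as $\chi=\chi^\perp+a(y)\mathbb H'(s)$.
\begin{itemize}
\item The $\mathbb H'$-component is harmless: $\mathbb H'$ lies in the kernel of the one-dimensional operator, and $\snf-c_\snf\mathbb H'$ is orthogonal to $\mathbb H'$.
\item The boundary pairing with $a\mathbb H'$ must vanish. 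This needs $\int\psi\mathbb H'\,ds=0$, which holds in the application because $\psi$ is odd in $s$. In general one modifies $\psi$ by its projection, or observes that the orthogonality constraint then forces the Neumann condition only modulo $\mathbb H'$. I will state this compatibility explicitly.
\end{itemize}
Uniqueness follows from coercivity.

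\textbf{Regularity.} Interior, Neumann and Dirichlet Schauder estimates (Proposition \ref{t.sch}) give $C^{m,\theta}$ regularity away from the corner $\partial_*\Sigma$. Near the corner, mixed-boundary theory only provides $C^{0,\theta'}$, which explains the space stated for $\varphi$.

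\textbf{Weighted estimate.} This is the main step, and I will argue by contradiction with a blow-up.
\begin{enumerate}
\item Suppose $\varepsilon_k\to0$ and solutions exist with data norms $\le 1/k$ but $\|\varphi_k\|_{C^{0}_\sigma(\Sigma_{(-2\delta R)})}=1$.
\item First obtain the exponential decay in $s$. For $|s|\ge s_0$ we have $W''(\mathbb H)\ge1/2>\sigma^2$, so the barrier $e^{-\sigma|s|}$ gives a maximum-principle comparison, as in Lemma \ref{l.entire.lin.ac'}. Cut-offs in $y$ at scale $\delta R$ handle the lateral boundary, and their error is $O(1/(\delta R))$, which vanishes as $k\to\infty$.
\item Choose points $(y_k,s_k)$ where the weighted norm is nearly attained. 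The decay from step 2 forces $s_k$ to stay bounded.
\item Recentre at $y_k$. Since the metric converges to Euclidean and $\mathsf h$ becomes flat, the recentred solutions converge to a bounded solution of $L\varphi=0$ on $\mathbb R^n\times\mathbb R$, or on a half-space with Neumann condition; the Dirichlet and mixed corners are excluded because we only estimate on $\Sigma_{(-3\delta R)}$, at distance $\ge\delta R$ from $\partial_+\Sigma$.
\item The orthogonality condition passes to the limit. Lemma \ref{l.entire.lin.ac} then gives limit $c\mathbb H'$, and orthogonality forces $c=0$, contradicting the normalisation.
\end{enumerate}
Once the $C^0_\sigma$ bound is established, weighted Schauder estimates on unit balls, multiplied by $e^{\sigma|s|}$, upgrade it to the $C^{m,\theta}_\sigma$ bound in \eqref{e.lin.ac.est}. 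The obstacles I expect are controlling the cut-off errors uniformly and the corner regularity.
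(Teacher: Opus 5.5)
Your proposal has the same skeleton as the paper's proof. Both use Lax--Milgram on the fibrewise $\mathbb H'$-orthogonal subspace with coercivity from Theorem~\ref{t.coercive.a}, then Stampacchia/Schauder regularity away from $\partial_*\Sigma$, then a contradiction argument whose blow-up limit is killed by Lemma~\ref{l.entire.lin.ac}, and finally weighted Schauder estimates to pass from $C^0_\sigma$ to $C^{m,\theta}_\sigma$.

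The genuinely different step is the treatment of heights $s_k\to\infty$. The paper lets $|s_k|\to\infty$ and blows up the conjugated functions $e^{\sigma(s_k+s)}\varphi_k(\cdot,s_k+s)$. Their limit is a bounded solution of $-\Delta\tilde\varphi-\partial_{ss}\tilde\varphi+2\sigma\partial_s\tilde\varphi+(1-\sigma^2)\tilde\varphi=0$, which vanishes by the second Liouville lemma, Lemma~\ref{l.entire.lin.ac'}. You instead first prove quantitative decay in $s$ by a barrier, so only Lemma~\ref{l.entire.lin.ac} is needed. Both routes rest on $W''(\mathbb H(s))\to1>\sigma^2$, and Lemma~\ref{l.entire.lin.ac'} is itself proved by a barrier. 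Your route is more quantitative, but you must absorb several terms in the comparison: the cut-off error, which contains $\nabla\varphi_k$ and needs weighted Schauder bounds; the Neumann datum; and the term $c_{\mathsf f}\mathbb H'$. The paper's compactness argument avoids this bookkeeping.

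Your Steps 2--3 need two corrections:
\begin{itemize}
\item The inequality $W''\geq\frac12>\sigma^2$ only covers $\sigma<1/\sqrt2$. The paper needs every $\sigma\in(0,1)$; it uses $\sigma=31/32$ in Section~\ref{s.stability.toda}. So take $s_0=s_0(\sigma)$ with $W''(\mathbb H(s))\geq(1+\sigma^2)/2$ for $|s|\geq s_0$.
\item The decay estimate does not make the near-maximisers $s_k$ bounded. It transfers the lower bound to points with $|s|\leq s_0$, and that is where you should recentre.
\end{itemize}

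Your existence discussion is more careful than the paper's. The paper asserts that the weak formulation tested against $\mathcal H$ is equivalent to \eqref{e.lin.AC}. However, differentiating the constraint $\int_{\mathbb R}\varphi\,\mathbb H'\,ds=0$ along $\nu_\Sigma$ shows that $\int_{\mathbb R}\psi(y,s)\mathbb H'(s)\,ds=0$ on $\partial_0\Sigma$ is necessary. So the compatibility condition you isolate is genuinely required; it holds in the application because $-w_\alpha(y)\tau\overline{\mathbb H}'(\tau)$ is odd. Note also that killing the $a(y)\mathbb H'$ component of a test function uses $\int_{\mathbb R}\nabla_y\varphi\,\mathbb H'\,ds=0$, which comes from differentiating the constraint in $y$, and not only the fact that $\mathbb H'$ spans the one-dimensional kernel.

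You have dropped one blow-up case. In the paper's definition, $\Sigma_{(-\delta r)}$ removes only points within $\delta r$ of both $\partial_0\Sigma$ and $\partial_+\Sigma$; the case analysis in its proof confirms this reading. So points of $\Sigma_{(-3\delta R)}$ can lie at bounded distance from $\partial_+\Sigma$, and a Dirichlet half-space limit can occur (case (iii) in the paper). It is handled by the second item of Lemma~\ref{l.entire.lin.ac}.

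Finally, a caveat that applies to your argument and the paper's alike. Smallness of the data on $\Sigma_{(-\delta R)}$, together with a normalisation on $\Sigma_{(-2\delta R)}$, gives no control of $\varphi_k$ near the excised corner region. Yet the blow-up limit, and in your version the cut-off error, need bounds on a neighbourhood of the normalised set. In fact, data supported near $\partial_*\Sigma$ generically produce, by unique continuation, a solution that does not vanish on $\Sigma_{(-3\delta R)}$. So \eqref{e.lin.ac.est} as written needs an additional term controlling $\varphi$ (or the data) outside $\Sigma_{(-\delta R)}$. This is harmless in the application, where the Neumann datum is $O(\varepsilon e^{-\sigma|s|})$ everywhere.
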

\begin{proof}
	The proof is similar to \cite[Proof of Proposition 4.1]{del2010interface}. Let $T_0:W^{1,2}(\Si \times \bbr)\to L^2(\del_0\Si \times \bbr)$ and $T_+:W^{1,2}(\Si \times \bbr)\to L^2(\del_+\Si \times \bbr)$ denote the trace maps. \(\mathcal{H}_0:=\{\sfu \in W^{1,2}(\Si \times \bbr): T_+\sfu = 0\}.\) Then $\mathcal{H}_0$ is the completion of \(C^1_c\left((\text{int}(\Si)\cup \del_0 \Si)\times \bbr\right)\) \wrt\ the $W^{1,2}$ norm. \(\mathcal{H}:=\left\{\sfu \in \mathcal{H}_0: \int_{-\infty}^{\infty}\sfu(y,s)\Hb'(s)ds = 0\;\forall \; y\in \Si\right\}\). Let the bilinear form \(\mathbf{a}\) on \(\mathcal{H}\) be defined by 
	\[\mathbf{a}(\vp_1,\vp_2)=\int_{\Si}\int_{\bbr}\left(\del_s\vp_1 \del_s\vp_2 + \langle \nabla_y \vp_1, \nabla_y \phi_2\rangle +W''(\Hb)\vp_1\vp_2\right)ds\: d\mathcal{H}^n(y).\]
	Then $\vp$ is a weak solution of \eqref{e.lin.AC} iff 
	\[\mathbf{a}(\vp,\vp')=\int_{\Si \times \bbr} \snf \vp'ds\:d\mathcal{H}^n(y)+\int_{\del_0\Si \times \bbr}\ps T_0(\vp') ds \: d\mathcal{H}^n(y)\quad \forall \; \vp'\in \mathcal{H}.\]
Theorem \ref{t.coercive.a} implies $\mathbf{a}$ is coercive; hence by the Lax-Milgram theorem, \tes\ a unique weak solution $\vp$ of \eqref{e.lin.AC}. By standard elliptic regularity \cite[Theorem 7.1]{Stampacchia65},  \(\vp\in C^{0,\tht'}(\Si\times \bbr) \cap  C^{m,\tht}(\hat{\Si}\times \bbr)\) (for some $\tht'\in (0,1)$).

Next, we show that the solution \(\vp\) satisfies $\|\varphi\|_{C^{0}_{\si}\left(\Si\times \bbr\right)}<\infty$. Since $\snf, \ps$ are compactly supported, \tes\ \(s_0\) \st\ (using the notation $I=\{s\in \bbr: |s|>s_0\}$)
\begin{align}
	 -&\De_y \vp - \del_{ss}\vp +W''(\Hb)\vp = c_{\snf}(y)\Hb'(s) \text{ on } \Si \times I,\\
	& \del_{y_1}\vp = 0 \text{ on } \del_0\Si \times I,\quad \vp=0 \text{ on } \del_+\Si \times I.
\end{align}
Using the barrier function
\[\vp_{\ep}=\|\vp\|_{L^{\infty}}\left(e^{-\si(|s|-s_0)} + \ep \cosh\si s +\ep \sum_{i=1}^{n} \cosh \si y_i \right),\]
one can show that $|\vp|\leq \|\vp\|_{L^{\infty}}e^{-\si(|s|-s_0)}$ on $\Si \times \bbr$, hence $\|\varphi\|_{C^{0}_{\si}\left(\Si\times \bbr\right)} <\infty$.

Lastly, we show that if $\vp\in C^{m,\tht}(\hat{\Si}\times \bbr)$ is a solution of \eqref{e.lin.AC} and $\|\varphi\|_{C^{0}_{\si}
 \left(\Si\times \bbr\right)} <\infty$ then \eqref{e.lin.ac.est} holds. To prove this, without loss of generality we can assume that $c_{\snf}=0$ and (because of Theorem \ref{t.sch}) it is enough to establish the following estimate
\begin{equation}\label{e.lin.ac.est'}
	\|\varphi\|_{C^{0}_{\si}\left(\Si_{(-2\de R)}\times \bbr\right)} \leq C \left(\|\ps\|_{C^{1,\tht}_{\si}\left(\del_0\Si_{(-\de R)}\times \bbr\right)} + \|\snf\|_{C^{0,\tht}_{\si}\left(\Si_{(-\de R)}\times \bbr\right)}\right).
\end{equation}
Let us assume by contradiction that the estimate in \eqref{e.lin.ac.est'} does not hold. So, \te\ sequences $\ve_k\to 0$, $R_k=\ve_k^{-1}$, $\mathsf{h}_k,\Si^k, \hat{\mathtt{g}}^k , \mathtt{g}^k, \vp_k, \snf_k, \ps_k$ \st\ the assumptions (i)--(iii) in Proposition \ref{p.lin.ac} are satisfied for \((\ve, R, \mathsf{h}, \Si, \hat{\mathtt{g}}, \mathtt{g} ) = ( \ve_k, R_k, \mathsf{h}_k, \Si^k, \hat{\mathtt{g}}^k, \mathtt{g}^k )\), \eqref{e.lin.AC} is satisfied for \((\vp, \snf, \ps ) = (\vp_k, \snf_k, \ps_k )\), and 
\begin{equation}\label{e.norm.1,0}
	\|\varphi\|_{C^{0}_{\si}\left(\Si^k_{(-2\de R)}\times \bbr\right)} =1, \left(\|\ps\|_{C^{1,\tht}_{\si}\left(\del_0\Si_{(-\de R)}\times \bbr\right)} + \|\snf\|_{C^{0,\tht}_{\si}\left(\Si_{(-\de R)}\times \bbr\right)}\right) \to 0.
\end{equation} 
\hn\ \tes\ $(p_k,s_k)\in \Si^k_{(-2\de R)}\times \bbr$ and $c>0$ \st\ 
\begin{equation}\label{e.(p_k,s_k)}
	e^{\si s_k}|\vp(p_k,s_k)|\geq c.
\end{equation}

If $q\in \text{int}(\Si^k_{(-2\de R)})$, using a normal coordinate system (cf. Lemmas \ref{l.exp.map}, \ref{l.tilde.t.Pi}) of \(\Si^k\) centreed at \(q\), we can find $Y_q:\B_r(0)\to \text{int}(\Si_k)$, where $r=\min\{R_k^{1/2}, d(q,\del\Si^k), \de R_k\}$, \st\ $Y_q$ is a diffeomorphism onto its image, $Y_q(0)=q$, $\|(Y_q^*\mathtt{g}^k)_{ij}-\de_{ij}\|_{C^2} = o(1)$ uniformly as $k\to \infty$. Similarly, if $q\in \del \Si^k_{(-2\de R)}$, using a Fermi coordinate system (cf. Lemmas \ref{l.Fermi.coord}, \ref{l.exp.map}, \ref{l.tilde.t.Pi}) of \(\del \Si^k\) centreed at \(q\), we can find $Y_q:\B^+_r(0)\to \Si_k$, where $r=\min\{R_k^{1/2}, \de R_k\}$, \st\ $Y_q$ is a diffeomorphism onto its image, $Y_q(0)=q$, $\|(Y_q^*\mathtt{g}^k)_{ij}-\de_{ij}\|_{C^2} = o(1)$ uniformly as $k\to \infty$.

Let $(p_k,s_k)$ be as in \eqref{e.(p_k,s_k)}. There are three possibilities:
\begin{equation}\label{e.p_k.del.Si^k}
	\text{(i)}\;\limsup_{k\to \infty} d(p_k,\del \Si^k) = \infty,\;\text{(ii)}\; \limsup_{k\to \infty} d(p_k,\del_0 \Si^k) < \infty,\; \text{(iii)}\; \limsup_{k\to \infty} d(p_k,\del_+ \Si^k) < \infty.
\end{equation}
If (i) happens, we set $q_k=p_k$; if (ii) (resp. (iii)) happens, we set $q_k$ to be the nearest point in \(\del_0 \Si^k\) (resp. \(\del_+ \Si^k\)) to $p_k$. \sps\ $\limsup_{k\to \infty} |s_k| <\infty$. Then in all the three cases (in \eqref{e.p_k.del.Si^k}), we define $Y_k = Y_{q_k}$, $\tilde{\vp}_k(\mathtt{y},s)= \vp_k(Y_k(\mathtt{y}), s)$, \(\tilde{\snf}_k(\mathtt{y},s)= \snf_k(Y_k(\mathtt{y}), s)\), and in case (ii) $\tilde{\ps}_k(\mathtt{y},s)= \ps_k(Y_k(\mathtt{y}), s)$. Then \(\tilde{\vp}_k\) satisfies the following equation.

\begin{equation}\label{e.tilde.vp.k.eq}
	-a^{ij}_k\del_{\mathtt{y}_i\mathtt{y}_j} \tilde{\vp}_k - b^i_k \del_{\mathtt{y}_i} \tilde{\vp}_k - \del_{ss} \tilde{\vp}_k + W''(\Hb)\tilde{\vp}_k = \tilde{\snf}_k,
\end{equation}
where \(\|a^{ij}_k-\de_{ij}\|_{C^{0,\tht}} + \|b^i_k\|_{C^{0,\tht}}=o(1)\) as \(k\to \infty\). In case (i), \eqref{e.tilde.vp.k.eq} is satisfied on \(\B_{r_k}(0) \times \bbr \), where \(r_k=\min\{R_k^{1/2}, d(q_k,\del\Si^k), \de R_k\}\). \tf\ by \eqref{e.norm.1,0}, \tes\ a bounded \fn\ \(\tilde{\vp}_{\infty}\in C^2(\bbr^n \times \bbr)\) \st\ up to a subsequence, \(\tilde{\vp}_k \to \tilde{\vp}_{\infty}\) in \(C^2_{\text{loc}}(\bbr^n \times \bbr)\),
\[-\De_{\mathtt{y}} \tilde{\vp}_{\infty} - \del_{ss}\tilde{\vp}_{\infty} + W''(\Hb)\tilde{\vp}_{\infty} =0 \text{ on }\bbr^n \times \bbr,\]
and \(\int_{-\infty}^{\infty}\tilde{\vp}_{\infty}(\mathtt{y},s)\Hb'(s)ds=0\) \fa\ \(\mathtt{y}\in \bbr^n\). \hn\ by Lemma \ref{l.entire.lin.ac}, \(\tilde{\vp}_{\infty}\equiv 0\). However, this is a contradiction since \eqref{e.(p_k,s_k)} implies \tes\ $s_{\infty}\in \bbr$ \st\ \(\tilde{\vp}_{\infty}(0,s_{\infty}) \neq 0\). In case (ii) (resp. (iii)), \eqref{e.tilde.vp.k.eq} is satisfied on \(\B^+_{r_k}(0) \times \bbr \), where \(r_k=\min\{R_k^{1/2}, \de R_k\}\) and $\del_{\nu} \tilde{\vp}_k=\tilde{\ps}_k$ (resp. \(\tilde{\vp}_k = 0 \)) on \(\del_0\B^+_{r_k}(0) \times \bbr \). \tf\ by \eqref{e.norm.1,0}, \tes\ a bounded \fn\ \(\tilde{\vp}_{\infty}\in C^2(\bbr^n_+ \times \bbr)\) \st\ up to a subsequence, \(\tilde{\vp}_k \to \tilde{\vp}_{\infty}\) in \(C^2_{\text{loc}}(\bbr^n_+ \times \bbr)\),
\[-\De_{\mathtt{y}} \tilde{\vp}_{\infty} - \del_{ss}\tilde{\vp}_{\infty} + W''(\Hb)\tilde{\vp}_{\infty} =0 \text{ on }\bbr^n_+ \times \bbr,\]
\(\del_{\nu}\tilde{\vp}_{\infty} = 0 \) (resp. $\tilde{\vp}_{\infty} = 0$) on \(\del \bbr^n_+ \times \bbr \)  and \(\int_{-\infty}^{\infty}\tilde{\vp}_{\infty}(\mathtt{y},s)\Hb'(s)ds=0\) \fa\ \(\mathtt{y}\in \bbr^n_+\). \hn\ by Lemma \ref{l.entire.lin.ac}, \(\tilde{\vp}_{\infty}\equiv 0\). However, this is a contradiction since \eqref{e.(p_k,s_k)} implies \tes\ $s_{\infty}\in \bbr$ \st\ \(\tilde{\vp}_{\infty} (0,s_{\infty}) \neq 0\). 

If $\limsup_{k\to \infty} |s_k| =\infty$, without loss of generality we can assume that \(s_k \to \infty\) as \(k\to \infty\). Let us consider the three possibilities stated in \eqref{e.p_k.del.Si^k} and define \(q_k\) as above. In all the three cases (in \eqref{e.p_k.del.Si^k}), we define $Y_k = Y_{q_k}$, $\tilde{\vp}_k(\mathtt{y},s)= e^{\si (s_k + s)}\vp_k(Y_k(\mathtt{y}), s)$, \(\tilde{\snf}_k(\mathtt{y},s)= e^{\si (s_k + s)}\snf_k(Y_k(\mathtt{y}), s)\), and in case (ii) $\tilde{\ps}_k(\mathtt{y},s)= e^{\si (s_k + s)} \ps_k(Y_k(\mathtt{y}), s)$. Then \(\tilde{\vp}_k\) satisfies the following equation.

\begin{equation}\label{e.tilde.vp.k.eq'}
	-a^{ij}_k\del_{\mathtt{y}_i\mathtt{y}_j} \tilde{\vp}_k - b^i_k \del_{\mathtt{y}_i} \tilde{\vp}_k - \del_{ss} \tilde{\vp}_k + 2\si \del_s \tilde{\vp}_k + (W''(\Hb(s_k+s)) - \si^2)\tilde{\vp}_k = \tilde{\snf}_k,
\end{equation}
where \(\|a^{ij}_k-\de_{ij}\|_{C^{0,\tht}} + \|b^i_k\|_{C^{0,\tht}}=o(1)\) as \(k\to \infty\). Arguing as in the previous case and using Lemma \ref{l.entire.lin.ac'}, we again arrive at a contradiction.
\end{proof}

\begin{lemma}\label{global gluing estimates 1}
	Let us fix $\de_i>0$, $i=0,1,2,3$, satisfying $\de_0+\de_1+\de_2<\de_3$ and $a,b>0$. Let $r\in (0, \de_1R)$, $\rh\in(1,\de_2R)$, $p\in B^+_{\de_0R}(0)$, $B^{+}_{s}(p)=B_s(p)\cap B^+_{2R}(0)$, $U\in C^{2,\tht}(B^+_{\de_3R}(0))$ with the uniform estimates
\begin{equation}\label{e.U.bound}
	U\geq a>0, \quad U+|\nabla U|\leq b \text{ on }B^+_{\de_3R}(0).
\end{equation}
Then \tes\ a constant $C>0$ independent of $r,\: \rho,\: p,\: U,\: \ve$ \w\ the following significance.
\begin{enumerate}[(i)]
	\item If $\ve>0$ is sufficiently small\footnote{The \mt\ $g$ in \eqref{e.De-1} depends on $\ve$.} and $\vp\in C^2(B^{+}_{r+\rh}(p))$ satisfies
	\begin{equation}\label{e.De-1}
		\begin{cases}
			\De_g\varphi-U\vp=E+\operatorname{div}(F)\text{ on } B^{+}_{r+\rh}(p),\\
			\partial_\nu \varphi=\psi \text{ on }\partial_0 B^{+}_{r+\rh}(p),\\
		\end{cases}
	\end{equation}
	there holds
	\begin{equation}\label{e.est1.De-1}
		\md{\vp}_{C^{1,\tht}(B^{+}_{r}(p))}\leq C\left(\md{E}_{C^{0}(B^{+}_{r+\rh}(p))}+\md{F}_{C^\theta(B^{+}_{r+\rh}(p))} +\md{\ps}_{C^{0, \tht}(\del_0B^{+}_{r+\rh}(p))}+\rh^{-1}\md{\vp}_{C^{1,\tht}(B^{+}_{r+\rh}(p))}\right).
	\end{equation}	

\item If $\ve>0$ is sufficiently small and $\vp\in C^2(B^{+}_{r+\rh}(p))$ satisfies
\begin{equation}\label{e.De-1'}
	\begin{cases}
		\De_g\varphi-U\vp=E+\operatorname{div}(F)\text{ on } B^{+}_{r+\rh}(p),\\
		\varphi=\psi \text{ on }\partial_0 B^{+}_{r+\rh}(p),\\
	\end{cases}
\end{equation}
there holds
\begin{equation}\label{e.est1.De-1'}
	\md{\vp}_{C^{1,\tht}(B^{+}_{r}(p))}\leq C\left(\md{E}_{C^{0}(B^{+}_{r+\rh}(p))}+\md{F}_{C^\theta(B^{+}_{r+\rh}(p))} +\md{\ps}_{C^{1, \tht}(\del_0B^{+}_{r+\rh}(p))}+\rh^{-1}\md{\vp}_{C^{1,\tht}(B^{+}_{r+\rh}(p))}\right).
\end{equation}
\end{enumerate}
\end{lemma}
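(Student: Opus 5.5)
I would prove both parts by the standard ``massive operator'' argument: the zeroth-order coefficient $U\geq a>0$ makes $\Delta_g-U$ coercive. The plan is first to get a $C^0$ bound with exponential decay away from the data region, and then to upgrade it to $C^{1,\theta}$ by the local Schauder estimates of Proposition \ref{t.sch}. The $\rho^{-1}$ factor is produced by a cutoff at scale $\rho$, not by decay; this mirrors how the lemma is used in Lemma \ref{outer}.

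\emph{Step 1 (localisation).} Fix a cutoff $\chi$ with $\chi=1$ on $B^+_{r+\rho/2}(p)$, support in $B^+_{r+\rho}(p)$, $|D\chi|\leq C\rho^{-1}$, $|D^2\chi|\leq C\rho^{-2}$, and $\partial_\nu\chi=0$ on $\partial_0$. This is possible since near $\partial_0$ the Fermi coordinates are exactly orthogonal, by \eqref{e.g.Ga.0.ball}. Then $\tilde\varphi=\chi\varphi$ satisfies
\[
\Delta_g\tilde\varphi-U\tilde\varphi=\chi E+\operatorname{div}(\chi F)-\langle\nabla\chi,F\rangle+2\langle\nabla\chi,\nabla\varphi\rangle+(\Delta_g\chi)\varphi .
\]
The new source terms are bounded in $C^0$ by $C(|E|_{C^0}+|F|_{C^0}+\rho^{-1}|\varphi|_{C^{1,\theta}})$, using $\rho>1$. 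The boundary data become $\chi\psi$ (Neumann case) or $\chi\psi$ (Dirichlet case).

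\emph{Step 2 ($C^0$ bound).} I would prove $\|\tilde\varphi\|_{C^0}\leq C(\text{data})$ by a blow-up contradiction, as in the proof of Proposition \ref{p.lin.ac}. Normalise $\|\tilde\varphi\|_{C^0}=1$, let the data tend to $0$, and pick points where $|\tilde\varphi|\geq 1/2$. Recentre there, in interior coordinates or in boundary Fermi coordinates, according to whether the distance to $\partial_0$ stays bounded. Since $g\to\mathrm{Id}$ in $C^2_{loc}$ as $\varepsilon\to0$ and $U$ is uniformly Lipschitz, Arzel\`a--Ascoli gives a subsequence $U\to U_\infty\geq a$ in $C^0_{loc}$. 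The local estimate \eqref{e.sch.int1}/\eqref{e.sch.neu1}/\eqref{e.sch.dir1} gives $C^{1,\theta}_{loc}$ compactness, and the limit is a bounded solution of $\Delta\varphi-U_\infty\varphi=0$ on $\mathbb R^{n+1}$ or on a half-space, with zero Neumann or zero Dirichlet data. Lemma \ref{l.entire.lin.ac'} forces this limit to vanish, contradicting $|\varphi_\infty(0)|\geq1/2$.

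There is one subtlety here: the divergence term only converges weakly. I would therefore use the $C^{1,\theta}$ estimates with $C^\theta$ divergence data, which stay uniform, and pass to the limit in the weak formulation. Alternatively, one could use a direct maximum principle with the barrier of Lemma \ref{l.entire.lin.ac'}. This would also need the $C^\theta$ norm of $F$, which enters only through a solve for $\operatorname{div}F$.

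\emph{Step 3 ($C^{1,\theta}$).} Cover $B^+_r(p)$ by unit balls and half-balls. Applying \eqref{e.sch.int1}, \eqref{e.sch.neu1} or \eqref{e.sch.dir1} to $\varphi$ on each of them gives
\[
|\varphi|_{C^{1,\theta}(B_1^+(q))}\leq C\big(\|\tilde\varphi\|_{C^0}+|E|_{C^0}+|F|_{C^\theta}+\text{boundary norm of }\psi\big).
\]
Here $\varphi=\tilde\varphi$ on $B^+_2(q)$ once $\rho\geq4$; the case $\rho<4$ is trivial because then $\rho^{-1}|\varphi|_{C^{1,\theta}}$ already dominates. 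Combining this with Step 2 gives \eqref{e.est1.De-1} and \eqref{e.est1.De-1'}.

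The main obstacle is Step 2: making the constant independent of $r,\rho,p,\varepsilon$ while handling the divergence-form source in $C^\theta$ rather than $C^1$. The compactness argument must be run at the level of $C^{1,\theta}$ to avoid losing control of $F$.
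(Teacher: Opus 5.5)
Your proposal is correct and follows essentially the same route as the paper. The paper proves a $C^0$ bound for compactly supported solutions by the same blow-up/Liouville contradiction through Lemma \ref{l.entire.lin.ac'}, applies it to $\chi\varphi$ with a cutoff at scale $\rho$, and finishes with the local Schauder estimates of Proposition \ref{t.sch}. The only cosmetic difference is that the paper does not impose $\partial_\nu\chi=0$; it simply puts the extra term $\varphi\,\partial_1\chi$, of size $\rho^{-1}|\varphi|_{C^{1,\theta}}$, into the Neumann data.
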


\begin{proof}
	The proof is similar to \cite[proof of Lemma 8.1]{del2016introduction}. Since the proofs of (i) and (ii) are very similar, we will prove only (i). Let us first prove the following claim.
	\begin{claim}\label{outer claim1}
	\Tes\ a constant $C>0$ independent of $r,\: \rho,\: p,\: U,\: \ve$ \st\ if	$\ve>0$ is sufficiently small, $\tph,\tE,\tF,\tps$ are compactly supported in $B^{+}_{r+\rh}(p)$ and \eqref{e.De-1} is satisfied for $(\vp,E, F, \ps) =(\tph,\tE,\tF,\tps)$, there holds 
		\begin{equation}\label{e.est2.De-1}
			|\tph|_{C^{0}(B^{+}_{r+\rh}(p))}\leq C\left(|\tE|_{C^{0}(B^{+}_{r+\rh}(p))}+|\tF|_{C^\theta(B^{+}_{r+\rh}(p))}+|\tps|_{C^{0, \tht}(\del_0B^{+}_{r+\rh}(p))}\right).
		\end{equation}
	\end{claim}
 To prove this claim, we assume by contradiction that the estimate \eqref{e.est2.De-1} does not hold. Then \te\ sequences $\tilde{U}_i, \tph_i,\tE_i,\tF_i,\tps_i$, $r_i,\: \rho_i,\: p_i,\: \ve_i$ \st\  $\ve_i\ra 0$, $\tph_i,\tE_i,\tF_i,\tps_i$ are compactly supported in $B^{+}_{r_i+\rh_i}(p_i)$, 
		\begin{equation}\label{e.est.tph_n}
			|\tph_i|_{C^{0}(B^{+}_{r_i+\rh_i}(p_i))}=1,\quad \left(|\tE_i|_{C^{0}(B^{+}_{r_i+\rh_i}(p_i))}+|\tF_i|_{C^\theta (B^{+}_{r_i+\rh_i}(p_i))} +|\tps_i|_{C^{0, \tht}(\del_0B^{+}_{r_i+\rh_i}(p_i))}\right)\ra 0
		\end{equation} 
		and \eqref{e.De-1} is satisfied for $\ve=\ve_i$ and 
		$$(U,\vp,E, F, \ps,r,\rh,p)=(\tilde{U}_i, \tph_i,\tE_i, \tF_i, \tps_i,r_i, \rho_i, p_i).$$
Then
	\begin{align}\label{phi_n-equation out}
			& \hat{g}^{jk}(\ve_i\cdot)\del_{jk}\tph_i-\ve_i\hat{g}^{jk}(\ve_i\cdot)\hat{\Ga}_{jk}^l(\ve_i\cdot)\del_{l}\tph_i-\tilde{U}_i\tph_i=\tE_i+\operatorname{div}\tF_i \text{ on } B^{+}_{r_i+\rh_i}(p_i);\\
			&\partial_{\nu}\tph_i=\tps_i\text{ on }\partial_0 B^{+}_{r_i+\rh_i}(p_i).
	\end{align}
Since $\tph_i,\tE_i,\tF_i,\tps_i$ are compactly supported, we can assume that \eqref{phi_n-equation out} holds on $B^+_{\de_3 R}(0)$. Let $\de'_3=\de_0+\de_1+ \de_2$. \Tes\ $\tilde{x}_i\in B^+_{\de'_3 R}(0)$ \st\ $|\tph(\tilde{x}_i)|\geq 1/2$. 

Suppose $\limsup_{i\to\infty} \tilde{x}_{i,1} =\infty$. We define $\bar{\vp}_i(x)=\tph_i(\tilde{x}_i+x)$. Then $|\bar{\vp}_i(0)|\geq 1/2$ and (by Theorem \ref{t.sch}) the $C^{1,\tht}$ norm of $\bar{\vp}_i$ is uniformly bounded on $B(0,s_i)$ where $$s_i=\frac{1}{2}\min\left\{\tilde{x}_{i,1},(\de_3-\de'_3)R_i\right\}.$$
\tf\ \te\ $\vp_{\infty},U_{\infty}:\bbr^{n+1}\to \bbr$ \st\ up to a subsequence, $\bar{\vp}_i \to \vp_{\infty}$ in $C^1_{\text{loc}}(\bbr^{n+1})$,  
\[\De_{g_E}\vp_{\infty}-U_{\infty}\vp_{\infty}=0 \text{ on }\bbr^{n+1}\]
and $|\vp_{\infty}(0)|\geq 1/2$. However, this contradicts Lemma \ref{l.entire.lin.ac'}(i).

On the other hand, if $\limsup_{i\to\infty} \tilde{x}_{i,1} <\infty$, we define $\bar{x}_i=(0,\tilde{x}_{i,2},\dots, \tilde{x}_{i,n+1})$ and $\bar{\vp}_i(x) =\tph_i(\bar{x}_i+x)$. The $C^{1,\tht}$ norm of $\bar{\vp}_i$ is uniformly bounded on $B(0,\frac{1}{2}(\de_3-\de'_3)R_i)$. \tf\ \te\ $\vp_{\infty},U_{\infty}:\bbr^{n+1}_+\to \bbr$ satisfying
\begin{align}
	& \De_{g_E}\vp_{\infty}-U_{\infty}\vp_{\infty}=0 \text{ on }\bbr^{n+1}_+,\\
	& \del_{\nu}\vp_{\infty}=0 \text{ on }\del\bbr^{n+1}_+.
\end{align}
Moreover, $|\vp_{\infty}(x_{\infty})|\geq 1/2$ for some $x_{\infty}\in \bbr^{n+1}_+$, which contradicts Lemma \ref{l.entire.lin.ac'}(iii). This finishes the proof of the claim. 
	
	To prove the lemma, we choose a cut-off \fn $0\leq\ch\leq1$ \st\ $\ch=1$ on $B^+_r(p)$, $\text{spt}(\ch)\subset B^{+}_{r+\rh}(p)$, $\md{\na \ch}+\md{\na^2 \ch}\lesssim \rh^{-1}$ and apply the above claim to $\tph=\ch\vp$. Indeed, $		\De_{g}\tph -U\tph =\tE+\textrm{div}(\tilde{F}) \text{ on } B^{+}_{r+\rh}(p)$, where $\tE=\ch E+2\langle\na \vp, \na \ch\rangle +\vp\De\ch-\langle F, \na \ch\rangle $, $\tilde{F}=\chi F$ and $\partial_{\nu} \tph=\tps\quad \text{on }\partial_0 B^{+}_{r+\rh}(p)$ where $\tps=\ch\ps+\vp\del_1\ch$. \tf, the lemma follows from above claim and  elliptic estimate
	\begin{align}
		\md{\vp}_{C^{1,\tht}(B^{+}_{r}(p))}&=\md{\tph}_{C^{1,\tht}(B^{+}_{r}(p))}
        \lesssim \left(|\tE|_{C^{0}(B^{+}_{r+\rh}(p))}+|\tilde{F}|_{C^\theta(B^{+}_{r+\rh}(p))}+|\tps|_{C^{0, \tht}(\del_0B^{+}_{r+\rh}(p))}+\md{\tph}_{C^{0}(B^{+}_{r+\rh}(p))}\right).    
	\end{align}	
\end{proof}
\end{appendix}

\bibliography{biblio.bib}
\bibliographystyle{alpha}
\end{document}